\def\R{D}
\def\C{\mathbb C}
\def\N{\mathbb N}
\def\Z{\mathbb Z}
\def\D{\mathcal{D}}
\def\be{\begin{equation}}
\def\ee{\end{equation}}
\def\bea{\begin{eqnarray}}
\def\eea{\end{eqnarray}}
\def\beas{\begin{eqnarray*}}
\def\eeas{\end{eqnarray*}}
\def\l{\lambda}
\def\pa{\partial }
\def\l{\lambda}
\def\lv{\left\vert}
\def\rv{\right\vert}
\def\bk{\bar{\kappa}}
\def\w{{\bf w}}
\def\d{{\bf d}}
\def\ch{{\boldsymbol \chi}}
\def\ttau{\tilde{\tau}}
\def\e{\epsilon}
\def\tr{\tilde{r}}
\def\tG{\tilde G}
\def\bcr{\begin{color}{red}}
\def\bcb{\begin{color}{blue}}
\def\bcg{\begin{color}{green}}
\def\bcv{\begin{color}{violet}}
\def\ec{\end{color}}
\def\k{\varepsilon}
\def\e{\eta}
\def\F{J}
\def\xc{x_{\text{crit}}}
\def\xmin{x_{\text{min}}}
\def\xmax{x_{\text{max}}}
\def\wl{{\bf w}}
\def\rl{\tilde{r}}
\def\DS{\Pi}
\def\Rl{{\bf d}}
\def\RY{d}
\def\vY{w}
\def\QY{Q}
\def\yms{Y^{\text{ms}}}
\def\grlp{g_{\text{RLP},\k}}
\def\MRLP{\mathcal M_{\text{RLP},\k}}
\def\DRLP{\mathcal D_{\text{RLP},\k}}
\def\DRLPtilde{\tilde{\mathcal D}_{\text{RLP},\k}}
\def\MS{\mathcal{M}\mathcal{S}_\k}
\def\dy{\delta Y}
\def\pau{\partial_{p}}
\def\pav{\partial_{q}}
\def\u{p}
\def\v{q}
\def\Om{\Omega}
\def\tt{\tilde{\tau}}
\def\Ysp{Y^{\text{sp}}}
\def\xf{x_F}
\def\dx{\delta x}
\def\RH{\mathcal{R}}
\def\WH{\mathcal{W}}
\def\xs{x_\ast}
\def\dz{\delta z}
\def\uC{\underline{\mathcal C}}
\def\C{\mathcal C}
\def\po{\pa W_-}
\def\pr{\pa D_-}
\def\X{\mathcal X_{>\frac13}}
\def\Y{\mathcal X_{\frac13}}
\def\Z{\mathcal X_{<\frac13}}
\newcommand{\vertiii}[1]{{\left\vert\kern-0.25ex\left\vert\kern-0.25ex\left\vert #1 
    \right\vert\kern-0.25ex\right\vert\kern-0.25ex\right\vert}}
\newcommand{\prfe}{\hspace*{\fill} $\Box$

\smallskip \noindent}
\newtheorem{theorem}{Theorem}[section]
\newtheorem{definition}[theorem]{Definition}
\newtheorem{proposition}[theorem]{Proposition}
\newtheorem{corollary}[theorem]{Corollary}
\newtheorem{lemma}[theorem]{Lemma}
\newtheorem{remark}[theorem]{Remark}
\title{Naked singularities in the Einstein-Euler system}
\author{Yan Guo\thanks{Division of Applied Mathematics, Brown University, Providence, RI 02912, USA, Email: Yan\_Guo@brown.edu.}, \ Mahir Hadzic\thanks{Department of Mathematics, University College London, London UK, Email: m.hadzic@ucl.ac.uk.}, \ and Juhi Jang\thanks{Department of Mathematics, University of Southern California, Los Angeles, CA 90089, USA, and Korea Institute for Advanced Study, Seoul, Korea.  Email: juhijang@usc.edu.}}
\date{}
\begin{document}

\maketitle

\abstract{In 1990, based on numerical and formal asymptotic analysis, Ori and Piran predicted the existence of self-similar spacetimes, called  relativistic Larson-Penston solutions, that 
can be suitably flattened to obtain examples of spacetimes that dynamically form naked singularities from smooth initial data, and solve the radially symmetric Einstein-Euler system. Despite its importance, a rigorous proof of the existence of such spacetimes has remained elusive, in part due to the complications associated with the analysis across the so-called sonic hypersurface. We provide a rigorous mathematical proof.

Our strategy is based on a delicate study of nonlinear invariances associated with the underlying non-autonomous dynamical system to which the problem reduces after a self-similar reduction.
Key technical ingredients are a monotonicity lemma tailored to the problem, an ad hoc shooting method developed to construct a solution connecting the sonic hypersurface to the so-called Friedmann solution,
and a nonlinear argument to construct the maximal analytic extension of the solution. Finally, we reformulate the problem in double-null gauge to truncate the self-similar profile
and thus obtain an asymptotically flat spacetime with an isolated naked singularity.}

\tableofcontents

\section{Introduction}

We study the Einstein-Euler system which couples the Einstein field equations to the Euler equations of fluid mechanics. 
The unknowns are the 4-dimensional Lorentzian spacetime $(\mathcal M, g)$,  the fluid pressure $p$, the mass-density $\rho$, and the 4-velocity $u^\alpha$.
In an arbitrary coordinate system, the Einstein-Euler equations read
\begin{align}
\text{Ric}_{\alpha\beta}-\frac{1}{2}\mathcal Rg_{\alpha\beta}&=T_{\alpha\beta},  \ \ (\alpha,\,\beta=\,0,1,2,3), \label{E:EINSTEIN}\\
\nabla_{\alpha}T^{\alpha\beta}&=0,  \ \  (\beta=0,1,2,3),\label{E:BIANCHI}\\
g_{\alpha \beta} u^{\alpha} u^{\beta}&=-1,\,\label{E:NORMALISATION}
\end{align}
where $\text{Ric}_{\alpha\beta}$ is the Ricci curvature tensor, $\mathcal R$ the scalar curvature of $g_{\alpha\beta}$, and $T_{\alpha\beta}$ is the energy momentum tensor given by the formula
\begin{align}\label{E:EMTENSOR}
T_{\alpha\beta} = (\rho+p)u_{\alpha}u_{\beta} + p g_{\alpha\beta}, \ \ ( \alpha, \, \beta=0,1,2,3).
\end{align} 
To close the system,
we assume the linear equation of state
\be\label{E:EOS}
p = \k \rho,
\ee
where $0<\k<1$ corresponds to the square of the speed of sound.

The system~\eqref{E:EINSTEIN}--\eqref{E:EOS} is a fundamental model of a self-gravitating relativistic gas.
We are interested in the existence of  self-similar solutions to~\eqref{E:EINSTEIN}--\eqref{E:EOS} under
the assumption of radial symmetry. This amounts to the existence of a homothetic Killing vector field $\xi$
with the property
\begin{align}
\mathcal L_\xi g = 2 g,
\end{align}
where the left-hand side is the Lie derivative of the metric $g$. The presence of such a vector field
induces a scaling symmetry, which allows us to look for  self-similar solutions to~\eqref{E:EINSTEIN}--\eqref{E:EOS}.
Study of self-similar solutions to Einstein-matter systems has a rich history in the physics literature. They in
particular provide a way of constructing spacetimes with  so-called naked singularities, a notion intimately
tied to the validity of the weak cosmic censorship of Penrose~\cite{Penrose1969}, see the discussion in~\cite{Ch1999a,RoSR2019,DaLu2017}. Naked singularities intuitively 
correspond to spacetime singularities that are ``visible" to far away observers, which informally means that there exists 
a future outgoing null-geodesic ``emanating" from the singularity and reaching the asymptotically flat region of the spacetime. We adopt here a precise mathematical definition
from the work of Rodnianski and Shlapentokh-Rothman~\cite[Definition 1.1]{RoSR2019}, which in turn is related to a formulation of weak cosmic censorship by Christodoulou~\cite{Ch1999a}. 

In the absence of pressure ($\k=0$ in~\eqref{E:EOS}) and under the assumption of radial symmetry, the problem simplifies considerably.
The corresponding family of solutions was studied by Lema\^itre~\cite{Lemaitre1933} and Tolman~\cite{To1934} in early 1930s (see also~\cite{Bondi1947}). 
In their seminal work from 1939, Oppenheimer and Snyder~\cite{OpSn1939} studied the causal structure
of a subclass of Lema\^itre-Tolman solutions with space-homogeneous densities, thus exhibiting 
the first example of a dynamically forming  (what later became known as) black hole. However,
in 1984 Christodoulou~\cite{Ch1984} showed that, within the larger class of Lema\^itre-Tolman solutions with
space-inhomogeneous densities, black holes
are exceptional and instead naked singularities form generically.

Of course, in the context of astrophysics, one expects the role of pressure
to be very important in the process of gravitational collapse for relativistic gases. In the late stages
of collapse, the core region is expected to be very dense and the linear equation of state~\eqref{E:EOS}
is commonly used in such a setting, as it is compatible with the requirement that the speed of sound is 
smaller than the speed of light, $\sqrt\k<1$. In their pioneering works, Ori and Piran~\cite{OP1987,OP1988,OP1990}
found numerically self-similar solutions to~\eqref{E:EINSTEIN}--\eqref{E:EOS}, which are
the relativistic analogues of the Larson-Penston self-similar collapsing solutions to the isothermal Euler-Poisson system, see~\cite{Larson1969,Penston1969,GHJ2021}.
Through both numerical and asymptotic analysis methods Ori and Piran investigated the causal structure of such relativistic Larson-Penston
solutions, ascertaining the existence of spacetimes with naked singularities when $\k$ is smaller than a certain value. 
Our main goal  is to justify the findings of Ori and Piran on  rigorous
mathematical grounds.

Broadly speaking, this manuscript consists of two parts.
In the first part, which constitutes the bulk of our work, we 
construct a self-similar solution of the Einstein-Euler system (Sections~\ref{S:FORMULATION}--\ref{S:RLP}), assuming that $\k$ -- the square of the speed 
of sound -- is sufficiently small.


\begin{theorem}[Existence of the relativistic Larson-Penston spacetimes]\label{T:MAINTHEOREM2}
For any sufficiently small $0<\k\ll1$ there exists a radially symmetric real-analytic self-similar solution to the Einstein-Euler system
with a curvature singularity at the scaling origin and an outgoing null-geodesic emanating from it all the way to infinity. 
The resulting spacetime is called the relativistic Larson-Penston (RLP) spacetime.
\end{theorem}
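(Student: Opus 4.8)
The plan is to reduce the full Einstein-Euler system under radial symmetry and self-similarity to a non-autonomous system of ODEs in a single self-similar variable, and then carefully integrate across the singular (sonic) point. First I would introduce comoving or Schwarzschild-type coordinates adapted to the homothetic Killing field $\xi$, so that the metric coefficients, $\rho$, and $u^\alpha$ become functions of one variable $x$ (essentially $-r/t$ or its logarithm). Plugging the self-similar ansatz into~\eqref{E:EINSTEIN}--\eqref{E:EOS} with the equation of state~\eqref{E:EOS} collapses the PDE system to a first-order ODE system; after algebraic manipulation this should take the form of two coupled autonomous-looking equations whose right-hand side has a denominator vanishing on a ``sonic hypersurface'' $x = x_\ast$, the locus where the fluid velocity relative to the similarity surfaces equals the sound speed $\sqrt\k$. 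The Friedmann (FLRW-type) solution is the explicit constant/homogeneous solution that plays the role of a regular center or matching state, and the goal is a solution curve that (i) is regular and smooth at the center, (ii) passes analytically through the sonic point, and (iii) extends to the Friedmann solution and beyond.

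Second, I would handle the sonic point. Near $x_\ast$ the ODE system is of the form where two branches of local solutions exist; I expect one needs to linearize at the sonic point, compute the eigenvalues of the associated matrix, and show that for $0<\k\ll1$ there is a unique (up to the obvious normalization) real-analytic solution branch through it — this is a Briot–Bouquet / center-manifold type analysis giving a convergent power series in $(x-x_\ast)$, whose radius of convergence must then be estimated. This is where the small-$\k$ hypothesis enters crucially: the eigenvalue ratio, and hence analyticity versus mere $C^k$ regularity, depends delicately on $\k$, and the relevant inequalities only close when $\k$ is small. I would quote or re-derive the algebraic conditions (the ``sonic conditions'') that pin down $x_\ast$ and the solution value there as functions of $\k$, likely via an implicit-function argument perturbing off an explicit $\k=0$ configuration.

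Third comes the global construction: extend the local analytic solution from the sonic point both ``inward'' toward the scaling origin and ``outward'' toward and past the Friedmann solution. Inward, I would set up the ad hoc shooting argument advertised in the abstract — parametrize candidate solutions and use a monotonicity lemma (some conserved or monotone quantity along the flow, tailored so that the relevant region in state space is forward-invariant) to trap the trajectory and force it to hit the correct boundary condition at the center, yielding smoothness and the curvature singularity at the scaling origin. Outward, one continues the analytic solution, checks it stays in the region where the ODE is non-singular, and constructs the maximal analytic extension via a nonlinear continuation argument, verifying the solution connects to the Friedmann branch. Finally, with the self-similar profile in hand, I would pass to a double-null gauge, read off the causal structure, and exhibit the outgoing null geodesic from the singularity: integrate the null geodesic equation in the self-similar coordinates and show it reaches $\mathcal{I}^+$, while confirming the curvature (e.g. the Ricci/Kretschmann scalar built from $\rho$) blows up at the origin. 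The main obstacle, I expect, is the analysis at and across the sonic hypersurface: proving that the unique analytic branch exists, estimating its domain of convergence uniformly in small $\k$, and matching it to both the inward shooting solution and the outward maximal extension — this gluing across a genuinely singular point of the ODE, while maintaining analyticity and the sign conditions needed for the monotonicity lemma, is the technical heart of the argument.
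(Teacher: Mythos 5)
Your overall strategy---self-similar reduction to a singular ODE system, analytic passage through the sonic point, a shooting argument, and a causal analysis of null geodesics---is aligned with the paper's. However, there are two substantive errors in the plan as written that would make it fail.

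First, the sonic conditions do \emph{not} pin down $x_\ast$ as a function of $\k$. Imposing $\R_0=W_0$ and $B=0$ at the sonic point only determines $W_0$ as a function of the pair $(\k,x_\ast)$; the next coefficient $W_1$ then satisfies a cubic in which a spurious ``ghost'' root must be discarded and a genuine dichotomy between an LP-type and a Hunter-type root must be resolved by an explicit selection principle. The sonic point $x_\ast$ itself remains a free parameter ranging over a ``sonic window'' $[\xmin,\xmax]\subset(2,3)$, and it is precisely this parameter that the shooting argument varies in order to hit the boundary condition $W(0)=\tfrac13$ forced by regularity of the $\tfrac{1-3W}{x}$ term at the comoving center. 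If you believe $x_\ast$ is already fixed by the sonic conditions, you have eliminated the shooting parameter and the matching problem at $x=0$ becomes overdetermined. (An implicit-function argument perturbing off $\k=0$ is indeed used, but it only gives the local Taylor data $W_0(\k;x_\ast)$, $W_1(\k;x_\ast)$ at a \emph{given} $x_\ast$; the $\k=0$ base solution at the global level is the Newtonian Larson--Penston solution, which is itself produced by shooting, not by an explicit formula.)

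Second, the geometry of the Friedmann connection is reversed in your outline. The Friedmann-type value $W=\tfrac13$ is the \emph{inward} target at the regular center $x=0$; the spacetime is smooth there and there is no curvature singularity at $x=0$. Outward, as $x\to\infty$, the solution approaches the far-field profile $W\to 1$, $\R\sim x^{-2/(1+\e)}$, and $x=\infty$ (equivalently $\tau=0$) is only a coordinate singularity. To exhibit an outgoing null geodesic emanating from the scaling origin $\mathcal O$, one must first construct a genuine real-analytic extension of the self-similar profile across the $\{\tau=0\}$ hypersurface in an adapted self-similar variable $Y$, show it persists until a ``massive singularity'' at some $Y=\yms<0$, and then locate zeros of a certain real-analytic function on $(\yms,0)$ via the intermediate value theorem: these zeros are the slopes of outgoing simple radial null geodesics from $\mathcal O$. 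Your phrase ``continue past the Friedmann solution'' conflates the inward Friedmann connection with this outward maximal extension and omits the latter entirely, so the outgoing null geodesic from $\mathcal O$ would not be produced. Relatedly, the curvature singularity of Theorem~\ref{T:MAINTHEOREM2} is at $\mathcal O$ (where $\rho\sim\tau^{-2}\to\infty$), not at the center $x=0$ reached by the inward shooting.

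A more minor methodological note: the paper does not run a Briot--Bouquet or center-manifold argument at the sonic point; it constructs the Taylor coefficients by explicit recursion and closes the convergence by a combinatorial induction with $N$-dependent growth bounds. The dangerous issue at the sonic point is not an eigenvalue resonance but the reality of the relevant root of the cubic for $W_1$ (which fails below a $\k$-dependent threshold $\xc(\k)>2$ and is used to fix $\xmin$), a phenomenon absent in the Newtonian limit and worth flagging if you pursue the linearization route.
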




It is not hard to see that the self-similar solution constructed in 
Theorem~\ref{T:MAINTHEOREM2} is not asymptotically flat.
In the second step (Section~\ref{S:DOUBLENULL}), using PDE techniques, we flatten
the self-similar RLP-profile in a region away from the singularity and thus obtain an asymptotically flat solution
with a naked singularity. Thus, our main theorem states that in the presence of pressure there do exist examples of naked singularities which form 
from smooth data.


\begin{theorem}[Existence of naked singularities]\label{T:MAINTHEOREM}
For sufficiently small $0<\k\ll1$ there exist radially symmetric asymptotically flat solutions to the Einstein-Euler system that form
a naked singularity in the sense of~\cite[Definition 1.1]{RoSR2019}.
\end{theorem}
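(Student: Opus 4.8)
\medskip
\noindent\textbf{Proof strategy.} The plan is to start from the self-similar RLP spacetime provided by Theorem~\ref{T:MAINTHEOREM2} — which near the scaling origin carries a genuine curvature singularity, is regular towards the centre (where it matches a Friedmann solution) and possesses an outgoing null geodesic escaping to infinity — and to perform surgery far from the singularity so as to render the spacetime asymptotically flat while leaving it unchanged in a full neighbourhood of the escaping ray. The natural arena is the double-null gauge: in spherical symmetry one writes $g=-\Om^2\,dp\,dq+r^2\gamma$, with $\gamma$ the round metric on $\mathbb{S}^2$ and $\Om,r$ functions of $(p,q)$, and the Einstein--Euler system reduces to wave equations for $r$ and $\log\Om$ sourced by the matter, a first-order hyperbolic system for the renormalised fluid variables transported along $\pau$ and $\pav$, and a family of Raychaudhuri-type null constraints on the characteristic cones. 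Foliating the RLP solution by such coordinates, one identifies the escaping ray with an outgoing cone $\Cout$, while the Friedmann centre furnishes a regular-centre point $o$ to the past of the singularity whose outgoing light cone $\mathcal C_o$ carries smooth RLP data, including at its vertex.

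\medskip
\noindent\textbf{Truncation and reconstruction.} First I would modify the characteristic data on $\mathcal C_o$: keep the RLP data for $r\le r_0$, interpolate smoothly on $r_0\le r\le 2r_0$, and set the data equal to that of Minkowski (in particular $\rho\equiv0$) for $r\ge 2r_0$, choosing $r_0$ small enough that the scaling singularity and an initial segment of the escaping ray lie in the interior of the region determined by $\{r\le r_0\}\subset\mathcal C_o$ together with the regular centre. Constructing such data is elementary, since the constraints along $\mathcal C_o$ are ordinary differential equations. I would then solve the corresponding characteristic initial value problem for the reduced system towards the future, i.e.\ towards null infinity: local well-posedness in the double-null gauge, fed by a priori estimates that propagate all the way out (see below), should produce a development with a regular and complete future null infinity $\mathcal I^+$. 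By finite speed of propagation the reconstructed spacetime coincides with the RLP spacetime on the domain of dependence of $\{r\le r_0\}\subset\mathcal C_o$, so the curvature singularity and an outgoing null geodesic issuing from it persist verbatim; in the modified, near-Minkowskian exterior this geodesic then continues to $\mathcal I^+$.

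\medskip
\noindent\textbf{Asymptotic flatness and the naked-singularity criterion.} Next I would verify asymptotic flatness of the reconstructed spacetime — monotonicity of the Hawking mass along the outgoing cones, finiteness of the total mass, and convergence of $(\Om,\pau r,\pav r)$ to their Minkowskian values along each outgoing cone — yielding the strong asymptotic flatness required by~\cite[Definition 1.1]{RoSR2019}. Since the surgery is confined to $r\ge r_0$ and carried out on a single outgoing cone with a regular centre, the configuration is precisely the development of smooth asymptotically flat characteristic data in the sense of~\cite{RoSR2019}, and the singularity genuinely forms in its evolution. It then remains to check the residual clauses of~\cite[Definition 1.1]{RoSR2019} — future causal geodesic incompleteness, the presence of a terminal singular point on the central axis, and a future-directed outgoing null geodesic from that point reaching $\mathcal I^+$ — all of which are inherited from the RLP structure of Theorem~\ref{T:MAINTHEOREM2} through the domain-of-dependence identification. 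This proves Theorem~\ref{T:MAINTHEOREM}.

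\medskip
\noindent\textbf{Main obstacle.} The crux is the semi-global existence step: propagating quantitative control of the solution from $\mathcal C_o$ all the way to null infinity — through the collar $r_0\le r\le 2r_0$ in which the slowly decaying, non-compactly supported self-similar tail is cut off — and, above all, controlling the fluid, which for generic data would form shocks before reaching $\mathcal I^+$. The features that should make this tractable are: (i) the smallness of $\k$, which keeps the acoustic cones close to the light cones and suppresses shock formation over the relevant range; (ii) the fact that outside the inner core the reference state is explicit (Minkowski, or a vacuum exterior), so the estimates are genuinely perturbative; and (iii) the double-null gauge, in which the Einstein equations become a mild system of transport and wave equations amenable to weighted energy — or, in spherical symmetry, characteristic-ODE — estimates. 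A further delicate point is the fluid--vacuum free boundary created by the truncation: with the linear equation of state $p=\k\rho$ the sound speed remains $\sqrt\k$ while the density degenerates at the interface, so one needs either a dedicated free-boundary analysis or a carefully chosen cut-off profile keeping the density strictly positive with sufficiently rapid decay.
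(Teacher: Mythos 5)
Your high-level strategy — take the RLP spacetime of Theorem~\ref{T:MAINTHEOREM2}, flatten it far from the scaling origin in the double-null gauge, and inherit the naked singularity by finite speed of propagation — is the same as the paper's, and your ``main obstacle'' paragraph correctly identifies the crux: propagating estimates out to $\mathcal I^+$ without shocks and without a pathological fluid–vacuum interface. However, your specific implementation has two genuine gaps that the paper's construction is designed precisely to avoid.

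\textbf{(1) The vacuum truncation is the wrong move.} You propose to cut the data off to exact Minkowski ($\rho \equiv 0$) for $r \ge 2r_0$, interpolating across a collar $[r_0,2r_0]$. The paper \emph{explicitly} rejects this (see the discussion in Section~\ref{SS:DOUBLENULLINTRO}): with the linear equation of state $p=\k\rho$ the sound speed stays $\sqrt{\k}>0$ as $\rho\to0$, so the resulting fluid–vacuum interface is not characteristic and would require a dedicated free-boundary analysis that is not available. You flag this yourself, offering ``a carefully chosen cut-off profile keeping the density strictly positive with sufficiently rapid decay'' as an alternative — but that alternative is not a footnote; it \emph{is} the paper's approach. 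The paper keeps $\rho>0$ everywhere and dampens the tails, encoding the decay in weighted norms $\vertiii{\cdot}$ with precise powers of $\v$ (Section~\ref{SS:LWPSTATEMENT}) chosen so that asymptotic flatness (finiteness of the limiting Hawking mass) is a consequence of the norm bounds rather than of vanishing density. This design choice is not cosmetic; it is what makes the problem tractable.

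\textbf{(2) The choice of characteristic surface changes the difficulty class.} You prescribe truncated data on a single outgoing cone $\mathcal C_o$ emanating from a regular-centre point $o$ \emph{in the past of the singularity}, and then propose to evolve forward — through the singular region, across the backward light cone $\mathcal N$, past the Cauchy horizon $\mathcal B_1$ — all the way to $\mathcal I^+$. That is a semi-global existence problem with no obvious small parameter, and your only proposed control mechanisms (small $\k$, proximity of acoustic and light cones, perturbative exterior) do not by themselves close it. The paper instead poses a \emph{double}-characteristic problem on $\uC \cup \C$, with $\uC$ an ingoing null segment $\{\v=\v_0,\ \u\in[\u_0,0]\}$ carrying exact RLP data, placed strictly in the exterior region (future of $\mathcal N$, past of $\mathcal B_1$), and $\C$ the outgoing cone $\{\u=\u_0\}$ carrying RLP data up to $\v_0+A_0$ and flattened data beyond. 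The solution domain is the thin semi-infinite strip $\D=\{\u_0<\u<0,\ \v>\v_0\}$ with $|\u_0|=\delta$ small, and the iteration (Lemmas~\ref{lem:fn+1}--\ref{lem:Omrn+1}, Proposition~\ref{prop:diff}) closes \emph{because of} the $\delta/\v_0$ factors that appear in every estimate; $\delta$ is the small parameter. The singular region, $\mathcal N$, the sonic line, and everything to the past of $\uC$ are inherited verbatim from the already-constructed exact RLP spacetime by gluing — there is nothing to re-derive there. Your setup gives up this locality and thus the small parameter that makes Theorem~\ref{thm:LWP} provable, which is why your honest assessment that the existence step is ``the crux'' is accurate: in your formulation, it is open.

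A smaller point: the criterion of~\cite[Definition 1.1]{RoSR2019} that the paper verifies is incompleteness of future null-infinity, checked by showing that the affine lengths of ingoing null geodesics from $(\u_0,\v_n)$ are bounded uniformly in $n$ (Section~\ref{SS:NAKED}); it is not phrased as ``an outgoing null geodesic from a terminal axis point reaching $\mathcal I^+$'', which is a necessary ingredient of the picture but not the quantity that must be estimated.
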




Other than the dust-Einstein model mentioned above, we are aware of two other rigorous results on the existence of naked singularities.
In 1994 Christodoulou~\cite{Ch1994} provided a rigorous proof of the existence of radially symmetric solutions to the Einstein-scalar field
system, which contain naked singularities (see also~\cite{Ch1999b} for the proof of their instability). 
Very recently, Rodnianski and Shlapentokh-Rothman~\cite{RoSR2019} proved the existence of solutions to the
Einstein-vacuum equations which contain naked singularities and are (necessarily) not radially symmetric.

In the physics literature much attention has been given to self-similar solutions and naked singularities for the Einstein-Euler system, see for example~\cite{CaCo1999,GuGa2007}.
A self-similar reduction of the problem was first given in~\cite{StShGu1965}. As explained above, a detailed analysis of the
resulting equations, including the discussion of naked singularities, was given in~\cite{OP1987, OP1988,OP1990}.
Subsequent to~\cite{OP1990}, a further analysis of the causal structure, including the nonradial null-geodesics was presented in~\cite{JoDw1992}, see also~\cite{CaGu2003}.
There exist various approaches to the existence of solutions to the self-similar problem, most of them rely on numerics~\cite{OP1990,CaCoGoNiUg2000,Ha1998}. A dynamical systems approach
with a discussion of some qualitative properties of the solutions was developed in~\cite{GoNiUg1998,CaCoGoNiUg2000}. 
Numerical investigation of the stability of the RLP-spacetimes can be found in~\cite{Ha1998,HaMa2001}. Self-similar relativistic perfect fluids play an important role in 
the study of the so-called critical phenomena - we refer to reviews~\cite{GuGa2007,NeCh}.

The proof of Theorem~\ref{T:MAINTHEOREM2} relies on a careful study of the nonlinear invariances of the finite-dimensional
non-autonomous dynamical system obtained through the self-similar reduction.  The solutions we construct are real-analytic in a suitable choice of coordinates. 
A special role is played by the so-called sonic line (sonic point),
the boundary of the backward sound cone emanating from the scaling origin $\mathcal O$. 
Many difficulties in the proof of Theorem~\ref{T:MAINTHEOREM2}
originate from possible singularities across this line, which together with the requirement of smoothness, puts severe limitations on the possible space of smooth self-similar
solutions. We are aware of no general ODE theory for global existence 
in the presence of singular sonic points. Therefore,
our proofs are all based on continuity arguments, where we extract many
delicate invariant properties of the nonlinear flow, specific to the ODE system at hand. In particular, 
the discovery of a crucial monotonicity lemma enables us to apply 
an ad hoc shooting method, which was developed for the limiting Larson-Penston solution in the non-relativistic context by the authors.
From the point of view of fluid mechanics, the singularity at $\mathcal O$ is an imploding one, as the energy density blows
up on approach to $\mathcal O$. It is in particular not a shock singularity.

The asymptotic flattening in Theorem~\ref{T:MAINTHEOREM} requires solving a suitable characteristic problem for the Einstein-Euler 
system formulated in the double-null gauge. We do this in a semi-infinite characteristic rectangular domain 
wherefrom the resulting solution can be glued smoothly to the exact self-similar solution in the region 
around the singularity $\mathcal O$.  The proof of Theorem~\ref{T:MAINTHEOREM} is given in Section~\ref{SS:NAKED}.

Due to the complexity of our analysis, in Section~\ref{S:METHODS} we give an extensive overview of our methods
and key ideas behind the detailed proofs in Sections~\ref{S:FORMULATION}--\ref{S:DOUBLENULL}. 

\bigskip 

{\bf Acknowledgments.}
Y. Guo's research is supported in part by NSF DMS-grant 2106650.
M. Hadzic's research is supported by the EPSRC Early Career Fellowship EP/S02218X/1.
J. Jang's research is supported by the NSF DMS-grant 2009458
and the Simons Fellowship (grant number 616364).


\section{Methodology and outline}\label{S:METHODS}



\subsection{Formulation of the problem (Section~\ref{S:FORMULATION})}


Following~\cite{OP1990} it is convenient 
to work with the comoving coordinates
\be\label{E:METRIC}
g = -e^{2\mu(\tau, R)} d\tau^2 + e^{2\l(\tau,R)}dR^2 + r^2(\tau,R) \,\gamma,
\ee
where $\gamma = \gamma_{AB}dx^A\,dx^B$ is the standard metric on $\mathbb S^2$, $x^A$, $A=2,3$ are local coordinates on $\mathbb S^2$, 
and $r$ is the areal radius. 
The vector field $\pa_\tau$ is chosen in such a way that the four velocity $u^\nu$ is parallel to $\pa_\tau$. The normalisation condition~\eqref{E:NORMALISATION} then implies
\begin{align}
u = e^{-\mu}\pa_\tau.
\end{align}
The coordinate $R$ acts as a particle label.
The coordinates $(\tau, R)$ are then uniquely determined by fixing the remaining gauge freedoms in the problem, the value of $\mu(\tau,R)\big|_{R=0}$ and
by setting
$r(-1, R)  = R$, which states that on the hypersurface $\tau=-1$ the comoving 
label $R$ coincides with the areal radius.

Introduce the radial velocity
\be\label{E:VDEF0}
\mathcal V : = e^{-\mu} \pa_\tau r,
\ee
the Hawking (also known as Misner-Sharp) mass
\begin{align}\label{E:HAWKINGCOMOVING}
m(\tau,R):= 4\pi \int_0^{r(\tau,R)} \rho s^2 \,ds = 4\pi \int_0^R \rho r^2 \pa_Rr\,d\bar R,
\end{align}
and the mean density
\be\label{E:GDEF}
G(\tau,R): = \frac{m(\tau,R)}{\frac{4\pi}{3}r(\tau,R)^3} = \frac3 {r(\tau,R)^3} \int_0^{R} \rho(\tau, \bar R) r(\tau,\bar R)^2 \pa_Rr(\tau,\bar R)\,d\bar R . 
\ee


Recalling the 
equation of state~\eqref{E:EOS}, the spherically symmetric Einstein-Euler system in comoving coordinates reads (see \cite{MiSh1964,EhKi1993})
\begin{align}
\pa_\tau\rho + (1+\k)\rho\left(\frac{\pa_R\mathcal V}{\pa_Rr}+2\frac {\mathcal V}r\right)e^\mu & =0,  \label{E:RHOEQN1}\\
\pa_\tau\l & = e^\mu \frac{\pa_R{\mathcal V}}{\pa_Rr},  \label{E:LAMBDAEQN1}\\
e^{-\mu}\pa_\tau \mathcal V + \frac \k{1+\k}\frac{\pa_Rr e^{-2\l}}{\rho} \pa_R\rho + 4\pi r \left(\frac13 G+\k\rho\right) & =0,  \label{E:VEQN1}\\
(\pa_Rr)^2 e^{-2\l} & = 1+ \mathcal V^2 - \frac{8\pi}{3} G r^2, \label{E:LAMBDACONSTRAINT1}
\end{align}
where we recall~\eqref{E:GDEF}.
The well-known Tolman-Oppenheimer-Volkov relation reads $(\rho+p) \pa_R\mu + \pa_Rp=0$, which after plugging in~\eqref{E:EOS} further gives the relation
\begin{align}\label{E:MUFORMULA0}
\pa_R\mu  = - \frac{\k}{1+\k}\frac{\pa_R\rho}{\rho}.
\end{align} 


\subsubsection{Comoving self-similar formulation}\label{SS:SSCOMOVING}


It is straightforward to check that the system~\eqref{E:RHOEQN1}--\eqref{E:LAMBDACONSTRAINT1} is invariant under the scaling transformation
\begin{align}
\rho\mapsto a^{-2}\rho(s,y), \ \ r\mapsto a r(s,y), \ \ \mathcal V \mapsto \mathcal V(s,y), \ \ \lambda\mapsto \lambda(s,y), \ \ \mu\mapsto \mu(s,y), \label{E:T1} 
\end{align}
where the comoving ``time" $\tau$ and the particle label $R$ scale according to
\begin{align}
 s = \frac{\tau}{a}, \ \ y = \frac{R}{a}, \ \ a>0. \label{E:T2}
\end{align}
Motivated by the scaling invariance~\eqref{E:T1}--\eqref{E:T2}, we look for self-similar spacetimes of the form
\begin{align}
\rho(\tau,R) & = \frac{1}{2\pi \tau^2} \Sigma(y), \label{E:SS1}\\
r(\tau, R) & = -\sqrt \k \tau \tr(y) ,\label{E:AREARADIUSSS}\\
\mathcal V(\tau,R) & = \sqrt \k  V(y) ,\label{E:VELOCITYSSCHANGE}\\
\lambda(\tau,R) & = \lambda(y), \\
\mu(\tau,R) & = \mu(y),\\
G(\tau, R) & = \frac1{4\pi\tau^2} \tG(y) = \frac{3}{2\pi \tau^2\tr^3}\int_0^y \Sigma(\tilde y) \tr^2 \tr' \,d\tilde y, \label{E:SS6}
\end{align}
where
\be\label{E:SSCDEF}
y= \frac{R}{-\sqrt \k\tau}.
\ee
Associated with the comoving self-similar coordinates are the two fundamental unknowns:
\begin{align}
\d &: = \Sigma^{\frac{1-\k}{1+\k}},  \label{E:LITTLEBOLDDDEF}\\
\w &: = (1+\k) \frac{e^\mu V + \tilde r }{\tilde r}  - \k . \label{E:LITTLEWDEF}
\end{align}
For future use it is convenient to sometimes consider the quantity
\begin{align}\label{E:BOLDFACECHI}
\ch(y): = \frac{\tr(y)}{y}, 
\end{align}
instead of $\tr$. Quantity
$\d$ corresponds to the self-similar number density, while $\w$ is referred to as the relative velocity.
It is shown in Proposition~\ref{P:COMOVINGSS} that the radial Einstein-Euler system under the self-similar
ansatz above reduces to the following system
of ODE:
\begin{align}
\d' &= -  \frac{ 2(1-\k) \d (\d-\w)}{(1+\k)y(e^{2\mu - 2\lambda} y^{-2} -1)} ,\label{E:DSSEQNBOLD}\\
\w' &= \frac{(\w+\k)(1 -3\w)}{(1+\k)y} + \frac{2\w (\d-\w)}{y(e^{2\mu - 2\lambda} y^{-2} -1)}.\label{E:WSSEQNBOLD}
\end{align}

This formulation of the self-similar problem highlights the danger from possible singularities
associated with the vanishing of the denominators on the right-hand side of~\eqref{E:DSSEQNBOLD}--\eqref{E:WSSEQNBOLD}.
Such points play a distinguished role in our analysis, and as we shall show shortly, are unavoidable in the study 
of physically interesting self-similar solutions.

\begin{definition}[The sonic point]\label{D:SPDEF}
For any smooth solution to~\eqref{E:DSSEQNBOLD}--\eqref{E:WSSEQNBOLD} we refer to a point $y_\ast\in(0,\infty)$
satisfying
\begin{align}
y_\ast^2 = e^{2\mu(y_\ast)-2\l(y_\ast)}
\end{align}
as the {\em sonic point}.
\end{definition}


\subsubsection{Schwarzschild self-similar formulation}\label{SS:SCHWARZSCHILD}

The comoving formulation~\eqref{E:DSSEQNBOLD}--\eqref{E:WSSEQNBOLD} as written 
does not form a closed system of ODE. To do so, we must express the metric coefficients $\mu,\lambda$ 
as functions of $\d,\w$, which can be done at the expense of working with $\tr$ (or equivalently $\ch$) as a further unknown. To avoid this, 
it is possible to introduce the so-called Schwarzschild self-similar coordinate:
\be\label{E:SSSDEF}
x:= \tilde r (y)
\ee
so that 
\be\label{E:TILDERPRIME}
\frac{dx}{dy} = \tilde r ' = \frac{x (\w+\k)}{y(1+\k)}.
\ee
In this coordinate system the problem takes on a form analogous to the Eulerian formulation of the self-similar Euler-Poisson system from~\cite{GHJ2021}.
It is shown in Lemma~\ref{L:SCHW} that the new unknowns
\begin{align}\label{E:RWDEF}
\R(x):=\d(y), \ \ W(x):=\w (y),
\end{align}
solve the system 
\begin{align}
\R'(x) & = -  \frac{ 2x(1-\k) \R (W + \k) (\R-W) }{B},  \label{E:RODE} \\
W'(x) & = \frac{(1 -3W )}{x} + \frac{2x(1+\k) W (W + \k) (\R-W)}{B}, \label{E:WODE}
\end{align}
where 
\begin{align}\label{E:BDEF0}
B=B[x;\R,W] : =   \R^{-\e} -\left[ ( W + \k)^2 - \k (W - 1)^2 + 4\k \R W \right] x^2,
\end{align}
and
\begin{align}\label{E:ETADEF}
\eta:=\frac{2\k}{1-\k}.
\end{align}
The Greek letter $\e$ will always be used to mean~\eqref{E:ETADEF} in the rest of the paper.
In this formulation, sonic points correspond to zeroes of $B=B[x;\R,W]$, i.e. if $y_\ast$ is a sonic point in the sense of Definition~\ref{D:SPDEF},
then $\xs:=\tr(y_\ast)$ is a zero of the denominator $B$.

\subsubsection{Friedmann, far-field, and the necessity of the sonic point}
There are two exact solutions to~\eqref{E:RODE}--\eqref{E:WODE}.
The  far-field solution
\begin{align}
\R_f(x) = 
(1-\k)^{-\frac2{1+\e}} x^{-\frac2{1+\e}}, \ \
W_f(x)  = 1, \label{E:FARFIELDINTRO}
\end{align}
features a density $\R$ that blows up at $x=0$ and decays to $0$ as $x\to\infty$. On the other hand, the Friedmann
solution
\begin{align}
\R_F(x)  = \frac13, \ \
W_F(x)  = \frac13, \label{E:FRIEDMANNINTRO}
\end{align}
is bounded at $x=0$, but the density does not decay as $x\to\infty$. 
Our goal is to construct a smooth solution to~\eqref{E:RODE}-\eqref{E:WODE}
which qualitatively behaves like the far-field solution as $x\to\infty$ and like the Friedmann solution as $x\to0^+$. We can 
therefore think of it as a heteroclinic orbit for the dynamical system~\eqref{E:RODE}-\eqref{E:WODE}.
It is then easy to see that any such solution has the property $\lim_{x\to\infty}B=-\infty$ and $\lim_{x\to0^+}B(x)>0$. By
the intermediate value theorem there must exist a point where $B$ vanishes, i.e. a sonic point.

It is important to understand the formal Newtonian limit, which  is obtained by letting $\k =0$ in~\eqref{E:RODE}--\eqref{E:WODE}. This yields the
system
\begin{align}
\tilde\R'(x) & = -  \frac{ 2x \tilde\R \tilde W (\tilde\R-\tilde W) }{1-x^2\tilde W^2} , \label{E:RODEN} \\
\tilde W'(x) & = \frac{(1 -3\tilde W )}{x} + \frac{2x \tilde W^2 (\tilde \R-\tilde W)}{1-x^2 \tilde W^2}, \label{E:WODEN}
\end{align}
which is precisely the self-similar formulation of the isothermal Euler-Poisson system. In~\cite{GHJ2021}
we showed that there exists a solution to~\eqref{E:RODEN}--\eqref{E:WODEN} satisfying  
$\tilde W(0)=\frac13$, $\lim_{x\to\infty}\tilde W(x)=1$,
$\tilde\R(0)>\frac13$, and $\tilde\R\asymp_{x\to\infty}x^{-2}$.\footnote{For two non-vanishing functions $x\mapsto A(x)$, $x\mapsto B(x)$ we write $A\asymp_{x\to \bar x} B$ to mean 
that there exist $\k$-independent constants $C_1, C_2>0$ such that 
\[
C_1 \le \liminf_{x\to \bar x} \frac{A(x)}{B(x)} \le \limsup_{x\to\bar x} \frac{A(x)}{B(x)} \le C_2.
\]
} 
The behaviour of the relativistic solutions when $0<\k\ll1$ in the region $x\in[0,\infty)$
is modelled on this solution, called the Larson-Penston (LP) solution. 

The system~\eqref{E:RODE}--\eqref{E:BDEF0} is a non-autonomous $2\times2$ system of ODE which is at
the heart of the proof of Theorem~\ref{T:MAINTHEOREM2} and is used to show the existence of the RLP-solution in the region $x\in[0,\infty)$.
As $x\to\infty$, we are forced to switch back to a version of the comoving variables in order to extend the solution beyond $x=\infty$ in a unique way, see the 
discussion in Section~\ref{SS:MAEINTRO}. A version of the comoving formulation~\eqref{E:DSSEQNBOLD}--\eqref{E:WSSEQNBOLD} plays a crucial role
in that extension. In our analysis of the radial null-geodesics (Section~\ref{S:RLP}) and the nonradial ones (Appendix~\ref{A:NNG}), we often switch between 
different choices of coordinates to facilitate our calculations.

Even though the smallness of $\k$ is crucial for the validity of our estimates, 
we emphasise that we do not use perturbation theory to construct the relativistic
solution, by for example perturbing away from the Newtonian one. Such an argument 
is a priori challenging due to the singular nature of the sonic point, as well as complications
arising from boundary conditions.

We mention that self-similar imploding flows for the compressible Euler system, featuring a sonic point,
were constructed recently in the pioneering work of Merle, Rapha\"el, Rodnianski, and Szeftel~\cite{Merle19} - here the associated 
$2\times2$ dynamical system is autonomous. In the context of the Euler-Poisson system with polytropic gas law, self-similar collapsing
solutions featuring a sonic point were recently constructed in~\cite{GHJS2021}.






\subsection{The sonic point analysis (Section~\ref{S:SONIC})}\label{SS:SONICINTRO}


The solution we are trying to construct is on one hand assumed to be smooth, but 
it also must feature an a priori unknown sonic point, which we name $x_\ast$. This
is a singular point for the dynamical system and the assumption of smoothness
therefore imposes a hierarchy of constraints on the Taylor coefficients of a solution
in a neighbourhood of $x_\ast$. More precisely, 
we look for solutions $(\R ,W)$ to \eqref{E:RODE}-\eqref{E:WODE} of the form
\be\label{Taylor}
\R = \sum_{N=0}^\infty \R_N (x-\xs)^N, \quad W = \sum_{N=0}^\infty W_N (x-\xs)^N. 
\ee
It is clear that the first constraint reads $\R_0=W_0$, as the numerator in~\eqref{E:RODE} must vanish at $\xs$ for the solution to be smooth. Together with the condition 
$B(\xs)=0$, we can show that for any $\k>0$ sufficiently small, $\R_0=W_0$ is a function of $\xs$, which converges to $\frac1{\xs}$ as $\k\to0$ in accordance with the limiting Newtonian 
problem~\eqref{E:RODEN}--\eqref{E:WODEN}. 
Our goal is to express $\R_N, W_N$ recursively in terms of $\R_0,\dots, \R_{N-1}$, $W_0,\dots, W_{N-1}$ and thus obtain a hierarchy of  algebraic relations 
that allows to compute the Taylor coefficients up to an arbitrary order. 

However, an intriguing dichotomy emerges. At the next order, one obtains a cubic equation for $W_1$, see Lemma~\ref{L:CUBIC}. One of the solutions
is a ``ghost" solution and therefore unphysical, while the remaining two roots, when real, are both viable candidates for $W_1$, given 
as a function of $\R_0$ (and therefore $\xs$). This is related to an analogous dichotomy in the Newtonian case - where one choice of the root leads to
so-called Larson-Penston-type (LP type) solutions, while the other choice of the root leads to Hunter-type solutions. Based on this, for any 
$0<\k\ll1$ sufficiently small, we select $W_1=W_1(\k)$ to correspond to the choice of the branch converging to the LP-type coefficient as $\k\to0$.
This is a de facto selection principle which allows us to introduce formal Taylor expansions of the relativistic Larson-Penston-type (RLP type), see Definition~\ref{D:RLPTYPEDEF}. 
Upon fixing the choice of $W_1$ (and thereby $\R_1$), all the higher-order coefficients $(\R_N,W_N)$, $N\ge2$, are then uniquely determined 
through a recursive relation, see Section~\ref{SS:HIGHER}. We mention that $\R_1$ and $W_1$ cease to exist as real numbers before $\xs$ reaches 2 from above, which led us to define $\xc(\k)$, see Lemma \ref{L:XMINDEF}.  
This should be contrasted to the Newtonian problem where $\R_1$ and $W_1$ are real-valued as $\xs$ passes below $2$\footnote{The LP- and Hunter-type solutions coincide at $\xs=2$.}. The existence of  
a forbidden range for $\xs$ is related to the band structure in the space of all smooth solutions, see~\cite{OP1990,GoNiUg1998}.

Guided by the intuition developed in the construction of the (non-relativistic) Larson-Penston
solution~\cite{GHJ2021}, our next goal is to identify the so-called sonic window - a closed
interval $[\xmin,\xmax]$ within which we will find a sonic point for the global solution of the
ODE system on $[0,\infty)$. 
In fact, by Lemma \ref{L:PREPX} and Lemma \ref{L:XYNONEMPTY}, the set $W<\frac13$ and the set $W>\frac{1}{2-2\e}=\frac12+O(\k)$ are invariant under the flow to the left of the sonic point. Motivated by this, 
we choose $\xmax=\xmax(\k)<3$ so that the zero order coefficient $W_0$ coincides with the Friedmann solution \eqref{E:FRIEDMANNINTRO}: $W_0|_{\xs = \xmax}= \frac13$ (see \eqref{E:XMAXDEF}) and fix $\xmin=2+\delta_0>\xc$ for $\delta_0>0$ sufficiently small but independent of $\k$, so that $W(x;\xmin)> \frac{1}{2-2\e}$  for some $x<\xmin$ (see \eqref{E:XMINDEF}). 

The main result of Section~\ref{S:SONIC} is Theorem~\ref{T:SONICLWP}, which states that there exists an $0<\k_0\ll1$ sufficiently small such that for all $0<\k\le\k_0$ and for any choice of 
$\xs$ in the sonic window, there in fact exists a local-in-$x$ real analytic solution around $x=\xs$. The proof of this theorem relies
on a delicate combinatorial argument, where enumeration of indices and $N$-dependent growth bounds for the coefficients $(\R_N,W_N)$ 
are moved to Appendix~\ref{A:SONIC}.

Having fixed the sonic window $[\xmin,\xmax]\subset [2,3]$, our strategy is to determine
what values of $\xs\in[\xmin,\xmax]$ allow for RLP-type solutions that exist on the whole real
line. We approach this problem by splitting it into two subquestions. We identify those $\xs$ which 
give global solutions to the left, i.e. all the way from $x=x_\ast$ to $x=0$, and separately to the right, i.e. on $[\xs,\infty)$.


\subsection{The Friedmann connection (Section~\ref{S:FRIEDMANN})}


The main goal of Section~\ref{S:FRIEDMANN} is to identify a value $\bar x_\ast\in[\xmin,\xmax]$, so that the associated local solution
$(\R(\cdot;\bar x_\ast), W(\cdot;\bar x_\ast))$ exists on $[0,\bar{x}_\ast]$ and is real analytic everywhere. 


From the technical point of view the main obstruction to our analysis is 
the possibility that the flow features more than one sonic point.   
By the precise analysis around $x=\xs$ in Section~\ref{S:SONIC} we know that $B$ is strictly positive/negative locally around $\xs$ to the left/right respectively (and vanishes at $x=\xs$). Our strategy 
is to propagate these signs dynamically. To do so we develop a technical tool, referred to as the monotonicity lemma, even though it is an exact identity, see Lemma~\ref{L:JUHILEMMA}.
It is a first order differential equation for a quantity $f(x) = J[x;D]-xD$ with a source term depending on the solution, but with good sign properties
in the regime we are interested in, hence -- monotonicity lemma. Here $J[x;D]-xW$ is a factor of $B$ in \eqref{E:BDEF0}: $B=(1-\k)(J-xW)(J+2\e(1+D)x+ xW)$  
(see Lemmas~\ref{L:BALGEBRA}--\ref{L:JUHILEMMA}).  Roughly speaking, the function $f$ allows us to relate the sign of $B$ to the sign of the difference $(D-W)$ in a precise dynamic way,
so that we eventually show that 
away from the sonic point  $D>W$ and $B>0$ to the left, while $D<W$ and $B<0$ to the right of the sonic point for the relativistic Larson-Penston solution.

To construct the solution to the left of the sonic point
we develop a shooting-type method, which we refer to as {\em shooting toward Friedmann}. Namely, the requirement of smoothness
at $x=0$ is easily seen to imply $W(0)=\frac13$, which precisely agrees with the value of $W_F$, see~\eqref{E:FRIEDMANNINTRO}.

The key idea is to separate the sonic window $[\xmin,\xmax]$ into the sets of sonic points $\xs$ that launch solutions $W(\cdot;\xs)$ which either stay above the Friedmann value $W_F=\frac13$ 
on its maximum interval of existence $(s(\xs),\xs]$ or cross it, see Figure~\ref{F:SHOOTING}. This motivates the following definition.

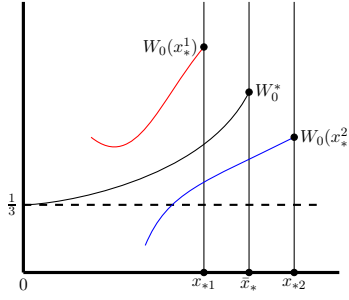
\begin{figure}
\begin{center}
\begin{tikzpicture}
\begin{scope}[scale=0.6, transform shape]

\coordinate[label=below:$0$] (A) at (0,0){};
\coordinate[label=below:$x_{\ast1}$] (B) at (4,0){};
\coordinate[label=below:$\bar x_\ast$] (C) at (5,0){};
\coordinate[label=below:$x_{\ast2}$] (D) at (6,0){};
\coordinate[label = left:$\frac13$] (E) at (0,1.5){};

\draw[fill=black] (B) circle (2pt);

\draw[fill=black] (C) circle (2pt);

\draw[fill=black] (D) circle (2pt);

\draw[very thick] (A)--(7,0);

\draw[very thick] (A)--(0,6);

\draw[dashed, thick] (E)--(6.5,1.5);

\draw (B) -- (4,6){};

\draw (C) -- (5,6){};

\draw (D) -- (6,6){};

\coordinate[label=right:$W_0(\xs^2)$] (F) at (6,3){};

\draw[blue] (F) .. controls +(-2,-1) and +(0.5,1.2) .. (2.7,0.6);

\draw[fill=black] (F) circle (2pt);

\coordinate[label=right:$W_0^\ast$] (G) at (5,4){};

\draw (G) .. controls +(-1,-2) and +(.7,0) .. (E);

\draw[fill=black] (G) circle (2pt);

\coordinate[label=left:$W_0(\xs^1)$] (H) at (4,5){};

\draw[red] (H) .. controls +(-1,-1.2) and +(1,-0.8) .. (1.5,3);

\draw[fill=black] (H) circle (2pt);

\end{scope}
\end{tikzpicture}
 \caption{Schematic depiction of the shooting argument. Here $x_{\ast1}\in\X$, $x_{\ast2}\in\Y$. The critical point $\bar x_\ast$ is obtained by sliding to the left
 in $\Y$ until we reach the boundary of its first connected component $X$.}\label{F:SHOOTING}
\end{center}
\end{figure}

\begin{definition}[$\X $, $\Y $, $\Z$, and $X $]\label{D:XYZ}
Let $\k_0>0$ be a small constant introduced in Section~\ref{SS:SONICINTRO} (see also Theorem~\ref{T:SONICLWP}). For any $\k\in(0,\k_0]$ and $x_\ast\in[\xmin,\xmax]$ we consider the associated RLP-type solution $(\R(\cdot;x_\ast), W(\cdot;x_\ast))$.
We introduce the sets
\begin{align}\label{E:XDEF}
\X & : = \left\{x_\ast\in[\xmin,\xmax]\,\Big| \inf_{x\in(s(x_\ast),x_\ast)}W(x;x_\ast)>\frac13 \right\}, \\
\label{E:YDEF13}
\Y &: = \left\{x_\ast\in[\xmin,\xmax]\,\big| \   \exists  \ x\in (s(x_\ast), x_\ast) \ \text{such that } W(x;x_\ast)=\frac13\right\}, \\
\label{E:ZDEF}
\Z &: = \left\{x_\ast\in[\xmin,\xmax]\,\Big| W(x;x_\ast)>\frac13 \ \text{ for all } \ x\in(s(x_\ast),x_\ast) \ \text{ and } \ \inf_{x\in(s(x_\ast),x_\ast)}W(x;x_\ast)\le\frac13\right\}.
\end{align}
Finally, we introduce the \underline{fundamental set} $X\subset \Y$ given by
\begin{align}
X: = \left\{x_\ast\in[\xmin,\xmax]\,\big| \ \tilde x_\ast \in \Y \ \ \text{ for all } \ \ \tilde x_\ast\in [x_\ast,\xmax] \right\}. 
\end{align}
\end{definition}

The basic observation is that solutions that correspond to the set $\Y$ have the property that once
they take on value $\frac13$, they never go back up above it. This is a nonlinear invariance of the flow, which 
guides our shooting argument idea.  It is possible to show that both sets $\X$ and $\Y$ are non-empty.
In Lemma~\ref{L:XYNONEMPTY} we show that $\xmin\in\X$ and that for some $\kappa>0$, $(\xmax-\kappa,\xmax]\subset\Y$.
Intuitively, we then slide down $\xs$ starting from $\xmax$, until we reach the first value of $\xs$ that does not belong to $\Y$, i.e.
we let
\begin{align}\label{critical_x_INTRO}
\bar x_\ast : = \inf_{x_\ast\in X } x_\ast,
\end{align}
see Figure~\ref{F:SHOOTING}.
This is the candidate for the value of $\xs$ which gives a real-analytic solution on $[0,\bar x_\ast]$. 
Using the nonlinear invariances of the flow and its continuity properties, in Proposition~\ref{P:GLOBALFRIEDMAN},
we show that the solution $(\R(\cdot;\bar x_\ast),W(\cdot,\bar x_\ast))$ exists on the semi-open interval $(0,\bar x_\ast]$.

To show that the solution is indeed analytic all the way to $x=0$, $W(0;\bar x_\ast)=\frac13$, and $\R(0;\bar x_\ast)>\frac13$, we 
adapt the strategy developed for the classical LP solution in~\cite{GHJ2021}. Using the method
of {\em upper} and {\em lower} solutions (Definition~\ref{D:UPPERLOWER}), we show that there 
exists a choice of $D_0>\frac13$ such that the solution $(\R(\cdot;\bar x_\ast),W(\cdot,\bar x_\ast))$ coincides with 
a unique real analytic solution to~\eqref{E:RODE}--\eqref{E:WODE}, with data $D(0)=D_0$, $W(0)=\frac13$, solving from
$x=0$ to the right. Detailed account of this strategy is contained in Sections~\ref{SS:TOTHERIGHT}--\ref{SS:UL}.
Finally, combining the above results we can prove the central statement of Section~\ref{S:FRIEDMANN}:


\begin{theorem}\label{T:FRIEDMANN}
There exists an $\k_0>0$ sufficiently small, such that for any $0<\k\le\k_0$, the solution of RLP-type to~\eqref{E:RODE}--\eqref{E:WODE} launched at $\bar x_\ast$ (defined by~\eqref{critical_x_INTRO})
extends (to the left) to the closed interval $[0,\bar x_\ast]$, is real analytic, and satisfies $W(0;\bar x_\ast)=\frac13$, $\R(0;\bar x_\ast)>\frac13$.
\end{theorem}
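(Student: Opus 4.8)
The plan is to combine three ingredients already set up in the excerpt: (i) the local real-analytic existence of RLP-type solutions near any sonic point $\xs\in[\xmin,\xmax]$ (Theorem~\ref{T:SONICLWP}), (ii) the monotonicity lemma (Lemma~\ref{L:JUHILEMMA}) together with the algebraic factorization $B=(1-\k)(J-xW)(J+2\e(1+D)x+xW)$, and (iii) the shooting dichotomy encoded in the sets $\X,\Y,\Z,X$ of Definition~\ref{D:XYZ}. First I would recall from Lemma~\ref{L:XYNONEMPTY} that $\xmin\in\X$ and $(\xmax-\kappa,\xmax]\subset\Y$, so that the fundamental set $X$ is nonempty and its infimum $\bar x_\ast$ defined in~\eqref{critical_x_INTRO} satisfies $\bar x_\ast\in(\xmin,\xmax)$. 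The heart of the argument is Proposition~\ref{P:GLOBALFRIEDMAN}, which I would invoke to conclude that the RLP-type solution $(\R(\cdot;\bar x_\ast),W(\cdot;\bar x_\ast))$ extends to the open interval $(0,\bar x_\ast]$ with $B>0$ and $D>W$ throughout — i.e. no second sonic point is encountered to the left, and $W$ stays above $\frac13$ on $(0,\bar x_\ast]$. The key point here is the nonlinear invariance: the set $\{W>\frac13\}$ cannot be re-entered from below once exited (this is precisely the ``once below $\frac13$, never back up'' property), so by continuity in $\xs$, the boundary value $\bar x_\ast=\inf X$ must produce a solution whose infimum of $W$ on its maximal left-interval equals exactly $\frac13$ and is not attained in the interior, forcing $\liminf_{x\to0^+}W(x;\bar x_\ast)=\frac13$.

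Next I would establish that the solution actually reaches $x=0$ with a finite, analytic limit rather than blowing up or ceasing to exist at some $x_0>0$. Here I would use the monotonicity lemma to control $f(x)=J[x;D]-xD$ and hence keep $B$ bounded away from zero on compact subsets of $(0,\bar x_\ast]$, ruling out a sonic point in the open interval; combined with a priori bounds on $(\R,W)$ coming from the invariant regions (the sets $\{W<\frac13\}$, $\{W>\frac1{2-2\e}\}$ being invariant to the left, as quoted from Lemmas~\ref{L:PREPX} and~\ref{L:XYNONEMPTY}), this prevents finite-$x$ blow-up. To upgrade convergence at $x=0$ to real analyticity and to pin down $W(0;\bar x_\ast)=\frac13$, $\R(0;\bar x_\ast)>\frac13$, I would run the upper/lower solution machinery (Definition~\ref{D:UPPERLOWER} and Sections~\ref{SS:TOTHERIGHT}--\ref{SS:UL}): one constructs an ordered pair of sub/super-solutions for the ODE system~\eqref{E:RODE}--\eqref{E:WODE} posed to the \emph{right} of $x=0$ with data $W(0)=\frac13$, $D(0)=D_0$, obtains for each admissible $D_0>\frac13$ a unique real-analytic local solution, and then shows — by a matching/continuity argument in $D_0$ — that exactly one value $D_0=\R(0;\bar x_\ast)$ makes this right-solution agree with the left-solution launched from $\bar x_\ast$. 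Uniqueness of analytic solutions through a regular point of the (suitably desingularized) system then glues the two halves into a single real-analytic solution on $[0,\bar x_\ast]$.

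The main obstacle, as the authors themselves flag, is controlling the \emph{number of sonic points}: a priori the leftward flow from $\bar x_\ast$ could run into a second zero of $B$, at which the ODE system is again singular and the whole scheme collapses. The monotonicity lemma is the device that resolves this — by tracking the sign of $f(x)=J-xD$ one shows $B$ retains a definite sign (positive to the left of $\bar x_\ast$) all the way down to $x=0$, so that $D>W$ is propagated and $B$ never vanishes again. Making this rigorous requires checking that the source term in the first-order equation for $f$ has the claimed favorable sign precisely in the regime $\{W>\frac13,\ D>W\}$ that the shooting construction keeps us in, and that this regime is genuinely invariant under the leftward flow; the smallness of $\k$ enters to control the relativistic correction terms (those proportional to $\e=\frac{2\k}{1-\k}$ and the $4\k\R W$ term in $B$) and to keep the zero-order coefficient $\R_0=W_0$ and the selected root $W_1(\k)$ close to their Newtonian counterparts. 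A secondary technical difficulty is the \emph{continuity of the flow in the parameter} $\xs$ up to the boundary point $\bar x_\ast$: since $\bar x_\ast$ is defined as an infimum, one must show the maximal left-interval of existence and the relevant functionals ($\inf W$, position of the first crossing of $\frac13$) depend semicontinuously on $\xs$, which is where the detailed estimates of Proposition~\ref{P:GLOBALFRIEDMAN} do the work. Once these two points are in hand, assembling Theorem~\ref{T:FRIEDMANN} is a matter of combining Proposition~\ref{P:GLOBALFRIEDMAN} with the upper/lower solution result and the analytic gluing.
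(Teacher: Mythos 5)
Your proposal follows the paper's own route essentially step for step: Lemma~\ref{L:XYNONEMPTY} to ensure $X\neq\emptyset$, Proposition~\ref{P:GLOBALFRIEDMAN} for $s(\bar x_\ast)=0$, the monotonicity lemma and the a~priori bounds to keep $B$ bounded below on compact subsets of $(0,\bar x_\ast]$, and the upper/lower solution machinery together with the $C^1$-in-$\R_0$ monotonicity of $\R_-(\cdot;\R_0)$ to glue the left solution to the analytic one emanating from $x=0$ (Propositions~\ref{P:ONETHIRD} and~\ref{P:GOODPROPERTIES}).

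One caution: the sentence asserting that $\bar x_\ast=\inf X$ ``must produce a solution whose infimum of $W\ldots$ equals exactly $\frac13$\ldots forcing $\liminf_{x\to0^+}W(x;\bar x_\ast)=\frac13$'' is not a free consequence of the shooting dichotomy. A~priori $\bar x_\ast$ could lie in $\X$ (with $\liminf W>\frac13$) or in $\Z$ with $W$ oscillating without a limit; ruling these out is precisely the substance of Lemma~\ref{L:CONTR} (which treats all three subcases of $\limsup/\liminf$, using the a~priori bound $\R\le x^{-(1-\k)}$, the estimate $W\gtrsim x^{-(1-\k)}$ along subsequences from Lemma~\ref{L:XIMP}, and the strict monotonicity $\pa_{\R_0}\R_->0$ from Lemma~\ref{L:MONOTONE}) and of Proposition~\ref{P:ONETHIRD}, which derives a contradiction from the coexistence of an upper and a lower solution via the intermediate value theorem in the parameter $\xs$. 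Since you do invoke this machinery afterwards, the final argument is sound, but the claimed liminf should be presented as the \emph{conclusion} of that argument rather than an input to it.
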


This theorem is formally proved at the very end of Section~\ref{SS:UL}.

\subsection{The far-field connection (Section~\ref{S:FARFIELD})} \label{SS:FFINTRO}


By contrast to establishing the existence of the Friedmann connection in Section~\ref{S:FRIEDMANN}, 
we show that for any choice of $\xs$ in our sonic window $[\xmin,\xmax]$, there exists a global solution to the right, 
i.e. we prove the following theorem:


\begin{theorem}\label{T:GLOBALRIGHT}
Let $x_\ast\in[\xmin,\xmax]$. There exists an $0<\k_0\ll1$ such that the unique RLP-type solution $(W,\R)$ exists globally to the right for all $\k\in(0,\k_0]$.
\end{theorem}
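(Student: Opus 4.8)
The plan is to propagate the local real-analytic solution furnished by Theorem~\ref{T:SONICLWP} from a right-neighbourhood of the sonic point all the way to $x=\infty$ by a continuity argument, with the monotonicity lemma (Lemma~\ref{L:JUHILEMMA}) as the device that rules out a second sonic point. Fix $\xs\in[\xmin,\xmax]$ and $0<\k\le\k_0$. Using the RLP-type Taylor expansion~\eqref{Taylor}, the sonic analysis of Section~\ref{S:SONIC}, and the factorization $B=(1-\k)(J-xW)\big(J+2\e(1+\R)x+xW\big)$ from Lemmas~\ref{L:BALGEBRA}--\ref{L:JUHILEMMA}, one first checks that immediately to the right of $\xs$ the solution enters the ``trapping region''
\[
\mathcal{T}:=\big\{x>\xs:\ \R>0,\ \tfrac13<W,\ \R<W,\ f:=J[x;\R]-x\R<0,\ B[x;\R,W]<0\big\},
\]
the second factor $J+2\e(1+\R)x+xW$ being automatically positive on $\mathcal{T}$, and that $f(\xs)=0$. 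Let $[\xs,X_{\max})$ be the maximal interval to the right on which the RLP-type solution exists and stays in $\mathcal{T}$.

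On $\mathcal{T}$ the numerators of~\eqref{E:RODE}--\eqref{E:WODE} have definite signs, which forces $\R'<0$; hence $0<\R(x)\le\R(\xs)$, and since $\R'$ carries an overall factor of $\R$, $\R$ is bounded below on every compact subinterval of $[\xs,X_{\max})$, keeping $\R^{-\e}$ locally bounded. Feeding this together with the lower bounds on $|B|$ available on $\mathcal{T}$ into~\eqref{E:WODE} produces a differential inequality $W'\le C(x)(1+W)$ with $C$ locally integrable on $(0,\infty)$, so $W$ cannot blow up at a finite $x$ either. As the vector field~\eqref{E:RODE}--\eqref{E:WODE} is locally Lipschitz on $\mathcal{T}$, maximality then means that if $X_{\max}<\infty$ the solution must touch $\partial\mathcal{T}$ at $x=X_{\max}$; it therefore suffices to show that none of the defining inequalities of $\mathcal{T}$ can degenerate.

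The inequalities $\R>0$ and $\tfrac13<W$ are preserved directly: $\{\R=0\}$ is invariant for~\eqref{E:RODE}, while the right-hand side of~\eqref{E:WODE} is strictly positive on $\mathcal{T}\cap\{W=\tfrac13\}$. The remaining, essential inequalities $f<0$, $\R<W$ and $B<0$ are the heart of the matter, and here the monotonicity lemma is decisive: it is an exact first-order equation for $f$ whose source term has a favourable sign throughout $\mathcal{T}$, propagating the sign of $f$ --- and, with more work, a quantitative upper bound on it --- forward from $f(\xs)=0$. Writing $J-xW=f+x(\R-W)$ and recalling $B=(1-\k)(J-xW)(J+2\e(1+\R)x+xW)$ with the last factor positive, this control on $f$, combined with dynamic bounds on $\R-W$ read off from~\eqref{E:RODE}--\eqref{E:WODE} along the flow, keeps $J-xW$ negative and bounded away from $0$ (hence $B<0$) and keeps $\R-W<0$: the three signs reinforce one another and no link is lost, so $\partial\mathcal{T}$ is never reached at a finite $X_{\max}$ and $X_{\max}=\infty$, uniformly for $0<\k\le\k_0$. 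The RLP-type solution thus exists globally to the right, and, as $x\to\infty$, is asymptotic to the far-field solution~\eqref{E:FARFIELDINTRO}. The main obstacle is precisely this coupled propagation: the monotonicity lemma delivers the favourable source sign only while the solution is inside $\mathcal{T}$, whose own defining inequalities rest on $B<0$, so the continuity argument has to be set up so that the whole chain of signs closes simultaneously --- and it is here that the smallness of $\k$ is used quantitatively, the Newtonian Larson-Penston solution of~\cite{GHJ2021} serving as the guiding limit.
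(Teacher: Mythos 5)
Your trapping region has the sign of $f$ reversed, and this is not a cosmetic slip. To the \emph{left} of the sonic point the paper shows $\F - xW > 0$ and $f = \F - x\R > 0$; to the \emph{right} the correct signs are $\F - xW < 0$ and $f = \F - x\R > 0$ (see Lemma~\ref{L:INITIAL}\,(b), which gives $\F > x\R$ for $x \in (\xs + c_0\k_0, \xs + 2\delta)$, and Lemma~\ref{L:FLOWERBOUND}, which propagates this). With $f < 0$ on $\mathcal T$ as you have written it, the factorisation $B = (1-\k)(\F - xW)(H + xW)$ together with $\F - xW = f + x(\R - W)$ would still close, but the monotonicity lemma's source term $b_1[x;\R,W]$ is \emph{positive} once $\F - xW < 0$ and $B < 0$ (this is precisely what Lemma~\ref{L:FLOWERBOUND} exploits), so the integral representation of Corollary~\ref{C:FFORMULA} pushes $f$ \emph{up}, not down: the sign $f < 0$ is not dynamically preserved, and your trapping region is exited within a distance $O(\k)$ of the sonic point. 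The actual invariant object is $f > 0$ quantitatively bounded away from $0$.

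Even with the sign corrected, there is a deeper gap: the proposal never explains why $B$ cannot degenerate to $0$ at some finite $X$ (a second sonic point). Propagating $f = \F - x\R > \bar\kappa > 0$ does not by itself keep $\F - xW$ away from $0$, because $\F - xW = f + x(\R - W)$ and you have no independent quantitative lower bound on $W - \R$. The paper's proof of Theorem~\ref{T:GLOBALRIGHT} handles this with a genuine contradiction argument: assuming $X < \infty$ and hence $B \to 0^-$, it computes $(xW)' - \F'$ explicitly (the display~\eqref{E:CONTR1}) and shows that the coefficient $g(x)$ of $\R'$ there is negative, so the term $g(x)\R'(x) \to +\infty$ as $\R' \to -\infty$, forcing $(xW)' - \F' \to +\infty$; this contradicts $\limsup_{x\to X^-}\bigl((xW)' - \F'\bigr) \le 0$, which must hold since $xW - \F \to 0^+$. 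Your ``the three signs reinforce one another and no link is lost'' replaces exactly this step with an assertion. A barrier/invariance argument alone cannot rule out $B \to 0$, because the boundary $\{B = 0\}$ of the region is not repulsive for the full vector field (indeed the solution starts on it at $\xs$), so the a priori bounds of Lemma~\ref{L:APRIORI} plus Lemma~\ref{L:FLOWERBOUND} plus the blow-up contradiction in~\eqref{E:CONTR1}--\eqref{E:CONTR2} are all three needed, and the last one is absent from your proposal.
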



The proof relies on a careful study of nonlinear invariances of the flow, and once again the monotonicity properties
encoded in Lemma~\ref{L:JUHILEMMA} play a critical role in our proof. This is highlighted in Lemma~\ref{L:FLOWERBOUND}, 
which allows us to propagate the negativity of the sonic denominator $B$ to the right of the sonic point. 
Importantly, we may now let $\xs = \bar x_\ast$
defined in~\eqref{critical_x_INTRO} to obtain a real-analytic RLP-type solution defined globally on $[0,\infty)$. As it turns out, 
the obtained spacetime is not maximally extended, and to address this issue we need to understand the
asymptotic behaviour of our solutions as $x\to\infty$.

In Lemma~\ref{L:ROUGHASYMPTOTICS} we show that solutions from Theorem~\ref{T:GLOBALRIGHT}
honour the asymptotic behaviour
\be\label{E:AINTRO1}
\lim_{x\to\infty} W(x;\xs)=1, \ \ \lim_{x\to\infty} \left[D(x;\xs)x^{\frac2{1+\e}}\right]>0;
\ee
hence the name  far-field connection, see~\eqref{E:FARFIELDINTRO}. This is however not
enough for the purposes of extending the solution beyond $x=\infty$, as we also need sharp asymptotic behaviour of
the relative velocity $W$. Working with the nonlinear flow~\eqref{E:RODE}--\eqref{E:WODE}, in Proposition~\ref{P:PRECISEW} we show that 
the leading order behaviour of $W$ is given by the relation 
\be\label{E:AINTRO2}
1-W \asymp_{x\to\infty} x^{-\frac1{1+\e}}.
\ee


\subsection{Maximal analytic extension (Section~\ref{S:MAE})} \label{SS:MAEINTRO}


Asymptotic relations~\eqref{E:AINTRO1}--\eqref{E:AINTRO2} suggest that our unknowns are 
asymptotically ``regular" only if thought of as functions of $x^{-\frac1{1+\e}}$. In fact, it turns out
to be more convenient to interpret this in the original comoving self-similar variable $y$, see~\eqref{E:SSCDEF}. 
Due to~\eqref{E:AINTRO1} and~\eqref{E:SSSDEF}--\eqref{E:TILDERPRIME}, it is easy to see that asymptotically
\be\label{E:XEQUALY}
x\asymp_{x\to\infty} y.
\ee
Moreover, by~\eqref{E:AINTRO1} and~\eqref{E:RWDEF}, we have $\d(y)\asymp_{y\to\infty}y^{-\frac{2}{1+\e}}$. 
Furthermore by~\eqref{E:MUFORMULASS2} we have the relation
\be\label{E:GTT}
e^{2\mu}= \frac1{(1+\k)^2}  \Sigma^{-\frac{\e}{1+\e}} = \frac1{(1+\k)^2} \d^{-\e}.
\ee
As a consequence of~\eqref{E:AINTRO1}
we then conclude that 
\begin{align}\label{E:MUAS}
e^{2\mu}\asymp_{y\to\infty}  y^{\frac{4\k}{1+\k}},
\end{align}
which implies that the metric $g$ given by~\eqref{E:METRIC}
becomes singular as $y\to\infty$ (or equivalently $x\to\infty$). 
In Section~\ref{S:MAE} we show that this is merely a coordinate singularity, and the spacetime extends smoothly (in fact analytically in a suitable choice of coordinates) across the surface
$\{(\tau\equiv 0,R)\, \big|\, R>0\}$. 

Motivated by the above considerations, we switch to an adapted comoving chart $(\tilde\tau,R)$,
defined through 
\be\label{E:TILDETAUDEF}
\tilde\tau(\tau,R) =- \k^{-\frac{\k}{1+\k}} R^{\frac{2\k}{1+\k}}(-\tau)^{\frac{1-\k}{1+\k}}, \ \ \tau<0, \  R>0.
\ee 
We introduce the self-similar variable 
\be\label{E:YDEF}
Y: = \frac{-\sqrt \k\tilde\tau}{R},
\ee
which is then easily checked to be equivalent 
to the change of variables
\begin{align}\label{E:YLITTLEY}
Y 
= y^{-\frac{1}{1+\e}}, \ \ y = Y^{-1-\e},
\end{align}
where we recall $\e= \e(\k)=\frac{2\k}{1-\k}$. To formulate the extension problem,
it is natural to define the new variables
\begin{align}\label{E:YVARDEF}
\chi(Y): = \ch(y), \ \ \RY(Y) : =  \d(y),  \ \ w(Y) : = \wl (y),
\end{align} 
where we recall the fundamental variables $\d,\w,\ch$ from~\eqref{E:LITTLEBOLDDDEF}--\eqref{E:BOLDFACECHI}.
Note that by~\eqref{E:BOLDFACECHI} and~\eqref{E:YLITTLEY} we have $\chi(Y)=Y^{1+\e}  \tr(y)$.
It is shown in Lemma~\ref{L:ADAPTED} that the original system~\eqref{E:DSSEQNBOLD}--\eqref{E:WSSEQNBOLD}
in the new variables reads
\begin{align}
\RY' &=    \frac{2\chi^2}{Y} \frac{ \RY (\vY+\k)^2(\RY-\vY)}{\mathcal C}, \label{E:LDEQN2}\\
\vY' &= -\frac{(\vY+\k)(1-3\vY)}{(1-\k)Y} -\frac{2(1+\k)\chi^2}{(1-\k)Y} \frac{\vY(\vY+\k)^2(\RY-\vY)}{\mathcal C},  \label{E:LWEQN2} \\
\chi' & = \frac{1-\vY}{(1-\k)Y} \chi, \label{E:CHIEQN}
\end{align}
where
\begin{align}\label{E:CDEF}
\mathcal C : = \left(\RY Y^{-2}\right)^{-\eta}Y^2 - \chi^2 \left[(\vY + \k)^2-\k(\vY-1)^2 + 4\k \vY \RY \right].
\end{align}

The first main result of Section~\ref{S:MAE} is Theorem~\ref{T:LE}, where 
we prove the local existence of a real analytic solution in an open neighbourhood of $Y=0$, which provides the local 
extension of the solution from $Y=0^+$ to $Y=0^-$.  Initial
data at $Y=0$ are read off from the asymptotic behaviour~\eqref{E:AINTRO1}--\eqref{E:AINTRO2}, 
see Remark~\ref{R:TAYLORR}. The most important result of the section
is the maximal extension theorem of the solution to the negative $Y$-s:


\begin{theorem}[Maximal extension]\label{T:GE}
There exists an $0<\k_0\ll1$ sufficiently small such that for any $\k\in(0,\k_0]$  there exists a $\yms<0$ such that the unique solution to the initial value problem~\eqref{E:LDEQN2}--\eqref{E:CHIEQN} exists on the interval $(\yms,0]$,
and
\begin{align}
\lim_{Y\to (\yms)^-}\vY(Y) = \lim_{Y\to (\yms)^-}\RY(Y) & = \infty, \\
\chi(Y)& >0, \ \ Y\in(\yms,0], \\
\lim_{Y\to (\yms)^-}\chi(Y) & = 0.
\end{align}
\end{theorem}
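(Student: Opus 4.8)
\textbf{Proof proposal for Theorem~\ref{T:GE}.}
The plan is to run a continuity/continuation argument for the system~\eqref{E:LDEQN2}--\eqref{E:CHIEQN} starting from the local real analytic solution around $Y=0$ furnished by Theorem~\ref{T:LE}, pushing it to the left (decreasing $Y$) and identifying the first $Y$ at which it breaks down. Let $\yms<0$ be the infimum of all $Y_0<0$ such that the solution extends real-analytically to $(Y_0,0]$ with $\chi>0$ and $\mathcal C\ne0$ throughout. Standard ODE theory guarantees that as long as $(\RY,\vY,\chi)$ stay finite, $\chi$ stays bounded away from $0$, and the denominator $\mathcal C$ stays nonzero, the solution continues; hence a breakdown at $\yms$ must be caused by one of: (i) $\RY$ or $\vY$ blowing up, (ii) $\chi\to0$, or (iii) $\mathcal C\to0$ (a new ``sonic''-type degeneracy). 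The bulk of the work is to show that alternatives corresponding to a \emph{benign} breakdown are excluded, so that the only possibility is the simultaneous blow-up of $\RY$ and $\vY$ together with $\chi\to0$, and that this occurs at a \emph{finite} $\yms$.

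First I would establish the qualitative picture near $Y=0^-$: from the far-field asymptotics~\eqref{E:AINTRO1}--\eqref{E:AINTRO2} translated via~\eqref{E:YLITTLEY}--\eqref{E:YVARDEF} one reads off $\vY(0)=1$, $\RY(0)=0$ (with $\RY(Y)\sim cY^2$ to leading order, matching $\d(y)\asymp y^{-2/(1+\e)}$ and $Y^{1+\e}=y^{-1}$... more precisely $\RY\asymp Y^{2}$ up to the $\eta$-correction hidden in $\mathcal C$), and $\chi(0)>0$. The key structural observation is that $\mathcal C$ at $Y=0$ is dominated by the term $(\RY Y^{-2})^{-\eta}Y^2$; since $\RY Y^{-2}\to c>0$, this behaves like $c^{-\eta}Y^2\to 0$, but one must check the \emph{sign} and the rate at which $\mathcal C$ vanishes or stays positive as $Y\to0^-$ — this is exactly what Theorem~\ref{T:LE} and Remark~\ref{R:TAYLORR} should pin down, and I would quote it. Then I would derive monotonicity properties on $(\yms,0)$: I expect $\vY$ to be increasing and $\RY$ to be increasing as $Y$ decreases (equivalently $\vY',\RY'$ of a definite sign once $\RY-\vY$, $\vY+\k$, and $\mathcal C$ have controlled signs), using the same kind of invariant-region reasoning (and an analogue of the monotonicity Lemma~\ref{L:JUHILEMMA}) that was used for the $x$-side. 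The sign of $\RY-\vY$ is the crucial input: I would show $\vY>\RY$ persists on $(\yms,0)$, which (given the signs of the other factors and of $\mathcal C$) forces $\RY'>0$ and $\vY'<0$ in $Y$, i.e. both grow as $Y$ decreases, while $\chi'=\frac{1-\vY}{(1-\k)Y}\chi$ has a sign making $\chi$ decrease toward $0$ as $Y\searrow\yms$ (since $\vY>1$ and $Y<0$ give $\chi'>0$ in $Y$, hence $\chi$ decreasing as $Y$ decreases — this needs to be checked against the actual sign of $1-\vY$).

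Next I would rule out a spurious sonic breakdown $\mathcal C\to0$ at an interior point: using $\chi>0$, the second bracket in~\eqref{E:CDEF}, $(\vY+\k)^2-\k(\vY-1)^2+4\k\vY\RY$, is positive for $\vY\ge1$, $\RY\ge0$, so $\mathcal C<0$ would require $(\RY Y^{-2})^{-\eta}Y^2$ to be overtaken; I would track the quantity $\mathcal C$ (or a suitable renormalization like $\mathcal C/Y^2$ or $\mathcal C/\chi^2$) via its ODE and show that, within the invariant region, $\mathcal C$ retains a fixed sign (presumably $\mathcal C<0$ throughout $(\yms,0)$, consistent with the far-field side where $B<0$), so that no new sonic point appears and the flow is genuinely singular only in the limit. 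The remaining and \emph{main obstacle} is to show the breakdown happens at a \emph{finite} $\yms$ rather than as $Y\to-\infty$, and that at $\yms$ we get precisely $\RY,\vY\to\infty$ and $\chi\to0$. For finiteness I would extract from~\eqref{E:LWEQN2} a differential inequality of the form $\vY'\lesssim -\frac{\vY^{a}}{|Y|}$ or $\vY'\lesssim -\vY^{a}$ with $a>1$ (the power coming from the $\vY\cdot\vY^2\cdot\vY/\mathcal C$ structure once $\mathcal C$ is shown to be comparable to $-\chi^2\vY^2$ or to $-Y^2\cdot(\text{bounded})$), which yields blow-up in finite $Y$-time; simultaneously~\eqref{E:LDEQN2} forces $\RY\to\infty$ at the same $\yms$ because $\RY-\vY$ and $\vY+\k$ are large, and integrating $\chi'=\frac{1-\vY}{(1-\k)Y}\chi$ with $\vY\to\infty$ gives $\log\chi\to-\infty$, i.e. $\chi(\yms^-)=0$. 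The delicate point throughout — and where I expect to spend the most effort — is getting \emph{two-sided} control on $\mathcal C$ (it must not vanish before $\yms$, yet it must degenerate in a controlled way \emph{at} $\yms$ so the blow-up rates are consistent), which requires carefully coupling the ODEs for $\RY$, $\vY$, $\chi$ and $\mathcal C$ rather than treating them separately; an analogue of Lemma~\ref{L:JUHILEMMA} adapted to the $Y$-chart, together with smallness of $\k$ (hence of $\eta$), should make the estimates close.
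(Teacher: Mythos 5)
Your skeleton is the right one: continuation from the local extension of Theorem~\ref{T:LE}, exclusion of a spurious interior sonic degeneracy, a Riccati-type differential inequality for $\vY$ giving finite-$Y$ blow-up, and then $\chi\to 0$. You also correctly identify the pieces $\vY>1$, $\vY>\RY>0$, and $\chi'>0$ for $Y<0$. However, there are three substantive gaps. \textbf{First}, you only propose the one-sided bound $\vY>\RY$. The Riccati estimate does not close with this alone: one needs a \emph{lower} bound $\RY/\vY>\tilde\delta>0$ (the paper's ``sandwich'' bound, Lemma~\ref{L:SIGMAVBOUND}), obtained from an explicit formula for $(\RY/\vY)'$ and a dynamically invariant region around $\RY/\vY\approx 1/4$. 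Without this, you cannot control the term $-(1+\k)\chi^2\vY(\vY+\k)(\RY-\vY)$ relative to $\vY\mathcal C$ in the recombination of~\eqref{E:LWEQN2}, which is exactly what makes the Riccati sign definite. \textbf{Second}, you propose to control $\mathcal C$ ``via its ODE'' and suggest an analogue of the monotonicity Lemma~\ref{L:JUHILEMMA}. In fact the paper does \emph{not} adapt that lemma here; it instead uses the exact conservation-type identities $(\RY^{1+\e}\vY)'$ and $(\chi^2\RY^{1+\e}\vY)'$ (Lemma~\ref{L:PRELIM}) to propagate the quantity $\Gamma_\delta(Y)=(Y^2)^{1+\e}-(1-\delta)\chi^2\vY\RY^{1+\e}<0$ (Lemma~\ref{L:GAMMABOUND}). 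That quantity is precisely the two-sided control of $\mathcal C$ you flag as the ``delicate point'' but leave open; your proposal names the problem without providing a mechanism, and the $\F$-based mechanism you float does not match the one that actually works in the $Y$-chart.

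\textbf{Third}, the conclusion $\chi(\yms^-)=0$ does not follow merely from $\vY\to\infty$ and integrating $\chi'=\frac{1-\vY}{(1-\k)Y}\chi$: if $\vY$ diverged slowly (e.g.\ $\log$), $\int\frac{\vY-1}{|Y|}\,dY$ could converge and $\chi$ would stay bounded away from zero. One needs a quantitative rate $\vY\gtrsim 1/|Y-\yms|$. The paper gets this by contradiction: assume $\chi_\ast>0$, observe the ODE for $\vY$ is then asymptotically $\vY'/\vY^2\asymp 1/Y$, integrate to get the rate, deduce $\log\chi\to-\infty$, contradiction. You should also be more careful about your proposed classification of breakdown alternatives: ruling out $\mathcal C\to 0$ at an interior point (the ``supersonic'' statement, Lemma~\ref{L:SUPERSONIC}) requires showing $\F'-(\chi\vY)'>0$ near the would-be sonic point, which in turn uses $(\chi\vY)'<0$ (Lemma~\ref{L:CHIVBOUND}) and the sandwich bound from the first gap. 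So the omissions compound: the sandwich bound feeds into $(\chi\vY)'<0$, which feeds into supersonicity, which feeds into the Riccati estimate. Without it, the continuation argument doesn't run.
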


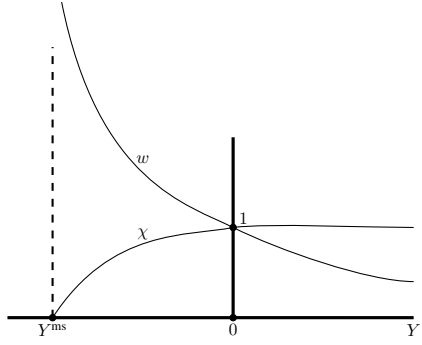
\begin{figure}
\begin{center}
\begin{tikzpicture}
\begin{scope}[scale=0.6, transform shape]

\coordinate [label=below:$0$] (A) at (-3,7){};
\coordinate [label=below:$\yms$] (B) at (-7,7){};
\coordinate [] (C) at (-7,11){};
\coordinate [label=below:$Y$] (D) at (1,7){};
\coordinate [label=above:$\chi$] (E) at (-5,8.6){};
\coordinate [label=right:$1$] (F) at (-3,9.2){};
\coordinate [label=above:$\vY$] (F) at (-5,10.3){};

\draw[very thick] (-3,7)--(-8,7);
\draw[very thick] (A)--(1,7);
\draw[very thick] (A)--(-3,11);
\draw[dashed, thick] (B) -- (-7,13);
\draw (-7,7) .. controls +(1.3,2) and +(-1,-0.2) .. (-3,9);
\draw (-3,9) .. controls +(0.8,0.1) and +(-1,0) .. (1,9);
\draw (-6.8,14) .. controls +(0.8,-4) and +(-1,0.5) .. (-3,9);
\draw (-3,9) .. controls +(1,-0.5) and +(-1,0) .. (1,7.8);

\draw[fill=black] (-3,7) circle (2pt);
\draw[fill=black] (-7,7) circle (2pt);
\draw[fill=black] (-3,9) circle (2pt);

\end{scope}
\end{tikzpicture}
 \caption{Schematic depiction of the behaviour of $\chi(Y)$ and $\vY(Y)$ in the maximal extension. As $Y$ approaches $\yms$
 from the right, $\chi$ approaches $0$ and $\vY$ blows up to $\infty$.}
\end{center}
\end{figure}

We see from the statement of the theorem that the maximal extension is characterised by the simultaneous blow-up of $\vY, \RY,$ and $\frac1{\chi}$ at the terminal point $\yms$.
By~\eqref{E:YDEF}, in the adapted comoving chart, the point $\yms$ coincides with 
the hypersurface
\[
\MS:=\left\{(\ttau,R)\,\big| \ttau = \frac1{\sqrt\k} |\yms| R\right\} \setminus \{(0,0)\} , 
\]
which we refer to as the {\em massive singularity}, following the terminology in~\cite{OP1990}.
The proof of Theorem~\ref{T:GE} relies on a careful understanding of nonlinear invariances associated with the dynamical system~\eqref{E:LDEQN2}--\eqref{E:CHIEQN} and the key 
dynamic ``sandwich" bound  
\[
1\lesssim \frac{\RY}{\vY} <1, \ \text{ for } Y<Y_0<0,
\]
where $Y_0<0$ is small. This is shown in Lemma~\ref{L:SIGMAVBOUND}.  The blow-up proof finally follows from a Ricatti-type ordinary differential inequality for the 
relative velocity $\vY$. 

In Section~\ref{SS:MASSIVE} we compute the sharp asymptotics of $\vY, \RY,$ and $\chi$ on approach to the massive singularity $\yms$. This result
is stated in Proposition~\ref{P:SINGBEHAVIOUR}, which is later crucially used in the study of the causal structure of such a maximally extended solution,
where it is in particular shown that the spacetime curvature blows up on approach to the massive singularity.

\begin{remark}\label{R:MSLIMIT}
The maximal self-similar extension makes sense in the Newtonian limit $\k\to0$, which is one of the key observations of Ori and Piran~\cite{OP1990}.
Our proof of Theorem~\ref{T:GE} easily extends to the simpler case $\k=0$, which in particular shows that the LP-solutions
constructed in~\cite{GHJ2021} have a natural maximal extension in the Lagrangian (comoving) coordinates.
\end{remark}


\subsection{The RLP-spacetime and its causal structure (Section~\ref{S:RLP})} \label{SS:RLPINTRO}


As a consequence of Theorems~\ref{T:FRIEDMANN},~\ref{T:GLOBALRIGHT}, and~\ref{T:GE}, 
we can now formally introduce the exactly self-similar solution of the Einstein-Euler system by patching the solutions in the subsonic region $x\in[0,\bar x_\ast]$, the supersonic region
$x\in [\bar x_\ast,\infty)$, and the extended region $Y\in(\yms,0]$.
Following Ori and Piran~\cite{OP1990}, we call this spacetime
the relativistic Larson-Penston (RLP) solution\footnote{The nomenclature is motivated by their Newtonian analogue 
discovered by Larson~\cite{Larson1969} and Penston~\cite{Penston1969} in 1969, see~\cite{GHJ2021} for the rigorous proof of existence of the Newtonian solution.}.


\begin{definition}[$\grlp$-metric]\label{D:RLPST}
We refer to the $1$-parameter family of spherically symmetric self-similar spacetimes
$(\MRLP, 
\grlp)$
constructed above
as the \underline{relativistic Larson-Penston spacetimes}.
In the adapted comoving coordinates $(\tilde\tau,R)$ the metric takes the form
\begin{align}
\grlp & = - e^{2\tilde\mu}\,d\tilde\tau^2 -\frac{4\sqrt\k}{1+\k} Y e^{2\tilde\mu}\,d\tt\,dR + \left(e^{2\tilde\l}-\frac{4\k}{(1+\k)^2} Y^2 e^{2\tilde\mu} \right)\,dR^2 + r^2\ \gamma, 
\label{E:METRICRLP2}
\end{align}
where the metric coefficients are defined on the connected component of the $(\tilde\tau,R)$ coordinate plane given by 
\begin{align}\label{E:MATHCALDTILDE}
\DRLPtilde : = \left\{(\tilde\tau, R)\,\big| \, R>0, \ \   Y\in (\yms,\infty)\right\}.
\end{align}
Here 
\begin{align}
r(\ttau,R) =  \chi(Y) R, \ \ \tilde\mu(\ttau,R) = \tilde\mu(Y), \ \ \tilde\lambda(\ttau,R) = \tilde\lambda(Y),
\end{align}
where
$
Y = -\frac{\sqrt\k\tilde\tau}{R},
$
\begin{align}\label{E:MUTILDE}
e^{2\tilde\mu(Y)}  =   \frac{(1+\k)^2}{(1-\k)^2} Y^{2\e} e^{2\mu(y)}, \ \ y>0,
\end{align}
and
\begin{align}\label{E:LAMBDATILDE}
e^{2\tilde\l(Y)} = e^{2\l(y)}, \ \ y>0.
\end{align}
Metric coefficients $\tilde\mu(Y),\tilde\l(Y)$ for $Y\le0$ are then defined by expressing them as appropriate functions of $\RY,\vY,\chi$ and extending to $Y\le0$, see Proposition~\ref{P:SINGBEHAVIOUR}.
%
\end{definition}

{\em The RLP-spacetime in the original comoving coordinates.}
In the original comoving  coordinates $(\tau,R)$, the metric takes the form
\begin{align}
\grlp & = -e^{2\mu} d\tau^2 + e^{2\lambda}dR^2 +  r^2 \ \gamma.  \label{E:METRICRLP1}
\end{align}
It is clear from~\eqref{E:MUAS} that the metric~\eqref{E:METRICRLP1}
becomes singular across the surface $\tau=0$ (equivalently $y=\infty$). Nevertheless, away from this surface (i.e. when $\tau>0$ and $\tau<0$) we can keep using comoving coordinates, whereby 
we define
\begin{align}
\tau =  |Y|^\e \ttau, \ \ \ttau>0,
\end{align}
where we recall~\eqref{E:YDEF}.
Therefore the metric coefficients are defined on the union of the connected components of the $(\tau,R)$ coordinate plane given by 
\begin{align}
\DRLP : = \left\{(\tau, R) \in (-\infty,0)\times (0,\infty)\right\} \cup \left\{(\tau, R) \in (0,\infty)\times (0,\infty)\,\big| \, y\in(-\infty, y^{\text{ms}})\right\},
\end{align}
where 
\[
y^{\text{ms}} : = - |\yms|^{-(1+\e)}.
\]
Here 
\begin{align*}
r(\tau,R) = - \sqrt\k \tau  \tr(y), \ \ \mu(\tau,R) = \mu(y), \ \ \lambda(\tau,R) = \lambda(y), \ \ \text{ for $\tau<0$},
\end{align*}
and
\[
r = - \sqrt\k \tau \tr (y), \ \ \tr(y) = \ch(y) y, \ \ y = \frac{R}{-\sqrt\k \tau} =- |Y|^{-1-\e}, \ \ \text{ for $\tau>0$},
\]
where $\ch(y)=\chi(Y)$, $y<0$.

\begin{remark}\label{R:TILDRREG} 
It is of interest to understand the leading order asymptotic behaviour of the 
radius $\tilde r$ as a function of the comoving self-similar variable $y$. It
follows from~\eqref{E:RWDEF} and the boundary condition $W(0)=\frac13$
that $\lim_{y\to0^+}\w(y) = \frac13
$. Therefore, using~\eqref{E:TILDERPRIME}
it is easy to see that the leading order behaviour of $\tilde r(y)$ at $y=0$ is of the form
\begin{align}\label{E:TILDERREG}
\tilde r(y) = \tilde r_0 y^{\frac{1+3\k}{3(1+\k)}} + o_{y\to0^+}(y^{\frac{1+3\k}{3(1+\k)}}), \ \ \tr'(y) \asymp_{y\to 0^+} y^{-\frac{2}{3(1+\k)}},
\end{align} 
for some $\tilde r_0>0$. It follows in particular that $y\mapsto \tilde r(y)$ is only $C^{0,\frac{1+3\k}{3(1+\k)}}$ 
at $y=0$. 
The self-similar reduction of the constraint equation~\eqref{E:LAMBDACONSTRAINT1} reads $\tr'^2 e^{-2\l} = 1+\k V^2 - \frac{2}{3} \k \tG \tr^2$ (see~\eqref{E:LAMBDACONSTRAINTSS}).
Using this and~\eqref{E:TILDERREG}, it then follows 
\be\label{E:LAMBDABADATZERO}
e^{\l(y)}\asymp_{y\to0^+} \tr'(y) \to_{y\to 0^+} \infty, 
\ee 
which shows that the metric $g$~\eqref{E:METRIC} is singular at $y=0$.

This singularity is not geometric, but instead caused by the Friedmann-like behaviour of the relative velocity $W$ at $y=0$, see~\eqref{E:FRIEDMANNINTRO}.
It captures an important difference between the comoving and the Schwarzschild coordinates at the centre of symmetry
$\{(\tau, R)\, \big| \, \tau<0, R=0\}$. It can be checked that the space-time is regular at the centre of symmetry by switching to the $(\tau,r)$ coordinate.
The same phenomenon occurs in the Newtonian setting, where it can be shown that the map $\ch = \frac{\tr}{y}$ associated with the LP-solution is exactly $C^{0,\frac13}$ and therefore $\ch$
is not smooth at the labelling origin $y=0$.
\end{remark}


\begin{remark}
Note that the trace of $T_{\mu\nu}$ is easily evaluated
\[
g^{\mu\nu}T_{\mu\nu} = (\rho+p)g^{\mu\nu}u_\mu u_\nu + 4 p = - \rho+3p = -(1-3\k)\rho.
\]
On the other hand, the trace of the left-hand side of~\eqref{E:EINSTEIN} is exactly $-\mathcal R$, and therefore
the Ricci scalar of
any classical solution of~\eqref{E:EINSTEIN}--\eqref{E:EOS} satisfies the relation
\begin{align}\label{E:CURVATUREDENSITY}
\mathcal R = (1-3\k)\rho.
\end{align} 
This relation also implies that the blow-up of the Ricci scalar is equivalent to the blow up of the mass-energy density when $\k\neq\frac13$.
\end{remark}



\subsubsection{The outgoing null-geodesic}\label{SSS:OUTGOING}


The maximally extended RLP spacetime constructed above has two singular boundary components - the scaling origin $\mathcal O$ and the massive singularity $\MS$ introduced in Section~\ref{SS:MAEINTRO}, see
Figure~\ref{F:CAUSAL}.
They are very different in nature, as the density (and the curvature components) blow up at two distinct rates. 

The main result of Section~\ref{S:RLP} states that there exist 
an outgoing radial null-geodesic (RNG) emanating from the scaling 
origin $\mathcal O$ and reaching infinity. Following
Ori and Piran we look for a so-called simple
RNG:

\begin{definition}[Simple radial null-geodesics]\label{D:SIMPLERNG}
An RNG of the form
\begin{align}\label{E:SIMPLERNGDEF}
R(\tilde\tau) = \sigma \tilde\tau, \ \ \sigma\in\mathbb R\setminus\{0\},
\end{align}
is called a \underline{simple radial null-geodesic} (simple RNG).
\end{definition}

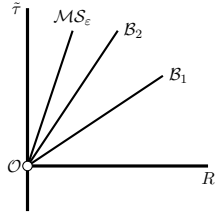
\begin{figure}

\begin{center}
\begin{tikzpicture}
\begin{scope}[scale=0.6, transform shape]

\coordinate [label=left:$\mathcal O$] (A) at (-3,7){};

\coordinate [label=below:$R$] (B) at (1,7){};

\coordinate [label=above:$\MS$] (C) at (-2,10){};

\coordinate [label=right:$\mathcal B_2$] (D) at (-1,10){};

\coordinate [label=right:$\mathcal B_1$] (E) at (0,9){};


\coordinate [label=left:$\ttau$] (G) at (-3,10.5){};


\draw[very thick] (A)--(1,7);

\draw[very thick] (A)--(-3,6);

\draw[very thick] (A)--(-3,10.5);

\draw[thick] (A)--(-2,10);

\draw[thick] (A)--(-1,10);

\draw[thick] (A)--(0,9);


\draw[fill=white] (-3,7) circle (3pt);
\end{scope}
\end{tikzpicture}
 \caption{Schematic depiction of the outgoing null-geodesics $\mathcal B_1$, $\mathcal B_2$, and the massive singularity $\MS$.
}
    \label{F:CAUSAL}
\end{center}
\end{figure}

Then the key result we prove is the following theorem.

\begin{theorem}[Existence of global outgoing simple RNG-s]\label{T:NS}
There exists an $0<\k_0\ll1$ sufficiently small so that for any $\k\in(0,\k_0]$ there exist at least two and at 
most finitely many outgoing simple RNG-s emanating out of the singularity $(0,0)$. In other words, there exist 
\begin{align}\label{E:SRNGS}
\yms < Y_n <\dots Y_1 <0, \ \ n\ge 2,
\end{align}
so that the associated simple RNG-s are given by 
\begin{align}
\mathcal B_i : = \{(\tt, R)\in\MRLP\,\big| \, \frac{-\sqrt\k \tt}{R} = Y_{i}\}, \ \ i=1,\dots n.
\end{align}
\end{theorem}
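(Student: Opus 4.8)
\textbf{Proof strategy for Theorem~\ref{T:NS}.}
The plan is to reduce the existence of a simple outgoing RNG to the existence of a zero of an explicit scalar function of the self-similar variable $Y$, and then to count such zeros. First I would write the null condition $g_{\mu\nu}\dot x^\mu\dot x^\nu=0$ for a curve of the form $R(\ttau)=\sigma\ttau$ using the adapted comoving metric~\eqref{E:METRICRLP2}. Because the metric coefficients depend on $(\ttau,R)$ only through the scale-invariant quantity $Y=-\sqrt\k\ttau/R$, the null equation along a ray $R=\sigma\ttau$ forces $Y$ to be constant along the geodesic; plugging $dR=\sigma\,d\ttau$ into~\eqref{E:METRICRLP2} and dividing by $d\ttau^2$ yields a quadratic relation in $\sigma$ whose coefficients are functions of $Y$ alone. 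Solving for $\sigma$ and re-expressing the relation $Y=-\sqrt\k\ttau/R=-\sqrt\k/\sigma$ as a single equation, one obtains a scalar equation $F(Y)=0$ on the interval $(\yms,0)$, where $F$ is built out of $\chi(Y)$, $\vY(Y)$, $\RY(Y)$, $\tilde\mu(Y)$, $\tilde\l(Y)$; a natural and clean form uses the outgoing-null slope directly, i.e. the condition that $\frac{dr}{d\ttau}$ along the ray equals the local outgoing null speed, which after the self-similar reduction becomes an algebraic identity among $\chi$, $\vY$ and $e^{\tilde\mu-\tilde\l}$. I would isolate this function $F$ carefully, making sure it extends continuously (indeed analytically) up to $\yms$ using the sharp asymptotics of Proposition~\ref{P:SINGBEHAVIOUR} and near $Y=0^-$ using the local extension Theorem~\ref{T:LE} together with the far-field asymptotics~\eqref{E:AINTRO1}--\eqref{E:AINTRO2}.

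The heart of the argument is then a sign-change (intermediate value) analysis of $F$ on $(\yms,0)$. I would compute the limiting sign of $F$ as $Y\to0^-$: here the solution is close to the far-field profile $(\RY\sim$ power law, $\vY\to1$, $\chi$ regular), which corresponds geometrically to the sonic/far-field regime, and I expect $F$ to have a definite sign there determined by the fact that the scaling origin is being approached along the Friedmann-like centre. At the other end, as $Y\to(\yms)^-$, I would use Proposition~\ref{P:SINGBEHAVIOUR}: $\vY,\RY\to\infty$ and $\chi\to0$ at explicit rates, and $e^{2\tilde\mu}$, $e^{2\tilde\l}$ have correspondingly explicit blow-up/decay behaviour; substituting these rates into $F$ should produce the opposite sign (this reflects the fact that near the massive singularity $\MS$ the local light cones ``tip'' the other way relative to the rays $R=\sigma\ttau$). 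An intermediate value argument then yields at least one zero $Y_1\in(\yms,0)$. To get \emph{at least two}, I would either exhibit an intermediate value of $Y$ at which $F$ again takes the first endpoint's sign — forcing a second crossing — or, more robustly, track the monotone quantities already developed (the sandwich bound $1\lesssim \RY/\vY<1$ of Lemma~\ref{L:SIGMAVBOUND} and the sign structure of $\mathcal C$ from Section~\ref{S:MAE}) to show $F$ changes sign an odd number of times plus has a forced interior extremum, giving two crossings. Finally, \emph{finiteness} follows because $F$ is real analytic on the \emph{closed} interval $[\yms,0]$ (all the unknowns $\chi,\vY,\RY$ are real analytic in $Y$ on $(\yms,0]$ by the construction, analytic up to $0$ by Theorem~\ref{T:LE}, and analytic up to $\yms$ by the asymptotic expansion in Proposition~\ref{P:SINGBEHAVIOUR}), and a real analytic function that is not identically zero has only finitely many zeros on a compact interval; one checks $F\not\equiv0$ from its nonzero endpoint limits.

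The main obstacle I anticipate is \emph{not} the intermediate value step but the bookkeeping required to extract the precise function $F$ in a form whose endpoint signs can be read off unambiguously. The metric~\eqref{E:METRICRLP2} is written in a mixed $(\ttau,R)$ chart with a cross term, and the null equation is genuinely quadratic; one must select the \emph{outgoing} root (the geodesic with $\sigma<0$ so that it moves to larger $r$ toward the future) and verify it lies in the physical range, and then show the resulting transcendental equation really does change sign. In particular, near $\yms$ one needs the \emph{exact leading constants} in the expansions of $\chi,\vY,\RY,\tilde\mu,\tilde\l$, not merely their rates, because $F$ may be a difference of two terms with the same leading power of $|Y-\yms|$; this is precisely why Proposition~\ref{P:SINGBEHAVIOUR} (sharp asymptotics, not just $\asymp$) is invoked. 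A secondary technical point is checking that $Y$ is genuinely conserved along the geodesic — i.e. that the curve $R=\sigma\ttau$ with the correct $\sigma$ is an affinely parametrised null geodesic and not merely a null curve — which follows from spherical symmetry and the homothetic structure but should be stated. Once $F$ is pinned down with these ingredients, the existence of at least two zeros and at most finitely many is a short consequence of continuity and analyticity as above.
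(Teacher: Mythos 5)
Your high-level strategy — reduce simple outgoing RNG-s to zeros of a scalar function of the self-similar variable $Y$, apply the intermediate value theorem, and invoke real analyticity for finiteness — is the same as the paper's (Lemma~\ref{L:RNGS}(c) reduces simple RNG-s to zeros of $G_\pm$, equivalently to solutions of~\eqref{E:FEQN}, and the finiteness claim is handled exactly as you say). The reduction step, including the point that $Y$ is constant along rays $R=\sigma\ttau$ and that the null condition becomes an algebraic relation in $Y$, is also fine.

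The gap is in the sign analysis, and it is not cosmetic. You predict that $F$ has \emph{opposite} signs as $Y\to 0^-$ and as $Y\to(\yms)^+$, which would give one crossing, and then you treat the second crossing as something to be rescued by a backup argument. In fact the endpoint limits are the \emph{same}: Proposition~\ref{P:UNIF}(c) shows $\lim_{Y\to 0} F_\k(Y)=\lim_{Y\to\yms} F_\k(Y)=0$, so the quantity $\frac1\k F_\k(Y)-\bigl(\frac{1+\k}{1-\k}\bigr)^2$ is strictly negative near both endpoints. The two crossings come from Proposition~\ref{P:UNIF}(b): there is a $\k$-\emph{independent} interior point $Y_0<0$, supplied by the local extension Theorem~\ref{T:LE}, at which $\frac1\k F_\k(Y_0)>2>\bigl(\frac{1+\k}{1-\k}\bigr)^2$. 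Same sign at both ends, opposite sign in the middle, hence two crossings by IVT. Your first ``backup'' sentence gestures at this configuration, but you present it as a contingency rather than as the actual structure, and you never identify the mechanism that forces the interior excursion — the uniform-in-$\k$ bound on $F_\k(Y_0)$, which is the real nontrivial input (it is why the local extension theorem had to produce a $\k$-independent $Y_0$ and $\k$-independent lower bounds on $\RY(Y_0)$, $\chi(Y_0)$). Without it you have no reason for $F$ to leave the band near zero, and the argument collapses to ``possibly one zero.''

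A secondary point: you say you need the \emph{exact leading constants} of the asymptotics near $\yms$ because $F$ might be a difference of terms with the same leading power. That is not the case here. What is needed at $\yms$ is only that $F_\k\to 0$, which follows from the rate $F_\k\asymp (Y-\yms)^{\frac{2(1+3\k)}{3(1-\k)}}$ read off from~\eqref{E:METRICYMSASYMP}; no delicate cancellation occurs. The sharp constants in Proposition~\ref{P:SINGBEHAVIOUR} are used elsewhere.
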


The proof relies on a beautiful idea of Ori and Piran~\cite{OP1990}, which we make rigorous. Namely, one can show that the slopes
of outgoing simple RNG-s must correspond to roots of a certain real-analytic function, see Lemma~\ref{L:RNGS}. 
Using the sharp asymptotic behaviour of the metric coefficients, brought about through our analysis in Sections~\ref{S:FARFIELD}--\ref{S:MAE}, 
we can prove that this function converges to negative values at $Y=0$ and $Y=\yms$. On the other hand, by the 
local existence theory for the  ODE-system~\eqref{E:LDEQN2}--\eqref{E:CHIEQN}, we can also ascertain the function in question peaks above $0$
for a $Y_0\in(\yms,0)$. Therefore, by the intermediate value theorem, we conclude the proof of Theorem~\ref{T:NS}, see Section~\ref{S:RLP}, immediately after 
Proposition~\ref{P:UNIF}.

Informally, the null-hypersurface $\mathcal B_1$ is  the ``first" outgoing null-curve emanating from the singular scaling origin
and reaching the infinity. It is easy to see that $r$ grows to $+\infty$ along $\mathcal B_1$.
Since the spacetime is not asymptotically  flat, we perform a suitable truncation in Section~\ref{S:DOUBLENULL} in order to interpret
$\mathcal O$ as a naked singularity.

The maximal extension we construct is unique only if we insist on it being self-similar, otherwise there could exist other extensions in the 
causal future of $\mathcal O$. Nevertheless, in our analysis the role of the massive singularity $\MS$ is important, as we use the 
sharp blow-up asymptotics of our unknowns to run the intermediate value theorem-argument above. Conceptually, $\MS$ has a natural Newtonian
limit as $\k\to0$, see Remark~\ref{R:MSLIMIT}, which makes it a useful object for our analysis.  

In the remainder of Section~\ref{S:RLP} we give a detailed account of radial null-geodesics, showing in particular that there is a
unique ingoing null-geodesic $\mathcal N$ emanating from the scaling origin to the past, see Figure~\ref{F:INGOING}. This is the boundary 
of the backward light cone ``emanating" from the scaling origin. Following the terminology in~\cite{Ch1994}, it splits the spacetime into the {\em exterior} region (in the future of $\mathcal N$) and the {\em interior} region 
(in the past of $\mathcal N$), see Definition~\ref{D:EXTERIOR} and Figure~\ref{F:EXTERIOR}. Moreover, the sonic line  is contained strictly in the interior region.
The complete analysis of nonradial null-geodesics is given in Appendix~\ref{A:NNG}.


\begin{figure}
\begin{center}
\begin{tikzpicture}
\begin{scope}[scale=0.6, transform shape]

\coordinate [label=left:$\mathcal O$] (A) at (-3,7){};

\coordinate [label=below:$R$] (B) at (1,7){};

\coordinate [label=left:$\ttau$] (G) at (-3,8){};

\coordinate [label=right:$\mathcal N$] (F) at (-1.3,5.5){};

\coordinate [label=below:sonic line] (H) at (-2,4.1){};


\draw[very thick] (A)--(1,7);

\draw[very thick] (A)--(-3,4);

\draw[very thick] (A)--(-3,8);

\draw[thick] (A)--(-0.2,4.2);

\draw[thick] (A)--(-2.2,4.1);

\draw[fill=white] (-3,7) circle (3pt);

\end{scope}
 \end{tikzpicture}
   \end{center}
    \caption{Schematic depiction of the ingoing null-curve $\mathcal N$ and the sonic line. The spacetime is smooth across both
 surfaces.
}
\label{F:INGOING}
\end{figure}
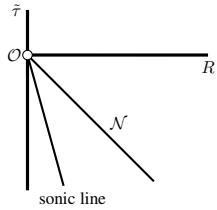






\subsection{Double null gauge, asymptotic flattening, and naked singularities (Section~\ref{S:DOUBLENULL})}
\label{SS:DOUBLENULLINTRO}


The final step in the proof of Theorem~\ref{T:MAINTHEOREM}
is to truncate the profile away from the scaling origin $\mathcal O$ and
glue it to the already constructed self-similar solution.
To do that we set up this problem 
in the double-null gauge:
\begin{align}\label{E:DNG}
g = - \Omega^2 \, d\u\,d\v + r^2 \gamma,
\end{align}
where $\u=\text{const.}$ corresponds to outgoing null-surfaces and 
$\v=\text{const.}$ to the ingoing null-surfaces. 
A similar procedure for the scalar field model was implemented in~\cite{Ch1994},
however due to the complications associated with the Euler evolution, a 
mere cut-off argument connecting the ``inner region" to pure vacuum as we 
approach null-infinity is hard to do. Instead, we carefully design function
spaces that capture the asymptotic decay of the fluid density toward 
future null-infinity in a way that is both consistent with the asymptotic flatness, and can 
be propagated dynamically.


\subsubsection{Formulation of the problem in double-null gauge}\label{SS:DNFORM}


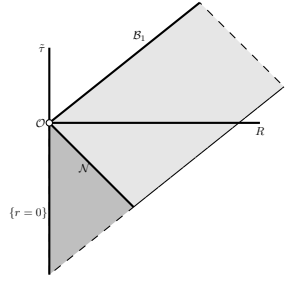
\begin{figure}
\begin{center}
\begin{tikzpicture}
\begin{scope}[scale=0.4, transform shape]

\coordinate [label=left:$\mathcal O$] (A) at (-3,7){};

\coordinate [label=below:$R$] (B) at (4,7){};

\coordinate [label=left:$\ttau$] (G) at (-3,9.5){};

\coordinate [label=above:$\mathcal B_1$] (E) at (0,9.6){};

\coordinate [label=left:$\mathcal N$] (F) at (-1.5,5.5){};

\coordinate (I) at (-0.2,4.2);

\coordinate (J) at (2,11);

\coordinate (K) at (4.8,8.2);

\coordinate (L) at (-3,1.95);

\node at (-3.7,4) {$\{r=0\}$};


\draw [fill=gray!20, dashed] (A) -- (I) -- (K) -- (J);

\draw [fill=gray!50, dashed] (A) -- (L) -- (I);

\coordinate [label=left:$\mathcal N$] (F) at (-1.5,5.5){};

\draw[dashed] (I)--(K);

\draw[thick] (A)--(4,7);

\draw[thick] (A)--(L);

\draw[thick] (A)--(-3,9.5);

\draw[thick] (A)--(J);

\draw[thick] (A)--(-0.2,4.2);

\draw[fill=white] (-3,7) circle (3pt);

\end{scope}
 \end{tikzpicture}
   \end{center}
    \caption{Schematic depiction of the interior (dark grey) and the exterior (light grey) region, see Definition~\ref{D:EXTERIOR}.}
\label{F:EXTERIOR}
\end{figure}

In addition to the unknowns 
associated with the fluid, 
the metric unknowns are the conformal factor $\Om=\Om(\u,\v)$
and the areal radius $r=r(\u,\v)$. 
Clearly,
\begin{align}\label{E:METRICNULL}
g_{\u\u}=g_{\v\v}=0, \ \ g_{\u\v} = -\frac12\Omega^2, \ \ g^{\u\v} = - 2\Om^{-2}.
\end{align}
Let 
$u = (u^{\u}, u^{\v},0,0)$
be the components of the velocity 4-vector $u$ in the frame
$\{\pau, \  \pav, \ E_{2}, \  E_3\}$, 
where 
$\{E_A\}$, $A=2,3$, describe the generators of some local coordinates on $\mathbb S^2$.
We have $g(\pau,E_A) = g(\pav, E_A) = 0$, $A=2,3$.
The normalisation condition~\eqref{E:NORMALISATION} in the double-null gauge reads
\begin{align}
-1 = g^{\mu\nu}u_{\mu}u_{\nu} = g^{\u\v} u_\u u_\v = -4\Om^{-2} u_\u u_\v,
\end{align}
which therefore equivalently reads
\begin{align}\label{E:NORMALISATIONNULL}
u_\u u_\v = \frac14 \Om^2, \ \ u^\u u^\v = \Om^{-2}.
\end{align}





\begin{lemma}[Einstein-Euler system in double-null gauge]\label{L:DOUBLENULL}
In the double-null gauge~\eqref{E:DNG}, the 
Einstein field equations take the form
\begin{align}
\pau \pav r & = - \frac{\Om^2}{4r} - \frac{1}{r} \pau r \pav r  + \pi r\Om^4 T^{\u \v}, \label{E:RWAVE1} \\
\pau \pav \log \Om & = - (1+\e)\pi\Om^4 T^{\u\v} +\frac{\Om^2}{4r^2} + \frac1{r^2} \pau r \pav r, \label{E:OMEGAWAVE1} \\
\pav \left(\Om^{-2}\pav r\right) & = - \pi r \Om^2 T^{\u\u}, \label{E:CONSTRAINTV1}\\
\pau\left(\Om^{-2}\pau r\right) & = - \pi r \Om^2 T^{\v\v}. \label{E:CONSTRAINTU1}
\end{align}
Moreover, the components of the energy-momentum tensor satisfy
\begin{align}
\pa_\u (\Om^4r^2T^{\u\u}) + \frac{\Om^2}{r^{2\e}} \pa_\v ( \Om^2r^{2+2\e} T^{\u\v}  ) &=0, 
 \label{E:CONTNULL13}\\
\pa_\u ( \Om^2r^{2+2\e} T^{\u\v}  )  + \frac{r^{2\e}}{\Om^2}\pa_\v (\Om^4r^2T^{\v\v}) & = 0,
\label{E:MOMENTUMNULL13}
\end{align}
%
where the energy-momentum tensor is given by the formulas
\begin{align}
& T^{\u\u} = (1+\k)\rho (u^\u)^2, \ \ T^{\v\v} = (1+\k)\rho (u^\v)^2, \ \ T^{\u\v} = 
(1-\k)\rho \Om^{-2},  \label{E:TPP}\\ 
& T^{AB} 
= \k \rho r^{-2} \gamma^{AB},   \ \
 T^{\u A} = T^{\v A} = 0, \ \ A,B = 2,3. \label{E:TAB} 
\end{align}
Moreover, its components are related through the algebraic relation
\begin{align}
T^{\u\u} T^{\v\v}
= (1+\e)^2(T^{\u\v})^2 . \label{E:TCONSTRAINT}
\end{align}
\end{lemma}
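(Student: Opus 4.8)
The plan is to derive \eqref{E:RWAVE1}--\eqref{E:TCONSTRAINT} by specializing the general spherically symmetric Einstein-Euler system to the double-null gauge \eqref{E:DNG} and reconciling it with the comoving formulation already set up in Section~\ref{S:FORMULATION}. I would proceed in three stages: first the energy-momentum tensor and its algebraic constraints, then the matter evolution equations \eqref{E:CONTNULL13}--\eqref{E:MOMENTUMNULL13}, and finally the geometric (Einstein) equations \eqref{E:RWAVE1}--\eqref{E:CONSTRAINTU1}.

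\textbf{Step 1 (the energy-momentum tensor).} Starting from \eqref{E:EMTENSOR} with the equation of state \eqref{E:EOS}, so that $\rho+p=(1+\k)\rho$, I raise indices using \eqref{E:METRICNULL}: since $g^{\u\u}=g^{\v\v}=0$ and $g^{\u\v}=-2\Om^{-2}$, one gets $u^\u = g^{\u\v}u_\v = -2\Om^{-2}u_\v$ and similarly for $u^\v$. Then $T^{\u\u} = (1+\k)\rho (u^\u)^2$ directly from \eqref{E:EMTENSOR} because $g^{\u\u}=0$ kills the $pg^{\u\u}$ term; the same holds for $T^{\v\v}$. For the mixed component, $T^{\u\v} = (1+\k)\rho u^\u u^\v + p g^{\u\v}$, and using the normalisation \eqref{E:NORMALISATIONNULL}, $u^\u u^\v = \Om^{-2}$, together with $g^{\u\v}=-2\Om^{-2}$ and $p=\k\rho$, this collapses to $T^{\u\v}=(1-\k)\rho\Om^{-2}$. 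The angular components follow from $g^{AB}=r^{-2}\gamma^{AB}$ and $u^A=0$, giving $T^{AB}=p\,r^{-2}\gamma^{AB}=\k\rho r^{-2}\gamma^{AB}$, while $T^{\u A}=T^{\v A}=0$ since $u^A=0$ and $g^{\u A}=g^{\v A}=0$. The algebraic relation \eqref{E:TCONSTRAINT} is then immediate: $T^{\u\u}T^{\v\v} = (1+\k)^2\rho^2 (u^\u u^\v)^2 = (1+\k)^2\rho^2\Om^{-4}$, whereas $(1+\e)^2(T^{\u\v})^2 = (1+\e)^2(1-\k)^2\rho^2\Om^{-4}$, and by \eqref{E:ETADEF}, $1+\e = \frac{1+\k}{1-\k}$, so $(1+\e)(1-\k)=1+\k$ and the two sides match. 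Note this same identity $1+\e=\frac{1+\k}{1-\k}$ is what produces the coefficient $(1+\e)$ in \eqref{E:OMEGAWAVE1}.

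\textbf{Step 2 (matter equations).} Equations \eqref{E:CONTNULL13}--\eqref{E:MOMENTUMNULL13} are the two independent components of $\nabla_\alpha T^{\alpha\beta}=0$ (\eqref{E:BIANCHI}) in the $(\u,\v)$ directions. I would compute $\nabla_\alpha T^{\alpha\u}$ and $\nabla_\alpha T^{\alpha\v}$ using the Christoffel symbols of \eqref{E:DNG}; the nonzero ones are $\Gamma^\u_{\u\u}=\pau\log\Om^2$, $\Gamma^\v_{\v\v}=\pav\log\Om^2$, $\Gamma^A_{\u B}=\frac{\pau r}{r}\delta^A_B$, $\Gamma^A_{\v B}=\frac{\pav r}{r}\delta^A_B$, $\Gamma^\u_{AB}=2\Om^{-2}r\pav r\,\gamma_{AB}$, $\Gamma^\v_{AB}=2\Om^{-2}r\pau r\,\gamma_{AB}$ (up to signs I would recheck). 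The angular components $T^{AB}=\k\rho r^{-2}\gamma^{AB}$ feed back into the $\u$- and $\v$-divergences through the $\Gamma^\u_{AB}$, $\Gamma^\v_{AB}$ terms. Collecting terms and multiplying through by suitable powers of $\Om$ and $r$ (chosen so that the connection terms assemble into total derivatives — this is the standard trick) should produce exactly the divergence-form equations \eqref{E:CONTNULL13}--\eqref{E:MOMENTUMNULL13}; the weights $\Om^4r^2$, $\Om^2r^{2+2\e}$, $r^{2\e}$, $\Om^{-2}$ are dictated precisely by requiring the Christoffel contributions to combine into derivatives of a product, and the appearance of $r^{2\e}$ rather than $r^2$ reflects the $\k$-dependent pressure contribution from $T^{AB}$.

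\textbf{Step 3 (Einstein equations) and the main obstacle.} For \eqref{E:RWAVE1}--\eqref{E:CONSTRAINTU1} I would take the known spherically symmetric form of the Einstein tensor in double-null coordinates: the $\u\u$ and $\v\v$ components of $\text{Ric}_{\alpha\beta}-\tfrac12\mathcal R g_{\alpha\beta}=T_{\alpha\beta}$ give the two constraint (Raychaudhuri) equations \eqref{E:CONSTRAINTV1}--\eqref{E:CONSTRAINTU1} after recognizing $-r^{-1}\pav(\Om^{-2}\pav r)$ (resp. $\pau$) as the relevant curvature expression and equating to $\pi\Om^2 T^{\u\u}$ (resp. $T^{\v\v}$) — the factor $\pi$ absorbs the normalization with $8\pi G=1$ implicit in \eqref{E:EINSTEIN}. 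The $\u\v$ component gives the wave equation for $r$, \eqref{E:RWAVE1}, with source $\pi r\Om^4 T^{\u\v}$, and the angular ($AB$) component gives the wave equation for $\log\Om$, \eqref{E:OMEGAWAVE1}, where the coefficient $-(1+\e)$ in front of $\pi\Om^4T^{\u\v}$ arises from combining the angular Einstein equation with the trace relation \eqref{E:CURVATUREDENSITY} / the fact that $g^{AB}T_{AB}=2\k\rho$ and $T^{\u\v}=(1-\k)\rho\Om^{-2}$, so that the density eliminates in favor of $(1+\e)T^{\u\v}$. The main obstacle I expect is precisely \textbf{bookkeeping consistency}: there are many sign and factor-of-two conventions (in the metric normalization $g_{\u\v}=-\tfrac12\Om^2$, in the definition of $\e$, in the $8\pi$ vs.\ $\pi$ normalization of \eqref{E:EINSTEIN}), and the only genuine check is that all four Einstein equations plus the two Bianchi identities form a consistent overdetermined system — i.e.\ that \eqref{E:CONTNULL13}--\eqref{E:MOMENTUMNULL13} follow from \eqref{E:RWAVE1}--\eqref{E:CONSTRAINTU1} via the contracted Bianchi identity. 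I would verify this compatibility at the end as a sanity check, and I would double-check the coefficient $(1+\e)$ in \eqref{E:OMEGAWAVE1} against the Newtonian/dust limit $\k\to0$ (where $\e\to0$) to make sure the wave equation for $\log\Om$ reduces correctly.
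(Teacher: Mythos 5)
Your proposal is correct and follows essentially the same route as the paper: Step~1 (raising indices with \eqref{E:METRICNULL} and \eqref{E:NORMALISATIONNULL}, using $1+\e=\tfrac{1+\k}{1-\k}$) and Step~2 (computing $\nabla_\alpha T^{\alpha\u}=\nabla_\alpha T^{\alpha\v}=0$ with the null-gauge Christoffel symbols and reweighting to get the divergence forms \eqref{E:CONTNULL13}--\eqref{E:MOMENTUMNULL13}) are exactly what the paper does. The one place the paper takes a shortcut is Step~3: rather than recomputing the Einstein tensor, it simply cites the spherically symmetric double-null equations from Section~5 of Dafermos--Rendall, and then observes that the trace relation $\gamma^{AB}T_{AB}=2\k\rho r^2=\e\,\Om^2 r^2\,T^{\u\v}$ is what turns the generic angular-trace term in the $\log\Om$ wave equation into the coefficient $(1+\e)$ in \eqref{E:OMEGAWAVE1}; your explanation of that coefficient is equivalent in content though phrased less directly.
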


\begin{proof}
The proof is a straightforward calculation and is given in Appendix~\ref{A:DOUBLENULL}.
\end{proof}


\subsubsection{The characteristic Cauchy problem and asymptotic flattening}


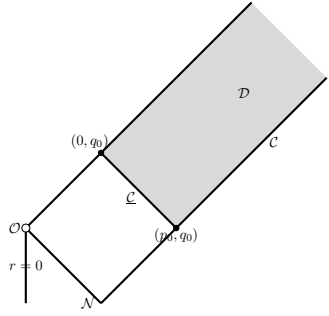
\begin{figure}

\begin{center}
\begin{tikzpicture}
\begin{scope}[scale=0.5, transform shape]

\coordinate [label=left:$\mathcal O$] (A) at (-2,-2){};

\coordinate (I) at (0,-4);

\coordinate (J) at (4,4);


\coordinate (L) at (-2,-4);

\coordinate (P) at (0,0);

\coordinate (Q) at (2,-2){};

\coordinate (S) at (6,2){};

\node at (2,-2.2) {$(\u_0,\v_0)$};


\node at (-2,-3) {$r=0$};

\node at (0.8,-1.2) {$\uC$};

\node at (4.6,0.3) {$\C$};

\node at (-0.3,0.3) {$(0,q_0)$};

\draw [fill=gray!30, dashed] (J) -- (P) -- (Q) -- (S);

\coordinate [label=left:$\mathcal N$] (F) at (0,-4){};

\draw[thick] (A)--(L);

\draw[thick] (A)--(J);

\draw[thick] (A)--(I);

\draw[thick] (P)--(Q);

\draw[fill=white] (-2,-2) circle (3pt);

\draw[fill=black] (2,-2) circle (2pt);

\draw[fill=black] (0,0) circle (2pt);

\draw[thick] (I)--(S);

\node at (3.8,1.6) {$\D$};

\end{scope}
 \end{tikzpicture}
   \end{center}
    \caption{The grey shaded area is the region $\D$, where the truncation of the self-similar profile takes place. Data are prescribed on the two characteristic surfaces $\C$ and $\uC$.}    
     \label{F:CUTOFF}
\end{figure}

The idea is to choose a point $(\u_0,\v_0)$ in the exterior region (recall Figure~\ref{F:EXTERIOR}) and solve the Einstein-Euler system
in an infinite semi-rectangular domain (in the $(\u,\v)$-plane) $\D$ depicted in Figure~\ref{F:CUTOFF}. We normalise the choice
of the double-null coordinates by making the outgoing curve $\mathcal B_1$ in the RLP-spacetime (see Figures~\ref{F:INGOING}--\ref{F:EXTERIOR})
correspond to the $\{\u=0\}$ level set, and the ingoing curve $\mathcal N$ (see Figure~\ref{F:EXTERIOR}) to the $\{\v=0\}$ level set.
We have the freedom to prescribe the data along 
the ingoing boundary $\uC = \{(\u,\v)\,\big| \, \u\in[\u_0,0], \v=\v_0\}$ and the outgoing boundary $\C = \{(\u,\v)\,\big| \, \u=\u_0, \v\ge\v_0\}$. On $\uC$ we demand that data be given by the restriction of
the self-similar RLP solution to $\uC$, and on the outgoing piece we make the data exactly self-similar on a subinterval $\v\in[\v_0,\v_0+A_0]$ for some $A_0>0$. On the remaining part of the outgoing boundary $\v\in[\v_0+A_0,\infty)$,
we prescribe asymptotically flat data.

The key result of Section~\ref{S:DOUBLENULL} is Theorem~\ref{thm:LWP}, which states that the above described PDE
is well-posed on $\D$, if we choose $|p_0|=\delta$ sufficiently small. We are not aware of such a well-posedness result for the
Einstein-Euler system in the double-null gauge in the literature, and we therefore carefully develop the necessary theory.
Precise statements are provided in Section~\ref{SS:LWPSTATEMENT}.
The idea is standard and relies on the method of characteristics. However, to make it work we rely on an effective ``diagonalisation"
of the Euler equations~\eqref{E:CONTNULL13}--\eqref{E:MOMENTUMNULL13}, which replaces these equations by two transport
equations for the new unknowns $f^+$ and $f^-$, see Lemma~\ref{L:EULERREFORM}. This change of variables 
highlights the role of the acoustic cone and allows us to track the acoustic domain of dependence by following the characteristics, see Figure~\ref{F:SOUNDVERSUSLIGHT}.
The analysis of the fluid characteristics is presented in Section~\ref{SS:CHARACTERISTICS}. Various a priori bounds are given in Section~\ref{SS:APRIORIPDE}. 
In Section~\ref{SS:LWPPDE} we finally introduce an iteration procedure and prove Theorem~\ref{thm:LWP}.

\begin{figure}

\begin{center}
\begin{tikzpicture}
\begin{scope}[scale=0.6, transform shape]

\coordinate [label=left:$\mathcal O$] (A) at (-2,-2){};

\coordinate (I) at (0,-4);

\coordinate (J) at (4,4);


\coordinate (L) at (-2,-4);

\coordinate (P) at (0,0);

\coordinate (Q) at (2,-2){};

\coordinate (S) at (6,2){};

\coordinate (T) at (1,1){};

\coordinate (U) at (3,-1){};

\node at (2,-2.2) {$(\u_0,\v_0)$};

\node at (4,-1.1) {$(\u_0,\v_0+A_0)$};


\node at (-2,-3) {$r=0$};

\node at (0.8,-1.2) {$\uC$};

\node at (4.6,0.3) {$\C$};

\node at (-0.3,0.3) {$(0,\v_0)$};

\draw [fill=gray!30, dashed] (J) -- (P) -- (Q) -- (S);

\draw [fill=gray!10, dashed] (T) -- (P) -- (Q) -- (U);

\coordinate [label=left:$\mathcal N$] (F) at (-0.5,-3.5){};

\draw[thick] (A)--(L);

\draw[thick] (A)--(J);

\draw[thick] (A)--(I);

\draw[thick] (P)--(Q);

\draw[fill=white] (-2,-2) circle (3pt);

\draw[fill=black] (2,-2) circle (2pt);

\draw[fill=black] (0,0) circle (2pt);

\draw[fill=black] (3,-1) circle (2pt);

\draw[thick] (I)--(S);

\draw[dashed] (U)--(T);

\node at (3.5,1.3) {$\D\setminus \D_{A_0}$};

\node at (1.8,-0.6) {$\D_{A_0}$};

\end{scope}
 \end{tikzpicture}
   \end{center}
    \caption{The light grey shaded area is the region $\D_{A_0}$, where the truncated solution from Theorem~\ref{thm:LWP} coincides with the exact self-similar solution.}  
\label{F:CUTOFF2}
\end{figure}
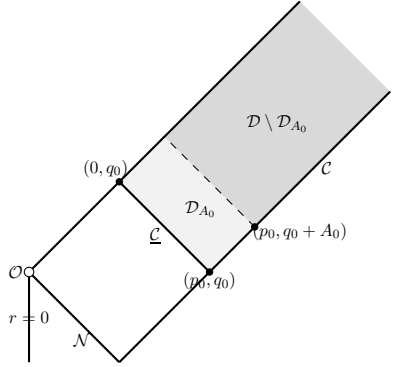

\subsubsection{Formation of naked singularities: proof of Theorem~\ref{T:MAINTHEOREM}}\label{SS:NSINTRO}

Since the data on $\uC$ and the portion of $\C$ with $\v_0\le \v \le \v_0+A_0$
agrees with the RLP-solution, the solution must, by the finite-speed of propagation, 
coincide with the exact RLP solution in the region $\D_{A_0}$ depicted in Figure~\ref{F:CUTOFF2}.
By uniqueness the solution extends smoothly (in fact analytically) to the exact RLP-solution
in the past of $\uC$, all the way to the regular centre $\{r=R=0\}$.

The future boundary of the maximal development is precisely the surface $\{\u=0\}$, the Cauchy horizon for the new spacetime.
The boundary of the null-cone corresponding to $\{\u=\u_0\}$ is complete, and we show in Section~\ref{SS:NAKED}
that for any sequence of points $(\u_0,\v_n)$ with $\v_n$ approaching infinity, the affine length of maximal future-oriented ingoing geodesics launched 
from $(\u_0,\v_n)$ and normalised so that the tangent vector corresponds to $\pau$, is bounded by a constant, uniformly-in-$n$. This shows that the
future null-infinity is incomplete in the sense of~\cite[Definition 1.1]{RoSR2019}, thus completing the proof of Theorem~\ref{T:MAINTHEOREM}, see the Penrose diagram, 
Figure~\ref{F:PENROSE}.


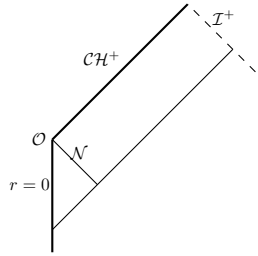
\begin{figure}

\begin{center}
\begin{tikzpicture}
\begin{scope}[scale=0.6, transform shape]

\coordinate [] (A) at (0,0){};

\coordinate [] (A') at (0,-0.5);

\coordinate[label=left:$\mathcal O$] (O) at (0,2);

\coordinate (B) at (4,4);

\coordinate (B') at (4.5,3.5);

\coordinate (C) at (3,5);

\coordinate (E) at (1,1);

\draw [thick] (O) -- (A');

\draw (A) -- (B);

\draw[dashed] (B') -- (C);

\draw [thick] (O) -- (C);

\draw (O)--(E);

\node at (0.6,1.7) {$\mathcal N$};

\node at (3.8,4.7) {$\mathcal I^+$};

\node at (1.1, 3.8) {$\mathcal C\mathcal H^+$};

\node at (-0.5,1) {$r=0$};

%

%
%
%
%
%
%
%
%

\end{scope}
 \end{tikzpicture}
   \end{center}
    \caption{Penrose diagram of our spacetime with an incomplete future null-infinity.}  
\label{F:PENROSE}
\end{figure}

%


\section{Radially symmetric Einstein-Euler system in comoving coordinates}\label{S:FORMULATION}

\subsection{Self-similar comoving coordinates}


We plug in~\eqref{E:SS1}--\eqref{E:SS6} into~\eqref{E:RHOEQN1}--\eqref{E:LAMBDACONSTRAINT1} and obtain the self-similar formulation of the 
Einstein-Euler system:
\begin{align}
2\Sigma + y\Sigma'(y) +(1+\k)\Sigma\left(\frac{V'}{\tr'}+\frac{2V}{\tr}\right) e^{\mu} & = 0, \label{E:RHOEQNSS}\\
y \l'(y) = e^{\mu}\frac{V'}{\tr'}, \label{E:LAMBDAEQNSS}\\
e^{-\mu} y V'(y) + \frac{1}{1+\k} \frac{\tr'(y) e^{-2\l(y)}}{\Sigma(y)}\Sigma'(y) + \tr\left(\frac13 \tG + 2\k \Sigma\right) & = 0 ,\label{E:VEQNSS}\\
\tr'^2 e^{-2\l} & = 1+\k V^2 - \frac{2}{3} \k \tG \tr^2. \label{E:LAMBDACONSTRAINTSS}
\end{align}

\begin{remark}
Recall that the radial velocity $\mathcal V$ satisfies $\mathcal V=e^{-\mu}\partial_\tau r$ and therefore
\begin{align}
V = e^{-\mu} (-\rl + y\rl'(y)). \label{E:RADIALVELSS}
\end{align}
From~\eqref{E:RHOEQNSS},~\eqref{E:LAMBDAEQNSS}, 
and~\eqref{E:RADIALVELSS} we obtain 
\begin{align}
0 & = 2 + y\frac{\Sigma'(y)}{\Sigma(y)} +(1+\k)\left(y \l'(y)+2 \left(y \frac{\tr'(y)}{\tr(y)}-1\right)\right) \notag 
\end{align}
and therefore
\begin{align}
\l'(y) = -\frac1{1+\k}\left(\frac{\Sigma'}{\Sigma}+\frac{2}{y}\right) -2 \left(\frac{\tr'}{\tr}-\frac1y\right). \label{E:LAMBDASSNEW}
\end{align}
For smooth solutions with strictly positive density $\Sigma$, an explicit integration of the identity~\eqref{E:LAMBDASSNEW} gives the formula
\begin{align}\label{E:LAMBDAFORMULA}
e^{\lambda} = \alpha \Sigma^{-\frac1{1+\k}} y^{\frac{2\k}{1+\k}}\tr^{-2},
\end{align}
for some constant $\alpha>0$.
\end{remark}

\begin{remark}[Gauge normalisation]
From~\eqref{E:MUFORMULA0} we also obtain
\begin{align}\label{E:MUFORMULASS}
\mu'(y) = - \frac{\k}{1+\k} \frac{\Sigma'(y)}{\Sigma(y)}.
\end{align}
For any given $0<\k<1$ we finally fix the remaining freedom in the problem by
setting 
\begin{align}\label{E:GAUGENORM}
c = e^{2\mu(0)}\Sigma(0)^\frac{2\k}{\k+1} = \frac1{(1+\k)^2}.
\end{align} 
Upon integrating~\eqref{E:MUFORMULASS} we obtain the identity
\begin{align}\label{E:MUFORMULASS2}
e^{\mu(y)} = \frac1{1+\k} \ \Sigma(y)^{-\frac{\k}{1+\k}}.
\end{align}

\end{remark}


%



Before we reduce the self-similar formulation~\eqref{E:RHOEQNSS}--\eqref{E:LAMBDACONSTRAINTSS} to a suitable $2\times2$ nonautonomous ODE system, we first prove
an auxiliary lemma.


\begin{lemma}\label{L:NONLOCALISLOCAL}
Let $(\Sigma,V,\l,\mu)$ be a smooth solution to~\eqref{E:RHOEQNSS}--\eqref{E:LAMBDACONSTRAINTSS}.
\begin{enumerate}
\item[{\em (a)}]
The following important relationship holds:
\begin{align}\label{E:WTR}
\frac{\w+\k}{1+\k} =  \frac{y\tr'}{\tr}, 
\end{align}
with $\w$  defined by~\eqref{E:LITTLEWDEF}. 
\item[{\em (b)}]
(Local expression for the mean density)
The selfsimilar mean density $\tilde G$ defined by
\be\label{E:TILDEGSS}
\tilde G(y) = \frac{6}{\tilde r^3}\int_0^y \Sigma \tilde r^2 \tilde r' d\tilde y 
\ee
satisfies the relation
\be\label{E:TILDEGLOCAL}
\tilde G = 6 \Sigma \w. 
\ee
\item[{\em (c)}]
The expression $K$ defined by
\begin{align}\label{H1}
K:=-\frac \w{1+\k}+e^{2\mu}\left(\frac16\frac{(1+\k)\tG}{\w+\k}+\frac{\k(1+\k) \Sigma}{\w+\k}\right).
\end{align}
satisfies the relation
\be
K =  \frac1{1+\k}\left(\d - \w\right),
\ee
with $\d$ defined by~\eqref{E:LITTLEBOLDDDEF}.
\item[{\em (d)}]
The metric coefficient $e^{2\l}$ satisfies the formula
 \begin{align}\label{E:LAMBDAFLUID}
e^{2\l} = \frac{\tr^2(\w+\k)^2}{(1+\k)^2 y^2\left[1 + \k \Sigma^{\frac{2\k}{1+\k}}\tr^2(\w-1)^2-4\k \Sigma\w \tr^2\right]}. 
\end{align}
\end{enumerate}
\end{lemma}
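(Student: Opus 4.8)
The plan is to derive the four identities (a)--(d) essentially algebraically from the self-similar equations~\eqref{E:RHOEQNSS}--\eqref{E:LAMBDACONSTRAINTSS}, the definitions~\eqref{E:LITTLEBOLDDDEF}--\eqref{E:LITTLEWDEF}, and the gauge-normalized formula~\eqref{E:MUFORMULASS2}. For part (a), I start from the definition~\eqref{E:LITTLEWDEF} of $\w$, namely $\w = (1+\k)\frac{e^\mu V + \tr}{\tr} - \k$, so that $\frac{\w+\k}{1+\k} = \frac{e^\mu V + \tr}{\tr} = 1 + \frac{e^\mu V}{\tr}$. Then I invoke the relation~\eqref{E:RADIALVELSS}, $V = e^{-\mu}(-\tr + y\tr')$, which comes directly from $\mathcal V = e^{-\mu}\pa_\tau r$ and the self-similar ansatz~\eqref{E:AREARADIUSSS}; substituting gives $e^\mu V = -\tr + y\tr'$, hence $\frac{\w+\k}{1+\k} = 1 + \frac{-\tr + y\tr'}{\tr} = \frac{y\tr'}{\tr}$, which is~\eqref{E:WTR}. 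This step is pure substitution.

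For part (b), the point is that the nonlocal quantity $\tG = \frac{6}{\tr^3}\int_0^y \Sigma \tr^2\tr'\,d\tilde y$ is in fact a local algebraic expression. I would differentiate $\tr^3 \tG = 6\int_0^y \Sigma\tr^2\tr'\,d\tilde y$ in $y$ to get $3\tr^2\tr'\tG + \tr^3\tG' = 6\Sigma\tr^2\tr'$, i.e. $\tr\tG' = 3\tr'(2\Sigma - \tG)$. The claim $\tG = 6\Sigma\w$ then follows if I can show this ODE, together with the boundary behavior at $y=0$, forces that identity — equivalently, I verify that $6\Sigma\w$ solves the same first-order ODE. Using~\eqref{E:WTR} to write $\frac{\tr'}{\tr} = \frac{\w+\k}{(1+\k)y}$, and using~\eqref{E:RHOEQNSS} rewritten via~\eqref{E:LAMBDASSNEW}–type manipulations to express $\Sigma'/\Sigma$, plus the $\w$-equation~\eqref{E:WSSEQNBOLD} (or its precursor before the denominator is introduced), I compute $(6\Sigma\w)'$ and check it equals $3\tr'\tr^{-1}(2\Sigma - 6\Sigma\w)\cdot\tr = 3\tr'(2\Sigma-6\Sigma\w)$ after dividing by $\tr$ — matching the ODE for $\tG$. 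Since both $\tG$ and $6\Sigma\w$ are finite at $y=0$ and the linear ODE $\tr f' = 3\tr'(2\Sigma - f)$ has a unique such solution, they coincide. Alternatively, and perhaps more cleanly, I can avoid differentiating and just observe that~\eqref{E:RHOEQNSS} can be integrated: rewrite $2\Sigma + y\Sigma' + (1+\k)\Sigma(\frac{V'}{\tr'} + \frac{2V}{\tr})e^\mu = 0$ using $y\l' = e^\mu V'/\tr'$ from~\eqref{E:LAMBDAEQNSS} and~\eqref{E:RADIALVELSS}, to get a conservation law whose integral is exactly $m$; comparing with~\eqref{E:HAWKINGCOMOVING} and~\eqref{E:SS6} yields $\tG = 6\Sigma\w$ directly. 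I expect (b) to be the main obstacle, because it is the one genuinely non-algebraic identity: it requires either a careful integration-by-parts/uniqueness argument or recognizing the right total derivative, and one must be careful about the behavior of the integrand near $y=0$ (where $\tr \sim y$ and the integral is genuinely convergent).

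For part (c), once (b) is in hand this is straightforward algebra. I substitute $\tG = 6\Sigma\w$ into the definition~\eqref{H1} of $K$:
\[
K = -\frac{\w}{1+\k} + e^{2\mu}\left(\frac{(1+\k)\cdot 6\Sigma\w}{6(\w+\k)} + \frac{\k(1+\k)\Sigma}{\w+\k}\right) = -\frac{\w}{1+\k} + e^{2\mu}\frac{(1+\k)\Sigma(\w+\k)}{\w+\k} = -\frac{\w}{1+\k} + (1+\k)e^{2\mu}\Sigma.
\]
Now I use the gauge-normalized identity~\eqref{E:MUFORMULASS2}, $e^{\mu} = \frac{1}{1+\k}\Sigma^{-\frac{\k}{1+\k}}$, so $(1+\k)e^{2\mu}\Sigma = \frac{1}{1+\k}\Sigma^{-\frac{2\k}{1+\k}}\Sigma = \frac{1}{1+\k}\Sigma^{1 - \frac{2\k}{1+\k}} = \frac{1}{1+\k}\Sigma^{\frac{1-\k}{1+\k}} = \frac{\d}{1+\k}$, recalling~\eqref{E:LITTLEBOLDDDEF}. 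Therefore $K = \frac{1}{1+\k}(\d - \w)$, as claimed.

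For part (d), I start from the constraint~\eqref{E:LAMBDACONSTRAINTSS}, $\tr'^2 e^{-2\l} = 1 + \k V^2 - \frac23\k\tG\tr^2$. I solve for $e^{2\l} = \tr'^2 / (1 + \k V^2 - \frac23\k\tG\tr^2)$, then substitute $\tr' = \frac{\tr(\w+\k)}{(1+\k)y}$ from~\eqref{E:WTR} into the numerator, giving $\tr'^2 = \frac{\tr^2(\w+\k)^2}{(1+\k)^2 y^2}$. For the denominator I substitute $\tG = 6\Sigma\w$ from (b), so $-\frac23\k\tG\tr^2 = -4\k\Sigma\w\tr^2$, and I rewrite $\k V^2$ using $e^\mu V = -\tr + y\tr' = \tr(\frac{y\tr'}{\tr} - 1) = \tr(\frac{\w+\k}{1+\k} - 1) = \tr\frac{\w-1}{1+\k}$, hence $V = e^{-\mu}\tr\frac{\w-1}{1+\k}$ and $\k V^2 = \k e^{-2\mu}\tr^2\frac{(\w-1)^2}{(1+\k)^2}$. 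Using~\eqref{E:MUFORMULASS2} again, $e^{-2\mu} = (1+\k)^2\Sigma^{\frac{2\k}{1+\k}}$, so $\k V^2 = \k\Sigma^{\frac{2\k}{1+\k}}\tr^2(\w-1)^2$. Collecting, the denominator becomes $1 + \k\Sigma^{\frac{2\k}{1+\k}}\tr^2(\w-1)^2 - 4\k\Sigma\w\tr^2$, which yields precisely~\eqref{E:LAMBDAFLUID}. All of (a), (c), (d) are thus mechanical once~\eqref{E:RADIALVELSS} and~\eqref{E:MUFORMULASS2} are available; the only real content is (b), and I would present it first if the integration-based proof turns out cleaner, since (c) and (d) both consume it.
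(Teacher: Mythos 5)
Your arguments for (a), (c), and (d) coincide with the paper's: (a) is direct substitution of~\eqref{E:RADIALVELSS} into~\eqref{E:LITTLEWDEF}, (c) is algebra from~\eqref{E:TILDEGLOCAL} and~\eqref{E:MUFORMULASS2}, and (d) follows by plugging~\eqref{E:WTR},~\eqref{E:TILDEGLOCAL}, and the reorganized $V$ into the constraint~\eqref{E:LAMBDACONSTRAINTSS}.

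On part (b), which you correctly flag as the only nontrivial step, you offer two routes. Your ``alternatively, observe it integrates'' route is the paper's: multiply~\eqref{E:RHOEQNSS} by $\tr^2\tr'$, use $y\tr' = \tr + Ve^\mu$ (again~\eqref{E:RADIALVELSS}) and the identity $(\Sigma e^\mu)' = \tfrac{1}{1+\k}\Sigma' e^\mu$ (from~\eqref{E:MUFORMULASS}), and recognize the whole expression as $\bigl(\Sigma\tr^3\w\bigr)'$, so that integrating from $0$ to $y$ gives $\int_0^y\Sigma\tr^2\tr'\,ds = \Sigma\tr^3\w$, i.e.~\eqref{E:TILDEGLOCAL}. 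One small imprecision: you suggest invoking~\eqref{E:LAMBDAEQNSS} ($y\lambda' = e^\mu V'/\tr'$) here, but $\lambda$ plays no role in the paper's computation; what is actually needed is the $\mu$-relation~\eqref{E:MUFORMULASS}, which is what lets the $\Sigma'$ and $V'$ contributions combine into the exact derivative $(e^\mu\Sigma\tr^2 V)'$.

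Your first, ODE-uniqueness route (show $6\Sigma\w$ satisfies $\tr f' = 3\tr'(2\Sigma - f)$ and conclude $f = \tG$ by uniqueness) is genuinely different and is workable, but it is less economical and hides a subtlety you should make explicit. Because $\tr\sim y^{(1+3\k)/(3(1+\k))}$ as $y\to 0^+$ (cf.~\eqref{E:TILDERREG}), the coefficient $\tr'/\tr \sim c/y$ is singular at the origin, so the ODE has a regular singular point there; uniqueness of the \emph{bounded} solution requires pinning down the value $f(0) = 2\Sigma(0)$, which for $6\Sigma\w$ in turn requires $\w(0)=\tfrac13$ — a regularity/boundary condition not among the hypotheses of the lemma as stated (which assumes only smoothness of a solution to~\eqref{E:RHOEQNSS}--\eqref{E:LAMBDACONSTRAINTSS}). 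The integration route sidesteps this entirely: the only boundary input is $\lim_{y\to0}\Sigma\tr^3\w = 0$, which follows from $\tr\to 0$ and boundedness. Also, be careful that when you invoke the ``$\w$-equation'' to compute $(6\Sigma\w)'$ you do not silently use~\eqref{E:WSSEQNBOLD} itself, whose derivation (Proposition~\ref{P:COMOVINGSS}) already relies on parts (b) and (c) of this lemma; you would need the pre-substitution version built only from~\eqref{E:RHOEQNSS} and~\eqref{E:VEQNSS}. Since you also say the integration route is likely cleaner, I would commit to it — it is both what the paper does and the argument least vulnerable to circularity or boundary-condition issues.
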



\begin{proof}
{\em Proof of part {\em (a)}.}
This is a trivial consequence of ~\eqref{E:RADIALVELSS} and~\eqref{E:LITTLEWDEF}.

\noindent
{\em Proof of part {\em (b)}.}
After multiplying~\eqref{E:RHOEQNSS} by $\tilde r^2 \tilde r'$ we obtain
\begin{align}
0&= 2\Sigma \tilde r^2 \tilde r' + \underbrace{y \tilde r' }_{= \tilde r+ Ve^\mu }\tilde r^2 \Sigma'  + (1+\k) \Sigma (\tilde r^2 V' + 2 \tilde r \tilde r' V) e^\mu \\
&=  2\Sigma \tilde r^2 \tilde r' + \Sigma' \tilde r^3 + e^\mu \Sigma' \tilde r^2 V + (1+\k) e^\mu \Sigma (\tilde r^2 V)' \\
&= 2\Sigma \tilde r^2 \tilde r' + \Sigma' \tilde r^3  + (1+\k) (e^\mu \Sigma \tilde r^2 V) ' 
\end{align}
where in the second line we used $y \tilde r' = \tilde r+ Ve^\mu$ (which follows from~\eqref{E:RADIALVELSS})
and in the last line the identity
\[
(\Sigma e^\mu)' = \Sigma' e^\mu + \Sigma e^\mu ( - \frac{\k}{1+\k} \frac{\Sigma'}{\Sigma}) = \frac{1}{1+\k} \Sigma' e^\mu,
\]
which uses the field equation~\eqref{E:MUFORMULASS}.
After integrating over the interval $[0,y]$ and integrating-by-parts, we obtain 
\be
\int_0^y \Sigma \tilde r^2 \tilde r' ds = \Sigma \tilde r^2 (\tilde r + (1+\k) e^\mu V) = \Sigma \tilde r^3 \w
\ee
where we have used~\eqref{E:LITTLEWDEF}. Hence dividing by $\rl^3$ we obtain~\eqref{E:TILDEGLOCAL}. 

\noindent
{\em Proof of part {\em (c)}.}
From~\eqref{H1} and~\eqref{E:TILDEGLOCAL} we immediately have 
\be
K = (1+\k)e^{2\mu} \Sigma \frac{\w}{1+\k} =\frac1{1+\k} \left(\Sigma^{\frac{1-\k}{1+\k}} - \w \right),
\ee
where we have used~\eqref{E:MUFORMULASS2}.

\noindent
{\em Proof of part {\em (d)}.} The formula is a simple consequence of~\eqref{E:LAMBDACONSTRAINTSS},~\eqref{E:RADIALVELSS}, and~\eqref{E:TILDEGLOCAL}.
\end{proof}



\begin{proposition}[The ODE system in comoving self-similar coordinates]\label{P:COMOVINGSS}
Let $(\Sigma,V,\l,\mu)$ be a smooth solution to~\eqref{E:RHOEQNSS}--\eqref{E:LAMBDACONSTRAINTSS}.
Then the pair $(\d,\w)$ solves the system~\eqref{E:DSSEQNBOLD}--\eqref{E:WSSEQNBOLD},
where $\d,\w$ are defined in~\eqref{E:LITTLEBOLDDDEF}--\eqref{E:LITTLEWDEF}.
\end{proposition}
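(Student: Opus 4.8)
\textbf{Proof plan for Proposition~\ref{P:COMOVINGSS}.}
The plan is to derive the two ODEs~\eqref{E:DSSEQNBOLD}--\eqref{E:WSSEQNBOLD} directly from the self-similar field equations~\eqref{E:RHOEQNSS}--\eqref{E:LAMBDACONSTRAINTSS} by rewriting everything in terms of the fundamental unknowns $\d=\Sigma^{\frac{1-\k}{1+\k}}$ and $\w$, using the algebraic identities already harvested in Lemma~\ref{L:NONLOCALISLOCAL}. The key observation that makes this tractable is that Lemma~\ref{L:NONLOCALISLOCAL} has already eliminated all nonlocal quantities: part (b) expresses $\tG = 6\Sigma\w$, part (a) gives $y\tr'/\tr = (\w+\k)/(1+\k)$, and part (c) packages the combination appearing in the velocity equation as $K = \frac{1}{1+\k}(\d-\w)$. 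So the work is ``merely'' to massage~\eqref{E:RHOEQNSS} and~\eqref{E:VEQNSS} into evolution equations for $\d$ and $\w$.

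First I would derive the equation for $\d$. Since $\d = \Sigma^{\frac{1-\k}{1+\k}}$, we have $\d'/\d = \frac{1-\k}{1+\k}\,\Sigma'/\Sigma$, so it suffices to get a clean expression for $\Sigma'/\Sigma$. Equation~\eqref{E:RHOEQNSS} gives $\frac{y\Sigma'}{\Sigma} = -2 - (1+\k)e^\mu\big(\frac{V'}{\tr'} + \frac{2V}{\tr}\big)$; using~\eqref{E:LAMBDAEQNSS} to replace $e^\mu V'/\tr' = y\l'$ and~\eqref{E:LAMBDASSNEW} (or directly~\eqref{E:RADIALVELSS} together with part (a)) to handle the remaining terms, one reduces this to an expression involving only $\d,\w$ and the metric factor $e^{2\mu-2\l}$. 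The key algebraic step is recognizing that the constraint~\eqref{E:LAMBDACONSTRAINTSS} rewritten via part (d), i.e. $e^{2\l} = \frac{\tr^2(\w+\k)^2}{(1+\k)^2y^2[\,\cdots]}$, combined with $e^{2\mu} = \frac{1}{(1+\k)^2}\Sigma^{-\frac{2\k}{1+\k}}$ from~\eqref{E:MUFORMULASS2} and with~\eqref{E:WTR}, collapses the messy bracket precisely into the denominator $e^{2\mu-2\l}y^{-2} - 1$ appearing in~\eqref{E:DSSEQNBOLD}. Then the factor $(\d-\w)$ in the numerator emerges from part (c): the velocity equation~\eqref{E:VEQNSS}, once one substitutes~\eqref{E:TILDEGLOCAL} and~\eqref{E:MUFORMULASS2}, is literally a statement about $K$, so $\d-\w$ appears naturally.

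Next I would derive the equation for $\w$. From~\eqref{E:LITTLEWDEF}, $\w = (1+\k)\frac{e^\mu V + \tr}{\tr} - \k$, so $\w+\k = (1+\k)(1 + e^\mu V/\tr)$ and one computes $\w'$ by differentiating, using $\mu' = -\frac{\k}{1+\k}\Sigma'/\Sigma$ from~\eqref{E:MUFORMULASS}, the expression for $V'$ from~\eqref{E:VEQNSS} (solved for $e^{-\mu}yV'$), the already-obtained formula for $\Sigma'/\Sigma$, and $\tr'/\tr$ from part (a). After substituting~\eqref{E:TILDEGLOCAL} to kill $\tG$ and grouping, the terms should organize into the two pieces of~\eqref{E:WSSEQNBOLD}: the ``algebraic'' part $\frac{(\w+\k)(1-3\w)}{(1+\k)y}$ coming from the mass/Friedmann-type terms, and the ``singular'' part $\frac{2\w(\d-\w)}{y(e^{2\mu-2\l}y^{-2}-1)}$ coming from the pressure-gradient term, which again carries the $\d-\w$ factor via part (c) and the same denominator simplification as before.

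\textbf{Main obstacle.} I expect the principal difficulty to be purely computational bookkeeping: verifying that the bracketed expression $1 + \k\Sigma^{\frac{2\k}{1+\k}}\tr^2(\w-1)^2 - 4\k\Sigma\w\tr^2$ in part (d), after being combined with $e^{2\mu}$, $e^{2\l}$, and the $\tr^2/y^2$ factors, simplifies \emph{exactly} to $1 - e^{2\mu-2\l}y^{-2}$ (up to sign and the overall $(\w+\k)^2$ normalization). This is the hinge on which the whole reduction turns, and it requires carefully tracking powers of $\Sigma$ (equivalently $\d$, since $\Sigma^{\frac{2\k}{1+\k}} = \d^{\eta}$ with $\eta = \frac{2\k}{1-\k}$) and of $y$; a sign error or a misplaced factor of $(1+\k)$ anywhere derails it. The rest is safe but tedious chain-rule differentiation, and I would organize it by first establishing the single master identity for $\Sigma'/\Sigma$ (hence $\d'$) and only then differentiating $\w$, so that each ODE is built from the same handful of substitutions rather than re-deriving intermediate quantities twice.
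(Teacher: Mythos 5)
Your overall strategy matches the paper's: the paper also derives a $2\times 2$ linear system for $(\Sigma',\w')$ from~\eqref{E:RHOEQNSS} and~\eqref{E:VEQNSS}, solves it, and then invokes parts~(b) and~(c) of Lemma~\ref{L:NONLOCALISLOCAL} to recognize the combination $K=\tfrac1{1+\k}(\d-\w)$. So the plan is sound in outline. Two points deserve flagging, however.

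First, a small derailment risk in the first step: substituting~\eqref{E:LAMBDAEQNSS} for $e^\mu V'/\tr'$ and then~\eqref{E:LAMBDASSNEW} for $\l'$ into~\eqref{E:RHOEQNSS}, together with part~(a), does not produce an evolution equation --- it collapses to a trivial identity (it is exactly the consistency of~\eqref{E:RADIALVELSS} with part~(a)). The productive route, which you mention parenthetically, is the one the paper actually takes: write $e^\mu V = \tr(\w-1)/(1+\k)$ directly from~\eqref{E:LITTLEWDEF}, differentiate this using~\eqref{E:MUFORMULASS} and part~(a), and substitute into both~\eqref{E:RHOEQNSS} and~\eqref{E:VEQNSS}. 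That yields two equations linear in $(\Sigma',\w')$ (the paper's~\eqref{E:NEQN2}--\eqref{E:WEQN2}); solving this $2\times 2$ system produces the determinant $y^2-e^{2\mu-2\l}$ in the denominator.

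Second, the ``main obstacle'' you anticipate is misplaced here. Proposition~\ref{P:COMOVINGSS} does not assert that~\eqref{E:DSSEQNBOLD}--\eqref{E:WSSEQNBOLD} is a closed system in $(\d,\w)$ alone; the factor $e^{2\mu-2\l}y^{-2}-1$ appears in the denominator exactly as written, with $\mu$ and $\l$ still ``unreduced.'' The paper explicitly notes (right after Definition~\ref{D:SPDEF}) that closing the system requires introducing $\tr$ (or $\ch$) as an additional unknown, or passing to the Schwarzschild coordinate $x=\tr(y)$. The constraint-based computation you worry about --- using~\eqref{E:LAMBDACONSTRAINTSS} and part~(d) of Lemma~\ref{L:NONLOCALISLOCAL} to express the denominator purely in terms of $\R,W,x$ --- belongs to the proof of Lemma~\ref{L:SCHW}, not this proposition. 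So the hard bookkeeping you fear is not required to prove~\eqref{E:DSSEQNBOLD}--\eqref{E:WSSEQNBOLD}; what is required, and what your plan slightly underweights, is the careful elimination step described above.
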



\begin{proof}
A routine calculation starting from~\eqref{E:RHOEQNSS} and~\eqref{E:VEQNSS} (these can be thought of as the continuity and the momentum equation respectively) gives:
\begin{align}
&\left(y+\k\frac y{\w+\k}(\w-1) \right) \Sigma' + \frac{(1+\k)\Sigma y}{\w+\k}\w' - (1+3\k)\Sigma+ 3 (\w+\k)\Sigma  = 0, \label{E:NEQN2}\\
&e^{-2\mu}y\Sigma \w' + \frac1{1+\k}\left(\k e^{-2\mu} y (\w-1)+\frac{(\w+\k)e^{-2\l}}{y}\right)\Sigma'+e^{-2\mu}\Sigma\frac{(\w+\k)(\w-1)}{1+\k} + (1+\k)\left(\frac13 \Sigma\tG + 2\k  \Sigma^2\right)  = 0.\label{E:WEQN2}
\end{align}
We may rewrite~\eqref{E:NEQN2}--\eqref{E:WEQN2} in the form
\begin{align}
\Sigma' & = \frac{2y(1+\k)\Sigma}{y^2-e^{2\mu-2\l}}\left(-\frac \w{1+\k}+e^{2\mu}\left(\frac16\frac{(1+\k)\tG}{\w+\k}+\frac{\k(1+\k) \Sigma}{\w+\k}\right)\right)  ,\label{E:N1}\\
\frac{\w'}{1+\k} & = \frac{(\w+\k)\left(1-3\w\right)}{(1+\k)^2y} - \frac{2y \w}{y^2 - e^{2\mu-2\l}}
\left(-\frac \w{1+\k}+e^{2\mu}\left(\frac16\frac{(1+\k)\tG}{\w+\k}+\frac{\k(1+\k) \Sigma}{\w+\k}\right)\right). \label{E:w1}
\end{align}
With~\eqref{H1} in mind,
equations~\eqref{E:N1}--\eqref{E:w1} take the form
\begin{align}
\Sigma' &= -  \frac{ 2(1+\k) \Sigma K }{y(e^{2\mu - 2\lambda} y^{-2} -1)} ,\label{E:DSSEQN}\\
\w' &= \frac{(\w+\k)(1 -3\w)}{(1+\k)y} + \frac{2(1+\k)\w K}{y(e^{2\mu - 2\lambda} y^{-2} -1)},\label{E:WSSEQN}
\end{align}
where $K$ is given by~\eqref{H1} and $\tilde G$ by~\eqref{E:TILDEGSS}. Equations~\eqref{E:DSSEQNBOLD}--\eqref{E:WSSEQNBOLD} now
follow from~\eqref{E:LITTLEBOLDDDEF} and parts (b) and (c) of Lemma~\ref{L:NONLOCALISLOCAL}.
\end{proof}

\subsection{Selfsimilar Schwarzschild coordinates}\label{SS:SCHW}


We recall the self-similar Schwarzschild coordinate introduced in~\eqref{E:SSSDEF}.


\begin{lemma}[Self-similar Schwarzschild formulation]\label{L:SCHW}
Let $(\Sigma,\w)$ be a smooth solution to~\eqref{E:DSSEQN}--\eqref{E:WSSEQN}. Then the variables $(\R,W)$ defined by~\eqref{E:RWDEF}
solve the system~\eqref{E:RODE}--\eqref{E:WODE}, with $B$ given by~\eqref{E:BDEF0} and $\e=\e(\k)=\frac{2\k}{1-\k}$.
\end{lemma}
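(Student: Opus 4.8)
The plan is to change variables from the comoving self-similar coordinate $y$ to the Schwarzschild self-similar coordinate $x=\tr(y)$ defined in~\eqref{E:SSSDEF}, and to track how the two ODEs~\eqref{E:DSSEQN}--\eqref{E:WSSEQN} (equivalently~\eqref{E:DSSEQNBOLD}--\eqref{E:WSSEQNBOLD}) transform. First I would record the Jacobian of the change of variables: by part (a) of Lemma~\ref{L:NONLOCALISLOCAL} we have $\frac{dx}{dy}=\tr'=\frac{x(\w+\k)}{y(1+\k)}$, which is exactly~\eqref{E:TILDERPRIME}. Since we are only claiming the statement on the range where the solution is smooth and the fluid is physical (so $\w+\k\neq0$, $x>0$, $y>0$), this map is locally invertible, and for any function $F(y)=\hat F(x)$ we have $F'(y)=\hat F'(x)\cdot\frac{x(\w+\k)}{y(1+\k)}$. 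In particular $\R'(x)=\d'(y)\cdot\frac{y(1+\k)}{x(\w+\k)}$ and $W'(x)=\w'(y)\cdot\frac{y(1+\k)}{x(\w+\k)}$.

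Next I would rewrite the denominator $y(e^{2\mu-2\l}y^{-2}-1)$ appearing in~\eqref{E:DSSEQNBOLD}--\eqref{E:WSSEQNBOLD} in terms of the Schwarzschild variables. Using the gauge-fixed formula~\eqref{E:MUFORMULASS2}, $e^{2\mu}=\frac{1}{(1+\k)^2}\Sigma^{-\frac{2\k}{1+\k}}=\frac{1}{(1+\k)^2}\d^{-\e}$ (recalling $\d=\Sigma^{\frac{1-\k}{1+\k}}$ and $\e=\frac{2\k}{1-\k}$, so that $\frac{2\k}{1+\k}\cdot\frac{1+\k}{1-\k}=\e$), and using the formula~\eqref{E:LAMBDAFLUID} from part (d) of Lemma~\ref{L:NONLOCALISLOCAL} for $e^{2\l}$, one computes
\[
e^{2\mu-2\l}=\frac{1}{(1+\k)^2}\d^{-\e}\cdot\frac{(1+\k)^2y^2\left[1+\k\Sigma^{\frac{2\k}{1+\k}}\tr^2(\w-1)^2-4\k\Sigma\w\tr^2\right]}{\tr^2(\w+\k)^2}.
\]
Hence $e^{2\mu-2\l}y^{-2}-1=\frac{1}{(\w+\k)^2x^2}\left(\d^{-\e}\left[1+\k\d^{\e}x^2(\w-1)^2-4\k\Sigma\w x^2\right]-(\w+\k)^2x^2\right)$; multiplying through and using $\Sigma=\d^{1+\e}$ (since $\frac{1+\k}{1-\k}=1+\e$) rearranges this into $\frac{1}{(\w+\k)^2x^2}B[x;\R,W]$ with $B$ exactly as in~\eqref{E:BDEF0}. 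Therefore $y(e^{2\mu-2\l}y^{-2}-1)=\frac{y}{(\w+\k)^2x^2}B$.

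Now I would simply substitute. In~\eqref{E:DSSEQNBOLD}, $\d'(y)=-\frac{2(1-\k)\d(\d-\w)}{(1+\k)y}\cdot\frac{(\w+\k)^2x^2}{yB}$, so multiplying by $\frac{y(1+\k)}{x(\w+\k)}$ gives $\R'(x)=-\frac{2(1-\k)\R(W+\k)(\R-W)x}{B}$, which is~\eqref{E:RODE}. Similarly, in~\eqref{E:WSSEQNBOLD} the first term $\frac{(\w+\k)(1-3\w)}{(1+\k)y}$ becomes, after multiplication by $\frac{y(1+\k)}{x(\w+\k)}$, exactly $\frac{1-3W}{x}$; and the second term $\frac{2\w(\d-\w)}{y(e^{2\mu-2\l}y^{-2}-1)}=\frac{2\w(\d-\w)(\w+\k)^2x^2}{yB}$ becomes $\frac{2x(1+\k)W(W+\k)(\R-W)}{B}$, giving~\eqref{E:WODE}. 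I would conclude by noting that only the boxed algebraic identities (namely the reduction of $e^{2\mu-2\l}y^{-2}-1$ to $\frac{B}{(\w+\k)^2x^2}$ via~\eqref{E:MUFORMULASS2} and~\eqref{E:LAMBDAFLUID}) require care; the rest is bookkeeping. The main obstacle I anticipate is precisely this algebraic simplification: one must track the exponents $\frac{2\k}{1+\k}$, $\frac{1-\k}{1+\k}$, and $\e=\frac{2\k}{1-\k}$ consistently and verify that the cross term $-4\k\Sigma\w\tr^2$ matches the $+4\k\R W x^2$ term inside the bracket in~\eqref{E:BDEF0}, as well as checking the sign conventions so that $B$ comes out with the stated sign. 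Everything else is a direct consequence of the chain rule and Lemma~\ref{L:NONLOCALISLOCAL}.
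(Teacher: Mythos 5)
Your proof is correct and follows essentially the same approach as the paper: change variables via~\eqref{E:TILDERPRIME} and then reduce $x^2(W+\k)^2\bigl(e^{2\mu-2\lambda}y^{-2}-1\bigr)$ to $B$ using the formulas for $e^{2\mu}$ and $e^{2\lambda}$. The only difference is organizational: you cite~\eqref{E:LAMBDAFLUID} (part~(d) of Lemma~\ref{L:NONLOCALISLOCAL}) directly for $e^{2\lambda}$, whereas the paper re-derives that identity on the spot from the constraint equation~\eqref{E:LAMBDACONSTRAINTSS}, \eqref{E:LITTLEWDEF}, and $\tilde G = 6\Sigma\w$.
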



\begin{proof}
With the above notation and Lemma~\ref{L:NONLOCALISLOCAL} we have $K= \frac1{1+\k}\left(\R-W\right)$.
It is straightforward to see that the system~\eqref{E:DSSEQN}--\eqref{E:WSSEQN} transforms into
\begin{align}
\R' & = - \frac{2(1-\k)x\R(W+\k)(\R-W)}{x^2(W+\k)^2(e^{2\mu - 2\lambda} y^{-2} -1)} ,\label{E:TILDEREQN}\\
W' & = \frac{1-3W}{x} + \frac{2(1+\k)x W(W+\k)(\R-W)}{x^2(W+\k)^2(e^{2\mu - 2\lambda} y^{-2} -1)}. \label{E:TILDEWEQN}
\end{align}
From~\eqref{E:TILDERPRIME}, the constraint equation~\eqref{E:LAMBDACONSTRAINTSS}, and~\eqref{E:LITTLEWDEF}
\begin{align}
\frac{(W+\k)^2}{(1+\k)^2} x^2e^{2\mu - 2\lambda} y^{-2}&= \tilde r'^2 e^{2\mu - 2\lambda} = e^{2\mu}(1+ \k V^2 - \frac{2}{3}\k \tilde G \tilde r^2)\notag\\
&=e^{2\mu} \left( 1 +\k e^{-2\mu}x^2 \frac{(W-1)^2}{(1+\k)^2} - 4 \k \Sigma W x^2 \right),\label{E:SONICKEY1}
\end{align}
where we have slightly abused notation by letting $\Sigma(x)=\Sigma(y)$.
Therefore
\be\label{E:SONICDEN}
\frac{(W+\k)^2}{(1+\k)^2} x^2e^{2\mu - 2\lambda} y^{-2} -\frac{(W+\k)^2}{(1+\k)^2} x^2 = 
\frac{\Sigma^{-\frac{2\k}{1+\k}} }{(1+\k)^2}- \left[ \frac{(W+\k)^2}{(1+\k)^2} - \k \frac{(W-1)^2}{(1+\k)^2} + \frac{4\k}{(1+\k)^2} \Sigma^{\frac{1-\k}{1+\k}}W\right] x^2,
\ee
where we have used~\eqref{E:MUFORMULASS2}. Plugging this back into~\eqref{E:TILDEREQN}--\eqref{E:TILDEREQN}, the claim follows.
\end{proof}


\begin{lemma}[Algebraic structure of the sonic denominator $B$]\label{L:BALGEBRA}
Consider the denominator $B[x;\R,W]$ defined in~\eqref{E:BDEF0}. We may factorise $B$ in the form
\begin{align}\label{E:BDEF}
B[x;\R,W] = (1-\k)  (\F[x;\R] -xW(x)) (H[x;\R] + xW(x)),
\end{align}
where 
\begin{align}
\F[x;\R]=\F & : = -\frac{2\k}{1-\k}(1 +\R)x + \sqrt{ \frac{4\k^2}{(1-\k)^2} (1 +\R)^2 x^2+  \k x^2 
+ \frac{\R^{-\e}}{1-\k}} , \label{E:FDEF} \\
H[x;\R]=H& : =  \F[x;\R] +\frac{4\k}{1-\k}(1+\R)x. \label{E:HDEF}
\end{align}
Moreover,
\be\label{FH}
(1-\k)\F H =  \R^{-\e} + \k(1-\k) x^2, 
\ee
and
\begin{align}\label{E:FPRIME}
\F ' = \frac{ -2\k \F  (1+\R) + \k(1-\k)x - (2\k x \F  + \frac{\k}{1-\k} \R^{-\e -1}) \R' }{(1-\k) \F  + 2\k x (1+\R)}. 
\end{align}
\end{lemma}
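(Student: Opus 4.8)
The plan is to verify the factorisation \eqref{E:BDEF} by direct expansion, then read off \eqref{E:FDEF}, \eqref{FH}, and \eqref{E:FPRIME} as consequences. First I would treat $B$ in \eqref{E:BDEF0} as a quadratic polynomial in the single variable $xW$ (with $x$ and $\R$ as parameters). Expanding the bracket in \eqref{E:BDEF0},
\[
(W+\k)^2 - \k(W-1)^2 + 4\k\R W = (1-\k)W^2 + (2\k + 4\k\R)W + (\k^2-\k),
\]
so that
\[
B = \R^{-\e} - \bigl[(1-\k)(xW)^2 + 2\k(1+2\R)x\cdot xW + \k(\k-1)x^2\bigr].
\]
Thus $-B/(1-\k)$ is, as a function of $z:=xW$, the monic quadratic $z^2 + \tfrac{2\k(1+2\R)}{1-\k}x\,z - \k x^2 - \tfrac{\R^{-\e}}{1-\k}$. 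Its two roots are
\[
z_\pm = -\frac{\k(1+2\R)}{1-\k}x \pm \sqrt{\frac{\k^2(1+2\R)^2}{(1-\k)^2}x^2 + \k x^2 + \frac{\R^{-\e}}{1-\k}}.
\]
Writing $\e = \tfrac{2\k}{1-\k}$ one checks $\tfrac{\k(1+2\R)}{1-\k} = \tfrac{2\k}{1-\k}\cdot\tfrac{1+2\R}{2} = \e\tfrac{1+\R}{2} + \tfrac{\k}{1-\k}\cdot\text{(correction)}$; more simply, one notes $\k^2(1+2\R)^2/(1-\k)^2 = \tfrac{4\k^2}{(1-\k)^2}(1+\R)^2 - \tfrac{4\k^2}{(1-\k)^2}\R - \tfrac{\k^2}{(1-\k)^2}\cdots$ — the cleanest route is to observe that $\k(1+2\R) = 2\k(1+\R) - \k$ is awkward, so instead I would verify directly that with $\F$ defined by \eqref{E:FDEF} and $H$ by \eqref{E:HDEF} one has $z_- = -\F$ and $z_+ = H = \F + \tfrac{4\k}{1-\k}(1+\R)x$, i.e. that $\F + (-H) $ equals the sum of roots $z_+ + z_- = -\tfrac{2\k(1+2\R)}{1-\k}x$ and $\F\cdot(-H)\cdot$(sign) equals the product $z_+z_- = -\k x^2 - \tfrac{\R^{-\e}}{1-\k}$. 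The radical in \eqref{E:FDEF} is exactly the square root above once one expands $\tfrac{4\k^2}{(1-\k)^2}(1+\R)^2 = \tfrac{\k^2(1+2\R)^2}{(1-\k)^2} + \tfrac{\k^2}{(1-\k)^2}(2\R^2 \cdots)$ — so a short algebraic check confirms the discriminants match. Then $-B/(1-\k) = (z - z_+)(z - z_-) = (xW - H)(xW + \F) = -(\F + xW)(H - xW)$, hence $B = (1-\k)(\F - xW)(H + xW)$ after noting the sign pattern matches \eqref{E:BDEF}; being careful with which root is which is the one place to slow down.

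Next, \eqref{FH} is just the product of roots restated: since $z_+ z_- = -\F\cdot H \cdot(\pm)$, and the constant term of the monic quadratic is $-\k x^2 - \tfrac{\R^{-\e}}{1-\k}$, multiplying through by $(1-\k)$ gives $(1-\k)\F H = \R^{-\e} + \k(1-\k)x^2$. Alternatively this follows by evaluating \eqref{E:BDEF0} and \eqref{E:BDEF} at $W=0$: $B[x;\R,0] = \R^{-\e} - [\k^2 - \k + \k]x^2\cdot$... one must recompute the bracket at $W=0$, which gives $\k - \k(1)^2 + 0 = 0$? No: $(0+\k)^2 - \k(0-1)^2 + 0 = \k^2 - \k$, so $B[x;\R,0] = \R^{-\e} - (\k^2-\k)x^2 = \R^{-\e} + \k(1-\k)x^2$, and \eqref{E:BDEF} at $W=0$ gives $(1-\k)\F H$; equating yields \eqref{FH} immediately. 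This is the cleanest derivation and I would present it this way.

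Finally, \eqref{E:FPRIME} is obtained by implicitly differentiating the defining relation for $\F$. Rather than differentiating the messy explicit formula \eqref{E:FDEF}, I would use that $\F$ satisfies the quadratic $(1-\k)\F^2 + 2\k(1+\R)x\,\F - \k(1-\k)x^2 - \R^{-\e} = 0$ (this is just $z^2 + \cdots = 0$ evaluated at $z = -\F$ with a factor $(1-\k)$, and one should double-check the middle coefficient: the monic quadratic has middle coefficient $\tfrac{2\k(1+2\R)}{1-\k}x$, so multiplying by $(1-\k)$ and substituting $z=-\F$ gives $(1-\k)\F^2 - 2\k(1+2\R)x\F - \k(1-\k)x^2 - \R^{-\e} = 0$; I must reconcile the $(1+2\R)$ versus $(1+\R)$ discrepancy — this is exactly the subtle point, and the resolution is that $\F$ as in \eqref{E:FDEF} uses $(1+\R)$ in the linear term but $H$ absorbs the rest, so the quadratic satisfied by $\F$ has the $(1+\R)$ coefficient only if I also track $H$; the honest statement is $(1-\k)\F^2 + 2\k(1+\R)\cdot 2 \cdots$). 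Given this potential for sign/coefficient confusion, the cleanest path is: differentiate \eqref{FH} together with \eqref{E:HDEF}. From \eqref{E:HDEF}, $H' = \F' + \tfrac{4\k}{1-\k}(1+\R)' x + \tfrac{4\k}{1-\k}(1+\R) = \F' + \tfrac{4\k}{1-\k}x\R' + \tfrac{4\k}{1-\k}(1+\R)$. Differentiating \eqref{FH}: $(1-\k)(\F'H + \F H') = -\e\R^{-\e-1}\R' + 2\k(1-\k)x$. Substituting $H' $ and $H = \F + \tfrac{4\k}{1-\k}(1+\R)x$ and solving the resulting linear equation for $\F'$ gives a formula; to match \eqref{E:FPRIME} exactly one then simplifies using \eqref{FH} once more to eliminate $H$. \textbf{The main obstacle} I anticipate is purely bookkeeping: getting every factor of $(1-\k)$, every $(1+\R)$ versus $(1+2\R)$, and the sign of each root straight, so that the stated $\F, H$ in \eqref{E:FDEF}--\eqref{E:HDEF} are the correct roots with the correct labels and \eqref{E:FPRIME} comes out verbatim. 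There is no conceptual difficulty — it is the quadratic formula plus implicit differentiation — but the algebra is unforgiving, so I would organise the computation around the two clean identities (product of roots $=$ \eqref{FH}, and the quadratic satisfied by $\F$) rather than the explicit radical, and verify the radical expression in \eqref{E:FDEF} separately at the end by expanding $\F + \tfrac12 H$-type combinations.
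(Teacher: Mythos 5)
Your overall route is the same as the paper's: view $B$ as a quadratic in $W$ (equivalently in $z = xW$), apply the quadratic formula, identify the roots with $\F$ and $-H$, read off the product of roots for~\eqref{FH}, and obtain~\eqref{E:FPRIME} by differentiating the quadratic relation satisfied by $\F$. The $W=0$ evaluation you give for~\eqref{FH} is a clean observation and works. However, there is one concrete arithmetic error at the start, and it is the source of all the downstream ``$(1+2\R)$ versus $(1+\R)$'' confusion you flag.

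When you expand the bracket you write
\begin{equation*}
(W+\k)^2 - \k(W-1)^2 + 4\k\R W = (1-\k)W^2 + (2\k + 4\k\R)W + (\k^2-\k),
\end{equation*}
but the linear coefficient is wrong: $(W+\k)^2$ contributes $2\k W$ \emph{and} $-\k(W-1)^2 = -\k W^2 + 2\k W - \k$ contributes another $2\k W$, so the correct coefficient of $W$ is $4\k + 4\k\R = 4\k(1+\R)$, not $2\k+4\k\R$. With this fix, the monic quadratic in $z = xW$ is
\begin{equation*}
z^2 + \frac{4\k(1+\R)}{1-\k}x\,z - \k x^2 - \frac{\R^{-\e}}{1-\k} = 0,
\end{equation*}
whose roots are $z_\pm = -\tfrac{2\k(1+\R)}{1-\k}x \pm \sqrt{\tfrac{4\k^2(1+\R)^2}{(1-\k)^2}x^2 + \k x^2 + \tfrac{\R^{-\e}}{1-\k}}$; the radical and the linear term now match \eqref{E:FDEF} verbatim, $z_+ = \F$, $z_- = -H$, and the ``subtle discrepancy'' you anticipated simply disappears. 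The quadratic satisfied by $\F$ then reads $(1-\k)\F^2 + 4\k x(1+\R)\F - \k(1-\k)x^2 - \R^{-\e} = 0$ (this is what the paper differentiates to get \eqref{E:FPRIME}), which again has $(1+\R)$ and not $(1+2\R)$ in the middle. Your alternative plan of differentiating \eqref{FH} together with \eqref{E:HDEF} also works and is a legitimate variant, though it introduces $H'$ and requires an extra elimination step; differentiating the quadratic for $\F$ directly, as the paper does, is shorter. Once the coefficient is corrected, the rest of your argument goes through without the hedging.
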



\begin{proof}
From~\eqref{E:BDEF0} it is clear that we may view $B[x;\R,W]$ as a quadratic polynomial in $W$:
\[
W^2 + \frac{4\k}{1-\k} (1+\R) W - \k - \frac{ x^{-2} \R^{-\e}}{1-\k}=0. 
\]
 Solving for $W$, we
obtain two roots, which when multiplied by $x$ give
\be
xW_\pm= \frac{-2\k x(1 +\R) \pm \sqrt{ 4\k^2 x^2(1 +\R)^2 + (1-\k) \left( \k(1-\k) + \R^{-\e} \right) } }{1-\k}. 
\ee
We now observe from~\eqref{E:FDEF}--\eqref{E:HDEF} that  the positive solution corresponds to $\F [x;\R]$ and the negative one to $- H[x;\R]$. This in turn
immediately gives~\eqref{E:BDEF}. Property~\eqref{FH} is obvious from~\eqref{E:FDEF}--\eqref{E:HDEF}. Finally, to show~\eqref{E:FPRIME} observe that
$\F$ solves
\[
(1-\k) \F ^2 + 4\k x (1+\R) \F  - \k(1-\k) x^2- \R^{-\e} =0.
\]
We differentiate the above equality, regroup terms and obtain~\eqref{E:FPRIME}. 
\end{proof}


\subsection{The monotonicity lemma}


Controlling the sonic denominator $B$ from below will be one of the central technical challenges in our analysis. 
From Lemma~\ref{L:BALGEBRA} it is clear that this can be accomplished by tracking the quantity $\F [x;\R]-xW(x)$. 
A related quantity, of fundamental importance in our analysis is given by 
\be\label{E:LITTLEFDEF}
f(x) : = \F [x;\R]- x\R.
\ee
The goal of the next lemma is to derive a first order ODE satisfied by $f$, assuming that we have a smooth solution to the system~\eqref{E:RODE}--\eqref{E:WODE}.
This lemma will play a central role in our analysis.


\begin{lemma}\label{L:JUHILEMMA}
Let $(\R,W)$ be a smooth solution to the self-similar Einstein-Euler system~\eqref{E:RODE}--\eqref{E:WODE} on some interval $I\subset(0,\infty)$, and let $f$ be given by~\eqref{E:LITTLEFDEF}.
Then, the function $f$ satisfies the ODE
\begin{align}\label{E:LITTLEFDYNAMICS}
f'(x) + a[x;\R,W] f(x) = b[x;\R,W],  \ \ x\in I,
\end{align}
where
\[
 a[x;\R,W]=a_1[x;\R,W] + a_2[x;\R,W], \ \ \   b[x;\R,W]=b_1[x;\R,W] + \k b_2[x;\R,W]
 \]
 and
\begin{align}
a_1[x;\R,W] & =  \left(\frac{2\k x \F  + \frac{\k}{1-\k} \R^{-\e-1}}{(1-\k)\F +2\k x\left(1+\R\right)}+x\right)\frac{2(1-\k)\R\left(W+\k\right)}{B} \notag \\
& \ \ \ \  -2\k \left( \frac1{1-\k}\frac{ \R^{-\e}}{x}
+ (1-\k)x +  (\R+\k)f \right) Z^{-1};  \label{E:AONEDEF} \\
a_2[x;\R,W] & =   2\k\left[\left(\F -xW\right)\left(\R-1\right)  + 2f + 4\k x + x\R\left(5+\k\right) \right] Z^{-1};\label{E:ATWODEF}\\
b_1[x;\R,W]  & = \frac{\R}{H+xW}(xW-\F ) + \k(xW-\F )Z^{-1}\left(x\left(2\R^2-2\R+1-\k\right)+\frac2{(1-\k)x}\R^{-\e}\right); \label{E:BONEDEF}\\
b_2[x,\R,W] & = - 2 x^2\Big\{\R\left[\R^2+(3+\k)\R -(1-\k)\right] \notag \\
& \ \ \ \ +\k \left[\frac4{1-\k}\R^3+\frac{2(1+\k)}{1-\k}\R(1+\R) + \frac{5-\k}{1-\k}\R^2+3\R - 2+\k\right]\Big\}Z^{-1}; \label{E:BTWODEF}\\
Z[x;\R,W] & = \left[(1-\k)\F  + 2\k x \left(1+\R\right)\right]\left[H+xW\right].  \label{E:ZDEF0}
\end{align}
\end{lemma}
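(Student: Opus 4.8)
The plan is to prove~\eqref{E:LITTLEFDYNAMICS} by straightforward differentiation of the definition $f(x) = \F[x;\R] - x\R$, using the chain rule together with the explicit expression~\eqref{E:FPRIME} for $\F'$ from Lemma~\ref{L:BALGEBRA} and the ODE~\eqref{E:RODE} for $\R'$, and then algebraically reorganising the resulting expression into the claimed form $f' + af = b$. This is in essence a (lengthy) computation, so the main task is bookkeeping rather than any conceptual leap; the art lies in choosing the right intermediate groupings so that the denominators $B$ and $Z$ appear naturally.

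First I would compute $f'(x) = \F'[x;\R] - \R - x\R'$. Into this I substitute $\R' = -\frac{2x(1-\k)\R(W+\k)(\R-W)}{B}$ from~\eqref{E:RODE}, and for $\F'$ I use~\eqref{E:FPRIME}, which expresses $\F'$ as a rational function of $x,\R,\F,\R'$ with denominator $(1-\k)\F + 2\k x(1+\R)$; after inserting the same expression for $\R'$, every occurrence of $\R'$ is eliminated and $f'$ becomes an explicit algebraic function of $x,\R,W,\F$ (recall that $H$ is just $\F$ plus a multiple of $x(1+\R)$ by~\eqref{E:HDEF}, and $B = (1-\k)(\F - xW)(H+xW)$ by~\eqref{E:BDEF}). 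The next step is to split off the part of this expression that is proportional to $f = \F - x\R$: the idea is that the terms carrying a factor $(\F - xW)$ or $(\F - x\R)$ should be collected into the $-a f$ piece, while the genuinely inhomogeneous remainder becomes $b$. Concretely, one writes $\F - xW = (\F - x\R) + x(\R - W) = f + x(\R-W)$, and similarly manipulates $\R - W$ appearing in $\R'$, so that $B$ and the factor $\R - W$ can be traded against $f$ and lower-order polynomial pieces. Matching the resulting coefficient of $f$ against the definitions~\eqref{E:AONEDEF}--\eqref{E:ZDEF0}, and the constant-in-$f$ remainder against~\eqref{E:BONEDEF}--\eqref{E:BTWODEF}, completes the identification; the decomposition $a = a_1 + a_2$, $b = b_1 + \k b_2$ reflects, respectively, the contribution from differentiating $\F$ through the square-root / $\R'$-terms (the $a_1, b_1$ pieces, which survive formally as $\k \to 0$) versus the contributions that are manifestly order $\k$ coming from the $\frac{2\k}{1-\k}(1+\R)x$ shift inside $\F$ (the $a_2$, $\k b_2$ pieces, with the common denominator $Z$).

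The main obstacle is purely the volume and delicacy of the algebra: one must carefully clear denominators among $B$, $H + xW$, $(1-\k)\F + 2\k x(1+\R)$, and $Z = [(1-\k)\F + 2\k x(1+\R)][H+xW]$, all of which are interrelated through the quadratic identity $(1-\k)\F^2 + 4\k x(1+\R)\F - \k(1-\k)x^2 - \R^{-\e} = 0$ satisfied by $\F$ (used already to derive~\eqref{E:FPRIME}) and through $(1-\k)\F H = \R^{-\e} + \k(1-\k)x^2$ from~\eqref{FH}. Every time a power of $\F$ higher than one, or a product like $\F \cdot \R^{-\e}$, appears, it must be reduced using these two relations, and it is this repeated reduction that generates the somewhat baroque polynomials in~\eqref{E:ATWODEF} and~\eqref{E:BTWODEF}. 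I would organise the computation by first treating the formal Newtonian part ($\k = 0$ in the appropriate places), where $\F$ reduces to $\sqrt{\R^{-\e}}$-type expressions and the identity matches a simpler known structure, and then tracking the $O(\k)$ corrections term by term. A useful consistency check at the end is to verify that multiplying~\eqref{E:LITTLEFDYNAMICS} through by $Z \cdot B \cdot (H+xW)$ yields a polynomial identity in $x, \R, W, \F$ that vanishes identically modulo the quadratic relation for $\F$ — this reduces the verification to polynomial algebra that can be checked mechanically.
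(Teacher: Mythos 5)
Your proposal takes essentially the same approach as the paper: differentiate $f = \F - x\R$, substitute~\eqref{E:RODE} for $\R'$ and~\eqref{E:FPRIME} for $\F'$, and reorganise using $x\R - xW = -f + (\F - xW)$ together with the factorisation~\eqref{E:BDEF}, the product identity~\eqref{FH}, and the quadratic relation satisfied by $\F$. The paper carries out precisely this bookkeeping (its first move after substituting is to write $\R'$ as an $f$-multiple plus a remainder using exactly your identity), so the plan is sound; the only small inaccuracy is your heuristic that terms carrying $\F - xW$ all go into $-af$, since $b_1$ also carries a factor $(xW - \F)$ with no extra $f$ — the real dividing line is whether a genuine factor of $f$ remains after reduction, as the paper's term-by-term grouping shows.
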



\begin{proof}
Since $f'= \F ' - x\R'-\R$, the goal is to find the desirable form \eqref{E:LITTLEFDYNAMICS} by using the dynamics of $\F $ and $\R$ \eqref{E:FPRIME} and \eqref{E:RODE}. The factorisation of the denominator $B$ in terms of $\F $ and $H$ given in \eqref{E:BDEF} will be importantly used in the derivation. 

Using \eqref{E:BDEF} and $x\R-xW= x\R -\F  +\F -xW$, we first rewrite $\R'$ as 
\[
\R' = - \frac{2x(1-\k) \R (W+ \k) (\R-W)}{B} =  \frac{2(1-\k) \R (W+ \k) }{B} f  - \frac{2 \R (W+ \k)}{H + xW} . 
\]
Using further \eqref{E:FPRIME}, it leads to  
\begin{align}
f'&=-\Big(  \frac{   2\k x \F  + \frac{\k}{1-\k} \R^{-\e -1} }{(1-\k) \F  + 2\k x (1+\R) } +x  \Big)\Big(  \frac{2(1-\k) \R (W+ \k) }{B} \Big)  f\label{1.54}  \\
&\quad-  \left( \frac{  2\k \F  (1+\R) - \k (1-\k)x - (2\k x \F  + \frac{\k}{1-\k} \R^{-\e -1}) \frac{2 \R (W+ \k)}{H + xW} }{(1-\k) \F  + 2\k x (1+\R) } \right) \label{1.55}\\
&\quad - \left( -x \frac{2 \R (W+\k)}{H + xW}  + \R   \right). \label{1.56}
\end{align}
\eqref{1.54} is the form of $-a f$, and it corresponds to the first term of $a_1$ in \eqref{E:AONEDEF}. 

We next examine \eqref{1.56}. Using  \eqref{E:HDEF}, we see that 
\begin{align}
- \,\eqref{1.56} &= \frac{\R}{H+xW}\left( -2x (W + \k) + H + xW\right) \notag\\
&=  \frac{\R}{H+xW}\left( -2x (W +\k) + \F  + \frac{4\k}{1-\k} (1 + \R) x + xW  \right)\notag\\
&=\frac{\R}{H+xW}\left( \F  -  xW\right) + \frac{\R}{H+xW}\left( 2\k \frac{1+\k}{1-\k}x + \frac{4\k}{1-\k} \R x  \right)\notag \\
&=: I_1 + I_2. \label{I12}
\end{align}
Then the first term $I_1$ corresponds to the first term of $b_1$ in \eqref{E:BONEDEF}. The second term $I_2$ will be combined together with the second line \eqref{1.55}. 

Let us rewrite $I_2-\eqref{1.55}$ as 
\begin{align}
I_2-\eqref{1.55} = \frac{I_3}{ Z }
\end{align}
where $Z$ is given in \eqref{E:ZDEF0} and the numerator $I_3$ reads as 
\begin{align}
I_3& =  \R  ( (1-\k) \F  + 2\k x (1+\R ))( 2\k\tfrac{1+\k}{1-\k}x + \tfrac{4\k}{1-\k} \R x  ) +2\k \F  (1+\R )  (H+xW) \label{1.60}\\
& - 2 (W + \k)(2\k x \R \F  + \tfrac{\k}{1-\k}  \R ^{-\e}) -\k(1-\k)x (H+xW) \label{1.61}\\
&=\R  ( (1-\k) \F  + 2\k x (1+\R ))(2\k\tfrac{1+\k}{1-\k}x + \tfrac{4\k}{1-\k} \R x ) +2\k \F  (1+\R )  (H+\F ) \label{pos}\\
& - 2 (\tfrac{\F }{x} +\k)(2\k x \R \F  + \tfrac{\k}{1-\k}  \R ^{-\e}) - \k(1-\k) x (H+\F )\label{neg}\\
&  - 2 (W- \frac{\F }{x})(2\k x \R \F  + \tfrac{\k}{1-\k}  \R ^{-\e}) - \k(1-\k)x (xW-\F ) + 2\k \F  (1+\R ) (xW-\F ) . \label{1.65}
\end{align}
We first observe that \eqref{1.65} can be written into the form of $a f - b$:  
\begin{align}
\eqref{1.65}&=2\k (\F -xW)  ( \R -1 ) f   \\
& \quad + \k (\F -xW)  \left( x\left( 2\R ^2 - 2\R  + 1-\k \right) +  \tfrac{2}{1-\k}  \tfrac{\R ^{-\e}}{x}   \right)
\end{align}
where the first line corresponds to the first term of $a_2$ in \eqref{E:ATWODEF} and the second line corresponds to the second term of $b_1$ in \eqref{E:BONEDEF}.  

We next examine \eqref{pos} and \eqref{neg}. Using \eqref{E:HDEF} to replace $H$, and \eqref{FH} to replace  $\R  ^{-\e}$, and writing $\F = f + x\R$ to replace $\F $, we arrive at  
\begin{align}
\eqref{pos} + &\eqref{neg} =\tilde a f + \\
&+\R  ( (1-\k) x\R  + 2\k x (1+\R ))(2\k\tfrac{1+\k}{1-\k}x + \tfrac{4\k}{1-\k} \R x  )\label{pos1} \\
&+ 4\k x^2\R ^2 (1+\R ) + \tfrac{8\k^2}{1-\k} x^2 \R  (1+\R )^2 \\
&+2\k^2 x^2(\R  +\k ) \\
&- 2 (\R  +\k) (2\k x^2 \R ^2 + \k x^2\R ^2 + \tfrac{4\k^2}{1-\k}(1 +\R )x^2\R  ) \\
&-\k(1-\k)x (2x\R  + \tfrac{4\k}{1-\k}(1+\R )x ), \label{neg1} 
\end{align}
where  
\begin{align}
\tilde a&= \R  (1-\k) (2\k\tfrac{1+\k}{1-\k}x + \tfrac{4\k}{1-\k} \R x  ) \\
&\quad+ 2\k (1+\R ) \left[ 2 f + 2x \R  + (2x\R  + \tfrac{4\k}{1-\k} (1+\R ) x)   \right] \\
&\quad- 2 \tfrac{1}{x} \left[2\k x\R  f + 2\k x^2 \R ^2+ \tfrac{\k}{1-\k} \R ^{-\e} \right] \\
&\quad-2(\R +\k) \k \left[ 2x\R  + f +(2x\R  + \tfrac{4\k}{1-\k} (1+\R ) x)     \right]\\
&\quad-2\k (1-\k) x \\
&= -2\k \left(   \tfrac{1}{(1-\k)x} \R ^{-\e} + (1-\k) x + (\R  + \k)f \right)  \label{a11}\\
&\quad+ 2\k \left[ 2 f+ x\R  (5+\k) + 4\k x \right] .  \label{a22}
\end{align}
Now \eqref{a11} corresponds to the second term of $a_1$ in \eqref{E:AONEDEF} and \eqref{a22} corresponds to the last three terms of $a_2$ in \eqref{E:ATWODEF}. 
It remains to check the formula for $b_2$. To this end, we now group \eqref{pos1}--\eqref{neg1} into $\k$ term and $\k^2$ terms: 
\begin{align}
&\eqref{pos1}+\cdots+\eqref{neg1}\\
&= \Big[ 2\k(1+\k) x^2 \R ^2 + 4\k \R ^3 x^2 + 4\k^2\tfrac{1+\k}{1-\k} x^2 \R  (1+ \R ) + \tfrac{8\k^2}{1-\k} x^2 \R ^2 (1+\R ) \\
&\quad+ 4\k x^2 \R ^2 (1 + \R )  + \tfrac{8\k^2}{1-\k}x^2 \R  (1 + \R )^2 + 2\k^2 x^2 (\R + \k) \Big]\\
&- \Big[ 6\k x^2 \R ^3 +6\k^2 x^2 \R ^2 + \tfrac{8\k^2}{1-\k}x^2 \R ^2 (1+\R )+ \tfrac{8\k^3}{1-\k}x^2 \R  (1+\R ) \\
&\quad +2\k (1-\k) x^2 \R  +4\k^2 x^2 (1+\R ) \Big] \\
&= \big[8\k x^2 \R ^3 + 2\k x^2\R ^2 ( 3+\k)\big]  -\big[6\k x^2 \R ^3 + 2\k(1-\k) x^2 \R  \big] \notag\\
&+ \big[ 4\k^2\tfrac{1+\k}{1-\k} x^2 \R  (1+ \R ) + \tfrac{8\k^2}{1-\k}x^2 \R ^3 + \tfrac{16 \k^2}{1-\k} x^2 \R ^2 + 2\k^2 (\tfrac{4}{1-\k} +1)x^2\R  +2\k^3 x^2  \big]\notag \\
&- \big[ (6\k^2 + \tfrac{8\k^3}{1-\k} )x^2 \R ^2 + (\tfrac{8\k^3}{1-\k} + 4\k^2) x^2 \R  + 4\k^2 x^2  \big]\notag \\
&= 2\k x^2\R  \left[ \R ^2 + (3+\k) \R  - (1-\k) \right]  \\
& + 2\k^2x^2\left[ \tfrac{4}{1-\k} \R ^3+  \tfrac{2(1+\k)}{1-\k} \R (1+\R )  + \tfrac{5-\k}{1-\k} \R ^2 + 3 \R - 2+\k \right]. 
\end{align}
This completes the proof. 
\end{proof}


\begin{corollary}\label{C:FFORMULA}
Let $(\R ,W)$ be a smooth solution to the self-similar Einstein-Euler system~\eqref{E:RODE}--\eqref{E:WODE} on some interval $I\subset(0,\infty)$. 
Then for any $x_1<x$, $x_1,x\in I$, we have the formula
\begin{align}
f(x) = & f(x_1) e^{-\int_{x_1}^x a[z;\R ,W]\,dz} \notag \\
& +e^{-\int_{x_1}^x a[z;\R ,W]\,dz}  \int_{x_1}^x \left(b_1[z;\R ,W] + \k b_2[z;\R ,W]\right) e^{\int_{x_1}^z a[s;\R ,W]\,ds}\,dz. 
\label{E:LITTLEFFORMULA}
\end{align}
\end{corollary}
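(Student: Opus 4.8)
The plan is to recognise the claim as nothing more than the variation-of-constants formula for the linear first-order ODE~\eqref{E:LITTLEFDYNAMICS} furnished by Lemma~\ref{L:JUHILEMMA}, and to justify the integrating-factor computation carefully.

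First I would check that all the coefficients are continuous on $I$. Since $(\R,W)$ is a smooth solution of~\eqref{E:RODE}--\eqref{E:WODE} on $I\subset(0,\infty)$, the quantity $\F[\cdot;\R]$ in~\eqref{E:FDEF} is smooth and strictly positive, hence so is $H[\cdot;\R]$ by~\eqref{E:HDEF}, and $(1-\k)\F+2\k x(1+\R)>0$ on $I$; also $B$ does not vanish on $I$ (no sonic point lies in the interior of the domain of a smooth solution), so by the factorisation~\eqref{E:BDEF} both $H+xW$ and $\F-xW$ are non-vanishing there, and therefore $Z$ from~\eqref{E:ZDEF0} is non-vanishing on $I$. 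Consequently $a[\cdot;\R,W]$, $b_1[\cdot;\R,W]$ and $b_2[\cdot;\R,W]$ given by~\eqref{E:AONEDEF}--\eqref{E:BTWODEF} are continuous on $I$, the function $f=\F[\cdot;\R]-x\R$ is $C^1$ there, and all the integrals appearing in~\eqref{E:LITTLEFFORMULA} make sense.

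Next I would fix $x_1\in I$, introduce the integrating factor $\nu(x):=\exp\!\big(\int_{x_1}^x a[z;\R,W]\,dz\big)$, which is $C^1$, strictly positive, equal to $1$ at $x_1$, and solves $\nu'=a[\cdot;\R,W]\,\nu$, and multiply~\eqref{E:LITTLEFDYNAMICS} through by $\nu$. The product rule turns the left-hand side into $\frac{d}{dx}\big(\nu f\big)$, so that $\frac{d}{dx}\big(\nu f\big)=\nu\big(b_1[\cdot;\R,W]+\k b_2[\cdot;\R,W]\big)$ on $I$. Integrating this identity over $[x_1,x]$ for $x_1<x$, $x\in I$, and then dividing by $\nu(x)$, while using $\nu(x)^{-1}=e^{-\int_{x_1}^x a[z;\R,W]\,dz}$ and $\nu(x)^{-1}\nu(z)=e^{-\int_{x_1}^x a[s;\R,W]\,ds}\,e^{\int_{x_1}^z a[s;\R,W]\,ds}$, produces precisely formula~\eqref{E:LITTLEFFORMULA}.

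I do not expect any real obstacle: the entire substance has already been carried out in Lemma~\ref{L:JUHILEMMA}, and the remaining step is the standard linear-ODE integration. The only point deserving a line of care is the standing assumption that the interval $I$ avoids sonic points, which keeps $B$, $Z$ and $H+xW$ away from zero and hence the coefficients $a$, $b_1$, $b_2$ continuous; this holds in every instance where the corollary is invoked later.
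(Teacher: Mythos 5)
Your proposal is correct and is exactly the paper's argument: the paper proves the corollary in one line by applying the integrating-factor method to~\eqref{E:LITTLEFDYNAMICS}. Your additional remarks on the continuity of $a,b_1,b_2$ and the non-vanishing of $B$, $Z$, and $H+xW$ on $I$ are a reasonable (if implicit in the paper) justification that the integrals in~\eqref{E:LITTLEFFORMULA} are well-defined.
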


\begin{proof}
The proof follows by applying the integrating factor method to~\eqref{E:LITTLEFDYNAMICS}.
\end{proof}


\begin{lemma}[Sign properties of $b$]\label{L:LITTLEBPOS}
Assume that  $\F -xW>0$ and $B[x;\R ,W]>0$. Then there exists an $\k_0>0$ sufficiently small such that for all $0<\k\le\k_0$ the following statements hold:  
\begin{enumerate}
\item[{\em (a)}] 
For any $\R >0$
\begin{align}
b_1[x;\R ,W]<0.
\end{align}
\item[{\em (b)}]
Furthermore, 
\begin{align}
b_2[x;\R ,W] <0 \ \ \text{ for all } \ \R >\frac13.
\end{align}
\end{enumerate}
\end{lemma}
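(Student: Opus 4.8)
The plan is to prove both sign claims by extracting the leading-order behaviour in the regime $0<\k\ll1$ and then controlling the error uniformly. I would begin with a few structural observations about the quantities appearing in Lemma~\ref{L:JUHILEMMA}. Under the standing hypotheses $\F-xW>0$ and $B>0$, recall from Lemma~\ref{L:BALGEBRA} that $B=(1-\k)(\F-xW)(H+xW)$, so $H+xW>0$; also $\F>0$ and from~\eqref{E:HDEF} we have $H=\F+\frac{4\k}{1-\k}(1+\R)x>\F>0$, and $(1-\k)\F+2\k x(1+\R)>0$. Hence $Z=[(1-\k)\F+2\k x(1+\R)][H+xW]>0$. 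This immediately makes the sign of each term depend only on the bracketed polynomial factors. The key small-$\k$ reductions are: $\e=\frac{2\k}{1-\k}=O(\k)$, so $\R^{-\e}=1+O(\k\log\R)$; from~\eqref{E:FH} one reads $\F=\R^{-\e/2}(1-\k)^{-1/2}+O(\k)$ at leading order, so $\F\to\R^0=1$-type behaviour only after one accounts for the $x$-dependence — more precisely $\F\asymp (1-\k)^{-1/2}\sqrt{\R^{-\e}/(1-\k)+\dots}$ stays strictly positive and bounded away from the degenerate cases. I would record once and for all that $\F$, $H$, $\F-xW$, $H+xW$, $(1-\k)\F+2\k x(1+\R)$ are all strictly positive, with the first two bounded below by a positive constant times $\max(1,\R^{-\e/2})$.

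For part (a), consider $b_1$ in~\eqref{E:BONEDEF}:
\[
b_1=\frac{\R}{H+xW}(xW-\F)+\k(xW-\F)Z^{-1}\Big(x(2\R^2-2\R+1-\k)+\tfrac{2}{(1-\k)x}\R^{-\e}\Big).
\]
Since $xW-\F<0$ by hypothesis, and $\R>0$, $H+xW>0$, the first term is strictly negative. The second term equals $(xW-\F)$ — which is negative — times $\k Z^{-1}$ times the factor $x(2\R^2-2\R+1-\k)+\frac{2}{(1-\k)x}\R^{-\e}$. I would check this last factor is nonnegative for all $\R>0$ once $\k$ is small: $2\R^2-2\R+1$ has discriminant $4-8<0$, hence $2\R^2-2\R+1\ge\frac12>0$, so subtracting $\k$ keeps it positive for $\k<\frac12$; and $\frac{2}{(1-\k)x}\R^{-\e}>0$. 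Therefore the whole bracket is positive, $\k Z^{-1}>0$, and multiplying by the negative quantity $(xW-\F)$ yields a negative second term. The sum of two negative terms is negative, giving $b_1<0$. (If one wanted to be even more careful about whether $2\R^2-2\R+1-\k$ could conceivably dip negative due to other interactions, one notes it is $\ge\frac12-\k\ge\frac14$ for $\k\le\frac14$, independently of $\R$, so there is no issue.)

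For part (b), I turn to $b_2$ in~\eqref{E:BTWODEF}:
\[
b_2=-2x^2\Big\{\R\big[\R^2+(3+\k)\R-(1-\k)\big]+\k\big[\tfrac4{1-\k}\R^3+\tfrac{2(1+\k)}{1-\k}\R(1+\R)+\tfrac{5-\k}{1-\k}\R^2+3\R-2+\k\big]\Big\}Z^{-1}.
\]
Since $-2x^2Z^{-1}<0$, it suffices to show the brace is strictly positive for $\R>\frac13$ and $\k$ small. Split: the first group is $\R(\R^2+(3+\k)\R-(1-\k))$. The quadratic $q(\R):=\R^2+3\R-1$ has its positive root at $\frac{-3+\sqrt{13}}{2}\approx0.303<\frac13$, so for $\R>\frac13$ we have $q(\R)>q(\tfrac13)=\tfrac19+1-1=\tfrac19>0$; adding $\k\R-\k=\k(\R-1)$ perturbs this by $O(\k)$, so $\R^2+(3+\k)\R-(1-\k)\ge\tfrac19+O(\k)\ge\tfrac1{18}$ for $\k$ small, hence the first group is $\ge\tfrac{1}{54}$. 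The second group is $\k$ times a polynomial in $\R$; for $\R>\frac13$ the dominant terms $\tfrac4{1-\k}\R^3$ and $\tfrac{5-\k}{1-\k}\R^2$ and $3\R$ are all positive, and $-2+\k$ is the only negative constant, so the $\k$-multiplied bracket is bounded below by $-C\k$ for a universal $C$ (using only $\R>\tfrac13$; in fact the positive cubic and quadratic pieces already exceed $2-\k$ once $\R\ge\tfrac13$, but we do not even need that). Combining, the brace is $\ge\tfrac1{54}-C\k>0$ for $\k\le\k_0$ with $\k_0$ chosen small enough, so $b_2<0$.

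The main obstacle I anticipate is not any single inequality but rather bookkeeping the uniformity: all smallness thresholds on $\k$ must be chosen independently of $x$ and of $\R$ (which ranges over an unbounded set), so I must make sure every ``$O(\k)$'' is genuinely bounded by $\k$ times an absolute constant on the relevant parameter range, never by $\k\cdot g(\R)$ with $g$ unbounded. The only place this is delicate is the $\R^{-\e}$ factors and the term $\k\cdot\frac{2}{1-\k}\R^{-\e}$ in $b_1$ and the cubic-in-$\R$ piece of $b_2$; but in $b_1$ these enter with a favourable sign (they multiply the negative $xW-\F$), and in $b_2$ the potentially large powers of $\R$ also enter with the correct (positive, hence favourable) sign inside the brace, so large $\R$ only helps. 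Consequently no uniformity issue actually bites, and the argument closes once $\k_0$ is fixed small. I would end by remarking that the lower bounds $\tfrac1{54}$ etc. are wasteful but sufficient; sharper constants are available but unnecessary for the continuity arguments in which this lemma is used.
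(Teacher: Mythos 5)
Your argument for part~(a) is essentially identical to the paper's: both rest on the elementary bound $2\R^2-2\R+1-\k>0$ (discriminant of $2\R^2-2\R+1$ is negative, so it is $\ge\tfrac12$) together with the observation that $Z>0$ and $xW-\F<0$ under the standing hypotheses, so both summands in $b_1$ are negative. Nothing to add there.

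For part~(b) you take a genuinely different (though still valid) route. The paper shows that \emph{each} of the two groups inside the brace in~\eqref{E:BTWODEF} is individually positive for $\R>\tfrac13$: it checks that the larger root $\varphi_0$ of $\R^2+(3+\k)\R-(1-\k)$ lies below $\tfrac13$, and separately that the $\k$-weighted polynomial dominates $4\R^3+6\R^2+5\R-2$, which is positive at $\R=\tfrac13$ (value $13/27$) and increasing. In contrast, you treat the $\k$-weighted group as an error to be absorbed — bounding it below by $-C\k$ with $C$ absolute, while bounding the main group below by $\tfrac1{54}$ — and then balance the two. This works, but it misses the cleaner structural fact that for $\R>\tfrac13$ the $\k$-piece actually \emph{helps} (it is nonnegative), so no balance and no additional smallness of $\k_0$ is required for that piece. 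One minor algebraic slip: expanding $\R^2+(3+\k)\R-(1-\k)=(\R^2+3\R-1)+\k(\R+1)$, the perturbation is $\k(\R+1)$, not $\k(\R-1)$ as you wrote; since $\k(\R+1)>0$ the perturbation is favourable, making your $O(\k)$ hedge unnecessary — this is harmless to your conclusion but worth noting, as recognising the correct sign would have led you directly to the paper's sharper version of the argument.
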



\begin{proof}
The assumptions of the lemma and the decomposition~\eqref{E:BDEF} imply that $Z>0$ and $H+xW>0$.

\noindent
{\em Proof of part (a).}
The negativity of $b_1$ is obvious from~\eqref{E:BONEDEF} and the obvious bound
$2\R ^2-2\R +1-\k>0$ for $\k$ sufficiently small.

\noindent
{\em Proof of part (b).} Let $\varphi_0$ be the larger of the two roots of the quadratic polynomial $\R \mapsto \R ^2+(3+\k)\R  -(1-\k)$, which is given by
\[
\varphi_0:=\frac{-(3+\k) +\sqrt{(3+\k)^2+ 4(1-\k)}}{2}.
\]
It is easily checked that there exists an $\k_0>0$ such that $0<\varphi_0<\frac13$ for all $0<\k\le\k_0$ and in particular
\[\R \left[\R ^2+(3+\k)\R  -(1-\k)\right] >0\]
for $\R >\frac13 (>\varphi_0)$. On the other hand, 
\[
\frac4{1-\k}\R ^3+\frac{2(1+\k)}{1-\k}\R (1+\R ) + \frac{5-\k}{1-\k}\R ^2+3\R  - 2+\k> 4 \R ^3 + 6\R ^2 +5\R -2>0 
\]
for $\R >\frac13$ and the claim follows from~\eqref{E:BTWODEF}.
\end{proof}



\section{The sonic point analysis}\label{S:SONIC}


It turns out that for purposes of homogeneity, it is more convenient to work with rescaled unknowns where the sonic point is 
pulled-back to a fixed value $1$. Namely, we introduce the change
of variables: 
\be
z= \frac{x}{x_\ast}, \quad \WH(z)=W(x), \quad \RH(z) =  \R(x). 
\ee
so that the sonic point $x_\ast$ is mapped to $z=1$. 
It is then easily checked from~\eqref{E:RODE}--\eqref{E:WODE} that $(\RH,\WH)$ solves
\begin{align}
\frac{d  \RH}{dz} &= -  \frac{ 2x_\ast^2 z(1-\k)  \RH ( \WH +\k) ( \RH- \WH) }{B} ,\label{Eq:R} \\
\frac{d \WH}{dz}&= \frac{(1-3\WH )}{z} + \frac{2x_\ast^2 z(1+\k)  \WH ( \WH + \k)
( \RH- \WH)}{B}, \label{Eq:W}
\end{align}
where 
\be
B =  \RH^{-\e} -\left[ ( \WH + \k)^2 - \k ( \WH - 1)^2 + 4\k  \RH \WH \right] x_\ast^2z^2. 
\ee

We introduce
\be
\dz := z-1. 
\ee
we look for solutions $\RH,\WH$ to~\eqref{Eq:R}-\eqref{Eq:W} of the form
\be\label{E:TAYLORZ}
\RH = \sum_{N=0}^\infty \RH_N (\delta z)^N, \quad \WH = \sum_{N=0}^\infty \WH_N (\delta z)^N.
\ee
We observe that there is a simple relation between the formal Taylor coefficients of $(\R,W)$ and $(\RH,\WH)$:
\begin{align}\label{E:SONICTRANS}
\RH_j = \R_j \xs^j, \ \ \WH_j = W_j \xs^j, \ \ j=0,1,2,\dots.
\end{align}


\subsection{Sonic point conditions}


\begin{lemma}[Sonic conditions]\label{R0W0} 
There exists a small $\k_0>0$ such that for all $|\k|\leq \k_0$ and all $x_\ast\in [\frac32, \frac72]$ there exists a continuously differentiable curve 
$[-\k_0,\k_0]\ni\k \mapsto (\RH_0(\k),\WH_0(\k))=(\RH_0(\k;x_\ast),\WH_0(\k;x_\ast))$ such that 
\begin{enumerate}
\item  $\RH_0(\k)=\WH_0(\k)>0$. 
\item $B[\RH_0(\k),\WH_0(\k)]=0$. 
\item $\RH_0(0)=\WH_0(0) = \frac{1}{x_\ast}$. 
\item $-\infty<\pa_\k\RH_0(0)=\pa_\k\WH_0(0)<0$. 
 \end{enumerate}
\end{lemma}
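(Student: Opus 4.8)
The plan is to treat conditions (1)--(2) as defining an implicit system $F(\RH_0,\WH_0,\k;x_\ast)=0$ in $\mathbb R^2$ and apply the implicit function theorem along the curve $\k\mapsto(\RH_0(\k),\WH_0(\k))$, then extract the sign information in (4) by differentiating the defining relations at $\k=0$. Concretely, condition (1) forces $\RH_0=\WH_0=:P$, so the two unknowns collapse to one, and the only remaining equation is $B[P,P]=0$, which by~\eqref{E:BDEF0} reads
\begin{align}\label{E:PROP}
P^{-\e}-\left[(P+\k)^2-\k(P-1)^2+4\k P^2\right]x_\ast^2 z^2=0,
\end{align}
evaluated at the sonic point $z=1$, i.e. with $x_\ast^2 z^2$ replaced by $x_\ast^2$ after the rescaling of Section~\ref{S:SONIC}. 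Recall $\e=\e(\k)=\frac{2\k}{1-\k}$, so $\e(0)=0$. At $\k=0$, \eqref{E:PROP} becomes $1-P^2 x_\ast^2=0$, whose positive root is $P=\frac1{x_\ast}$, giving condition (3); note $P=\frac1{x_\ast}\in[\frac27,\frac23]$ for $x_\ast\in[\frac32,\frac72]$, so $P$ stays bounded away from $0$ and from the boundary of any domain where $P^{-\e}$ is analytic.

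Next I would set up the scalar function
\[
\Phi(P,\k;x_\ast):=P^{-\e(\k)}-\left[(P+\k)^2-\k(P-1)^2+4\k P^2\right]x_\ast^2,
\]
which is $C^1$ (indeed real-analytic) in $(P,\k)$ near $(\frac1{x_\ast},0)$ since $P>0$ there, and compute $\partial_P\Phi$ at $(\frac1{x_\ast},0)$: the $P^{-\e}$ term contributes $-\e P^{-\e-1}$, which vanishes at $\k=0$, and the bracket contributes $-2P x_\ast^2$, so $\partial_P\Phi(\frac1{x_\ast},0;x_\ast)=-2\cdot\frac1{x_\ast}\cdot x_\ast^2=-2x_\ast\ne0$. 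Hence the implicit function theorem yields a $C^1$ curve $\k\mapsto P(\k;x_\ast)$ with $P(0;x_\ast)=\frac1{x_\ast}$, solving $\Phi=0$ on $[-\k_0,\k_0]$ for $\k_0>0$ small, with $\k_0$ uniform in $x_\ast\in[\frac32,\frac72]$ by compactness (the IFT quantities $\partial_P\Phi$, $\partial_\k\Phi$ are continuous and $\partial_P\Phi$ is bounded away from zero uniformly on the compact $x_\ast$-range). Setting $\RH_0(\k):=\WH_0(\k):=P(\k;x_\ast)$ gives (1), (2), (3) at once, and positivity in (1) is automatic for $\k_0$ small since $P$ is close to $\frac1{x_\ast}>0$.

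For (4), differentiate $\Phi(P(\k),\k)=0$ in $\k$ and evaluate at $\k=0$: $\partial_P\Phi\cdot P'(0)+\partial_\k\Phi=0$, so $P'(0)=-\partial_\k\Phi/\partial_P\Phi=\partial_\k\Phi/(2x_\ast)$. It remains to compute $\partial_\k\Phi$ at $(\frac1{x_\ast},0)$. The derivative of $P^{-\e(\k)}=e^{-\e(\k)\ln P}$ in $\k$ is $-\e'(\k)\ln P\cdot P^{-\e}$, and $\e'(0)=2$, so this term equals $-2\ln(\frac1{x_\ast})=2\ln x_\ast$. The bracket $[(P+\k)^2-\k(P-1)^2+4\k P^2]$ has $\k$-derivative at $\k=0$ equal to $2P-(P-1)^2+4P^2$, so that term contributes $-x_\ast^2\big(2\cdot\tfrac1{x_\ast}-(\tfrac1{x_\ast}-1)^2+\tfrac4{x_\ast^2}\big)=-\big(2x_\ast-(1-x_\ast)^2+4\big)$. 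Thus $\partial_\k\Phi(\tfrac1{x_\ast},0)=2\ln x_\ast-2x_\ast+(1-x_\ast)^2-4$, and $P'(0)=\tfrac1{2x_\ast}\big(2\ln x_\ast-2x_\ast+(1-x_\ast)^2-4\big)$. One checks this is strictly negative for $x_\ast\in[\tfrac32,\tfrac72]$: write $(1-x_\ast)^2-2x_\ast-4=x_\ast^2-4x_\ast-3$, which is negative on $[\tfrac32,\tfrac72]$ (its larger root is $2+\sqrt7>\tfrac72$), and $2\ln x_\ast\le 2\ln\tfrac72<3$, so the whole bracket stays below $x_\ast^2-4x_\ast<0$. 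This gives $-\infty<P'(0)<0$, i.e. condition (4). The main obstacle is purely bookkeeping — getting the $\k$-derivative of the $P^{-\e(\k)}$ term right (the chain through $\e(\k)$ is where sign errors hide) and then verifying the resulting explicit expression is negative on the stated $x_\ast$-interval; everything else is a direct application of the implicit function theorem with a non-degenerate, uniformly-bounded-below Jacobian.
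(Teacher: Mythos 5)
Your proposal is correct and follows essentially the same route as the paper's proof: reduce to the scalar equation $B[P,P]=0$ at $z=1$, apply the implicit function theorem in $(P,\kappa)$ with the non-degenerate $P$-derivative $-2x_\ast$, and compute $P'(0)=\frac{x_\ast^2-4x_\ast+2\ln x_\ast-3}{2x_\ast}$, arriving at the same expression (the paper works with $h=\Phi\cdot(-P^{\e}/x_\ast^2)$, a nonvanishing rescaling that changes nothing). The only cosmetic difference is in how negativity of $P'(0)$ on $[\tfrac32,\tfrac72]$ is verified — you bound $2\ln x_\ast<3$ and $x_\ast^2-4x_\ast<0$ directly, while the paper notes the bracket is increasing in $x_\ast$ and checks its value at $x_\ast=\tfrac72$; both are valid.
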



\begin{proof}
Fix an $x_\ast\in[\frac32,\frac72]$. Consider a small neighbourhood of  $(\k,\WH)=(0,\frac{1}{x_\ast})$, open rectangle $(\k,\WH)\in (-l,l)\times (w_1,w_2) $, and a continuously differentiable function 
$h:(-l,l)\times (w_1,w_2) \rightarrow \mathbb R$ defined by 
\be\label{SC}
h(\k,\WH):= \left[ (1+3\k) \WH^2 + 4\k \WH +\k(\k-1)\right] \WH^{\e} - \frac{1}{x_\ast^2},
\ee
where we recall $\e=\e(\k)=\frac{2\k}{1-\k}$.
Then the sonic point conditions $\RH=\WH$ and $B=0$ with $z=1$ reduce to $h(\k,\WH)=0$ and moreover we have $h(0,\frac{1}{x_\ast})=0$. Clearly $h$ is continuously differentiable in all arguments. Observe that 
\be\label{dfdW}
\frac{\partial h}{\partial \WH} = \left[ 2(1+3\k) \WH^2 + 4\k \WH  +\e\left((1+3\k) \WH^2 + 4\k \WH + \k(\k-1)  \right)  \right] \WH^{\e-1}
\ee
from which we have
\be
\frac{\partial h}{\partial \WH} \Big|_{(\k,\WH)=(0,\frac{1}{x_\ast})} = \frac{2}{x_\ast} >0 . 
\ee
Therefore, by the implicit function theorem, we deduce that there exists an open interval $(-l_0,l_0)$ of $\k=0$ and a unique continuously differential function $g:(-l_0,l_0) \rightarrow (w_1,w_2)$ such that $g(0)=\frac{1}{x_\ast}$ 
and $h(\k, g(\k)) =0$ for all $\k\in (-l_0,l_0)$. Moreover, we have 
\be
\frac{\partial g}{\partial \k} = - \frac{\frac{\partial h}{\partial \k}}{\frac{\partial h}{\partial \WH}}, \quad \k\in (-l_0,l_0)
\ee
where 
\be\label{dfdk}
\frac{\partial h}{\partial \k}= \left[ 3 \WH^2 + 4\WH + 2\k-1+ \left( (1+3\k) \WH^2 + 4\k \WH +\k(\k-1)\right) \frac{2\ln \WH}{(1-\k)^2} \right] \WH^{\e}.
\ee
When $\k=0$, 
\be
\begin{split}
\frac{\partial g}{\partial \k}(0) = - \frac{\frac{\partial h}{\partial \k}|_{(0,\frac{1}{x_\ast})}}{\frac{\partial h}{\partial \WH}|_{(0,\frac{1}{x_\ast})}}
&=- \frac{\frac{3}{x_\ast^2} + \frac{4}{x_\ast} -1 -\frac{2\ln x_\ast}{x_\ast^2}}{\frac{2}{x_\ast}} =  \frac{x_\ast^2 - 4x_\ast+2\log x_\ast-3}{2x_\ast}. 
\end{split}
\ee
 The derivative of the map $x_\ast\to x_\ast^2 - 4x_\ast+2\log x_\ast-3$ is $2\frac{(\xs-1)^2}{\xs}$ and the function is therefore strictly increasing for $x_\ast\neq1$. 
It is easy to check that the value at $x_\ast=\frac72$ is negative,
and therefore there exists a constant $\kappa>0$ such that $\frac{\partial g}{\partial \k}(0)<-\kappa$ for all $x_\ast\in[\frac32,\frac72]$.  
In particular, there exists a $0<\k_0\ll1$ sufficiently small  and a constant $c_\ast>0$ such that
\[
\frac1{x_\ast}-c_\ast \k < g(\k;x_\ast)< \frac{1}{x_\ast}, \ \ \k\in(0,\k_0], \ \ x_\ast\in[\frac32,\frac72].
\]
We let
\be\label{W0}
 \RH_0:= \RH(\k)=g(\k), \quad \WH_0:= \WH_0(\k) = g(\k). 
\ee
\end{proof}


\begin{remark}[The map $\xs\mapsto \WH_0(\k;\xs)$ is decreasing]\label{R:XSDECREASING}
In order to examine the behaviour of $\WH_0(\k; x_\ast)=g(\k;x_\ast)$ as a function of $x_\ast$ for any fixed $\k$, we rewrite the relation $h(\k,\WH_0(\k))=0$ in the form
\[
h(\k,g(\k;x_\ast);x_\ast)=0.
\]
Upon taking the $\frac{\pa}{\pa \xs}$ derivative of the above, we easily see that $\pa_{\xs}g(\k;\xs)<0$. In fact,  we have $\pa_{\xs}g = - \frac{\pa_{\xs} h}{\pa_{\WH} h}$ where $\pa_{\xs} h = \frac{2}{x_\ast^3}>0$ from \eqref{SC},   
 and $\pa_{\WH} h>0$ is given in \eqref{dfdW}.
\end{remark}





For a given function $f$, we write $(f)_M$, $M\in\mathbb N$, to denote the $M$-th Taylor coefficient in the expansion of $f$ around the sonic point $z = 1$. In particular, 
\begin{align*}
(\RH ( \WH + \k	) ( \RH- \WH))_M &= \sum_{l+m+n=M} \RH_l ( \WH_m + \k\delta_{m}^{0}) ( \RH_n- \WH_n), \\
(\WH ( \WH + \k)( \RH- \WH))_M &=  \sum_{l+m+n=M} \WH_l ( \WH_m + \k\delta_{m}^{0}) ( \RH_n- \WH_n),\\
(\WH^2)_M &= \sum_{l+m=M} \WH_l \WH_m, \\
(\RH\WH)_M &= \sum_{l+m=M} \RH_l \WH_m.
\end{align*}
We set $(f)_M=0$ for $M<0$.

{\em Formula of Faa Di Bruno.} Given two functions $f,g$ with formula power series expansions
\be
f(x)=\sum_{n=0}^\infty {f_n} x^n, \quad g(x)=\sum_{n=1}^\infty {g_n} x^n, 
\ee
we can compute the formal Taylor series expansion of the composition $h=f \circ g$ via 
\be\label{E:FAAH}
h(x) =\sum_{n=0}^\infty {h_n} x^n
\ee
where 
\be
h_n = \sum_{m=1}^n \sum_{\pi(n,m)} \frac{m !}{\lambda_1 !\dots \lambda_n !} f_m \left({g_1} \right)^{\lambda_1}\dots
\left({g_n}\right)^{\lambda_n}, \quad h_0= f_0
\ee
and 
\be\label{pi}
\pi(n,m)= \left\{(\lambda_1, \dots,\lambda_n): \lambda_i \in \mathbb Z_{\geq 0}, \sum_{i=1}^n \lambda_i =m, \sum_{i=1}^n i \lambda_i =n \right\}. 
\ee
An element of $\pi(n,m)$ encodes the partitions of the first $n$ numbers into $\lambda_i$ classes of cardinality $i$ 
for $i\in \{1,\dots,m\}$. Observe that by necessity 
\[
\lambda_j=0 \ \text{ for } \  n-m+2\leq  j\leq n. 
\]
{To see this, suppose  $\lambda_j  = p\geq 1$ for some $n-m+2\leq  j\leq n$. Then $ m - p=\sum_{i\neq j} \lambda_i \leq \sum_{i\neq j} i\lambda_i =  n-j p \leq n- (n-m+2) p$, which leads to $(n-m+1)p \leq n-m$. But this is impossible if $p\geq 1$.}

Now 
\be
\begin{split}
\RH^{-\e} &= \RH_0^{-\e} \left( 1+ \sum_{i=1}^\infty \frac{\RH_i}{\RH_0} (\delta z)^i  \right)^{-\e}= \RH_0^{-\e}+ \sum_{j=1}^\infty (\RH^{-\e}  )_j (\delta z)^j 
\end{split}
\ee
where
\be\label{faaR}
(\RH^{-\e}  )_j =\RH_0^{-\e} \sum_{m=1}^j \frac{1}{\RH_0^m}\sum_{\pi(j,m)} (- \e)_m \frac{1}{\lambda_1 ! \dots \lambda_j !} {\RH_1}^{\lambda_1}\dots 
{\RH_j}^{\lambda_j}, \quad j \geq 1. 
\ee 
Here $(-\e)_m = (-\e)(-\e-1) \cdots (-\e-m+1)$.  
Then we may write $B$ as 
\begin{align}
B&= \RH^{-\e} -\left[ (1-\k)\WH^2  + 4\k \WH+ 4\k  \RH \WH+ \k^2-\k \right] x_\ast^2z^2\\
&=:  \RH^{-\e} - x_\ast^2 Hz^2\\
&= \sum_{l=0}^\infty (\RH^{-\e})_l  (\delta z)^l - x_\ast^2 
 \sum_{l=0}^\infty H_l (\delta z)^l(1+ 2 \delta z + (\delta z)^2) \label{B_exp}
\end{align}
so that
\be\label{Hl}
H_l = (1-\k)(\WH^2)_l  + 4\k \WH_l+ 4\k ( \RH \WH)_l+ (\k^2-\k)\delta^0_l . 
\ee 

\begin{lemma} For any $N\geq 0$, the following formulas hold: 
\be
\begin{split}\label{RN+1}
&\sum_{l+m=N} (m+1) \RH_{m+1} (\RH^{-\e})_l  \\
&- x_\ast^2 \left( \sum_{l+m=N} (m+1) \RH_{m+1} H_l + 2\sum_{l+m=N-1} (m+1) \RH_{m+1}H_l +\sum_{l+m=N-2} (m+1) \RH_{m+1}H_l  \right)\\
&+ 2x_\ast^2(1-\k)\left( ( \RH ( \WH + \k) ( \RH- \WH) )_N +( \RH ( \WH + \k) ( \RH- \WH) )_{N-1}  \right) =0
\end{split}
\ee
and 
\be\label{WN+1}
\begin{split}
&\sum_{l+m=N} (m+1) \WH_{m+1} (\RH^{-\e})_l  \\
&- x_\ast^2 \left( \sum_{l+m=N} (m+1) \WH_{m+1} H_l + 2\sum_{l+m=N-1} (m+1) \WH_{m+1}H_l +\sum_{l+m=N-2} (m+1) \WH_{m+1}H_l  \right)\\
& - \sum_{l+m=N} (\RH^{-\e})_l  (-1)^m + 3\sum_{l+m+n=N} \WH_n (\RH^{-\e})_l  (-1)^m    \\
& + \xs^2 \left(  \sum_{l+m=N} H_l (-1)^m +2 \sum_{l+m=N-1} H_l (-1)^m  + \sum_{l+m=N-2}H_l (-1)^m  \right)  \\
& -3x_\ast^2 \left(  \sum_{l+m+n=N} \WH_n H_l (-1)^m +2 \sum_{l+m+n=N-1} \WH_nH_l (-1)^m  + \sum_{l+m+n=N-2}\WH_n H_l (-1)^m  \right) \\ 
&- 2x_\ast^2(1+\k)\left( ( \WH ( \WH + \k ) ( \RH- \WH) )_N +( \WH ( \WH + \k) ( \RH- \WH) )_{N-1}  \right) =0
\end{split}
\ee
where we recall $(f)_M=0$ for $M<0$. 
\end{lemma}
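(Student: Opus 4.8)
The plan is to multiply each of the rescaled equations \eqref{Eq:R}--\eqref{Eq:W} by the denominator $B$, so as to obtain identities in which no $B$ appears in a denominator (the only remaining denominator, $z$, being analytic at $z=1$), then insert the Taylor ansatz \eqref{E:TAYLORZ} and equate the coefficients of $(\delta z)^N$ on the two sides. Note that $B$ itself cannot be inverted as a formal series, since its constant term $B_0=B[\RH_0,\WH_0]=0$ at the sonic point $z=1$ (Lemma~\ref{R0W0}); however, multiplying by $B$ produces an identity valid wherever $(\RH,\WH)$ solves the ODE system which, being free of the $B$-denominator, extends across $z=1$, so equating Taylor coefficients at $z=1$ is legitimate. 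Throughout I use $\frac{d\RH}{dz}=\sum_{m\ge0}(m+1)\RH_{m+1}(\delta z)^m$ and its analogue for $\WH$, the expansion $B=\RH^{-\e}-x_\ast^2Hz^2$ with $z^2=1+2\delta z+(\delta z)^2$ and $H_l$ as in \eqref{B_exp}--\eqref{Hl}, the expansion \eqref{faaR} of $\RH^{-\e}$, and the elementary facts that $\big((1+\delta z)h\big)_N=h_N+h_{N-1}$ and $\big(Hz^2\,g\big)_N=\sum_{l+m=N}H_lg_m+2\sum_{l+m=N-1}H_lg_m+\sum_{l+m=N-2}H_lg_m$ for any formal series $g,h$ (with the convention $(g)_M=0$ for $M<0$).

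First I would handle the $\RH$-equation. Multiplying \eqref{Eq:R} by $B$ gives $B\,\frac{d\RH}{dz}=-2x_\ast^2(1-\k)\,z\,\RH(\WH+\k)(\RH-\WH)$. Reading off the $(\delta z)^N$-coefficient of the left side via the Cauchy product for $\RH^{-\e}\frac{d\RH}{dz}$ and via the $z^2$-identity above applied to $g=\frac{d\RH}{dz}$, and of the right side via $z=1+\delta z$ acting on $h=\RH(\WH+\k)(\RH-\WH)$, one obtains precisely \eqref{RN+1}.

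Next I would handle the $\WH$-equation. Multiplying \eqref{Eq:W} by $B$ gives $B\,\frac{d\WH}{dz}=\frac{(1-3\WH)B}{z}+2x_\ast^2(1+\k)\,z\,\WH(\WH+\k)(\RH-\WH)$. The left side is treated exactly as before with $\RH_{m+1}$ replaced by $\WH_{m+1}$, giving the first two lines of \eqref{WN+1}. For the first term on the right I would write $\frac{(1-3\WH)B}{z}=\frac{\RH^{-\e}}{z}-x_\ast^2\frac{Hz^2}{z}-3\,\frac{\WH\RH^{-\e}}{z}+3x_\ast^2\frac{\WH Hz^2}{z}$ and use $\big(\tfrac1z\big)_m=(-1)^m$ together with the Cauchy products of the series $\RH^{-\e}$, $Hz^2$, $\WH$, $\tfrac1z$; the last term on the right contributes $2x_\ast^2(1+\k)\big((\WH(\WH+\k)(\RH-\WH))_N+(\WH(\WH+\k)(\RH-\WH))_{N-1}\big)$. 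Moving all contributions to one side yields \eqref{WN+1}.

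The calculation is entirely mechanical and uses no smallness of $\k$; the main obstacle is purely bookkeeping, namely keeping track of the three shifted sums produced by $z^2$, the alternating signs produced by $1/z$, and the several Cauchy products, and checking that the overall signs match the stated formulas. I would guard against sign errors by verifying \eqref{RN+1}--\eqref{WN+1} directly at $N=0$ (where they reduce to $\RH_0=\WH_0$ and $B[\RH_0,\WH_0]=0$, i.e.\ the conditions of Lemma~\ref{R0W0}) and at $N=1$ (where they become the first-order relations that pin down $\RH_1,\WH_1$).
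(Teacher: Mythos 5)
Your proposal is correct and takes essentially the same approach as the paper: multiply \eqref{Eq:R} and \eqref{Eq:W} by $B$ (the paper's equations \eqref{Eq:R1}--\eqref{Eq:W1}, with $z=1+\delta z$), substitute the Taylor ansatz \eqref{E:TAYLORZ}, expand $B$ via \eqref{B_exp}, use $\frac{1}{1+\delta z}=\sum(-1)^m(\delta z)^m$, and equate coefficients of $(\delta z)^N$. Your sanity check at $N=0,1$ also matches Remarks~\ref{remark1} and~\ref{R:CONSISTENCYSONIC}.
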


\begin{proof} {\it Proof of \eqref{RN+1}. } We now plug \eqref{E:TAYLORZ} 
into 
\begin{align}
B \RH' &+ 2x_\ast^2(1-\k)   (1+\dz) \RH ( \WH + \k) ( \RH- \WH) =0, \label{Eq:R1}\\
B \WH'&- \frac{(1 -3 \WH ) B}{1+\delta z} - 2x_\ast^2 (1+\k)  (1+\delta z) \WH ( \WH + \k)
( \RH- \WH)=0. \label{Eq:W1}
\end{align}
\eqref{Eq:R1} reads as 
\begin{align*}
0&= \left[\sum_{l=0}^\infty (\RH^{-\e})_l  (\delta z)^l - x_\ast^2
 \sum_{l=0}^\infty H_l (\delta z)^l(1+ 2 \delta z + (\delta z)^2)\right]\left[ \sum_{m=0}^\infty (m+1) \RH_{m+1} (\delta z)^m \right] \\
& + 2x_\ast^2(1-\k) \sum_{l=0}^\infty ( \RH ( \WH + \k ) ( \RH- \WH) )_l (\delta z)^l (1+\delta z) \\
&= \sum_{N=0}^\infty \sum_{l+m=N} (m+1) \RH_{m+1} (\RH^{-\e})_l (\delta z)^N \\
& - x_\ast^2 \sum_{N=0}^\infty\left( \sum_{l+m=N} (m+1) \RH_{m+1} H_l + 2\sum_{l+m=N-1} (m+1) \RH_{m+1}H_l +\sum_{l+m=N-2} (m+1) \RH_{m+1}H_l  \right) (\delta z)^N\\
&+ 2x_\ast^2(1-\k) \sum_{N=0}^\infty \left( ( \RH ( \WH + \k ) ( \RH- \WH) )_N +( \RH ( \WH + \k ) ( \RH- \WH) )_{N-1}  \right) (\delta z)^N . 
\end{align*}
Comparing the coefficients, we obtain \eqref{RN+1}. 

\


\noindent{\it Proof of \eqref{WN+1}. } 
Since $\frac{1}{1+\delta z} = \sum_{m=0}^\infty (-1)^m (\delta z)^m$, we can expand $\frac{(1 -3 \WH ) B}{1+\delta z}$ as 
\begin{align*}
&\frac{(1 -3 \WH ) B}{1+\delta z} \\
&= \left(1 -3\sum_{n=0}^\infty \WH_n (\delta z)^n \right)
\left( \sum_{l=0}^\infty (\RH^{-\e})_l  (\delta z)^l - x_\ast^2
 \sum_{l=0}^\infty H_l(\delta z)^l (1+ 2 \delta z + (\delta z)^2) \right)   \sum_{m=0}^\infty (-1)^m (\delta z)^m\\
 &= \sum_{N=0}^\infty \sum_{l+m=N} (\RH^{-\e})_l  (-1)^m (\delta z)^N - 3 \sum_{N=0}^\infty \sum_{l+m+n=N} \WH_n (\RH^{-\e})_l  (-1)^m (\delta z)^N \\
 &- \xs^2  \sum_{N=0}^\infty\left(  \sum_{l+m=N} H_l (-1)^m +2 \sum_{l+m=N-1} H_l (-1)^m  + \sum_{l+m=N-2}H_l (-1)^m  \right)(\delta z)^N\\
 &+3x_\ast^2  \sum_{N=0}^\infty\left(  \sum_{l+m+n=N} \WH_n H_l (-1)^m +2 \sum_{l+m+n=N-1} \WH_nH_l (-1)^m  + \sum_{l+m+n=N-2}\WH_n H_l (-1)^m  \right)(\delta z)^N. 
\end{align*}
We plug  \eqref{Taylor} into \eqref{Eq:W1} 
\begin{align*}
0&= \left[\sum_{l=0}^\infty (\RH^{-\e})_l  (\delta z)^l - x_\ast^2
 \sum_{l=0}^\infty H_l (\delta z)^l(1+ 2 \delta z + (\delta z)^2)\right]\left[ \sum_{m=0}^\infty (m+1) \WH_{m+1} (\delta z)^m \right] \\
 &- \sum_{N=0}^\infty \left( \frac{(1 -3 \WH ) B}{1+\delta z} \right)_N  (\delta z)^N  - 2x_\ast^2(1+\k) \sum_{l=0}^\infty ( \WH ( \WH + \k ) ( \RH- \WH) )_l (\delta z)^l (1+\delta z) \\
&= \sum_{N=0}^\infty \sum_{l+m=N} (m+1) \WH_{m+1} (\RH^{-\e})_l (\delta z)^N \\
& - x_\ast^2 \sum_{N=0}^\infty\left( \sum_{l+m=N} (m+1) \WH_{m+1} H_l + 2\sum_{l+m=N-1} (m+1) \WH_{m+1}H_l +\sum_{l+m=N-2} (m+1) \WH_{m+1}H_l  \right) (\delta z)^N\\
&- \sum_{N=0}^\infty \left( \frac{(1 -3 \WH ) B}{1+\delta z} \right)_N  (\delta z)^N  \\
&- 2x_\ast^2(1+\k) \sum_{N=0}^\infty \left( ( \WH ( \WH + \k ) ( \RH- \WH) )_N +( \WH ( \WH + \k ) ( \RH- \WH) )_{N-1}  \right) (\delta z)^N 
\end{align*}
which leads to \eqref{WN+1}.  
\end{proof}

\begin{remark}\label{remark1} $(\RH_0,\WH_0)$ obtained in Lemma \ref{R0W0} satisfy the sonic conditions: 
\be
\RH_0=\WH_0, \quad \RH_0^{-\e}=x_\ast^2 H_0 \label{sonic}
\ee
and it is easy to verify that  for such choice of $(\RH_0,\WH_0)$, the relations $\eqref{RN+1}_{N=0}$ and $\eqref{WN+1}_{N=0}$ trivially hold. Moreover, with \eqref{sonic}, there are no $(\RH_{N+1}, \WH_{N+1})$ in  $\eqref{RN+1}$ and $\eqref{WN+1}$. 
\end{remark}



\begin{remark}\label{R:CONSISTENCYSONIC}
To determine $R_1, \WH_1$, we record  $\eqref{RN+1}_{N=1}$ and $\eqref{WN+1}_{N=1}$: 
\be\label{R1ZERO}
\begin{split}
(\RH^{-	\e })_1 \RH_1 &- x_\ast^2 H_1 \RH_1 - 2x_\ast^2 H_0 \RH_1+ 2x_\ast^2 (1-\k) \RH_0(\WH_0 + \k)(\RH_1 -\WH_1) =0 
\end{split}
\ee
and
\be\label{W1ZERO}
\begin{split}
&(\RH^{-\e  })_1 \WH_1 - x_\ast^2 (\WH_1 H_1+ 2\WH_1 H_0) 
- [ (\RH^{-\e  })_1- \RH_0^{-\e  }] \\
&+ 3 [ \WH_1\RH_0^{-\e  } + \WH_0 (\RH^{-\e  })_1 - \WH_0 \RH_0^{-\e  }] + x_\ast^2 [H_1+H_0] \\
&- 3x_\ast^2 [\WH_1 H_0 + \WH_0 H_1 + \WH_0 H_0]
- 2x_\ast^2 (1+\k) \WH_0(\WH_0 + \k)(\RH_1 -\WH_1) =0, 
\end{split}
\ee where we have used the sonic conditions \eqref{sonic}. Recalling from \eqref{faaR} and \eqref{Hl}
\begin{align}
(\RH^{-\e  })_1 &= -\e\RH_0^{-\e  -1} \RH_1,\label{R11}\\
H_1 &= 2\left[ (1+\k)\WH_0 + 2\k\right] \WH_1 + 4\k \WH_0 \RH_1, \label{R11.1}
\end{align} 
we see that for general nonzero $\k$, \eqref{R1ZERO} is linear in $\WH_1$ and quadratic in $\RH_1$ and \eqref{W1ZERO} is linear in $\RH_1$ and quadratic in $\WH_1$. On the other hand, when $\k=0$, since $(\RH^{-\e  })_1 =0$, $H_1= 2\WH_0\WH_1$, $H_0=\WH_0^2$,  \eqref{R1ZERO}  becomes  
\[
\begin{split}
-2x_\ast^2 \WH_0\WH_1 \RH_1 - 2x_\ast^2\WH_0^2 \RH_1 + 2x_\ast^2 \RH_0\WH_0 (\RH_1-\WH_1)=0, 
\end{split}
\]
and \eqref{W1ZERO} becomes 
\[
\begin{split}
-x_\ast^2(2\WH_0\WH_1^2 + 2 \WH_1\WH_0^2) + 1 + 3[\WH_1-\WH_0] + x_\ast^2[2\WH_0\WH_1+\WH_0^2] \\
- 3x_\ast^2 [\WH_1\WH_0^2+2\WH_0^2\WH_1 + \WH_0^3] - 2x_\ast^2\WH_0^2 (\RH_1-\WH_1) =0. 
\end{split}
\]
Thus by using $\RH_0\big|_{\k=0}=\WH_0\big|_{\k=0}=\frac{1}{x_\ast}$, we see that 
\eqref{R1ZERO} and \eqref{W1ZERO} are reduced to 
\begin{align}
-2\WH_1 \left( x_\ast {\RH_1} +1 \right) =0, \label{R10} \\
-2 \left( x_\ast \WH_1^2 - x_\ast \WH_1 + 3\WH_1 + \RH_1 + \frac{3}{x_\ast} -1\right) =0, \label{W10}
\end{align}which are the the sonic point conditions satisfied in the Newtonian limit, see~\cite{GHJ2021}.
In general, there are two pairs of solutions to \eqref{R10}-\eqref{W10}, one is of Larson-Penston type given by $(\RH_1,\WH_1)=(-\frac{1}{x_\ast}, 1-\frac{2}{x_\ast})$  and the other one is of Hunter type given by $(\RH_1,\WH_1)=(1-\frac{3}{x_\ast},0)$. 
\end{remark}

In the following, we will show that there exists a continuously differentiable curve $(\RH_1(\k),\WH_1(\k))$ satisfying 
\eqref{R1ZERO}--\eqref{W1ZERO}, which at $\k=0$ agrees precisely with the values of $(\RH_1,\WH_1)$ 
associated with the (Newtonian) Larson-Penston solution, namely $(\RH_1(0),\WH_1(0))=(-\frac{1}{x_\ast}, 1-\frac{2}{x_\ast}) $ for $x_\ast>2$.

\begin{lemma}[RLP conditions]\label{R1W1} Let $x_\ast\in (2, \frac72)$ be fixed and let $(\RH_0,\WH_0)$ be as obtained in Lemma \ref{R0W0}. 
Then there exists an $\k_1>0$ such that there exists a continuously differentiable curve $(-\k_1,\k_1)\ni \k\mapsto (\RH_1(\k)$, $\WH_1(\k))$  such that 
\begin{enumerate}
\item  The relations $\eqref{RN+1}_{N=1}$ and $\eqref{WN+1}_{N=1}$ hold.   
\item $\RH_1(0)= -\frac{1}{x_\ast}$ and $\WH_1(0)= 1-\frac{2}{x_\ast}$. 
 \end{enumerate}
\end{lemma}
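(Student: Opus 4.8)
The plan is to read $\eqref{RN+1}_{N=1}$ and $\eqref{WN+1}_{N=1}$ --- equivalently, upon substituting the sonic conditions $\eqref{sonic}$, which hold for every $\k\in[-\k_0,\k_0]$ by Lemma~\ref{R0W0} and Remark~\ref{remark1}, the relations $\eqref{R1ZERO}$ and $\eqref{W1ZERO}$ --- as the zero set of a map
\[
F\colon(\k,\RH_1,\WH_1)\longmapsto\big(F_1(\k,\RH_1,\WH_1),\,F_2(\k,\RH_1,\WH_1)\big),
\]
where $F_1$ and $F_2$ denote the left-hand sides of $\eqref{R1ZERO}$ and $\eqref{W1ZERO}$, with $\RH_0=\RH_0(\k)$, $\WH_0=\WH_0(\k)$ given by Lemma~\ref{R0W0}, $\e=\e(\k)=\tfrac{2\k}{1-\k}$, $(\RH^{-\e})_1=-\e\RH_0^{-\e-1}\RH_1$ as in $\eqref{R11}$, and $H_0$, $H_1$ as in $\eqref{Hl}$, $\eqref{R11.1}$. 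Since $\k\mapsto(\RH_0(\k),\WH_0(\k))$ is continuously differentiable with $\RH_0(0)=\WH_0(0)=\tfrac1{x_\ast}>0$, the factors $\RH_0^{-\e}$ and $\RH_0^{-\e-1}$ are $C^1$ near $\k=0$, hence $F$ is $C^1$ jointly near $\big(0,-\tfrac1{x_\ast},1-\tfrac2{x_\ast}\big)$. Then I would apply the implicit function theorem.

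First, $F\big(0,-\tfrac1{x_\ast},1-\tfrac2{x_\ast}\big)=0$: this is precisely Remark~\ref{R:CONSISTENCYSONIC}, which shows that at $\k=0$ --- where $(\RH^{-\e})_1=0$, $H_0=\WH_0^2$, $H_1=2\WH_0\WH_1$, $\RH_0=\WH_0=\tfrac1{x_\ast}$ --- the system $\eqref{R1ZERO}$--$\eqref{W1ZERO}$ collapses to the Newtonian sonic relations $\eqref{R10}$--$\eqref{W10}$, of which the Larson--Penston branch $(\RH_1,\WH_1)=\big(-\tfrac1{x_\ast},1-\tfrac2{x_\ast}\big)$ --- available because $x_\ast>2$ --- is a solution.

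Second, I would verify that the partial Jacobian $\partial_{(\RH_1,\WH_1)}F$ at this point is invertible. Because $\RH_0,\WH_0$ do not depend on $(\RH_1,\WH_1)$ and every $\k$-dependent modification of $F_1,F_2$ is either absent or vanishes at $\k=0$, these partials coincide with those of $\eqref{R10}$, $\eqref{W10}$, and a short computation gives
\[
\partial_{(\RH_1,\WH_1)}F\Big|_{\k=0}
=\begin{pmatrix} -2x_\ast\WH_1 & -2(x_\ast\RH_1+1)\\[3pt] -2 & -2(2x_\ast\WH_1-x_\ast+3)\end{pmatrix}
=\begin{pmatrix} -2(x_\ast-2) & 0\\[3pt] -2 & -2(x_\ast-1)\end{pmatrix}
\]
at $\RH_1=-\tfrac1{x_\ast}$, $\WH_1=1-\tfrac2{x_\ast}$, with determinant $4(x_\ast-2)(x_\ast-1)>0$ for $x_\ast\in(2,\tfrac72)$. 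The implicit function theorem then yields $\k_1>0$ and a unique $C^1$ curve $(-\k_1,\k_1)\ni\k\mapsto(\RH_1(\k),\WH_1(\k))$ with $F\big(\k,\RH_1(\k),\WH_1(\k)\big)=0$ and $(\RH_1(0),\WH_1(0))=\big(-\tfrac1{x_\ast},1-\tfrac2{x_\ast}\big)$, which, after unwinding the definition of $F$ via $\eqref{sonic}$, gives claim (1), while claim (2) is built in.

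The main obstacle is purely computational: expanding the $\k$-dependent coefficients of $\eqref{R1ZERO}$--$\eqref{W1ZERO}$ through $\eqref{faaR}$, $\eqref{Hl}$ and $\eqref{R11}$--$\eqref{R11.1}$ carefully enough to confirm both that $F$ is $C^1$ near the relevant point despite the $\RH_0^{-\e}$ factors and that the $\k=0$ partial Jacobian reduces to the displayed lower-triangular matrix. The vanishing of its determinant at $x_\ast=2$ --- where the $(x_\ast-2)$ entry drops out --- is exactly why the lemma excludes $x_\ast\le2$: there the Larson--Penston and Hunter branches merge, which is also the mechanism behind the forbidden range $\xc(\k)$ for $\k>0$.
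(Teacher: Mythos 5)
Your proposal is correct and follows essentially the same route as the paper: define $\mathcal F=(F_1,F_2)$ from the left-hand sides of \eqref{R1ZERO}--\eqref{W1ZERO}, check that $\mathcal F(0,-\tfrac1{x_\ast},1-\tfrac2{x_\ast})=0$ via the Newtonian reduction, verify the partial Jacobian in $(\RH_1,\WH_1)$ is the lower-triangular matrix with determinant $4(x_\ast-2)(x_\ast-1)>0$, and invoke the implicit function theorem. The only cosmetic difference is that you set $\k=0$ before differentiating in $(\RH_1,\WH_1)$ rather than differentiating the full $\k$-dependent expressions and then evaluating at $\k=0$; since those operations commute, this is a legitimate shortcut and it yields the same matrix as the paper's explicit computation.
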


\begin{proof} As in Lemma \ref{R0W0} we will use the implicit function theorem. To this end, consider a small neighborhood $(-l,l)\times (r_1,r_2)\times (w_1,w_2)$ of $(0, -\frac{1}{x_\ast}, 1-\frac{2}{x_\ast})$ and introduce $\mathcal F:(-l,l)\times (r_1,r_2)\times (w_1,w_2)\rightarrow \mathbb R^2$ as 
\be
\mathcal F(\k , \RH_1,\WH_1) = \left( \mathcal F_1 (\k , \RH_1,\WH_1),\mathcal F_2(\k , \RH_1,\WH_1)  \right)^T
\ee where 
\be
\mathcal F_1(\k , \RH_1,\WH_1)=\text{ LHS of }\eqref{R1ZERO} \ \text{ and } \  \mathcal F_2(\k , \RH_1,\WH_1)=\text{ LHS of }\eqref{W1ZERO}
\ee so that  \eqref{R1ZERO} and \eqref{W1ZERO} are equivalent to $\mathcal F(\k , \RH_1,\WH_1) =0$. 
It is clear that $\mathcal F$ is continuously differentiable in all arguments and  $\mathcal F (0, -\frac{1}{x_\ast}, 1-\frac{2}{x_\ast})=0 $. We next compute the Jacobi matrix  $\frac{\partial \mathcal F}{\partial [\RH_1,\WH_1]}$ 
\be
\begin{split}
\frac{\partial \mathcal F}{\partial [\RH_1,\WH_1]} & = 
\left( 
\begin{array}{cc} \frac{\partial \mathcal F_1}{\partial \RH_1} &  \frac{\partial \mathcal F_1}{\partial \WH_1} \\
\frac{\partial \mathcal F_2}{\partial \RH_1} &  \frac{\partial \mathcal F_2}{\partial \WH_1}
\end{array}
\right)
\end{split}
\ee where 
\begin{align*}
\frac{\partial \mathcal F_1}{\partial \RH_1}& = -2\e \RH_0^{-\e-1} \RH_1 - x_\ast^2 H_1 - x_\ast^2 \RH_1 \tfrac{\partial H_1}{\partial \RH_1} - 2x_\ast^2 H_0 + 2x_\ast^2 (1-\k) \RH_0 (\WH_0 +\k) , \\
\frac{\partial \mathcal F_1}{\partial \WH_1} &= - x_\ast^2 \RH_1 \tfrac{\partial H_1}{\partial \WH_1} -  2x_\ast^2 (1-\k) \RH_0 (\WH_0 +\k) , \\
\frac{\partial \mathcal F_2}{\partial \RH_1}& =- \e \RH_0^{-\e-1} \WH_1 - x_\ast^2 \WH_1 \tfrac{\partial H_1}{\partial \RH_1} + \e \RH_0^{-\e-1} -  3\e \WH_0\RH_0^{-\e-1} \\
&\quad+ \xs^2\tfrac{\partial H_1}{\partial \RH_1} - 3x_\ast^2 \WH_0 \tfrac{\partial H_1}{\partial \RH_1} - 2x_\ast^2 (1+\k) \WH_0 (\WH_0 +\k), \\
\frac{\partial \mathcal F_2}{\partial \WH_1}& = - \e \RH_0^{-\e-1} \RH_1 - x_\ast^2 (H_1+2H_0)- x_\ast^2 \WH_1 \tfrac{\partial H_1}{\partial \WH_1} + 3 \RH_0^{-\e}  \\
&\quad+ x_\ast^2\tfrac{\partial H_1}{\partial \WH_1}- 3x_\ast^2 H_0 
- 3x_\ast^2 \WH_0 \tfrac{\partial H_1}{\partial \WH_1} + 2x_\ast^2 (1+\k) \WH_0 (\WH_0 +\k), 
\end{align*}
{and 
\[
\tfrac{\partial H_1}{\partial \RH_1} = 4\k\WH_0, \quad \tfrac{\partial H_1}{\partial \WH_1} = 2[(1+\k)\WH_0 + 2\k]. 
\]
Using $H_1(0)= 2\WH_0(0)\WH_1(0)$, $\tfrac{\partial H_1}{\partial \RH_1} (0)=0$, $\tfrac{\partial H_1}{\partial \WH_1}(0)= 2\WH_0(0)$,  $H_0(0)=\WH_0^2(0)$, $\WH_0(0)=\RH_0(0) =\frac{1}{x_\ast}$, $\WH_1(0)=\frac{x_\ast-2}{x_\ast}$ and $\RH_1(0)=-\frac{1}{x_\ast}$, we evaluate the above at $(\k,\RH_1,\WH_1)= (0,-\frac{1}{x_\ast}, 1-\frac{2}{x_\ast})$
\[
\begin{split}
\frac{\partial \mathcal F_1}{\partial \RH_1}\Big|_{(k,\RH_1,\WH_1)= (0,-\frac{1}{x_\ast}, 1-\frac{2}{x_\ast})}&= - x_\ast^2H_1(0) - 2x_\ast^2 H_0 (0) + 2x_\ast^2 \RH_0(0) \WH_{0}(0) = - 2(x_\ast -2) , \\
\frac{\partial \mathcal F_1}{\partial \WH_1}\Big|_{(k,\RH_1,\WH_1)= (0,-\frac{1}{x_\ast}, 1-\frac{2}{x_\ast})}&=- 2x_\ast^2 \RH_1(0) \WH_0(0) - 2x_\ast^2 \RH_0(0) \WH_0(0)= 0,  \\
\frac{\partial \mathcal F_2}{\partial \RH_1}\Big|_{(k,\RH_1,\WH_1)= (0,-\frac{1}{x_\ast}, 1-\frac{2}{x_\ast})}&=- 2x_\ast^2 \WH_0^2(0)= -2, \\
\frac{\partial \mathcal F_2}{\partial \WH_1}\Big|_{(k,\RH_1,\WH_1)= (0,-\frac{1}{x_\ast}, 1-\frac{2}{x_\ast})}&=- x_\ast^2 (H_1(0) +2H_0(0)) - 2x_\ast^2 \WH_1(0)\WH_0(0) + 3 \\
&\quad+ 2x_\ast^2 \WH_0(0) - 3x_\ast^2 H_0(0) - 6x_\ast^2 \WH_0^2(0)+ 2x_\ast^2 \WH_0^2(0) \\
&= - 4x_\ast^2 \WH_0(0)\WH_1(0) + 2 x_\ast - 6 = - 4 (x_\ast-2) + 2 x_\ast - 6 \\
&= -2 (x_\ast -1). 
\end{split}
\] }
The Jacobian at $(\k,\RH_1,\WH_1)= (0,-\frac{1}{x_\ast}, 1-\frac{2}{x_\ast})$ is 
\be
\begin{split}
\left| \frac{\partial \mathcal F}{\partial [\RH_1,\WH_1]} \right|_{(\k,\RH_1,\WH_1)= (0,-\frac{1}{x_\ast}, 1-\frac{2}{x_\ast})}
&= 4 (x_\ast -2) (x_\ast -1) >0 \quad \text{if } \ x_\ast >2
\end{split}
\ee
and thus, $\frac{\partial \mathcal F}{\partial [\RH_1,\WH_1]} $ is invertible in sufficiently small neighborhood of $(0,-\frac{1}{x_\ast}, 1-\frac{2}{x_\ast})$ for any fixed $x_\ast>2$. Now by the implicit function theorem, we deduce that there exists an open interval $(-l_0,l_0)$ of $\k=0$ and unique continuously differential functions $g_1:(-l_0,l_0) \rightarrow (r_1,r_2)$, $g_2:(-l_0,l_0) \rightarrow (w_1,w_2)$ such that $g_1(0)=-\frac{1}{x_\ast}$, $g_2(0)= 1-\frac{2}{x_\ast}$ and $\mathcal F(\k, g_1(\k), g_2(\k)) =0$ for all $\k\in (-l_0,l_0)$.
\end{proof}

It is evident that $\RH_1$ and $\WH_1$ become degenerate at $x_\ast=2$. In particular, when $\k=0$, the corresponding $\RH_1$ and $\WH_1$ for the LP-type and the Hunter-type solutions coincide precisely at $x_\ast=2$, while they are well-defined for further values of $x_\ast$ below 2, see  \cite{GHJ2021}. Interestingly this feature does not persist for $\k>0$.  In fact, we will show that  $\RH_1$ and $\WH_1$ cease to exist as real numbers before $x_\ast$ reaches $2$ from 
above. 
To this end, we first derive the algebraic equation satisfied by $\WH_1$. 


\begin{lemma}[The cubic equation for $\WH_1$] \label{L:CUBIC}
The Taylor coefficient $\WH_1$ is a root of the cubic polynomial
\begin{align}\label{W_eq}
P(X) := X^3 + \frac{a_2}{a_3} X^2+ \frac{a_1}{a_3} X + \frac{a_0}{a_3},
\end{align}
where $a_j$, $j=0,1,2,3$ are real-valued continuous functions of $\WH_0$ given in~\eqref{E:ATHREEDEF}--\eqref{E:AZERODEF} below. Moreover,
there exists a constant $c>0$ and $0<\k_0\ll1$ such that for all $0<\k\ll\k_0$ and any 
$\frac13<\WH_0<1$, we have $c<a_3<\frac1c$, $c<\frac{a_0}{\k(3\WH_0-1)}<\frac1c$. 
In particular any root of $X\mapsto P(X)$ is not zero for $0<\k\ll\k_0$. 
\end{lemma}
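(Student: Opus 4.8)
The plan is to eliminate $\RH_1$ from the two algebraic relations $\eqref{R1ZERO}$ and $\eqref{W1ZERO}$ by a resultant computation. First I would rewrite both relations using the sonic conditions $\eqref{sonic}$ in the form $\RH_0=\WH_0$ and $x_\ast^2=\RH_0^{-\e}/H_0$ (so that $x_\ast^2$, and hence every coefficient, becomes a function of $\WH_0$ and $\k$ alone), together with $\eqref{R11}$ for $(\RH^{-\e})_1$, $\eqref{R11.1}$ for $H_1$, and $\eqref{Hl}$ for $H_0$. After this substitution $\eqref{R1ZERO}$ takes the form
\[
\alpha\,\RH_1^2+(\beta_0+\beta_1\WH_1)\RH_1+\gamma_1\WH_1=0,
\]
and $\eqref{W1ZERO}$ the form
\[
p\,\WH_1^2+(q_0+q_1\WH_1)\RH_1+(r_0+r_1\WH_1)=0,
\]
with $\alpha,\beta_0,\beta_1,\gamma_1,p,q_0,q_1,r_0,r_1$ explicit functions of $\WH_0$ and $\k$. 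Since the second relation is linear in $\RH_1$, the resultant in $\RH_1$ of the two is
\[
Q(\WH_1):=\alpha N^2-(\beta_0+\beta_1\WH_1)NM+\gamma_1\WH_1 M^2,\qquad N:=p\WH_1^2+r_1\WH_1+r_0,\quad M:=q_1\WH_1+q_0,
\]
and $\WH_1$ must be a root of $Q$; equivalently, one solves the linear relation for $\RH_1=-N(\WH_1)/M(\WH_1)$, substitutes into the quadratic one, and clears the denominator $M^2$.

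The reason $Q$ is cubic and not quartic is a coincidence of coefficients in $\eqref{R1ZERO}$--$\eqref{W1ZERO}$ inherited from $H_1$: the coefficient of $\RH_1^2$ in $\eqref{R1ZERO}$ equals the coefficient of $\RH_1\WH_1$ in $\eqref{W1ZERO}$ (both are $-\e\RH_0^{-\e-1}-4\k x_\ast^2\WH_0$, coming from $(\RH^{-\e})_1$ and the $4\k\WH_0\RH_1$ term of $H_1$), and the coefficient of $\RH_1\WH_1$ in $\eqref{R1ZERO}$ equals the coefficient of $\WH_1^2$ in $\eqref{W1ZERO}$ (both are $-2x_\ast^2((1+\k)\WH_0+2\k)$, from the $2((1+\k)\WH_0+2\k)\WH_1$ term of $H_1$); that is, $\alpha=q_1$ and $\beta_1=p$. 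Hence the $\WH_1^4$-coefficient of $Q$ equals $\alpha p^2-\beta_1pq_1=q_1p^2-p^2q_1=0$, so $\deg Q\le3$. I would then define $a_3,a_2,a_1,a_0$ as the coefficients of $Q$ (the explicit expressions recorded in $\eqref{E:ATHREEDEF}$--$\eqref{E:AZERODEF}$), set $P(X):=Q(X)/a_3$, and record that $\WH_1$ solves $P(X)=0$, which is $\eqref{W_eq}$. The clearing of denominators by $M^2$ does not introduce the spurious root $\WH_1=-q_0/q_1$ of $M$, since $M\mid Q$ would force $M\mid\alpha N^2$, impossible because $M$ is linear and $M\nmid N$ generically; this is consistent with Remark~\ref{R:CONSISTENCYSONIC}, where at $\k=0$ the three roots of $P$ are the Larson--Penston value $1-\tfrac2{x_\ast}$, the Hunter value $0$, and a ghost value $-\tfrac1{x_\ast}$.

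For the quantitative statements I would expand the $a_j$ in $\k$ about $\k=0$, where $\e\to0$, $x_\ast^2\to\WH_0^{-2}$, and crucially $\alpha\to0$ and $\beta_0\to0$ (one checks $\beta_0\big|_{\k=0}=-2x_\ast^2H_0+2x_\ast^2\WH_0^2=0$). Since the $\WH_1^3$-coefficient of $Q$ is $2\alpha pr_1-\beta_0pq_1-\beta_1pq_0-\beta_1r_1q_1+\gamma_1q_1^2$ and both $\alpha$ and $q_1$ vanish at $\k=0$, we get $a_3\big|_{\k=0}=-\beta_1pq_0\big|_{\k=0}=8/\WH_0^2$, a strictly positive function that is bounded away from $0$ and $\infty$ on $(\tfrac13,1)$; by continuity in $\k$, uniform on that range, there are $\k_0,c>0$ with $c<a_3<1/c$ for $0<\k\le\k_0$. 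For the constant term $a_0=Q(0)=r_0(\alpha r_0-\beta_0q_0)$, the vanishing of $\alpha$ and $\beta_0$ at $\k=0$ gives $a_0\big|_{\k=0}=0$ (this is precisely the statement that the Hunter root $\WH_1=0$ solves the Newtonian cubic $\eqref{W10}$), and
\[
\partial_\k a_0\big|_{\k=0}=r_0\big|_{\k=0}\Big(\partial_\k\alpha\big|_{\k=0}\,r_0\big|_{\k=0}-\partial_\k\beta_0\big|_{\k=0}\,q_0\big|_{\k=0}\Big).
\]
Using $\eqref{sonic}$ one computes $r_0=2x_\ast^2H_0(1-3\WH_0)$, so $r_0\big|_{\k=0}=2(1-3\WH_0)$ and the whole of $\partial_\k a_0\big|_{\k=0}$ carries the factor $(3\WH_0-1)$; a short computation of $\partial_\k\alpha\big|_{\k=0}$ and $\partial_\k\beta_0\big|_{\k=0}$ shows the remaining factor keeps a definite sign on $(\tfrac13,1)$, so that $\partial_\k a_0\big|_{\k=0}$ is a positive multiple of $(3\WH_0-1)$. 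Combining $a_0=\k(3\WH_0-1)\times(\text{positive, bounded})+O(\k^2)$ with $c<a_3<1/c$ gives $c<\tfrac{a_0}{\k(3\WH_0-1)}<\tfrac1c$ for $0<\k\le\k_0$, and therefore $P(0)=a_0/a_3\neq0$, i.e. no root of $P$ vanishes.

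The main obstacle is this last computation: one must track faithfully how $\e=\tfrac{2\k}{1-\k}$, the factor $\RH_0^{-\e}=\WH_0^{-\e}=1-\e\log\WH_0+O(\k^2)$, and the $\k$-linear corrections to $H_0,H_1,p,q_0,r_0$ propagate through $N(0)$, $M(0)$ and $\alpha r_0-\beta_0q_0$, and recognise that the outcome is forced to contain the factor $3\WH_0-1$ by consistency with $\eqref{W10}$ (whose physical roots are $0$ and $1-\tfrac2{x_\ast}$, the vanishing one contributing the vanishing constant term at $\k=0$). By contrast, the reduction from quartic to cubic is structurally robust, using only the two coefficient identities $\alpha=q_1$, $\beta_1=p$ valid for every $\k$, and the positivity of $a_3$ at $\k=0$ is immediate once one passes to the Newtonian limit.
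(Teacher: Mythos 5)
Your proposal is correct and follows the same elimination strategy as the paper: view the pair $\eqref{R1ZERO}$--$\eqref{W1ZERO}$ after the substitutions $\RH_0=\WH_0$, $x_\ast^2=\RH_0^{-\e}/H_0$, solve the $\RH_1$-linear relation for $\RH_1$ (this is exactly the paper's $\eqref{R33}$), substitute into the $\RH_1$-quadratic one (the paper's $\eqref{W22}$), and clear the linear denominator by squaring. I verified your two coefficient identities $\alpha=q_1$ and $\beta_1=p$ directly from $\eqref{R11}$--$\eqref{R11.1}$, so your explanation for the vanishing of the $\WH_1^4$ coefficient is correct; the paper simply observes the cancellation without noting its source, and your structural reason is a genuine, if modest, improvement in exposition. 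For the quantitative part the two proofs diverge in presentation: the paper obtains the explicit factorization $a_0=-cH_0(bcH_0+fh)$ and simplifies $bcH_0+fh$ to $\k\WH_0(\WH_0+\k)\{(5\WH_0-1)(\WH_0-1)+O(\k)\}$; you instead note that $\alpha,\beta_0,q_1$ all vanish at $\k=0$, get $a_0=r_0(\alpha r_0-\beta_0 q_0)$, and compute $\partial_\k a_0|_{\k=0}=4(1-3\WH_0)\bigl[\partial_\k\alpha\,(1-3\WH_0)+\partial_\k\beta_0\bigr]$, which after evaluating $\partial_\k\alpha|_{\k=0}=-6/\WH_0$ and $\partial_\k\beta_0|_{\k=0}=\tfrac{2}{\WH_0^2}(-4\WH_0^2-3\WH_0+1)$ collapses to $-\tfrac{8}{\WH_0^2}(3\WH_0-1)(5\WH_0-1)(\WH_0-1)$, the same sign and the same $(3\WH_0-1)(5\WH_0-1)(\WH_0-1)$ structure as the paper. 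Your $a_j$ differ from the paper's by an overall $\WH_0$-dependent normalization (your $a_3|_{\k=0}=8/\WH_0^2$, the paper's $\WH_0^4$), which is immaterial for the stated bounds since the monic polynomial $P$ is the same; if you need to match $\eqref{E:ATHREEDEF}$--$\eqref{E:AZERODEF}$ literally, you would clear denominators exactly as in $\eqref{E:RONEEXPRESSION}$--$\eqref{E:ABSTRACTFORM}$ rather than re-parametrizing. One shared caveat with the paper: as stated, the bound $c<a_0/(\k(3\WH_0-1))$ degenerates as $\WH_0\to1^-$ because of the factor $(\WH_0-1)$; this is harmless in context since the sonic window keeps $\WH_0$ bounded away from $1$, but neither your proof nor the paper's gives a uniform $c$ on all of $(\tfrac13,1)$.
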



\begin{proof}
%
%

Note that \eqref{W1ZERO} can be rearranged as 
\be\label{R22}
\begin{split}
-x_\ast^2H_1 [\WH_1+ 3\WH_0 -1] - x_\ast^2 H_0 [\e\tfrac{\RH_1}{\WH_0} +2]  [\WH_1+ 3\WH_0 -1] \\
- 2x_\ast^2 (1+\k) \WH_0(\WH_0 + \k)(\RH_1 -\WH_1) =0 . 
\end{split}
\ee 
Rewrite \eqref{R1ZERO} as 
\be\label{W22}
\begin{split}
& \left[{(1-\k) \WH_0(\WH_0+\k) -H_0 - [ \tfrac{\e}{2} \tfrac{H_0}{\WH_0} + 2\k\WH_0]\RH_1}\right]\RH_1\\
&\quad-\left[ {(1-\k)\WH_0(\WH_0+\k) +[(1+\k)\WH_0+2\k]\RH_1}
\right]\WH_1= 0, 
\end{split}
\ee
and \eqref{R22} as 
\be\label{R33}
\begin{split}
\RH_1= -  \frac{ [(1+\k)\WH_0+2\k ]\WH_1^2 + [(3+5\k)\WH_0^2- (1-8\k+\k^2)\WH_0- (3\k-\k^2)] \WH_1 + (3\WH_0-1) H_0}
{(1+\k)\WH_0(\WH_0+\k) +[\frac{\e}{2}\frac{H_0}{\WH_0} + 2\k \WH_0] (3\WH_0-1) +[\frac{\e}{2}\frac{H_0}{\WH_0} + 2\k \WH_0]\WH_1}. 
\end{split}
\ee
We would like to derive an algebraic equation for $\WH_1$. To this end, write \eqref{R33} and \eqref{W22} as 
\be\label{E:RONEEXPRESSION}
\RH_1 = - \frac{d\WH_1^2 + e\WH_1 + cH_0}{h+ b\WH_1},
\ee
where
\begin{align}
&d:=(1+\k)\WH_0+2\k, \ \ e:=(3+5\k)\WH_0^2- (1-8\k+\k^2)\WH_0- (3\k-\k^2), \ \ c:=3\WH_0-1, \label{E:ABCDEF1}\\
&h:= (1+\k)\WH_0(\WH_0+\k) +[\frac{\e}{2}\frac{H_0}{\WH_0} + 2\k \WH_0] (3\WH_0-1) , \ \  b:= \frac{\e}{2}\frac{H_0}{\WH_0} + 2\k \WH_0.
\end{align}
We observe that we may write
$h=(1+\k)a+ bc$, where
\begin{align}
a:=\WH_0(\WH_0+\k). \label{E:ABCDEF3}
\end{align} 
If we let
\be\label{E:VERYLITTLEFDEF}
f:=(1-\k)a -H_0,
\ee 
then~\eqref{W22} can be rewritten in the form
\be\label{E:ABSTRACTFORM}
[f- b\RH_1]\RH_1 - [(1-\k)a + d \RH_1]\WH_1=0,
\ee
with $f,.b,a, d$ as above.
Now replace $\RH_1$  in \eqref{E:ABSTRACTFORM} using the relation~ \eqref{E:RONEEXPRESSION}: 
\begin{align*}
 -\left[ f+ b \frac{d\WH_1^2 + e\WH_1 + cH_0}{h+ b\WH_1} \right]\frac{d\WH_1^2 + e\WH_1 + cH_0}{h+ b\WH_1} 
- \left[(1-\k)a - d \frac{d\WH_1^2 + e\WH_1 + cH_0}{h+ b\WH_1} \right] \WH_1=0. 
\end{align*}
Multiply $(h+ b\WH_1)^2$: 
\begin{align*}
& - \left[ f(h+ b\WH_1)+ b (d\WH_1^2 + e\WH_1 + cH_0) \right](d\WH_1^2 + e\WH_1 + cH_0) \\
&- \left[(1-\k)a (h+ b\WH_1)- d(d\WH_1^2 + e\WH_1 + cH_0)  \right](h+ b\WH_1) \WH_1=0. 
\end{align*}
Note that the highest order term $\WH_1^4$ is not present. Rearrange similar terms to conclude the identity
\be\label{E:WCUBIC}
a_3 \WH_1^3 + a_2 \WH_1^2 + a_1\WH_1 + a_0 = 0, 
\ee
where
\begin{align}
a_3&:=  d ( d h - be )- b (fd +(1-\k) ab), \label{E:ATHREEDEF}\\
a_2&:=-\Big[bcdH_0 -e  ( d h - be ) +h (fd+(1-\k)ab) + b ( fe + (1-\k) a h)  \Big],\\
a_1&:=-\Big[ bceH_0 -cH_0   ( d h - be ) + bcf H_0  
+  h  ( fe + (1-\k) a h ) \Big],\\
a_0&:=- cH_0 ( bcH_0 + fh). \label{E:AZERODEF}
\end{align}
By~\eqref{E:AZERODEF}, the sign of $a_0$ is the same as the sign of $bcH_0 + fh$.
We now use
\begin{align*}
 &bcH_0 + fh = \left[\frac{\e}{2}\frac{H_0}{\WH_0} + 2\k \WH_0 \right]\left[3\WH_0-1 \right]H_0\\
 & + \left[(1-\k) \WH_0(\WH_0+\k) -H_0 \right]\left[(1+\k)\WH_0(\WH_0+\k) +[\tfrac{\e}{2}\tfrac{H_0}{\WH_0} + 2\k \WH_0] (3\WH_0-1) \right]\\
 &=  \left[(1-\k) \WH_0(\WH_0+\k) -H_0 \right] (1+\k)\WH_0(\WH_0+\k) \\
 &\quad+ (1-\k) \WH_0(\WH_0+\k) [\tfrac{\e}{2}\tfrac{H_0}{\WH_0} + 2\k \WH_0] (3\WH_0-1)\\
 &= \WH_0(\WH_0+\k)\underbrace{ \Big\{(1+\k)\underbrace{\left[(1-\k) \WH_0(\WH_0+\k) -H_0 \right]}_{(I)}  + \k\underbrace{[\tfrac{H_0}{\WH_0} + 2(1-\k)\WH_0]}_{(II)} (3\WH_0-1) \Big\}}_{(III)},
\end{align*}
where we have used $\frac{\e}{2}=\frac{\k}{1-\k}$ in the last line.
Now 
\begin{align*}
(I)&= (1-\k) \WH_0(\WH_0+\k) - (1+3\k)\WH_0^2 - 4\k \WH_0 +\k-\k^2\\
&=\k \left[ -4 \WH_0^2 - (3+\k) \WH_0 +1-\k\right]
\end{align*}
and 
\begin{align*}
(II)&= (1+3\k)\WH_0 + 4\k - \tfrac{\k-\k^2}{\WH_0} + 2(1-\k) \WH_0\\
&= (3+\k)\WH_0 + 4\k - \tfrac{\k-\k^2}{\WH_0}.  
\end{align*}
Hence 
\begin{align*}
(III)&=\k \Big\{ -4(1+\k)\WH_0^2 - (3+\k)(1+\k)\WH_0 + 1-\k^2 + [  (3+\k)\WH_0+ 4\k - \tfrac{\k-\k^2}{\WH_0} ](3\WH_0-1)   \Big\}\\
&= \k \Big\{ (5-\k)\WH_0^2 - (6-7\k+\k^2) \WH_0 + (1-7\k+2\k^2) + \tfrac{\k-\k^2}{\WH_0}\Big\}\\
&= \k \Big\{ 5\WH_0^2 - 6\WH_0 + 1+ O(\k) \Big\}.  \label{E:AZEROASYMP}
\end{align*}
Since $5\WH_0^2 - 6\WH_0 + 1= (5\WH_0-1) (\WH_0-1) <0$ for $\frac15 <\WH_0<1$, 
\begin{align}
a_0 &= - cH_0 ( bcH_0 + fh) \notag \\
&=  - \k cH_0 \WH_0(\WH_0+\k)\Big\{ 5\WH_0^2 - 6\WH_0 + 1+ O(\k) \Big\} \notag\\
&>0  \ \text{ for } \ \frac13< \WH_0 <1 \text{ and } 0<\k\ll 1. 
\end{align}
From~\eqref{E:ABCDEF1}--\eqref{E:ABCDEF3} it is easy to see that 
\begin{align}
a&= \WH_0^2 + O(\k), \ \ b= O(\k), \ \ c= 3\WH_0-1, \ \ d= \WH_0 +O(\k), \\
e&= 3\WH_0^2-\WH_0 +O(\k), \ \  f= (1-\k) \WH_0(\WH_0+\k) -H_0= O(\k) , \\
h&= (1+\k)\WH_0(\WH_0+\k) +[\tfrac{\k}{1-\k}\tfrac{H_0}{\WH_0} + 2\k \WH_0] (3\WH_0-1)= \WH_0^2 +O(\k).
\end{align}
This then implies
\begin{align}\label{E:AIASYMP}
a_3=  \WH_0^4 +O(\k), \ \ a_2=\WH_0^4(3\WH_0-1) +O(\k), \ \ a_1=   \WH_0^5(2\WH_0-1)+O(\k), \ \  \frac{a_0}{\k(3\WH_0-1)} \sim 1
\end{align}
which shows the uniform positivity and boundedness of $a_3$ for sufficiently small $\k$. Upon dividing~\eqref{E:WCUBIC} by $a_3$ we finally conclude the proof of the lemma. 
\end{proof}



\begin{remark}
In the formal Newtonian limit $\k=0$, the cubic equation $P(X)=0$
reduces to
\be
X^3 + (3\WH_0-1) X^2 + (2\WH_0-1)\WH_0 X = X (X +\WH_0)(X+ 2\WH_0 -1 ) =0.
\ee
The root $X=0$ corresponds to the Hunter-type, $X=1-2\WH_0$ is of the LP-type, and $X=-\WH_0$ is the Newtonian ghost solution, see~\cite{GHJ2021}. 
\end{remark}



\subsection{Relativistic Larson-Penston-type solutions}


By Lemma~\ref{L:CUBIC} we know that there are in general three complex roots of~\eqref{W_eq} giving possible values for $\WH_1$.
One of those roots is a ``ghost root" and is discarded as unphysical solution of~\eqref{W_eq}. More precisely, in the Newtonian limit $\k\to0$ 
such a spurious root corresponds to the value $\WH_1(0)=-\frac{1}{x_\ast}$. Our goal is to first mod out such a solution by using the implicit function theorem to
construct a curve $\k\to\WH^{\text{gh}}_1(\k)$ of spurious solutions agreeing with $-\frac{1}{x_\ast}$ when $\k=0$.


\begin{lemma}[Ghost root]
The cubic polynomial $P$ introduced in Lemma~\ref{L:CUBIC} can be factorised in the form
\begin{align}\label{E:PFACTOR}
P(X) = (X-\WH_1^{\text{gh}}(\k)) Q(X),
\end{align}
where $\WH_1^{\text{gh}}(\k)$ is a real valued function of $\WH_0(\k)$ and
$Q$ is a quadratic polynomial given by
\begin{align}\label{E:QPOLDEF}
Q(X) = X^2 + \left[ \frac{a_2}{a_3} +\WH_1^{\text{gh}} \right] X -  \frac{a_0}{a_3 \WH_1^{\text{gh}}} .
\end{align}
\end{lemma}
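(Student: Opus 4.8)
The plan is to apply the implicit function theorem to isolate the spurious root of $P$. Recall from Lemma~\ref{L:CUBIC} and the Newtonian-limit remark that when $\k=0$ the cubic factors as $X(X+\WH_0)(X+2\WH_0-1)$, so the ghost root is $X=-\WH_0(0)=-\frac1{x_\ast}$. First I would set up the scalar function $\Phi(\k,X):=P(X)$ (viewing the coefficients $a_j/a_3$ as continuously differentiable functions of $\k$ through $\WH_0(\k)$, which is continuously differentiable by Lemma~\ref{R0W0}), and check that $\Phi(0,-\frac1{x_\ast})=0$. The key computation is $\partial_X\Phi(0,-\frac1{x_\ast})$: differentiating $X(X+\WH_0)(X+2\WH_0-1)$ and evaluating at $X=-\WH_0$ gives $(-\WH_0)(\WH_0-1)=\WH_0(1-\WH_0)$, which is strictly positive for $\frac13<\WH_0<1$, hence nonzero. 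The implicit function theorem then yields, for $\k$ in a small interval around $0$, a continuously differentiable function $\k\mapsto\WH_1^{\text{gh}}(\k)$ with $\WH_1^{\text{gh}}(0)=-\frac1{x_\ast}$ and $P(\WH_1^{\text{gh}}(\k))=0$. Since $\WH_0$ is itself a continuously differentiable function of $\k$ on the relevant range, $\WH_1^{\text{gh}}$ is a real-valued function of $\WH_0$ as claimed.

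Next I would perform the polynomial division $P(X)=(X-\WH_1^{\text{gh}})Q(X)$. Writing $P(X)=X^3+\frac{a_2}{a_3}X^2+\frac{a_1}{a_3}X+\frac{a_0}{a_3}$ and $Q(X)=X^2+pX+s$, matching coefficients gives $p=\frac{a_2}{a_3}+\WH_1^{\text{gh}}$ (from the $X^2$ coefficient), and $s=-\frac{a_0}{a_3\WH_1^{\text{gh}}}$ (from the constant term $-\WH_1^{\text{gh}}s=\frac{a_0}{a_3}$); this matches the stated formula~\eqref{E:QPOLDEF}. One should note that this division is legitimate precisely because $\WH_1^{\text{gh}}\neq0$: by Lemma~\ref{L:CUBIC} the constant term $\frac{a_0}{a_3}$ is nonzero for $0<\k\ll\k_0$ and $\frac13<\WH_0<1$ (indeed $a_0/(\k(3\WH_0-1))$ is bounded above and below), so no root of $P$ vanishes, and in particular $\WH_1^{\text{gh}}\neq0$, making $-\frac{a_0}{a_3\WH_1^{\text{gh}}}$ well-defined. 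The remaining coefficient check (the $X^1$ coefficient, giving $\frac{a_1}{a_3}=s-p\WH_1^{\text{gh}}$) is then an automatic consequence of the fact that $\WH_1^{\text{gh}}$ is a root, so it need not be verified separately.

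The only genuine subtlety — the main obstacle, such as it is — is ensuring the implicit function theorem applies uniformly over the range of $x_\ast$ (and hence $\WH_0$) under consideration, i.e. that $\partial_X\Phi$ stays bounded away from zero. This follows because $\WH_0(\k;x_\ast)\to\frac1{x_\ast}\in(\frac13,1)$ uniformly for $x_\ast$ in a compact subinterval and $\WH_0(1-\WH_0)$ is bounded below on compact subsets of $(\frac13,1)$; combined with the continuity of the coefficients $a_j$ in $\k$ established in Lemma~\ref{L:CUBIC}, this gives a common $\k_1>0$ on which $\WH_1^{\text{gh}}(\k)$ is defined. Everything else is routine polynomial algebra.
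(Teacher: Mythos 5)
Your proposal is correct and follows essentially the same route as the paper: both verify $\partial_X P\neq 0$ at the Newtonian point $(\k,X)=(0,-\tfrac1{x_\ast})$ (you compute $\WH_0(1-\WH_0)$ from the factored form, the paper computes $\tfrac{x_\ast-1}{x_\ast^2}$ from the coefficient form; these agree since $\WH_0(0)=\tfrac1{x_\ast}$), apply the implicit function theorem to produce $\WH_1^{\text{gh}}(\k)$, and then read off $Q$ by polynomial division. Your additional remarks — that $\WH_1^{\text{gh}}\neq0$ by the nonvanishing of $a_0/a_3$ from Lemma~\ref{L:CUBIC}, and the uniformity over the $x_\ast$-range — are sound and merely make explicit what the paper leaves implicit.
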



\begin{proof}
We differentiate~\eqref{W_eq} to obtain
\be
\begin{split}
\frac{\pa P}{\partial X}\Big|_{(\k,X)=(0,-\frac{1}{x_\ast})}&=  \left(3X^2 + 2\frac{a_2}{a_3} X+ \frac{a_1}{a_3}\right) \Big|_{(\k,X)=(0,-\frac{1}{x_\ast})} \\
&=  3\frac{1}{x_\ast^2} - 2\frac{3-x_\ast}{x_\ast}\frac{1}{x_\ast} + \frac{2-x_\ast}{x_\ast^2}= \frac{x_\ast-1}{x_\ast^2} >0.
\end{split}
\ee
The  implicit function theorem implies the existence of a unique $\k$-parametrised curve 
\be\label{E:GHOSTCURVE}
\WH_1^{\text{gh}}(\k)=  -\frac{1}{x_\ast} + O(\k)
\ee 
satisfying $P(\WH_1^{\text{gh}}(\k))=0$. 
Identity~\eqref{E:PFACTOR} can now be checked directly, keeping in mind~\eqref{E:QPOLDEF}.

The discriminant of $Q$ is given by 
\[
\Delta_Q= ( \frac{a_2}{a_3} +\WH_1^{\text{gh}} )^2 +  \frac{4a_0}{a_3 \WH_1^{\text{gh}}}.
\]
\end{proof}




\begin{lemma}[Definition of $\xc(\k)$]\label{L:XMINDEF}
There exists a continuous curve $(0,\k_0]\ni \k\mapsto\xc(\k)\in(\frac12,\frac72)$ such that 
\begin{align}
\Delta_Q(\xc(\k)) & =0, \label{E:XMIN1}\\
\Delta_Q(x_\ast)& >0, \ \ x_\ast\in(\xc(\k),\xmax(\k)], \label{E:XMIN2}\\
\xc(\k) & = 2+ O(\sqrt\k)>2. \label{E:XMIN3}
\end{align}
Moreover, for any $x_\ast\in(\xc(\k),\frac72]$ there exists a constant $C_\ast=C_\ast(x_\ast)>0$ such that for all $\k\in(0,\k_0]$
\begin{align}\label{E:XMINRATE1}
\lv \WH_1^{\text{{\em RLP}}} (\k; x_\ast) - \frac{x_\ast-2}{x_\ast}  \rv\le C_\ast \k.
\end{align}
When $\xs=\xc(\k)$ the rate of convergence changes and there exists a constant $\bar C>0$ such that 
\begin{align}\label{E:XMINRATE2}
\lv \WH_1^{\text{{\em RLP}}} (\k; \xc(\k)) - \frac{\xc(\k)-2}{\xc(\k)}  \rv\le \bar C \sqrt{\k}.
\end{align}
\end{lemma}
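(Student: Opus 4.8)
The plan is to read off the zero of the discriminant $\Delta_Q$ as a perturbation, in the variable $\sqrt\k$, of the value $x_\ast=2$ where the Newtonian LP- and Hunter-branches collide. First I would record, from~\eqref{E:AIASYMP} and the formula $\Delta_Q=\big(\tfrac{a_2}{a_3}+\WH_1^{\text{gh}}\big)^2+\tfrac{4a_0}{a_3\WH_1^{\text{gh}}}$, the leading behaviour of each ingredient as $\k\to0$ at fixed $\WH_0\in(\tfrac13,1)$. Since $\WH_1^{\text{gh}}(\k)=-\tfrac1{x_\ast}+O(\k)$ by~\eqref{E:GHOSTCURVE}, $a_2/a_3=(3\WH_0-1)+O(\k)$, and $\WH_0=\WH_0(\k;x_\ast)=\tfrac1{x_\ast}+O(\k)$ by Lemma~\ref{R0W0}, the first square tends to $(3\WH_0-1-\WH_0)^2=(2\WH_0-1)^2$. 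For the second term one uses the refined expansion $a_0=-\k c H_0\WH_0(\WH_0+\k)\{5\WH_0^2-6\WH_0+1+O(\k)\}$ from the proof of Lemma~\ref{L:CUBIC}, together with $a_3=\WH_0^4+O(\k)$ and $\WH_1^{\text{gh}}=-\tfrac1{x_\ast}+O(\k)=-\WH_0+O(\k)$, so that
\[
\frac{4a_0}{a_3\WH_1^{\text{gh}}} = \frac{-4\k(3\WH_0-1)\WH_0^2\big(5\WH_0^2-6\WH_0+1+O(\k)\big)}{-\WH_0^5+O(\k)} = 4\k\,\frac{(3\WH_0-1)(5\WH_0-1)(\WH_0-1)}{\WH_0^3}+O(\k^2).
\]
Hence $\Delta_Q = (2\WH_0-1)^2 + 4\k\,\Psi(\WH_0) + O(\k^2)$ with $\Psi(\WH_0):=\frac{(3\WH_0-1)(5\WH_0-1)(\WH_0-1)}{\WH_0^3}$, which is strictly negative on $(\tfrac13,1)$ since the factor $(5\WH_0-1)(\WH_0-1)<0$ there. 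Converting back to $x_\ast$ via $2\WH_0-1=2(\tfrac1{x_\ast})-1+O(\k)=\tfrac{2-x_\ast}{x_\ast}+O(\k)$, this gives
\[
\Delta_Q(x_\ast;\k) = \Big(\frac{x_\ast-2}{x_\ast}\Big)^2 + 4\k\,\Psi\!\big(\tfrac1{x_\ast}\big) + O\big(\k\,|x_\ast-2|\big) + O(\k^2),
\]
uniformly for $x_\ast$ in a fixed neighbourhood of $2$.

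Next I would apply the implicit function theorem in the rescaled variable. Set $x_\ast = 2+\sqrt\k\,t$ and define $\Phi(t,\k):=\k^{-1}\Delta_Q(2+\sqrt\k\,t;\k)$. By the expansion above, $\Phi$ extends continuously (in fact continuously differentiably in $t$) to $\k=0$ with $\Phi(t,0)=\tfrac{t^2}{4}+4\Psi(\tfrac12)$, where $\Psi(\tfrac12)=\frac{(1/2)(3/2)(-1/2)}{1/8}=-3<0$. Thus $\Phi(t,0)$ has a simple positive root $t_0=\sqrt{48}=4\sqrt3$, at which $\partial_t\Phi(t_0,0)=t_0/2\neq0$. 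The implicit function theorem then yields a continuous curve $\k\mapsto t(\k)$ with $t(0)=t_0$ and $\Phi(t(\k),\k)=0$, and one sets $\xc(\k):=2+\sqrt\k\,t(\k)$; this immediately gives~\eqref{E:XMIN1} and~\eqref{E:XMIN3}, and by possibly shrinking $\k_0$ one lands inside $(\tfrac12,\tfrac72)$. Property~\eqref{E:XMIN2} follows because, for $x_\ast>\xc(\k)$, the leading term $\big(\tfrac{x_\ast-2}{x_\ast}\big)^2$ grows while the negative correction $4\k\Psi$ is bounded; more carefully, $\partial_t\Phi(t,0)=t/2>0$ for $t>t_0>0$, so $\Phi(\cdot,\k)$ is increasing past its root for small $\k$, hence $\Delta_Q>0$ on $(\xc(\k),\xmax(\k)]$ — note $\xmax(\k)<3$ is fixed and bounded away from $2$, so the expansion's error terms stay controlled there, and one can alternatively argue directly that $\Delta_Q(x_\ast;0)=\big(\tfrac{x_\ast-2}{x_\ast}\big)^2>0$ for $x_\ast\neq2$ and use continuity on compact subsets of $(2,3]$.

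For the rate estimates~\eqref{E:XMINRATE1}--\eqref{E:XMINRATE2}: the physical root $\WH_1^{\text{RLP}}$ is the root of $Q$ selected in Definition~\ref{D:RLPTYPEDEF} that converges to $\tfrac{x_\ast-2}{x_\ast}=1-2\WH_0\big|_{\k=0}$. Writing $Q(X)=X^2+(\tfrac{a_2}{a_3}+\WH_1^{\text{gh}})X-\tfrac{a_0}{a_3\WH_1^{\text{gh}}}$, its two roots are $\tfrac12\big[-(\tfrac{a_2}{a_3}+\WH_1^{\text{gh}})\pm\sqrt{\Delta_Q}\big]$. At fixed $x_\ast\in(\xc(\k),\tfrac72]$ one has $\Delta_Q(x_\ast;\k)=\big(\tfrac{x_\ast-2}{x_\ast}\big)^2+O(\k)$ bounded away from $0$ uniformly in $\k$, so $\sqrt{\Delta_Q}$ is Lipschitz in $\k$ near $\k=0$, as are $a_2/a_3$ and $\WH_1^{\text{gh}}$ (all $C^1$ in $\k$ by Lemmas~\ref{R0W0},~\ref{R1W1} and the IFT arguments above); hence $\WH_1^{\text{RLP}}(\k;x_\ast)$ is Lipschitz in $\k$, giving $|\WH_1^{\text{RLP}}(\k;x_\ast)-\tfrac{x_\ast-2}{x_\ast}|\le C_\ast(x_\ast)\k$, which is~\eqref{E:XMINRATE1}. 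At $x_\ast=\xc(\k)$ instead $\Delta_Q=0$, so the two roots of $Q$ coincide at $-\tfrac12(\tfrac{a_2}{a_3}+\WH_1^{\text{gh}})$; comparing this common value with $\tfrac{x_\ast-2}{x_\ast}$ and using $\xc(\k)-2=O(\sqrt\k)$ yields a discrepancy of size $O(\sqrt\k)$, which is~\eqref{E:XMINRATE2}. The main obstacle is bookkeeping the $O(\k)$ versus $O(\sqrt\k)$ error terms carefully enough near the degeneracy $x_\ast=2$ — in particular verifying that the correction coefficient $\Psi(\tfrac12)$ is strictly negative so that a genuine turning point (rather than a mere inflection) occurs, and ensuring the expansion of $\Delta_Q$ is uniform on the relevant $x_\ast$-range; everything else is an application of the implicit function theorem together with the asymptotics already established in Lemmas~\ref{R0W0},~\ref{R1W1}, and~\ref{L:CUBIC}.
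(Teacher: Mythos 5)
Your approach is valid but genuinely different from the paper's. The paper establishes existence of $\xc(\k)$ by the intermediate value theorem applied directly to $\Delta_Q(\cdot;\k)$ (using that $\Delta_Q<0$ at $x_\ast$ with $2-x_\ast=O(\k)$ and $\Delta_Q>0$ at $x_\ast=2+\delta$), then separately derives the rate $\xc=2+O(\sqrt\k)$ from the identity $\Delta_Q(\xc)=0$; you instead rescale $x_\ast=2+\sqrt\k\,t$ and apply the implicit function theorem to $\Phi(t,\k)=\k^{-1}\Delta_Q$, obtaining existence, continuity of the curve, and the $O(\sqrt\k)$ rate in a single stroke. Your route is arguably cleaner for the continuity claim, which the paper leaves implicit; the price is that you need $\Delta_Q$ to be $C^1$ jointly in $(x_\ast,\k)$ (so that the Taylor expansion and the parametric IFT apply), which does hold because all the coefficients $a_j$, $\WH_1^{\text{gh}}$, $\WH_0$ are built from implicit-function-theorem constructions, but you should state this explicitly. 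There is also an arithmetic slip in the computation of the $O(\k)$ coefficient: from the proof of Lemma~\ref{L:CUBIC}, $a_0=-\k\,c\,H_0\,\WH_0(\WH_0+\k)\{5\WH_0^2-6\WH_0+1+O(\k)\}$ with $c=3\WH_0-1$, and since $H_0\cdot\WH_0(\WH_0+\k)=\WH_0^4+O(\k)$ (not $\WH_0^2$), one gets $\frac{4a_0}{a_3\WH_1^{\text{gh}}}=4\k\,\frac{(3\WH_0-1)(5\WH_0-1)(\WH_0-1)}{\WH_0}+O(\k^2)$, so $\Psi(\WH_0)=\frac{(3\WH_0-1)(5\WH_0-1)(\WH_0-1)}{\WH_0}$, $\Psi(\tfrac12)=-\tfrac34$, and $t_0=2\sqrt3$ rather than $4\sqrt3$. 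Since the lemma only requires $\Psi(\tfrac12)<0$ (which survives) and does not fix the constant in $O(\sqrt\k)$, this does not affect the validity of the argument, but the numbers should be corrected. The rate estimates~\eqref{E:XMINRATE1}--\eqref{E:XMINRATE2} are obtained essentially as in the paper: Lipschitz dependence of $\sqrt{\Delta_Q}$ in $\k$ when $\Delta_Q$ stays bounded away from zero, versus the collision of the two roots at the double zero $\Delta_Q=0$, where the discrepancy is governed by $\xc-2=O(\sqrt\k)$.
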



\begin{proof}
{\em Step 1. Existence of $\xc$.}
From the asymptotic formulas~\eqref{E:AIASYMP} and~\eqref{E:GHOSTCURVE} it is clear that for all $\k\in(0,\k_0]$ we have
\be\label{E:GHOSTASYMP}
- \left[ \frac{a_2}{a_3} +\WH_1^{\text{gh}} \right] = \frac{x_\ast-2}{x_\ast}+O(\k), 
\ee
and therefore 
\[
\Delta_Q(\k,x_\ast) = \left(\frac{x_\ast-2}{x_\ast}+O(\k)\right)^2 +  \frac{4a_0}{a_3 \WH_1^{\text{gh}}} .
\]
However, by Lemma~\ref{L:CUBIC} we know that 
$ \frac{4a_0}{a_3 \WH_1^{\text{gh}}} \le - \alpha(3\WH_0-1) \k$ for some positive constant $\alpha>0$. Choosing $x_\ast<2$ such that $2-\xs=O(\k)$, we see that 
$\Delta_Q(\k,x_\ast)<0$. On the other hand, for any $\xs\ge2+\delta$ for some small, but fixed $\delta>0$, we see that $\Delta_Q(\k,\xs)>0$.
Therefore, by the intermediate value property, there exists an $\xs\in (2-O(\k),2+\delta)$ such that $\Delta_Q(\xs)=0$.
Let $\xc$ be the largest such $\xs$ - it is clear from the construction that the properties~\eqref{E:XMIN1}--\eqref{E:XMIN2} are satisfied.

\noindent
{\em Step 2. Asymptotic behaviour of $\xc(\k)$.}
From~\eqref{E:GHOSTASYMP}, the identity
\be
- \left[ \frac{a_2}{a_3} +\WH_1^{\text{gh}} \right] \Big\vert_{\xs=\xc(\k)} =\sqrt{ -  \frac{4a_0}{a_3 \WH_1^{\text{gh}}}  }\Big\vert_{\xs=\xc(\k)},
\ee
and Lemma~\ref{L:CUBIC} we easily conclude~\eqref{E:XMIN3}.

\noindent
{\em Step 3. Continuity properties of the map $\k\mapsto\WH_1^{\text{{\em RLP}}}(\k,\xs)$.}
Fix an $\xs\in(\xc,\xmax]$. Then 
\begin{align}
\WH_1^{\text{RLP}} (\k; x_\ast) - \frac{x_\ast-2}{x_\ast} &= \frac{- \left[ \frac{a_2}{a_3} +\WH_1^{\text{gh}} \right]  
+ \sqrt{( \frac{a_2}{a_3} +\WH_1^{\text{gh}} )^2 +  \frac{4a_0}{a_3 \WH_1^{\text{gh}}}} }{2} - \frac{x_\ast-2}{x_\ast} \notag \\
&=\frac{ - \left[ \frac{a_2}{a_3} +\WH_1^{\text{gh}} \right] - \frac{x_\ast-2}{x_\ast}}{2} + \frac{\sqrt{( \frac{a_2}{a_3} +\WH_1^{\text{gh}} )^2 +  \frac{4a_0}{a_3 \WH_1^{\text{gh}}}} -  \frac{x_\ast-2}{x_\ast} }{2} \notag \\
&=: (i) + (ii). 
\end{align}
By~\eqref{E:GHOSTASYMP} we have $(i)=O(\k)$. For $(ii)$, 
\begin{align}
(ii)= \frac{( \frac{a_2}{a_3} +\WH_1^{\text{gh}} )^2 - (\frac{x_\ast-2}{x_\ast})^2 
+ \frac{4a_0}{a_3 \WH_1^{\text{gh}} }}{ 2 \left[  \sqrt{( \frac{a_2}{a_3} +\WH_1^{\text{gh}} )^2 +  \frac{4a_0}{a_3 \WH_1^{\text{gh}}}} +  \frac{x_\ast-2}{x_\ast}\right] } 
= \frac{O(\k)}{ \sqrt{(x_\ast-2)^2 + O(\k) } +(x_\ast-2) } \ \ \text{ as $\k \to 0^+$},
\end{align}
by~\eqref{E:AIASYMP} and~\eqref{E:GHOSTASYMP}.
Therefore, the bound~\eqref{E:XMINRATE1} follows.

If we now let $\xs=\xc(\k)$, since by definition $\Delta_Q(\xc(\k))=0$, we have
\begin{align}
\WH_1^{\text{RLP}} (\k; \xc(\k)) - \frac{\xc(\k)-2}{\xc(\k)}  & = -\frac{ \frac{a_2}{a_3} +\WH_1^{\text{gh}}}{2} - \frac{\xc(\k)-2}{\xc(\k)} \notag \\
& = - \sqrt{-\frac{a_0}{a_3\WH_1^{\text{gh}}}}- \frac{\xc(\k)-2}{\xc(\k)} \notag\\
& = O_{\k\to0^+}(\sqrt \k),
\end{align}
where we have used~\eqref{E:XMIN3} and Lemma~\ref{L:CUBIC}. This proves~\eqref{E:XMINRATE2}.
\end{proof}


Of special importance in our analysis is the Friedmann solution, which has the property that for any $\k>0$ $\WH_0=\RH_0=\frac13$. By Lemma~\ref{R0W0}
there exists a continuously differentiable curve $\k\to\xmax(\k)$ defined through the property
\be\label{E:XMAXDEF}
\WH_0(\k;\xmax(\k)) = \frac13, \ \ \k\in[0,\k_0].
\ee
Since $\WH_0(0;3)=\frac13$, we conclude from Lemma~\ref{R0W0} that 
\be\label{E:XMAXPROP}
\xmax(\k)<3 \ \ \text{ and } \ 3-\xmax(\k)=O(\k), \ \   \k\in[0,\k_0].
\ee


With above preparations in place, we are now ready to define what we mean by a solution of the {\em relativistic Larson-Penston type}.

\begin{definition}[Solutions of RLP-type]\label{D:RLPTYPEDEF}
Let $0<\k_0\ll1$ be given by Lemma~\ref{L:CUBIC} and let  $x_\ast\in[\xc,\xmax]$. We say that the 
sequence $(\RH_N,\WH_N)_{N\in\mathbb N}$ is of \underline{relativistic Larson-Penston (RLP)-type} if for all $\k\in(0,\k_0]$
\begin{align}
\RH_0 &= \WH_0,
\end{align}
the coefficients $(\RH_N,\WH_N)_{N\in\mathbb N}$ satisfy the recursive relations~\eqref{RN+1}--\eqref{WN+1},
$\WH_1$ is the root of the quadratic polynomial $Q$~\eqref{E:QPOLDEF} given by
\begin{align}\label{E:WONERLP}
\WH_1 =\WH_1^{\text{{\em RLP}}} = \frac{- \left[ \frac{a_2}{a_3} +\WH_1^{\text{gh}} \right]  + \sqrt{( \frac{a_2}{a_3} +\WH_1^{\text{gh}} )^2 +  \frac{4a_0}{a_3 \WH_1^{\text{gh}}}} }{2},
\end{align}
and $\RH_1$ is given as a function of $\WH_0$ and $\WH_1$ via~\eqref{R33}.
\end{definition}



\begin{remark}
For all $\k\in(0,\k_0]$ it is clear from the proof of the lemma that the following  bound holds in the 
interval $\xs\in[\xc(\k),\xmax(\k)]$: 
\begin{align}
\WH_1^{\text{RLP}} (\k; x_\ast) - \frac{x_\ast-2}{x_\ast}  = \frac{O(\k)}{ \sqrt{(x_\ast-2)^2 + O(\k) } +(x_\ast-2) }.
\end{align}
\end{remark}


\subsection{High-order Taylor coefficients}\label{SS:HIGHER}


We now consider $\eqref{RN+1}_{N\geq2}$ and $\eqref{WN+1}_{N\geq 2}$. 

\begin{lemma}\label{Lem:N} Let $N\geq 2$. Then the following holds 
\be\label{E:MATRIXEQN}
\mathcal A_N (\WH_0, \WH_1, \RH_1)  \left(\begin{array}{c}
\RH_N \\
\WH_N 
\end{array}
 \right) = 
  \left(\begin{array}{c}
\mathcal S_N \\
\mathcal V_N
\end{array}
 \right) . 
\ee Here 
\be
\mathcal A_N (\WH_0, \WH_1, \RH_1)  = 
\left(\begin{array}{cc}
A_{11} \  & A_{12}  \\
A_{21} \  & A_{22}  
\end{array}
 \right) 
\ee where 
\begin{align*}
A_{11}&=-2x_\ast^2 H_0 N  + 2x_\ast^2 (1-\k) \RH_0 (\WH_0+\k) -x_\ast^2 H_1 N - \e \RH_0^{-\e-1}\RH_1(N+1) - 4\k x_\ast^2 \WH_0\RH_1, \\
A_{12}&= -x_\ast^2 \RH_1 (2(1-\k) \WH_0 + 4\k + 4\k\RH_0) - 2x_\ast^2 (1-\k) \RH_0 (\WH_0+\k), \\
A_{21}&= - 2x_\ast^2 (1+\k) \WH_0 (\WH_0+\k) - (\e \RH_0^{-\e-1} +4\k \WH_0x_\ast^2 ) (\WH_1 +3\WH_0 -1), \\
A_{22}&=  - 2x_\ast^2 H_0 N - 3x_\ast^2 H_0 + 3\RH_0^{-\e} + 2x_\ast^2 (1+\k) \WH_0 (\WH_0+\k)\\
&\quad  -x_\ast^2 (  \WH_1  + 3\WH_0-1)  (2(1-\k) \WH_0 + 4\k + 4\k \RH_0) -x_\ast^2 H_1N - 
 \e \RH_0^{-\e-1} \RH_1N , 
\end{align*}
and 
\begin{align*}
\mathcal S_N= \mathcal S_N[\RH_0,\WH_0; R_1,\WH_1;\cdots, \RH_{N-1}, \WH_{N-1}], \\
\mathcal V_N= \mathcal V_N[\RH_0,\WH_0; R_1,\WH_1;\cdots, \RH_{N-1}, \WH_{N-1}], 
\end{align*} are given in \eqref{SN} and \eqref{VN}.
\end{lemma}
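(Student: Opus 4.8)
The plan is to read off \eqref{E:MATRIXEQN} directly from the recursion identities \eqref{RN+1} and \eqref{WN+1}, already established for all $N\ge 0$, by isolating their $\RH_N$-- and $\WH_N$--linear parts. First I would recall, following Remark~\ref{remark1}, that the sonic conditions $\RH_0=\WH_0$ and $\RH_0^{-\e}=x_\ast^2 H_0$ make the coefficient $(\RH^{-\e})_0-x_\ast^2 H_0=\RH_0^{-\e}-x_\ast^2 H_0$ of $(N+1)\RH_{N+1}$ in \eqref{RN+1}, and the analogous coefficient of $(N+1)\WH_{N+1}$ in \eqref{WN+1}, vanish. Hence for $N\ge 2$ the highest-order unknowns genuinely present in these two scalar relations are $\RH_N$ and $\WH_N$, so \eqref{RN+1}--\eqref{WN+1} close into a $2\times 2$ linear system for $(\RH_N,\WH_N)$ whose coefficients depend only on lower-order data.

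Next I would extract, term by term, every contribution linear in $\RH_N$ or $\WH_N$. There are three sources. The slice $m=N-1$, $l=1$ of the sums $\sum_{l+m=N}(m+1)\RH_{m+1}(\cdots)_l$ and $\sum_{l+m=N}(m+1)\WH_{m+1}(\cdots)_l$ produces the $N$--proportional entries, using $(\RH^{-\e})_1=-\e\RH_0^{-\e-1}\RH_1$ from \eqref{faaR} and $H_1=2[(1+\k)\WH_0+2\k]\WH_1+4\k\WH_0\RH_1$ from \eqref{Hl}. The slice $l=N$, $m=0$ of the same sums contributes through the single linear piece carried by each index-$N$ composite: by \eqref{faaR} the only linear-in-$\RH_N$ term of $(\RH^{-\e})_N$ is $-\e\RH_0^{-\e-1}\RH_N$ (the partition $\lambda_N=1$), and by \eqref{Hl} the linear part of $H_N$ is $[2(1-\k)\WH_0+4\k+4\k\RH_0]\WH_N+4\k\WH_0\RH_N$; multiplying these by the $m=0$ factors $\RH_1$ and $\WH_1$ yields the $-\e\RH_0^{-\e-1}\RH_1(N+1)$-- and $4\k x_\ast^2$--type entries. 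Finally, the $n=N$ (and $l=N$) slices of the trilinear coefficients $(\RH(\WH+\k)(\RH-\WH))_N$ and $(\WH(\WH+\k)(\RH-\WH))_N$ contribute, and here the would-be diagonal term $\RH_N(\WH_0+\k)(\RH_0-\WH_0)$ drops out precisely because $\RH_0=\WH_0$. Collecting these pieces reproduces $A_{11},A_{12}$ from \eqref{RN+1} and $A_{21},A_{22}$ from \eqref{WN+1} exactly as stated.

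Every remaining term in \eqref{RN+1}--\eqref{WN+1} depends only on $\RH_0,\dots,\RH_{N-1}$, $\WH_0,\dots,\WH_{N-1}$ and $x_\ast$: the composites $(\RH^{-\e})_l$, $H_l$, $(\WH^2)_l$, $(\RH\WH)_l$ for $l\le N-1$ and the trilinear coefficients of index $\le N-1$ involve only coefficients of index $\le N-1$, and the index-$N$ composites, once their single linear term is removed, likewise do. Transferring all these to the right-hand side defines $\mathcal S_N$ and $\mathcal V_N$; writing the result out produces the explicit formulas \eqref{SN} and \eqref{VN}, completing the proof of \eqref{E:MATRIXEQN}.

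I expect the only real difficulty to be organizational book-keeping: ensuring that no linear-in-$\RH_N$/$\WH_N$ term is missed, especially the Faa di Bruno contribution buried in $(\RH^{-\e})_N$ and the index-$N$ parts of $H_N$, $(\WH^2)_N$, $(\RH\WH)_N$ when paired with $\RH_1$ or $\WH_1$, together with tracking the shifted sums over $l+m=N-1$ and $l+m=N-2$ and the sign factors $(-1)^m$ appearing in \eqref{WN+1}. No new idea is needed beyond those already used to derive \eqref{RN+1}--\eqref{WN+1}.
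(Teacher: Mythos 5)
Your plan is essentially the paper's proof: isolate the terms in \eqref{RN+1}--\eqref{WN+1} that are linear in $\RH_N,\WH_N$, use the sonic conditions of Remark~\ref{remark1} to kill the would-be $(N+1)$-order contributions, read off the coefficient matrix, and sweep everything else into $\mathcal S_N,\mathcal V_N$. One small omission in your explicit list of ``three sources'': the shifted sum $2x_\ast^2\sum_{l+m=N-1}(m+1)\RH_{m+1}H_l$ (and its analogues in \eqref{WN+1}) contains a fourth top-order slice, $m=N-1$, $l=0$, contributing $2N\RH_N H_0$; this is precisely the origin of the $-2x_\ast^2 H_0 N$ entries in $A_{11}$ and $A_{22}$ and cannot be recovered from the $m=N-1,\,l=1$ slice of $\sum_{l+m=N}$ nor from the $l=N,\,m=0$ slice. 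You flag the shifted sums as part of the book-keeping, so the computation would surface this term, but it belongs on your enumerated list alongside the others.
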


\begin{proof}  We first observe that there are no terms involving  $(\RH_{N+1}, \WH_{N+1})$ in \eqref{RN+1} and \eqref{WN+1} due to the sonic conditions \eqref{sonic} (cf.  Remark \ref{remark1} and see the cancelation below). To prove the lemma, we isolate all the coefficients in  $ (\RH^{-\e})_N$ and $H_N$ that contain  the top order coefficients $\RH_{N}, \WH_{N}$. For  $ (\RH^{-\e})_N$, from \eqref{faaR} we have 
\be\label{faaR1}
\begin{split}
 (\RH^{-\e})_N&= -\e \RH_0^{-\e-1}\RH_N \\
&\quad + \RH_0^{-\e} \sum_{m=2}^N \frac{1}{\RH_0^m}\sum_{\pi(N,m)} (- \e)_m \frac{1}{\lambda_1 ! \dots \lambda_N !} {\RH_1}^{\lambda_1}\dots 
{\RH_N}^{\lambda_N}
\end{split}
\ee
where we recall  $\lambda_j=0$ for $N-m+2\le j\le N$, in particular $\lambda_N=0$ and therefore there are no terms involving $\RH_N$ in the summation term.  
For $H_N$, from \eqref{Hl}, we have 
\be\label{HN1}
\begin{split}
H_N&= 2(1-\k)\WH_0 \WH_N + 4\k \WH_N + 4\k (\RH_0 \WH_N+ \RH_N \WH_0)\\
&\quad + (1-\k) \sum_{\substack{\ell +m =N\\ 1\leq m\leq N-1}} \WH_\ell \WH_m+  \sum_{\substack{\ell +m =N\\ 1\leq m\leq N-1}} 4\k \RH_\ell  \WH_m . 
\end{split}
\ee
Using \eqref{faaR1} and \eqref{HN1}, we can isolate all the coefficients in \eqref{RN+1} that contain contributions from $(R_{N}, W_{N})$ as follows: 
\be
\begin{split}
0=&
\cancel{(N+1)\RH_{N+1} \RH_0^{-\e} }+ N \RH_N  (\RH^{-\e})_1 + \RH_1 (\RH^{-\e})_N \\
& - x_\ast^2 \left( \cancel{(N+1) \RH_{N+1} H_0} + N \RH_N H_1+ \RH_1H_N + 2N \RH_N H_0 \right) \\
& + 2x_\ast^2(1-\k) \RH_0 ( \WH_0 + \k) ( \RH_N- \WH_N) - \tilde{\mathcal S}_N \\
=&\Big[ -\e \RH_0^{-\e-1}\RH_1(N+1) -x_\ast^2 NH_1 - 4\k x_\ast^2 \WH_0 \RH_1 - 2x_\ast^2 N H_0 
\\
&\quad +2x_\ast^2(1-\k) \RH_0 ( \WH_0 + \k)  \Big] \RH_N \\
+& \Big[-x_\ast^2\RH_1 ( 2(1-\k) \WH_0+4\k + 4\k \RH_0 ) -2x_\ast^2(1-\k) \RH_0 ( \WH_0 + \k) \Big] \WH_N - \mathcal S_N   
\end{split}
\ee where
\be\label{SN}
\begin{split}
 \mathcal S_N =  -  \RH_1\RH_0^{-\e} \sum_{m=2}^N \frac{1}{\RH_0^m}\sum_{\pi(N,m)} (- \e)_m \frac{1}{\lambda_1 ! \dots \lambda_N !} {\RH_1}^{\lambda_1}\dots 
{\RH_N}^{\lambda_N} \\
+x_\ast^2 \RH_1 \Big[   (1-\k) \sum_{\substack{\ell +m =N\\ 1\leq m\leq N-1}} \WH_\ell \WH_m+  \sum_{\substack{\ell+m =N\\ 1\leq m\leq N-1}} 4\k \RH_\ell \WH_m\Big]
+\tilde{\mathcal S}_N, 
 \end{split}
\ee and 
\be\label{SN0}
\begin{split}
&\tilde{\mathcal S}_N = - \sum_{\substack{\ell +m=N \\ 1\leq m\leq N-2}} (m+1) \RH_{m+1} (\RH^{-\e})_\ell  \\
&+x_\ast^2 \left( \sum_{\substack{\ell+m=N\\ 1\leq m\leq  N-2}} (m+1) \RH_{m+1} H_\ell + 2\sum_{\substack{\ell+m=N-1 \\ m\leq N-2}} (m+1) \RH_{m+1}H_\ell +\sum_{\ell+m=N-2} (m+1) \RH_{m+1}H_\ell  \right) \\
&- 2x_\ast^2(1-\k)\left(  \sum_{\substack{\ell+m+n=N\\ 1\leq n\leq N-1}} \RH_\ell ( \WH + \k)_m ( \RH- \WH)_n  +( \RH ( \WH + \k) ( \RH- \WH) )_{N-1}  \right).
\end{split}
\ee 
Here we recall the definitions of $(\RH^{-\e})_\ell$ in \eqref{faaR} and $H_1$ in \eqref{R11.1} as well as the sonic conditions in \eqref{sonic}. Note that we have also used~\eqref{R11} above. 

Following the same procedure, we now  isolate all the coefficients in \eqref{WN+1} that contain contributions from $(\RH_{N}, \WH_{N})$. 
\be
\begin{split}
0=& \cancel{(N+1) \WH_{N+1} \RH_0^{-\e}} + N\WH_{N} (\RH^{-\e})_1 + \WH_1 (\RH^{-\e})_N \\
&-  x_\ast^2 \left( \cancel{(N+1) \WH_{N+1} H_0} + N\WH_N H_1 + \WH_1 H_N  + 2 N \WH_N H_0 \right)\\ 
& - (\RH^{-\e})_N  + 3(  \WH_N (\RH^{-\e})_0 +  \WH_0 (\RH^{-\e})_N)\\ 
& + \xs^2  H_N  - 3x_\ast^2 (\WH_N H_0 +\WH_0H_N) \\
& - 2x_\ast^2(1+\k)  \WH_0 ( \WH_0 + \k) ( \RH_N- \WH_N) -\tilde{\mathcal V}_N \\
=& \Big[-\e \RH_0^{-\e-1} (\WH_1-1 + 3\WH_0)  - x_\ast^2 (  \WH_1 -1+ 3\WH_0)4\k\WH_0 \\
&\quad- 2x_\ast^2 (1+\k) \WH_0 (\WH_0+\k)\Big] \RH_N \\
+& \Big[-\e \RH_0^{-\e-1} \RH_1N  -x_\ast^2 H_1N - 2x_\ast^2 H_0 N + 3\RH_0^{-\e} - 3x_\ast^2 H_0 \\
&\quad - x_\ast^2 (  \WH_1 -1 + 3\WH_0) (2(1-\k)\WH_0 +4\k + 4\k\RH_0)
\\
&\quad + 2x_\ast^2 (1+\k) \WH_0 (\WH_0+\k) \Big] \WH_N -  \mathcal V_N
\end{split}
\ee
where
\be\label{VN}
\begin{split}
 \mathcal V_N =  -  (\WH_1-1 +3\WH_0)\RH_0^{-\e} \sum_{m=2}^N \frac{1}{\RH_0^m}\sum_{\pi(N,m)} (- \e)_m \frac{1}{\lambda_1 ! \dots \lambda_N !} {\RH_1}^{\lambda_1}\dots 
{\RH_N}^{\lambda_N} \\
+x_\ast^2  (  \WH_1 -1 + 3\WH_0)  \Big[   (1-\k) \sum_{\substack{\ell+m =N\\ 1\leq m\leq N-1}} \WH_\ell \WH_m+  \sum_{\substack{\ell+m =N\\ 1\leq m\leq N-1}} 4\k \RH_\ell \WH_m\Big]
+\tilde{\mathcal V}_N
 \end{split}
\ee 
and 
\be\label{VN0}
\begin{split}
&\tilde{\mathcal V}_N = - \sum_{\substack{\ell+m=N \\ 1\leq m\leq N-2}} (m+1) \WH_{m+1} (\RH^{-\e})_\ell \\
&+x_\ast^2 \left( \sum_{\substack{\ell+m=N\\ 1\leq m\leq  N-2}} (m+1) \WH_{m+1} H_\ell + 2\sum_{\substack{\ell+m=N-1 \\ m\leq N-2}} (m+1) \WH_{m+1}H_\ell +\sum_{\ell+m=N-2} (m+1) \WH_{m+1}H_\ell  \right) \\
&+ \sum_{\substack{\ell+m=N \\ 1\leq m\leq N}} (\RH^{-\e})_\ell (-1)^m - 3\sum_{\substack{\ell+m+n=N \\ 1\leq m\leq N}} \WH_n (\RH^{-\e})_\ell  (-1)^m \\
&-  \xs^2 \left(  \sum_{\substack{\ell+m=N \\ 1\leq m\leq N}} H_\ell (-1)^m +2 \sum_{\ell+m=N-1} H_\ell (-1)^m  + \sum_{\ell+m=N-2}H_\ell (-1)^m  \right)\\
& + 3x_\ast^2 \left( \sum_{\substack{\ell+n=N\\1\leq  n\leq N-1}} \WH_n H_\ell  + \sum_{\substack{\ell+m+n=N\\1\leq  m\leq N}} \WH_n H_\ell (-1)^m +2 \sum_{\ell+m+n=N-1} W_nH_\ell (-1)^m  + \sum_{\ell+m+n=N-2}W_n H_\ell (-1)^m  \right)\\
&+ 2x_\ast^2(1+\k)\left( \sum_{\substack{\ell+m+n=N\\ 1\leq n\leq N-1}} \WH_\ell ( \WH + \k)_m ( \RH- \WH)_n +( \WH ( \WH + \k) ( \RH- \WH) )_{N-1}  \right). 
\end{split}
\ee 
This completes the proof. 
\end{proof}

Let $\kappa>0$ be a sufficiently small number independent of $\k$ to be fixed later in Section \ref{Sec:XMIN}. 


\begin{lemma}\label{lem:detA} Let $x_\ast\in[\xc+\kappa,\xmax]$ be given.  Then the components $A_{ij}$ of the matrix $\mathcal A_N$ satisfy the following 
\begin{align}
A_{11}&= -2 N\left( x_\ast -1 + O(\k)\right) + 2 +O(\k), \\
 A_{12} &=O(\k), \\
 A_{21} &= -2+O(\k), \\
  A_{22}&= -2N\left( x_\ast -1+O(\k) \right)  +O(\k). 
\end{align}
In particular, the matrix $\mathcal A_N$ is invertible for all sufficiently small $\k$ and $\det \mathcal A_N = O(N^2)$. 
Moreover, the Taylor coefficients $(\RH_N,\WH_N)$, $N\ge2$, satisfy the recursive relationship
\begin{align}
\RH_N = \frac{A_{22}}{\det A_N} \mathcal S_N - \frac{A_{12}}{\det A_N } \mathcal V_N,\label{recR}  \\
\WH_N = \frac{ A_{11}}{\det A_N} \mathcal V_N- \frac{A_{21}}{\det A_N} \mathcal S_N, \label{recW}
\end{align} 
where the source terms $\mathcal V_N,\mathcal S_N$, $N\ge2$, are given by Lemma~\ref{Lem:N}.
\end{lemma}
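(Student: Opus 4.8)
The plan is to obtain all four asymptotic formulas by a direct substitution of the $\k\to0^+$ limits of the ``building blocks'' $\RH_0,\WH_0,\RH_1,\WH_1,H_0,H_1,\RH_0^{-\e}$ and $\e$, each of which has already been pinned down: $\RH_0=\WH_0=\frac1{x_\ast}+O(\k)$ by Lemma~\ref{R0W0}; $\WH_1=\WH_1^{\text{RLP}}=\frac{x_\ast-2}{x_\ast}+O(\k)$ by~\eqref{E:WONERLP} together with the quantitative rate~\eqref{E:XMINRATE1}; $\RH_1=-\frac1{x_\ast}+O(\k)$ via~\eqref{R33}; $H_0=(1+3\k)\WH_0^2+4\k\WH_0+\k^2-\k=\frac1{x_\ast^2}+O(\k)$ and $H_1=2[(1+\k)\WH_0+2\k]\WH_1+4\k\WH_0\RH_1=\frac{2(x_\ast-2)}{x_\ast^2}+O(\k)$ by~\eqref{Hl} and~\eqref{R11.1}; $\RH_0^{-\e}=x_\ast^2H_0=1+O(\k)$ by the sonic condition~\eqref{sonic}; and $\e=\frac{2\k}{1-\k}=O(\k)$, so $\e\RH_0^{-\e-1}=O(\k)$. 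The one organisational point is that each entry $A_{ij}$ is \emph{affine} in $N$: I would write $A_{ij}=\alpha_{ij}N+\beta_{ij}$ and track slope and intercept separately, so that the term $-\e\RH_0^{-\e-1}\RH_1N$ occurring in $A_{11}$ and $A_{22}$ — which is $O(\k)N$, not $O(\k)$ — is correctly absorbed into the $O(\k)$ correction of the \emph{slope}. This is exactly why the statement reads $-2N(x_\ast-1+O(\k))$ rather than $-2N(x_\ast-1)+O(\k)$.

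Carrying out the substitution: for both $A_{11}$ and $A_{22}$ the slope is $-x_\ast^2(2H_0+H_1)-\e\RH_0^{-\e-1}\RH_1=-2-2(x_\ast-2)+O(\k)=-2(x_\ast-1)+O(\k)$, while $A_{12},A_{21}$ are $N$-independent. For the intercepts one finds $\beta_{11}=2x_\ast^2(1-\k)\RH_0(\WH_0+\k)+O(\k)=2+O(\k)$; $A_{12}=-x_\ast^2\RH_1(2(1-\k)\WH_0+4\k+4\k\RH_0)-2x_\ast^2(1-\k)\RH_0(\WH_0+\k)=2-2+O(\k)=O(\k)$; $A_{21}=-2x_\ast^2(1+\k)\WH_0(\WH_0+\k)+O(\k)=-2+O(\k)$, using that $\WH_1+3\WH_0-1=\frac1{x_\ast}+O(\k)$ stays bounded; and the intercept of $A_{22}$ is $-3x_\ast^2H_0+3\RH_0^{-\e}+2x_\ast^2(1+\k)\WH_0(\WH_0+\k)-x_\ast^2(\WH_1+3\WH_0-1)(2(1-\k)\WH_0+4\k+4\k\RH_0)=-3+3+2-2+O(\k)=O(\k)$. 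These four lines are the displayed formulas. I would emphasise that all $O(\k)$ remainders are uniform for $x_\ast\in[\xc+\kappa,\xmax]$, because on that set $x_\ast-2\ge\tfrac\kappa2$ for $\k$ small, so the rate~\eqref{E:XMINRATE1} (and not the merely $\sqrt\k$-rate valid at $\xc$ itself) applies with a constant uniform in $x_\ast$.

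It then remains to record invertibility and Cramer's rule. Since $A_{12}=O(\k)$ and $A_{21}=-2+O(\k)$, one has $\det\mathcal A_N=A_{11}A_{22}-A_{12}A_{21}=4N^2\big(x_\ast-1+O(\k)\big)^2+O(N)$, where the $O(N)$ and $O(\k)$ are uniform in $N\ge2$ and $x_\ast$. Because $x_\ast\ge\xc(\k)+\kappa=2+\kappa+O(\sqrt\k)$ by Lemma~\ref{L:XMINDEF}, one has $x_\ast-1\ge1+\tfrac\kappa2$ for all $\k\le\k_0$ after shrinking $\k_0$, so the quadratic-in-$N$ term dominates uniformly: there are constants $0<c\le C$, independent of $N\ge2$, of $x_\ast$, and of $\k\in(0,\k_0]$, with $cN^2\le\det\mathcal A_N\le CN^2$; in particular $\mathcal A_N$ is invertible and $\det\mathcal A_N=O(N^2)$. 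Inverting the $2\times2$ system~\eqref{E:MATRIXEQN} of Lemma~\ref{Lem:N} by Cramer's rule then gives~\eqref{recR}--\eqref{recW} with $\mathcal S_N,\mathcal V_N$ as in that lemma. The genuinely delicate point is the \emph{bookkeeping}: separating the $N$-linear and $N$-independent pieces of every entry, and checking that the errors are uniform in both $N$ and $x_\ast$ over the whole sonic window — in particular that the lower bound $x_\ast-1\ge1+\tfrac\kappa2$ keeps $\det\mathcal A_N$ nondegenerate simultaneously for all $N\ge2$, not just for large $N$.
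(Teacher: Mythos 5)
Your proposal is correct and follows essentially the same route as the paper: identify the slope and intercept (in $N$) of each $A_{ij}$, substitute the $\k\to 0$ limits $\RH_0=\WH_0=\tfrac1{x_\ast}+O(\k)$, $\RH_1=-\tfrac1{x_\ast}+O(\k)$, $\WH_1=\tfrac{x_\ast-2}{x_\ast}+O(\k)$, $H_0=\tfrac1{x_\ast^2}+O(\k)$, $H_1=\tfrac{2(x_\ast-2)}{x_\ast^2}+O(\k)$, and then bound $\det\mathcal A_N$ from below using $x_\ast\ge\xc+\kappa>2$. The only cosmetic difference is that the paper first regroups the entries of Lemma~\ref{Lem:N} into terms with explicit $\k$-prefactors before substituting, while you go straight from the affine-in-$N$ form; the observation that $-\e\RH_0^{-\e-1}\RH_1 N$ lives in the $O(\k)$-perturbation of the slope (not the intercept) is the same point the paper makes implicitly by grouping that term with $H_0,H_1$ in the $N$-coefficient.
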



\begin{proof} We rearrange terms in $A_{ij}$ of Lemma \ref{Lem:N} as 
\begin{align*}
A_{11}&=-2N \left( x_\ast^2 H_0 + \tfrac{1}{2} x_\ast^2 H_1 +  \frac\eta2\RH_0^{-\e-1}\RH_1\right)  \\
&\quad+ 2x_\ast^2  \RH_0\WH_0 + 2\k x_\ast^2 \RH_0  (1-\k-\WH_0)   - \e \RH_0^{-\e-1}\RH_1 - 4\k x_\ast^2 \WH_0\RH_1,\\
A_{12}&= - 2x_\ast^2 \RH_1 \WH_0 - 2x_\ast^2 \RH_0 \WH_0  -  2\k x_\ast^2 \RH_1 ( 2 + 2\RH_0- \WH_0) - 2\k x_\ast^2 \RH_0 (1-\k-\WH_0),\\
A_{21}&= -  2x_\ast^2\WH_0^2 -   2\k x_\ast^2  \WH_0 (\WH_0+1+\k) - 2\k (\tfrac{1}{1-\k}\RH_0^{-\e-1} +2 \WH_0x_\ast^2 ) (\WH_1 +3\WH_0 -1),\\
A_{22}&=-2N \left(x_\ast^2 H_0 + \tfrac{1}{2} x_\ast^2 H_1 +  \frac\eta2\RH_0^{-\e-1}\RH_1 \right) \\
&\quad  - 3x_\ast^2 H_0 + 3\RH_0^{-\e} + 2x_\ast^2 \WH_0^2  
-2x_\ast^2\WH_0 (  \WH_1  + 3\WH_0-1)  \\
&\quad + 2\k x_\ast^2  \WH_0 (\WH_0+1+\k)  -2\k x_\ast^2 (  \WH_1  + 3\WH_0-1)  (2  +2\RH_0- \WH_0)  . 
\end{align*}
From~\eqref{sonic}, equations~\eqref{R11}--\eqref{R11.1} and Lemmas~\ref{R0W0},~\ref{R1W1}, we have 
the relations
\[
H_0 = \frac1{\xs^2} + O(\k), \ \ H_1= 2\WH_0(0)\WH_1(0) + O(\k) = \frac{2}{\xs}\left(1-\frac2{\xs}\right) + O(\k),
\]
and therefore
\begin{align*}
A_{11}&= -2 N\left( 1+ \tfrac{\WH_1(0)}{\WH_0(0)} + O(\k)\right) + 2 +O(\k), \\
 A_{12} &= -2\tfrac{\RH_1(0)}{\WH_0(0)} - 2+O(\k) ,\\
 A_{21} &= -2+O(\k), \\
  A_{22}&= -2N\left( 1+ \tfrac{\WH_1(0)}{\WH_0(0)}+O(\k) \right) -4 +2\tfrac{1}{\WH_0(0)}-2\tfrac{\WH_1(0)}{\WH_0(0)} +O(\k). 
\end{align*}
Since $\RH_0(0)=\WH_0(0)=\frac{1}{x_\ast}$, $\frac{\WH_1(0)}{\WH_0(0)} = x_\ast -2$ and $\RH_1(0)=-\frac{1}{x_\ast}$, the claimed behavior of $A_{ij}$ follows. 

Since $N\geq 2$ and  $x_\ast\geq \xc +\kappa >2$, the determinant of $\mathcal A_N$ has a  lower bound 
\begin{align*}
\tfrac14\det \mathcal A_N &=  \left(N(x_\ast-1+O(\k)) +O(\k) \right) \left( N (x_\ast-1+O(\k)) -1 +O(\k)\right) +O(\k) \\
&\geq  \left( N(x_\ast-1-C\k_0))-C\k_0 \right)\left(N (x_\ast-1-C\k_0) -1-C\k_0\right) - C\k_0
\end{align*}
for some universal constant $C>0$. For a sufficiently small $\k_0>0$ we have $ N(x_\ast-1-C\k_0))-C\k_0 \geq \frac32$ for $N\geq 2$ and  $x_\ast\geq \xc+\kappa>2$. 
We see that $\det \mathcal A_N >0$ and hence $\mathcal A_N$ is invertible for all $0<\k\le\k_0$ with $\k_0$ chosen sufficiently small. 
It immediately follows that  $\det \mathcal A_N = O(N^2)$. 
Since $\mathcal A_N$ is invertible, relations~\eqref{recR}--\eqref{recW} follow by multiplying~\eqref{E:MATRIXEQN} by $\mathcal A_N^{-1}$ from the left.
\end{proof}


\begin{remark}
A simple consequence of the previous lemma is the existence of a universal constant $\beta_0>0$ such that for any  
 $x_\ast\in[\xc+\kappa,\xmax]$  and sufficiently small $0<\k\le\k_0$ 
the following bounds hold: 
\begin{align}
| \RH_N|\leq  \frac{\beta_0}{N} \left(  |\mathcal S_N| + \frac{\k}{N} |\mathcal V_N| \right),\label{recR1} \\
|\WH_N| \leq \frac{\beta_0}{N} \left(  |\mathcal V_N| + \frac{1}{N} |\mathcal S_N| \right). \label{recW1}
\end{align} 
\end{remark}


\begin{remark} It is a routine to check 
\begin{align}
\RH_2& = \frac{-x_\ast^2 + 6x_\ast -7}{ 2x_\ast(2x_\ast -3)} + O({\k}),\label{coeff_R2} \\
\WH_2&= \frac{-5x_\ast^2 + 19 x_\ast - 17}{ 2x_\ast (2x_\ast -3)} + O({\k}), \label{coeff_W2}
\end{align}
for any $x_\ast\in [x_{\text{cr}}+\kappa, x_{\max}]$. 
\end{remark}


\begin{remark} In Lemma \ref{lem:detA}, the lower bound $\xc+\kappa$ for the $x_\ast$-interval  has been chosen for convenience  to ensure $O(\k)$ disturbance of the coefficients to the corresponding ones to LP type solutions. It can be relaxed to $\xc$ by replacing   $O(\k)$  by  $O(\sqrt\k)$. See the change of the distance of $\WH_1$ near $\xc$ in Lemma \ref{L:XMINDEF}. 
\end{remark}



\subsection{Series convergence and local existence around a sonic point}


\begin{theorem}[Local existence around the sonic point]\label{T:SONICLWP} 
There exist an $\k_0>0$ and $r>0$ sufficiently small such that for all $0<\k\le\k_0$ and all  $\xs \in [\xc(\k)+\kappa,\xmax(\k)]$  the 
sequence $\{\RH_N, \WH_N\}_{N\in \mathbb Z_{\geq 0}}$ of RLP-type (see Definition~\ref{D:RLPTYPEDEF}) has the following property:
the formal power series 
\be\label{infinites}
\RH (z):= \sum_{N=0}^\infty \RH_N (\delta z)^N, \quad \WH(z) := \sum_{N=0}^\infty \WH_N (\delta z)^N
\ee
converge for all $z\in(1-r,1+r)$ and functions $z\mapsto \RH(z)$ and $z\mapsto \WH(z)$ are real analytic inside $|z-1|<r$. We can differentiate the infinite sums term by term, the pair $z\mapsto (\RH(z), \WH(z))$ 
solves~\eqref{Eq:R}--\eqref{Eq:W} for $|z-1|<r$, and $R(z)$ is strictly positive for $|z-1|<r$.  
\end{theorem}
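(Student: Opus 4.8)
The plan is to reduce the statement to a uniform geometric bound on the RLP-type Taylor coefficients and then invoke elementary power-series theory. Concretely, one proves that there are constants $C_0>0$ and $\rho>0$, both independent of $\k\in(0,\k_0]$ and of $\xs\in[\xc(\k)+\kappa,\xmax(\k)]$, such that
\[
|\RH_N|\le C_0\,\rho^{N}/(N+1)^{2}, \qquad |\WH_N|\le C_0\,\rho^{N}/(N+1)^{2}, \qquad N\ge 0 .
\]
Granting this, the two formal series in~\eqref{infinites} converge absolutely and locally uniformly on the disc $|z-1|<r$ with $r:=\rho^{-1}$, hence define real-analytic functions there which may be differentiated term by term. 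The recursions~\eqref{RN+1}--\eqref{WN+1} were derived precisely by matching powers of $\dz$ in the polynomial (denominator-cleared) system~\eqref{Eq:R1}--\eqref{Eq:W1}; thus the limit functions $(\RH,\WH)$ satisfy~\eqref{Eq:R1}--\eqref{Eq:W1} as identities of convergent power series on $|z-1|<r$. Since $\RH(1)=\RH_0=\WH_0>0$ by Lemma~\ref{R0W0}, after shrinking $r$ we get $\RH(z)>0$ on $|z-1|<r$, so $\RH^{-\e}$ is classically defined and analytic there; moreover $B$ is analytic and not identically zero near $z=1$, so its zero at $z=1$ is isolated, and dividing~\eqref{Eq:R1}--\eqref{Eq:W1} by $B$ on $0<|z-1|<r$ shows $(\RH,\WH)$ solves~\eqref{Eq:R}--\eqref{Eq:W} there. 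This yields the theorem once the geometric bounds are in hand.

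The geometric bounds are established by strong induction on $N$. The base cases $N=0,1$ follow from Lemma~\ref{R0W0} (which gives $\RH_0=\WH_0=g(\k;\xs)$, bounded above and bounded below away from $0$ uniformly on the window), together with Lemma~\ref{R1W1}, Lemma~\ref{L:XMINDEF} and the formula~\eqref{R33} (which bound $|\WH_1-\tfrac{\xs-2}{\xs}|$ and $|\RH_1+\tfrac1{\xs}|$ uniformly, so $\RH_1,\WH_1$ are uniformly bounded); any sufficiently large $C_0$ absorbs them. For the inductive step $N\ge2$ one estimates the source vector $(\mathcal S_N,\mathcal V_N)$ of Lemma~\ref{Lem:N}. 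By Lemma~\ref{lem:detA} the matrix $\mathcal A_N$ is invertible with $\det\mathcal A_N=O(N^{2})$ and inverse having $O(N^{-1})$ entries, so that $|\RH_N|\lesssim N^{-1}|\mathcal S_N|+\k N^{-2}|\mathcal V_N|$ and similarly for $\WH_N$. The entries of $(\mathcal S_N,\mathcal V_N)$ are finite sums of two kinds of terms: convolution products of a bounded number of coefficients $\RH_j,\WH_j$ with $j\le N-1$, each carrying at most one index-linear prefactor from a $z$-derivative; and terms built from the Fa\`a di Bruno composition coefficients $(\RH^{-\e})_\ell$, $\ell\le N-1$, of~\eqref{faaR}. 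The first kind is controlled by the standard fact that a summable weight closed under Cauchy product is quasi-preserved, the single derivative prefactor being compensated by the $O(N^{2})$ gain from $\det\mathcal A_N$. For the second kind one uses that $\e=\e(\k)=\tfrac{2\k}{1-\k}\to0$, so that $(-\e)_m$ carries a factor $O(\k)$: consequently $(\RH^{-\e})_\ell$ is $O(\k)$-small for every $\ell\ge1$ (with geometric growth controlled by the induction hypothesis and $\RH_0\ge c>0$), while $(\RH^{-\e})_0=\RH_0^{-\e}=1+O(\k)$. This makes the relativistic source terms a controlled perturbation of the Newtonian ones at the level of these estimates. Assembling, one obtains $|\mathcal S_N|+|\mathcal V_N|\le C_1(C_0)\,\rho^{N}N^{q}/(N+1)^{2}$ for a fixed exponent $q$; dividing by $\det\mathcal A_N$ and choosing $r$ small enough, uniformly in $\k$ and $\xs$, so that the accumulated combinatorial constants beat the polynomial-in-$N$ losses closes the induction with the same $C_0$. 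Finally, term-by-term differentiability (valid inside the disc of convergence) and the fact that~\eqref{RN+1}--\eqref{WN+1} are exactly the coefficient identities for~\eqref{Eq:R1}--\eqref{Eq:W1} complete the argument as above.

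The main obstacle is precisely this last combinatorial step: one must verify that every index pattern occurring in $\mathcal S_N,\mathcal V_N$ — including the nested partition sums defining $(\RH^{-\e})_\ell$ — is dominated by the claimed weighted bound with constants that do not degenerate as $\k\to0$ nor as $\xs$ ranges over $[\xc(\k)+\kappa,\xmax(\k)]$, and that the single derivative prefactor per term is the worst case so that the $O(N^2)$ gain from $\det\mathcal A_N$ suffices. This is the content deferred to Appendix~\ref{A:SONIC}; the argument outlined here reduces Theorem~\ref{T:SONICLWP} to the enumeration and $N$-dependent growth estimates proved there.
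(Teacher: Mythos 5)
Your outline follows the paper's strategy: reduce the theorem to a geometric-in-$N$ bound on $(\RH_N,\WH_N)$ proved by strong induction through the recursion of Lemma~\ref{Lem:N} and Lemma~\ref{lem:detA}, deferring the combinatorics to Appendix~\ref{A:SONIC}, and then conclude convergence, analyticity, and the ODE identity by the comparison test and the local non-vanishing of $B$ away from $z=1$ — this is exactly how the paper proceeds, via Lemma~\ref{lem:InductionFinal}.

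There is, however, a genuine wrinkle in the way you set up the inductive bound. You posit $|\RH_N|\le C_0\,\rho^N/(N+1)^2$ with $C_0,\rho$ chosen independently, and then claim the induction closes ``with the same $C_0$'' by ``choosing $r$ small enough.'' This does not work as stated: $r$ is the radius of convergence, a \emph{consequence} of the coefficient bound and not a free parameter of it; and with a constant prefactor $C_0$ the convolution (Cauchy product) terms appearing in $\mathcal S_N,\mathcal V_N$ produce a factor $C_0^2$ against a target of $C_0$, so that closing the induction would require $C_0$ to be \emph{small}, which conflicts with the base-case constraint $C_0\ge|\RH_0|\asymp 1$. The paper's ansatz $|\RH_N|\le C^{N-\alpha}/N^3$ with $\alpha\in(1,2)$ resolves this tension: a product of two such sequences yields $C^{N-2\alpha}/N^3$, which is smaller than $C^{N-\alpha}/N^3$ by the factor $C^{-\alpha}$, and this factor — driven to zero by taking $C$ \emph{large} — absorbs all the accumulated combinatorial constants without interfering with the ($C$-independent) base cases. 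In your normalization this corresponds to coupling $C_0\sim\rho^{-\alpha}$; without such a coupling the closing mechanism you invoke is absent. A second, lesser imprecision: the gain from $\mathcal A_N^{-1}$ on the dominant diagonal entry is $O(N^{-1})$, not $O(N^{-2})$ (only the off-diagonal entries are $O(N^{-2})$, and those enter with the additional $\k$ factor), so what actually happens in \eqref{recR1}--\eqref{recW1} is that the derivative prefactor $(m+1)$ costs one power of $N$, the matrix inversion recovers exactly that power, and the geometric factor $C^{-\alpha}$ (together with $\k$ small) supplies the strict contraction.
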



\begin{proof} The proof is analogous to Theorem 2.10 of \cite{GHJ2021}. By Lemma \ref{lem:InductionFinal}, using $\alpha\in(1,2)$, there exists $C>1$ such that 
\[
\sum_{N=1}^\infty |\RH_N||\delta z|^N + \sum_{N=1}^\infty |\WH_N||\delta z|^N \leq 2 \sum_{N=1}^\infty \frac{|C\delta z|^N}{CN^3} <\infty 
\] when $|\delta z|<\frac{1}{C}=: r$. The claim follows by the comparison test. The real analyticity and differentiability statements are clear. Recalling \eqref{B_exp}, we may rewrite $B$ as 
\be
\begin{split}
B&=\sum_{N=0}^\infty (\RH^{-\e })_N  (\delta z)^N - x_\ast^2
 \sum_{N=0}^\infty H_N (\delta z)^N(1+ 2 \delta z + (\delta z)^2) \\
 &= \left[ (\RH^{-\e })_1 - x_\ast^2 (H_1 + 2 H_0) \right]\delta z + \sum_{N=2}^\infty \left [ (\RH^{-\e })_N -  x_\ast^2 (H_N + 2H_{N-1} + H_{N-2}) \right] (\delta z)^N.
\end{split}
\ee 
By~\eqref{R11}--\eqref{R11.1}
\be
\begin{split}
&(\RH^{-\e })_1 - x_\ast^2 (H_1 + 2 H_0) \\
&= -\e  \RH_0^{-\e-1} \RH_1 - x_\ast^2 \Big( 2\left[ (1+\k)\WH_0 + 2\k \right] \WH_1 + 4\k \WH_0 \RH_1 \\
&\qquad \qquad\qquad \qquad \qquad \quad  +2\left[ (1-\k)\WH_0^2 +  4\k \WH_0+ 4\k \RH_0\WH_0+\k^2-\k \right] \Big) \\
&= - 2x_\ast^2 \WH_0 (\WH_1+ \WH_0) -2\k \mathcal P \neq 0,
\end{split}
\ee 
for all sufficiently small $\k$, where 
\[
\mathcal P =  \tfrac{1}{1-\k} \RH_0^{-\e -1} \RH_1 +x_\ast^2 \left( \WH_0\WH_1+2\WH_1 + 2\WH_0\RH_1 +4 \WH_0 + 3\WH_0^2 +\k-1 \right) . 
\]
Therefore, it is now easy to see that for all sufficiently small $\k$ and for $r>0$ sufficiently small, the function $B \neq 0$  for all $|z-1|<r$ and $z\neq 1$. As a consequence, $\RH(z)$ and $\WH(z)$ are indeed the solutions as can be seen by plugging the infinite series \eqref{infinites} into \eqref{Eq:R1} and \eqref{Eq:W1}. 
\end{proof}


\begin{lemma}\label{Lem:analyticXast}  
There exist an $\k_0>0$ and $r>0$ sufficiently small such that for any $\alpha\in(1,2)$ and for all $\xs \in [\xc(\k)+\kappa,\xmax(\k)]$ 
the sequence $\{\RH_N, \WH_N\}_{N\in \mathbb N_{\geq 0}}$ of RLP-type (see Definition~\ref{D:RLPTYPEDEF}) has the following property:
There exists a constant $C=C(\xs,\alpha)>0$ such that for all $0<\k\le\k_0$ the bounds
$|\pa_{x_\ast}\RH_0|$, $|\pa_{x_\ast}\WH_0|$, $|\pa_{x_\ast}\RH_1|$, $|\pa_{x_\ast}\WH_1|\leq C$ hold and 
\begin{align}
|\pa_{x_\ast}\RH_N|\leq \frac{C^{N-\alpha}}{N^3}, \ \ N\ge2, \label{ind1}\\
|\pa_{x_\ast}\WH_N|\leq \frac{C^{N-\alpha}}{N^3}, \ \ N\ge2. \label{ind2}
\end{align} 
In particular, the formal power series 
\[
 \sum_{N=0}^\infty \pa_{x_\ast} \RH_N (\delta z)^N, \quad \sum_{N=0}^\infty  \pa_{x_\ast} \WH_N (\delta z)^N
\]
converge for all $z$ satisfying $|z-1|<r$. Moreover, the function $x_\ast \in (\xc+\kappa,\xmax)\rightarrow ( \RH(z; x_\ast), \WH (z;x_\ast) )$ is $C^1$ and the derivatives $\pa_{x_\ast} \RH$ and $\pa_{x_\ast} \WH$ are given by the infinite series above. 
\end{lemma}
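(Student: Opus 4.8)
The plan is to mimic exactly the convergence argument used to prove Theorem~\ref{T:SONICLWP}, but now applied to the sequence of $x_\ast$-derivatives $(\pa_{x_\ast}\RH_N,\pa_{x_\ast}\WH_N)$. First I would establish the base cases: by Lemmas~\ref{R0W0} and~\ref{R1W1}, the maps $\k\mapsto(\RH_0,\WH_0)$ and $\k\mapsto(\RH_1,\WH_1)$ are obtained by the implicit function theorem from equations whose data depend smoothly (indeed real-analytically) on $x_\ast$ as well; differentiating the defining relations $h(\k,\WH_0;x_\ast)=0$ and $\mathcal F(\k,\RH_1,\WH_1;x_\ast)=0$ with respect to $x_\ast$ and using the nonvanishing Jacobians already verified there gives that $\pa_{x_\ast}\RH_0$, $\pa_{x_\ast}\WH_0$, $\pa_{x_\ast}\RH_1$, $\pa_{x_\ast}\WH_1$ exist and are bounded uniformly in $\k\in(0,\k_0]$ on the compact $x_\ast$-window, by the same compactness argument used repeatedly in Section~\ref{S:SONIC}. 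This gives the constant bound claimed for $N\le1$.

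Next I would differentiate the recursive relations~\eqref{recR}--\eqref{recW} with respect to $x_\ast$. Since $\RH_N$ and $\WH_N$ are explicit rational functions of $(\RH_0,\WH_0,\RH_1,\WH_1)$ and of the lower-order coefficients $\RH_j,\WH_j$, $j\le N-1$, through the matrix $\mathcal A_N$ (whose entries are given in Lemma~\ref{Lem:N}) and the source terms $\mathcal S_N,\mathcal V_N$ (given in~\eqref{SN}--\eqref{VN0}), the chain rule produces a recursion for $(\pa_{x_\ast}\RH_N,\pa_{x_\ast}\WH_N)$ of the same structural type. The key quantitative inputs are: (i) $\det\mathcal A_N = O(N^2)$ with a uniform lower bound of order $N^2$, from Lemma~\ref{lem:detA}, so that $\mathcal A_N^{-1}$ and $\pa_{x_\ast}(\mathcal A_N^{-1}) = -\mathcal A_N^{-1}(\pa_{x_\ast}\mathcal A_N)\mathcal A_N^{-1}$ both have entries bounded by $O(1/N)$; (ii) the Faà di Bruno bookkeeping from~\eqref{faaR}, together with the partition combinatorics in~\eqref{pi} and the vanishing $\lambda_j=0$ for $j\ge N-m+2$, which is exactly what makes the nonlinear source terms $\mathcal S_N,\mathcal V_N$ amenable to the inductive bound $|\RH_N|,|\WH_N|\le C^{N-\alpha}/N^3$ from Lemma~\ref{lem:InductionFinal}. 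Differentiating these convolution-type sums with respect to $x_\ast$ and applying the Leibniz rule spreads a single $\pa_{x_\ast}$ across the factors; each differentiated factor is controlled by the inductive hypothesis~\eqref{ind1}--\eqref{ind2} at strictly lower index, while the undifferentiated factors are controlled by Lemma~\ref{lem:InductionFinal}. One then runs a strong induction: assuming~\eqref{ind1}--\eqref{ind2} for all indices $<N$ (and the $N\le1$ base cases), the differentiated recursion yields $|\pa_{x_\ast}\RH_N|,|\pa_{x_\ast}\WH_N|\le C^{N-\alpha}/N^3$ for $N$, after enlarging $C$ if necessary and using the same geometric-series absorption trick (the extra factors of $1/N$ from $\det\mathcal A_N$ and from the convolution sums dominate the polynomial losses $C^{\alpha}$).

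Once the bounds~\eqref{ind1}--\eqref{ind2} hold, the series $\sum \pa_{x_\ast}\RH_N(\delta z)^N$ and $\sum\pa_{x_\ast}\WH_N(\delta z)^N$ converge absolutely and uniformly on $|z-1|<r$ for the same $r=1/C$ as in Theorem~\ref{T:SONICLWP}. To conclude that these limit functions are genuinely $\pa_{x_\ast}\RH(z;x_\ast)$ and $\pa_{x_\ast}\WH(z;x_\ast)$, I would invoke the standard theorem on term-by-term differentiation of series of functions: the partial sums $\sum_{N\le M}\RH_N(x_\ast)(\delta z)^N$ converge pointwise (Theorem~\ref{T:SONICLWP}), their $x_\ast$-derivatives converge uniformly in $x_\ast$ on a neighborhood of any fixed $x_\ast\in(\xc+\kappa,\xmax)$ by the bounds just proved, hence the limit is $C^1$ in $x_\ast$ with derivative the limit of the derivatives. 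The main obstacle is purely bookkeeping: tracking how a single $\pa_{x_\ast}$ distributes over the deeply nested sums in $\mathcal S_N,\mathcal V_N$ (which themselves contain Faà di Bruno sums over partitions) and verifying that every resulting term still carries enough negative powers of $N$ to close the induction with the \emph{same} exponent $3$ in $N^{-3}$; this is where one must be careful that differentiating the partition sums does not produce an uncontrolled combinatorial blow-up, but the vanishing-$\lambda_j$ observation and the $\alpha\in(1,2)$ slack are precisely the tools that were engineered in~\cite{GHJ2021} and in Appendix~\ref{A:SONIC} to handle exactly this, so the argument goes through mutatis mutandis.
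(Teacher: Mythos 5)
Your proposal follows essentially the same route as the paper: establish the base cases from the implicit-function-theorem constructions in Lemmas~\ref{R0W0} and~\ref{R1W1}, differentiate the recursions~\eqref{recR}--\eqref{recW} and the entries of $\mathcal A_N$ in $x_\ast$ (using $\det\mathcal A_N\sim N^2$), observe that applying $\pa_{x_\ast}$ to $\mathcal S_N,\mathcal V_N$ lands on at most one lower-order factor so the combinatorial structure of Appendix~\ref{A:SONIC} is preserved, and close by strong induction with the $C^{N-\alpha}/N^3$ bound, then pass to the $C^1$ conclusion by uniform convergence of the differentiated series. This is precisely the argument the paper gives.
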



\begin{proof} From Lemma \ref{R0W0} and Lemma \ref{R1W1}, $\pa_{x_\ast} \RH_0(0) =\pa_{x_\ast} \WH_0(0)= -\frac{1}{x_\ast^2}$ and $\pa_{x_\ast}\RH_1(0)= \frac{1}{x_\ast^2}$, $\pa_{x_\ast}\WH_1(0)= \frac{2}{x_\ast^2}$, 
and it is clear that $|\pa_{x_\ast}\RH_0|$, $|\pa_{x_\ast}\WH_0|$, $|\pa_{x_\ast}\RH_1|$, $|\pa_{x_\ast}\WH_1|\leq C$ for  $x_\ast\in[\xc+\kappa,\xmax]$ and for all sufficiently small $\k$. For $N\geq 2$, $\pa_{x_\ast} \RH_N$ and $\pa_{x_\ast} \WH_N$ are recursively given by differentiating the expression in \eqref{recR} and \eqref{recW}: 
\begin{align}
\pa_{x_\ast}  \RH_N =\pa_{x_\ast} \left(  \frac{A_{22}}{\det A_N}\right)  \mathcal S_N+ \frac{A_{22}}{\det A_N} \pa_{x_\ast}\mathcal S_N - \pa_{x_\ast}\left( \frac{A_{12}}{\det A_N }\right) \mathcal V_N- \frac{A_{12}}{\det A_N } \pa_{x_\ast}\mathcal V_N, \\
\pa_{x_\ast}  \WH_N =\pa_{x_\ast} \left(  \frac{ A_{11}}{\det A_N} \right) \mathcal V_N + \frac{ A_{11}}{\det A_N} \pa_{x_\ast}\mathcal V_N -\pa_{x_\ast} \left( \frac{A_{21}}{\det A_N}\right) \mathcal S_N - \frac{A_{21}}{\det A_N} \pa_{x_\ast}\mathcal S_N . 
\end{align}
When $N=2$, the claim immediately follows. For $N\geq 3$, we will apply the same induction argument used for $\RH_N, \WH_N$ bounds. To this end, we first observe that from Lemma \ref{Lem:N} and Lemma \ref{lem:detA} 
\begin{align}
&\pa_{x_\ast} A_{11} = - 2N (1+O(\k)) + O(\k), \quad \pa_{x_\ast} A_{12} = O(\k), \\
& \pa_{x_\ast} A_{21} =O(\k), \quad \pa_{x_\ast} A_{22} = - 2N (1+O(\k)) + O(\k), \\
&\pa_{x_\ast} \det \mathcal A_N = 4N(1+O(\k))( 2N (( x_\ast-1) +O(\k)) -1) + O(\k) , 
\end{align} leading to 
\[
\left| \pa_{x_\ast} \left(  \frac{A_{22}}{\det A_N}\right) \right|, \  \left| \pa_{x_\ast} \left(  \frac{A_{11}}{\det A_N}\right) \right| \lesssim \frac{1}{N},\ \  \left| \pa_{x_\ast} \left(  \frac{A_{12}}{\det A_N}\right) \right|\lesssim \frac{\k}{N^2}, \  \left| \pa_{x_\ast} \left(  \frac{A_{21}}{\det A_N}\right) \right| \lesssim \frac{1}{N^2}. 
\]
Hence using Lemmas~\ref{lem:SVbound} and~\ref{lem:InductionFinal}, 
\be
\begin{split}
\left| \pa_{x_\ast}  \RH_N \right|&\lesssim \frac{1}{N} |\mathcal S_N|+\frac{1}{N} |\pa_{x_\ast}\mathcal S_N| +\frac{\k}{N^2} |\mathcal V_N| +\frac{\k}{N^2} |\pa_{x_\ast}\mathcal V_N| \\
&\lesssim  \frac{ C^{N-\alpha}}{N^3}  +\frac{1}{N} |\pa_{x_\ast}\mathcal S_N| +\frac{\k}{N^2} |\pa_{x_\ast}\mathcal V_N|, \\
\left|\pa_{x_\ast}  \WH_N  \right| & \lesssim \frac{1}{N}| \mathcal V_N |+\frac{1}{N} |\pa_{x_\ast}\mathcal V_N  | +\frac{1}{N^2} |\mathcal S_N| +\frac{1}{N^2} |\pa_{x_\ast}\mathcal S_N | \\
& \lesssim  \frac{ C^{N-\alpha}}{N^3} +\frac{1}{N} |\pa_{x_\ast}\mathcal V_N  |  + \frac{1}{N^2} |\pa_{x_\ast}\mathcal S_N |. 
\end{split}
\ee

We now recall that $\mathcal S_N$ and $\mathcal V_N$ consist of sum and product of polynomials in $\RH_0,\WH_0,\dots,\RH_{N-1},\WH_{N-1}$ and power functions of $\RH_0$. When we differentiate with respect to $x_\ast$, at most one term indexed by $\RH_i$ or $\WH_i$, $0\leq i\leq N-1$ is differentiated. In particular, the same combinatorial structure in the problem is maintained and the same inductive proof relying on the already established bounds \eqref{E:inductionR} and \eqref{E:inductionW} gives \eqref{ind1} and \eqref{ind2}. The remaining conclusions now follow easily. 
\end{proof}


\subsection{The sonic window and $\xmin$}\label{Sec:XMIN}


The goal of this subsection is to define $\xmin$ and to define the sonic window, which serves as the basic interval in our shooting method in the next section. We begin with the following lemma.

\begin{lemma}\label{L:XMINN} Consider the RLP type solution constructed in Theorem \ref{T:SONICLWP}. There exist a small constant $\delta_0>0$ independent of $\k$ and $0<z_0=z_0(\delta_0) <1$ such that for $x_\ast= 2+\delta_0 > \xc$ 
$$\WH (z_0) >\frac{1}{2-2\e} $$
for all sufficiently small $\k>0$. Here we recall $\e= \frac{2\k}{1-\k}$. 
\end{lemma}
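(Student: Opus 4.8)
The plan is to exploit the real-analytic Taylor expansion of the RLP-type solution around the sonic point $z=1$ furnished by Theorem~\ref{T:SONICLWP}, together with the explicit Newtonian values of the first few Taylor coefficients, to exhibit a point $z_0$ slightly to the left of $z=1$ at which $\WH$ exceeds $\frac{1}{2-2\e}=\frac12+O(\k)$. The structural fact that makes this work is that at $x_\ast=2+\delta_0$ the quadratic Taylor coefficient $\WH_2$ is positive and close to $\tfrac14$, so it is this term that carries $\WH$ upward.

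First I would record, for $x_\ast=2+\delta_0$ with $\delta_0>0$ small and fixed (and $\k$ small enough that $2+\delta_0>\xc(\k)$, by~\eqref{E:XMIN3}, so that the RLP-type expansion is available; if needed one invokes the relaxation of the admissible interval from $[\xc+\kappa,\xmax]$ to $(\xc,\xmax]$ noted after Lemma~\ref{lem:detA}, which costs only $O(\sqrt\k)$ below and is harmless since $\delta_0$ is fixed first), the behaviour of the coefficients as $\k\to0^+$: by Lemma~\ref{R0W0}, $\WH_0(\k;2+\delta_0)=\frac{1}{2+\delta_0}+O(\k)$ (and in fact $\WH_0<\frac{1}{2+\delta_0}$ for $\k>0$); by Lemma~\ref{R1W1} and estimate~\eqref{E:XMINRATE1}, $\WH_1^{\text{RLP}}(\k;2+\delta_0)=\frac{\delta_0}{2+\delta_0}+O(\k)$; and by~\eqref{coeff_W2},
\[
\WH_2(\k;2+\delta_0)=\frac{-5(2+\delta_0)^2+19(2+\delta_0)-17}{2(2+\delta_0)\,(2(2+\delta_0)-3)}+O(\k)=\tfrac14+O(\delta_0)+O(\k),
\]
which is bounded below by $\tfrac15$, say, once $\delta_0$ and $\k$ are small. (All $O(\k)$ constants here may depend on the fixed $\delta_0$.)

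Next I would set $z_0:=1-2\sqrt{\delta_0}\in(0,1)$, so $\delta z:=z_0-1=-2\sqrt{\delta_0}$ lies inside the uniform, $\k$-independent radius of convergence $r$ of Theorem~\ref{T:SONICLWP} once $\delta_0$ is small, and split $\WH(z_0)=\WH_0+\WH_1\,\delta z+\WH_2\,(\delta z)^2+E$ with $E:=\sum_{N\ge3}\WH_N(\delta z)^N$. The tail obeys $|E|\le|\delta z|^3\sum_{N\ge3}|\WH_N|\,r^{N-3}\lesssim|\delta z|^3=O(\delta_0^{3/2})$ uniformly in $\k\in(0,\k_0]$, by the uniform-in-$\k$ coefficient bounds underlying Theorem~\ref{T:SONICLWP} (cf.\ Lemma~\ref{lem:InductionFinal}). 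Substituting the values above — the linear term contributing $-\frac{2\delta_0^{3/2}}{2+\delta_0}+O(\k)=O(\delta_0^{3/2})+O(\k)$, the quadratic term $4\delta_0\big(\tfrac14+O(\delta_0)+O(\k)\big)=\delta_0+O(\delta_0^2)+O(\k)$, and $\WH_0=\tfrac12-\tfrac{\delta_0}{4}+O(\delta_0^2)+O(\k)$ — one obtains
\[
\WH(z_0)=\tfrac12+\tfrac34\,\delta_0+O(\delta_0^{3/2})+O(\k)\;\ge\;\tfrac12+\tfrac12\,\delta_0+O(\k)
\]
once $\delta_0$ is fixed small enough. Since $\frac{1}{2-2\e}=\frac12+\frac{\e}{2}+O(\e^2)=\frac12+O(\k)$ with $\e=\frac{2\k}{1-\k}$, shrinking $\k_0$ (depending only on the now-fixed $\delta_0$) yields $\WH(z_0)>\frac{1}{2-2\e}$, which is the claim.

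The main obstacle — and the reason for the scaling $|\delta z|\sim\sqrt{\delta_0}$ — is that the first-order term points the wrong way: since $\WH_1^{\text{RLP}}>0$ at $x_\ast=2+\delta_0$, moving left from the sonic point decreases $\WH$ to linear order, and $\WH_0=\frac{1}{2+\delta_0}$ already lies below $\tfrac12$ by $\sim\delta_0/4$. The entire gain must come from the positive quadratic coefficient $\WH_2\approx\tfrac14$, so $\delta z$ must be taken large enough that $\WH_2(\delta z)^2\sim\delta_0$ beats both the deficit $\sim\delta_0/4$ and the ($O(\delta_0^{3/2})$) linear loss, yet small enough that the cubic-and-higher tail $|\delta z|^3=O(\delta_0^{3/2})$ and the $\k$-dependence remain negligible; this balance forces $|\delta z|$ of order $\sqrt{\delta_0}$. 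What remains is the routine bookkeeping of these $\delta_0$- and $\k$-expansions and the verification that all error constants are uniform in $\k$, for which the convergence estimates from the sonic-point analysis (Theorem~\ref{T:SONICLWP} and Lemma~\ref{lem:InductionFinal}) suffice.
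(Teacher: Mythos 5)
Your proposal is correct and follows essentially the same route as the paper: Taylor-expand $\WH$ around the sonic point $z=1$ at $x_\ast=2+\delta_0$, observe that $\WH_0\approx\tfrac12-\tfrac{\delta_0}{4}$ lies just below $\tfrac12$ while $\WH_2\approx\tfrac14>0$, and pick $z_0$ close enough to $1$ that the quadratic term lifts $\WH(z_0)$ above $\tfrac12$ by a $\k$-independent margin. The only cosmetic differences are your scale $|z_0-1|=2\sqrt{\delta_0}$ versus the paper's $\delta_0^{1/4}$, and your tail control via the uniform coefficient bounds of Lemma~\ref{lem:InductionFinal} rather than the paper's Lagrange-remainder argument keeping $\WH''\ge\tfrac14$ on a $\k$-uniform interval.
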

\begin{proof} For any $x_\ast\in (\xc, x_{\max})$, we Taylor-expand $\WH$ in $z$ around $z=1$:  
\be\label{TaylorExpW}
\WH(z) = \WH (1) + \WH'(1) (z-1) + \frac{\WH''(\tilde z)}{2} (z-1)^2
\ee
for some $\tilde z \in [z,1]$. We note that $ \WH (1)= \WH_0$ and $\WH'(1)= \WH_1$. Let $\delta>0$ be a small constant independent of $\k$ to be fixed and set $x_\ast= 2+\delta$. Then  we have 
\be
\WH_0 = \frac{1}{2+\delta} + O(\k), \quad \WH_1= \frac{\delta}{2+\delta } + O(\k),
\ee
see the end of Remark~\ref{R:CONSISTENCYSONIC}. 
On the other hand, from \eqref{coeff_W2} we have 
\be\label{coeff_W''}
\WH''(1)= 2 \WH_2 = \frac{1-\delta - 5\delta^2}{(2+\delta) (1+2\delta)} + O({\k}).  
\ee
Now let 
\[
z_0(\delta,\k) = \min \{ z_1 : \WH''(z) \ge \frac{1}{4}  \ \text{for all}  \ z\in [z_1,1] \}. 
\]
Observe that $z_0<1$ by~\eqref{coeff_W''} for $\k,\delta$ sufficiently small.  
We now claim that there exists a small enough $\delta_0>0$ such that $z_0(\delta_0,\k)\le 1-\delta_0^{1/4}$  for all sufficiently small $\k$. Suppose not. Then for all $\delta>0$ and for some $\k_0>0$,  $z_0(\delta,\k_0) > 1-\delta^{1/4}$. Thus there exists $1-\delta^{1/4}<z_1=z_1(\delta,\k_0)<1$ so that $\WH''(z_1) <\frac14$, but this is impossible because of \eqref{coeff_W''} and the continuity of $\WH''$. 
Now the Taylor expansion \eqref{TaylorExpW} at $z=1-\delta_0^{1/4}$ gives rise to 
\be
\begin{split}
\WH(1-\delta_0^{1/4}) &=   \frac{1}{2+\delta_0} - \frac{\delta_0}{2+\delta_0} \delta_0^{1/4} 
+ \frac{\WH''(\tilde z)}{2} \delta_0^{1/2} + O(\k)\\
&\ge  \frac{1}{2+\delta_0} - \frac{\delta_0}{2+\delta_0} \delta_0^{1/4} 
+ \frac{1}{8} \delta_0^{1/2} + O(\k)
\end{split}
\ee
from which we deduce $\WH(1-\delta_0^{1/4}) > \frac{1}{2-2\e}$ for all sufficiently small $\k$.
\end{proof}

We now define 
\be\label{E:XMINDEF}
\xmin := 2+\delta_0
\ee
where $\delta_0$ is given in Lemma \ref{L:XMINN}. We observe that by construction $\xmin$ is independent of $\k$. We are ready to introduce the sonic window:

\begin{definition}[The sonic window]\label{D:SONICINTERVAL}
For any $0\le \k\le \k_0$ we refer to the interval $[\xmin,\xmax(\k)]$ as the \underline{sonic window}. We often drop the $\k$-dependence when the $\k$ is fixed.
\end{definition}


\begin{remark}
Observe that by construction, the sonic window $[\xmin,\xmax(\k)]$ is a strict subset of  the interval $[2,3]$, while the interval $[\xc(\k), \xmax(\k)]$ coincides with the interval $[2,3]$ when $\k=0$. The latter is precisely the range of possible sonic points within
which we found the Newtonian Larson-Penston solution in~\cite{GHJ2021} and our new sonic window $[\xmin,\xmax(\k)]$ shows that the lower bound $x_\ast=2$ can be improved to $2+\delta_0$ even for the Newtonian problem. 
\end{remark}


For future use in Sections~\ref{S:FRIEDMANN} and~\ref{S:FARFIELD}, we analyse the behaviour of $\F -xW$ and $\F -x\R $ near the sonic point $x_\ast$. 

\begin{lemma}[Initialisation]\label{L:INITIAL}
Let $\k\in(0,\k_0]$, where $\k_0>0$ is a sufficiently small constant given by Theorem~\ref{T:SONICLWP}.
There exist a $\delta>0$ and $c_0>0$ such that $c_0\k_0<\delta$ and for any $x_\ast\in[\xmin,\xmax]$, the unique local RLP-type solution associated to $x_\ast$ given by Theorem~\ref{T:SONICLWP},
satisfies the bounds
\begin{enumerate}
\item[{\em (a)}]
\begin{align}
\F [x;\R ]&<xW, \ \  x\in(\xs,\xs+\delta),\label{initial1} \\
\F [x;\R ]&>xW, \ \  x\in(\xs-\delta,\xs). \label{initial2}
\end{align}
\item[{\em (b)}]
\begin{align}
\F [x;\R ]&>x\R , \ \  x\in(\xs-2\delta, \xs-c_0\k_0),\label{initial_fleft}\\
\F [x;\R ]&>x\R , \ \  x\in(\xs+ c_0\k_0, \xs+2\delta).\label{initial_fright}
\end{align}
\item[{\em (c)}]
Moreover, the following bound holds
\begin{align}\label{E:INAUX1}
\left[x(1-W(x))\right]\Big|_{x=x_\ast+\delta}\ge \frac12, \ \ \text{ for all} \ \k\in(0,\k_0].
\end{align}
\end{enumerate}
\end{lemma}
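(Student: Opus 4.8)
\textbf{Proof proposal for Lemma~\ref{L:INITIAL}.}

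The plan is to work directly from the Taylor expansions around the sonic point provided by Theorem~\ref{T:SONICLWP} together with the known leading-order behaviour of the coefficients $\R_0=W_0$, $W_1=W_1^{\text{RLP}}$, $\R_1$ supplied by Lemmas~\ref{R0W0},~\ref{R1W1}, and~\ref{L:XMINDEF}. The key observation is that at $x=\xs$ one has $\F[\xs;\R]=\xs W_0 = \xs\R_0$ (since $B(\xs)=0$ forces $\F=xW$ there by the factorisation~\eqref{E:BDEF}, and $\R_0=W_0$), so all three quantities in parts (a)--(b) vanish at $\xs$; what governs their sign in a punctured neighbourhood is the first non-vanishing Taylor coefficient of $x\mapsto \F[x;\R(x)]-xW(x)$ and of $x\mapsto\F[x;\R(x)]-x\R(x)$.

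First I would compute $\frac{d}{dx}\big(\F[x;\R]-xW\big)\big|_{x=\xs}$. Using $\F'=\F'[x;\R,\R']$ from~\eqref{E:FPRIME} and the equation~\eqref{E:WODE} for $W'$, and evaluating at $\xs$ where $\F=\xs W_0$, $B=0$ (so the $\frac{1}{x}$-type term in $W'$ survives but the singular-looking $B^{-1}$ terms must be handled via the explicit resolution of $W_1$ — here one uses that $W_1$ is precisely the coefficient solving the sonic hierarchy, so $W'(\xs)=W_1$ is finite). Thus $\big(\F-xW\big)'(\xs)=\F'(\xs)-W_0-\xs W_1$. In the Newtonian limit $\k=0$ one has $\F[x;\R]=x\R$ identically (inspect~\eqref{E:FDEF} with $\e=0$: $\F=\sqrt{\R^{-0}}=1$? — more carefully, $\F$ reduces to $x$ times the positive root, which is $x W_+$, and at leading order $\F'(\xs)\to W_0+\xs\R_1$), so $\big(\F-xW\big)'(\xs)\to \xs(\R_1-W_1)=\xs\big(-\tfrac1{\xs}-(1-\tfrac2{\xs})\big)=-( \xs-1)<0$ for $\xs\in[\xmin,\xmax]\subset(2,3)$. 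Hence for small $\k$ the derivative is strictly negative, uniformly in $\xs$ and $\k\le\k_0$, which gives~\eqref{initial1}--\eqref{initial2} on a $\k$-independent interval $(\xs-\delta,\xs+\delta)$ by Taylor's theorem with the second-order remainder controlled by Theorem~\ref{T:SONICLWP} (the series bounds make $\|(\F-xW)''\|_{L^\infty}$ bounded on $|x-\xs|<r$). For part (b), the analogous computation gives $\big(\F-x\R\big)'(\xs)=\F'(\xs)-\R_0-\xs\R_1$, which vanishes to leading order in $\k$ (since $\F-x\R\equiv 0$ when $\k=0$); so one must go to second order. I would show $\big(\F-x\R\big)(x)=c_2(x-\xs)^2+O(\k|x-\xs|)+O(|x-\xs|^3)$ with $c_2>0$ bounded below uniformly (computing $c_2$ from $\tfrac12(\F-x\R)''(\xs)$, again using~\eqref{E:FPRIME},~\eqref{E:FPRIME} differentiated once more, and $\R_2$ from~\eqref{coeff_R2}); the $O(\k)$ linear-in-$(x-\xs)$ term is why the positivity only holds for $|x-\xs|\gtrsim \k_0$, producing the excluded sub-interval $(\xs-c_0\k_0,\xs+c_0\k_0)$ and the constant $c_0$ with $c_0\k_0<\delta$.

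For part (c), I would simply use the Taylor expansion $W(\xs+\delta)=W_0+W_1\delta+O(\delta^2)$ together with $W_0=\tfrac1{\xs}+O(\k)$, $W_1=\tfrac{\xs-2}{\xs}+O(\k)$, so that $W(\xs+\delta)\le \tfrac1{\xs}+\tfrac{\xs-2}{\xs}\delta+C\delta^2+O(\k)<1-c$ for a suitable small $\k$-independent $\delta$ and $\xs\in[2,3]$, whence $\big[x(1-W)\big]|_{x=\xs+\delta}\ge (\xs+\delta)(1-W(\xs+\delta))\ge \tfrac12$ after possibly shrinking $\delta$; one must check that the constant works uniformly across the whole sonic window, which follows since all coefficient bounds are uniform in $\xs\in[\xmin,\xmax]$.

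The main obstacle I anticipate is the part (b) computation: since $\F-x\R$ degenerates identically in the Newtonian limit, obtaining a clean lower bound on the second-order coefficient $c_2$ requires carefully tracking the $\k\to0$ expansions of $\F$, $\F'$, $\F''$ through~\eqref{E:FDEF} and~\eqref{E:FPRIME} and combining them with $\R_1,\R_2$; I would need to verify that $c_2$ does not itself vanish or become small, and to correctly identify the size ($O(\k)$) of the linear term so that the forbidden window $(\xs-c_0\k_0,\xs+c_0\k_0)$ is genuinely of width $O(\k_0)$ and not larger. The bookkeeping is routine but delicate, and it is the only place where the structure of the relativistic correction (as opposed to a soft perturbation argument) really enters.
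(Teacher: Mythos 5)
Your proposal follows essentially the same route as the paper: part (a) is proved by showing the first derivative of $\F - xW$ at $\xs$ is strictly negative uniformly in $\xs \in [\xmin,\xmax]$ (the paper works with $g = xW - \F$, shows $g'(\xs) = 1 - \tfrac1{\xs} + O(\k) > \tfrac13$, and concludes by monotonicity); part (b) is proved exactly as you suggest, by showing $f'(\xs) = (\F - x\R)'(\xs) = O(\k)$ and $f''(\xs) = \tfrac{(\xs-1)^2}{\xs^2(2\xs-3)} + O(\k) > \tfrac19$, then Taylor-expanding $f(x) = O(\k)(x-\xs) + \tfrac12 f''(\tilde x)(x-\xs)^2$ to extract the forbidden window of width $O(\k)$; and part (c) is the same quick consequence of the uniform coefficient asymptotics.

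One minor slip in your writeup: you use the rescaled Taylor coefficients $\RH_1 = -\tfrac1{\xs}$, $\WH_1 = 1 - \tfrac2{\xs}$ (those from the $z = x/\xs$ expansion, per~\eqref{E:SONICTRANS}) in place of the $x$-coefficients $\R_1 = \RH_1/\xs = -\tfrac1{\xs^2}$, $W_1 = \WH_1/\xs = \tfrac{\xs-2}{\xs^2}$. This only changes the evaluated constant by a factor of $\xs$ (the paper gets $(\F - xW)'(\xs) = -\tfrac{\xs-1}{\xs} + O(\k)$, you get $-(\xs-1)$), so it does not affect the sign or the validity of the argument. Also, in the Newtonian limit $\F \equiv 1$ (not $x\R$), which you correctly observed first and then second-guessed; both your computations happen to give the right sign, but the clean way to see $\F'(\xs) = O(\k)$ and $f'(\xs) = O(\k)$ is via $\F \to 1$ as $\k \to 0$.
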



\begin{proof} 
{\em Proof of part (a).}
Let $g(x):= xW - \F $. Since $g$ is a smooth function of $\R $ and $W$, by Theorem~\ref{T:SONICLWP}, $g$ is smooth near the sonic point. Note that $g(x_\ast)=0$ from Lemma \ref{R0W0}. Since $g'= W + x W' - \F '$, using Lemma \ref{R0W0}, Lemma \ref{R1W1} and \eqref{E:FPRIME}, we deduce that 
\[
g'(x_\ast) = 1- \frac{1}{x_\ast} + O(\k) >\frac13 \ \text{for all} \  x_\ast\in[\xmin,\xmax]
\]
for all sufficiently small $\k>0$. Therefore, $g$ is locally strictly increasing and \eqref{initial1} and \eqref{initial2} hold for some $\delta>0$. 

\noindent
{\em Proof of part (b).}
Since $\F -x\R =0$ at $x=x_\ast$, we use this and the formula~\eqref{E:FPRIME} to conclude
\begin{align}
(\F ' - (x\R )')\Big|_{x=\xs} = \k  \frac{-2 (1+\R _0)\xs \R _0+(1-\k)\xs -\left(2\xs^2 \R _0 \R _1+\frac1{1-\k}\R _0^{-\e-1}\R _1\right)}{(1-\k)\xs \R _0 + 2\k\xs (1+\R _0)} - \xs \R _1- \R _0,
\end{align}
where we recall~\eqref{Taylor}. 
By Remark~\ref{R:CONSISTENCYSONIC} and~\eqref{E:SONICTRANS}, we have $\R _0=W_0=\frac1{\xs}+O(\k)$ and $\R _1=-\frac1{\xs^2}+O(\k)$. Plugging this into the above
expression, we conclude that
\[
f'(\xs)=(\F ' - (x\R )')\Big|_{x=\xs} = O(\k).
\] 
%
Similarly, $f'' = \F '' - 2 \R ' - x \R ''$ and thus using $\R ''(x_\ast) = \frac{2}{x_\ast^2} \RH_2=  \frac{2}{x_\ast^2} \frac{-x_\ast^2 + 6x_\ast -7}{2x_\ast (2x_\ast -3)} +O(\k)$ (see~\eqref{coeff_R2}) and $\F ''(x_\ast) = O(\k)$, we have 
\[
f''(x_\ast) = \frac{2}{x_\ast^2}  -  x_\ast\frac{2}{x_\ast^2} \left( \frac{-x_\ast^2 + 6x_\ast -7}{2x_\ast (2x_\ast -3)}\right)  + O(\k)= \frac{x_\ast^2 - 2x_\ast +1 }{x_\ast^2(2x_\ast -3 )}+ O(\k)
\]
and hence $f''(x_\ast)>\frac19$ for all $x_\ast\in [\xmin,\xmax]$. Since $f''$ is uniformly continuous, there exists a $\delta>0$ such that 
\be\label{f''bound}
f''(x)> \frac{1}{18} \ \text{for} \ x\in(x_\ast-\delta,x_\ast+\delta)
\ee
 for all $x_\ast\in [\xmin,\xmax]$. 

We now Taylor-expand $f$ at $x=x_\ast$ to obtain 
\[
f(x) =   O(\k) (x-x_\ast) + \frac{f''(\tilde x)}{2} (x-x_\ast)^2, \quad x\in (x_\ast - \delta,   x_\ast +\delta)
\]
for some $\tilde x$ between $x_\ast$ and $x$.
For $x>\xs$ and for some $c_1>0$ we have
\[
f(x) > - c_1 \k (x-x_\ast) + \frac{1}{36} (x-x_\ast)^2 =\frac{1}{36} (x-x_\ast) \left( x-x_\ast - 36 c_1\k \right) . 
\]
Therefore we deduce \eqref{initial_fright} with $c_0=36c_1$. An analogous argument gives~\eqref{initial_fleft}.

\noindent
{\em Proof of part (c).}
Bound~\eqref{E:INAUX1} follows trivially from Theorem~\ref{T:SONICLWP} and the asymptotic behaviour as $\k\to0$ in Lemma~\ref{R0W0}.
\end{proof}


\subsection{Singularity at the origin $x=0$} 


By analogy to the previous section, we Taylor-expand the solution at the origin $z = 0$ in
order to prove a local existence theorem starting from the origin to the right. An immediate consistency condition
follows from the presence of $\frac{1 -3W}{z}$ in \eqref{Eq:W}: $\WH(0) =\frac{1}{3}$ and $\RH(0)= \tilde \RH_0>0$ is a free parameter.   
Denote the solution from the left by $\RH_-$ and $\WH_-$ and assume that locally around $z=0$ 
\be\label{series0}
\RH_-(z;\tilde \RH_0) := \sum_{N=0}^\infty\tilde \RH_N z^N, \quad \WH_-(z;\tilde \RH_0) := \sum_{N=0}^\infty\tilde \WH_N z^N
\ee 
where $\tilde \RH_0>0$ is a free parameter and $\tilde \WH_0 = \frac{1}{3}$. 
The following theorem asserts that the formal power series converge and hence $(\RH_-,\WH_-)$ are real analytic in a small neighbourhood of $z=0$. 


\begin{theorem} \label{T:SONICLWPORIGIN}
Let $\tilde \RH_0>0$ be given. There exists an $0<\tilde r<1$ such that the formal power series \eqref{series0} converge for all $z\in [0,\tilde r)$. In particular, $\RH_-$ and $\WH_-$ are real analytic on $[0,\tilde r)$. We can differentiate the infinite sums term by term and the functions $\RH_-(z,\tilde \RH_0)$ and $\WH_-(z,\tilde \RH_0)$ solve \eqref{Eq:R} and \eqref{Eq:W} with the initial conditions  $\RH_-(0;\tilde \RH_0)= \tilde \RH_0$,  $\WH_-(0;\tilde \RH_0) = \frac{1}{3}$. 
\end{theorem}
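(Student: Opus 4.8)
The plan is to follow exactly the strategy used for the sonic-point local existence theorem (Theorem~\ref{T:SONICLWP}), which is in turn modeled on the Newtonian argument in~\cite{GHJ2021}. First I would derive the recursion for the Taylor coefficients $(\tilde\RH_N,\tilde\WH_N)$ by plugging the ansatz~\eqref{series0} into the system~\eqref{Eq:R}--\eqref{Eq:W}. The key structural point is that at $z=0$ the apparently singular term $\frac{1-3\WH}{z}$ in~\eqref{Eq:W} is resolved by the consistency condition $\tilde\WH_0=\frac13$, and that $B$ at $z=0$ equals $\tilde\RH_0^{-\e}>0$, so the denominator is nonvanishing in a neighborhood of $z=0$ and the right-hand sides are analytic there. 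Concretely, multiplying~\eqref{Eq:R}--\eqref{Eq:W} through by $B(1+\dz)$ analogues (here simply by $B$, since we expand around $z=0$ and $z$ itself is the small parameter) and matching powers of $z$ yields, for each $N\ge 0$, a linear $2\times2$ system
\[
\mathcal A_N\begin{pmatrix}\tilde\RH_{N+1}\\ \tilde\WH_{N+1}\end{pmatrix}=\begin{pmatrix}\mathcal S_N\\ \mathcal V_N\end{pmatrix},
\]
where the source terms $\mathcal S_N,\mathcal V_N$ depend only on $\tilde\RH_0,\dots,\tilde\RH_N,\tilde\WH_0,\dots,\tilde\WH_N$, built from $B$-expansion coefficients and the Fa\`a di Bruno expansion of $\tilde\RH^{-\e}$ exactly as in~\eqref{faaR}. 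One checks that $\mathcal A_N$ is lower-triangular-dominant with $\det\mathcal A_N = c\,\tilde\RH_0^{-\e} N + O(1)$ (the leading $N$ coming from differentiating the power series and from the $\frac{1}{z}$ term), hence invertible for every $N\ge 0$; this both recovers the consistency condition at $N=0$ (forcing $\tilde\WH_0=\frac13$, with $\tilde\RH_0$ free) and determines all higher coefficients uniquely.

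Second, I would establish $N$-dependent growth bounds of the form $|\tilde\RH_N|,|\tilde\WH_N|\le \frac{C^{N}}{N^3}$ for a suitable constant $C=C(\tilde\RH_0)>1$, by an induction identical in spirit to the one deferred to Appendix~\ref{A:SONIC} for Theorem~\ref{T:SONICLWP}. The point is that dividing by $\det\mathcal A_N\sim cN$ gains a factor $\frac1N$, which, combined with the combinatorial structure of the convolution sums in $\mathcal S_N,\mathcal V_N$ and of the Fa\`a di Bruno partitions $\pi(N,m)$, closes the induction for $C$ large enough depending on $\tilde\RH_0$ (and on $\k$, but uniformly in $\k\in(0,\k_0]$ if desired). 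Since $\sum_N \frac{C^N}{N^3}|z|^N$ converges for $|z|<1/C$, the comparison test gives convergence of~\eqref{series0} on $[0,\tilde r)$ with $\tilde r:=1/C$, and termwise differentiability and real analyticity follow. Finally, plugging the convergent series back into~\eqref{Eq:R}--\eqref{Eq:W} and using that $B=\tilde\RH_0^{-\e}+O(z)$ stays strictly positive on a possibly smaller interval shows that $(\RH_-,\WH_-)$ genuinely solves the ODE system with the prescribed data $\RH_-(0;\tilde\RH_0)=\tilde\RH_0$, $\WH_-(0;\tilde\RH_0)=\tfrac13$.

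The main obstacle is the same as for Theorem~\ref{T:SONICLWP}: the bookkeeping in the growth estimate. One must track carefully how the factor $\frac1N$ from $\det\mathcal A_N$ is distributed across the many convolution and composition terms making up $\mathcal S_N$ and $\mathcal V_N$ — in particular the terms arising from $\tilde\RH^{-\e}$ via Fa\`a di Bruno, where the partition structure~\eqref{pi} must be controlled — so that the inductive constant $C$ does not blow up. This is routine but delicate combinatorics; here it is actually somewhat easier than in Theorem~\ref{T:SONICLWP} because there is no sonic degeneracy of $B$ at $z=0$ (we have $B(0)=\tilde\RH_0^{-\e}>0$ outright, rather than $B$ vanishing), so the analog of the extra care needed near $x=\xs$ in that theorem is not required. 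I would therefore state the growth bounds as a lemma, indicate that its proof is a direct adaptation of the corresponding appendix argument, and conclude as above.
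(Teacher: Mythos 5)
Your proposal follows the paper's strategy (expand around $z=0$, derive a Taylor recursion, prove growth bounds by the Appendix~\ref{A:SONIC}--style induction, conclude convergence by comparison), and the key structural observations are right: the consistency condition $\tilde\WH_0=\tfrac13$ removes the apparent singularity of $\tfrac{1-3\WH}{z}$, and $B(0)=\tilde\RH_0^{-\e}>0$ means there is no sonic degeneracy to contend with. One imprecision worth flagging: in the paper the recursion actually \emph{decouples} rather than forming a genuinely coupled $2\times2$ system. Because the only occurrence of $\tilde\RH_{N+1}$ comes from the $\ell=0$, $m=N$ term of $\sum_{\ell+m=N}(m+1)\tilde\RH_{m+1}(\RH^{-\e})_\ell$ (every other term has index at most $N$), the $\RH$-equation alone yields $\tilde\RH_{N+1}=\tfrac1{N+1}\tilde{\mathcal S}_{N+1}$; likewise the $\WH$-equation, after the $\tfrac{1-3\WH}{z}$ contribution is absorbed, yields $\tilde\WH_{N+1}=\tfrac1{N+4}\tilde{\mathcal V}_{N+1}$. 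So instead of a matrix $\mathcal A_N$ with $\det\mathcal A_N\sim cN$ as you wrote, one simply divides by the scalar factors $(N+1)\tilde\RH_0^{-\e}$ and $(N+4)\tilde\RH_0^{-\e}$ — the $\tfrac1N$ gain you correctly identified as essential to closing the induction, achieved more directly. (Your stated determinant asymptotics is off — if you insist on the matrix picture it is diagonal with $\det\sim N^2$ — but since the per-coefficient gain is still $1/N$, the induction goes through as you describe.) Apart from this, your proof outline matches the paper's.
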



\begin{proof}
Around the origin $z=0$ we write out the formal expansion of $B$: 
\be
B=  \RH^{-\e} -x_\ast^2Hz^2 = \sum_{j=0}^\infty  (\RH^{-\e})_j z^j - x_\ast^2 \sum_{j=0}^\infty H_j z^{j+2} . 
\ee 
By the Faa di Bruno formula~\eqref{E:FAAH}--\eqref{pi}
\be
 (\RH^{-\e}  )_j =\tilde \RH_0^{-\e} \sum_{m=1}^j \frac{1}{\tilde \RH_0^m}\sum_{\pi(j,m)} (- \e )_m \frac{1}{\lambda_1 ! \dots \lambda_j !} {\tilde \RH_1}^{\lambda_1}\dots 
{\tilde \RH_j}^{\lambda_j}, \quad j \geq 1
\ee
and $ (\RH^{-\e}  )_0 = \tilde \RH_0^{-\e}$. Plugging \eqref{series0} into \eqref{Eq:R}, we obtain the formal relation
\be
\begin{split}
0&=\left( \sum_{\ell=0}^\infty  (\RH^{-\e})_\ell z^\ell - x_\ast^2 \sum_{\ell=0}^\infty H_\ell z^{\ell+2} \right) \left(  \sum_{m=0}^\infty (m+1) \tilde \RH_{m+1} z^{m} \right) \\
&\quad+ 2x_\ast^2(1-\k) \sum_{\ell=0}^\infty (\RH ( \WH +\k) ( \RH- \WH))_\ell z^{\ell+1} \\
&=\sum_{N=0}^\infty \sum_{\ell+m=N}(m+1) \tilde \RH_{m+1}  (\RH^{-\e})_\ell z^N -x_\ast^2 \sum_{N=0}^\infty \sum_{\ell+m=N-2}(m+1) \tilde \RH_{m+1} H_\ell z^N \\
&\quad + 2x_\ast^2(1-\k) \sum_{N=0}^\infty (\RH ( \WH + \k) ( \RH- \WH))_{N-1} z^{N}. 
\end{split}
\ee
Comparing the coefficients, we obtain
\be
\begin{split}\label{RN+10}
\sum_{\ell+m=N} (m+1) \tilde \RH_{m+1} (\RH^{-\e})_\ell  - x_\ast^2 \sum_{\ell+m=N-2} (m+1) \tilde \RH_{m+1}H_\ell  \\
\quad+ 2x_\ast^2(1-\k) ( \RH ( \WH +\k) ( \RH- \WH) )_{N-1}  =0. 
\end{split}
\ee
Similarly, plugging \eqref{series0} into \eqref{Eq:W}, we obtain 
\be
\begin{split}
0&=\left( \sum_{\ell=0}^\infty  (R^{-\e})_\ell z^\ell - x_\ast^2 \sum_{\ell=0}^\infty H_\ell z^{\ell+2} \right) \left(  \sum_{m=0}^\infty (m+1) \tilde \WH_{m+1} z^{m} + 3\sum_{m=0}^\infty \tilde \WH_{m+1} z^m \right) \\
&\quad- 2x_\ast^2(1+\k) \sum_{\ell=0}^\infty (\WH ( \WH +\k) ( \RH- \WH))_\ell z^{\ell+1} \\
&=\sum_{N=0}^\infty \sum_{\ell+m=N}(m+4) \tilde \WH_{m+1}  (\RH^{-\e})_\ell z^N -x_\ast^2 \sum_{N=0}^\infty \sum_{\ell+m=N-2}(m+4) \tilde \WH_{m+1} H_\ell z^N \\
&\quad -2x_\ast^2(1+\k) \sum_{N=0}^\infty (\WH ( \WH + \k) ( \RH- \WH))_{N-1} z^{N}.
\end{split}
\ee 
Comparing the coefficients, we obtain 
\be\label{WN+10}
\begin{split}
\sum_{\ell+m=N} (m+4) \tilde \WH_{m+1} (\RH^{-\e})_\ell  - x_\ast^2 \sum_{\ell+m=N-2} (m+4) \tilde \WH_{m+1}H_\ell  \\
\quad- 2x_\ast^2(1+\k) ( \RH ( \WH + \k) ( \RH- \WH) )_{N-1}  =0. 
\end{split}
\ee
Identities \eqref{RN+10} and \eqref{WN+10} give the recursive relationships 
\begin{align}
\tilde \RH_{N+1} = \frac{1}{N+1} \tilde{\mathcal S}_{N+1}, \quad N\geq 0, \label{tildeR}\\
\tilde \WH_{N+1} = \frac{1}{N+4} \tilde{\mathcal V}_{N+1}, \quad N\geq 0, \label{tildeW}
\end{align} where
\begin{align}
\tilde{\mathcal S}_{N+1}&=\tilde \RH_0^{\e} \Big[ - \sum_{\substack{\ell+m=N\\ m\le N-1}} (m+1) \tilde \RH_{m+1} (\RH^{-\e})_\ell  +x_\ast^2 \sum_{\ell+m=N-2} (m+1) \tilde \RH_{m+1}H_\ell \label{tildeS}\\
&\qquad \qquad  -  2x_\ast^2(1-\k) ( \RH ( \WH + \k) ( \RH- \WH) )_{N-1}\Big] ,\notag\\
\tilde{\mathcal V}_{N+1}&=\tilde \RH_0^{\e} \Big[ - \sum_{\substack{\ell+m=N\\ m\le N-1}} (m+4) \tilde \WH_{m+1} (\RH^{-\e})_\ell  +x_\ast^2 \sum_{\ell+m=N-2} (m+4) \tilde \WH_{m+1}H_\ell  \label{tildeV}\\
&\qquad \qquad  +  2x_\ast^2(1+\k) ( \WH ( \WH + \k) ( \RH- \WH) )_{N-1}\Big],\notag
\end{align} where $\tilde{\mathcal S}_{N+1}$ and $\tilde{\mathcal V}_{N+1}$ depend only on $(\tilde \RH_i, \tilde \WH_i)$ for $0\le i\le N$. 
The rest of the proof is now entirely analogous to the proof of Theorem \ref{T:SONICLWP} and we omit  the
details. 
\end{proof}




%

\begin{remark} \label{R:CONEPROP}
We may repeat the same procedure as in Lemma \ref{Lem:analyticXast} to deduce that  $\partial_{\tilde \RH_0} \RH_-(z; \tilde \RH_0)$ and  $\partial_{\tilde \RH_0} \WH_-(z; \tilde \RH_0)$ have the convergent power series near the origin and the function $\tilde \RH_0\in (0,\infty) \rightarrow (\RH_-(z;\tilde \RH_0), \WH_-(z;\tilde \RH_0))$ is $C^1$. And the derivatives $\partial_{\tilde \RH_0} \RH_-$ and $\partial_{\tilde \RH_0}\WH_-$ satisfy the system of ODEs obtained by the differentiating \eqref{Eq:R} and \eqref{Eq:W} with initial conditions  $\partial_{\tilde \RH_0} \RH_-(0; \tilde \RH_0)=1$ and $\partial_{\tilde \RH_0}\WH_-(0;\tilde \RH_0 )=0$. 
\end{remark}



\section{The Friedman connection}\label{S:FRIEDMANN}


\subsection{Sonic time, a priori bounds, and the key continuity properties of the flow}
We denote $x_\ast r$ by $\tilde r$, where $r$ is the analyticity radius given by Theorem~\ref{T:SONICLWP}.   


\begin{definition}[Sonic time]\label{D:SONICTIME}
For any $x_\ast\in[\xmin,\xmax]$ consider the unique local solution on the interval $[x_\ast-\tilde r,x_\ast+\tilde r]$ 
given by Theorem~\ref{T:SONICLWP}. The {\em sonic time}
$s(x_\ast)$ is given by
\begin{align}
s(x_\ast) : = &\inf_{x\in(0,x_\ast]} \Big\{ \text{$(\R(\cdot,x_\ast), W(\cdot;x_\ast))$ is a solution to~\eqref{E:RODE}--\eqref{E:WODE} on $[x,x_\ast)$ and } \notag\\
&   \ \  \ B(x';\R,W)>0 \text{ for all } \ x'\in[x,x_\ast)\Big\},
\end{align}
where we recall the definition~\eqref{E:BDEF} of $B$.
\end{definition}



\begin{lemma}\label{L:AUXPRELIM}
Let $\tilde r>0$ be as above. Then
\be\label{E:AUXPRELIM}
\frac13<\frac{J[x;\frac13]}{x} \ \ \text{ for all } \ x\in[0,\xmax-\tilde r].
\ee
\end{lemma}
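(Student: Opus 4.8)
The plan is to work directly from the algebraic definition of $\F = J[x;\R]$ in~\eqref{E:FDEF}, specialized to $\R = \tfrac13$. First I would write $J[x;\tfrac13]$ explicitly: with $\R = \tfrac13$ the quadratic relation $(1-\k)\F^2 + 4\k x(1+\R)\F - \k(1-\k)x^2 - \R^{-\e} = 0$ becomes $(1-\k)\F^2 + \tfrac{16\k}{3} x\,\F - \k(1-\k)x^2 - 3^{\e} = 0$, so that
\[
J[x;\tfrac13] = \frac{-\tfrac{8\k}{3}x + \sqrt{\tfrac{64\k^2}{9}x^2 + (1-\k)\bigl(\k(1-\k)x^2 + 3^{\e}\bigr)}}{1-\k}.
\]
The inequality $J[x;\tfrac13] > \tfrac{x}{3}$ is then equivalent, after moving the linear term to the right and squaring (both sides are manifestly positive once one checks $-\tfrac{8\k}{3}x + \tfrac{(1-\k)x}{3} > 0$ for small $\k$), to a clean polynomial inequality in $x$. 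I expect this to reduce to something of the form $c_1(\k) + c_2(\k) x^2 > 0$ or a comparably simple expression, with the $x$-dependent coefficient having a definite sign for $\k$ small; the constant-in-$x$ term will carry the factor $3^\e > 1$ and thus be strictly positive.

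An alternative, cleaner route that avoids squaring altogether: use the factorization~\eqref{FH}, $(1-\k)\F H = \R^{-\e} + \k(1-\k)x^2$, together with $H = \F + \tfrac{4\k}{1-\k}(1+\R)x$ from~\eqref{E:HDEF}. Evaluating at $\R = \tfrac13$ gives $(1-\k)\F\bigl(\F + \tfrac{16\k}{3(1-\k)}x\bigr) = 3^\e + \k(1-\k)x^2$. If I substitute the hypothetical value $\F = \tfrac{x}{3}$ into the left-hand side I get $(1-\k)\tfrac{x}{3}\bigl(\tfrac{x}{3} + \tfrac{16\k}{3(1-\k)}x\bigr) = \tfrac{(1-\k)x^2}{9} + \tfrac{16\k x^2}{9}$, and I need to compare this with the right-hand side $3^\e + \k(1-\k)x^2$. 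Since the map $\F \mapsto (1-\k)\F\bigl(\F + \tfrac{16\k}{3(1-\k)}x\bigr)$ is strictly increasing for $\F > 0$, the inequality $J[x;\tfrac13] > \tfrac x3$ holds if and only if $\tfrac{(1-\k)x^2}{9} + \tfrac{16\k x^2}{9} < 3^\e + \k(1-\k)x^2$, i.e.
\[
\Bigl(\tfrac{1-\k}{9} + \tfrac{16\k}{9} - \k(1-\k)\Bigr)x^2 < 3^\e.
\]
For $\k$ sufficiently small the coefficient of $x^2$ is close to $\tfrac19$, hence positive and bounded; and $x \le \xmax - \tilde r < 3$, so the left side is bounded by roughly $\tfrac{9}{9} = 1 < 3^\e$. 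This closes the estimate provided $\k_0$ is chosen small enough that both the coefficient bound and the strict inequality $\bigl(\tfrac19 + O(\k)\bigr)(\xmax-\tilde r)^2 < 3^\e$ hold uniformly; since $\xmax < 3$ by~\eqref{E:XMAXPROP} and $\tilde r > 0$ is fixed, $(\xmax - \tilde r)^2$ is bounded away from $9$, giving genuine room.

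The only real subtlety — the ``main obstacle,'' though a mild one — is making sure the monotonicity-of-$\F$ argument is applied to the correct root: $J[x;\R]$ is defined as the \emph{positive} root in~\eqref{E:FDEF}, and one must confirm that $\F \mapsto (1-\k)\F^2 + \tfrac{16\k}{3}x\F$ is increasing on the relevant range $\F > 0$ (it is, trivially, since both terms are increasing there), so that comparing values of this quadratic at $\F = J[x;\tfrac13]$ and at $\F = \tfrac x3$ genuinely detects the sign of $J[x;\tfrac13] - \tfrac x3$. One should also note $x=0$ is trivial ($J[0;\tfrac13] = \tfrac{3^{\e/2}}{\sqrt{1-\k}} > 0$ while the bound reads $\tfrac13 < J/x$, which is vacuous or handled by continuity as $x\to 0^+$ since $J[x;\tfrac13]/x \to \infty$). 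I would state the lemma's proof in the two-line form: reduce to the displayed coefficient inequality via the increasing quadratic, then bound the coefficient by $\tfrac19 + C\k$ and $x^2$ by $(\xmax-\tilde r)^2$, and shrink $\k_0$.
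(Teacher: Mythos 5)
Your proposal is correct and, after the algebra, coincides with the paper's proof: both reduce~\eqref{E:AUXPRELIM} to the same scalar inequality (your displayed $\bigl(\tfrac{1-\k}{9}+\tfrac{16\k}{9}-\k(1-\k)\bigr)x^2<3^{\e}$ is exactly the paper's $\tfrac19+\tfrac23\e(1+\tfrac13)<\k+\tfrac{(1/3)^{-\e}}{(1-\k)x^2}$ after multiplying by $(1-\k)x^2$), and both close it using $x\le\xmax-\tilde r<3$ and smallness of $\k$. Your second route via the factorization $(1-\k)\F H=\R^{-\e}+\k(1-\k)x^2$ is a slightly tidier way to arrive at that inequality than the paper's direct squaring of~\eqref{E:FDEF}, but it is the same argument in substance.
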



\begin{proof}
By~\eqref{E:FDEF}, bound~\eqref{E:AUXPRELIM} is equivalent to showing
\begin{align}
\frac19+\frac23\e(1+\frac13) < \k + \frac{(\frac13)^{-\e}}{(1-\k)x^2}. 
\end{align}
Since $x\le\xmax-\tilde r=3+O(\k)-\tilde r$ the right-hand side above is larger than $\k+\frac{(\frac13)^{-\e}}{(1-\k)(3+O(\k)-r)^2}$, which converges to $\frac1{(3-r)^2}$ as $\k\to0^+$.
The left-hand side on the other hand converges to $\frac19$ and thus the claim follows.
\end{proof}


\begin{lemma}\label{L:PRELIM1}
For any $x_\ast\in[\xmin,\xmax]$ consider the unique local solution on the interval $[x_\ast-\tilde r,x_\ast+\tilde r]$  given by Theorem~\ref{T:SONICLWP}. Then for any $x\in(s(x_\ast),x_\ast]$ the 
following bounds hold:
\begin{align}
W(x) & < \frac{\F (x)}{x},\label{E:WBOUNDL}\\
|W(x)| & <  \frac{\F (x)}{x} + 2\e(1+\R), \label{E:WBOUNDL2} \\
0&<\R(x),
\end{align}
where $\F $ is defined in~\eqref{E:FDEF}. Moreover, for any $x\in (s(x_\ast),x_\ast-\delta]$ such that  $\R(x)\ge \frac{1}{3}$ we have the upper bound
\begin{align}
\R(x) &< 
\frac{\F (x)}{x},  \label{E:RBOUNDL} 
\end{align}
where $0<\delta\ll 1$ is an $\k$-independent constant from Lemma \ref{L:INITIAL}. 
\end{lemma}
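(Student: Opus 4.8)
The plan is to establish the four assertions of Lemma~\ref{L:PRELIM1} as a package, propagated dynamically from the sonic point $x_\ast$ to the left, using the sign structure of $B$ encoded in the factorisation~\eqref{E:BDEF} together with the monotonicity identity for $f(x)=\F[x;\R]-x\R$ from Lemma~\ref{L:JUHILEMMA}. First I would record the local picture near $x_\ast$: by Theorem~\ref{T:SONICLWP} the solution is real analytic on $[x_\ast-\tilde r, x_\ast+\tilde r]$, $\R$ is strictly positive there, and by Lemma~\ref{L:INITIAL}(a)--(b) we have $\F[x;\R]>xW$ and $\F[x;\R]>x\R$ on $(x_\ast-\delta,x_\ast)$ (shrinking $\tilde r$ if necessary so $\tilde r\le\delta$). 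Since $B=(1-\k)(\F-xW)(H+xW)$ and $H+xW=\F+\frac{4\k}{1-\k}(1+\R)x+xW>0$ near $x_\ast$, the inequality $\F>xW$ is precisely equivalent to $B>0$; hence on $(s(x_\ast),x_\ast)$, by the very definition of the sonic time, $B>0$ holds throughout, which gives~\eqref{E:WBOUNDL} and simultaneously $H+xW>0$. The bound $|W|<\frac{\F}{x}+2\e(1+\R)$ is then equivalent to $-xW<H=\F+\frac{4\k}{1-\k}(1+\R)x$, i.e.\ to $H+xW>0$, which we have just shown, so~\eqref{E:WBOUNDL2} is immediate.

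The positivity $\R(x)>0$ on $(s(x_\ast),x_\ast)$ I would obtain from~\eqref{E:RODE}: writing $\R' = -\frac{2x(1-\k)\R(W+\k)(\R-W)}{B}$, on the interval in question $B>0$ and the right-hand side is $\R$ times a continuous coefficient, so $\R$ cannot reach zero in finite ``time'' — more precisely $\frac{d}{dx}\log\R$ is continuous on any compact subinterval of $(s(x_\ast),x_\ast]$, so $\R$ stays strictly positive. (This also confirms that $\R^{-\e}$ and hence $B$ and $\F$ are well defined along the flow, closing a potential circularity.)

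The remaining and genuinely substantive claim is the upper bound~\eqref{E:RBOUNDL}: $\R(x)<\frac{\F(x)}{x}$, equivalently $f(x)>0$, on the set of $x\in(s(x_\ast),x_\ast-\delta]$ where $\R(x)\ge\frac13$. Here the plan is to invoke Corollary~\ref{C:FFORMULA}, the representation $f(x)=f(x_1)e^{-\int_{x_1}^x a} + e^{-\int_{x_1}^x a}\int_{x_1}^x (b_1+\k b_2)e^{\int_{x_1}^z a}\,dz$, integrating from a base point $x_1:=x_\ast-\delta$ down to $x<x_1$ (so that $\int_{x_1}^x = -\int_x^{x_1}$, and one must be careful with the sign of the integration variable ordering). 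By Lemma~\ref{L:INITIAL}(b), $f(x_1)>0$. For the source term, Lemma~\ref{L:LITTLEBPOS} tells us that whenever $\F-xW>0$ and $B>0$ we have $b_1<0$ and, provided $\R>\frac13$, also $b_2<0$; thus $b_1+\k b_2<0$ on the relevant region. The delicate point is that we are integrating \emph{leftward} (decreasing $x$), so a negative source term actually \emph{helps} $f$ stay positive as $x$ decreases — one should track this carefully by rewriting the formula with limits in increasing order. I expect the main obstacle to be a bootstrap/continuity subtlety: the sign conditions $\F-xW>0$ (already have it) and $\R>\frac13$ needed to apply Lemma~\ref{L:LITTLEBPOS}(b) are exactly the hypotheses under which we are proving the bound, so the argument must be run as a connectedness/continuity argument on the maximal subinterval of $(s(x_\ast),x_\ast-\delta]$ on which $\R\ge\frac13$ and $f>0$ simultaneously hold, showing this set is relatively open and closed and nonempty, hence everything on which $\R\ge\frac13$. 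On any maximal such subinterval $[x_2,x_1]$, the integral representation with $b_1+\k b_2\le 0$ and leftward integration yields $f(x)\ge f(x_1)e^{-\int_{x_1}^x a}>0$ for $x\in[x_2,x_1]$, closing the bootstrap; should $\R$ dip to $\frac13$ at some $x_2$, the bound is simply not asserted below $x_2$, consistent with the statement's hypothesis ``such that $\R(x)\ge\frac13$''. I would also double-check that no division by $B$ or by $H+xW$ degenerates on the closed interval, which follows because $B>0$ strictly on $(s(x_\ast),x_\ast)$ by definition of the sonic time and $H+xW>0$ as shown above.
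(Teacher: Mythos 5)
Your treatment of~\eqref{E:WBOUNDL}, \eqref{E:WBOUNDL2} and of $\R>0$ is sound. For the first two you use a sign-propagation argument on the factors of $B=(1-\k)(\F-xW)(H+xW)$, initializing the signs near $x_\ast$ via Lemma~\ref{L:INITIAL} and propagating by connectedness; the paper reaches the same conclusion more directly by splitting on the sign of $W$ (when $W>0$ the second factor is manifestly positive, so the first must be; when $W\le0$ the first factor is manifestly positive, so the second must be), so no initialization and no continuity argument are needed. Both routes are valid. The positivity of $\R$ is the same logarithmic-derivative argument as in the paper.

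The proof of~\eqref{E:RBOUNDL} has a genuine gap. Your bootstrap works only on the connected component of $\{x\in(s(x_\ast),x_\ast-\delta]:\R(x)\ge\frac13\}$ that contains $x_\ast-\delta$, because once $\R$ drops below $\frac13$ the sign information from Lemma~\ref{L:LITTLEBPOS}(b) on $b_2$ is lost and the integral representation can no longer be extended. Your remark that ``should $\R$ dip to $\frac13$ at some $x_2$, the bound is simply not asserted below $x_2$'' is a misreading of the statement: the lemma asserts~\eqref{E:RBOUNDL} at \emph{every} $x\in(s(x_\ast),x_\ast-\delta]$ with $\R(x)\ge\frac13$, including any such $x$ lying to the left of an interval on which $\R<\frac13$. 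The ingredient you are missing is Lemma~\ref{L:AUXPRELIM}, which states $\frac13<\F[x;\frac13]/x$ for all $x\le \xmax-\tilde r$; by continuity of $\R\mapsto\F[x;\R]/x$, this gives $f>0$ \emph{automatically} at any point where $\R=\frac13$, with no dynamics required. The paper exploits this in two ways: (i) if $\R(x)=\frac13$, the bound follows immediately; and (ii) if $\R(x)>\frac13$ but $\R$ is not $>\frac13$ on all of $[x,x_\ast-\delta]$, one takes $x_1\in(x,x_\ast-\delta]$ to be the first point (moving rightward from $x$) where $\R(x_1)=\frac13$, applies Lemma~\ref{L:AUXPRELIM} to get $f(x_1)>0$, and then runs the integral representation from $x_1$ down to $x$, on which $\R>\frac13$ so $b_1+\k b_2<0$ there. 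Without the ability to restart the base point at a $\R=\frac13$ crossing, your open-closed argument cannot cover a disconnected sublevel set.
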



\begin{proof}
Let $x_\ast\in[\xmin,\xmax]$ and let $\mathring x\in(s(x_\ast),x_\ast)$. By Definition~\ref{D:SONICTIME} there exists a $\kappa>0$ such that 
$B(x)>\kappa$ for all $x\in[\mathring x,x_\ast-\tilde r)$, which according to~\eqref{E:BDEF} is equivalent to the bound
\[
\left(\F -xW\right)\left(\F  + 2\e x(1+\R) + xW\right)>\frac{\kappa}{1-\k}. 
\]
If $W(x)>0$ then from the strict positivity of $\F $ and the above bound we immediately have $W(x)<\frac{\F (x)}{x}$. If on the other hand $W(x)\le0$ then $\F -xW>0$ and therefore from 
the above bound again
$
|xW| = - xW < \F  +  2\e x(1+\R).
$
The two bounds together imply 
\be
|W(x)| \le \frac{\F (x)}{x} + 2\e(1+\R), \ \ x\in(s(x_\ast), x_\ast),
\ee
which shows~\eqref{E:WBOUNDL2}.
The strict positivity of $\R$ on $(s(x_\ast),x_\ast]$ follows by rewriting the equation~\eqref{E:RODE}
in the form
\begin{align}\label{E:LOGRFORMULA}
\frac{d}{dx}\left(\log \R\right) & = -  \frac{ 2x(1-\k) (W + \k) (\R-W) }{B}.
\end{align}
Finally, to prove~\eqref{E:RBOUNDL}, we observe that it suffices to show $f>0$ where we recall the formula $f=\F -x\R$.  
If $D(x)=\frac13$, by Lemma~\ref{L:AUXPRELIM}, we are done. If $D(x)>\frac13$, we consider two cases. First suppose $D>\frac13$ on $[x,x_\ast-\delta]$. Then $b<0$ 
on $[x,x_\ast-\delta]$ by Lemma \ref{L:LITTLEBPOS} and \eqref{E:WBOUNDL}, and $f(x_\ast -\delta) >0$ by Lemma \ref{L:INITIAL}. Hence, by using  Corollary~\ref{C:FFORMULA}, we have 
\[
f(x) > f(x_\ast -\delta) e^{\int_{x}^{x_\ast -\delta} a[z;\R,W] \,dz} >0. 
\]
If $D\ngtr \frac13$ on $[x,x_\ast-\delta]$, 
there should exist $x_1\in (x,x_\ast-\delta]$ such that $D>\frac13$ on $[x,x_1)$ and $D(x_1)=\frac13$. Note that $b<0$ on $[x,x_1)$ and $f(x_1)>0$ by Lemma~\ref{L:AUXPRELIM}. 
By using Corollary~\ref{C:FFORMULA} again, 
we obtain
\begin{align}\label{E:FXONEXTWO}
f(x)> f(x_1) e^{\int_{x}^{x_1} a[z;\R,W]\,dz} >0, 
\end{align}
which proves the claim. 
\end{proof}


\begin{lemma} \label{L:APRIORIMINUS}
Let $\k\in(0,\k_0]$, where $\k_0>0$ is a small constant given by Theorem~\ref{T:SONICLWP}.
For any $x_\ast\in[\xmin,\xmax]$ consider the unique local RLP-type solution associated to $x_\ast$ given by Theorem~\ref{T:SONICLWP}.  If  $\R(x)\geq \frac13$ for some  $x\in(s(x_\ast),x_\ast-\delta)$, then 
\be\label{E:FtoRho}
\R(x) < \frac{1}{x^{1-\k}}.
\ee
\end{lemma}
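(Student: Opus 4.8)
The plan is to combine the upper bound $\R(x)<\F[x;\R]/x$ from Lemma~\ref{L:PRELIM1} (which applies precisely because $\R(x)\ge\frac13$ and $x\in(s(x_\ast),x_\ast-\delta]$) with a pointwise estimate on $\F[x;\R]/x$ itself. Recall from~\eqref{E:FDEF} that
\[
\F[x;\R] = -\frac{2\k}{1-\k}(1+\R)x + \sqrt{\frac{4\k^2}{(1-\k)^2}(1+\R)^2 x^2 + \k x^2 + \frac{\R^{-\e}}{1-\k}},
\]
so that $\F[x;\R]/x$ is, up to the $O(\k)$ correction coming from the first term and the $\k x^2$ term under the root, essentially $\sqrt{\R^{-\e}/(1-\k)}\,/x = \R^{-\e/2}(1-\k)^{-1/2} x^{-1}$. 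Since $\e/2 = \k/(1-\k)$, we have $\R^{-\e/2} = \R^{-\k/(1-\k)}$, and $x^{-1}$ compares to $x^{-(1-\k)}$ up to $x^{\k}$, which is bounded on the compact $x$-range (recall $x<x_\ast\le\xmax<3$). So the target bound $\R < x^{-(1-\k)}$ will follow once we show $\F[x;\R]/x$ is dominated by something like $\R^{-\k/(1-\k)} x^{-(1-\k)}$, and then absorb the resulting inequality $\R < C\R^{-\k/(1-\k)} x^{-(1-\k)}$ — i.e. $\R^{1+\k/(1-\k)} = \R^{1/(1-\k)} < C x^{-(1-\k)}$ — into $\R < x^{-(1-\k)}$ for $\k$ small, using that $\R\ge\frac13$ bounds $\R^{\k/(1-\k)}$ away from $0$ and that the constant $C$ is $1 + O(\k)$.

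More carefully, I would proceed as follows. First, invoke Lemma~\ref{L:PRELIM1}, in particular~\eqref{E:RBOUNDL}, to get $\R(x) < \F[x;\R(x)]/x$ under the stated hypotheses. Second, bound $\F[x;\R]$ from above: dropping the negative term $-\frac{2\k}{1-\k}(1+\R)x$ only increases $\F$, so
\[
\F[x;\R] \le \sqrt{\frac{4\k^2}{(1-\k)^2}(1+\R)^2 x^2 + \k x^2 + \frac{\R^{-\e}}{1-\k}}.
\]
Combining with $\R<\F/x$ gives, after squaring,
\[
\R^2 x^2 < \frac{4\k^2}{(1-\k)^2}(1+\R)^2 x^2 + \k x^2 + \frac{\R^{-\e}}{1-\k}.
\]
Third, observe that on the range in question $\R\ge\frac13$, so $(1+\R)^2 \le (1+\R)^2$ and in fact $(1+\R)^2 \le (4\R)^2 = 16\R^2$ crudely (since $\R\ge\frac13$ implies $1\le 3\R$, hence $1+\R\le 4\R$); thus the term $\frac{4\k^2}{(1-\k)^2}(1+\R)^2 x^2 \le \frac{64\k^2}{(1-\k)^2}\R^2 x^2$, which for $\k$ small is at most, say, $\frac12 \R^2 x^2$. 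Similarly $\k x^2 \le \k\cdot 9 \le \frac14 \R^2 x^2$ would require a lower bound $\R^2 x^2\gtrsim 1$; since $\R\ge\frac13$, this needs $x$ bounded below, which we do not have near $x=0$ — so instead I would keep $\k x^2$ and note it is $\le 9\k$ and hence negligible compared to $\R^{-\e}/(1-\k)$ when $\R$ is not too large, or simply carry it through. After moving the $\frac12\R^2 x^2$-type term to the left, one obtains $c\,\R^2 x^2 < \k x^2 + \frac{\R^{-\e}}{1-\k}$ with $c = 1 - O(\k) > \frac12$.

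Fourth, and this is the step needing the most care, I would argue that the $\k x^2$ term can be absorbed. If $\R$ is bounded (say $\R\le \R^{-\e}$, which holds once $\R\le 1$), then $\k x^2 \le 9\k \le 9\k\R^{-\e}$ and the whole right side is $\le (1+O(\k))\R^{-\e}/(1-\k)$; if instead $\R > 1$ then $\R^{-\e} < 1$ but $\R^{2}x^2$ is correspondingly larger, and one checks the inequality $c\R^2 x^2 < 9\k + \R^{-\e}/(1-\k)$ forces $\R^2 x^2 = O(1)$, i.e. $\R = O(1/x)$, which already gives $\R < x^{-(1-\k)}$ after adjusting $\k_0$ (because $x^{-1} = x^{-(1-\k)}x^{-\k} \le x^{-(1-\k)}\cdot 3^{\k}$ is not quite $\le x^{-(1-\k)}$ — here I would need to be slightly more generous and note the constant). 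So either way we land on $\R^2 x^2 < (1+O(\k))\R^{-\e}$, i.e.
\[
\R^{2+\e} < (1+O(\k))\, x^{-2}, \qquad 2+\e = \frac{2}{1-\k},
\]
hence $\R^{2/(1-\k)} < (1+O(\k)) x^{-2}$, i.e. $\R < (1+O(\k))^{(1-\k)/2} x^{-(1-\k)}$. Finally, since $\R\ge\frac13$ (so $\R$ is bounded below) one can absorb the constant $(1+O(\k))^{(1-\k)/2} = 1 + O(\k)$ into the power — e.g. by shrinking $\k_0$ so that $(1+O(\k))^{(1-\k)/2} \le (1/3)^{-\k'}\cdot$ something, or more honestly by strengthening the intermediate bound to carry a factor strictly less than $1$ on the left, which we have room for since $c > \frac12$.

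\textbf{Main obstacle.} The delicate point is precisely the end-game bookkeeping of multiplicative constants: the raw estimate produces $\R < (1+O(\k))\, x^{-(1-\k)}$, not $\R < x^{-(1-\k)}$ on the nose, and one cannot simply absorb a constant $>1$ into an exponent on $x$ uniformly for $x$ near $0$. The resolution must exploit the extra slack in the coefficient $c = 1 - O(\k)$ (equivalently, that $(1+\R)^2$ is genuinely smaller than a large multiple of $\R^2$, and that the $\frac{4\k^2}{(1-\k)^2}$ prefactor is $O(\k^2)$, not $O(\k)$) to ensure the right-hand side is $(1-O(\k))^{-1}\R^{-\e}$ in a way that, after taking the $(1-\k)/2$-th root, yields a constant $\le 1$ for $\k$ small. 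I expect this to require writing $\frac{(1+\R)^2}{\R^2} \le (1+\tfrac1\R)^2 \le 16$ on $\R\ge\frac13$ but then noting $\frac{4\k^2}{(1-\k)^2}\cdot 16 \le \tfrac1{10}$, say, so $c \ge \tfrac{9}{10} - 9\k \cdot(\text{something})$, and checking $(\tfrac{10}{9}(1-\k)^{-1})^{(1-\k)/2} < 1$ fails — so one genuinely needs the $\k x^2$ term to be strictly subdominant, which it is because $x^2 \le 9$ while $\R^{-\e}/(1-\k) \ge (1/3)^{\e}$ stays bounded below only if $\R$ is bounded above. Hence the clean path is: first prove $\R = O(1/x)$ crudely (ignoring constants), which bounds $\R$ above by a constant times $1/x \le$ const times $1/x_{\min}$-type quantity is false near $0$ — so actually the honest route is to prove $\R x \le C$ for a universal $C$, feed that back to bound $\k x^2 = \k x\cdot x \le \k C$ (bounded!) hence $\k x^2 \le \k C \le \k C \cdot 3^{\e} \R^{-\e}/(\tfrac13)^{-\e}$... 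This circularity is the crux, and the clean fix is to first establish the weaker $\R(x) < C/x$ with universal $C$ (which follows from $\R < \F/x \lesssim \R^{-\k/(1-\k)}/x + O(\k)$ and $\R\ge\frac13$ by a fixed-point/algebraic argument), and only then refine to the stated sharp exponent. I would expect the authors' proof to do exactly this two-step refinement, and I would structure mine the same way.
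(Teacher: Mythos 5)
Your proposal does not close, and the gap is the one you yourself flag at the end: dropping the negative term $-\e(1+\R)x$ in $\F[x;\R]$ \emph{before} squaring is lossy, and what you lose is exactly the cross term needed to make the exponent come out on the nose. If you write $\R x<\F=-\e(1+\R)x+\sqrt{B}$ with $B:=\e^2(1+\R)^2x^2+\k x^2+\frac{\R^{-\e}}{1-\k}$, then the correct move is $\R x+\e(1+\R)x<\sqrt B$, square (both sides positive), and watch the $\e^2(1+\R)^2x^2$ term cancel exactly:
\[
\R^2x^2+2\e\R(1+\R)x^2<\k x^2+\frac{\R^{-\e}}{(1-\k)x^0}\cdot\frac{1}{1} ,
\]
i.e.\ after dividing by $x^2$ and multiplying by $(1-\k)$,
\[
(1+3\k)\R^2+4\k\R-\k(1-\k)<\frac{\R^{-\e}}{x^2}.
\]
No $O(\k^2)(1+\R)^2$ garbage survives. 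The remaining $\k$-terms are then absorbed \emph{exactly}, not up to $O(\k)$, by the observation that for $\R\ge\frac13$ one has $3\R^2+4\R-(1-\k)\ge\frac13+\frac43-1+\k>0$, hence $(1+3\k)\R^2+4\k\R-\k(1-\k)=\R^2+\k\bigl[3\R^2+4\R-(1-\k)\bigr]\ge\R^2$. This gives $\R^{2+\e}<x^{-2}$, i.e.\ $\R^{2/(1-\k)}<x^{-2}$, and hence $\R<x^{-(1-\k)}$ with no multiplicative constant at all.

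Your proposed repair -- first prove $\R<C/x$, then refine -- does not work: the weakened inequality $\R^2x^2<\frac{4\k^2}{(1-\k)^2}(1+\R)^2x^2+\k x^2+\frac{\R^{-\e}}{1-\k}$ really does only give $\R\lesssim(1+O(\k))x^{-(1-\k)}$, and for small $x$ a constant $>1$ cannot be traded against the exponent. There is no a priori bound to break the circularity you identify, and none is needed: the fix is to not throw away the $-\e(1+\R)x$ term in the first place. With that one change your outline collapses to the paper's two-line algebraic argument.
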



\begin{proof}
Since $\R(x)\ge\frac13$, by Lemma~\ref{L:PRELIM1}, we have $\frac{\R}{\F [x;\R]} <\frac{1}{x}$.
Using the definition of $\F [\cdot;\R]$~\eqref{E:FDEF} it is easy to see that the inequality  $\frac{\R}{\F [x;\R]} <\frac{1}{x}$ is equivalent to
\begin{align*}
\sqrt{ \e^2 (1+\R)^2 x^2 + \k x^2 + \tfrac{\R^{-\e}}{1-\k}}  < \frac{ \k x^2 + \frac{\R^{-\e}}{1-\k} }{\R x} - \e (1+\R) x. 
\end{align*}
This in turn implies
\begin{align*}
&  \k x^2 + \tfrac{\R^{-\e}}{1-\k } <  \frac{( \k x^2 + \frac{\R^{-\e}}{1-\k })^2 }{\R^2 x^2} -  2\e \tfrac{ (1+\R)}{\R} ( \k x^2 + \tfrac{\R^{-\e}}{1-\k })\\
\Longleftrightarrow  \ \ & 1< \frac{( \k x^2 + \frac{\R^{-\e}}{1-\k }) }{\R^2 x^2} - 2\e \tfrac{ (1+\R)}{\R} \\
\Longleftrightarrow  \ \ & \left[ (1+3\k )\R^2 + 4\k  \R -\k (1-\k ) \right] \R^{\e} < \frac{1}{x^2}. 
\end{align*} 
Now we note that 
\[
(1+3\k )\R^2 + 4\k  \R -\k (1-\k ) = \R^2 + \k  \left[ 3\R^2 + 4\R - (1-\k )\right] \geq  \R^2 
\]
where we have used $\R\geq \frac13$. This implies~\eqref{E:FtoRho}.
\end{proof}



\begin{remark}
It is a priori possible that the solution blows-up at a point at which $B$ remains strictly positive, for example through blow-up of $W$. 
It is trivial to see that this cannot happen in the Newtonian setting, but in the relativistic case it requires a careful argument, which is given in the next lemma.
\end{remark}


\begin{lemma}[No blow up before the sonic point]\label{L:OBSTRUCTION}
For any $x_\ast\in[\xmin,\xmax]$ consider the unique RLP-type solution on the interval $(s(x_\ast),x_\ast]$. 
If $s(x_\ast)>0$, then
\begin{align}
\liminf_{x\to s(x_\ast)^+}B(x)=0.
\end{align}
\end{lemma}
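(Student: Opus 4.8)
The plan is to argue by contradiction. Suppose $s(x_\ast)>0$ but $\liminf_{x\to s(x_\ast)^+}B(x)>0$. I want to show that this forces the solution $(\R,W)$ to remain bounded and bounded away from degeneracy on $(s(x_\ast),x_\ast]$, hence to extend past $s(x_\ast)$ with $B$ still positive, contradicting the definition of $s(x_\ast)$ as an infimum. The strategy splits into two parts: (i) upgrade the $\liminf$ hypothesis to a genuine uniform lower bound $B\ge\kappa_0>0$ on $(s(x_\ast),x_\ast-\tilde r]$; (ii) use this lower bound together with the a priori estimates of Lemma~\ref{L:PRELIM1} and the monotonicity machinery to control $\R$ and $W$ uniformly on the whole interval, and then invoke the standard ODE continuation principle.

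First I would address the upgrade in (i). The function $B=B[x;\R,W]$ is continuous wherever $(\R,W)$ is defined and, by Definition~\ref{D:SONICTIME}, $B>0$ on $(s(x_\ast),x_\ast-\tilde r)$; the assumption $\liminf_{x\to s(x_\ast)^+}B>0$ rules out $B$ decaying to $0$ near the left endpoint. A compactness argument on any compact subinterval $[x,x_\ast-\tilde r]$ gives positivity there, but to get a \emph{uniform} constant down to $s(x_\ast)^+$ I combine the interior positivity with the $\liminf$ bound: pick $\epsilon>0$ so that $B(x)\ge \tfrac12\liminf_{x\to s(x_\ast)^+}B$ for $x\in(s(x_\ast),s(x_\ast)+\epsilon)$, and use continuity/compactness on $[s(x_\ast)+\epsilon, x_\ast-\tilde r]$ to conclude $\inf_{(s(x_\ast),x_\ast-\tilde r]}B=:\kappa_0>0$.

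Next, for (ii), with $B\ge\kappa_0$ in hand I control the unknowns. By~\eqref{E:LOGRFORMULA}, $\frac{d}{dx}\log\R = -\tfrac{2x(1-\k)(W+\k)(\R-W)}{B}$, so to bound $\log\R$ it suffices to bound $W$ and $\R$ on the interval; and for $W$ we use~\eqref{E:WSSEQNBOLD}-type structure, namely~\eqref{E:WODE}, whose right-hand side is $\tfrac{1-3W}{x}+\tfrac{2x(1+\k)W(W+\k)(\R-W)}{B}$. The a priori bounds~\eqref{E:WBOUNDL}--\eqref{E:WBOUNDL2} of Lemma~\ref{L:PRELIM1} already give $|W(x)|\le \tfrac{\F[x;\R]}{x}+2\e(1+\R)$; since $\F[x;\R]$ is, by~\eqref{E:FDEF}, bounded in terms of $\R$ and $x$ with $x$ ranging over the compact interval $[s(x_\ast),x_\ast]$, the problem reduces to an a priori bound on $\R$ itself. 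Here the key is a Grönwall-type closing argument: feeding the bound $|W|\lesssim \R^{-\e/2}+\R$ (from the explicit form of $\F$) back into~\eqref{E:LOGRFORMULA} with $B\ge\kappa_0$ yields a differential inequality of the form $|\tfrac{d}{dx}\log\R|\le C(1+\R)$ on a compact $x$-interval, which a priori only gives finite-time blow-up; to preclude blow-up I would instead track the combination that appears in the monotonicity lemma. Specifically, Lemma~\ref{L:JUHILEMMA} and Corollary~\ref{C:FFORMULA} express $f=\F-x\R$ via an integrating-factor formula whose kernel $a[z;\R,W]$ and source $b[z;\R,W]$ are, when $B\ge\kappa_0$ and $\F-xW>0$ (which holds by~\eqref{E:WBOUNDL}), controlled rational expressions in $(\R,W)$; combined with the sign property $b<0$ from Lemma~\ref{L:LITTLEBPOS} in the regime $\R>\tfrac13$, and Lemma~\ref{L:APRIORIMINUS} which caps $\R<x^{-(1-\k)}$ precisely when $\R\ge\tfrac13$, one gets a uniform two-sided bound $0<c_1\le\R(x)\le c_2$ and hence $|W(x)|\le c_3$ on all of $(s(x_\ast),x_\ast]$. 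The main obstacle is exactly this: showing $\R$ cannot blow up (or vanish) while $B$ stays positive, since the relativistic equations — unlike the Newtonian ones, as the preceding Remark flags — do not obviously prevent $W\to\pm\infty$ feeding back into $\R$; the resolution is to exploit Lemma~\ref{L:APRIORIMINUS} (when $\R\ge\tfrac13$) and, in the region $\R<\tfrac13$, the boundedness of $\F[x;\R]$ and of the right-hand sides once $B$ is bounded below, together with the monotonicity lemma to prevent oscillatory blow-up.

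Finally, with uniform bounds $c_1\le\R\le c_2$, $|W|\le c_3$ and $B\ge\kappa_0$ on $(s(x_\ast),x_\ast]$, the right-hand sides of~\eqref{E:RODE}--\eqref{E:WODE} are uniformly bounded and Lipschitz in a neighbourhood of the closure of the trajectory, so by the standard ODE extension theorem the solution extends to a slightly larger interval $[s(x_\ast)-\epsilon',x_\ast]$ with $B$ remaining positive there by continuity. This contradicts the minimality in Definition~\ref{D:SONICTIME}, completing the proof. $\Box$

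\smallskip \noindent
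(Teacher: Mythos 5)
Your overall architecture — upgrade the $\liminf$ hypothesis to a uniform lower bound $B\ge\kappa_0>0$, deduce uniform bounds on $\R$ and $W$, and then contradict the definition of $s(x_\ast)$ via the continuation principle (Lemma~\ref{L:TQUANTITATIVE}) — is the right one, and your reduction of the $W$-bound to a two-sided bound on $\R$ via $|W|\le\F[x;\R]/x+2\e(1+\R)$ and~\eqref{E:FDEF} is sound. The upper bound $\R\le M$ via Lemma~\ref{L:APRIORIMINUS} (or equivalently~\eqref{E:RBOUNDL}) is also essentially the paper's Step~1.

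The gap is exactly where you flag the ``main obstacle,'' and your proposed resolution does not close it. First, the claimed Gr\"onwall inequality is wrong: with $B\ge\kappa_0$ and $|W|\lesssim\R^{-\e/2}+\R$, the right-hand side of~\eqref{E:LOGRFORMULA} is dominated by $W^2/B\sim\R^{-\e}$ when $\R$ is small, so what you obtain is $|\tfrac{d}{dx}\log\R|\lesssim\R^{-\e}$, not $\lesssim 1+\R$; this does not by itself preclude $\R\to0$ (and hence $W\to\infty$) as $x\to s(x_\ast)^+$. Second, the route through Lemma~\ref{L:JUHILEMMA}, Corollary~\ref{C:FFORMULA} and Lemma~\ref{L:LITTLEBPOS} is circular here: the integrating-factor coefficients $a[z;\R,W]$ and $b[z;\R,W]$ themselves contain $W$ and negative powers of $\R$ through $Z^{-1}$ and $\R^{-\e}$, so using Corollary~\ref{C:FFORMULA} to bound $f$ (and hence $\F$, hence $W$) requires a priori control on $W$ and a lower bound on $\R$ — the very things you are trying to prove. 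Moreover $b_2<0$ in Lemma~\ref{L:LITTLEBPOS} needs $\R>\tfrac13$, which you have no reason to assume near $s(x_\ast)$.

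What the paper does instead is identify the key cancellation: multiplying~\eqref{E:RODE} by $\tfrac{1+\k}{(1-\k)\R}$ and~\eqref{E:WODE} by $\tfrac1W$ and adding, the $B$-dependent terms drop out and one gets the exact identity
\begin{align*}
\bigl(\log(\R^{2(1+\e)}W^2)\bigr)' = \frac{2}{x}\Bigl(\frac1W-3\Bigr),
\end{align*}
whose sign is controlled once $|W|>N$ for $N$ large. Since the set $\{|W|>N\}$ is dynamically trapped going to the left (from the signs of $\R'$, $W'$ there), this yields monotonicity of $\R^{2(1+\e)}W^2$. Combined with $\R^{-\e}\gtrsim W^2$, which follows from $B\ge\kappa_0$ and the explicit form~\eqref{E:BDEF0} of $B$ when $|W|$ is large and $\R\le M$, this produces a lower bound on $\R$ and thereby an upper bound on $|W|$, closing the argument. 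This conservation-type identity, not the monotonicity lemma for $f$, is the missing ingredient; without it (or some comparable structural input) your step~(ii) does not go through, and indeed the Remark preceding the lemma in the paper warns that in the relativistic case this boundedness ``requires a careful argument.''
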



\begin{proof} 
Assume the opposite. In that case there exists a constant  $\kappa>0$ such that $B(x)\ge\kappa$ for all $x\in(s(x_\ast),x_\ast-\tilde r]$.  Our goal is to show that $|\R(x)|+|W(x)|<\infty$ on $(s(x_\ast), x_\ast]$, which would lead to the contradiction. 

\noindent{\it Step 1. Boundedness of $\R$.} If $\R(x)\ge \frac13$ the bound~\eqref{E:RBOUNDL} gives $\R+\e(1+\R)<\sqrt{\e^2 (1+\R)^2+\k +\frac{\R^{-\e}}{(1-\k)x^2}}$,
which upon taking a square and using $2\e \R(1+\R)>0$ leads to
\[
\R^2 < \k + \frac{\R^{-\e}}{(1-\k)x^2}< \k + \frac{\R^{-\e}}{(1-\k)s(x_\ast)^2}.
\] 
Since $0<\e\ll1$, this gives a uniform upper bound on $\R$ on $(s(x_\ast),x_\ast-\tilde r]$
\be\label{RboundM}
\R(x)\le M, \ \ x\in(s(x_\ast), x_\ast].
\ee

\noindent{\it Step 2. Boundedness of $|W|$.} 
It follows from~\eqref{E:RODE}--\eqref{E:WODE} that there exists a sufficiently large value $N=N(s(x_\ast),x_\ast)>0$ such that
$W'<0$, $\R'>0$ if $W>N$ and $W'>0$, $\R'>0$ if $W<-N$, where we use the already shown upper bound on $\R$. In both cases, the two regions are dynamically trapped and we denote the union of the two regions by $I_N$.
For any $x\in I_N$ we multiply~\eqref{E:RODE} by $\frac{1+\k}{(1-\k)\R}$,~\eqref{E:WODE} by $\frac1W$, and sum them to obtain
\begin{align}\label{E:LOGID0}
\left(\log\left(\R^{2(1+\e)}W^2\right)\right)' = \frac{2}x\left(\frac1W-3\right) <0 \ \ \text{ for } \ |W|>N,
\end{align}
with $N$ sufficiently large.
In particular, for any $s(x_\ast)<x_1<x_2<x_\ast$, where $x_1,x_2$ both belong to the invariant region $I_N$ above, we obtain
\begin{align}\label{E:RLOWER}
\R(x_1)^{2(1+\e)}W(x_1)^2> \R(x_2)^{2(1+\e)}W(x_2)^2
\end{align}
On the other hand, since $B\ge\kappa $, $|W|>N$, and $\R<M$  there exists a universal constant constant $C=C(s(x_\ast),x_\ast)$ such that for a sufficiently large
choice of  $N$, rom \eqref{E:BDEF0}  we have
\be\label{E:RWINV}
\R^{-\e}>C W^2, \ \ x\in I_N.
\ee
We apply this to~\eqref{E:RLOWER} to conclude that 
\[
\frac1C \R(x_1)^{2+\e} > \R(x_2)^{2(1+\e)}W(x_2)^2, \ \ \text{ for any } \ x_1\in(s(x_\ast),x_2).
\]
This gives a  lower bound for $\R$ and therefore an upper bound for $|W|$ via~\eqref{E:RWINV} in the region $I_N$. In particular $\limsup_{x\to s(x_\ast)^+}|W(x)|<\infty$ and the claim follows.
\end{proof}


Essentially a consequence of the previous lemma and a standard ODE argument is the statement that as long as the sonic denominator $B$ is bounded below by some constant $\delta>0$ for all $x\ge \bar x>s(x_\ast)$, we can extend
the solution to the left to some interval $[\bar x-t,x_\ast]$, where $t>0$ depends only on $\delta$ and $\bar x$. The statement and the proof are analogous to Lemma 4.3 in~\cite{GHJ2021} and we state it without proof.


\begin{lemma}\label{L:TQUANTITATIVE}
Let $x_\ast\in[\xmin,\xmax]$ be given and consider the unique RLP-type solution $(\R(\cdot;x_\ast),W(\cdot;x_\ast))$ to the left of $x=x_\ast$, given by Theorem~\ref{T:SONICLWP}.
Assume that for some $\bar x\in(0,x_\ast-\tilde r)$ and $\delta>0$ we have $\bar x>s(x_\ast)$ and the conditions
\begin{align}\label{E:ETAZERO}
B(x)>\delta, \ \ \R(x)>0, \ \ x\in[\bar x,x_\ast-\tilde r],
\end{align}
hold. Then there exists a $ t=t(\delta,\bar x)>0$ such that the solution can be continued to the interval $[\bar x- t,x_\ast]$ so that 
\begin{align}
B(x)>0, \ \ \R(x)>0, \ \ x\in[\bar x- t,x_\ast-\tilde r]. \notag 
\end{align}
\end{lemma}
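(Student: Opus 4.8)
The plan is to follow the standard ODE-continuation argument for non-autonomous systems with the extra care required by the relativistic source terms, essentially mirroring the structure of Lemma 4.3 in \cite{GHJ2021} but with the bookkeeping adapted to the system \eqref{E:RODE}--\eqref{E:WODE} and the factorisation of $B$ from Lemma~\ref{L:BALGEBRA}. Under the standing hypothesis \eqref{E:ETAZERO} we have $B(x)>\delta$ and $\R(x)>0$ on the compact interval $[\bar x,x_\ast-\tilde r]$, so on this interval the right-hand sides of \eqref{E:RODE}--\eqref{E:WODE} are smooth functions of $(\R,W)$ and the solution is already well-defined there by Theorem~\ref{T:SONICLWP}. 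The task is genuinely only to push the solution slightly to the \emph{left} of $\bar x$, i.e. to obtain a uniform lower bound $t(\delta,\bar x)>0$ on how far one can continue while preserving $B>0$ and $\R>0$.

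The key steps I would carry out are as follows. First, extract a priori bounds on $(\R,W)$ on $[\bar x - t, x_\ast - \tilde r]$ valid as long as the solution exists and $B$ stays positive: the positivity and upper bound on $\R$ follow from \eqref{E:LOGRFORMULA} together with the argument of Step~1 of Lemma~\ref{L:OBSTRUCTION} (using $B\ge\delta$ and $x\ge\bar x - t \ge \bar x/2$ say), giving $0<\R(x)\le M(\delta,\bar x)$; the bound on $|W|$ then follows from Step~2 of Lemma~\ref{L:OBSTRUCTION}, exploiting the invariant regions $|W|>N$ and the monotonicity identity \eqref{E:LOGID0}, so that $|W(x)|\le M(\delta,\bar x)$ as well. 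Second, with $(\R,W)$ confined to a fixed compact set $\mathcal K$ on which $B\ge\delta$, the vector field $(\R',W')$ is uniformly bounded by some $L=L(\delta,\bar x,\mathcal K)$; hence by Picard--Lindel\"of / a continuity argument the solution extends to $[\bar x - t_1, x_\ast]$ for some $t_1$ depending only on $L$ and $\mathcal K$, and on this interval $(\R(x),W(x))$ cannot leave a slightly enlarged compact set before time $t_1$. Third, since $B$ is a continuous function of $(x,\R,W)$ and $B\ge\delta$ on $\mathcal K$, shrinking $t_1$ to a possibly smaller $t=t(\delta,\bar x)>0$ guarantees $B(x;\R(x),W(x))>0$ and $\R(x)>0$ throughout $[\bar x - t, x_\ast - \tilde r]$, which is exactly the claim.

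The main obstacle is the uniform control of $|W|$ near the putative left endpoint: unlike in the Newtonian case, $W$ could in principle run off to $\pm\infty$ while $B$ stays bounded away from zero, so one cannot simply quote Picard--Lindel\"of on a naive a priori box. This is precisely why one needs the trapping-region analysis of Lemma~\ref{L:OBSTRUCTION}: the regions $\{W>N\}$ and $\{W<-N\}$ are forward-in-$x$ invariant for $N$ large (depending on the already-established bound for $\R$ and on the lower bound for $x$), and \eqref{E:LOGID0} shows $\R^{2(1+\e)}W^2$ is monotone there, which combined with \eqref{E:RWINV} converts a hypothetical blow-up of $|W|$ into a forbidden vanishing of $\R$. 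Once that structural point is in place the rest is the routine compactness-plus-continuity argument, and I would simply reference the proof of \cite[Lemma 4.3]{GHJ2021} for the remaining details, as the authors do.
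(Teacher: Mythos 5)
Your proposal follows essentially the same approach as the paper, which in fact states this lemma without proof and defers to Lemma~4.3 of \cite{GHJ2021} as analogous. Your reconstruction correctly identifies the needed ingredients: a~priori bounds on $\R$ and $|W|$ via the trapping-region and monotonicity arguments of Lemma~\ref{L:OBSTRUCTION} (with the caveat that the blow-up of $|W|$ while $B$ remains bounded away from zero is the genuinely relativistic obstruction), followed by a compactness plus continuity-of-$B$ argument to obtain a uniform continuation time $t(\delta,\bar x)$.
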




\begin{proposition}\label{P:LOWERSEMICONTRLP}
Let $x_\ast\in[\xmin,\xmax]$ be given and consider the unique RLP-type solution $(\R(\cdot;x_\ast),W(\cdot;x_\ast))$ to the left of $x=x_\ast$.
\begin{enumerate}
\item[{\em (a)}] (Upper semi-continuity of the sonic time). Then 
\[
\limsup_{\tilde x_\ast \to x_\ast} s(\tilde x_\ast) \le s(x_\ast),
\]
i.e. the map $x_\ast\to s(x_\ast)$ is upper semi-continuous. In particular, if $s(x_\ast)=0$ then the map $s(\cdot)$ is continuous at $x_\ast$. 

\item[{\em (b)}] ([Continuity of the flow away from the sonic time])
Let $\{x^n_\ast\}_{n\in\mathbb N}\subset[\xmin,\xmax]$ and $x_\ast\subset[
\xmin,\xmax]$ satisfy $\lim_{n\to\infty}x^n_\ast = x_\ast$. Let 
$x_\ast-\tilde r>z>\max\{s(x_\ast),\sup_{n\in\mathbb N}s(x_\ast^n)\}$. Then 
\begin{align}
\lim_{n\to\infty}W(x;x_\ast^n) = W(x;x_\ast), \ \ \lim_{n\to\infty}\R(x;x_\ast^n) = \R(x;x_\ast).\notag 
\end{align}

\item[{\em (c)}] 
Let $\{x^n_\ast\}_{n\in\mathbb N}\subset[\xmin,\xmax]$ and $x_\ast\subset[\xmin,\xmax]$ satisfy $\lim_{n\to\infty}x^n_\ast = x_\ast$. Assume that there exist $0<\hat x<x_\ast-\tilde r$ and $\kappa>0$ such that $s(x_\ast^n)<\hat x$ for all $n\in\mathbb N$ and the following uniform bound holds:
\begin{align}\label{E:LOWERBOUNDN}
B[x;x_\ast^n,W,\R]>\kappa,  \ \ n\in\mathbb N, \ \ x\in[\hat x ,x_\ast-\tilde r].
\end{align}
Then there exists a $T=T(\kappa,\hat x)>0$ such that 
\begin{align}\label{E:CLAIMUNIFCONT}
s(x_\ast)<\hat x-T, \ \ s(x_\ast^n)<\hat x-T, \ \ n\in\mathbb N.
\end{align}
\end{enumerate}
\end{proposition}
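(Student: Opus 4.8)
The three parts are proved in sequence, each building on the previous one. For part (a), fix $x_\ast\in[\xmin,\xmax]$ and let $\varepsilon>0$. By Definition~\ref{D:SONICTIME}, for any $\bar x\in(s(x_\ast),x_\ast-\tilde r)$ with $\bar x<s(x_\ast)+\varepsilon$, the solution $(\R(\cdot;x_\ast),W(\cdot;x_\ast))$ exists on $[\bar x,x_\ast)$ with $B>0$ there, hence there is a $\delta>0$ with $B(x;x_\ast)\ge\delta$ and $\R(x;x_\ast)>0$ on the compact interval $[\bar x,x_\ast-\tilde r]$. Since the Taylor coefficients $(\RH_N,\WH_N)$ depend continuously (in fact $C^1$, by Lemma~\ref{Lem:analyticXast}) on $\xs$ and the series converges uniformly on $|z-1|<r$, the data at $x_\ast-\tilde r$ depend continuously on $\xs$; by continuous dependence of ODE solutions on initial data and parameters, for $\tilde x_\ast$ close enough to $x_\ast$ the solution $(\R(\cdot;\tilde x_\ast),W(\cdot;\tilde x_\ast))$ exists on $[\bar x,x_\ast-\tilde r]$ and stays within the region where $B\ge\delta/2>0$ and $\R>0$. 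By the definition of sonic time this forces $s(\tilde x_\ast)\le\bar x<s(x_\ast)+\varepsilon$, giving $\limsup_{\tilde x_\ast\to x_\ast}s(\tilde x_\ast)\le s(x_\ast)$. The case $s(x_\ast)=0$ combines this with the trivial lower bound $s\ge 0$ to give continuity at $x_\ast$.

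For part (b), set $z>\max\{s(x_\ast),\sup_n s(x_\ast^n)\}$ with $z<x_\ast-\tilde r$. On $[z,x_\ast-\tilde r]$ the solutions $(\R(\cdot;x_\ast^n),W(\cdot;x_\ast^n))$ all exist with $B>0$ and $\R>0$; moreover Lemma~\ref{L:PRELIM1} and Lemma~\ref{L:APRIORIMINUS} (together with Lemma~\ref{L:OBSTRUCTION}'s proof, which bounds $|W|$ wherever $B$ stays bounded below) give uniform-in-$n$ a priori bounds on $|\R(x;x_\ast^n)|+|W(x;x_\ast^n)|$ on this interval, and a uniform lower bound on $B$ away from its (possible) zeros. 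Hence we are in a compact region of phase space where the right-hand sides of~\eqref{E:RODE}--\eqref{E:WODE} are Lipschitz, and the data at $x=x_\ast^n-\tilde r$ converge to the data at $x=x_\ast-\tilde r$ by the analytic dependence of the Taylor series on $\xs$ (Theorem~\ref{T:SONICLWP}, Lemma~\ref{Lem:analyticXast}). Standard Grönwall-type continuous dependence then yields $W(x;x_\ast^n)\to W(x;x_\ast)$ and $\R(x;x_\ast^n)\to\R(x;x_\ast)$ uniformly on $[z,x_\ast-\tilde r]$, and for $x\in[x_\ast-\tilde r,x_\ast]$ the convergence follows from uniform convergence of the Taylor series.

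For part (c), the uniform bound~\eqref{E:LOWERBOUNDN} on $[\hat x,x_\ast-\tilde r]$ together with the uniform a priori bounds on $\R$ and (via the argument in Lemma~\ref{L:OBSTRUCTION}) on $|W|$ over this interval places all solutions in a fixed compact set. Applying the quantitative continuation estimate of Lemma~\ref{L:TQUANTITATIVE} with $\delta=\kappa$ and $\bar x=\hat x$ — whose conclusion $t=t(\delta,\bar x)$ depends only on $\delta$ and $\bar x$, not on $\xs$, because the relevant ODE bounds are uniform — we get a single $T=t(\kappa,\hat x)>0$ such that each solution (for $\xs=x_\ast^n$ and for $\xs=x_\ast$, the latter being a limit point covered by part (b) so that the same lower bound $B\ge\kappa$ persists on $[\hat x,x_\ast-\tilde r]$ by passing to the limit) extends to $[\hat x-T,x_\ast]$ with $B>0$ and $\R>0$ there. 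This is precisely~\eqref{E:CLAIMUNIFCONT}. The main obstacle throughout is part (c): one must be careful that $x_\ast$ itself inherits the lower bound $B\ge\kappa$ on $[\hat x,x_\ast-\tilde r]$ — this is obtained by taking $n\to\infty$ in~\eqref{E:LOWERBOUNDN} using part (b), which is why the parts must be proved in this order — and that the continuation time in Lemma~\ref{L:TQUANTITATIVE} genuinely depends only on $(\kappa,\hat x)$ and not on the particular solution, which requires the uniform-in-$n$ phase-space bounds assembled from Lemmas~\ref{L:PRELIM1}, \ref{L:APRIORIMINUS}, and \ref{L:OBSTRUCTION}.
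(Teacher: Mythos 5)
The paper states Proposition~\ref{P:LOWERSEMICONTRLP} without proof, so there is nothing in the text to compare against directly; I assess your argument on its own. Parts (a) and (b) are conceptually on target: $C^1$-dependence of the sonic Taylor data on $\xs$ (Lemma~\ref{Lem:analyticXast}) plus continuous dependence of ODE solutions in a region where $B$ stays bounded below is the right mechanism. In part (b), however, you assert ``a uniform lower bound on $B$'' as if it were supplied by Lemmas~\ref{L:PRELIM1} and~\ref{L:APRIORIMINUS}. Those lemmas control $\R$ and $W$ but do not directly bound $B$ from below uniformly in $n$; that bound must be obtained \emph{simultaneously} with the convergence, by a bootstrap pushed leftward from $x_\ast-\tilde r$: at each stage the limit solution has $B\ge\delta>0$ on a compact interval, so the approximants have $B\ge\delta/2$ on a slightly larger interval for $n$ large, allowing the Gr\"onwall step to continue. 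The order of quantifiers matters and should be made explicit.

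Part (c) has a genuine circularity. You invoke part (b) on the whole interval $[\hat x, x_\ast-\tilde r]$ to transport $B\ge\kappa$ to the limit solution, but part (b) only applies at $z>\max\{s(x_\ast),\sup_n s(x_\ast^n)\}$, so applying it on $[\hat x,x_\ast-\tilde r]$ tacitly assumes $s(x_\ast)<\hat x$ --- which is (up to the extra margin $T$) precisely what you are trying to prove. The missing ingredient is Lemma~\ref{L:OBSTRUCTION}: if $s(x_\ast)\ge\hat x>0$, then $\liminf_{x\to s(x_\ast)^+}B(x;x_\ast)=0$. But for every $z\in(\max\{s(x_\ast),\hat x\},x_\ast-\tilde r)$ part (b) \emph{does} apply, and the hypothesis~\eqref{E:LOWERBOUNDN} passes to the limit, giving $B(z;x_\ast)\ge\kappa>0$; letting $z\to s(x_\ast)^+$ contradicts Lemma~\ref{L:OBSTRUCTION}. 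Only after this rules out $s(x_\ast)\ge\hat x$ can you apply part (b) on all of $[\hat x,x_\ast-\tilde r]$ and then Lemma~\ref{L:TQUANTITATIVE} with $\bar x=\hat x$, $\delta=\kappa$ to get $s(x_\ast)<\hat x-T$. Note also that part (a) gives $\limsup_n s(x_\ast^n)\le s(x_\ast)$, an inequality in the wrong direction, so it cannot serve as a shortcut here.
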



\subsection{The Friedmann shooting argument}


The basic idea of this section is inspired by a related proof in our earlier work on
the existence of Newtonian Larson-Penston solutions~\cite{GHJ2021}. We start by
recalling Definition~\ref{D:XYZ}.




\begin{lemma}[$\X $ and $\Y$ are nonempty] \label{L:XYNONEMPTY}
There exists an $0<\k_0\ll1$ sufficiently small so that the following statements hold for any $0<\k\le\k_0$: 
\begin{enumerate}
\item[{\em (a)}] There exists a $\kappa=\kappa(\k)>0$ such that 
$(\xmax(\k)-\kappa,\xmax(\k)]\subset X \subset\Y$. 
\item[{\em (b)}] Moreover,
$x_\text{min} \in \X $. 
\end{enumerate}
\end{lemma}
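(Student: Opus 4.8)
\textbf{Proof strategy for Lemma~\ref{L:XYNONEMPTY}.}

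The plan is to prove the two claims by analysing the behaviour of the RLP-type solution near the sonic point, using the precise values of the Taylor coefficients $(\RH_0,\WH_0,\RH_1,\WH_1)$ collected in Lemmas~\ref{R0W0}, \ref{R1W1}, \ref{L:CUBIC}, and Lemma~\ref{L:XMINN}, together with the continuity properties of the flow in Proposition~\ref{P:LOWERSEMICONTRLP}.

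For part (b), I would show directly that $\xmin = 2+\delta_0 \in \X$. By Definition~\ref{D:XYZ} this requires $\inf_{x\in(s(\xmin),\xmin)} W(x;\xmin) > \frac13$. The key input is Lemma~\ref{L:XMINN}: at $\xs = \xmin$ there is a $z_0<1$, equivalently a point $x_0 = z_0\xmin < \xmin$ in the analyticity ball, where $W(x_0;\xmin) > \frac1{2-2\e} = \frac12 + O(\k)$. I would then invoke the invariance statements quoted in the introduction (Lemma~\ref{L:PREPX} and the remark that the set $\{W > \frac1{2-2\e}\}$ is invariant under the flow to the left of the sonic point) to conclude that once $W$ exceeds $\frac1{2-2\e}$ it stays above $\frac1{2-2\e} > \frac13$ all the way down to $s(\xmin)$. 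On the short interval $[x_0,\xmin]$ inside the analyticity ball, one checks $W > \frac13$ from the Taylor expansion: $W(\xmin) = \WH_0 = \frac1{2+\delta_0} + O(\k) > \frac13$ for $\delta_0$ small and $\WH_1 = \frac{\delta_0}{2+\delta_0} + O(\k)$, so $W$ is (up to $O(\k)$) increasing as $x$ decreases from $\xmin$ toward $x_0$, hence stays above $\frac13$ there. Combining the two ranges gives $W > \frac13$ on all of $(s(\xmin),\xmin)$, i.e. $\xmin\in\X$.

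For part (a), I would first show $(\xmax-\kappa,\xmax]\subset\Y$ and then upgrade to $\subset X$. At $\xs=\xmax$ we have $\WH_0 = \frac13$ exactly, by the defining property~\eqref{E:XMAXDEF} of $\xmax$, while $\WH_1^{\mathrm{RLP}}(\k;\xmax) = \frac{\xmax-2}{\xmax} + O(\k) = \frac13 + O(\k) > 0$. Hence, expanding $W(x;\xmax)$ around $x=\xmax$, as $x$ decreases through $\xmax$ the solution $W$ drops \emph{below} $\frac13$ immediately (since $W(\xmax)=\frac13$ and $W'(\xmax)=\WH_1/\xmax>0$), so there exists $x<\xmax$ with $W(x;\xmax)<\frac13$, indeed $W(x;\xmax)=\frac13$ at some such point by continuity — this puts $\xmax\in\Y$. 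For $\xs$ slightly below $\xmax$, $\WH_0(\k;\xs) = g(\k;\xs)$ is slightly \emph{above} $\frac13$ (by Remark~\ref{R:XSDECREASING}, $\xs\mapsto\WH_0$ is decreasing, so smaller $\xs$ gives larger $\WH_0$), but only by $O(\xmax-\xs)$, while $\WH_1$ stays bounded away from $0$; a short computation with the second-order Taylor expansion (using the sign of $\WH_2$ from~\eqref{coeff_W2}, which at $\xs\approx 3$ is negative) shows $W(\cdot;\xs)$ still dips below $\frac13$ within the analyticity ball for $\xmax - \xs < \kappa$ with $\kappa = \kappa(\k)$ small. This gives $(\xmax-\kappa,\xmax]\subset\Y$. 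Finally, to get $\subset X$: by Definition~\ref{D:XYZ}, $X$ consists of those $\xs\in[\xmin,\xmax]$ with $\tilde x_\ast\in\Y$ for all $\tilde x_\ast\in[\xs,\xmax]$; since $(\xmax-\kappa,\xmax]\subset\Y$ and this interval is its own "upper tail", it is contained in $X$.

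\textbf{Main obstacle.} The delicate point is part (a): showing that for $\xs$ in a genuine (if $\k$-dependent) neighbourhood of $\xmax$ the solution $W(\cdot;\xs)$ actually crosses $\frac13$ \emph{within the analyticity ball} $[\xs-\tilde r,\xs]$, rather than merely having negative slope at $\xs$. Since $\WH_0(\k;\xs) - \frac13 = O(\xmax-\xs)$ can be made comparable to $\WH_1\cdot\tilde r$, one must carefully balance the linear term $\WH_1(x-\xs)$ against the zeroth-order excess $\WH_0-\frac13$ and control the quadratic remainder via the uniform coefficient bounds of Theorem~\ref{T:SONICLWP}; this is where the explicit asymptotics~\eqref{coeff_W2} for $\WH_2$ and the continuity of $x_\ast\mapsto(\R,W)$ from Lemma~\ref{Lem:analyticXast} are essential. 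The invariance lemmas then ensure that once $W$ has touched $\frac13$ it cannot return above it, so the crossing genuinely places $\xs$ in $\Y$.
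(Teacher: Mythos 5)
There is a genuine gap in part (b), and a notable (though workable) detour in part (a).

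\textbf{Part (b).} You write that you would ``invoke the invariance statements quoted in the introduction (Lemma~\ref{L:PREPX} and the remark that the set $\{W>\tfrac1{2-2\e}\}$ is invariant under the flow to the left of the sonic point)'' and conclude from Lemma~\ref{L:XMINN}. But that invariance is \emph{not} available from Lemma~\ref{L:PREPX} (which concerns $\Y\cup\Z$ and the set $\{W<\tfrac13\}$), and the introduction explicitly attributes the invariance of $\{W>\tfrac1{2-2\e}\}$ to Lemma~\ref{L:XYNONEMPTY} itself --- i.e.\ to the very statement you are proving. The content of part (b) in the paper is precisely a proof of this invariance: using Lemma~\ref{L:BALGEBRA}, the numerator $B-2x^2(1+\k)(W+\k)(D-W)$ is re-expressed in terms of $f=\F-xD$ and $\F-xW$, and after several algebraic steps (and the positivity of $f$, $B$, $H+xW$ on $(s(x_\ast),x_\ast)$) one arrives at the differential inequality
\[
xW' \le 1-(2-2\e)W,
\]
which is what makes $\{W>\tfrac1{2-2\e}\}$ invariant going left. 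Without reproving this, your argument for $\xmin\in\X$ is circular. You also cannot skip the interval $(x_0,\xmin)$ by sign considerations alone; the paper does not need to address it separately precisely because the invariance, once proved, propagates the lower bound from $x_0$ all the way down to $s(\xmin)$, and the Taylor analysis in Lemma~\ref{L:XMINN} already supplies $W>\tfrac13$ on $(x_0,\xmin)$ via the convexity bound $\WH''\ge\tfrac14$.

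\textbf{Part (a).} Your approach --- linearising $W$ at $\xs=\xmax$, noting $\WH_0=\tfrac13$, $\WH_1>0$, and then for nearby $\xs$ balancing $\WH_0-\tfrac13$ against $\WH_1$ via a second-order Taylor expansion with the sign of $\WH_2$ --- is workable but considerably more involved than necessary. The paper's argument does not invoke $\WH_2$ at all. Instead it uses only the mean value theorem together with the strict $\k$-uniform bound $\WH_0<\tfrac1{\xs}$ from Lemma~\ref{R0W0} and the slope bound $W'>\tfrac1{18}$ on the analyticity ball for $\xs$ near $\xmax$, giving $W(x;\xs)<\tfrac1{\xs}+\tfrac1{18}(x-\xs)$. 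The right side hits $\tfrac13$ at $x=\xs-\tfrac{6(3-\xs)}{\xs}$, and the crucial observation is that the needed excursion $\tfrac{6(3-\xs)}{\xs}$ tends to $0$ as $\xs\to 3$, so it falls inside the (fixed, $\xs$-uniform) analyticity radius $r$ for $\xs$ close enough to $\xmax$. The obstacle you flag (``showing that $W$ actually crosses $\tfrac13$ within the analyticity ball'') is exactly what this estimate resolves cleanly; the uniform bound $\WH_0<\tfrac1{\xs}$ is the key input you did not exploit. Your second-order route would still need to control the remainder uniformly in $\xs$ and $\k$; that control is precisely what the first-order MVT argument supplies for free.

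The passage from $(\xmax-\kappa,\xmax]\subset\Y$ to $(\xmax-\kappa,\xmax]\subset X$ is correct as you state it.
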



\begin{proof}
{\em Proof of part (a).}
We use the mean value theorem to write 
\[
W(x;x_\ast) = W_0 + W'(\bar x; x_\ast) (x-x_\ast), \ x\in (s(x_\ast), x_\ast) 
\] 
for some $\bar x\in (x,x_\ast)$.  
Note that $ W'(x_\ast; x_\ast) =\frac1{x_\ast} \WH_1 = \frac1{x_\ast}\left(1-\frac2{x_\ast}+O(\k)\right)=\frac1{x_\ast}-\frac2{x_\ast^2}+O(\k)$. By~\eqref{E:XMAXDEF}, it follows that
$W'(\xmax(\k);\xmax(\k)) = \frac19+O(\k)$.
 By Theorem \ref{T:SONICLWP} there exist small enough $r>0$ and $\delta_1>0$ such that $W'(x; x_\ast)>\frac1{18}$ for all $x\in(\xs-r,\xs]$ and $x_\ast\in [\xmax(\k)-\delta_1,\xmax(\k)]$. 
 For such $\xs$ and $x$, we have 
\[
W(x;x_\ast) \le \frac{1}{x_\ast} +\frac1{18} (x-\xs),
\]
where we have used $W_0=\WH_0 <\frac1{\xs}$ for $\k\in(0,\k_0]$, see Lemma~\ref{R0W0}.
Note $ \frac{1}{x_\ast} + \frac1{18} (x-\xs) = \frac 13$ when $x=\tilde x(x_\ast)= \xs- \frac{6(3-x_\ast)}{x_\ast}$. Therefore for all $x_\ast \in (\xmax(\k)-\kappa, \xmax(\k)]\subset(3-\kappa,3)$ with 
$\kappa = \min \{ \delta_1, \xmax(\k)-3+\frac{3r}{6+r}\}$, there exists an $x\ge \tilde x(x_*)$ such that $W(x;x_\ast)=\frac13$, which shows the claim. Here we have used~\eqref{E:XMAXPROP} and the smallness
of $0<\k_0\ll1$.

{\em Proof of part (b).}
We rewrite~\eqref{E:WODE} in the form
\begin{align}\label{5.267}
xW' = 1- 2W - W \frac{B -  2 x^2(1+\k)(W + \k)(\R- W)}{B}. 
\end{align}
We now recall Lemma~\ref{L:BALGEBRA} and express the numerator above in the form 
\begin{align}
&B -  2x^2(1+\k) (W + \k)(\R- W) \notag\\
&= (1-\k) (\F [\R]-x W)(H[\R] +   x  W) 
 - 2(1+\k)x (   W + \k ) (  x \R  - \F [\R ] + \F [\R ] -   x  W) \notag\\
& = 2(1+\k)x( W + \k ) f+  (\F [\R ]-  x  W  )  \left\{ (1-\k) (H[\R ] +   x  W) - 2(1+\k) x(  W + \k)\right\}. \notag
\end{align}
where we recall $f(x) = \F [x;\R]-x\R$.
Using~\eqref{E:HDEF}, 
\begin{align*}
 (1-\k) (H[\R ] +   x  W) - 2(1+\k) x(  W + \k)&= (1-\k) ( \F [\R ] +2\e (1+\R )  x+   x  W) - 2(1+\k)x ( W + \k) \\
&=(1-\k) ( \F [\R ] -   x  W) + 4\k (  x \R  -   x W) + 2\k(1-\k)   x \\
&= (1+3\k) ( \F [\R ] -   x  W) -4\k f + 2\k(1-\k)   x.
\end{align*}
Putting these together, and by \eqref{E:BDEF} and $W>0$,  \eqref{5.267} reads as 
\begin{align}
x W' &= 1- 2 W + 2\e \frac{f}{H[\R ] +   x  W}  W \notag \\
&-  W \frac{ (1+3\k)( \F [\R ] -   x  W)^2 +2(1+\k)  x(  W + \k )(\F [\R ] -   x \R )  + 2\k(1-\k)( \F [\R ] -   x  W)x }{B} \notag \\
&\le 1- (2-2\e)  W,
\end{align}
where we have used $0<f=\F [\R ] -  x \R < H[\R ] + x W$ and $B>0$ for $x\in (s(x_\ast),x_\ast)$. Hence the set 
\be\label{E:INVSET}
\left\{x\in(s(x_\ast),x_\ast))\, \big| W(x;x_\ast)>\frac{1}{2-2\e}\right\}
\ee 
is an invariant set. 

We now use Lemma \ref{L:XMINN} and the definition of $\xmin$ to obtain $W(x_0;\xmin) >\frac{1}{2-2\e}$ for some $x_0\in (s(x_\ast),x_\ast))$. Together with the invariance of~\eqref{E:INVSET} we conclude that $x_{\min} \in \X $.
\end{proof}


\begin{remark}
A corollary of Definition~\ref{D:XYZ} and Lemma~\ref{L:XYNONEMPTY} is that the fundamental set $X $ is a simply connected subset of $(\xmin,\xmax]$ which contains the point $\xmax$. The set $X$ is in particular an interval.
\end{remark}


Of fundamental importance in our analysis are the following two quantities.
\begin{definition}[Critical point and Friedmann time]\label{D:CRITICAL}
Let $\k_0>0$ be a small constant given by Theorem~\ref{T:SONICLWP}.  For any $\k\in(0,\k_0]$ we introduce the \underline{critical point}
\begin{align}\label{critical_x}
\bar x_\ast : = \inf_{x_\ast\in X } x_\ast,
\end{align}
and the \underline{Friedmann time}
\begin{align}
\xf &  = \xf(x_\ast) : = \inf \left\{x \in (s(x_\ast), x_\ast) \, \big| \ 
\ W(\tau;x_\ast)>\frac13 \ \text{ for } \ \tau \in(x,x_\ast) \right\}. \label{E:FRIEDMANTIME}
\end{align}
\end{definition}

We will show later, the unique local RLP-type solution associated with the sonic point $\bar x_\ast$ is in fact {\em global} and extends all the way to 
the origin $x=0$. The Friedmann time introduced above plays a crucial role in the proof.


The sets $\Y$ and $\Z$ enjoy several important properties which we prove in the next lemma.


\begin{lemma}\label{L:PREPX}
Let $\k_0>0$ be a small constant given by Theorem~\ref{T:SONICLWP}. Then for any $\k\in(0,\k_0]$ the following statements hold.
\begin{enumerate}
\item[{\em (a)}] For any $x_\ast\in \Y\cup \Z$ we have 
\begin{align}
W(x;x_\ast)&<\R(x;x_\ast), \ \ x\in (s(x_\ast), x_\ast), \label{E:WLESSTHANRX}\\
W(x;y_\ast) & <\frac13, \ \ x\in (s(x_\ast), x_{F}(x_\ast)),
\label{E:WLESSTHANTHIRDX}
\end{align}
where~\eqref{E:WLESSTHANTHIRDX} is considered trivially true in the case $s(x_\ast)=x_{F}(x_\ast)$.
\item[{\em (b)}]
For any $x_\ast\in \Y$ we have $W'(x_{F}(x_\ast);x_\ast)>0$. Moreover, the set $\Y$ is relatively open in $[\xmin,\xmax]$. 
\end{enumerate}
\end{lemma}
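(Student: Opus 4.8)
The plan is to prove Lemma~\ref{L:PREPX} in the order (a) then (b), exploiting the monotonicity lemma (Lemma~\ref{L:JUHILEMMA}), the sign information at the sonic point (Lemma~\ref{L:INITIAL}), and the sign properties of the source term $b$ (Lemma~\ref{L:LITTLEBPOS}). Throughout, fix $x_\ast\in\Y\cup\Z$ and let $(\R,W)=(\R(\cdot;x_\ast),W(\cdot;x_\ast))$ on $(s(x_\ast),x_\ast]$; recall $B>0$ on $(s(x_\ast),x_\ast)$ by Definition~\ref{D:SONICTIME}.

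\textbf{Step 1: $W<\R$ on $(s(x_\ast),x_\ast)$ (proof of~\eqref{E:WLESSTHANRX}).} The strategy is to compare the two first-order ODEs through the factorisation $B=(1-\k)(\F-xW)(H+xW)$ in~\eqref{E:BDEF}. Near $x_\ast$, by Lemma~\ref{L:INITIAL}(a)--(b), we have $\F>xW$ and $\F>x\R$ on $(x_\ast-\delta,x_\ast)$ (away from an $O(\k)$-neighbourhood), hence $f=\F-x\R>0$ and $\F-xW>0$ there; since by Lemma~\ref{L:XMINN} and the definition of $\Y$ the solution crosses or stays above $\frac13$, one checks $W<\R$ right below the sonic point using the Taylor coefficients $\R_1,W_1$ from Remark~\ref{R:CONSISTENCYSONIC} (indeed $\R_1-W_1=-\frac1{\xs}-(1-\frac2{\xs})+O(\k)<0$ for $\xs>2$, so $\R-W>0$ just left of $\xs$). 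Then I would argue by contradiction: if $W<\R$ fails, let $x_1\in(s(x_\ast),x_\ast)$ be the largest point where $W(x_1)=\R(x_1)$. On $(x_1,x_\ast)$ we have $W<\R$, hence by the monotonicity lemma and Lemma~\ref{L:LITTLEBPOS} (using $\F-xW>0$, $B>0$, and $\R>\frac13$ wherever needed — which holds since $\R>W$ and near $\xs$ both exceed $\frac13$ up to $O(\k)$; a short bootstrap handles the general case) the source $b=b_1+\k b_2<0$, so Corollary~\ref{C:FFORMULA} gives $f(x)>f(\bar x)e^{\int a}>0$ for $x<\bar x$ in this range, keeping $\F-xW>0$ and in particular $B>0$ on all of $(x_1,x_\ast)$. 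At $x=x_1$, subtracting~\eqref{E:WODE} from~\eqref{E:RODE} and using $\R(x_1)=W(x_1)=:v$ yields $(\R-W)'(x_1)=-\frac{1-3v}{x_1}<0$ provided $v>\frac13$; but at a point where $\R-W$ changes from positive (just right of $x_1$) to non-positive, we need $(\R-W)'(x_1)\le0$, a contradiction unless $v\le\frac13$. The case $v\le\frac13$ is excluded separately using~\eqref{E:WLESSTHANTHIRDX} (proved next) and the invariance that $W$ once below $\frac13$ stays below, together with $\R>W$ forced near $\xs$ — so I would in fact prove (a) and the $W<\frac13$ invariance simultaneously via a single continuity/contradiction argument tracking the first failure time of the pair of inequalities.

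\textbf{Step 2: $W<\frac13$ on $(s(x_\ast),x_F(x_\ast))$ (proof of~\eqref{E:WLESSTHANTHIRDX}).} By the definition~\eqref{E:FRIEDMANTIME} of $\xf$, for $x_\ast\in\Y$ the solution attains the value $\frac13$ at $\xf$ and stays $>\frac13$ on $(\xf,x_\ast)$. At $x=\xf$ we have $W(\xf)=\frac13$, and from~\eqref{5.267}-type manipulation in the proof of Lemma~\ref{L:XYNONEMPTY}(b), $xW'=1-2W+(\text{lower order}) - W\,\frac{(1+3\k)(\F-xW)^2+2(1+\k)x(W+\k)f+2\k(1-\k)x(\F-xW)}{B}$; since $W=\frac13$, $\F-xW>0$, $f>0$ (from Step 1), and $B>0$, the negative bracket term is strictly negative, giving $\xf W'(\xf)=1-\frac23 + 2\e\frac{f}{H+xW}\cdot\frac13 - (\text{positive}) $. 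The plan is to show this is strictly positive; then $W$ is increasing through $\frac13$ at $\xf$, so $W<\frac13$ immediately to the left, and then the invariant-set argument from Lemma~\ref{L:XYNONEMPTY}(b) (the set $\{W>\frac1{2-2\e}\}$ is invariant, and more to the point one shows $\{W<\frac13\}$ is invariant under the leftward flow once $f>0$ and $B>0$) propagates $W<\frac13$ all the way down to $s(x_\ast)$. Making the sign of $W'(\xf)$ precise is where the constants $\e=O(\k)$ and the smallness of $\k$ enter. This also establishes $W'(\xf;x_\ast)>0$, the first claim of part (b).

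\textbf{Step 3: $\Y$ is relatively open in $[\xmin,\xmax]$ (rest of part (b)).} Fix $x_\ast\in\Y$. Since $W'(\xf;x_\ast)>0$ and $W(\xf;x_\ast)=\frac13$, there is a point $\hat x<\xf$ with $W(\hat x;x_\ast)<\frac13$ strictly, and on $[\hat x,x_\ast-\tilde r]$ the sonic denominator satisfies $B>\kappa$ for some $\kappa>0$ (as $s(x_\ast)<\hat x$ and $B>0$ on the compact set). By Proposition~\ref{P:LOWERSEMICONTRLP}(b)--(c), for $\tilde x_\ast$ near $x_\ast$ the solutions $W(\cdot;\tilde x_\ast)$ converge uniformly to $W(\cdot;x_\ast)$ on $[\hat x,x_\ast-\tilde r]$ and the sonic times stay below $\hat x-T$; in particular $W(\hat x;\tilde x_\ast)<\frac13$ for $\tilde x_\ast$ close enough, so $\tilde x_\ast\in\Y$ by definition of $\Y$. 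The main obstacle I anticipate is Step 1/Step 2: keeping the signs $f>0$, $\F-xW>0$, $B>0$, and $W<\R$ coupled and propagating them leftward simultaneously requires a careful choice of "first failure time" and the $\k$-smallness must be used to absorb the $O(\k)$ and $O(\sqrt\k)$ errors in the sonic-point Taylor coefficients near $\xc$; the interplay with the possibility $v\le\frac13$ at a crossing of $\R=W$ is the delicate point, and is resolved precisely by the monotonicity lemma forcing $f>0$ whenever $\R>\frac13$ together with the Friedmann-time structure.
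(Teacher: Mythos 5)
Your overall architecture diverges from the paper's, and there is a concrete error in your Step~1 that invalidates the contradiction argument as written.

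\textbf{The sign error in Step~1.} At a putative crossing point $x_1$ with $\R(x_1)=W(x_1)=:v$, the numerators of the $(\R-W)$-dependent terms in~\eqref{E:RODE}--\eqref{E:WODE} vanish, so $\R'(x_1)=0$ and $W'(x_1)=\frac{1-3v}{x_1}$, giving
\[
(\R-W)'(x_1)\;=\;-\frac{1-3v}{x_1}\;=\;\frac{3v-1}{x_1}\,.
\]
This is strictly \emph{positive} when $v>\frac13$, not negative as you claim. Moreover, your assertion that a crossing ``from positive to non-positive'' requires $(\R-W)'(x_1)\le 0$ is also reversed: if $\R-W>0$ on $(x_1,x_\ast)$ and $(\R-W)(x_1)=0$, the one-sided derivative from the right satisfies $(\R-W)'(x_1)\ge 0$. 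So $(\R-W)'(x_1)>0$ produces \emph{no} contradiction; it simply says $\R-W$ passes through zero transversally, with $W>\R$ just to the left. Your Step~1 therefore does not close.

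\textbf{How the paper handles the crossing.} Rather than looking for a contradiction in the derivative sign at $x_c$, the paper partitions into three cases according to whether $x_c\in(\xf,x_\ast)$, $x_c=\xf$, or $x_c\in(s(x_\ast),\xf)$. In the first case (which is exactly your $v>\frac13$ situation), one observes that immediately to the \emph{left} of $x_c$ the configuration $W'<0$, $W-\R>0$, $\R'>0$, $W>\frac13$ holds, is dynamically trapped under the leftward flow of~\eqref{E:RODE}--\eqref{E:WODE}, and forces $W$ to stay bounded away from $\frac13$ from above — contradicting $x_\ast\in\Y\cup\Z$, whose definition requires $W$ to reach or dip below $\frac13$ on the maximal interval. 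The second case is killed by the monotonicity of $\R$ on $(\xf,x_\ast)$ together with $\R(x_\ast)=\WH_0\ge\frac13$, and the third by propagating the invariant $W'>0$, $\R-W>0$, $W<\frac13$ leftward. This trapping argument is the key idea your proof is missing; it replaces, and is much simpler than, the machinery of Lemma~\ref{L:JUHILEMMA}, Corollary~\ref{C:FFORMULA}, and Lemma~\ref{L:LITTLEBPOS} you invoke.

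\textbf{Step~2 overcomplicates.} Once~\eqref{E:WLESSTHANRX} is in hand, $W'(\xf;x_\ast)>0$ is immediate from~\eqref{E:WODE}: at $\xf$ the Friedmann term $\frac{1-3W}{x}$ vanishes, while the remaining term is a positive multiple of $\R-W>0$ divided by $B>0$. There is no need for the identity from Lemma~\ref{L:XYNONEMPTY}(b), and no $\k$-smallness is required to fix the sign of $W'(\xf)$. Your Step~3 (openness of $\Y$) is essentially correct and matches the paper's use of $W'(\xf)>0$ together with Proposition~\ref{P:LOWERSEMICONTRLP}.
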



\begin{proof}
{\it Proof of Part (a).} Let $x_\ast\in \Y\cup \Z$. Then from Lemma \ref{R1W1} we know that 
$W(x) <\R(x)$ for all $x\in [ (1-\bar r) x_\ast, x_\ast)$ for some $\bar r\le r$ where $r$ is given by Theorem \ref{T:SONICLWP}. We first prove \eqref{E:WLESSTHANRX}. By way of contradiction, assume that there exists $x_c\in (s(x_\ast), x_\ast)$ such that 
\[
W(x_c) = \R(x_c) \ \text{ and } \ W(x)< \R(x), \ x \in (x_c, x_\ast). 
\]
We distinguish three cases. 

\noindent{\it Case 1: $x_c\in (\xf, x_\ast)$.} In this case, from \eqref{E:RODE} and \eqref{E:WODE}, we have $W'(x_c) <0$ and $\R'(x_c)=0$. In particular, $(\R-W)'(x_c)>0$ and locally strictly to the left of $x_c$ we have 
\[
W' <0, \ W - \R >0, \ \R'>0, \ W>\frac13 . 
\]
We observe that these conditions are dynamically trapped and since $W'<0$, we deduce that $W$ stays strictly bounded away from $\frac13$ from above for all $x\in (\xf, x_\ast)$, which is a contradiction to the assumption $x_\ast\in \Y\cup \Z$. 

\noindent{\it Case 2: $x_c=\xf$.} In this case, $x_\ast \in \Y$ necessarily and $W(x_c) = \R(x_c)=\frac13$. On the other hand, since $\R-W>0$ for $x\in (\xf, x_\ast)$, it follows $\R'<0$ on $(\xf, x_\ast)$ from \eqref{E:RODE}. Hence, $\R(x_c) > \R(x_\ast)=\WH_0 \ge \frac13$ since $x_\ast\in[\xmin,\xmax]$. This is a contradiction. 

\noindent{\it Case 3: $x_c\in (s(x_\ast), \xf)$.} In this case, $x_\ast \in \Y$ necessarily. Since $x_c<\xf$ we know that $\R-W>0$ locally around $\xf$. Therefore, we have 
\[
W' >0, \ \R-W >0, \ W<\frac13 \ \text{ on } (\xf-\delta, \xf)
\]
for a sufficiently small $\delta>0$. These conditions are dynamically trapped and we conclude that $\R-W>0$ on $(s(x_\ast),\xf)$. This is a contradiction, and hence completing the proof of \eqref{E:WLESSTHANRX}. 

Inequality \eqref{E:WLESSTHANTHIRDX} follows by a similar argument, since the property $W' >0, \ \R-W >0, \ W<\frac13$ is dynamically preserved and all three conditions are easily checked to hold locally to the left of $\xf(x_\ast)$.  

\noindent{\it Proof of Part (b).} For any  $x_\ast\in \Y$ by part (a) and  \eqref{E:WODE} we have 
$W'( \xf(x_\ast); x_\ast) >0$. Therefore there exists a sufficiently small $\delta>0$ so that $W(x; x_\ast)<\frac13$ for all $x\in (\xf(x_\ast)-\delta, \xf(x_\ast))$. By Proposition \ref{P:LOWERSEMICONTRLP}, there exists a small neighborhood of $x_\ast$ such that $W(x;x_\ast)<\frac13$ for some $x\in (\xf(x_\ast)-\delta, \xf(x_\ast))$. Therefore $\Y$ is open. 
\end{proof}


\begin{lemma}\label{L:PRELIM2}
Let $\k_0>0$ be a small constant given by Theorem~\ref{T:SONICLWP} and for any $x_\ast\in[\xmin,\xmax]$ consider the unique local RLP-type solution given by Theorem~\ref{T:SONICLWP}.
If $x_F(x_\ast)=s(x_\ast)>0$ then necessarily
\be
W(x_F(x_\ast);x_\ast)>\frac13.
\ee
In particular if $x_\ast\in X $, then necessarily $s(x_\ast)<x_F(x_\ast)$.
\end{lemma}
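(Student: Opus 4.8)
\textbf{Proof plan for Lemma~\ref{L:PRELIM2}.}

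The statement asserts that if the Friedmann time coincides with the sonic time and is strictly positive, then at that common point $W$ must exceed $\frac13$; and consequently any sonic point in the fundamental set $X$ has $s(x_\ast)<x_F(x_\ast)$. The plan is to argue by contradiction for the first claim: suppose $x_F(x_\ast)=s(x_\ast)>0$ but $W(x_F(x_\ast);x_\ast)\le\frac13$. By the very definition of the Friedmann time in~\eqref{E:FRIEDMANTIME}, $W(x;x_\ast)>\frac13$ for all $x\in(x_F(x_\ast),x_\ast)$, so by continuity $W(x_F(x_\ast);x_\ast)\ge\frac13$; combined with the contradiction hypothesis this forces $W(x_F(x_\ast);x_\ast)=\frac13$ exactly. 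The key point is then to examine the sign of $W'$ at this point. Since $x_F(x_\ast)=s(x_\ast)$, the sonic denominator $B$ stays strictly positive on $(x_F(x_\ast),x_\ast)$ but degenerates as $x\to x_F(x_\ast)^+$, so I cannot directly evaluate $W'$ there from~\eqref{E:WODE}. Instead I would first establish that $W<\R$ on $(s(x_\ast),x_\ast)$ — but this is exactly the invariance proved in Lemma~\ref{L:PREPX}(a), which applies once we know $x_\ast\in\Y\cup\Z$; since $W$ drops to the value $\frac13$ we are in precisely that situation (or we argue the relevant trapping directly).

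With $W<\R$ in hand near $x_F(x_\ast)$ and $W+\k>0$, the second term on the right-hand side of~\eqref{E:WODE} is strictly positive on $(x_F(x_\ast),x_\ast)$, while $\frac{1-3W}{x}<0$ there because $W>\frac13$. The delicate issue is which term dominates as $x\downarrow x_F(x_\ast)=s(x_\ast)$: the positive term carries the factor $1/B$, which blows up, while $\frac{1-3W}{x}\to 0$ as $W\to\frac13$. Thus for $x$ slightly greater than $x_F(x_\ast)$ the right-hand side of~\eqref{E:WODE} should be strictly positive, forcing $W$ to be strictly \emph{increasing} just to the right of $x_F(x_\ast)$, hence $W(x;x_\ast)<\frac13$ for $x$ slightly above $x_F(x_\ast)$ — contradicting the defining property of $x_F(x_\ast)$. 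Making this rigorous requires a lower bound on the blow-up rate of $\frac{1}{B}$ near the sonic time comparable to (or slower-decaying than) $1-3W$; one clean way is to use the factorization $B=(1-\k)(\F-xW)(H+xW)$ from Lemma~\ref{L:BALGEBRA} together with the initialization estimates of Lemma~\ref{L:INITIAL}, noting that $\F-xW\to 0$ linearly in $x-s(x_\ast)$ near a simple zero while $1-3W=3(\R-W)+3(W-\frac13)\cdot(\dots)$ is $o(1)$; since $\R-W$ stays bounded below by a positive constant near $x_F(x_\ast)$ (again by Lemma~\ref{L:PREPX}(a) and continuity, $\R(x_F(x_\ast))\ge\frac13=W(x_F(x_\ast))$ would need to be strict — which follows because $\R'<0$ wherever $\R>W$, so $\R(x_F(x_\ast))>\R(x_\ast)=\WH_0\ge\frac13$), the positive term in~\eqref{E:WODE} dominates. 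I expect this comparison of blow-up rates to be the main obstacle; the rest is a continuity/trapping argument of the type already used repeatedly in Section~\ref{S:SONIC} and the preceding lemmas.

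For the final sentence of the lemma, suppose $x_\ast\in X\subset\Y$. By Definition~\ref{D:XYZ} and the definition of $\Y$ in~\eqref{E:YDEF13}, there exists $x\in(s(x_\ast),x_\ast)$ with $W(x;x_\ast)=\frac13$; in particular the infimum in~\eqref{E:FRIEDMANTIME} is attained at a value $x_F(x_\ast)\ge s(x_\ast)$ with $W(x_F(x_\ast);x_\ast)\le\frac13$ by continuity. If we had $x_F(x_\ast)=s(x_\ast)$, then necessarily $s(x_\ast)>0$ (otherwise $x_F(x_\ast)=0$, but $W(0;x_\ast)=\frac13$ would contradict the strict inequality $W>\frac13$ on $(x_F,x_\ast)$ combined with $x_\ast\in\Y$ only if $s(x_\ast)=0$ is excluded — handle this boundary case separately, using that $s(x_\ast)=0$ means the solution is global and $W(0)=\frac13$ is the Friedmann value, so $x_F(x_\ast)$ could equal $0=s(x_\ast)$, in which case the first part would force $W(0;x_\ast)>\frac13$, impossible). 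Assuming $s(x_\ast)>0$, the first part of the lemma gives $W(x_F(x_\ast);x_\ast)>\frac13$, contradicting $W(x_F(x_\ast);x_\ast)\le\frac13$. Hence $s(x_\ast)<x_F(x_\ast)$, as claimed.
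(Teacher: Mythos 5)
Your overall setup — showing $W(x_F)=\frac13$ is forced, that $\R$ is decreasing so $\R(x_F)>\frac13$, and that $W<\R$ near $x_F$ by Lemma~\ref{L:PREPX}(a) — matches the paper. But your final step runs in the wrong direction and does not produce a contradiction. You argue that the second term in~\eqref{E:WODE} dominates as $x\downarrow x_F$ because $1/B$ blows up, so $W'>0$ there, and then you conclude ``hence $W(x;x_\ast)<\frac13$ for $x$ slightly above $x_F(x_\ast)$.'' This is a sign error: $W(x_F)=\frac13$ together with $W'>0$ just to the right of $x_F$ gives $W>\frac13$ for $x$ slightly above $x_F$, which is \emph{consistent} with the defining property of $x_F$, not contradictory. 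A blow-up of $W'$ as $x\downarrow x_F$ while $W$ stays bounded (which Lemma~\ref{L:OBSTRUCTION} guarantees) is perfectly possible — it just means a vertical tangent — so no contradiction follows from the rate comparison you propose, and the ``main obstacle'' you flag is not merely technical: the route does not close.

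The paper's proof is structurally simpler and avoids analyzing $W'$ entirely. Having established $\R(x_F)>\frac13=W(x_F)$ and $W<\R$ near $x_F$, it combines the factorization of Lemma~\ref{L:BALGEBRA} with the bound $\F\ge x\R$ from~\eqref{E:RBOUNDL} to get
\[
B \;=\; (1-\k)(\F - xW)(H+xW) \;\ge\; (1-\k)\,x(\R-W)(H+xW),
\]
which is bounded strictly away from zero as $x\to x_F^+$. Thus $\liminf_{x\to x_F}B>0$, and this directly contradicts Lemma~\ref{L:OBSTRUCTION}, which says $\liminf_{x\to s(x_\ast)^+}B=0$ whenever $s(x_\ast)>0$. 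In other words, the contradiction is not that the ODE for $W$ misbehaves, but that the sonic denominator refuses to degenerate — so $x_F$ cannot actually be the sonic time. You already have almost all the ingredients in hand (you even mention using the factorization and Lemma~\ref{L:INITIAL}); replacing your rate-of-blow-up argument with this lower bound on $B$ and an appeal to Lemma~\ref{L:OBSTRUCTION} is the fix.

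For the last sentence, your argument is fine (and, given $x_\ast\in\Y$, there even exists $x\in(s(x_\ast),x_\ast)$ with $W(x)=\frac13$, so $x_F\ge x>s(x_\ast)$ follows directly from the definition of $x_F$), though you introduce a boundary-case detour around $s(x_\ast)=0$ that the statement already excludes since the conclusion $s(x_\ast)<x_F(x_\ast)$ is then trivial.
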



\begin{proof}
Assume the claim is not true, in other words $W(x_F(x_\ast);x_\ast)=\frac13$. Since $\R$ is decreasing on  $(x_F,x_\ast-\delta]$  by Lemma~\ref{L:PREPX} and~\eqref{E:RODE} and since  $\R(x_\ast) \ge 1/3$  it follows that 
 $\lim_{x\to x_F}\R(x)>\frac13$.
In particular, since $\F \ge x\R$ on $(s(x_\ast),x_\ast-\delta)$  by \eqref{E:RBOUNDL},  it follows from~\eqref{E:BDEF} that $B(x;x_\ast) \ge  (1-\k)x(\R-W)(H(x)+xW)$.
Therefore
\[
\liminf_{x\to x_F} B(x)>0,
\]
a contradiction to the assumption $x_F(x_\ast)=s(x_\ast)$ due to Lemma~\ref{L:OBSTRUCTION}. The second claim is a consequence of the just proven claim and the definition of the set $X $.
\end{proof}


\begin{lemma}\label{L:PRELIM3}
Let $\k_0>0$ be a small constant given by Theorem~\ref{T:SONICLWP} and for any $x_\ast\in\X \subset[\xmin,\xmax]$ consider the unique local RLP-type solution given by Theorem~\ref{T:SONICLWP}.
Then there exists a monotonically increasing continuous function 
\[
m:(0,x_\ast-\tilde r]\to(0,\infty),
\]
such that for any $X\in(0,x_\ast-\tilde r]$
\begin{align}
B(x;x_\ast) >m (X)>0 \ \ \text{ for all } \ \ x\in(\max(X,s(x_\ast)),x_\ast-\tilde r].
\end{align}
\end{lemma}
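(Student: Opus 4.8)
### Proof strategy for Lemma~\ref{L:PRELIM3}

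\textbf{Overview.} The statement says that on any compact subinterval bounded away from $0$, the sonic denominator $B$ is bounded below by a positive constant depending only on the left endpoint $X$, uniformly on $(\max(X,s(x_\ast)),x_\ast-\tilde r]$; moreover the lower bound $m(X)$ can be chosen increasing in $X$. The natural plan is to derive a \emph{positive lower bound that is itself a function of $x$ alone} (plus monotone quantities), so that shrinking $x$ only weakens the bound, and then take $m(X)$ to be the infimum of that expression over $[X, x_\ast-\tilde r]$, which is positive and increasing in $X$ by construction. The whole point of the $\X$-hypothesis is that it activates the a priori bounds of Lemmas~\ref{L:PRELIM1} and~\ref{L:APRIORIMINUS} and the sign of $f$ via Corollary~\ref{C:FFORMULA}, which is exactly what is needed to keep $B$ from degenerating before the sonic time.

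\textbf{Key steps.} First I would recall the factorisation from Lemma~\ref{L:BALGEBRA}, $B = (1-\k)(\F[x;\R]-xW)(H[x;\R]+xW)$, so that a lower bound on $B$ reduces to separate lower bounds on the two factors. Since $x_\ast \in \X$ means $W(x;x_\ast) > \tfrac13$ on all of $(s(x_\ast),x_\ast)$, Lemma~\ref{L:PRELIM1} gives $W < \F/x$ (so the first factor $\F - xW > 0$ strictly), and for the second factor, positivity of $\F$ and $\R$ together with $H = \F + \tfrac{4\k}{1-\k}(1+\R)x > 0$ and $xW>0$ give $H+xW>0$; in fact one gets a quantitative bound $H+xW \ge \F[x;\R] \ge \sqrt{\R^{-\e}/(1-\k)}$ directly from~\eqref{E:FDEF}, which is already bounded below by a positive $x$-independent constant once $\R$ has a known upper bound. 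Second, I would control $\F - xW$ from below. The clean route is Lemma~\ref{L:OBSTRUCTION}: since $x_\ast\in\X\subset \Y$ and $B>0$ on $(s(x_\ast),x_\ast)$ by definition of the sonic time, if the $\liminf$ of $B$ at $s(x_\ast)$ were positive we'd contradict Lemma~\ref{L:OBSTRUCTION} unless $s(x_\ast)=0$ --- but the desired conclusion is only about $x \ge X > 0 \ge s(x_\ast)$ in the case $s(x_\ast)=0$, and about $x> s(x_\ast)$ otherwise. So the argument must be local/quantitative rather than by contradiction: one shows that on $[X, x_\ast - \tilde r]$ the solution $(\R,W)$ stays in a compact set. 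Here Lemma~\ref{L:APRIORIMINUS} bounds $\R$ whenever $\R \ge \tfrac13$ (giving $\R < x^{1-\k} \le (x_\ast-\tilde r)^{1-\k}$), and when $\R < \tfrac13$ it is bounded trivially; combined with Lemma~\ref{L:PRELIM1}'s bound $|W| < \F(x)/x + 2\e(1+\R)$ this confines $W$ too. On this compact set $B$ is a continuous strictly positive function of $(x,\R,W)$ --- strict positivity being exactly the content of $x_\ast\in\X$ via $\F-xW>0$ and $H+xW>0$ --- so it attains a positive minimum $m(X)$ on $[X,x_\ast-\tilde r]$. Third, monotonicity of $X\mapsto m(X)$: define $m(X) := \inf_{x\in[X, x_\ast-\tilde r]} B(x;x_\ast)$; as $X$ increases the infimum is over a smaller set, so $m$ is non-decreasing, and if strict monotonicity is wanted replace it by $m(X) := X\cdot \inf_{x\in[X,x_\ast-\tilde r]}\big(B(x;x_\ast)/x_\ast\big)$ or simply add a strictly increasing perturbation that stays below; continuity follows from continuity of $x\mapsto B(x;x_\ast)$ on the compact interval.

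\textbf{Main obstacle.} The delicate point is ruling out that $\F(x)-xW(x)$ decays to $0$ as $x\downarrow \max(X,s(x_\ast))$ while $x$ stays bounded away from $0$; near the sonic point itself this is controlled by Lemma~\ref{L:INITIAL} and Theorem~\ref{T:SONICLWP}, but between $x_\ast-\tilde r$ and $X$ we rely on the $\X$-invariance ($W>\tfrac13$ throughout) feeding into Lemma~\ref{L:PRELIM1}. The subtlety is that Lemma~\ref{L:PRELIM1} only gives $\F-xW > 0$, not a uniform positive lower bound; the uniform lower bound must come from compactness of the trajectory closure in $(x,\R,W)$-space together with the fact that $B$ cannot vanish at any interior point $x \in (s(x_\ast), x_\ast-\tilde r]$ (by the very definition of $s(x_\ast)$ and since $x_\ast - \tilde r > s(x_\ast)$ is assumed --- note the range in the statement is $x > \max(X, s(x_\ast))$, never reaching $s(x_\ast)$). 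So the argument is: the set $\{(x,\R(x;x_\ast),W(x;x_\ast)) : x\in[X',x_\ast-\tilde r]\}$ for any $X' \in (\max(X,s(x_\ast)), x_\ast-\tilde r]$ is a compact subset of the open set $\{B>0\}$, hence $B$ is bounded below there; letting $X'\downarrow \max(X,s(x_\ast))$ and using that $B$ extends continuously (as a function of the solution) up to but not including $s(x_\ast)$ --- together with the a priori bounds preventing $(\R,W)$ from escaping to infinity before $s(x_\ast)$, which is Lemma~\ref{L:OBSTRUCTION}'s role --- gives the uniform bound $m(X)>0$ on the half-open interval. This interplay, where one needs the a priori compactness (Lemmas~\ref{L:PRELIM1}, \ref{L:APRIORIMINUS}) \emph{before} knowing the solution extends that far, is the part requiring care, but it follows the template of Lemma~4.8 of~\cite{GHJ2021}.
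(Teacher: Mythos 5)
Your compactness route is genuinely different from the paper's proof, and it has a gap that cannot be repaired without the tool you skip over. The paper does not argue by compactness at all: it works with $f=\F[x;\R]-x\R$ and invokes the monotonicity lemma, more precisely Corollary~\ref{C:FFORMULA}, after first establishing the sign $b[x;\R,W]<0$ on the relevant range via Lemma~\ref{L:LITTLEBPOS} (which needs $\R>\frac13$, in turn a consequence of the $W>\frac13$ hypothesis via Lemma~\ref{L:PREPX}). This yields $f(x)\ge f(x_\ast-\tilde r)\,e^{-\int_x^{x_\ast-\tilde r}a\,dz}$; estimating $|a|\lesssim z^{-3}$ one obtains the \emph{explicit} bound $f(x)\ge f(x_\ast-\tilde r)\,e^{-C_2/(2x^2)}$, and since $H+xW\ge \F-xW\ge f$ this feeds into $B\ge (1-\k)f^2\ge (1-\k)f(x_\ast-\tilde r)^2 e^{-C_2/x^2}$. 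The right-hand side is a strictly positive, continuous, monotone-increasing function of $x$ alone (times the fixed constant $f(x_\ast-\tilde r)^2$), so one may simply take $m(X)$ proportional to $e^{-C_2/X^2}$.

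The gap in your argument is the passage ``letting $X'\downarrow \max(X,s(x_\ast))$ \ldots gives the uniform bound $m(X)>0$ on the half-open interval.'' Compactness gives a positive minimum of $B$ on each closed interval $[X',x_\ast-\tilde r]$, but those minima may tend to $0$ as $X'$ decreases, and nothing in your reasoning prevents this. Continuity of $B$ and the a priori boundedness of $(\R,W)$ do not rule it out: indeed, Lemma~\ref{L:OBSTRUCTION} says precisely that if $s(x_\ast)>0$ then $\liminf_{x\to s(x_\ast)^+}B(x)=0$. So the claim that the limit is positive is exactly the content of the lemma, not something one gets for free from topology. The crucial missing idea is that after factoring $B\ge (1-\k)f^2$, the sign structure $b<0$ turns the ODE for $f$ (Lemma~\ref{L:JUHILEMMA}) into a source of a lower bound, and the bounds on the coefficient $a$ give that lower bound an explicit, $x$-local form $e^{-C/x^2}$ which is bounded away from $0$ on any interval bounded away from $x=0$, independently of how close $x$ is to the sonic time. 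That quantitative, solution-independent structure is what the compactness argument cannot reproduce, and it is also what makes the lemma usable uniformly over the sequence $x_\ast^n$ in the proof of Proposition~\ref{P:GLOBALFRIEDMAN}.
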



\begin{proof}
Since $x_\ast\in\Y$, by Lemma~\ref{L:PREPX} $\F -xW\ge \F -x\R = f$ on $(s(x_\ast),x_\ast-\tilde r)$. Since $H+xW\ge \F -xW\ge f$ we conclude from~\eqref{E:BDEF}
that $B\ge (1-\k)f^2$. Since $W(x;x_\ast)\ge\frac13$ for all $x\in[x_F(x_\ast), x_\ast-\tilde r]$, it follows from Lemma~\ref{L:PREPX} that $\R>\frac13$ on the same interval. Therefore,
by Lemma~\ref{L:LITTLEBPOS} we conclude that $b[x;\R,W]<0$ on $[x_F(x_\ast), x_\ast-\tilde r]$. By Corollary~\ref{C:FFORMULA} we then conclude
\begin{align}\label{E:FLB1}
f(x)\ge f(x_\ast-\tilde r) e^{\int_{x}^{x_\ast-\tilde r} a[z;\R,W]\,dz}, \ \ x\in[x_F(x_\ast), x_\ast-\tilde r].
\end{align}
From~\eqref{E:ATWODEF} it is clear that on $[x_F(x_\ast), x_\ast-\tilde r]$,
\begin{align*}
a_2 & = 2\k\left[\left(\F -xW\right)\left(\R-1\right)  + 2f + 4\k x + x\R\left(5+\k\right) \right] Z^{-1} \notag\\
& =  2\k\left[\left(\F -xW\right)\left(\R+1\right)  + 2xW + 4\k x + x\R\left(3+\k\right) \right] Z^{-1}>0.
\end{align*}
Similarly, the first line of~\eqref{E:AONEDEF} is strictly positive on the same interval, and we conclude from~\eqref{E:FLB1} and~\eqref{E:AONEDEF} that
\begin{align}\label{E:FLB2}
f(x)\ge f(x_\ast-\tilde r) e^{ -2\k \int_{x}^{x_\ast-\tilde r}  \left( \frac1{1-\k}\frac{ \R^{-\e}}{z}
+ (1-\k)z +  (\R+\k) f  \right) Z^{-1}  \,dz}, \ \ x\in[x_F(x_\ast), x_\ast-\tilde r].
\end{align}
From~\eqref{E:ZDEF}, $W\ge\frac13$ on $[x_F(x_\ast), x_\ast-\tilde r]$, and $H\ge \F $ we have   $Z\ge\max\{ \frac23\k z^2, \frac13 \F  z\}$  on $[x_F(x_\ast), x_\ast-\tilde r]$. Similarly,  
\[
\begin{split}
&2\k \left( \frac1{1-\k}\frac{ \R^{-\e}}{z}
+ (1-\k)z+ (\R+\k) f \right) Z^{-1} 
\\
&\le 2\k \left(  \frac{3^\e}{1-\k} \frac{1}{z} + (1-\k) x_\ast \right) \frac{3}{2\k z^2} +  2\k \frac{(M+\k) \F }{ \frac{1}{3} \F  z}\le C_1 \left(\frac{1}{z^3}+\frac{1}{z^2}+  \frac{1}{z}\right) \le C_2\frac{1}{z^3}, 
\end{split}
\] 
for  and some $C_1,C_2>0$ where we recall the upper bound $M$ of $\R$ in  \eqref{RboundM}. 
Combining the two bounds and plugging it back into~\eqref{E:FLB2} we get
\begin{align}
f(x) \ge f(x_\ast-\tilde r)  e^{- C_2\int_x^{x_\ast-\tilde r}\frac1{z^3}\,dz} \ge f(x_\ast-\tilde r) e^{-\frac{C_2}2 x^{-2}}.
\end{align}
Together with the established lower bound $B\ge (1-\k)f^2$ we conclude the proof.
\end{proof}


\begin{proposition}\label{P:GLOBALFRIEDMAN}
The unique RLP-type solution associated with $\bar x_\ast\in(\xmin,\xmax)$ exists globally to the left, i.e.
$
s(\bar x_\ast)=0.
$
\end{proposition}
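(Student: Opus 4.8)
The goal is to show that $s(\bar x_\ast)=0$, i.e. the RLP-type solution launched from the critical sonic point $\bar x_\ast=\inf_{x_\ast\in X}x_\ast$ extends to the left all the way to the origin. I would argue by contradiction: suppose $s(\bar x_\ast)>0$. The strategy is to exploit the boundary nature of $\bar x_\ast$ — it sits on the edge of the fundamental set $X\subset\Y$ — together with the continuity properties of the flow (Proposition~\ref{P:LOWERSEMICONTRLP}) and the no-blow-up obstruction (Lemma~\ref{L:OBSTRUCTION}). The crux is to rule out the two possible ways the solution could fail to reach $x=0$: either $s(\bar x_\ast)=x_F(\bar x_\ast)$ (the flow crosses $\frac13$ exactly at the sonic time), or $s(\bar x_\ast)<x_F(\bar x_\ast)$ (so $W$ dips to $\frac13$ strictly before $B$ degenerates).

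First I would dispense with the case $\bar x_\ast\in X$. Since $X$ is a relatively open interval (being $\Y$-membership propagated from $\xmax$, and $\Y$ is relatively open by Lemma~\ref{L:PREPX}(b)) whose infimum is $\bar x_\ast$, one checks whether $\bar x_\ast$ itself lies in $X$; in either case one shows $\bar x_\ast\in\Y$, hence $W(\cdot;\bar x_\ast)$ does reach $\frac13$ at some $x_F(\bar x_\ast)\in(s(\bar x_\ast),\bar x_\ast)$, and by Lemma~\ref{L:PRELIM2} if $\bar x_\ast\in X$ then $s(\bar x_\ast)<x_F(\bar x_\ast)$. Then, assuming $s(\bar x_\ast)>0$: on the interval $(s(\bar x_\ast),x_F(\bar x_\ast))$ we have $W<\frac13$, and by Lemma~\ref{L:PREPX}(a) also $W<\R$; I would combine the lower bound $B\ge(1-\k)f^2$ with the monotonicity lemma machinery (Corollary~\ref{C:FFORMULA}, Lemma~\ref{L:LITTLEBPOS}) exactly as in Lemma~\ref{L:PRELIM3} — but now on the stretch where $W<\frac13$, one uses the equation~\eqref{E:WODE} to see $xW'=1-3W+(\text{sign-definite terms})$, so $W$ stays bounded below (say $W>-N$) and $\R$ likewise stays controlled, since $\R$ is monotone where $\R-W$ has a sign. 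This forces $\liminf_{x\to s(\bar x_\ast)^+}B(x)>0$, contradicting Lemma~\ref{L:OBSTRUCTION}.

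The remaining case is $\bar x_\ast\notin X$, i.e. $\bar x_\ast$ is the left endpoint and there is a sequence $x_\ast^n\downarrow\bar x_\ast$ with $x_\ast^n\in X\subset\Y$. Here I would use Proposition~\ref{P:LOWERSEMICONTRLP}: if $s(\bar x_\ast)>0$ were true, upper semi-continuity gives $\limsup_n s(x_\ast^n)\le s(\bar x_\ast)$, but one wants the opposite comparison. The idea is that on each $(s(x_\ast^n),\bar x_\ast^n-\tilde r]$ the bound from Lemma~\ref{L:PRELIM3} gives a uniform (in $n$) lower bound $B(x;x_\ast^n)\ge(1-\k)f(x_\ast^n-\tilde r)^2 e^{-\frac{C_2}{2}x^{-2}}$ on compact subsets of $(0,\bar x_\ast-\tilde r]$; here uniformity comes from the continuity of the Taylor data in $x_\ast$ (Lemma~\ref{Lem:analyticXast}) and the initialisation bounds of Lemma~\ref{L:INITIAL}. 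Feeding this uniform lower bound into Proposition~\ref{P:LOWERSEMICONTRLP}(c) yields, for any small $\hat x>0$, a $T=T(\hat x)>0$ with $s(\bar x_\ast)<\hat x-T$; letting $\hat x\to0^+$ forces $s(\bar x_\ast)=0$. The subtlety is that Lemma~\ref{L:PRELIM3} is stated for $x_\ast\in\X$ whereas here the approaching points are in $\Y$; I expect the main obstacle to be verifying that the key inequality $\F-xW\ge f$ on $(s(x_\ast^n),x_\ast^n-\tilde r)$ and the sign $b<0$ (hence the differential-inequality lower bound on $f$, hence on $B$) persist uniformly along the sequence. This requires showing $\R(x;x_\ast^n)>\frac13$ (or at least a controlled lower bound) on the relevant interval — which follows from Lemma~\ref{L:PREPX}(a) combined with $W<\frac13$ there — and then a careful limiting/compactness argument using parts (b)--(c) of Proposition~\ref{P:LOWERSEMICONTRLP} to pass the uniform $B$-bound to $\bar x_\ast$ itself and conclude via Lemma~\ref{L:OBSTRUCTION} that the terminal time must be $0$.
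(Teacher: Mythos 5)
Your proposal contains genuine gaps that would prevent it from working.

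First, the logic of the case split is inverted. You claim "in either case one shows $\bar x_\ast\in\Y$," but the truth is exactly the opposite: $\bar x_\ast\notin X$ (since $X$ is relatively open, $\bar x_\ast=\inf X$, and $\xmin\notin X$, so openness at the boundary point forces $\bar x_\ast$ out of $X$), and since $(\bar x_\ast,\xmax]\subset X\subset\Y$, having $\bar x_\ast\in\Y$ would immediately put $\bar x_\ast\in X$. So $\bar x_\ast\notin\Y$. This fact — not $\bar x_\ast\in\Y$ — is the engine of the contradiction in the nontrivial case. The paper therefore disposes of your entire first paragraph in one line: if $x_F(\bar x_\ast)>s(\bar x_\ast)>0$, then $W$ crosses $\tfrac13$ strictly before $s$, forcing $\bar x_\ast\in\Y$, hence $\bar x_\ast\in X$, contradiction. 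No continuation-past-$x_F$ or $B$-lower-bound argument is needed (and the one you sketch is on shaky ground anyway: Lemma~\ref{L:PRELIM3}'s derivation of $b<0$ and the crucial $Z^{-1}$ estimate genuinely use $W\ge\tfrac13$ — for example $H+xW\ge xW\ge\tfrac13 x$ — and $W$ can become small or negative below $x_F$, so the differential-inequality lower bound on $f$ does not propagate).

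Second, and more seriously, your argument in the case $\bar x_\ast\notin X$ has a gap that the paper's Case~3 dichotomy exists precisely to address. The bound from Lemma~\ref{L:PRELIM3} is only established on $[x_F(x_\ast^n),x_\ast^n-\tilde r]$ — not on $(s(x_\ast^n),x_\ast^n-\tilde r]$ as you assert, and in particular not on all compact subsets of $(0,\bar x_\ast-\tilde r]$. Consequently, your plan of "feeding the uniform bound into Proposition~\ref{P:LOWERSEMICONTRLP}(c) for any small $\hat x>0$ and letting $\hat x\to0^+$" only works if $x_F(x_\ast^n)\to 0$. The paper calls this sub-case $\bar x_F=0$ and indeed handles it exactly the way you propose. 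But when $\bar x_F:=\lim_n x_F(x_\ast^n)>0$, you can only push $\hat x$ down to $\bar x_F$ (giving $s(\bar x_\ast)<\bar x_F-T$ for some $T>0$), which does not yield $s(\bar x_\ast)=0$. The paper's resolution is entirely different from what you describe: pick $x\in(\bar x_F-T,\bar x_F)$; this $x$ lies strictly above every sonic time, so by Proposition~\ref{P:LOWERSEMICONTRLP}(b) and Lemma~\ref{L:PREPX} (which gives $W(x;x_\ast^n)<\tfrac13$ for $x<x_F(x_\ast^n)$), $W(x;\bar x_\ast)=\lim_n W(x;x_\ast^n)\le\tfrac13$. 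Since $W(\bar x_\ast;\bar x_\ast)=W_0>\tfrac13$, the intermediate value theorem places $\bar x_\ast\in\Y$ — contradicting $\bar x_\ast\notin\Y$. Your proposal has no analogue of this $\Y$-membership contradiction and therefore cannot close the $\bar x_F>0$ case.

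A smaller issue: the claimed inference "$\R>\frac13$ follows from $\R>W$ (Lemma~\ref{L:PREPX}(a)) combined with $W<\frac13$" does not work. The correct route is that $\R'<0$ on $(s,x_\ast)$ (since $B>0$, $\R>W$, $W+\k>0$), so $\R(x)>\R(x_\ast)=W_0\ge\tfrac13$ by monotonicity.
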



\begin{proof}
{\em Case 1.} We assume $x_F(\bar x_\ast)=0$. Since $0\le s(\bar x_\ast) \le x_F(\bar x_\ast)$ we are done.

\noindent
{\em Case 2.} We assume  $x_F(\bar x_\ast)>s(\bar x_\ast)>0$. In this case $\bar x_\ast\in X $ which is impossible, since $X $ is relatively open in $[\xmin,\xmax]$ by Lemma~\ref{L:PREPX} and $\xmin \notin X $ by Lemma~\ref{L:XYNONEMPTY}. 

\noindent
{\em Case 3.} We now assume that $x_F(\bar x_\ast)=s(\bar x_\ast)>0$. By Lemma~\ref{L:PRELIM2} this in particular implies that 
\be\label{E:CASE3}
\bar x_\ast\in[\xmin,\xmax]\setminus \Y
\ee 
and
\[
W(x_F(\bar x_\ast);\bar x_\ast)>\frac13.
\]
Consider now a sequence $\{x_\ast^n\}_{n\in\mathbb N}\subset X $ such that $\lim_{n\to\infty}x_\ast^n = \bar x_\ast$. We consider
\[
\bar x_F : = \limsup_{n\to\infty} x_F(x_\ast^n)
\]
and after possibly passing to a subsequence, we assume without loss that $\lim_{n\to\infty} x_F(x_\ast^n)=\bar x_F$. We now consider two possibilities.

\noindent
{\em Case 3 a).} Assume that $\bar x_F>0$.  Since $\{x_\ast^n\}_{n\in\mathbb N}\subset X $, by Lemma~\ref{L:PRELIM2} necessarily $s(x_\ast^n)<x_F(x_\ast^n)$, $n\in\mathbb N$.
Upon possibly passing to a further subsequence, we can ascertain that there exists some $\delta>0$ such that $\bar x_F>x_F(x_\ast^n)>\bar x_F-\delta>0$ for all $n\in\mathbb N$.
 By Lemma~\ref{L:PRELIM3} we conclude in particular that 
 \begin{align}
 B(x;x_\ast^n)>m(\bar x_F-\delta)>0 \ \ \text{ for all } \ \ x\in [x_F(x_\ast^n),x_\ast-\tilde r], \ \ n\in\mathbb N.
 \end{align}
 Therefore, by part (c) of Proposition~\ref{P:LOWERSEMICONTRLP}, there exists $T=T(m(\bar x_F-\delta),\bar x_F)$ such that 
 \begin{align}
s(\bar x_\ast)<\bar x_F-T, \ \  s(x_\ast^n)<\bar x_F - T, \ \ n\in\mathbb N.
 \end{align}
 Fix an $x\in(\bar x_F-T,\bar x_F)$ and observe that 
 \begin{align}
 W(x;\bar x_\ast)=\lim_{n\to\infty}W(x,x_\ast^n)\le \frac13,
 \end{align}
 where we have used Lemma~\ref{L:PREPX}. This implies $\bar x_\ast\in\Y $, a contradiction to~\eqref{E:CASE3}.
 
 \noindent
{\em Case 3 b).} 
Assume that $\bar x_F=0$. 
For any fixed $\hat x>0$ we can apply the argument from {\em Case 3 a)} to conclude that $s(\bar x_\ast)<\hat x$. Therefore $s(\bar x_\ast)=0$ in this case.
 \end{proof}


\begin{lemma}[Continuity of $\Y\ni \xs\mapsto x_{F}(\xs)$]\label{L:XFCONT}
The map 
\[
\Y\ni x_\ast \mapsto x_{F}(x_\ast) 
\]
is continuous and 
\be\label{E:ZONETHIRDCONT}
\lim_{X \ni x\to \bar x_\ast} x_{F}(x) = 0 = x_{F}(\bar x_\ast).
\ee
\end{lemma}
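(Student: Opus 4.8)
\textbf{Proof plan for Lemma~\ref{L:XFCONT}.}

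The plan is to prove continuity of $x_\ast\mapsto x_F(x_\ast)$ on $\Y$ in two pieces: upper semi-continuity and lower semi-continuity, using the structural facts already assembled. The key inputs are: (i) the transversality fact $W'(x_F(x_\ast);x_\ast)>0$ for $x_\ast\in\Y$ (Lemma~\ref{L:PREPX}(b)); (ii) the invariance $\{W<\frac13\}$ to the left of $x_F$, which says that once $W$ dips to $\frac13$ it cannot return above, coupled with the fact that $\Y$ is relatively open (Lemma~\ref{L:PREPX}(b)); and (iii) the continuity of the flow away from the sonic time, Proposition~\ref{P:LOWERSEMICONTRLP}(b)--(c), together with upper semi-continuity of the sonic time.

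First I would prove upper semi-continuity: if $x_\ast^n\to x_\ast$ in $\Y$, then $\limsup_n x_F(x_\ast^n)\le x_F(x_\ast)$. Fix a small $\delta>0$. By the transversality $W'(x_F(x_\ast);x_\ast)>0$ and the invariance of $\{W<\frac13\}$, there is a point $x_1\in(x_F(x_\ast)-\delta,x_F(x_\ast))$ with $W(x_1;x_\ast)<\frac13$ strictly. Since $x_1>x_F(x_\ast)>s(x_\ast)$ (using Lemma~\ref{L:PRELIM2}, as $x_\ast\in\Y$ forces $s(x_\ast)<x_F(x_\ast)$ when $s(x_\ast)>0$, and trivially otherwise), Proposition~\ref{P:LOWERSEMICONTRLP}(b) gives $W(x_1;x_\ast^n)\to W(x_1;x_\ast)<\frac13$, so $W(x_1;x_\ast^n)<\frac13$ for $n$ large; hence $x_F(x_\ast^n)\ge x_1>x_F(x_\ast)-\delta$. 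Wait --- this gives a \emph{lower} bound on $x_F(x_\ast^n)$; let me recast. Since $x_F$ is by definition $\inf\{x : W(\tau;x_\ast)>\frac13 \text{ on } (x,x_\ast)\}$, the condition $W(x_1;x_\ast^n)<\frac13$ means $x_F(x_\ast^n)\ge x_1$, i.e. $\liminf_n x_F(x_\ast^n)\ge x_F(x_\ast)-\delta$, giving \emph{lower} semi-continuity: $\liminf_n x_F(x_\ast^n)\ge x_F(x_\ast)$. For the other direction (upper semi-continuity), I would argue: for $x$ slightly \emph{above} $x_F(x_\ast)$, $W(x;x_\ast)>\frac13$ strictly on the compact interval $[x_F(x_\ast)+\delta', x_\ast-\tilde r]$, so by uniform convergence on this compact set (again Proposition~\ref{P:LOWERSEMICONTRLP}(b), after ruling out sonic-time obstructions via part (c) and the lower bound on $B$ from Lemma~\ref{L:PRELIM3} when $x_\ast\in\X$, or via upper semi-continuity of $s$), we get $W(x;x_\ast^n)>\frac13$ on that interval for $n$ large, forcing $x_F(x_\ast^n)\le x_F(x_\ast)+\delta'$. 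Combining the two bounds yields continuity.

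The remaining assertion~\eqref{E:ZONETHIRDCONT} is that $x_F(x)\to0$ as $x\to\bar x_\ast$ within $X$. Here $x_F(\bar x_\ast)=0$ follows from Proposition~\ref{P:GLOBALFRIEDMAN} ($s(\bar x_\ast)=0$) combined with the analysis showing $W(0;\bar x_\ast)=\frac13$ (Theorem~\ref{T:FRIEDMANN}), i.e. $W>\frac13$ on all of $(0,\bar x_\ast)$. For the limit, suppose for contradiction that there is a sequence $x_\ast^n\in X$, $x_\ast^n\to\bar x_\ast$, with $x_F(x_\ast^n)\to \bar x_F>0$. Then one repeats the argument of \emph{Case 3 a)} in the proof of Proposition~\ref{P:GLOBALFRIEDMAN}: by Lemma~\ref{L:PRELIM2} $s(x_\ast^n)<x_F(x_\ast^n)$, by Lemma~\ref{L:PRELIM3} $B$ stays uniformly bounded below on $[x_F(x_\ast^n),x_\ast-\tilde r]$, so Proposition~\ref{P:LOWERSEMICONTRLP}(c) gives a uniform $T>0$ with $s(\bar x_\ast)<\bar x_F-T$ and $s(x_\ast^n)<\bar x_F-T$, and then picking $x\in(\bar x_F-T,\bar x_F)$ and passing to the limit via Proposition~\ref{P:LOWERSEMICONTRLP}(b) forces $W(x;\bar x_\ast)\le\frac13$ for $x$ arbitrarily close to $\bar x_F>0$, contradicting $W>\frac13$ on $(0,\bar x_\ast)$. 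Hence $\bar x_F=0$.

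The main obstacle is the upper semi-continuity direction near points where the sonic time is positive or where $B$ degenerates: one must ensure that the limiting comparison of $W$ against $\frac13$ on a compact subinterval of $(s(x_\ast),x_\ast)$ is valid, which requires carefully invoking Proposition~\ref{P:LOWERSEMICONTRLP}(b)--(c) with the uniform lower bound on $B$ supplied by Lemma~\ref{L:PRELIM3} (available since $x_\ast\in\X\subset\Y$ near $\bar x_\ast$, or by shrinking to $\X$). Everything else is a routine limiting argument combining the transversality at $x_F$ with the continuity of the flow.
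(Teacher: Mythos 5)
Your approach --- lower and upper semi-continuity via the transversality $W'(x_F;x_\ast)>0$ and the flow-continuity results of Proposition~\ref{P:LOWERSEMICONTRLP}, then a contradiction argument modeled on Case~3\,a) of Proposition~\ref{P:GLOBALFRIEDMAN} for the limit~\eqref{E:ZONETHIRDCONT} --- is the natural route and matches what the paper intends; the paper itself only points to Lemma~4.13 of \cite{GHJ2021} without giving details, so your write-up is in fact more explicit than the source.

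There is, however, one genuine flaw in the way you justify $x_F(\bar x_\ast)=0$. You cite Theorem~\ref{T:FRIEDMANN} for ``$W(0;\bar x_\ast)=\tfrac13$, i.e.\ $W>\tfrac13$ on $(0,\bar x_\ast)$.'' This is circular: Theorem~\ref{T:FRIEDMANN} is proved only at the end of Section~\ref{SS:UL} and its proof passes through Lemma~\ref{L:LOWERSOLUTION} and Proposition~\ref{P:ONETHIRD}, both of which rely on the present Lemma~\ref{L:XFCONT}. Moreover, the inference is logically off: $W(0;\bar x_\ast)=\tfrac13$ by itself does not preclude $W$ dipping below $\tfrac13$ somewhere in $(0,\bar x_\ast)$. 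The correct and non-circular route is the one already implicit in the proof of Proposition~\ref{P:GLOBALFRIEDMAN} (Case~2): since $\Y$ is relatively open (Lemma~\ref{L:PREPX}\,(b)) and $\xmin\in\X$ so $\xmin\notin\Y$ (Lemma~\ref{L:XYNONEMPTY}\,(b)), one has $\bar x_\ast\notin X$ and in fact $\bar x_\ast\notin\Y$. Combined with $s(\bar x_\ast)=0$ (Proposition~\ref{P:GLOBALFRIEDMAN}) and the strict inequality $W_0(\bar x_\ast)>\tfrac13$ (Remark~\ref{R:XSDECREASING}, since $\bar x_\ast<\xmax$), the fact that $W(\cdot;\bar x_\ast)$ never equals $\tfrac13$ on $(0,\bar x_\ast)$ and equals $W_0>\tfrac13$ at the right endpoint forces $W>\tfrac13$ throughout $(0,\bar x_\ast)$, hence $x_F(\bar x_\ast)=0$. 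With this replacement the rest of your contradiction argument goes through unchanged.

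Two smaller technical points worth a sentence each in a final write-up: for the upper semi-continuity step you should note that when $x_F(x_\ast)$ lies within distance $\tilde r$ of $x_\ast$ the compact interval $[x_F(x_\ast)+\delta',x_\ast-\tilde r]$ may be empty, but there the solution is given by the convergent Taylor series of Theorem~\ref{T:SONICLWP}, which by Lemma~\ref{Lem:analyticXast} depends $C^1$ on $x_\ast$, so continuity of $x_F$ follows from transversality and the implicit function theorem; and you should be aware that Lemma~\ref{L:PRELIM3} is stated for $x_\ast\in\X$ but its proof actually uses $x_\ast\in\Y$ (apparently a typo in the paper), so its use here for $x_\ast\in\Y$ is consistent with the paper's actual argument.
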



\begin{proof}
The proof is nearly identical to the proof of Lemma 4.13 in~\cite{GHJ2021}.
\end{proof}


\subsection{The solution from the origin $x=0$ to the right} \label{SS:TOTHERIGHT}


In this section we consider the solutions $(\R_-,W_-)$ to~\eqref{E:RODE}--\eqref{E:WODE}
generated by the data imposed at $x=0$ and satisfying $W_-(0)=\frac13$. Upon specifying
the value $\R_0=\R_-(0)>0$, the problem is well-posed by Theorem~\ref{T:SONICLWPORIGIN}
on some interval $[0,r]$.



\begin{definition}
We introduce the \underline{sonic time from the left} 
\be
s_-(\R_0):= \sup_{x\geq 0}\{x\ \big| \ \F [x;\R_-(x;\R_0)] - x W_- (x;\R_0)>0 \}. 
\ee
\end{definition}


\begin{lemma}\label{L:APRIORIMINUS0} Let $\R_-(0)=\R_0>\frac13$, $W_-(0)=\frac13$ and $x_\ast\in[\xmin,\xmax]$.  The solution $(\R_-(x;\R_0), W_-(x;\R_0))$ to \eqref{E:RODE}--\eqref{E:WODE} with the initial data
\be\label{E:LEFT}
\R_-(0,\R_0)=\R_0\ge \frac13, \quad W_-(0;\R_0)= \frac13,
\ee
exists on the interval $[0, s_-(\R_0))$ and satisfies the following bounds: 
for $x\in(0, s_-(\R_0))$
\begin{align}
 W_- &> \frac{1}{3} ,\label{E:LBWMINUS}\\
\frac{ \R_-}{1-\k}+ \frac{ W_-}{1+\k}&<\frac{\R_0}{1-\k}+ \frac{1}{3(1+\k)}, \\
 \R_-^{1+\k}  W_-^{1-\k} &<  \frac{ \R_-^{1+\k} }{ 3^{1-\k}},\label{E:USGOOD1}\\
  \R_-  &>  W_- , \label{E:USGOOD2}\\
  \R_-' &<0. 
\end{align}
\end{lemma}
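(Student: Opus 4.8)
The plan is to run a continuity/invariant-region argument on the interval where $B>0$ (equivalently $\F[x;\R_-]-xW_->0$), showing that the listed bounds form a closed, forward-invariant region starting from the data at $x=0$. First I would note that by Theorem~\ref{T:SONICLWPORIGIN} the solution exists and is real-analytic on a right-neighbourhood of $x=0$ with $W_-(0)=\tfrac13$, $\R_-(0)=\R_0>\tfrac13$, and by continuity $\F[0;\R_0]-0\cdot W_-(0) = \F[0;\R_0]>0$, so $s_-(\R_0)>0$ and $B>0$ just to the right of $0$. On this interval, rewriting~\eqref{E:WODE} as in the proof of Lemma~\ref{L:XYNONEMPTY}(b) and using $B>0$, one sees $xW_-' = 1-2W_- + (\text{terms})$; I would instead directly check the sign of $W_-'$ at any putative first crossing $W_-=\tfrac13$: there~\eqref{E:WODE} gives $W_-'= \tfrac{2x(1+\k)W_-(W_-+\k)(\R_--W_-)}{B}$, which is $>0$ provided $\R_->W_-$ and $B>0$, so $W_-$ cannot cross $\tfrac13$ from above; this gives~\eqref{E:LBWMINUS} as long as $\R_->W_-$ holds.

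Next I would handle $\R_->W_-$ (the bound~\eqref{E:USGOOD2}) simultaneously: at a hypothetical first equality point $x_c$ with $\R_-(x_c)=W_-(x_c)$ and $\R_->W_-$ on $(0,x_c)$, equations~\eqref{E:RODE}--\eqref{E:WODE} give $\R_-'(x_c)=0$ while $W_-'(x_c)=\tfrac{1-3W_-}{x_c}$. Since on $(0,x_c)$ we have $\R_->W_->\tfrac13$ hence $W_-(x_c)\ge\tfrac13$, we get $W_-'(x_c)\le 0 = \R_-'(x_c)$, so $(\R_--W_-)'(x_c)\ge 0$; combined with $\R_-(0)-W_-(0)=\R_0-\tfrac13>0$ and a standard first-touching argument (examining the strict sign of the next nonvanishing derivative, using analyticity from Theorem~\ref{T:SONICLWPORIGIN} to rule out infinite-order contact), this forbids a crossing. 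The monotonicity $\R_-'<0$ then follows from~\eqref{E:LOGRFORMULA}: with $B>0$, $\R->0$, $W_-+\k>0$ and $\R_-> W_-$, the right-hand side is strictly negative. In particular $\R_-$ stays $\ge\tfrac13$, which is what makes the bootstrap consistent.

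The two algebraic-type bounds are differential inequalities integrated from $0$. For the linear combination, I would compute $\left(\tfrac{\R_-}{1-\k}+\tfrac{W_-}{1+\k}\right)' = \tfrac{\R_-'}{1-\k}+\tfrac{W_-'}{1+\k}$; adding $\tfrac{1}{1-\k}\times\eqref{E:RODE}$ and $\tfrac{1}{1+\k}\times\eqref{E:WODE}$ the two $(\R_--W_-)/B$ source terms combine, and using $\R_-'<0$ together with $W_-'\le \tfrac{1-3W_-}{x}\le 0$ (since $W_-\ge\tfrac13$) one gets that the sum is nonincreasing — wait, one must be careful with the sign of the $B$-term in $W_-'$, so the cleaner route is: the source terms of the two equations are $-\tfrac{2x(1-\k)\R_-(W_-+\k)(\R_--W_-)}{(1-\k)B}$ and $+\tfrac{2x(1+\k)W_-(W_-+\k)(\R_--W_-)}{(1+\k)B}$, whose sum is $-\tfrac{2x(W_-+\k)(\R_--W_-)^2}{B}\le 0$, and the remaining piece is $\tfrac{1}{1+\k}\cdot\tfrac{1-3W_-}{x}\le 0$; hence the combination is strictly decreasing, giving the strict inequality for $x>0$. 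For~\eqref{E:USGOOD1}, taking the logarithmic derivative of $\R_-^{1+\k}W_-^{1-\k}$ and again combining~\eqref{E:RODE}--\eqref{E:WODE} (this time with weights $\tfrac{1+\k}{\R_-}$ and $\tfrac{1-\k}{W_-}$) the $B$-terms cancel exactly, leaving $(1-\k)\tfrac{1-3W_-}{xW_-}<0$; comparing with $\left(\tfrac{\R_-^{1+\k}}{3^{1-\k}}\right)'$ which has the purely negative contribution $(1+\k)\tfrac{\R_-'}{\R_-}$ shared by both sides, one gets the stated strict inequality after integrating from $x=0$ where equality holds. The main obstacle I expect is the simultaneous bootstrap of $W_->\tfrac13$ and $\R_->W_-$: each is proved assuming the other, so one has to set up a single connectedness argument on the maximal subinterval of $[0,s_-(\R_0))$ where \emph{all} the strict bounds hold and show it is both open and closed there — the analyticity near $x=0$ and the first-touching sign analysis at a putative boundary point are what make this closure argument go through.
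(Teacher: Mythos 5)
Your overall strategy --- a forward-invariant region / first-touching bootstrap on the interval where $B>0$ --- is the right one, and the individual differential identities you write down are correct. Two points are worth flagging.

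First, the bound~\eqref{E:USGOOD1} as printed in the paper cannot be right: dividing by $\R_-^{1+\k}>0$ it reads $W_-^{1-\k}<3^{-(1-\k)}$, i.e.\ $W_-<\frac13$, which contradicts~\eqref{E:LBWMINUS}. The intended right-hand side is $\R_0^{1+\k}/3^{1-\k}$ (a constant), and this is exactly what is used downstream in the proof of Lemma~\ref{L:RHOMINUS} (see the bounds labelled~\eqref{E:B2},~\eqref{E:B21}). Your key computation --- that the $B$-terms cancel when you combine $\frac{1+\k}{\R_-}\times\eqref{E:RODE}$ with $\frac{1-\k}{W_-}\times\eqref{E:WODE}$, yielding $\frac{d}{dx}\log\bigl(\R_-^{1+\k}W_-^{1-\k}\bigr)=(1-\k)\frac{1-3W_-}{xW_-}<0$ --- already proves the corrected inequality outright: the quantity is strictly decreasing from its value $\R_0^{1+\k}3^{-(1-\k)}$ at $x=0$. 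The subsequent step, ``comparing with $\bigl(\R_-^{1+\k}/3^{1-\k}\bigr)'$,'' is neither needed nor would it close: after cancelling the shared term $(1+\k)\R_-'/\R_-$ the remaining difference is $(1-\k)W_-'/W_-$, whose sign has not been controlled (and indeed $W_-'$ need not be $\le0$).

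Second, the circularity you identify between~\eqref{E:LBWMINUS} and~\eqref{E:USGOOD2} is real, and the closed-and-open argument on the maximal subinterval where all strict inequalities hold is the right fix. But the ``next nonvanishing derivative'' step is both heavier than necessary and slightly misattributed: Theorem~\ref{T:SONICLWPORIGIN} only yields analyticity near $x=0$; away from $x=0$, with $B>0$ and $\R_->0$, analyticity is simply the analyticity of solutions of an ODE with real-analytic right-hand side. More to the point, the only delicate case is a contact at $\R_-(x_c)=W_-(x_c)$ with $W_-(x_c)=\frac13$ (in every other case your first-derivative sign computation already yields a contradiction, since $(\R_--W_-)'(x_c)=\frac{3W_--1}{x_c}>0$ is inconsistent with $\R_--W_-$ decreasing to $0$ from the left). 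But $\R_-(x_c)=W_-(x_c)=\frac13$ is exactly the Friedmann fixed point~\eqref{E:FRIEDMANNINTRO}, which is an exact constant solution of~\eqref{E:RODE}--\eqref{E:WODE}; since $x_c>0$ is a regular point of the system (the coefficient $1/x$ is smooth there and $B(x_c)>0$), ODE uniqueness forces $(\R_-,W_-)\equiv(\frac13,\frac13)$, contradicting $\R_0>\frac13$. This replaces the higher-order contact analysis with a one-line uniqueness argument and is the cleanest way to close the bootstrap.

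With those two adjustments the argument is complete and matches the expected approach (the paper itself just defers to the Newtonian analogue in~\cite{GHJ2021}).
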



\begin{proof}
The proof is analogous to the proof of Lemma 4.14 from~\cite{GHJ2021}.
\end{proof}

In the following lemma we identify a spatial scale $x_0\sim\frac1{\R_0^{1+O(\k)}}$ over which we obtain quantitative lower bounds on the density $\R_-$ over $[0,x_0]$.


\begin{lemma}[Quantitative lower bounds on $\R_-$]\label{L:RHOMINUS}
Let $\R_0>\frac13$ and $x_\ast\in[\xmin,\xmax]$ be given and consider the unique solution $(\R_-(\cdot;\R_0), W_-(\cdot;\R_0))$ to the initial-value problem~\eqref{E:RODE}--\eqref{E:WODE},~\eqref{E:LEFT}.
For any $\R_0>\frac13$ let
\begin{align}\label{E:ZNODDEF}
x_0=x_0(\R_0): =  
\begin{cases}
\frac{3^{\frac{1-\k}{2}}}{2^\frac12  (1+8\k)^\frac12\R_0^{\frac{1}{1-\k} } },
& \R_0> 1; \\
\frac{3^{\frac{1-\k}{2}}}{2^\frac12  (1+8\k)^\frac12 },  & \frac13 < \R_0\le1.
\end{cases} 
\end{align}
Then $s_-(\R_0)>x_0$ for all $\R_0>\frac13$ and 
\begin{align}\label{E:APRIORILEFT}
\R_-(x;\R_0) \ge 
\begin{cases}
\R_0 \exp\left(- \R_0^{-1+\k}\right), & \R_0>1;\\
 \R_0 \exp\left(-1\right), & \frac13 < \R_0\le1, 
\end{cases}
\ \ x\in[0, x_0].
\end{align}
Moreover, for all sufficiently small $\k>0$, there exists a $\hat\R>1$ 
such that for all $\R_0>\hat\R$ we have 
\begin{align}\label{E:RHOZZERO}
\R_-(x_0;\R_0)& >\frac1{x_0^{1-\k}}. 
\end{align} 
\end{lemma}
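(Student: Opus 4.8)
The statement to be proved is Lemma~\ref{L:RHOMINUS}: for data $\R_-(0)=\R_0>\frac13$, $W_-(0)=\frac13$ one has $s_-(\R_0)>x_0(\R_0)$, the quantitative lower bound~\eqref{E:APRIORILEFT} holds on $[0,x_0]$, and for $\R_0$ large~\eqref{E:RHOZZERO} holds. I will first record the elementary monotonicity facts from Lemma~\ref{L:APRIORIMINUS0}, namely that on $(0,s_-(\R_0))$ we have $W_->\frac13$, $\R_->W_-$, $\R_-'<0$, and the conservation-type inequality $\R_-^{1+\k}W_-^{1-\k}<\frac13^{-(1-\k)}\R_-^{1+\k}$, equivalently $W_-<\frac13$ is false but $\R_-^{1+\k}W_-^{1-\k}$ is decreasing. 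The strategy is to use the $\R_-$-equation~\eqref{E:RODE} rewritten logarithmically as in~\eqref{E:LOGRFORMULA}, $\left(\log\R_-\right)'=-\frac{2x(1-\k)(W_-+\k)(\R_--W_-)}{B}$, and to obtain a lower bound on $B$ and an upper bound on $\R_--W_-$ valid on a controlled interval $[0,x_0]$, so that the right-hand side integrates to something $O(\R_0^{-1+\k})$ (resp.\ $O(1)$) in size.

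\textbf{Key steps.} First I would establish that $B$ stays bounded below by roughly $\R_0^{-\e}$ on a short interval. Near $x=0$ we have $\R_-\approx\R_0$, $W_-\approx\frac13$, so from~\eqref{E:BDEF0}, $B=\R_-^{-\e}-\big[(W_-+\k)^2-\k(W_--1)^2+4\k\R_-W_-\big]x^2$, and the bracket is $\lesssim 1+\k\R_0$, hence $B\ge \R_0^{-\e}\big(1+o(1)\big)-C(1+\k\R_0)x^2$. Choosing $x_0$ proportional to $\R_0^{-\frac{1}{1-\k}}=\R_0^{-\frac{1+\e}{2}\cdot\frac{2}{1+\e}}$ — note $\frac{1}{1-\k}$ is chosen precisely so $x_0^2\R_0^{\e}\R_0^{?}$... more carefully, $x_0^2\cdot\k\R_0\sim \R_0^{-\frac{2}{1-\k}}\k\R_0=\k\R_0^{1-\frac{2}{1-\k}}$ which is small, and $x_0^2\sim\R_0^{-\frac2{1-\k}}\le\R_0^{-\e}\cdot\R_0^{-\frac2{1-\k}+\e}$ is $\ll\R_0^{-\e}$; the definition~\eqref{E:ZNODDEF} with the explicit constant $\frac{3^{(1-\k)/2}}{2^{1/2}(1+8\k)^{1/2}}$ is rigged so that $B\ge\frac12\R_0^{-\e}$ on $[0,x_0]$. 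This simultaneously shows $\F-xW>0$ on $[0,x_0]$, i.e.\ $s_-(\R_0)>x_0$. Second, on this interval $(W_-+\k)(\R_--W_-)\le(\R_0+\k)\cdot\R_0\lesssim\R_0^2$ (using $W_-<\R_-<\R_0$ from Lemma~\ref{L:APRIORIMINUS0}), so $|(\log\R_-)'|\le \frac{2x(1-\k)\cdot C\R_0^2}{\frac12\R_0^{-\e}}=C'\R_0^{2+\e}x$. Integrating over $[0,x]$ with $x\le x_0\sim\R_0^{-\frac1{1-\k}}$ gives $|\log\frac{\R_-(x)}{\R_0}|\le C'\R_0^{2+\e}\cdot\frac{x_0^2}{2}\lesssim\R_0^{2+\e}\cdot\R_0^{-\frac{2}{1-\k}}=\R_0^{2+\e-\frac2{1-\k}}=\R_0^{-1+\k}$ after simplifying the exponent $2+\frac{2\k}{1-\k}-\frac2{1-\k}=2+\frac{2\k-2}{1-\k}=2-2=0$... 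I must recheck: $\frac{2\k-2}{1-\k}=\frac{-2(1-\k)}{1-\k}=-2$, so the exponent is $2-2=0$, giving $|\log(\R_-/\R_0)|\lesssim 1$. To get the sharper $\R_0^{-1+\k}$ one refines: when $\R_0>1$, $x_0$ carries an extra factor $\R_0^{-\frac1{1-\k}}$, so $x_0^2\sim\R_0^{-\frac2{1-\k}}$ and the product $\R_0^{2+\e}x_0^2\sim\R_0^{2+\frac{2\k}{1-\k}-\frac2{1-\k}}$; but we also have a better bound $\R_--W_-\le\R_0-\frac13\le\R_0$ only, whereas using $\R_-^{1+\k}W_-^{1-\k}$ decreasing gives $W_-\le \R_0^{\frac{1+\k}{1-\k}}/(3^{\text{?}}\R_-^{\frac{1+\k}{1-\k}})$ — actually the cleanest route is: on $[0,x_0]$, $\R_-\ge\R_0 e^{-1}$ follows already from the crude integration, and then re-feeding this into a Grönwall on $\log\R_-$ with the sharp constant $2+8\k$ built into $x_0$ yields the $\exp(-\R_0^{-1+\k})$ bound. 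I would carry out this bootstrap carefully.

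\textbf{The comparison~\eqref{E:RHOZZERO}.} Finally, for $\R_0$ large one wants $\R_-(x_0)>x_0^{-(1-\k)}$. From~\eqref{E:APRIORILEFT}, $\R_-(x_0)\ge\R_0 e^{-\R_0^{-1+\k}}\ge\frac12\R_0$ for $\R_0$ large (since $\R_0^{-1+\k}\to0$). On the other hand $x_0^{-(1-\k)}=\big(\frac{3^{(1-\k)/2}}{2^{1/2}(1+8\k)^{1/2}}\big)^{-(1-\k)}\R_0$, a fixed multiple of $\R_0$ with the multiplicative constant $c(\k):=\big(\frac{2(1+8\k)}{3^{1-\k}}\big)^{(1-\k)/2}$. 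As $\k\to0$, $c(\k)\to\big(\frac23\big)^{1/2}<1$, so for $\k$ small and $\R_0$ large, $\frac12\R_0$ vs $c(\k)\R_0$ is the wrong direction — so the crude $\frac12$ is insufficient and I must instead use that $e^{-\R_0^{-1+\k}}\to1$: choose $\hat\R$ so large that $e^{-\R_0^{-1+\k}}>c(\k)$ for all $\R_0>\hat\R$, which is possible since $c(\k)<1$ is a fixed constant (for fixed small $\k$) and the left side tends to $1$. Then $\R_-(x_0)\ge\R_0 e^{-\R_0^{-1+\k}}>c(\k)\R_0=x_0^{-(1-\k)}$, giving~\eqref{E:RHOZZERO}. \textbf{The main obstacle} I anticipate is the first step — pinning down the lower bound $B\ge\frac12\R_0^{-\e}$ on exactly the interval $[0,x_0]$ with the explicit constant in~\eqref{E:ZNODDEF} — since this requires tracking the $\k$-dependent coefficients in~\eqref{E:BDEF0} together with the a priori drift of $\R_-$ and $W_-$ away from their initial values, and getting the constant $(1+8\k)$ exactly right; the rest is a Grönwall/bootstrap argument of a routine type.
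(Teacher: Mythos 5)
Your overall architecture matches the paper's: use the logarithmic form of~\eqref{E:RODE}, lower-bound $B$ on $[0,x_0]$, upper-bound the integrand, and for~\eqref{E:RHOZZERO} express $x_0^{-(1-\k)}$ as a multiple $c(\k)\R_0$ with $c(\k)\to\sqrt{2/3}<1$ as $\k\to0$. That last step is correct and is exactly what the paper does.

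However, there is a genuine gap in the middle. You bound the bracket in $B$ by ``$\lesssim 1+\k\R_0$'' and the product $(W_-+\k)(\R_--W_-)$ by $\lesssim\R_0^2$. The first of these is only true at $x=0$; a priori $W_-$ can drift up to order $\R_0$, making $(W_-+\k)^2$ of order $\R_0^2$, not $O(1)$. The second is too crude, and you detect this yourself: your exponent arithmetic $2+\e-\frac{2}{1-\k}=0$ correctly shows that with $\R_0^2$ you only get $|\log(\R_-/\R_0)|\lesssim 1$, not $\lesssim\R_0^{-(1-\k)}$. Your proposed repair --- first get $\R_-\ge e^{-1}\R_0$, then bootstrap --- does not fill the gap, because the deficit is not in the bound on $\R_-$ but in the bound on $W_-$: improving $\R_-$ alone still leaves $(W_-+\k)(\R_--W_-)\lesssim W_-\R_-\lesssim\R_0^2$ unless you control $W_-$ better.

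The missing ingredient is the sharper a priori bound coming from the monotone quantity in Lemma~\ref{L:APRIORIMINUS0}, namely the decrease of $\R_-^{1+\k}W_-^{1-\k}$. Evaluated at $x=0$ this gives $W_-<(\R_0^{1+\k}/3^{1-\k})^{1/2}\sim\R_0^{(1+\k)/2}$ and $\R_-W_-<\R_0^{1+\k}/3^{1-\k}$ on the whole interval of existence. Feeding these into the bracket gives $(W_-+\k)^2-\k(W_--1)^2+4\k\R_-W_-\le(1+8\k)\R_0^{1+\k}/3^{1-\k}$, which is the exact constant the definition~\eqref{E:ZNODDEF} is tuned to, and makes $\R_-^{\e}B\ge 1-\tfrac12\R_0^{-(1-\k)}\ge\tfrac12$. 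Feeding them into the integrand gives $(W_-+\k)(\R_--W_-)<W_-\R_-+\k\R_-\lesssim\R_0^{1+\k}$, and then $\R_0^{\e}\cdot\R_0^{1+\k}\cdot x_0^2\sim\R_0^{1+\k+\e-\frac{2}{1-\k}}=\R_0^{-(1-\k)}$, which is the sharp exponent you were seeking. You cite Lemma~\ref{L:APRIORIMINUS0} in your preamble but only deploy its crudest corollaries ($W_-<\R_-<\R_0$); using~\eqref{E:USGOOD1} at the sharp level is what the proof actually hinges on.
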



\begin{proof}
 Equation~\eqref{E:RODE}  
 is equivalent to 
\begin{align}\label{E:RHOLEFT}
\R_-(x) = \R_0 \exp\left(-\int_0^x\frac{2 \tau (1-\k)( W_-+\k)(\R_-- W_-)}{B}\,d\tau\right).
\end{align}
Since  $ W_-  =( W_-^{1+\k} W_-^{1-\k})^\frac12< (\R_-^{1+\k} W_-^{1-\k})^\frac12$ and  $\R_-  W_- = (\R_-^{1+\k} W_-^{1-\k})^{\frac{1}{1+\k}} W_-^{\frac{2\k}{1+\k}} $, 
by Lemma \ref{L:APRIORIMINUS0},  
we have 
the following bounds on the interval $(0, s_-(\R_0))$
\begin{align}
 W_-<\R_- & <\R_0, \label{E:B1}\\
\frac13< W_- &   <\left(\frac{\R_0^{1+\k}}{3^{1-\k}}\right)^\frac12, \label{E:B2}\\
\R_-  W_-  & < \frac{\R_0^{1+\k}}{3^{1-\k}}.  \label{E:B21}
\end{align}
Now from the definition~\eqref{E:BDEF0} of $B$, for any $0\le x\le x_0$ using~\eqref{E:B1}-\eqref{E:B21} and the definition of $x_0$ in \eqref{E:ZNODDEF},  we have 
\begin{align}
\R_-^{\frac{2\k}{1-\k}} B& =1 -\R_-^{\frac{2\k}{1-\k}} \left[ (1-\k) W_-^2  + 4\k \R_- W_- + 4\k  W_-+ \k^2-\k \right] x^2\notag \\
&\geq 1  -\R_0^{\frac{2\k}{1-\k}}\left[ (1+3\k)\frac{\R_0^{1+\k}}{3^{1-\k}}  + 4\k  \left(\frac{\R_0^{1+\k}}{3^{1-\k}}\right)^\frac12+ \k^2-\k \right] x^2\notag \\
&\geq 1  -\R_0^{\frac{2\k}{1-\k}} (1+8\k) \frac{\R_0^{1+\k}}{3^{1-\k}} x^2\notag \\
&\geq 
\begin{cases}
1- \frac{1}{2\R_0^{1-\k}} \geq \frac12 ,& \R_0>1; \\
1- \frac{\R_0^{1+\k+\frac{2\k}{1-\k}}}{2} \geq \frac12, & \frac13<\R_0\le 1. 
\end{cases}
 \label{E:B3}
\end{align}
Note from the second line to the third line we have used the upper bound of $[\cdot]$ term
\be\label{E:B30}
\frac{\R_0^{1+\k}}{3^{1-\k}} \le (1+3\k)\frac{\R_0^{1+\k}}{3^{1-\k}}  + 4\k  \left(\frac{\R_0^{1+\k}}{3^{1-\k}}\right)^\frac12+ \k^2-\k\le (1+8\k) \frac{\R_0^{1+\k}}{3^{1-\k}}, 
\ee 
which holds true for $\R_0>\frac13$ and sufficiently small $0<\k$. Therefore,   $s_-(\R_0)>x_0$ for all $\R_0>\frac13$. 

From~\eqref{E:B1},~
\eqref{E:B21}, 
and~\eqref{E:B3} for any $x\in [0,x_0]$ we obtain
\begin{align*}
&\int_0^x\frac{2 \tau(1-\k)  ( W_-+\k)(\R_-- W_-)}{  B}\,d\tau
 \le 4 (1-\k)\R_0^{\frac{2\k}{1-\k}} ( \frac{\R_0^{1+\k}}{3^{1-\k}} + \k \R_0) \int_0^x\tau\,d\tau  
 \\
 & \le2x_0^2 (1-\k) \R_0^{\frac{2\k}{1-\k}} ( \frac{\R_0^{1+\k}}{3^{1-\k}} + \k \R_0) 
= \begin{cases}
  \frac{1}{\R_0^{1-\k}}\frac{(1-\k)( \frac{\R_0^{1+\k}}{3^{1-\k}} + \k \R_0)}{ (1+8\k) \frac{\R_0^{1+\k}}{3^{1-\k}} }, &  \R_0>1;\\
\R_0 \frac{(1-\k)  \R_0^\frac{2\k}{1-\k}( \R_0^{\k} + \k3^{1-\k} )}{ (1+8\k) } ,& \frac13<\R_0\le 1 . 
 \end{cases}
\end{align*}
Now comparing the denominator and numerator of the last fractions, we see that 
\[
\begin{split}
&(1+8\k) \frac{\R_0^{1+\k}}{3^{1-\k}} - (1-\k)( \frac{\R_0^{1+\k}}{3^{1-\k}} + \k \R_0) = \k \left[ 9\frac{\R_0^{1+\k}}{3^{1-\k}} - (1-\k)\R_0  \right]\geq 0 \  \text{ for all } \ \R_0 > 1
\end{split}
\] and
\[
\begin{split}
1+8\k - (1-\k) \R_0^\frac{2\k}{1-\k}( \R_0^{\k} + \k3^{1-\k} ) &\ge 1+8\k - (1-\k) (1+\k 3^{1-\k}) \\
&= \k(9-(1-\k) 3^{1-\k}) \ge 0 \  \text{ for all } \  \frac13<\R_0\le 1 
\end{split}
\]
and hence we deduce that for  any $x\in [0,x_0]$
\[
\int_0^x\frac{2\tau(1-\k)  ( W_-+\k)(\R_-- W_-)}{  B}\,d\tau \leq
\begin{cases}
 \frac{1}{\R_0^{1-\k}}, & \R_0>1;\\
 1, &  \frac13<\R_0\le 1 . 
 \end{cases}
\]
Plugging this bound in~\eqref{E:RHOLEFT} we obtain~\eqref{E:APRIORILEFT}.

To show~\eqref{E:RHOZZERO},  we first rewrite \eqref{E:ZNODDEF} for $\R_0>1$ as 
\be\label{rho0L}
\R_0 = \frac{1}{x_0^{1-\k}}  \left( \frac{3^{1-\k}}{2(1+8\k)} \right)^{\frac{1-\k}{2}}. 
\ee
Using this  in~\eqref{E:APRIORILEFT}
\be
\begin{split}
\R_-(x_0; \R_0) &\geq \R_0 e^{-\frac{1}{\R_0^{1-\k}}} 
= \frac{1}{x_0^{1-\k}}  \left( \frac{3^{1-\k}}{2(1+8\k)} \right)^{\frac{1-\k}{2}}  e^{-\frac{1}{\R_0^{1-\k}}}. 
\end{split}
\ee
The bound~\eqref{E:RHOZZERO} follows if 
$
 \left( \frac{3^{1-\k}}{2(1+8\k)} \right)^{\frac{1-\k}{2}}  e^{-\frac{1}{\R_0^{1-\k}}}>1
$
which is clearly true for sufficiently large $\R_0$ and sufficiently small $\k$. 
\end{proof}

\begin{remark} The specific choice of $x_0$ in \eqref{E:ZNODDEF} has been made to ensure the lower bound in \eqref{E:RHOZZERO} to be compared with the bound \eqref{E:FtoRho} satisfied by the solutions emanating from the sonic point. In fact, better bounds are obtained by choosing different $x_0$. For instance, if $x_0= O(\frac{1}{\R_0^{1+\alpha}})$ for $\frac{\k}{1-\k}+\frac{\k}{2}+\frac12<1+\alpha\leq \frac{1}{1-\k}$ in  \eqref{E:ZNODDEF}, then we may deduce that for such $x_0$, $\R_-(x_0;\R_0) > \frac{1}{x_0^{\frac{1}{1+\alpha}} } \geq \frac{1}{ x_0^{1-\k}}$ with the equality being valid for $\alpha=\frac{\k}{1-\k}$ for all sufficiently large $\R_0$ and small $\k$. 
\end{remark}

\begin{remark}\label{R:COMMONINTERVAL}
Since the mapping $\R_0\mapsto x_0(\R_0)$ from~\eqref{E:ZNODDEF} is nonincreasing, it follows that for any fixed $\R_0>\frac13$ we have the uniform bound
on the sonic time:
\begin{align}
s_-(\tilde \R_0)>x_0(\tilde \R_0)\ge x_0(\R_0), \ \ \text{ for all }\ \ \frac13  <\tilde \R_0\le \R_0.\notag
\end{align}
\end{remark}

\

The following lemma shows the crucial monotonicity property of $\R_-(\cdot;\R_0)$ with respect to $\R_0$ on a time-scale of order $\sim \R_0^{-(\frac34+O(\k))}$.

\begin{lemma}\label{L:MONOTONE}
Let $x_\ast\in[\xmin,\xmax]$. There exists a sufficiently small $\kappa>0$ such that for all $\R_0\ge \frac13$  
\begin{align}
\pa_{\R_0}\R_-(x;\R_0)>0 \ \ \text{ for all } \ x\in[0, \kappa \R_0^{-b}], \ \text{ where }  \ b= \tfrac{3+\k+2\e}{4} \notag
\end{align} 
\end{lemma}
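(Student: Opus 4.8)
\textbf{Proof plan for Lemma~\ref{L:MONOTONE}.}

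The plan is to derive the variational ODE satisfied by $g(x) := \pa_{\R_0}\R_-(x;\R_0)$ and $h(x):=\pa_{\R_0}W_-(x;\R_0)$ by differentiating the system~\eqref{E:RODE}--\eqref{E:WODE} with respect to the parameter $\R_0$, using Remark~\ref{R:CONEPROP} to justify the differentiation and to read off the initial conditions $g(0)=1$, $h(0)=0$. The resulting linear system has the schematic form $g' = \alpha g + \beta h$, $h' = \gamma g + \delta h$, where the coefficients $\alpha,\beta,\gamma,\delta$ are rational expressions in $(\R_-,W_-,x)$ with denominator a power of $B=B[x;\R_-,W_-]$. The strategy is then a continuity/trapping argument: we want to show that on the time-scale $x\in[0,\kappa\R_0^{-b}]$ the region $\{g>0\}$ (together with suitable sign/size control on $h$) is dynamically trapped starting from $g(0)=1$, $h(0)=0$.

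First I would collect the a priori bounds we already have on this time-scale. By Lemma~\ref{L:RHOMINUS} and Lemma~\ref{L:APRIORIMINUS0}, on $[0,x_0(\R_0)]$ — and hence on the shorter interval $[0,\kappa\R_0^{-b}]$ provided $\kappa$ is small, since $b>\tfrac12\cdot\tfrac{1}{1-\k}\cdot(1-\k)\cdot 2 = 1$ matches the scaling of $x_0\sim\R_0^{-1/(1-\k)}$ up to the choice of exponent $b=\tfrac{3+\k+2\e}{4}<\tfrac{1}{1-\k}$ for small $\k$ — we have $\tfrac13<W_-<\R_-<\R_0$, $\R_- W_- < \R_0^{1+\k}3^{-(1-\k)}$, and the crucial lower bound $\R_-^{\e}B\ge\tfrac12$, i.e. $B\gtrsim \R_0^{-\e}$. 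I would also record that $x^2\lesssim \kappa^2\R_0^{-2b}$ is extremely small relative to $\R_0^{-\e}$, so the "source-like" terms in the variational system that carry a factor $x^2/B$ or $xW/B$ come with a gain of a positive power of $\R_0^{-1}$; this is precisely why the small time-scale $\R_0^{-b}$ is forced.

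Next I would set up the trapping: define $T_\ast$ to be the supremum of $x\in[0,\kappa\R_0^{-b}]$ on which $g>0$ and $|h|\le C x$ (or a comparable linear-in-$x$ bound with $C$ a universal constant), and argue $T_\ast$ equals the endpoint. On $[0,T_\ast]$ we have from the $g$-equation $g' = \alpha g + \beta h$ with $|\alpha|\lesssim x\R_0^{\e}\cdot\R_0 \lesssim \R_0^{-2b+1+\e}$ (small) and $|\beta h|\lesssim (x\R_0^{\e}\cdot\text{stuff})\cdot Cx$, hence by Grönwall $g(x)\ge \tfrac12 e^{-\int|\alpha|} - (\text{error}) \ge \tfrac14>0$ once $\kappa$ is small; simultaneously the $h$-equation, whose homogeneous part near $x=0$ behaves like $h'\approx -\tfrac{2}{x}h + O(1)\cdot g$ (from the $\tfrac{1-3W}{x}$ term differentiated), forces $|h(x)|\le C x$ to be reproduced with the same constant. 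This closes the bootstrap and shows $g>0$ throughout, i.e. $\pa_{\R_0}\R_->0$ on $[0,\kappa\R_0^{-b}]$.

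\textbf{Main obstacle.} The hard part will be the bookkeeping in the $h$-equation near $x=0$: the singular term $\tfrac{1-3W_-}{x}$ in~\eqref{E:WODE} differentiates to a term $-\tfrac{3h}{x}$, so $h$ satisfies a singular linear ODE and one must check that the particular solution driven by $g$ (which is $\approx 1$) does not overwhelm the decay and that $|h|$ genuinely stays $O(x)$ rather than $O(1)$ — this requires expanding $W_-$ and $\R_-$ to the needed order at $0$ via Theorem~\ref{T:SONICLWPORIGIN} and tracking constants carefully. The second delicate point is verifying that the exponent $b=\tfrac{3+\k+2\e}{4}$ is exactly what is needed so that every error term of the form $x^{p}\R_0^{q}$ appearing in $\alpha,\beta,\gamma$ (with the worst case $q$ coming from $\R_0^{1+\k}$ in $\R_-W_-$ and $\R_0^{\e}$ from $\R_-^{-\e}$ in $B^{-1}$) carries a net negative power of $\R_0$; matching the dominant balance $2b = 1+\k+\e + \text{(half of the }\R_- W_-\text{ exponent)}$ after symmetrizing pins down $b$, and this algebra must be done precisely to justify the choice in the statement.
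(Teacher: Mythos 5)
Your overall strategy --- differentiate the system in $\R_0$, use the initial conditions $\pa_{\R_0}\R_-(0)=1$, $\pa_{\R_0}W_-(0)=0$, and show that on the short time-scale $[0,\kappa\R_0^{-b}]$ the perturbation $\pa_{\R_0}W_-$ stays too small to push $\pa_{\R_0}\R_-$ below zero --- is the same as the paper's. But the quantitative form of your bootstrap hypothesis is wrong and, as written, the bootstrap does not close.

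Concretely, you propose to propagate $|h(x)|=|\pa_{\R_0}W_-(x)|\le Cx$ with a \emph{universal} constant $C$. This is not consistent with the variational ODE. The singular part of the $h$-equation is $h'=-\tfrac{3}{x}h+F$ (note the coefficient is $-3/x$, coming from $\pa_{\R_0}\bigl(\tfrac{1-3W_-}{x}\bigr)$, not $-2/x$), so $x^3h(x)=\int_0^x s^3F(s)\,ds$. The source $F$ is itself $O(x)$, carrying the coefficient $\frac{xW_-(W_-+\k)}{B}\pa_{\R_0}\R_-\sim x\,\R_0^{1+\k+\e}\pa_{\R_0}\R_-$ (using $B^{-1}\lesssim\R_-^{\e}\lesssim\R_0^{\e}$ and $\R_-W_-\lesssim\R_0^{1+\k}$ from Lemma~\ref{L:APRIORIMINUS0}). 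Hence $h(x)\sim\R_0^{1+\k+\e}\,x^2\,\|\pa_{\R_0}\R_-\|_\infty$, not $O(x)$ with universal constant. At the endpoint $x\sim\kappa\R_0^{-b}$ one has $|h|/x\sim\kappa\,\R_0^{1+\k+\e-b}$ and $1+\k+\e-b=\tfrac{1+3\k+2\e}{4}>0$, so $|h|/x\to\infty$ as $\R_0\to\infty$: the bootstrap region $\{|h|\le Cx\}$ with fixed $C$ is exited for large $\R_0$. You must carry the explicit power $\R_0^{1+\k+\e}x^2$ in the $h$-bound; then the worst error in the $\pa_{\R_0}\R_-$-equation is $\int_0^x s(\R_0^{2+\e}+\k\R_0^{3+\k+\e})\cdot\R_0^{1+\k+\e}s^2\,\|\pa_{\R_0}\R_-\|_\infty\,ds\sim\R_0^{3+\k+2\e}x^4\|\pa_{\R_0}\R_-\|_\infty$, and the requirement that this be $O(\kappa^4)$ is exactly what pins $b=\tfrac{3+\k+2\e}{4}$.

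Two further points. First, the interval $[0,\kappa\R_0^{-b}]$ is \emph{not} contained in $[0,x_0(\R_0)]$ for large $\R_0$: since $b<\tfrac1{1-\k}$ one has $\R_0^{-b}\gg\R_0^{-1/(1-\k)}\sim x_0$; what one actually needs (and what holds, since $b>\tfrac{1+\k+\e}{2}$) is $\kappa\R_0^{-b}\le(8\R_0^{1+\k+\e})^{-1/2}$, the range on which the lower bound $\R_-^{\e}B\ge\tfrac12$ can be propagated. Second, the paper avoids a separate integrating-factor analysis of the singular $-3/x$ term by an energy argument: multiply the $\pa_{\R_0}W_-$-equation by $\pa_{\R_0}W_-$, integrate, keep $\int_0^x\tfrac{3(\pa_{\R_0}W_-)^2}{\tau}\,d\tau$ on the left as a good term, and absorb the cross term by Cauchy--Schwarz. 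This produces the bound $|\pa_{\R_0}W_-|\lesssim\R_0^{1+\k+\e}x^2\|\pa_{\R_0}\R_-\|_\infty$ cleanly and is worth adopting over the pointwise Grönwall sketch, which is awkward to make rigorous near $x=0$ where the coefficient is unbounded.
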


\begin{proof}
We introduce the short-hand notation $\pa \R_- = \pa_{ \R_0} \R_-$ and $\pa W_- = \pa_{ \R_0} W_-$, where we note that the map $\R_0\mapsto (\R_-,W_-)$ is $C^1$ by Remark~\ref{R:CONEPROP}. 
It is then easy to check that $(\pa \R_-,\pa W_-)$ solve
\begin{align}
\pa W_-' & = - \frac{3}{x} \pa W_- - \frac{2 x (1+\k) W_- ( W_- +\k)}{B}\pa W_- \notag\\
&\ \ \ \ +  
 \frac{2  x(1+\k)( \R_- - W_-)}{B} \left( 2 W_- +\k - \tfrac{\mathfrak D_ W B}{B} W_-( W_-+\k)\right) \pa W_- 
  \label{E:VARONE}\\
 & \ \ \ \ + \frac{2  x (1+\k) W_- ( W_- +\k)}{B} \left( 1- \tfrac{\mathfrak D_{ \R} B}{B} ( \R_-- W_-)  \right)\pa \R_-  , \notag \\
\pa \R_-'  & = - \frac{2  x(1-\k)( W_-+\k)}{B} \left( 2 \R_- -  W_- -\tfrac{\mathfrak D_{ \R} B}{B}  \R_- ( \R_-- W_-) \right) \pr  \notag \\
&  \ \ \ \ -\frac{2  x(1-\k) \R_-}{B}  \left( \R_- -2 W_--\k -\tfrac{\mathfrak D_ W B}{B} ( W_-+\k)( \R_-- W_-) \right) \po , \label{E:VAR2}
\end{align} where
\[
\mathfrak D_ W B = - [2(1-\k) W + 4\k(1+ \R)]  x^2, \quad \mathfrak D_ \R B = -\tfrac{2\k}{1-\k}  \R^{-\frac{2\k}{1-\k}-1} - 4\k W   x^2. 
\]
At $x=0$ we have the initial values
\begin{align}\label{E:BDRYOR}
\pr(0) = 1, \ \ \po(0)=0.
\end{align}

We multiply~\eqref{E:VARONE} by $\po$ and integrate over the region $[0,x]$. By~\eqref{E:BDRYOR} we obtain
\begin{align}
&\frac12\po^2(x) + \int_0^x \left(\frac3\tau +\frac{2  \tau (1+\k) W_- ( W_- +\k)}{B} \right) \po^2 \,d\tau \notag\\
& = \int_0^x 
 \frac{2  \tau(1+\k)( \R_- - W_-)}{B} \left( 2 W_- +\k - \tfrac{\mathfrak D_ W B}{B} W_-( W_-+\k)\right)  \po^2\,d\tau \notag \\
& \ \ \ \  + \int_0^x \frac{2  \tau(1+\k) W_- ( W_- +\k)}{B} \left( 1- \tfrac{\mathfrak D_{ \R} B}{B} ( \R_-- W_-)  \right)\pa \R_-  \po\,d\tau . \label{E:INT1}
\end{align}
Just like in \eqref{E:B3}, by using  \eqref{E:B1}--\eqref{E:B21}, and \eqref{E:B30}
we have 
\[
\begin{split}
 \R_-^{\frac{2\k}{1-\k}} B& \geq 1  - \R_0^{\frac{2\k}{1-\k}}\left[ (1+3\k)\frac{ \R_0^{1+\k}}{3^{1-\k}}  + 4\k  \left(\frac{ \R_0^{1+\k}}{3^{1-\k}}\right)^\frac12+ \k^2-\k \right]  \tau ^2 \\
& \geq 1- (1+8\k)  \R_0^{\frac{2\k}{1-\k}} \frac{ \R_0^{1+\k}}{3^{1-\k}}   \tau^2 . 
\end{split}
\]
Therefore 
\begin{align}\label{E:LOWERBOUNDRHOZERO}
 \R_-^{\frac{2\k}{1-\k}} B  \ge \frac12, \text{ for any } \  \tau \in[0, ( 8  \R_0^{1+\k+\frac{2\k}{1-\k}}   )^{-\frac12}]
\end{align}
where we have used $2(1+8\k)  3^{\k-1} < 8 $ for all $x_\ast \leq \xmax$ and sufficiently small $\k$. 
 
Using the bounds $x_\ast\le \xmax$, \eqref{E:B1}--\eqref{E:B21},~\eqref{E:LOWERBOUNDRHOZERO}, 
we obtain from~\eqref{E:INT1}
\begin{align}
& \frac12\po^2(x) + \int_0^x \left(\frac3\tau +\frac{2  \tau (1+\k) W_- ( W_- +\k)}{B} \right)\po^2 \,d\tau \label{E:INT2} \\
& \le  C \int_0^x  \R_0^{1+\k+\frac{2\k}{1-\k}}
\tau \po^2\,d\tau 
 + C \R_0^{1+\k+\frac{2\k}{1-\k}} \int_0^x \tau |\pr| |\po| \, d\tau, \ \ x\le ( 8  \R_0^{1+\k+\frac{2\k}{1-\k}}   )^{-\frac12} \notag
\end{align}
for all sufficiently small $\k$. Here we have used $-\tfrac{\mathfrak D_\R B}{B} (\R_--W_-) >0$ and $\tfrac{\mathfrak D_\R B}{B} W_- <0$, and  
\[
\begin{split}
0<- \tfrac{\mathfrak D_{ \R} B}{B} ( \R_-- W_- ) &<- \tfrac{\mathfrak D_{ \R} B}{B} \R_- = - \tfrac{\mathfrak D_{ \R} B}{\R_-^{\e}B} \R_-^{1+\e} \\
&\le C_1\k ( 1 + \R_-^{1+\e} W_- \tau^2  ) \le C_1\k (1+ \frac{\R_0^{1+\k + \e}}{3^{1-\k}}\tau^2)\le C_2 \k 
\end{split}
\]
for any $\tau \in[0, ( 8  \R_0^{1+\k+\frac{2\k}{1-\k}}   )^{-\frac12}]$ so that $\left( 1- \tfrac{\mathfrak D_{ \R} B}{B} ( \R_-- W_-)  \right)$ is bounded by a constant. 

Let $\hat X = \kappa  \R_0^{-b}$ where $b=\frac{3+\k+\frac{4\k}{1-\k}}{4}$ with a sufficiently small $\kappa>0$ to be specified later. Note that $\hat X< ( 8  \R_0^{1+\k+\frac{2\k}{1-\k}}   )^{-\frac12}$ for all $ \R_0\ge \frac13$ and $\kappa$ chosen sufficiently small and independent of $ \R_0$. 
For any $\tau\in [0,\hat X]$ we have $ \R_0 \le \kappa^{\frac{1}{b}} \tau^{-\frac{1}{b}}$. Therefore
$ \R_0^{1+\k+\frac{2\k}{1-\k}}  \tau \leq \kappa^{\frac{1}{b}( 1+\k+\frac{2\k}{1-\k}) } \tau^{-\frac{1}{b}+1}$.
From these estimates and~\eqref{E:INT2} we conclude for $ x\in[0,\hat X]$, 
\begin{align}
& \frac12\po^2(x) + \int_0^x \left(\frac3\tau +\frac{2  \tau (1+\k) W_- ( W_- +\k)}{B} \right)\po^2 \,d\tau \notag \\
& \le  C \int_0^x 
\kappa^{\frac{1}{b}( 1+\k+\frac{2\k}{1-\k}) } \tau^{-\frac{1}{b}+1}\po^2\,d\tau 
 + \frac C{\sqrt 3}  \R_0^{1+\k+\e} \left(\int_0^x\frac3\tau \po^2\,d\tau \right)^{\frac12} \left(\int_0^x \tau^3 \pr^2 \, d\tau\right)^{\frac12} \notag \\
 & \le  C \int_0^x 
\kappa^{\frac{1}{b}( 1+\k+\frac{2\k}{1-\k}) } \tau^{-\frac{1}{b}+1}\po^2\,d\tau 
 + \frac12 \int_0^x\frac3\tau \po^2\,d\tau +  \frac {C^2}{6}  \R_0^{2+2\k+2\e}  \|\pr\|_{\infty}^2 \int_0^x \tau^3\,d\tau.
 \label{E:INT3}
\end{align}
Since $-\frac{1}{b}+1>-1$, with $\kappa$ chosen sufficiently small, but independent of $ \R_0$, we can absorb the first two integrals on the right-most side
into the term $\int_0^x \frac3\tau \po^2\,d\tau$ on the left-hand side. 
We then conclude
\begin{align}\label{E:POBOUND}
\lv \po(x)\rv \le C \R_0^{1+\k+\e}  x^2 \|\pr\|_{\infty}, \ \ x\in[0,\hat X].
\end{align}

We now integrate~\eqref{E:VAR2}, use \eqref{E:B1}--\eqref{E:B21},~\eqref{E:LOWERBOUNDRHOZERO},  and conclude from~\eqref{E:BDRYOR}
\begin{align}
\lv \pr(x) - 1\rv & \le  C \R_0^{1+\k+\frac{2\k}{1-\k}} \|\pr\|_{\infty} \int_0^x \tau\,d\tau   
\\
&\ \ \ \ +  C \int_0^x \left( \R_0^{2+\e} \tau  + \left[  \R_0^{2+2\k+2\e } + \k  \R_0^{3+\k+2\e}\right]\tau^3\right)\lv \po(\tau)\rv \,d\tau \notag \\
& \le C\left(  \R_0^{1+\k+\e} x^2 +  \R_0^{3+\k+2\e} x^4 + (  \R_0^{3+3\k+3\e } + \k  \R_0^{4+2\k+3\e} )  x^6 \right)  \|\pr\|_{\infty} \notag \\
& \le C\kappa^2  \|\pr\|_{\infty},  \ \ x\in[0,\hat X],  \notag
\end{align}
for all sufficiently small $\k$, where we have used~\eqref{E:POBOUND} and $0\le x\le \kappa  \R_0^{-b}$.  Therefore, 
\[
\|\pr\|_{\infty} \le 1 + C\kappa^2 \|\pr\|_{\infty}
\]
and thus, for $\kappa$ sufficiently small so that $C\kappa^2<\frac13$, we have $\|\pr\|_\infty\le \frac32$. From here we infer
\begin{align}
\pr(x) \ge 1 - \frac32C\kappa^2>\frac12>0, \ \ x\in[0,\hat X].\notag
\end{align}
\end{proof}


\subsection{Upper and lower solutions}\label{SS:UL}



\begin{definition}[Upper and lower solution]\label{D:UPPERLOWER}
For any $x_\ast\in [\xmin,\xmax]$ we say that $(\R(\cdot;x_\ast),W(\cdot;x_\ast))$ is an {\em upper} (resp. {\em lower}) solution at $x_0\in(0,x_\ast)$ if there exists $\R_0>0$ such that 
\begin{align}
\R(x_0;x_\ast) = \R_-(x_0;\R_0)\notag 
\end{align}
and 
\begin{align}
W(x_0;x_\ast)> \ (\text{resp. } <) \ W_-(x_0;\R_0).\notag
\end{align}
\end{definition}


\begin{lemma}[Existence of a lower solution]\label{L:LOWERSOLUTION}
There exists a $\kappa>0$ such that for any $x_0<\kappa$ there exists an $x_{\ast\ast}\in [\bar x_\ast,\xmax]$ such that $(\R(\cdot;x_{\ast\ast}),W(\cdot;x_{\ast\ast}))$ is a lower solution at $x_0$. 
Moreover, there exists a universal constant $C$ such that $\R_1<\frac C{x_0^{1-\k}}$, where $\R_-(x_0;\R_1)=\R(x_0;x_{\ast\ast})$.
\end{lemma}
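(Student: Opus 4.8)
\textbf{Proof plan for Lemma~\ref{L:LOWERSOLUTION}.}

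The plan is to use the global Friedmann solution from the origin together with the behaviour of the RLP family at the endpoints $\bar x_\ast$ and $\xmax$, and then invoke a continuity/intermediate-value argument. First I would recall that by Proposition~\ref{P:GLOBALFRIEDMAN} the solution $(\R(\cdot;\bar x_\ast),W(\cdot;\bar x_\ast))$ extends all the way to $x=0$ with $s(\bar x_\ast)=0$, so in particular for $x_0$ small it is defined at $x_0$, and by construction $\bar x_\ast\in\Y$ (it lies in the closure of $X$), so along this solution $W(x;\bar x_\ast)$ drops to $\tfrac13$ and stays below it for $x$ small; hence $W(x_0;\bar x_\ast)<\tfrac13$ for $x_0$ below some $\k$-independent threshold $\kappa$. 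On the other hand, for $x_\ast=\xmax$ we have $\WH_0=\R_0=\tfrac13$ at the sonic point and $\xmax\in X\subset\Y$, but the relevant comparison is with $W(x_0;x_\ast)$ for $x_\ast$ near $\xmax$: here I would use Lemma~\ref{L:XYNONEMPTY}(a) to the effect that $(\xmax-\kappa,\xmax]\subset\Y$ and, more sharply, that these solutions remain above the Friedmann value $\tfrac13$ in a neighbourhood of the sonic point before descending — so that for an appropriate $x_\ast$ close to $\xmax$ (still $\ge\bar x_\ast$), $W(x_0;x_\ast)$ exceeds the value of the corresponding Friedmann-from-origin solution $W_-(x_0;\R_0)$ at the matched density.

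The mechanism for the comparison is the following: for each $x_\ast\in[\bar x_\ast,\xmax]$ for which the RLP solution reaches $x_0$, set $\R_0(x_\ast):=\R(x_0;x_\ast)>0$ and solve the origin problem~\eqref{E:RODE}--\eqref{E:WODE},~\eqref{E:LEFT} with this initial density; by Theorem~\ref{T:SONICLWPORIGIN} and Remark~\ref{R:COMMONINTERVAL} this solution exists on $[0,x_0]$ provided $x_0$ is small enough uniformly, giving a well-defined value $W_-(x_0;\R_0(x_\ast))$. Consider the continuous function
\[
\Phi(x_\ast):= W(x_0;x_\ast) - W_-\bigl(x_0;\R_0(x_\ast)\bigr),
\]
defined on the (closed) subinterval of $[\bar x_\ast,\xmax]$ where the RLP flow reaches $x_0$; continuity in $x_\ast$ follows from Proposition~\ref{P:LOWERSEMICONTRLP}(b) (continuity of the flow away from the sonic time) together with the continuous dependence of $W_-$ on $\R_0$ from Remark~\ref{R:CONEPROP}. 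A lower solution at $x_0$ in the sense of Definition~\ref{D:UPPERLOWER} is precisely a point where $\Phi<0$. Since $W(x_0;\bar x_\ast)<\tfrac13\le W_-(x_0;\R_0(\bar x_\ast))$ (the last inequality because $W_-\ge\tfrac13$ by Lemma~\ref{L:APRIORIMINUS0}), we get $\Phi(\bar x_\ast)<0$, which already produces the desired $x_{\ast\ast}=\bar x_\ast$ — or, if one wants $x_{\ast\ast}$ genuinely in the interior, one perturbs slightly using continuity. To obtain the density bound, I would invoke Lemma~\ref{L:APRIORIMINUS} (applied at the matched point, noting $\R(x_0;x_{\ast\ast})\ge\tfrac13$ by the running lower bound on $\R$, or else the bound is vacuous): this gives $\R(x_0;x_{\ast\ast})<x_0^{-(1-\k)}$, hence $\R_1:=\R_0(x_{\ast\ast})<C x_0^{-(1-\k)}$ with $C$ universal.

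The main obstacle I anticipate is ensuring that the comparison function $\Phi$ is actually \emph{defined} on a nondegenerate interval: one must guarantee that for all $x_\ast$ in a neighbourhood of $\bar x_\ast$ (or at least for $\bar x_\ast$ itself and enough nearby points) the RLP flow does reach down to $x=x_0$, i.e. that $s(x_\ast)<x_0$. For $x_\ast=\bar x_\ast$ this is Proposition~\ref{P:GLOBALFRIEDMAN}, but for nearby $x_\ast$ one needs the semicontinuity and quantitative-extension machinery of Proposition~\ref{P:LOWERSEMICONTRLP}(a),(c) and Lemma~\ref{L:TQUANTITATIVE}, possibly combined with Lemma~\ref{L:PRELIM3} to get a uniform lower bound on $B$ down to scale $x_0$. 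A second, more technical point is matching the \emph{density} rather than some other quantity in Definition~\ref{D:UPPERLOWER}: since $\R_-(\cdot;\R_0)$ is strictly decreasing in $x$ and, by Lemma~\ref{L:MONOTONE}, strictly increasing in $\R_0$ on the relevant scale, the map $\R_0\mapsto \R_-(x_0;\R_0)$ is a continuous bijection onto its range, so one can always find the $\R_0$ realising the matched density provided $\R(x_0;x_\ast)$ lies in that range — which is where the quantitative lower bounds of Lemma~\ref{L:RHOMINUS} (the choice of scale $x_0\sim \R_0^{-1/(1-\k)}$) are exactly calibrated to work. Once these solvability issues are handled, the sign computation at $x_\ast=\bar x_\ast$ and the density estimate are both immediate.
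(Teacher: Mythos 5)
There is a genuine gap in your proposal: the central sign claim, that $\bar x_\ast\in\Y$ and hence $W(x_0;\bar x_\ast)<\tfrac13$ for small $x_0$, is false. By the proof of Proposition~\ref{P:GLOBALFRIEDMAN}, after Cases 2 and 3 are ruled out one must be in Case~1, namely $x_F(\bar x_\ast)=0$; that says $W(x;\bar x_\ast)>\tfrac13$ for every $x\in(0,\bar x_\ast)$, and Proposition~\ref{P:ONETHIRD} then gives $W(x;\bar x_\ast)\to\tfrac13$ from \emph{above}. In particular $\bar x_\ast\notin\Y$ — being in the closure of $X\subset\Y$ does not put you in $\Y$, since $\Y$ is relatively open (Lemma~\ref{L:PREPX}(b)) and $\bar x_\ast$ is exactly the excluded boundary point. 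Consequently $\Phi(\bar x_\ast)=W(x_0;\bar x_\ast)-W_-(x_0;\R_0(\bar x_\ast))$ is a difference of two numbers \emph{both} strictly larger than $\tfrac13$ (the second by Lemma~\ref{L:APRIORIMINUS0}), and its sign is undetermined; the desired inequality $\Phi(\bar x_\ast)<0$ does not follow, and there is no obvious other endpoint where the sign is favourable, so your intermediate-value setup has no negative anchor.

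The paper sidesteps this by choosing $x_{\ast\ast}$ strictly inside $(\bar x_\ast,\xmax]$ with $x_F(x_{\ast\ast})$ equal to the given $x_0$: then $W(x_0;x_{\ast\ast})=\tfrac13$ \emph{identically} by the definition of the Friedmann time, and comparison with $W_-(x_0;\R_1)>\tfrac13$ is immediate — no sign dichotomy is needed. Surjectivity of $x_0\mapsto x_{\ast\ast}$ onto $(0,\kappa]$ comes from continuity and monotonicity of $S(x_\ast):=\sup_{\tilde x_\ast\in[\bar x_\ast,x_\ast]}x_F(\tilde x_\ast)$ together with $\lim_{x_\ast\to\bar x_\ast^+}S(x_\ast)=0$ (Lemma~\ref{L:XFCONT}). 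The remaining ingredients in your writeup — matching the density via Lemma~\ref{L:RHOMINUS} and the intermediate value theorem over $\R_0\in(\tfrac13,\R_0(x_0))$, and the bound $\R_1<Cx_0^{-(1-\k)}$ from the choice of $\R_0(x_0)$ and Lemma~\ref{L:APRIORIMINUS} — do line up with the paper's argument; it is only the choice of $x_{\ast\ast}$ and the sign reasoning at the $W$-comparison that needs to be replaced by the Friedmann-time mechanism.
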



\begin{proof}
For any $x_\ast\in X $ we consider the function 
\begin{align}\label{E:BIGFDEF}
S(x_\ast) : = \sup_{\tilde x_\ast\in[\bar x_\ast, x_\ast]} \left\{x_{F}(\tilde x_\ast)\right\}. 
\end{align}
The function $x_\ast\mapsto S(x_\ast)$ is clearly increasing, continuous,  and by Lemma~\ref{L:XFCONT}, $\lim_{x_\ast\to\bar x_\ast}S(x_\ast)=0$.  
Therefore, the range of $S$  is of the form $[0,\kappa]$ for some $\kappa>0$.
For any $x_\ast\in X $, by Lemma~\ref{L:XFCONT}, the supremum in~\eqref{E:BIGFDEF} is attained, i.e. there exists $x_{\ast\ast}\in [\bar x_\ast, x_\ast]$ such that 
$S(x_\ast)  = x_{F}(x_{\ast\ast})=:x_0$. Therefore, for any 
$\bar x_\ast < x_\ast< x_{\ast\ast}$ we have
\begin{align}
s(x_\ast)<x_{F}(x_\ast)\le x_0.\notag
\end{align}
By Lemma~\ref{L:PREPX} we have the bound $\R(x_0;x_{\ast\ast})>W(x_0;x_{\ast\ast})=\frac13$. By Lemma~\ref{L:RHOMINUS} choosing 
$\R_0 = \R_0(x_0) =  \frac{1}{x_0^{1-\k}}  \left( \frac{3^{1-\k}}{2(1+8\k)} \right)^{\frac{1-\k}{2}}$ and using Lemmas~\ref{L:APRIORIMINUS} and~\ref{L:PRELIM1} 
we have
\begin{align}
\R_-(x_0;\R_0) >\frac1{x_0^{1-\k}}> \frac{\F [x_0;\R]}{x_0}>\R(x_0;x_{\ast\ast}),\notag
\end{align}
where we have assumed $x_0$ to be sufficiently small.
On the other hand $\R_-(x_0;\frac13)=\frac13<\R(x_0;x_{\ast\ast})$ (where we recall that $\R_-(\cdot;\frac 13)$ is the Friedmann solution, see~\eqref{E:FRIEDMANNINTRO}). 
Using Remark~\ref{R:COMMONINTERVAL} and the Intermediate Value Theorem, there exists a $\R_1\in(\frac13,\R_0)$ such that 
\begin{align}
\R(x_0;x_{\ast\ast}) = \R_-(x_0;\R_1).\notag
\end{align}
By~\eqref{E:LBWMINUS} $W_-(x_0;\R_1)>\frac13 = W(x_0;x_{\ast\ast})$ and therefore $(\R(\cdot,x_{\ast\ast}),W(\cdot;x_{\ast\ast}))$ is a lower solution at $x_0$. The upper bound on 
$\R_1$ follows from our choice of $\R_0$ above.
\end{proof}


\begin{remark}
The proof of Lemma~\ref{L:LOWERSOLUTION} follows closely the analogous proof in the Newtonian case (Lemma 4.20 in~\cite{GHJ2021}).
\end{remark}



\begin{lemma}\label{L:XIMP}
Let $x_\ast\in \X$ (see~\eqref{E:XDEF} for the definition of $\X$) and assume that $s(x_\ast)=0$. 
Then
\begin{enumerate}
\item[(a)]
\begin{align}
\R(x;x_\ast)>W(x;x_\ast), \ \ x\in(0,x_\ast);\notag
\end{align}
\item[(b)]
\begin{align}
\limsup_{x\to0} x^{1-\k} W(x;x_\ast) >0.\notag
\end{align}
\end{enumerate}
\end{lemma}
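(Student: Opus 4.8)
\textbf{Proof strategy for Lemma~\ref{L:XIMP}.}

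The plan is to combine the monotonicity lemma (Lemma~\ref{L:JUHILEMMA}) with the a priori bounds from Section~\ref{S:FRIEDMANN} to propagate positivity of $f = \F[x;\R] - x\R$ all the way down to $x=0$, and then to argue by contradiction that $W$ cannot decay faster than $x^{-(1-\k)}$ near the origin. For part (a), the starting point is that for $x_\ast\in\X$ we have $W(x;x_\ast)>\frac13$ on the entire maximal interval $(s(x_\ast),x_\ast)=(0,x_\ast)$, hence by Lemma~\ref{L:PRELIM1} (specifically~\eqref{E:WBOUNDL}) also $W < \F/x$, so $B>0$ on $(0,x_\ast)$ and $\F-xW>0$ there. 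First I would establish that $f>0$ on $(0,x_\ast)$: near $x_\ast$ this is exactly~\eqref{initial_fleft} of Lemma~\ref{L:INITIAL}, and to the left of that I would run the integrating-factor formula from Corollary~\ref{C:FFORMULA}. The key is the sign of the source: since $W>\frac13$ on $(0,x_\ast)$, Lemma~\ref{L:PREPX}(a) (applicable because $x_\ast\in\X\subset\Y\cup\Z$ when $s(x_\ast)=0$, or more directly since $W>\frac13$ forces the relevant invariance) gives $\R>W>\frac13$, so Lemma~\ref{L:LITTLEBPOS} applies and $b[x;\R,W]<0$ on the whole interval. Then~\eqref{E:LITTLEFFORMULA} with $x_1$ near $x_\ast$ gives $f(x)\ge f(x_1)e^{-\int_x^{x_1}a}>0$ for all $x\in(0,x_\ast)$. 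Once $f>0$, i.e. $\F>x\R$, combined with $\F>xW$ we get from the factorisation~\eqref{E:BDEF} that $B = (1-\k)(\F-xW)(H+xW) > (1-\k)(\F-xW)\cdot(\text{something positive})$; more to the point, using $\R>\frac13$ the bound $\R<\F/x$ (which is~\eqref{E:RBOUNDL}, valid once we know $\R\ge\frac13$) is exactly the statement $f>0$, and this together with $W<\R$ is what I want. Actually the cleanest route: $\R>W$ on $(0,x_\ast)$ follows from the same trapping argument as in the proof of Lemma~\ref{L:PREPX}(a) — the region $\{\R>W,\ W>\frac13\}$ is dynamically invariant to the left, and near $x_\ast$ we have $\R_1 = -\frac1{x_\ast}+O(\k) < \WH_1/x_\ast = W_1$... so actually $\R<W$ near $x_\ast$; the crossing happens at $x_F$, but here $x_F=s(x_\ast)=0$ so in fact $W<\frac13 < \R$... no — for $x_\ast\in\X$, $W>\frac13$ everywhere so $x_F$ is not attained. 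I would therefore argue directly: $\R'/\R = -2x(1-\k)(W+\k)(\R-W)/B$ and since $B>0$, $W+\k>0$, the sign of $\R'$ is opposite to that of $\R-W$; if $\R-W$ ever vanished at some $x_c\in(0,x_\ast)$ with $\R-W>0$ to the right (which holds near... hmm). Given the subtlety, I would instead transport the quantity $\R-W$ using Lemma~\ref{L:PREPX}'s trapping logic restricted to the regime $W>\frac13$, where one checks $(\R-W)'>0$ at a hypothetical first crossing, yielding a contradiction. This gives part (a).

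For part (b) I would argue by contradiction: suppose $\lim_{x\to0}x^{1-\k}W(x;x_\ast)=0$. Then in particular $W(x)\to 0$... but $W>\frac13$, contradiction immediately — wait, $x^{1-\k}\to 0$ and $W>\frac13$ means $x^{1-\k}W\to 0$ trivially, so $\limsup x^{1-\k}W = 0$ is consistent with $W>\frac13$ only if... no, $\limsup_{x\to0}x^{1-\k}W \le \limsup x^{1-\k}\cdot\sup W$, and if $W$ stayed bounded this limsup is $0$. So the content of (b) is that $W$ must blow up at least like $x^{-(1-\k)}$ as $x\to0$. The mechanism: from part (a), $\R>W>\frac13$ near $0$, and by Lemma~\ref{L:APRIORIMINUS} (since $\R\ge\frac13$), $\R(x) < x^{-(1-\k)}$; meanwhile from~\eqref{E:RBOUNDL}, $\R < \F/x$. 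I would examine the ODE for $\R$: near $x=0$, $B = \R^{-\e} - O(x^2)(\cdots)$, and $\R^{-\e}\to$ something; if $\R$ stays bounded then $B\to\R_0^{-\e}>0$, and then $\R'/\R \sim -2x(1-\k)(W+\k)(\R-W)\cdot\R_0^{\e}$ which is integrable near $0$, so $\R(0^+)$ exists and is finite, $W(0^+)$ exists. But then $W\to W(0^+)\ge\frac13$ finite, and $1-3W(0^+)$: from~\eqref{E:WODE}, $W' \sim (1-3W)/x$ near $0$, which forces $W(0^+)=\frac13$ (else $W'$ is non-integrable and $W$ blows up). So $W(0^+)=\frac13$. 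But then the solution matches, at $x=0$, a left-solution $(\R_-,W_-)$ with $W_-(0)=\frac13$ and $\R_-(0)=\R(0^+)$ — and this is exactly the setup of Section~\ref{SS:TOTHERIGHT}. The point is that this would make $x_\ast$ behave like a point where the Friedmann connection is achieved, which by the construction of $\bar x_\ast$ as $\inf X$ and the openness of the relevant sets should be excluded for $x_\ast\in\X$ (recall $\X$ and $X$ are different: $\X$ requires $\inf W > \frac13$ strictly). So if $x_\ast\in\X$ with $s(x_\ast)=0$, the solution does reach $x=0$ with $W(0)=\frac13$ only if $\R(x)$ stays bounded — but I claim instead it cannot stay bounded, because $x_\ast\in\X$ means $W$ is bounded strictly away from $\frac13$, so $W(0^+)>\frac13$, contradicting $W(0^+)=\frac13$ derived above. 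Hence $\R$ is \emph{unbounded} near $0$; then $\R\asymp W$ is forced (from $\R<\F/x$ and the structure of $\F$ when $\R\to\infty$, $\F\sim\sqrt{\e^2\R^2x^2 + \cdots}\sim\e\R x$, giving $\R < \e\R + \cdots$, impossible for small $\k$ unless... this needs care), which together with the logarithmic ODE~\eqref{E:LOGID0}-type identity forces the precise rate $W\gtrsim x^{-(1-\k)}$.

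\textbf{Main obstacle.} The hard part will be the blow-up rate analysis in part (b): showing that $W$ (equivalently $\R$) blows up at $x=0$ and doing so \emph{at least} at the rate $x^{-(1-\k)}$, rather than some slower rate. The delicate point is that both $B$ and the numerators in~\eqref{E:RODE}--\eqref{E:WODE} degenerate as $\R,W\to\infty$, so naive comparison arguments are inconclusive; I expect to need a weighted quantity like $\R^{2(1+\e)}W^2$ (as in~\eqref{E:LOGID0}) combined with the $B$-lower bound to pin the rate, and to carefully use that $x_\ast\in\X$ (so $W$ is bounded below away from $\frac13$) to rule out the regular Friedmann-type behavior at the origin. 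The interplay between the ``$\X$ versus $X$'' distinction and the definition $\bar x_\ast=\inf X$ is what makes the hypothesis $x_\ast\in\X$ (as opposed to $x_\ast = \bar x_\ast$) usable here, and getting that logic airtight is the crux.
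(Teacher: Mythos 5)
Your proposal has genuine gaps in both parts, and the crux is a missing idea rather than a computational detail.

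\textbf{Part (a).} Your intended ``first-crossing'' argument does not close: at a hypothetical first crossing $x_c$ (going left from $x_\ast$), one has $\R(x_c)=W(x_c)$, $\R'(x_c)=0$, and $W'(x_c)=(1-3W)/x_c<0$, so $(\R-W)'(x_c)>0$ — but this merely says that $\R-W$ changes sign there, which is perfectly consistent with $x_c$ being a crossing point and yields no contradiction by itself. (Your earlier chain of reasoning via Lemma~\ref{L:PREPX}(a) is also not available: by Definition~\ref{D:XYZ} the sets $\X$, $\Y$, $\Z$ are pairwise disjoint, so ``$\X\subset\Y\cup\Z$'' is false, and Lemma~\ref{L:LITTLEBPOS} needs $\R>\frac13$ which is precisely what you are trying to establish.) What the paper actually does is carry the trapped region $\{W'<0,\ \R<W,\ \R'>0\}$ to its consequence: from $W'\le(1-3W)/x$ it integrates to $Wx^3\ge c>0$, and from the exact logarithmic identity $(\log(\R^{2(1+\e)}W^2))'=\frac{2}{x}(\frac1W-3)<0$ (valid whenever $W>\frac13$ and requiring no largeness of $W$) it deduces $\R^{-\e}\lesssim W^{\e/(1+\e)}$, while $x^2\gtrsim W^{-2/3}$. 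Feeding these into~\eqref{E:BDEF0} gives $B< C_1 W^{\e/(1+\e)}-C_2 W^{4/3}\to-\infty$ as $x\to0^+$, contradicting $B>0$ on $(0,x_\ast)$. The weighted quantity and the $B\to-\infty$ mechanism are the essential ingredients your sketch lacks.

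\textbf{Part (b).} Your route — show $\R$ is unbounded near the origin and then pin the rate through the structure of $\F$ — is both more complicated than necessary and, as you acknowledge, left incomplete at the critical step (``this needs care''). The paper argues far more directly: assuming $\limsup_{x\to0}x^{1-\k}W=0$, one gets $x^2W\R^{1+\e}\le\tilde\k$ and $x^2W\R^\e\le\tilde\k\delta^{1-\k}$ on a small interval $(0,\delta)$ (using $\R\le x^{-(1-\k)}$ from Lemma~\ref{L:APRIORIMINUS}, which is legitimate once part (a) gives $\R>W>\frac13$). These smallness bounds suffice to show $2(1+\k)x^2(W+\k)(\R-W)\le\bar C\,B$ with $\bar C=\tilde C(\tilde\k+\delta^{1-\k})$ arbitrarily small, whence from~\eqref{E:WODE}, $W'\le\bigl(1-(3-\bar C)W\bigr)/x\le -2W/x$, forcing $W\gtrsim x^{-2}$ — an immediate contradiction with the very assumption. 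This differential-inequality route avoids the intermediate boundedness dichotomy entirely and is the key step you would need to supply.

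In short, the main obstacle you identified (``the blow-up rate analysis'') is real, but the resolution in the paper is via a clean differential inequality rather than a matching argument; and in part (a) the heart of the matter is the weighted monotone quantity $\R^{2(1+\e)}W^2$, not the pointwise sign of $(\R-W)'$ at a crossing.
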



\begin{proof} 
{\em Proof of Part (a).}
If not let
\begin{align}
x_c : = \sup_{x\in(0,x_\ast)} \left\{\R(\tau;x_\ast)-W(\tau;x_\ast)>0, \ \tau\in(x,x_\ast) , \ \ \R(x;x_\ast)=W(x;x_\ast)\ \right\}>0.
\notag 
\end{align}  
At $x_c$ we have from~\eqref{E:RODE}--\eqref{E:WODE} $W'(x_c;x_\ast) = \frac{1-3W}{x_c}<0$  since $W>\frac13$ for $x_\ast\in\X $, and $\R'(x_c;x_\ast)=0$.  
Therefore there exists a neighbourhood strictly to the left of $x_c$ such that $W'<0$, $\R<W$, and $\R'>0$. 
It is easily checked that this property is  dynamically trapped and we conclude 
\begin{align}
W'(x;x_\ast) \le \frac{1-3W(x;x_\ast)}{x} , \ \ x\le x_c. \label{E:OMEGAPRIMENEWEST}
\end{align}
Integrating the above equation over $[x,x_c]$ we conclude 
\begin{align}\label{E:WCBOUND}
W(x;x_\ast) x^3 \ge \omega(x_c;x_\ast) x_c^3 - \frac13 x_c^3 = \left(W(x_c;x_\ast)-\frac13\right) x_c^3 =:c>0.
\end{align}
We now recall~\eqref{E:LOGID0}. From \eqref{E:WCBOUND}, this  
implies that $\left(\log\left(\R^{2(1+\e)}W^2\right)\right)' <0$ since $x_\ast\in \X $.
In particular we obtain the lower bound
\[
\R(x)^{2(1+\e)}W(x)^2> \R(x_c)^{2(1+\e)}W(x_c)^2 = W(x_c)^{4+2\e}>3^{-(4+2\e)}, \ \ x\le x_c.
\]
It follows that 
\be\label{E:ULONE}
\R^{-\e} < 3^{\frac{\e(2+\e)}{1+\e}}W^{\frac{\e}{1+\e}}, \ \ x\le x_c.
\ee
On the other hand, bound~\eqref{E:WCBOUND} implies
\begin{align}\label{E:ULTWO}
x^2 >  c^{\frac23} W^{-\frac23}, \ \ x\le x_c.
\end{align}
From~\eqref{E:BDEF0},~\eqref{E:ULONE}--\eqref{E:ULTWO} we therefore have
\begin{align}
B < 3^{\frac{\e(2+\e)}{1+\e}}W^{\frac{\e}{1+\e}} - C W^{\frac43}, \ \ x\le x_c.
\end{align}
for some universal constant $C>0$. Since $W$ grows to infinity as $x\to0$, this implies that the right-hand side above necessarily becomes negative, i.e. $s(x_\ast)>0$. A contradiction.

\noindent
{\em Proof of Part (b).}
Since $\R>W$ by part (a) and $W>\frac13$ (since $x_\ast\in\X $), from Lemmas~\ref{L:PRELIM1} and~\ref{L:APRIORIMINUS}
\be\label{E:USRBOUND}
\R\le \frac{1}{x^{1-\k}}.
\ee
By way of contradiction we assume that $\limsup_{x\to0}x^{1-\k} W(x;x_\ast)=0$.
For any $\tilde\k>0$ choose $\delta>0$ so small that 
\be\label{E:USCONTR}
x^{1-\k} W(x;x_\ast)<\tilde\k, \ \ x\in(0,\delta).
\ee
Note that in particular, for any $x\in(0,\delta)$ we have
\begin{align}\label{E:USBOUNDONE}
x^2 W \R^{1+\e} = x^{1-\k}W x^{1+\k}\R^{1+\e} \le  \tilde\k, \ \ x\in(0,\delta),
\end{align}
where we have used~\eqref{E:USRBOUND} and~\eqref{E:USCONTR}.
Similarly, 
\begin{align}\label{E:USBOUNDTWO}
x^2 W \R^{\e} = x^{1-\k} W  x^{2\k} \R^\e  x^{1-\k}  
 \le \tilde\k \delta^{1-\k} , \ \ x\in(0,\delta).
\end{align}
We next claim that there exists a universal constant $\bar C$ such that 
\begin{align}\label{E:USCLAIM}
2(1+\k)x^2(W+\k)(\R-W) \le \bar C B.
\end{align}
Keeping in mind~\eqref{E:BDEF0}, this is equivalent to the estimate
\begin{align}
&x^2 W^2\R^\e \left(\bar C(1-\k)-2(1+\k)\right) + x^2W\R^\e \left(-2\k(1+\k)+2(1+\k)\R+4\k \bar C(1+\R)\right) \notag \\
&  + x^2\R^\e\left(2\k(1+\k)\R + \bar C(\k^2-\k)\right)\le \bar C. \label{E:LSINTERIM}
\end{align}
For any $0<\bar C<2$ the first term on the left-hand side of~\eqref{E:LSINTERIM} is strictly negative, and so are $-2\k(1+\k)x^2W\R^\e$ and $\bar C(\k^2-\k) x^2\R^\e$.
On the other hand
\begin{align}
x^2W\R^\e \left(2(1+\k)\R+4\k \bar C(1+\R)\right) & = x^2W\R^{1+\e} \left(2(1+\k)+4\k \bar C\right)+4\k \bar Cx^2W\R^\e \notag \\
& \le  \tilde\k \left(2(1+\k)+4\k \bar C\right) + 4\k \bar C \tilde\k \delta^{1-\k},
\end{align}
where we have used~\eqref{E:USBOUNDONE} and~\eqref{E:USBOUNDTWO}.
Similarly, 
\begin{align}
2\k(1+\k) x^2\R^{1+\e}  \le  2\k(1+\k)  x^{1-\k} \le 2\k(1+\k) \delta^{1-\k},
\end{align}
where we have used~\eqref{E:USRBOUND}. It is thus clear that we can choose $\bar C$ of the form $\bar C = \tilde C (\tilde\k + \delta^{1-\k})$ 
for some universal constant $\tilde C>0$ and $\tilde \k,\delta$ sufficiently small, so that~\eqref{E:USCLAIM} is true.
It then follows from~\eqref{E:WODE} that 
\begin{align} \label{E:OMEGAPRIME}
W' \le \frac{1-3W}{x} + \frac{\tilde C(\tilde\k +\delta^{1-\k})W}{x} = \frac{1-(3-\tilde C(\tilde\k +\delta^{1-\k}))W}{x}. 
\end{align}

As a consequence of~\eqref{E:OMEGAPRIME}, for sufficiently small $\tilde\k,\delta>0$ and $x\in(0,\delta)$ we have
\begin{align}
W' \le -\frac{2W}{x}, \notag
\end{align}
which in turn implies $W(x;x_\ast)\ge C x^{-2}$ for some $C>0$ and sufficiently small $x$, a contradiction.  
\end{proof}



\begin{lemma}[Existence of an upper solution]\label{L:CONTR}
If 
\begin{align}
\lim_{x\to0} W(x;\bar x_\ast) \neq \frac13,\notag
\end{align}
then there exists  a universal constant $C>0$ and an arbitrarily small  $x_0>0$ such that $(\R(\cdot;\bar x_\ast),W(\cdot;\bar x_\ast))$ is an upper solution at $x_0$ and 
$\R_1<\frac C{x_0}$, where $\R_-(x_0;\R_1)=\R(x_0;\bar x_\ast)$.
\end{lemma}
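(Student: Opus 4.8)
The plan is to argue by contradiction, exploiting the definition of $\bar x_\ast$ as the infimum of the fundamental set $X\subset\Y$. Suppose $\lim_{x\to0}W(x;\bar x_\ast)\neq\frac13$. By Proposition~\ref{P:GLOBALFRIEDMAN} we know $s(\bar x_\ast)=0$, so the solution $(\R(\cdot;\bar x_\ast),W(\cdot;\bar x_\ast))$ exists on $(0,\bar x_\ast]$; the first task is to identify which of the three sets $\X,\Y,\Z$ the point $\bar x_\ast$ belongs to. Since $\bar x_\ast=\inf X$ and $X$ is relatively open in $[\xmin,\xmax]$ (Lemma~\ref{L:PREPX}), $\bar x_\ast\notin X$, hence $\bar x_\ast\notin\Y$ either, or more precisely $\bar x_\ast$ lies on the boundary between $\X$ and $\Y$. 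I would first rule out $\bar x_\ast\in\Y$: if it were, then by Lemma~\ref{L:PREPX}(b) $\Y$ is open so a whole neighbourhood would lie in $\Y$, but by continuity of $x_F$ (Lemma~\ref{L:XFCONT}) and Lemma~\ref{L:XYNONEMPTY} one sees the neighbourhood of $\bar x_\ast$ on the right lies in $X$, contradicting $\bar x_\ast=\inf X$ unless $\bar x_\ast$ itself is in $X$, which it is not. So the relevant case is $\bar x_\ast\in\X$, meaning $W(x;\bar x_\ast)>\frac13$ for all $x\in(0,\bar x_\ast)$; combined with the hypothesis $\lim_{x\to0}W\neq\frac13$, and the fact (to be checked from the flow near $x=0$, cf. the Friedmann-type analysis) that $W$ cannot oscillate indefinitely, we get $\liminf_{x\to0}W(x;\bar x_\ast)=:\ell>\frac13$ strictly.

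Next I would invoke Lemma~\ref{L:XIMP}: since $\bar x_\ast\in\X$ and $s(\bar x_\ast)=0$, part (a) gives $\R(x;\bar x_\ast)>W(x;\bar x_\ast)$ on $(0,\bar x_\ast)$, and part (b) gives $\limsup_{x\to0}x^{1-\k}W(x;\bar x_\ast)>0$. From $\R>W$ and $W>\frac13$, Lemmas~\ref{L:PRELIM1} and~\ref{L:APRIORIMINUS} give the density bound $\R(x;\bar x_\ast)\le x^{-(1-\k)}$ near $x=0$, so in fact $\R(x;\bar x_\ast)\asymp x^{-(1-\k)}$ along a sequence $x_0=x_0^{(n)}\to0$, picking the same sequence realizing the $\limsup$ in Lemma~\ref{L:XIMP}(b). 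Along that sequence we have $\R(x_0;\bar x_\ast)>\frac{\F[x_0;\R]}{x_0}$ by the lower bound from Lemma~\ref{L:PRELIM1}(3)/Lemma~\ref{L:APRIORIMINUS} (since $\R>\frac13$), and we need to match this against a solution from the origin. The key comparison is with $\R_-(\cdot;\R_1)$: I would choose $\R_1$ via the Intermediate Value Theorem so that $\R_-(x_0;\R_1)=\R(x_0;\bar x_\ast)$, using that $\R_-(x_0;\frac13)=\frac13<\R(x_0;\bar x_\ast)$ (the Friedmann solution) while for $\R_0$ of order $x_0^{-(1-\k)}$ (the scale from Lemma~\ref{L:RHOMINUS}, eq.~\eqref{E:RHOZZERO}) one has $\R_-(x_0;\R_0)>x_0^{-(1-\k)}>\R(x_0;\bar x_\ast)$. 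Remark~\ref{R:COMMONINTERVAL} guarantees $s_-(\R_1)>x_0$ for all the relevant $\R_1$ on a common interval, so the comparison is legitimate, and the resulting $\R_1$ satisfies $\R_1<C/x_0$ for a universal $C$ by the explicit form of $\R_0(x_0)$ in Lemma~\ref{L:RHOMINUS}.

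It remains to verify the velocity inequality $W(x_0;\bar x_\ast)>W_-(x_0;\R_1)$, i.e. that $(\R(\cdot;\bar x_\ast),W(\cdot;\bar x_\ast))$ is genuinely an \emph{upper} solution at $x_0$ in the sense of Definition~\ref{D:UPPERLOWER}. Here is where the hypothesis $\lim_{x\to0}W\neq\frac13$ does its work: by Lemma~\ref{L:XIMP}(b) we have $x_0^{1-\k}W(x_0;\bar x_\ast)\geq c>0$ along the chosen sequence, so $W(x_0;\bar x_\ast)\geq c\,x_0^{-(1-\k)}$ blows up, whereas $W_-(\cdot;\R_1)$ — being a solution emanating from $W_-(0)=\frac13$ with $\R_1\lesssim x_0^{-(1-\k)}$ bounded on the relevant scale — stays close to $\frac13$ on $[0,x_0]$ (by the a priori bounds of Lemma~\ref{L:APRIORIMINUS0}, in particular $\frac13<W_-<(\R_1^{1+\k}/3^{1-\k})^{1/2}$, and since over the short interval $[0,x_0]$ with $x_0\sim\R_1^{-(1-\k)}$ the product $x_0^2\R_1^{1+\k}$ is bounded, $W_-$ cannot have grown much). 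So for $x_0$ small enough along the sequence, $W(x_0;\bar x_\ast)>W_-(x_0;\R_1)$, establishing the upper-solution property. The main obstacle I anticipate is the bookkeeping in the last step: one must choose the sequence $x_0^{(n)}\to0$ carefully so that \emph{simultaneously} (i) $x_0^{1-\k}W(x_0;\bar x_\ast)$ is bounded below (from Lemma~\ref{L:XIMP}(b)), (ii) $\R(x_0;\bar x_\ast)$ is comparable to $x_0^{-(1-\k)}$ so the IVT argument for $\R_1$ goes through with the universal bound, and (iii) $x_0$ is small enough that the upper solution $W_-(x_0;\R_1)$ is still below $W(x_0;\bar x_\ast)$; reconciling these, and controlling $W_-$ precisely on the interval $[0,x_0]$ via Lemma~\ref{L:APRIORIMINUS0} rather than merely qualitatively, is the delicate part, though it is morally identical to the Newtonian argument of~\cite{GHJ2021} with $\k$-dependent exponents.
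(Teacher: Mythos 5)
Your proposal handles Case~1 of the paper's argument (where $\liminf_{x\to0}W(x;\bar x_\ast)>\frac13$, i.e.\ $\bar x_\ast\in\X$) in essentially the same way the paper does, via Lemma~\ref{L:XIMP} and the IVT comparison with $\R_-(\cdot;\R_0)$. But there is a genuine gap: you assert that after ruling out $\bar x_\ast\in\Y$, ``the relevant case is $\bar x_\ast\in\X$,'' justified by the claim that ``$W$ cannot oscillate indefinitely.'' That claim is neither proven nor true as stated; the paper makes no such claim, and instead explicitly handles the possibility $\bar x_\ast\in\Z$, i.e.\ that $W>\frac13$ everywhere but $\liminf_{x\to0}W=\frac13$ with $\limsup_{x\to0}W>\frac13$ (so $W$ oscillates). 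This is Cases~2 and~3 of the paper's proof, and they require entirely different estimates from the one you wrote.

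Concretely, in the paper's Case~2 one has $\frac13<\limsup_{x\to0}W<\infty$, so $W$ is \emph{bounded}, and your argument (which hinges on $W(x_0;\bar x_\ast)$ blowing up like $x_0^{-(1-\k)}$ along a sequence, via Lemma~\ref{L:XIMP}(b)) cannot be applied at all: Lemma~\ref{L:XIMP} requires $\bar x_\ast\in\X$, which is false there. The paper instead derives the new bound $\R'\ge -C\R/x^\k$ to conclude $\R$ is also uniformly bounded (eq.~\eqref{E:RHOUNIFORMBOUND}), and constructs the upper solution by picking a sequence $x_n$ where $W(x_n;\bar x_\ast)>\frac13+\delta$ and matching against $\R_-(\cdot;\R_{0,n})$ for a \emph{fixed} $\R_0$, using the uniform Lipschitz bound on $W_-'$ to show $W_-(x_n;\R_{0,n})\le\frac13+\tilde Cx_n<\frac13+\delta<W(x_n;\bar x_\ast)$. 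In Case~3 ($\limsup=\infty$, $\liminf=\frac13$), the paper selects the sequence via the mean value theorem at critical points of $W$, proves the quantitative lower bound~\eqref{E:CRUCIAL} by a contradiction argument using the rewrite~\eqref{E:USWODEBOUND} of the $W$-equation with the control parameter $\gamma$, and only then runs the Case~1 machinery along that sequence. Neither of these steps is present in, or subsumed by, your proposal, so as written it proves the lemma only under the additional unproven hypothesis $\bar x_\ast\in\X$.
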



\begin{proof}
It is clear that $\liminf_{x\to0}W(x;\bar x_\ast)\ge\frac13$ as otherwise we would have $\bar x_\ast\in X $, a contradiction to the definition \eqref{critical_x} 
of $\bar x_\ast$ and the openness of $X $.
We distinguish three cases.

\noindent
{\em Case 1.}
\begin{align}
\liminf_{x\to 0} W(x;\bar x_\ast) >\frac13.\notag
\end{align}
In this case $\bar x_\ast\in \X$ and by Proposition~\ref{P:GLOBALFRIEDMAN} we have $s(\bar x_\ast)=0$.
By part (b) of Lemma~\ref{L:XIMP} there exists a constant $C>0$ and a sequence $\{x_n\}_{n\in\mathbb N}\subset(0,1)$ such that $\lim_{n\to\infty}x_n=0$ and  
\be\label{E:CRUCIAL00}
W(x_n;\bar x_\ast)>\frac C{x_n^{1-\k}}
\ee
where $C$ is independent of $n$.
For any such $x_n$ we have by part (a) of Lemma~\ref{L:XIMP} and Lemma~\ref{L:APRIORIMINUS} 
\begin{align}\label{E:CONSTANTC}
\R(x_n;\bar x_\ast)>W(x_n;\bar x_\ast) >\frac{C}{x_n^{1-\k}} > C \R(x_n;\bar x_\ast)
\end{align}
where we have used the assumption $\bar x_\ast\in\X $ and part (a) of Lemma~\ref{L:XIMP} to conclude $\R(\cdot;\bar x_\ast)>\frac13$, which is necessary for the application of 
Lemma~\ref{L:APRIORIMINUS}.

For any $0<x_n\ll1$ sufficiently small consider $(\R_-(\cdot;\R_{0,n}),W_-(\cdot;\R_{0,n}))$ with $\R_{0,n}=\R_0(x_n)=
 \frac{1}{x_n^{1-\k}}  \left( \frac{3^{1-\k}}{2(1+8\k)} \right)^{\frac{1-\k}{2}}>1$ for $n$ large enough. By Lemmas~\ref{L:RHOMINUS} and~\ref{L:XIMP}
\be
\R_-(x_n;\R_{0,n})> \frac1{x_n^{1-\k}}>\R(x_n;\bar x_\ast)>W(x_n;\bar x_\ast)>\frac13.\notag 
\ee
On the other hand,
\begin{align}
\R(x_n;\bar x_\ast)>\frac13 = \R_-(x_n;\frac13),\notag
\end{align}
where we recall that $\R_-(\cdot;\frac13)\equiv\frac13$ is the Friedman solution.
Moreover, by Remark~\ref{R:COMMONINTERVAL} $[0,x_n]\subset[0,s_-(\tilde \R_0))$ for all $\tilde \R_0\subset[\frac13,\R_{0,n}]$. By the continuity of the map 
$[\frac13,\R_{0,n}]\ni \tilde \R_0\mapsto \R_-(x_n;\tilde \R_0)$  the Intermediate Value Theorem implies that there exists  $\R_{n,1}\in(\frac13,\R_{0,n})$ such that 
\begin{align}\label{E:EQUALITY}
\R_-(x_n;\R_1^n) = \R(x_n;\bar x_\ast) \ \ \text{ for all sufficiently large $n\in\mathbb N$}.
\end{align}
Let 
\begin{align}
c_{1,n}: =
\begin{cases}
 \exp\left(- \left(\R_{n,1}\right)^{-1+\k}\right), & \text{ if }\ \R_{n,1}> 1; \\
 \exp\left(-1 \right),  &  \text{ if }\ \frac13 <\R_{n,1} \le1. \notag
\end{cases}
\end{align}
Clearly $c_{1,n}\ge e^{-1}=:c_1$ for all $n\in\mathbb N$.
By Lemma~\ref{L:RHOMINUS} $\R_-(x_n;\R_{n,1})\ge c_1\R_{n,1}$. Using~\eqref{E:USGOOD1}--\eqref{E:USGOOD2} we also have
 $W_-(x_n;\R_{n,1})< \R_{n,1}^{\frac{1+\k}{2}} 3^{-\frac{1-\k}{2}}$. Together with~\eqref{E:CONSTANTC} and~\eqref{E:EQUALITY} we conclude that for all $n$ sufficiently large
\begin{align}
W_-(x_n;\R_{n,1})&<\R_{n,1}^{\frac{1+\k}{2}} 3^{-\frac{1-\k}{2}}<\left(\frac{\R_-(x_n;\R_{n,1})}{c_1}\right)^{\frac{1+\k}{2}}3^{-\frac{1-\k}{2}}
= \left(\frac{\R(x_n;\bar x_\ast)}{c_1}\right)^{\frac{1+\k}{2}}3^{-\frac{1-\k}{2}} \notag\\
& < 3^{-\frac{1-\k}{2}} (c_1 C)^{-\frac{1+\k}{2}} W(x_n;\bar x_\ast)^{\frac{1+\k}{2}}. \label{E:UPPERONE}
\end{align}
By~\eqref{E:CONSTANTC} $W(x_n;\bar x_\ast)$ grows to positive infinity as $x_n$ approaches zero. Therefore, we may choose a sufficiently large $N\in\mathbb N$ and set $x_0=x_N\ll1$, 
$\R_0=\R_{0,N}$, $\R_1=\R_{1,N}$ so that the right-hand side of~\eqref{E:UPPERONE} is bounded from above by
$ W(x_0;\bar x_\ast)$. This gives
\begin{align}
W_-(x_0;\R_1) <W(x_0;\bar x_\ast).\notag
\end{align}
We conclude that $(\R(\cdot;\bar x_\ast), W(\cdot;\bar x_\ast))$ is an upper solution (see Definition~\ref{D:UPPERLOWER}) at $x_0$ and the upper bound on $\R_1$ follows from our choice of $\R_0$.

\noindent
{\em Case 2.}
\begin{align}\label{E:USCONTR2}
\frac13<\limsup_{x\to 0} W(x;\bar x_\ast) <\infty, \ \ \liminf_{x\to 0}W(x;\bar x_\ast)=\frac13.
\end{align}
In particular $\bar x_\ast\in \Z$ (see~\eqref{E:ZDEF}) and by Lemma~\ref{L:PREPX} 
$\R(x;\bar x_\ast)>W(x;\bar x_\ast)$.
Assumption~\eqref{E:USCONTR2}  also implies that there exists a constant $c>0$ independent of $x$ such that 
\begin{align}\label{E:OMEGAUNIFORMBOUND}
W(x;\bar x_\ast)<c, \ \ x\in(0,\bar x_\ast].
\end{align}

We now claim that there exists a constant $C$ such that 
\begin{align}\label{E:USRUB}
\R'\ge - \frac{C \R}{x^\k},
\end{align}
for all sufficiently small $x$.
To prove~\eqref{E:USRUB} we note that by~\eqref{E:RODE} this is equivalent to the bound
$
x^{1+\k}(W+\k)(\R-W)\le C B,
$
which, by~\eqref{E:BDEF0} is equivalent to the bound
\begin{align}
&x^2W^2\R^\e \left(-\frac{1}{x^{1-\k}}+C(1-\k)\right) + x^2W\R^{1+\e} \left(\frac{1}{x^{1-\k}}+4\k C)\right) \notag\\
& + x^2 W \R^\e \left(-\frac{\k}{x^{1-\k}}+4\k C\right)
+ x^2 \R^\e \left( \frac{\k}{x^{1-\k}}+C(\k^2-\k)\right) \le C. 
\end{align}
Using~\eqref{E:OMEGAUNIFORMBOUND} and Lemma~\ref{L:APRIORIMINUS}, we have
\[
x^2W^2\R^\e\le c^2 x^{2-2\k}, \ \ x^2W\R^{1+\e} \le c x^{1-\k},  \ \ x^2W\R^\e\le c x^{2-2\k}, \ \ x^2\R^\e\le x^{2-2\k}.
\]
Choosing $\k>0$ sufficiently small, $x$ so small that $-\frac{1}{x^{1-\k}}+(1-\k)<0$, and $C>1$ sufficiently large, but independent of $\k$, we use the above bounds to conclude the claim.

%


Bound~\eqref{E:USRUB} gives $\left(\log \R + \frac{C}{1-\k}x^{1-\k}\right)'\ge 0$, which in turn gives the bound
\begin{align}
\R(x) \le \R(\bar x)  e^{ \frac{C}{1-\k}\bar x^{1-\k} - \frac{C}{1-\k} x^{1-\k}}, \ \ x\in(0,\bar x],
\end{align}
for some $\bar x$ sufficiently small. This immediately implies the uniform boundedness of $\R(\cdot;\bar x_\ast)$, i.e. 
\begin{align}\label{E:RHOUNIFORMBOUND}
\R(x;\bar x_\ast)<c, \ \ x\in(0,x_\ast],
\end{align}
where we have (possibly) enlarged $c$ so that~\eqref{E:OMEGAUNIFORMBOUND} and~\eqref{E:RHOUNIFORMBOUND} are both true. 
There exists an $\delta>0$ and a sequence $\left\{x_n\right\}_{n\in\mathbb N}$ such that $\lim_{n\to\infty}x_n=0$
and 
\[
\frac13 + \delta < W(x_n;\bar x_\ast), \ \ \text{ and } \ \ \lim_{n\to \infty} W(x_n;\bar x_\ast) = \limsup_{x\to 0} W(x;\bar x_\ast).
\] 
Since $\{ \R(x_n;\bar x_\ast)\}_{n\in\mathbb N}$ is bounded, by Lemma~\ref{L:RHOMINUS} we can choose an $\R_0>1$ such that 
$
\R_-(x_n;\R_0) > \R(x_n;\bar x_\ast)
$
for all $n\in\mathbb N$. 
On the other hand $\R(x_n;\bar x_\ast)>\frac13=\R_-(x_n;\frac13)$. By the intermediate value theorem there exists a 
sequence $\{\R_{0,n}\}_{n\in\mathbb N}\subset (\frac13,\R_0)$ such that
\begin{align}
\R_-(x_n;\R_{0,n}) = \R(x_n;\bar x_\ast).\notag
\end{align}
Since $W_-(x;\R_{0,n})^2\le \frac{\R_{0,n}^{1+\k}}{3^{1-\k}}<\frac{\R_0^{1+\k}}{3^{1-\k}}$ and $\R_-(x;\R_{0,n})<\R_{0,n}<\R_0$ (Lemma~\ref{L:APRIORIMINUS0}) we conclude
from~\eqref{E:RODE}--\eqref{E:WODE} 
and Theorem~\ref{T:SONICLWPORIGIN} that $\lv \R_-'(x_n;\R_{0,n}) \rv$ and $\lv W_-'(x_n;\R_{0,n})\rv$ 
are bounded uniformly-in-$n$, by some constant $\tilde C>0$. 
Therefore
\begin{align}
W_-(x_n;\R_{0,n}) \le \frac13 + \tilde C x_n.\notag
\end{align}
We thus conclude that for a fixed $n$ sufficiently large $W_-(x_n;\R_{0,n})\le \frac13 + \delta <W(x_n;\bar x_\ast)$. Therefore, $W(\cdot;\bar x_\ast)$ is an upper solution (see Definition~\ref{D:UPPERLOWER}) at 
$x_0:=x_n$ with $\R_1=\R_{0,n}$.
The claimed upper bound on $\R_1$ is clear.

\noindent
{\em Case 3.}
\begin{align}
\frac13<\limsup_{x\to 0} W(x;\bar x_\ast) =\infty, \ \ \liminf_{x\to0}W(x;\bar x_\ast)=\frac13.\notag
\end{align}
As $W(\cdot;\bar x_\ast)$ must oscillate between $\frac13$ and $\infty$ we can use the mean value theorem to conclude that there exists a sequence $\left\{x_n\right\}_{n\in\mathbb N}$ 
such that $\lim_{n\to\infty}x_n=0$ and
\begin{align}\label{E:CRUCIAL0}
W(x_n;\bar x_\ast)>n, \ \ \text{ and } \ \ W'(x_n;\bar x_\ast)=0.
\end{align}
We claim that 
there exist $N_0>0$ and $0<\eta\ll1$ such that
\begin{align}\label{E:CRUCIAL}
W(x_n;\bar x_\ast)\ge  \frac{1}{2x_n^{1-\k}}, \ \ n\ge N_0. 
\end{align}
To prove this, assume that~\eqref{E:CRUCIAL} is not true. 
Then there exists a subsequence of $\{x_n\}_{n\in\mathbb N}$ such that $W(x_n;\bar x_\ast)< \frac{1}{2x_n^{1-\k}}$. 
We now rewrite~\eqref{E:WODE} in the form
\begin{align}\label{E:USWODEBOUND}
W' = \frac{W}{x}\left(\frac1W - (3-\gamma) +\frac{-\gamma B + 2(1+\k)x^2(W+\k)(\R-W)}{B}\right),
\end{align}
where $\gamma\in(0,3)$ is a control parameter to be chosen below.
We use~\eqref{E:BDEF0} to evaluate
\begin{align}
& -\gamma B + 2x^2(1+\k)(W+\k)(\R-W) \notag \\
& =x^2\left[\gamma\left(( W + \k)^2 - \k (W - 1)^2 + 4\k \R W\right) - 2(1+\k)(W+\k)(W-\R)\right] -\gamma \R^{-\e}\notag \\
& = x^2\Big\{ \left[\gamma(1-\k)-2(1+\k)\right] W^2 + \left[4\gamma \k- 2(1+\k)\k+\left(4\gamma \k+2(1+\k)\right)\R\right]W \notag \\
& \ \ \ \ \qquad + \gamma(\k^2-\k) + 2 \k(1+\k)\R \Big\} - \gamma \R^{-\e}.
\end{align}
Since $\R\ge\frac13$, by Lemma~\ref{L:APRIORIMINUS} we also have the bound $\R(x)\le \frac1{x^{1-\k}}$.
We may therefore estimate 
\begin{align}
&\R^\e\left( -\gamma B + 2x^2(1+\k)(W+\k)(\R-W)\right) \notag\\
& \le x^2 \R^\e \left\{\left[\gamma(1-\k)-2(1+\k)\right] W^2 + 4\gamma \k W\right\} + \left[\left(4\gamma \k+2(1+\k)\right)\right]x^2W\R^{1+\e} \notag \\
& \ \ \ \  + 2 \k(1+\k)x^2\R^{1+\e}  - \gamma.
\end{align}
By~\eqref{E:CRUCIAL0}, for any $\gamma\in(0,\frac32)$ we have $\left[\gamma(1-\k)-2(1+\k)\right] W(x_n;\bar x_\ast)^2 + 4\gamma \k W(x_n;\bar x_\ast)<0$ for all sufficiently large $n$.
We use the upper bounds on $(\R,W)$ along the sequence $\{x_n\}_{n\in\mathbb N}$ to obtain 
\[
x^2W\R^{1+\e} \le \frac12 \ \text{ and } \ 
x^2 \R^{1+\e} \le  x^{1-\k}, \ \ 
\]
and therefore
\begin{align}
&\R^\e\left( -\gamma B + 2x^2(1+\k)(W+\k)(\R-W)\right)\Big|_{x=x_n}  \le2\gamma\k + 1 + \k + 2\k(1+\k)x_n^{1-\k}-\gamma <0 \notag\\
\end{align}
for some $\gamma\in(1,\frac32)$ and all sufficiently small $\k$. Feeding this back into~\eqref{E:USWODEBOUND}, we conclude
\begin{align}
W'(x_n;\bar x_\ast)&  \le \frac{W(x_n;\bar x_\ast)}{x_n} \left(\frac1{W(x_n;\bar x_\ast)} - (3-\gamma)\right)<0, 
\end{align}
 for all sufficiently large $n$, where we use the first bound in~\eqref{E:CRUCIAL0}, which contradicts the second claim in~\eqref{E:CRUCIAL0}.
We can therefore repeat the same argument following~\eqref{E:CRUCIAL00} to conclude that $W(\cdot;\bar x_\ast)$ is an upper solution at $x_0:=x_n$, for some $n$ sufficiently large. 
The upper bound on $\R_1$ follows in the same way.
\end{proof}

We next show that $W(\cdot;\bar x_\ast)$ takes on value $\frac13$ at $x=0$.


\begin{proposition}\label{P:ONETHIRD}
The limit $\lim_{x\to0}W(x;\bar x_\ast)$ exists and
\be
\lim_{x\to0}W(x;\bar x_\ast) = \frac13.\notag
\ee
\end{proposition}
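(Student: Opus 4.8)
The plan is to argue by contradiction, assuming that $\lim_{x\to0}W(x;\bar x_\ast)$ either fails to exist or exists but is not $\frac13$. Recall from Proposition~\ref{P:GLOBALFRIEDMAN} that $s(\bar x_\ast)=0$, so that $(\R(\cdot;\bar x_\ast),W(\cdot;\bar x_\ast))$ is defined with $B>0$ on all of $(0,\bar x_\ast]$, and recall from the opening of the proof of Lemma~\ref{L:CONTR} that $\liminf_{x\to0}W(x;\bar x_\ast)\ge\frac13$, since otherwise $\bar x_\ast\in X$, contradicting $\bar x_\ast=\inf X$ (see~\eqref{critical_x}) together with the relative openness of $X$. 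Under the negation of the claim, Lemma~\ref{L:CONTR} supplies arbitrarily small values $x_0>0$ at which $(\R(\cdot;\bar x_\ast),W(\cdot;\bar x_\ast))$ is an \emph{upper} solution in the sense of Definition~\ref{D:UPPERLOWER}, with the quantitative bound $\R_1\lesssim x_0^{-1}$ on the matched initial density. Choosing such an $x_0$ also smaller than the constant $\kappa$ from Lemma~\ref{L:LOWERSOLUTION}, that lemma produces $x_{\ast\ast}\in[\bar x_\ast,\xmax]$ which is a \emph{lower} solution at the same $x_0$, with $\R_1\lesssim x_0^{-(1-\k)}$; moreover $x_{\ast\ast}>\bar x_\ast$ strictly, since a point that is simultaneously an upper and a lower solution at $x_0$ would contradict $W(x_0;\cdot)>W_-(x_0;\cdot)$ versus the reverse inequality.

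Next I would set up a connectedness argument on $[\bar x_\ast,x_{\ast\ast}]$. For $x_\ast$ in this interval one has $x_0\ge \xf(x_\ast)$, hence $W(x_0;x_\ast)\ge\frac13$ and, by Lemma~\ref{L:PREPX}, $\R(x_0;x_\ast)>\frac13$; combining the a priori bound $\R(x_0;x_\ast)<x_0^{-(1-\k)}$ from Lemma~\ref{L:APRIORIMINUS} with the density bounds of the previous paragraph and the fact that $b:=\tfrac{3+\k+2\e}{4}<1$, one checks that $x_0$ may be taken small enough that the monotonicity window of Lemma~\ref{L:MONOTONE} covers the entire relevant range of $\R_0$, so that $\R_0\mapsto\R_-(x_0;\R_0)$ is strictly increasing and the quantity $\R_1(x_\ast):=(\R_-(x_0;\cdot))^{-1}\!\big(\R(x_0;x_\ast)\big)$ is well defined. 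By Proposition~\ref{P:LOWERSEMICONTRLP}(b) (applicable since $x_0>s(x_\ast)$ throughout), by the continuity of the monotone inverse, and by the continuity of $\R_-,W_-$ in the data from Theorem~\ref{T:SONICLWPORIGIN}, the map $x_\ast\mapsto W(x_0;x_\ast)-W_-(x_0;\R_1(x_\ast))$ is continuous on $[\bar x_\ast,x_{\ast\ast}]$. It is positive at $\bar x_\ast$ (upper solution) and negative at $x_{\ast\ast}$ (lower solution), so by the intermediate value theorem it vanishes at some $\hat x_\ast\in(\bar x_\ast,x_{\ast\ast})$; that is, $\R(x_0;\hat x_\ast)=\R_-(x_0;\R_1(\hat x_\ast))$ and $W(x_0;\hat x_\ast)=W_-(x_0;\R_1(\hat x_\ast))$.

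Since $B(x_0;\hat x_\ast)>0$, uniqueness for the ODE system~\eqref{E:RODE}--\eqref{E:WODE} forces $(\R(\cdot;\hat x_\ast),W(\cdot;\hat x_\ast))$ and $(\R_-(\cdot;\R_1(\hat x_\ast)),W_-(\cdot;\R_1(\hat x_\ast)))$ to coincide on their common interval of existence. A short argument rules out a sonic point of either solution between $0$ and $\hat x_\ast$: the left solution keeps $\F [x;\R]-xW>0$ there, while $W_->\frac13>0$ keeps $H[x;\R]+xW>0$, so $B>0$; invoking Lemma~\ref{L:OBSTRUCTION} one then concludes that the two solutions agree on all of $(0,\hat x_\ast)$, that the RLP-type solution from $\hat x_\ast$ extends real-analytically to $x=0$ with $s(\hat x_\ast)=0$, and---by~\eqref{E:LBWMINUS}---that $W(x;\hat x_\ast)=W_-(x;\R_1(\hat x_\ast))>\frac13$ for every $x\in(0,\hat x_\ast)$. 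On the other hand, $X$ is an interval with $\inf X=\bar x_\ast\notin X$ and $\xmax\in X$, hence $X=(\bar x_\ast,\xmax]$, and therefore $\hat x_\ast\in(\bar x_\ast,x_{\ast\ast})\subset X\subset\Y$; by the very definition of $\Y$ (Definition~\ref{D:XYZ}) there must exist $x\in(s(\hat x_\ast),\hat x_\ast)=(0,\hat x_\ast)$ with $W(x;\hat x_\ast)=\frac13$, contradicting the strict inequality just established. Hence $\lim_{x\to0}W(x;\bar x_\ast)=\frac13$.

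The step I expect to be the main obstacle is the verification, in the second paragraph, that a \emph{single} choice of $x_0$ can be made small enough for $\R_0\mapsto\R_-(x_0;\R_0)$ to be globally invertible over the whole span of densities $\R(x_0;x_\ast)$, $x_\ast\in[\bar x_\ast,x_{\ast\ast}]$ --- this is exactly the purpose of the careful $x_0$-dependent density bounds in Lemmas~\ref{L:CONTR} and~\ref{L:LOWERSOLUTION} and of the exponent $b<1$ in Lemma~\ref{L:MONOTONE}, and its bookkeeping is the delicate part; the remaining ingredients (continuity of the flow, ODE uniqueness away from $B=0$, and the identification $X=(\bar x_\ast,\xmax]$) are then routine.
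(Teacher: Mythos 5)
Your proposal is correct and follows essentially the same route as the paper's proof: a contradiction argument that invokes Lemmas~\ref{L:CONTR} and~\ref{L:LOWERSOLUTION} to obtain an upper solution at $\bar x_\ast$ and a lower solution at some $x_{\ast\ast}$ at a common small $x_0$, uses the monotonicity of $\R_0\mapsto\R_-(x_0;\R_0)$ from Lemma~\ref{L:MONOTONE} (and the exponent $b<1$) to build a continuous matching map, applies the intermediate value theorem to find $\hat x_\ast\in(\bar x_\ast,x_{\ast\ast})\subset X$ whose solution agrees with an origin solution with $W>\frac13$ on $(0,\hat x_\ast)$, and then derives a contradiction with $X\subset\Y$. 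You additionally make explicit a few details the paper leaves tacit (strict inequality $x_{\ast\ast}>\bar x_\ast$, $\bar x_\ast\notin X$, and the argument that $s(\hat x_\ast)=0$), all of which are correct.
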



\begin{proof}
Assume that the claim is  not true. By Lemmas~\ref{L:LOWERSOLUTION} and~\ref{L:CONTR} we can find a $0<x_0\ll1$ and $x_{\ast\ast}\in X $ so that $(\R(\cdot;x_{\ast\ast}),W(\cdot;x_{\ast\ast}))$  
and $(\R(\cdot;\bar x_\ast),W(\cdot;\bar x_\ast))$   are respectively a lower and an upper solution at $x_0$.
Without loss of generality let 
\[
A: =\R(x_0;x_{\ast\ast})< \R(x_0;\bar x_\ast)=:B.
\]
By Lemmas~\ref{L:LOWERSOLUTION} and~\ref{L:CONTR} there exist $\R_A,\R_B>\frac13$ such that $A=\R_-(x_0;\R_A)$,  $B=\R_-(x_0;\R_B)$, and
$\R_A,\R_B\in(\frac13,\R_0)$, where $\R_0\gg1$ and $x_0 \le C\frac1{\R_0}$. Therefore by Lemma~\ref{L:MONOTONE}, $\pa_{\R_0}\R_-(x_0;\tilde \R_0)>0$ for all $\tilde \R_0\in[\frac13,\R_0]$, 
since $\R_0^{- \tfrac{3+\k+2\e}{4} }\gg \R_0^{-1}$ for $\R_0$ large and all $\k\le\k_0$ with $\k_0$ sufficiently small. By the inverse function theorem, there exists a continuous function $\tau\mapsto g(\tau)$ such that 
\begin{align}
\R_-(x_0;g(\tau)) & = \tau,  \ \ \tau\in[A,B], \notag\\
g(\R_A) & = A.\notag
\end{align} 
By strict monotonicity of $\tilde \R_0\mapsto \R_-(x_0;\tilde \R_0)$ on $(0, \R_0]$ the inverse $g$ is in fact injective and therefore $g( \R_B)=B$. 
We consider the map 
\begin{align}
[\bar x_\ast,x_{\ast\ast}] \ni x_\ast \mapsto W(x_0;x_\ast) - W_-(x_0;g( \R(x_0;x_\ast))) = : h(x_\ast).\notag
\end{align}
By the above discussion $h$ is continuous, $h(A)<0$ and $h(B)>0$. Therefore, by the Intermediate Value Theorem there exists an $x_s\in(\bar x_\ast,x_{\ast\ast})$ such that $h(x_s)=0$.  
The solution $( \R(\cdot;x_s),W(\cdot;x_s))$ exists on $[0,x_s]$, satisfies $W(0)=\frac13$ and belongs to $X $. This is a contradiction to the definition of $X $. 
\end{proof}


In the next proposition we will prove that the solution $(\R(\cdot;x_\ast), W(\cdot;x_\ast))$ is analytic in a left neighbourhood of $x=0$, by showing that it coincides with a solution emanating from the origin.

\begin{proposition}\label{P:GOODPROPERTIES}
There exists a constant $C_\ast>0$ so that
\begin{align}
\lv \R(x;\bar x_\ast)\rv + \lv W(x;\bar x_\ast)\rv
 +  \lv \frac{W(x;\bar x_\ast)-\frac13}{x^2}\rv
\le C_\ast, \ \ x\in(0,\bar x_\ast].\notag
\end{align}
The solution $\R(\cdot;\bar x_\ast):(0,\bar x_\ast]\to\mathbb R_{>0}$ extends continuously to $x=0$ and 
$\R_\ast:=\R(0;\bar x_\ast)<\infty$. Moreover, the solution $(\R(\cdot;\bar x_\ast), W(\cdot;\bar x_\ast))$ coincides with
$(\R_-(\cdot;\R_\ast), W_-(\cdot;\R_\ast))$ and it is therefore analytic at $x=0$ by Theorem~\ref{T:SONICLWPORIGIN}.
\end{proposition}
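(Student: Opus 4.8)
The plan is to identify which of the sets $\X,\Y,\Z$ of Definition~\ref{D:XYZ} contains $\bar x_\ast$, use this to fix the correct signs along the orbit, promote the qualitative facts $s(\bar x_\ast)=0$ (Proposition~\ref{P:GLOBALFRIEDMAN}) and $\lim_{x\to0^+}W(x;\bar x_\ast)=\tfrac13$ (Proposition~\ref{P:ONETHIRD}) to the quantitative bounds claimed, and finally to invoke a uniqueness argument at the singular endpoint $x=0$ to match the orbit with the one emanating from the origin in Theorem~\ref{T:SONICLWPORIGIN}. For the first point, recall from Proposition~\ref{P:GLOBALFRIEDMAN} that $\bar x_\ast\in(\xmin,\xmax)$, $s(\bar x_\ast)=0$, and that the fundamental set $X$ is a relatively open interval in $[\xmin,\xmax]$ with $\bar x_\ast=\inf X$; hence $\bar x_\ast\notin X$, so $(\bar x_\ast,\xmax]\subset X\subset\Y$ while $[\bar x_\ast,\xmax]\not\subset\Y$, which forces $\bar x_\ast\notin\Y$. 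Since $\lim_{x\to0^+}W(x;\bar x_\ast)=\tfrac13$ implies $\inf_x W(x;\bar x_\ast)=\tfrac13$, we also have $\bar x_\ast\notin\X$, so $\bar x_\ast\in\Z$. With $s(\bar x_\ast)=0$ this gives $\tfrac13<W(x;\bar x_\ast)<\R(x;\bar x_\ast)$ on $(0,\bar x_\ast)$ by Lemma~\ref{L:PREPX}(a), and since $B>0$ there, the right-hand side of~\eqref{E:RODE} is strictly negative; thus $\R(\cdot;\bar x_\ast)$ is strictly decreasing and $\R_\ast:=\lim_{x\to0^+}\R(x;\bar x_\ast)\in(\tfrac13,+\infty]$ exists.

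\textbf{The bounds.} The function $W(\cdot;\bar x_\ast)$ is continuous on $(0,\bar x_\ast]$ with a finite limit at $0$, hence bounded; say $|W(\cdot;\bar x_\ast)|\le c$. Feeding this and the crude bound $\R<x^{-(1-\k)}$ of Lemma~\ref{L:APRIORIMINUS} (valid since $\R>\tfrac13$) into the argument from Case~2 of the proof of Lemma~\ref{L:CONTR}, one obtains $\R'\ge -C\R x^{-\k}$ for small $x$, i.e. $\bigl(\log\R+\tfrac{C}{1-\k}x^{1-\k}\bigr)'\ge0$, which bounds $\R$ near $0$; therefore $\R_\ast<\infty$, $\R(\cdot;\bar x_\ast)$ extends continuously to $x=0$, and $|\R(\cdot;\bar x_\ast)|\le C_\ast$ on $(0,\bar x_\ast]$. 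It follows that $B[x;\R,W]\to\R_\ast^{-\e}>0$ as $x\to0$, so $B\ge c_1>0$ near $0$. Setting $g:=W(\cdot;\bar x_\ast)-\tfrac13$ and rewriting~\eqref{E:WODE} as $g'=-\tfrac{3g}{x}+\tfrac{2x(1+\k)W(W+\k)(\R-W)}{B}$, we get $(x^3g)'=x^3\cdot\tfrac{2x(1+\k)W(W+\k)(\R-W)}{B}=O(x^4)$ near $0$; since $x^3g\to0$, integrating from $0$ gives $|g(x)|\le Cx^2$ for $x$ near $0$, and continuity of $g/x^2$ on compact subsets of $(0,\bar x_\ast]$ yields the bound on all of $(0,\bar x_\ast]$.

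\textbf{Identification and analyticity.} The estimate $|W(\cdot;\bar x_\ast)-\tfrac13|=O(x^2)$ makes $\tfrac{1-3W}{x}\to0$ as $x\to0$, so the right-hand sides of~\eqref{E:RODE}--\eqref{E:WODE} extend continuously to $x=0$ and $(\R(\cdot;\bar x_\ast),W(\cdot;\bar x_\ast))$ is a $C^1$ solution on some $[0,\delta]$ with data $(\R_\ast,\tfrac13)$ at $x=0$. I would then establish uniqueness through the singular point: estimating the difference with $(\R_-(\cdot;\R_\ast),W_-(\cdot;\R_\ast))$ by controlling $\R-\R_-$ directly and $x^3(W-W_-)$ (the factor $x^3$ absorbing the singular coefficient $-3/x$), and closing by Gronwall on $[0,\delta]$, forces the two orbits to coincide there; since for $x>0$ the system is regular and locally Lipschitz (because $B>0$), standard ODE uniqueness propagates the equality to $(0,\bar x_\ast]$. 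Real analyticity at $x=0$, and in particular $\R(0;\bar x_\ast)=\R_\ast>\tfrac13$, then follow from Theorem~\ref{T:SONICLWPORIGIN}.

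The step I expect to be the main obstacle is the upper bound on $\R$ near $x=0$: the a priori bound $\R<x^{-(1-\k)}$ degenerates as $x\to0^+$, so excluding density blow-up at the labelling origin genuinely requires combining the boundedness of the relative velocity $W$ with the algebraic structure of the sonic denominator $B$ to upgrade it to an exponentially integrable bound on $\R'/\R$. The remaining points, including the uniqueness statement across the singular endpoint $x=0$, are comparatively routine and parallel the Newtonian treatment.
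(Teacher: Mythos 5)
Your proposal is correct and follows essentially the same route as the paper: you bound $W$ via Proposition~\ref{P:ONETHIRD}, obtain $\R\ge W\ge\tfrac13$ (your explicit identification $\bar x_\ast\in\Z$ via $\bar x_\ast=\inf X$, the openness of $X$, and $\lim_{x\to0}W=\tfrac13$ is exactly what the paper uses implicitly), upgrade $\R<x^{-(1-\k)}$ to a uniform bound through the estimate $\R'\ge -C\R x^{-\k}$ from the proof of~\eqref{E:RHOUNIFORMBOUND}, and derive $W-\tfrac13=O(x^2)$ by the same integrating-factor computation on $x^3(W-\tfrac13)$. The one place you diverge slightly is the uniqueness argument across $x=0$: you propose a weighted sup-norm Gronwall argument controlling $x^3\bar W$ to absorb the singular coefficient $-3/x$, while the paper instead multiplies the difference equations by $\bar\R,\bar W$, integrates, applies Cauchy--Schwarz, and drops the good-sign term $3\int_0^x\bar W(\tau)^2\tau^{-1}\,d\tau$ before invoking Gronwall for $\bar\R^2+\bar W^2$. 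Both close; your variant has the small advantage of not needing to justify finiteness of $\int_0^x\bar W^2/\tau\,d\tau$ (which the paper does by noting $\bar W=O(\tau^2)$), while the paper's energy formulation is closer to what one would write for a PDE analogue.
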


\begin{proof}
By Proposition~\ref{P:ONETHIRD} it is clear that $W(\cdot;\bar x_\ast)$ is bounded on $[0,\bar x_\ast]$. Moreover, $\R\ge W\ge \frac13$ on $[0,\bar x_\ast]$ and therefore by Lemma~\ref{L:APRIORIMINUS} $\R\le\frac1{x^{1-\k}}$. We may now apply the exact same argument as in the proof of~\eqref{E:RHOUNIFORMBOUND} to conclude that $\R$ is uniformly bounded on $[0,\bar x_\ast]$. Since $B>0$ and $\R>W$ on $[0,\bar x_\ast]$, by~\eqref{E:RODE} $\R'\le0$
and therefore the limit $\R_\ast:=\lim_{x\to0}\R(x;\bar x_\ast)$ exists and  it is finite. It is also clear that there exists a constant $c_0>0$ such that 
\be\label{E:BLBUNIF}
B(x)>c_0, \ \  x\in[0,\bar x_\ast-\tilde r].
\ee


Let
\[
\zeta =  W-\frac13\ge0.
\]
It is then easy to check from~\eqref{E:WODE}
\begin{align}
\left(\zeta x^3\right)' = x^3 \frac{2(1+\k) x  W(W+\k)(\R- W)}{B}\notag
\end{align}
and therefore, by~\eqref{E:BLBUNIF} and the uniform boundedness of $W$ and $\R$, for any $0<x_1<x$ we obtain
\[
\zeta(x) x^3 - \zeta(x_1) x_1^3 \le C \int_{x_1}^{x}\tau^4\,d\tau  = \frac C5\left(x^5-x_1^5\right), \ \ x\in(0,\bar x_\ast-\tilde r].
\]
We now let $x_1\to0^+$ and conclude 
\be\label{E:ZETABOUND}
\zeta(x)\lesssim x^2. 
\ee

Consider now $\bar \R(x):=\R(x;\bar x_\ast)-\R_-(x;\R_\ast)$ and $\bar W(x):= W(x;\bar x_\ast)- W_-(x;\R_\ast)$, both are defined in a (right) neighbourhood of $x=0$ and satisfy
\begin{align}
\bar \R' &= O(1)\bar \R + O(1)\bar W, \notag \\
\bar W' & = - 3\frac{\bar W}{x} + O(1)\bar \R + O(1)\bar W, \notag
\end{align}
where $\bar \R(0)=\bar W(0)=0$. Here we have used the already proven boundedness of $(\R(\cdot;\bar x_\ast), W(\cdot;\bar x_\ast))$ and the boundedness of $(\R_-(\cdot;\R_\ast), W_-(\cdot;\R_\ast))$, see Lemma~\ref{L:APRIORIMINUS0}.
We multiply the first equation by $\bar \R$, the second by $\bar W$, integrate over $[0,x]$ and use Cauchy-Schwarz to get
\begin{align}\label{E:UNIQUE}
\bar \R(x)^2+\bar W(x)^2 + 3\int_0^x\frac{\bar W(\tau)^2}{\tau}\,d\tau  \le C \int_0^x \left(\bar \R(\tau)^2+\bar W(\tau)^2\right)\,d\tau.
\end{align}
We note that $\int_0^x\frac{\bar W(\tau)^2}{\tau}\,d\tau $ is well-defined, since $\bar W = \zeta - \zeta_-$, where $\zeta_-= W_--\frac13$; we use~\eqref{E:ZETABOUND} and observe $\zeta_-\lesssim x^2$ in the vicinity of $x=0$ by the analyticity of $ W_-$, see Theorem~\ref{T:SONICLWPORIGIN}. Therefore $\bar \R(x)^2+\bar W(x)^2=0$ by~\eqref{E:UNIQUE}. The analyticity claim now follows from Theorem~\ref{T:SONICLWPORIGIN}.
\end{proof}

\begin{remark}
Propositions~\ref{P:ONETHIRD} and~\ref{P:GOODPROPERTIES} follow closely the arguments in the Newtonian case (Propositions 4.22 and 4.23 in~\cite{GHJ2021}). 
However, an important difference is the use of~\eqref{E:RHOUNIFORMBOUND} in the proof of Proposition~\ref{P:GOODPROPERTIES}.
\end{remark}

{\em Proof of Theorem~\ref{T:FRIEDMANN}.}
Theorem~\ref{T:FRIEDMANN} is now a simple corollary of Propositions~\ref{P:GLOBALFRIEDMAN},~\ref{P:ONETHIRD}, and~\ref{P:GOODPROPERTIES}.
\prfe



\section{The far field connection}\label{S:FARFIELD}


The goal of this section is to construct a global-to-the-right solution of the dynamical system~\eqref{E:RODE}--\eqref{E:WODE}
for all $\xs$ in the sonic window $[\xmin,\xmax]$. We refer to Section~\ref{SS:FFINTRO} for a detailed overview.

\subsection{A priori bounds} 

\begin{remark}\label{R:EQ}
The condition $\F >x\R$ is equivalent to the inequality
\begin{align}\label{E:SONICASS3}
\R^{-\e}  > x^2\left((1-\k)\R^2-\k(1-\k)+4\k \R(1+\R)\right) ,
\end{align}
which can be seen directly from the definition~\eqref{E:FDEF} of $\F [x;\R]$. Similarly, the inequality
$xW>\F $ is equivalent to
\begin{align}\label{E:SONICASS4}
\R^{-\e} < x^2\left((1-\k)W^2- \k(1-\k) +4\k W(1+\R) \right).
\end{align}
\end{remark}



\begin{lemma}[A priori bounds to the right]\label{L:APRIORI}
Let $x_\ast\in[\xmin,\xmax]$.
Let $X>x_\ast$ be the maximal time of existence to the right of the associated RLP-type solution on which we have
\begin{align}
xW(x) & >\F [x;\R],   \ \ x\in(x_\ast+\delta,X), \label{E:SONICASS}\\
\F [x;\R]&>x \R(x), \ \ x\in(x_\ast+\delta,X), \label{E:SONICASS2}
\end{align}
where $0<\delta\ll1$ is an $\k$-independent constant from Lemma~\ref{L:INITIAL}.
Then the following claims hold
\begin{enumerate}
\item[{\em (a)}]
\begin{align}
\R' & <0,  \ \ x\in(x_\ast+\delta,X) ,\label{E:RBOUND}\\
W & >\frac16,  \ \ x\in(x_\ast+\delta,X) ,\label{E:WLOWERBOUND} \\
\R(x)W(x) & > \R_\delta^2 \left(\frac{x_\ast+\delta}{x}\right)^3, \ \ x\in(x_\ast+\delta,X), \label{E:RWLOWERBOUND}
\end{align}
where 
\[
\R_\delta:=\R(x_\ast+\delta).
\]
\item[{\em (b)}]
There exists a constant $\tilde C$ {\em independent} of $X$ and $\k_0>0$ such that for all $0<\k\le\k_0$
\begin{align}\label{E:WUPPERBOUND}
W(x)\le \tilde C, \ \  x\in(x_\ast+\delta,X).
\end{align}
\item[(c)]
There exist constants $C>0$, $0<\k_0\ll1$ 
such that for all $\k\in(0,\k_0)$ 
\begin{align}\label{E:RUB}
\frac{\R'x}{\R} \le - \gamma, \ \ x\in(x_\ast+\delta,X), \ \ \gamma := 1-C\k>0
\end{align}
and therefore
\begin{align}\label{E:RUB2}
\R(x) \le \R_\delta \left(\frac{x}{x_\ast+\delta}\right)^{-\gamma}.
\end{align}
In particular, with $0<\k_0\ll1$ sufficiently small, we may choose $\gamma = 1- C\k_0$ uniform for all $\k\in(0,\k_0)$.
\end{enumerate}
\end{lemma}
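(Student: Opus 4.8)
\textbf{Proof strategy for Lemma~\ref{L:APRIORI}.}

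The plan is to work on the interval $(x_\ast+\delta, X)$ under the two structural assumptions~\eqref{E:SONICASS}--\eqref{E:SONICASS2}, i.e. $x\R<\F<xW$, which by the factorisation~\eqref{E:BDEF} (and the positivity of $H$ and of $H+xW\ge \F - xW$... wait, $xW>\F$ here) forces $B<0$ on this interval, together with $\F-xW<0$ and $\F-x\R>0$. The sign of $B$ is the master sign that drives everything. First I would establish part (a). For~\eqref{E:RBOUND}: from~\eqref{E:LOGRFORMULA} we have $(\log\R)' = -2x(1-\k)(W+\k)(\R-W)/B$; since $\R-W<0$ (because $x\R<\F<xW$ gives $\R<W$ via division by $x$), $W+\k>0$ and $B<0$, the whole quotient is negative, hence $\R'<0$. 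For~\eqref{E:WLOWERBOUND} and~\eqref{E:RWLOWERBOUND}: rewrite~\eqref{E:WODE} using the decomposition of the numerator $B-2x^2(1+\k)(W+\k)(\R-W)$ exactly as in the proof of Lemma~\ref{L:XYNONEMPTY}(b), which gives $xW' = 1-2W + 2\e \frac{f}{H+xW}W - W\cdot(\text{positivity-structured term})/B$; here one must be careful with signs since now $B<0$, but the key point is that $\R-W<0$ makes the relevant products flip favorably, and combined with the multiplicative identity for $\R^{1+\k}W^{1-\k}$-type quantities (as in~\eqref{E:LOGID0}, giving $(\log(\R^{2(1+\e)}W^2))' = \frac2x(\frac1W-3)$ when we combine~\eqref{E:RODE} and~\eqref{E:WODE} suitably) one gets a differential inequality of the form $(\log(\R W x^3))' \ge 0$ or similar, which integrates to~\eqref{E:RWLOWERBOUND}; the lower bound $W>\frac16$ then follows by feeding the lower bound on $\R W$ and the upper bound on $\R$ back into a Riccati-type inequality for $W$, or more directly by showing the region $\{W\le \frac16\}$ is exited/never entered using the sign of $W'$ there.

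Next, part (b), the upper bound~\eqref{E:WUPPERBOUND} on $W$ uniform in $X$. The idea is to again combine~\eqref{E:RODE} and~\eqref{E:WODE} to get the identity $(\log(\R^{2(1+\e)}W^2))' = \frac2x(\frac1W - 3)$ valid wherever $W>0$; since $W>\frac16$ from part (a), the right-hand side is bounded, in fact $\le \frac{2}{x}(6-3) = \frac{6}{x}$... but that's the wrong sign for an upper bound. Instead, once $W$ is large, $\frac1W - 3 < 0$, so $\R^{2(1+\e)}W^2$ is decreasing; together with the lower bound on $\R$ from~\eqref{E:RWLOWERBOUND} (which controls $\R$ from below in terms of $\R W$ and $W$) one extracts that $W$ cannot run away to infinity — quantitatively, if $W$ were to exceed a large threshold $N$, then $\R^{2(1+\e)}W^2$ would be decreasing there while $\R$ stays bounded below on compact pieces, giving a contradiction with a bound that does not depend on $X$. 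One should mimic the structure of the proof of Lemma~\ref{L:OBSTRUCTION}, Step 2, adapting the invariant-region argument to the right of the sonic point. The constant $\tilde C$ comes out depending only on the data at $x_\ast+\delta$ (hence on $\k_0$ via Lemma~\ref{L:INITIAL}), not on $X$.

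Finally, part (c): the sharp decay rate $\frac{x\R'}{\R}\le -\gamma$ with $\gamma = 1-C\k$. Starting from $(\log\R)' = -2x(1-\k)(W+\k)(\R-W)/B$ and using $B = (1-\k)(\F-xW)(H+xW)$ with $\F-xW<0$, write $\R-W = \frac1x(x\R - xW) = \frac1x((x\R-\F) + (\F - xW)) = \frac1x(-f + (\F-xW))$ where $f = \F - x\R > 0$. Then $\frac{x\R'}{\R} = \frac{-2x^2(W+\k)(\R-W)}{(\F-xW)(H+xW)} = \frac{2x(W+\k)(f - (\F-xW))}{-(\F-xW)(H+xW)} = \frac{2x(W+\k)}{H+xW}\left(\frac{-f}{\F-xW} + 1\right)$. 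Wait — $\F - xW < 0$ so $\frac{-f}{\F-xW} > 0$; so $\frac{x\R'}{\R} = \frac{2x(W+\k)}{H+xW}\left(1 - \frac{f}{xW - \F}\right)$, but this is positive, contradicting~\eqref{E:RBOUND}. Let me recompute: $\R - W < 0$, $W+\k>0$, $B<0$, so $-2x(1-\k)(W+\k)(\R-W)/B$ has numerator $-2x(1-\k)(W+\k)(\R-W) > 0$ and denominator $B<0$, so $(\log\R)'<0$, consistent. The cleaner route: use $(1-\k)(H+xW) - 2(1+\k)x(W+\k)$ algebra from the $W$-equation manipulation to express $\frac{x\R'}{\R}$ and show it equals $-1 + O(\k)$-perturbations; specifically, in the Newtonian limit $\k=0$ the far-field solution has $W\to 1$, $\R\sim x^{-2/(1+\e)}\to x^{-2}$... no, $\gamma$ should approach $1$? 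Actually~\eqref{E:RUB2} with $\gamma = 1$ gives $\R\lesssim x^{-1}$, and $1-\k \approx 1$, consistent with $\frac2{1+\e} = 1-\k+O(\k^2)$. So the computation is: $\frac{x\R'}{\R}$, using~\eqref{FH} i.e. $(1-\k)\F H = \R^{-\e} + \k(1-\k)x^2$ and the relation $B = \R^{-\e} - (\ldots)x^2$, should simplify to $\frac{-2(1-\k)x^2(W+\k)(\R-W)}{B}$; bound $\R-W$ from above (it's negative, bound $|\R-W|$ from above by a multiple of $\F/x$-type quantities) and $|B|$ from below, using the assumptions and parts (a),(b) to get a uniform $O(\k)$ correction to the leading coefficient $1$. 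I expect \textbf{part (c) to be the main obstacle}: it requires extracting the \emph{sharp} leading constant (not just some negative rate), which means carefully tracking all $\k$-dependent error terms in the expression for $\frac{x\R'}{\R}$ and showing they are genuinely $O(\k)$ uniformly on $(x_\ast+\delta,X)$, using the a priori bounds $\frac16<W\le\tilde C$, $\R\le\R_\delta(\ldots)$, and the sign $B<0$ with a quantitative lower bound on $|B|$ coming from $\F - xW$ being bounded away from $0$ (which itself needs the quantitative version of~\eqref{E:SONICASS}, likely proven elsewhere or bootstrapped). Parts (a) and (b) are, by contrast, sign-chasing and invariant-region arguments closely parallel to Lemmas~\ref{L:OBSTRUCTION} and~\ref{L:XYNONEMPTY}.
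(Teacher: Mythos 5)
Your treatment of part (a) is essentially the paper's: once one has the signs $B<0$ and $\R-W<0$ on $(x_\ast+\delta,X)$, the negativity of $\R'$ is immediate from \eqref{E:LOGRFORMULA}; the lower bound $W>\tfrac16$ comes from the differential inequality $W'\ge\frac{1-3W}{x}$, equivalently $(Wx^3)'\ge x^2$, which integrates explicitly; and \eqref{E:RWLOWERBOUND} follows from the sign computation showing $(\R W x^3)'>0$. Your phrasing is looser than the paper (the logarithmic identity \eqref{E:LOGID0} controls $\R^{1+\e}W$, not $\R W$; the paper computes $(\R W x^3)'$ directly), but the content is the same.

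Parts (b) and (c), however, have genuine gaps. For part (b), the identity $(\log(\R^{2(1+\e)}W^2))'=\tfrac2x(\tfrac1W-3)$ plus the lower bound $\R W\gtrsim x^{-3}$ cannot produce an $X$-independent \emph{upper} bound on $W$: substituting $\R\ge cx^{-3}/W$ into a constant upper bound for $\R^{2(1+\e)}W^2$ yields $W^{-2\e}\lesssim x^{6(1+\e)}$, which is a (vacuous) \emph{lower} bound on $W$ and says nothing about blow-up. Your fallback of mimicking Lemma~\ref{L:OBSTRUCTION}, Step 2, also does not transfer: that argument used $B>\kappa>0$ on a \emph{compact} interval, whereas here $B<0$ and $X$ may be infinite. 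The paper's mechanism is different and is the decisive ingredient: for a free parameter $\gamma\in(\tfrac{2(1+\k)}{1-\k},3)$, rewrite \eqref{E:WODE} as $W'=\frac{1-(3-\gamma)W}{x}+\frac{W}{xB}G_\gamma$ with $G_\gamma=-\gamma B+2x^2(1+\k)(W+\k)(\R-W)$; then use \eqref{E:RWLOWERBOUND} to get $\R^{-\e}\lesssim x^{3\e}W^{\e}$ with $3\e<2$, so that the positive coefficient $[\gamma(1-\k)-2(1+\k)]x^2W^2$ dominates $G_\gamma$ once $W$ is large. Both summands in $W'$ are then negative for $W\ge\tilde C$, and the region is dynamically inaccessible. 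Nothing in your sketch reproduces this $\gamma$-parametrization.

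For part (c), you correctly identify it as the main difficulty, but the route you propose — estimate $|\R-W|$ from above and $|B|$ from below — cannot close as stated. You acknowledge that a lower bound for $|B|$ (equivalently $xW-\F$) "needs the quantitative version of~\eqref{E:SONICASS}, likely proven elsewhere or bootstrapped", but this is exactly what is \emph{not} available: the hypothesis is merely the strict inequality $xW>\F$, and a uniform-in-$X$ gap is not known at this stage. Indeed, this lemma is the input to Theorem~\ref{T:GLOBALRIGHT}, where one shows that $xW-\F$ does not collapse; assuming such a lower bound here would be circular. The paper's argument avoids any size estimate on $B$: it writes $\frac{x\R'}{\R}+\gamma=\frac{\gamma B-2(1-\k)x^2(W+\k)(\R-W)}{B}$, invokes \eqref{E:SONICASS3} (the algebraic reformulation of \eqref{E:SONICASS2}) to bound $B$ from below, and reduces the numerator to $x^2(W-\R)\bigl[\gamma(W-\R)+(2-2\gamma)W-\k(\cdots)\bigr]$. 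Choosing $\gamma=1-C\k$ with $C$ large and $\k$-independent makes $(2-2\gamma)W\ge \tfrac{C\k}{3}$ dominate the $O(\k)$ error terms, so the bracket is nonnegative, the numerator is nonnegative, and dividing by $B<0$ gives the claim — without any lower bound on $|B|$. The crucial step is the use of \eqref{E:SONICASS3} to convert the sign information $\F>x\R$ into a pointwise lower bound for $B$; that step is absent from your proposal.
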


\begin{proof}
We note that by Lemma~\ref{L:INITIAL} there exists a $\delta>0$ such that $X>x_\ast+2\delta$. 

\noindent
{\em Proof of part {\em (a)}.}
Notice that by assumptions~\eqref{E:SONICASS}--\eqref{E:SONICASS2} 
we have
\begin{align}
W& >\R,   \ \ x\in(x_\ast+\delta,X).\label{E:WRASS} 
\end{align}
Bound~\eqref{E:RBOUND} follows from~\eqref{E:RODE},~\eqref{E:BDEF},~\eqref{E:SONICASS}, and~\eqref{E:WRASS}.

By~\eqref{E:WODE} and~\eqref{E:SONICASS}--\eqref{E:SONICASS2} we conclude $W'\ge \frac{1-3W}{x}$ or equivalently 
$(Wx^{3})'\ge x^{2}$.
We conclude that for any $x\in(x_\ast+\delta,X)$ we have
\begin{align}
W(x) \ge W_\delta \left(\frac{x_\ast+\delta}{x}\right)^{3} + \frac1{3}\left(1-\left(\frac{x_\ast+\delta}{x}\right)^{3}\right) \ge\frac1{6},
\end{align}
since $W_\delta:=W(x_\ast+\delta)
=W_0(\k,\xs)-O(\delta)\ge W_0(\k;\xmax(\k))-O(\delta)=\frac1{3}-O(\delta)$, for $0<\delta\ll1$ sufficiently small and independent of $\k$. Here we have
 also used Remark~\ref{R:XSDECREASING} and the definition of $\xmax$ in~\eqref{E:XMAXDEF}.

From~\eqref{E:RODE}--\eqref{E:WODE} we have
\begin{align}
x^{1-\beta}\left(W\R x^{\beta}\right)' & = \R' W x + \R W' x + \beta \R W \notag \\
& = \R + (\beta-3) \R W +  \frac{4\k x^2 \R W (W + \k) (\R-W)}{B}. 
\end{align}
We may let $\beta=3$ and conclude from~\eqref{E:SONICASS},~\eqref{E:WRASS} that $\frac{d}{dx}\left(W\R x^{\beta}\right)>0$ on $(x_\ast+\delta,X)$. In particular
\begin{align}
\R(x) W(x) > \R_\delta^2 \left(\frac{x_\ast+\delta}{x}\right)^3, \ \ x\in(x_\ast+\delta,X),
\end{align}
which is~\eqref{E:RWLOWERBOUND}.

\noindent
{\em Proof of part {\em (b)}.}
Let now $\gamma\in(0,3)$ and rewrite~\eqref{E:WODE} in the following way:
\begin{align}\label{E:WGAMMA}
W' = \frac{1-(3-\gamma)W}{x} + \frac{W}{xB}\left(-\gamma B + 2x^2(1+\k)(W+\k)(\R-W)\right). 
\end{align}
We use~\eqref{E:BDEF0} to evaluate
\begin{align}
& -\gamma B + 2x^2(1+\k)(W+\k)(\R-W) \notag \\
& =x^2\left[\gamma\left(( W + \k)^2 - \k (W - 1)^2 + 4\k \R W\right) - 2(1+\k)(W+\k)(W-\R)\right] -\gamma \R^{-\e}\notag \\
& = x^2\Big\{ \left[\gamma(1-\k)-2(1+\k)\right] W^2 + \left[4\gamma \k- 2(1+\k)\k+\left(4\gamma \k+2(1+\k)\right)\R\right]W \notag \\
& \ \ \ \ \qquad + \gamma(\k^2-\k) + 2 \k(1+\k)\R \Big\} - \gamma \R^{-\e} \notag \\
& =: G_\gamma[x;\R,W]. \label{E:GAMMAONE}
\end{align}
By~\eqref{E:RWLOWERBOUND} we know that for any $x\in(x_\ast+\delta,X)$,
$\R(x) > \R_\delta^2  (x_\ast+\delta)^3 x^{-3}W(x)^{-1},$
and therefore
\begin{align}\label{E:GAMMATWO}
\R^{-\e} <\R_\delta^{-2\e}  (x_\ast+\delta)^{-3\e} x^{3\e}W(x)^{\e}, \ \ x\in(x_\ast+\delta, X).
\end{align}

Fix now any $\gamma\in(\frac{2(1+\k)}{1-\k},3)$, which is possible since $\k$ can be chosen small. The expression $G_\gamma[x;\R,W]$ in~\eqref{E:GAMMAONE}
can be bounded from below with the help of~\eqref{E:GAMMATWO} by 
\begin{align}\label{E:GAMMATHREE}
g[x,W]: = x^2(1-\k)\left(\gamma - \frac{2(1+\k)}{1-\k} \right)W^2 + x^2\chi_1[\R] W + x^2\chi_2[\R] - C_0(x_\ast) x^{3\e}W(x)^{\e},
\end{align}
where
\begin{align}
\chi_1[\R] &: = 4\gamma \k- 2(1+\k)\k+\left(4\gamma \k+2(1+\k)\right)\R,  \notag \\
\chi_2[\R] & : = \gamma(\k^2-\k) + 2 \k(1+\k)\R,  \notag \\
C_0(x_\ast) & : = \gamma \R_\delta^{-2\e}  (x_\ast+\delta)^{-3\e} \notag.
\end{align}
By the already proven bound~\eqref{E:RBOUND} it follows that $\R(x)\le \R_\delta$. Therefore, from the above formulas, there exists a universal constant $C$ such that 
\[
|\chi_1[\R]| + |\chi_2[\R]| + |C_0(x_\ast)| \le C, \ \ x\in(x_\ast+\delta, X), \ \ x_\ast\in[\xmin,\xmax]. 
\]
Since $x>x_\ast+\delta\ge1$ and for sufficiently small $\k$ we have $3\e<2$ (recall~\eqref{E:ETADEF}), it follows that there exists a constant $\bar C=\bar C(\gamma)$ {\em independent} of $X$ and $\k$ such that 
\begin{align}\label{E:GGAMMA}
g[x,W]>1, \ \ \text{ if } \ W(x)>\bar C.
\end{align}
Let now $\tilde C = \max\{\bar C, \frac{2}{3-\gamma}\}$. It then follows  that both summands on the right-hand side of~\eqref{E:WGAMMA} are strictly negative when $W\ge \tilde C$; the first term is negative due to $1-(3-\gamma) C<0$ and the second one due to~\eqref{E:GGAMMA} and the negativity of $B$. Therefore the region $W\ge \tilde C$ is dynamically inaccessible. This completes the proof of~\eqref{E:WUPPERBOUND}.

\noindent
{\em Proof of part {\em (c)}.}
For any $\gamma\in\mathbb R$ it follows from~\eqref{E:RODE} that
\begin{align}\label{E:RPRIMEGAMMA}
\frac{\R'x}{\R} + \gamma = \frac{\gamma B - 2(1-\k)x^2(W+\k)(\R-W)}{B}.
\end{align}
We focus on the numerator of the right-hand side above. By \eqref{E:BDEF0} 
we have
\begin{align}
& \gamma B - 2(1-\k)x^2(W+\k)(\R-W) \notag \\
& = \gamma \R^{-\e} -\gamma \left[ ( W + \k)^2 - \k (W - 1)^2 + 4\k \R W \right] x^2- 2(1-\k)x^2(W+\k)(\R-W) \notag \\
& \ge \gamma x^2\left((1-\k)\R^2-\k(1-\k)+4\k \R(1+\R)\right) \notag \\
& \ \ \ \ -\gamma \left[ ( W + \k)^2 - \k (W - 1)^2 + 4\k \R W \right] x^2- 2(1-\k)x^2(W+\k)(\R-W) \notag \\
& = x^2 (W-\R) \left[\gamma (W-\R) + (2-2\gamma) W - \k \left(3\gamma \R + W(2-\gamma) + 4\gamma+2\k -2\right)\right]. \label{E:BETASHORTER}
\end{align}
In the third line above we crucially used the bound~\eqref{E:SONICASS3}, which by Remark~\ref{R:EQ} is equivalent to the assumption~\eqref{E:SONICASS2}.
Since $W>\R$, $\tilde C>W\ge\frac16$, and $0<\R<\R_\delta$ on $(x_\ast+\delta,X)$, we may choose $\gamma=1-C\k$ with $C$ sufficiently large, but $\k$-independent, so that the above expression is strictly positive, since in this case $(2-2\gamma) W\ge \frac{C\k}{3}$.
Therefore, for sufficiently small $\k$ and $\gamma=1-C\k$, 
the right-hand side of~\eqref{E:RPRIMEGAMMA} is negative, since $B<0$ on $(x_\ast,X)$. This proves~\eqref{E:RUB}, which after an integration gives~\eqref{E:RUB2}.
\end{proof}


\subsection{Monotonicity and global existence}



\begin{lemma}[Monotonicity properties of the flow]\label{L:FLOWERBOUND}
Let $x_\ast\in[\xmin,\xmax]$.
Let $X>x_\ast$ be the maximal time of existence to the right of the associated RLP-type solution on which we have
\begin{align}
xW(x) & >\F [x;\R],   \ \ x\in(x_\ast+\delta,X). \label{E:SONICASS1.1}
\end{align}
Then there exists an $0<\k_0\ll1$ such that for all $\k\in(0,\k_0]$ the associated RLP-type solution satisfies,
\begin{align}
\F [x;\R]>x \R(x), \ \ x\in(x_\ast+\delta,X). \label{E:SONICASS2.1}
\end{align}
If in addition $X<\infty$, then 
there exists a constant $\bar{\kappa} = \bk(X)>0$ such that $f(x)=\F [x;\R]-x\R(x)>\bk$ for all $x\in(x_\ast+\delta, X)$.
\end{lemma}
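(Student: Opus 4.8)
\textbf{Proof plan for Lemma~\ref{L:FLOWERBOUND}.}

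The plan is to use the monotonicity lemma (Lemma~\ref{L:JUHILEMMA}), in the form of the integral representation of Corollary~\ref{C:FFORMULA}, to propagate the sign $f>0$ from a point just to the right of the sonic point, where we know $f>0$ by Lemma~\ref{L:INITIAL}(b). The key structural input is that, under the standing assumption~\eqref{E:SONICASS1.1}, the quantity $B$ is strictly negative on $(x_\ast+\delta,X)$ (since $\F[x;\R]>0$ always, $xW>\F$ forces $\F-xW<0$, and if moreover $f=\F-x\R>0$ then $H+xW>\F-xW$... so the two factors of $B$ in~\eqref{E:BDEF} have opposite signs, giving $B<0$). Thus the factorisation $B=(1-\k)(\F-xW)(H+xW)$ tells us $H+xW>0$ on this interval, which is exactly the sign needed for the source-term sign analysis.

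First I would set up a continuity/bootstrap argument. Let $x_1=x_\ast+2\delta$ (or any point in $(x_\ast+c_0\k_0,x_\ast+2\delta)$, where Lemma~\ref{L:INITIAL}(b) gives $f(x_1)>0$), and define $X_0$ to be the supremum of $x\in[x_1,X)$ such that $f>0$ on $[x_1,x]$. Suppose for contradiction $X_0<X$. On $[x_1,X_0)$ we have $f>0$ and $xW>\F$, hence $B<0$; moreover $\F-xW<0<f$ gives $H+xW = \F + \frac{4\k}{1-\k}(1+\R)x + xW > \F - xW + \dots >0$ and $Z=[(1-\k)\F+2\k x(1+\R)][H+xW]>0$. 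These are precisely the hypotheses of Lemma~\ref{L:LITTLEBPOS}. To apply part (b) of that lemma I need $\R>\frac13$ on the interval; this should follow from the a priori bounds — by Lemma~\ref{L:APRIORI}(a), under~\eqref{E:SONICASS1.1} and $f>0$ (which is~\eqref{E:SONICASS2} there), we have $\R'<0$ and the lower bound $\R(x)W(x)>\R_\delta^2((x_\ast+\delta)/x)^3$ together with the upper bound $W\le\tilde C$ from part (b) gives $\R(x)>\R_\delta^2(x_\ast+\delta)^3/(\tilde C x^3)$; whether this stays above $\frac13$ for all $x\in(x_\ast+\delta,X)$ is the delicate point — see the obstacle paragraph. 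Granting $\R>\frac13$, Lemma~\ref{L:LITTLEBPOS} gives $b=b_1+\k b_2<0$ on $[x_1,X_0)$, so by Corollary~\ref{C:FFORMULA},
\[
f(x) = f(x_1)e^{-\int_{x_1}^x a\,dz} + e^{-\int_{x_1}^x a\,dz}\int_{x_1}^x b(z)\,e^{\int_{x_1}^z a\,ds}\,dz \ge f(x_1)e^{-\int_{x_1}^x a\,dz}>0,
\]
for all $x\in[x_1,X_0)$, and by continuity $f(X_0)\ge f(x_1)e^{-\int_{x_1}^{X_0}a}>0$, contradicting the definition of $X_0$ unless $X_0=X$. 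This proves~\eqref{E:SONICASS2.1}. For the final quantitative claim when $X<\infty$: on the compact interval $[x_1,X]$ the coefficient $a[z;\R,W]$ is bounded (all quantities $\R,W,\F,H,B$ are bounded above, and $B$ is bounded away from $0$ since $\F-xW$ and $H+xW$ are — here I would use that $xW-\F$ is bounded below, which follows from Lemma~\ref{L:INITIAL}(a) giving strict monotonicity of $xW-\F$ near $x_\ast$ combined with continuity, or from a direct argument that $xW-\F$ cannot return to $0$). Hence $\int_{x_1}^x a\,dz\le M$ for a constant $M=M(X)$, so $f(x)\ge f(x_1)e^{-M}=:\bk(X)>0$ uniformly on $[x_1,X]$; on the short initial interval $(x_\ast+\delta,x_1]$ continuity and positivity (Lemma~\ref{L:INITIAL}(b)) plus compactness give a lower bound, and we take the minimum.

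\textbf{The main obstacle.} The delicate step is ensuring $\R(x)>\frac13$ throughout $(x_\ast+\delta,X)$, since Lemma~\ref{L:LITTLEBPOS}(b) — the negativity of $b_2$ — is only stated for $\R>\frac13$. Because $\R$ is decreasing (Lemma~\ref{L:APRIORI}(a)) and decays like $x^{-\gamma}$ with $\gamma=1-C\k$ close to $1$ (Lemma~\ref{L:APRIORI}(c)), $\R$ will eventually drop below $\frac13$ if $X$ is large, so a naive global claim fails. The resolution I anticipate: when $\R\le\frac13$, one no longer needs $b_2<0$ because the dominant balance changes — either one shows $b=b_1+\k b_2$ is still negative by absorbing the (now possibly positive, but $O(\k)$-small) $\k b_2$ term into the strictly negative $b_1$ (whose leading part $\frac{\R}{H+xW}(xW-\F)$ is negative of order $1$, not $O(\k)$), using that $xW-\F$ is bounded below; or one refines the bootstrap so that the region $\R<\frac13$ is handled by a separate invariance argument. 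I would first check whether $|b_2|$ admits an $\R$-uniform bound on the relevant range of $(\R,W)$ so that $\k b_2$ is genuinely a perturbation of the order-$1$ negative quantity $b_1$ — if so, the whole interval is handled uniformly and the $\R>\frac13$ restriction is only a convenience in the statement of Lemma~\ref{L:LITTLEBPOS}, not a genuine constraint here. This is the step I would spend the most care on.
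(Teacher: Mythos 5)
Your argument has the sign structure of the monotonicity lemma backward, and this is not a cosmetic error: it inverts the direction of every inequality you rely on. Lemma~\ref{L:LITTLEBPOS} is proved under the hypotheses $\F-xW>0$ and $B>0$, which hold \emph{to the left} of the sonic point (the subsonic, Friedmann side). Here, under the standing assumption~\eqref{E:SONICASS1.1}, one has $xW>\F$ and (once $f>0$) $B<0$. With $xW-\F>0$ the leading term $\frac{\R}{H+xW}(xW-\F)$ of $b_1$ is \emph{positive}, not negative; indeed in the paper's proof $b_1>0$ on the whole interval. Your invocation of Lemma~\ref{L:LITTLEBPOS} is therefore inapplicable, and your claim $b<0$ is false. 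Moreover, even granting $b<0$, the inequality you derive from Corollary~\ref{C:FFORMULA} is in the wrong direction when integrating forward: for $x>x_1$ and $b\le 0$ one gets $f(x)\le f(x_1)e^{-\int_{x_1}^x a}$, not $\ge$. The sign $b<0$ is useful for propagating $f>0$ \emph{backward} (decreasing $x$, as in Section~\ref{S:FRIEDMANN}); propagating \emph{forward} requires $b\ge0$. Your ``main obstacle'' paragraph inherits the same confusion: the proposed fix of ``absorbing $\k b_2$ into the strictly negative $b_1$'' has $b_1$'s sign reversed, so the absorption you describe is not available.

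The correct argument runs in the opposite direction. After the sonic point, $b_1>0$ by~\eqref{E:BONEDEF} with $xW>\F$, and $a_1<0$ because $B<0$; dropping the helpful negative term $a_1 f$ from $f'+(a_1+a_2)f=b_1+\k b_2$ yields $f'+a_2 f\ge \k\tilde b_2$ with $\tilde b_2$ the modified expression~\eqref{E:BTILDETWODEF}. The constraint $\R>\frac13$ you worry about never enters: instead one checks $\tilde b_2>0$ once $\R\le\frac14$, and the decay estimate~\eqref{E:RUB2} guarantees $\R$ drops below $\frac14$ by an explicit $\k$-independent time $\tilde x=48$. On the initial segment $(x_\ast+\delta,\tilde x]$ one needs a quantitative argument: a uniform bound $|a_2|\le C$ (via the $Z^{-1}$-bounds~\eqref{E:ZINVERSEBOUND},~\eqref{E:ZINVERSEBOUNDTWO}) and $|\tilde b_2|\le C$, giving $f(x)\ge f(x_\ast+\delta)e^{-C(x-x_\ast-\delta)}-\k C e^{C(x-x_\ast-\delta)}(x-x_\ast-\delta)$, which stays positive for $\k_0$ small because $f(x_\ast+\delta)$ is bounded below uniformly in $\k$ by Lemma~\ref{L:INITIAL}. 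Finally, for the last claim your plan to bound $\int a$ on $[x_1,X]$ by asserting $B$ is bounded away from $0$ cannot work: $X$ is by definition the maximal time on which $xW>\F$, so $\F-xW$ (and hence $B$) may degenerate to $0$ at $X$, and $a_1\sim B^{-1}$ is then unbounded. The paper sidesteps this by using only the bound on $a_2$ (which involves $Z^{-1}$, not $B^{-1}$) together with $\tilde b_2\ge0$ for $x>\tilde x$, giving $f(x)\ge f(\tilde x)e^{-C(x-\tilde x)}$.
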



\begin{proof}
We observe that $b_1[x;\R,W]$ is strictly positive on $(x_\ast+\delta,X)$ by our assumption.
Moreover, from the definition of~\eqref{E:BTWODEF}, it is clear that $b_2[x;\R,W]\ge \tilde b_2[x;\R,W]$, where
\begin{align}
\tilde b_2[x;\R,W]
= &- 2 x^2\R\Big\{\R^2+(3+\k)\R -(1-\k) \notag \\
& \ \ \ \ +\k \left[\frac4{1-\k}\R^2+\frac{2(1+\k)}{1-\k}(1+\R) + \frac{5-\k}{1-\k}\R+3\right]\Big\}Z^{-1}; 
\label{E:BTILDETWODEF}
\end{align}
It is clear that for any $\R\le\frac14$ and $\k_0$ sufficiently small the expression $\tilde b_2[x;\R,W]$ is strictly positive for all $0<\k\le\k_0$. 
If there exists an $\tilde x>x_\ast+\delta$ such that $\R(\tilde x)\le\frac14$,  then by the monotonicity of $x\mapsto \R(x)$, $\R(x)\le\frac14$ for all $x\ge\tilde x$. Thus $\tilde b_2[x;\R,W]$ and therefore $b[x;\R,W]$ remain positive.
It follows that if $f(\tilde x)>0$, then by Corollary~\ref{C:FFORMULA}
$f(x)>0$ for all $x\in[\tilde x, X)$. 

We next show that for a sufficiently small $\k_0$ the value of $\R$ always drops below $\frac14$ at some $\tilde x$ for all $\k\in(0,\k_0]$  so that $f$ remains strictly positive on 
$(x_\ast+\delta,\tilde x]$. By the bound~\eqref{E:RUB2} and $\R_\delta\le1$ it follows that $\R(x)\le \frac14$ as long as $x\ge x_\ast 4^{\frac1\gamma}$. Since $\gamma = 1+O(\k)$ and $x_\ast\le3$, then 
with 
\be\label{E:XTILDEDEF}
\tilde x = 3 \times 4^{\frac1{1-\frac12}} = 48
\ee
it follows that if $X>\tilde x$ and $f(x)>0$ on $(x_\ast+\delta, \tilde x]$, then 
$f(x)>0$ for all $x\in(x_\ast+\delta, X)$, with $\k_0$ chosen sufficiently small.

It remains to show that for all $\k\in(0,\k_0]$ with $\k_0$ chosen sufficiently small, we indeed have $f>0$ on $(x_\ast+\delta, \min\{\tilde x,X\})$.
Let $(x_\ast+\delta, X_1)\subset (x_\ast+\delta, X)$ be the maximal subinterval of $(x_\ast+\delta, X)$, such that $f>0$ on $(x_\ast+\delta, X_1)$. By Lemma~\ref{L:INITIAL} we have $X_1\geq x_\ast+2\delta$.
Notice that on the interval $(x_\ast+\delta,X_1)$, by the strict negativity of $B$ the factor $a_1[x;\R,W]$~\eqref{E:AONEDEF} is strictly negative. Since $b_1$ is positive and $b_2\ge \tilde b_2$ we have from Lemma~\ref{L:JUHILEMMA} 
\[
f' + a_2 f\ge \k \tilde b_2. 
\]
For any $x_\ast+\delta<x_1<x<X_1$ this immediately yields
\begin{align}\label{E:FLOWERBOUND}
f(x) \ge  f(x_1) e^{-\int_{x_1}^x a_2[z;\R,W]\,dz}  + \k e^{-\int_{x_1}^x a_2[z;\R,W]\,dz}  \int_{x_1}^x \tilde b_2[z;\R,W] e^{\int_{x_1}^z a_2[s;\R,W]\,ds}\,dz. 
\end{align}

Observe that 
\begin{align}\label{E:ZINVERSEBOUND}
Z^{-1} \le \frac1{x^2\R W}\le \frac6{x^2 \R},
\end{align}
where we have used $H+xW\ge xW$, $(1-\k)\F  + 2\k x \left(1+\R\right)\ge (1-\k)x\R + 2\k x \left(1+\R\right)\ge x\R$, and finally $W\ge\frac16$.
Therefore
\begin{align}
\lv \tilde b_2[x;\R,W]\rv & \le  \frac{2x^2\R}{Z} \Big(\lv \left[\R^2+(3+\k)\R -(1-\k)\right]\rv \notag \\
& \ \ \ \ + 2\k \lv \left[\frac4{1-\k}\R^2+\frac{2(1+\k)}{1-\k}(1+\R) + \frac{5-\k}{1-\k}\R+3\right] \rv \Big) \notag \\
& \le C,\label{E:TILDEBTWOBOUND}
\end{align}
for some universal constant $C$, which follows from~\eqref{E:ZINVERSEBOUND} and the a priori bound $\R\le \R_\delta\le 1$.

We note that by~\eqref{E:ZDEF} and~\eqref{E:HDEF},
\begin{align}
2\k \lv\left(\F -xW\right)\left(\R-1\right) Z^{-1}\rv
& = 2\k\lv \frac{ \left(\F -xW\right)\left(\R-1\right) }{\left[(1-\k)\F  + 2\k x \left(1+\R\right)\right]\left[H+xW\right]} \rv \notag \\
& = 2\k\lv \frac{ \left(\F -xW\right)\left(\R-1\right) }{\left[(1-\k)\F  + 2\k x \left(1+\R\right)\right]\left[\F [\R] +\frac{4\k}{1-\k}(1+\R)x+xW\right]} \rv  \notag \\
& \le 2\k \frac{ \lv \F +xW\rv \left(1-\R_\delta\right) }{\left[(1-\k)\F  + 2\k x \left(1+\R\right)\right]\left[\F  +\frac{4\k}{1-\k}(1+\R)x+xW\right]} \notag \\
& \le  \frac{1-\R_\delta}{ x} \le 1, \label{E:ATWOBOUND1}
\end{align}
where we have used $[(1-\k)\F  + 2\k x \left(1+\R\right) \ge 2\k x$, $\F  +\frac{4\k}{1-\k}(1+\R)x+xW\ge \F +xW$, and $x\ge x_\ast\ge1$.
Similarly, 
\begin{align}
2\k \lv\frac{f}{Z}\rv &\le \frac{\F +x\R}{\left[(1-\k)\F  + 2\k x \left(1+\R\right)\right]\left[H+xW\right]} \notag \\
& \le  2\k \frac{\F +xW}{\left[(1-\k)x\R + 2\k x \left(1+\R\right)\right]\left[H+xW\right]} \notag \\
& \le 2\k \frac1{2\k x} = \frac1x\le 1, \label{E:ATWOBOUND2}
\end{align}
where we have used $f = \F -x\R\le \F +x\R$ in the first line, $\R\le W$ and $\F >x\R$ in the second, and finally $\F +xW\le H+xW$, $(1-\k)x\R + 2\k x \left(1+\R\right)\ge 2\k x$, and $x\ge x_\ast\ge 1$ in the last line.
Also, since $H+xW\ge xW\ge \frac16 x$ and $(1-\k)x\R + 2\k x \left(1+\R\right)\ge 2\k x$, we have the following simple bound
\begin{align}\label{E:ZINVERSEBOUNDTWO}
Z^{-1} \le \frac3{\k x^2}. 
\end{align}
Since $\R\le \R_\delta\le1$, we can use~\eqref{E:ATWOBOUND1},~\eqref{E:ATWOBOUND2},~\eqref{E:ZINVERSEBOUND},~\eqref{E:ZINVERSEBOUNDTWO}, and the definition~\eqref{E:ATWODEF} of $a_2[x;\R,W]$ to conclude
that 
\begin{align}
\lv a_2[x;\R,W]\rv & \le 2\k  \lv\left(\F -xW\right)\left(\R-1\right) Z^{-1}\rv + 4\k  \lv\frac{f}{Z}\rv
+ 2\k Z^{-1} \left(4\k x + x\R\left(5+\k\right)\right) \notag \\
& \le  C.\label{E:ATWOBOUND3}
\end{align}

We now feed the bounds~\eqref{E:TILDEBTWOBOUND} and~\eqref{E:ATWOBOUND3} into~\eqref{E:FLOWERBOUND} to conclude
\begin{align}\label{E:FLOWERBOUND2}
e^{C(x-x_1)}f(x)\ge f(x)e^{\int_{x_1}^x a_2[z;\R,W]\,dz} \ge f(x_1) - \k C e^{C(x-x_1)}(x-x_1), \ \ x\in (x_1, X), \ \ x_1\in[x_\ast+\delta, X_1).
\end{align}
Let $x_1 = x_\ast+\delta$ so that by Lemma~\ref{L:INITIAL} $f(x_1)>\bar c$ for some $\bar c>0$ and all $\k\in(0,\k_0]$ and all $x_\ast\in[\xmin,\xmax]$. 
From~\eqref{E:FLOWERBOUND2} we conclude that we can choose $\k_0$ so that $f(x)>c>0$ for all $x\in (x_\ast+\delta, \min\{\tilde x,X_1\})$, where we recall $\tilde x = 48$. 
By the definition of $X_1$, this concludes the proof of~\eqref{E:SONICASS2.1}. 

To prove the remaining claim, by~\eqref{E:FLOWERBOUND2} we need only to address the case when $X>\tilde x=48$. In this case $\tilde b_2$ is strictly positive for $x>\tilde x$ by
the argument following~\eqref{E:BTILDETWODEF}
and therefore by~\eqref{E:FLOWERBOUND} and~\eqref{E:ATWOBOUND3}
\begin{align}
f(x) \ge  f(\tilde x) e^{-\int_{\tilde x}^x a_2[z;\R,W]\,dz} \ge f(\tilde x) e^{-C(x-\tilde x)},
\end{align}
and thus the claim follows.
\end{proof}




We are now ready to prove Theorem~\ref{T:GLOBALRIGHT}, which asserts global-to-the-right existence of RLP-type solutions.

{\em Proof of Theorem~\ref{T:GLOBALRIGHT}.}
Let $[x_\ast,X)$ be the maximal time of existence on which 
\begin{align}
xW>\F , \ \ x\in(x_\ast,X).
\end{align}
By Lemma~\ref{L:FLOWERBOUND} we conclude that $\F >x\R$ on $(x_\ast+\delta, X)$ for all $\k\le\k_0$ sufficiently small and therefore all the conclusions of Lemma~\ref{L:APRIORI} apply.
We argue by contradiction and assume that $X<\infty$. By Lemma~\ref{L:FLOWERBOUND} we know that there exists a constant $\bk = \bk(X)$ such that $f(x)>\bk$ for all $x\in(x_\ast+\delta, X)$. 
In particular $\limsup_{x\to X^-}(W(x)-\R(x))>0$. It follows that 
\be\label{E:CONTR0}
\limsup_{x\to X^-}\left((xW)' - \F '\right)\le0.
\ee
By~\eqref{E:WODE} and~\eqref{E:FPRIME}, we have
\begin{align}
(xW)' - \F ' & =  1-2W(x) + \frac{2x^2(1+\k) W (W + \k) (\R-W)}{B}  \notag\\
&  \ \ \ \ + \frac{ 2\k \F  (1+\R) - \k(1-\k)x}{(1-\k) \F  + 2\k x (1+\R)} +\frac{ (2\k x \F  + \frac{\k}{1-\k} \R^{-\e -1}) \R' }{(1-\k) \F  + 2\k x (1+\R) } \notag \\
& = 1-2W(x) + \frac{ 2\k \F  (1+\R) - \k(1-\k)x}{(1-\k) \F  + 2\k x (1+\R)} \notag \\
& \ \ \ \ + \left( - \frac{x(1+\k)W}{(1-\k)\R} + \k \frac{ 2 x \F  + \frac{1}{1-\k} \R^{-\e -1} }{(1-\k) \F  + 2\k x (1+\R)}  \right) \R' . \label{E:CONTR1}
\end{align}

By the uniform bounds~\eqref{E:WLOWERBOUND} and~\eqref{E:WUPPERBOUND} we have $\frac16\le W \le \tilde C$. By~\eqref{E:RBOUND} and~\eqref{E:RWLOWERBOUND} we have
\begin{align}
\R_\delta \ge \R(x) \ge  \R_\delta^2 \frac{(x_\ast+\delta)^3}{x^3 \tilde C} \ge \R_\delta^2 \frac{(x_\ast+\delta)^3}{X^3 \tilde C}.
\end{align}
Therefore the first line of the right-most side of~\eqref{E:CONTR1} is bounded. Since $\lim_{x\to X^-}\left(xW - \F \right)=0$ it follows that $\lim_{x\to X^-}B = 0$ and therefore
$\limsup_{x\to X^-}\R'(x) = -\infty$ by~\eqref{E:RODE}. On the other hand, the second line of the right-most side of~\eqref{E:CONTR1} is of the form $g(x) \R'$, where
\begin{align}
g(x) & =  - \frac{x(1+\k)W}{(1-\k)\R} + \k \frac{ 2 x \F  + \frac{1}{1-\k} \R^{-\e -1} }{(1-\k) \F  + 2\k x (1+\R)} \notag \\
& = \frac{-xW(1+\k)\left[(1-\k) \F  + 2\k x (1+\R)\right] + \left[2 \k x \F  + \frac{\k}{1-\k} \R^{-\e -1}\right](1-\k)\R }{(1-\k)\R\left[(1-\k) \F  + 2\k x (1+\R)\right]} \notag \\
& = \frac{-(1-\k^2)xW\F  - 2\k(1+\k)x^2 W(1+\R)+2\k(1-\k)x\R \F  + \k \R^{-\e}}{(1-\k)\R\left[(1-\k) \F  + 2\k x (1+\R)\right]}. \label{E:CONTR2}
\end{align}
We now use~\eqref{E:SONICASS4}, which for any $x<X$ allows us to estimate the numerator of~\eqref{E:CONTR2} from above:
\begin{align}
& -(1-\k^2)xW\F  - 2\k(1+\k)x^2 W(1+\R)+2\k(1-\k)x\R \F  + \k \R^{-\e} \notag \\
& < -(1-\k^2)xW\F  - 2\k(1+\k)x^2 W(1+\R)+2\k(1-\k)x\R \F  \notag \\
& \ \ \ \  + \k x^2\left((1-\k)W^2- \k(1-\k) +4\k W(1+\R) \right) \notag \\
& = x^2\left[-(1-\k^2)W\frac{\F }x + 2\k(1-\k)\R\frac \F x +\k(1-\k)W^2 \right] \notag \\
& \ \ \ \ - (2\k-2\k^2) x^2 W(1+\R) - x^2 \k^2(1-\k). 
\end{align}
As $x\to X$, $\frac \F x$ approaches $W$ and therefore the above expression is negative for $\k$ sufficiently small (and independent of $X$). 
Therefore by~\eqref{E:CONTR2} $g(x)<0$ as $x\to X^-$, which implies that the right-most side of~\eqref{E:CONTR1} blows up to $+\infty$ as $x\to X^-$, which is a contradiction
to~\eqref{E:CONTR0}. This concludes the proof of the theorem. 
\prfe

\subsection{Asymptotic behaviour as $x\to\infty$}\label{SS:AS}

We introduce the unknown $\DS(x)=\Sigma(y)$, where $\Sigma$ is the energy density introduced in~\eqref{E:SS1}. Recalling~\eqref{E:RWDEF} and~\eqref{E:LITTLEBOLDDDEF}, we have
\be\label{E:PIDEF}
\DS = \R^{1+\eta} = \R^{\frac{1+\k}{1-\k}}.
\ee
It is straightforward to check that the system~\eqref{E:RODE}--\eqref{E:WODE} can be reformulated as follows
\begin{align}
\DS' &= - \frac{2x(1+\k)\DS(W+\k)(\R-W)}{B}, \label{E:RODE1}\\
W' & = \frac{(1 -3W )}{x} + \frac{2x(1+\k) W (W + \k) (\R-W)}{B}.  \label{E:WODE1}
\end{align}


\begin{lemma}\label{L:DECAY1}
There exist an $x_0>x_\ast$, a constant $C$ and $\k_0>0$ sufficiently small so that 
for all $x>x_0$
\begin{align}
\frac{\DS'(x)x}{\DS(x)}+2 = \k \alpha(x) (W-1) + \beta(x), \ \ x\in(x_\ast,\infty),
\end{align}
where 
\begin{align}
|\alpha(x)|&\le C \ \  x\ge x_0, \label{E:ALPHABOUND}\\ 
|\beta(x)|& \le C x^{-\frac32}, \ \  x\ge x_0.\label{E:BETABOUND}
\end{align}
\end{lemma}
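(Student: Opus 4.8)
\textbf{Proof proposal for Lemma~\ref{L:DECAY1}.}

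The plan is to start from the reformulated density equation~\eqref{E:RODE1} and isolate the algebraic identity for $\frac{\DS' x}{\DS}+2$. Dividing~\eqref{E:RODE1} by $\DS$ and adding $2$, I would write
\[
\frac{\DS' x}{\DS} + 2 = \frac{2B - 2x^2(1+\k)(W+\k)(\R-W)}{B}.
\]
The idea is then to use the algebraic factorisation $B = (1-\k)(\F-xW)(H+xW)$ from Lemma~\ref{L:BALGEBRA} together with $\F-xW = (\F-x\R) + x(\R-W) = f + x(\R-W)$ to extract a factor of $(\R-W)$ in both the numerator and (partially) in $B$, and from there relate everything to $(W-1)$. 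The key structural observation, which I expect to drive the whole computation, is that in the far field $W\to1$ (by Proposition~\ref{P:PRECISEW}, or already the rough statement in Lemma~\ref{L:ROUGHASYMPTOTICS}), so $\R-W$ and $1-W$ are governed by the same decay; one should be able to write $\R - W = (\R - 1) + (1 - W)$ and show $\R-1 = O(\k)\,(1-W) + (\text{lower order})$ using the sharp asymptotics $\R \asymp x^{-2/(1+\e)}$, $1-W\asymp x^{-1/(1+\e)}$.

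Concretely, the main steps are as follows. First, invoke Theorem~\ref{T:GLOBALRIGHT} and Lemma~\ref{L:APRIORI} to get the a priori bounds $\frac16 \le W \le \tilde C$, $\R' < 0$, $0 < \R < \R_\delta \le 1$, $B<0$, and the decay rate~\eqref{E:RUB2}, $\R(x)\le \R_\delta(x/(x_\ast+\delta))^{-\gamma}$ with $\gamma = 1+O(\k)$; in particular fix $x_0 > x_\ast$ large enough that $\R(x)$ is as small as needed for $x\ge x_0$ (say $\R < \frac14$, so that $B$ is dominated by $\R^{-\e}$). Second, perform the algebraic manipulation above: expand $2B - 2x^2(1+\k)(W+\k)(\R-W)$ using~\eqref{E:BDEF0}, collect the terms proportional to $(W-1)$ versus everything else, and divide by $B$. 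Since $B = \R^{-\e}(1 + O(x^2\R^{1+\e}) + O(x^2\R^\e))$ and by~\eqref{E:RUB2} both error terms are $O(x^{-\gamma'})$ for some $\gamma' > 0$ (indeed $x^2\R^{1+\e}\lesssim x^{2-\gamma(1+\e)}$ and $\gamma(1+\e) = 1 + O(\k) > 3/2$ is NOT automatic — this needs $\gamma(1+\e) > 7/2$ which follows from $\gamma$ close to $1$ only if... wait, $\gamma(1+\e)\approx 1$, so $x^{2-\gamma(1+\e)}\approx x$, which grows). Let me reconsider: the correct bound must come not from~\eqref{E:RUB2} alone but from the sharp $\R\asymp x^{-2/(1+\e)}$, giving $x^2\R^{1+\e}\asymp x^{2 - 2} = O(1)$ — still not decaying. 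The genuine decay $x^{-3/2}$ in~\eqref{E:BETABOUND} must therefore come from the combination inside the numerator, where the leading $O(1)$ pieces cancel against the $-2x^2(1+\k)(W+\k)(\R-W)$ term precisely because of the asymptotic relations, leaving a remainder that decays. So the real work is a careful bookkeeping of which terms cancel to leading order.

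\textbf{Main obstacle.} The hard part will be the bookkeeping in Step two: showing that after extracting the explicit $\k\alpha(x)(W-1)$ piece, the remaining terms in $\frac{2B - 2x^2(1+\k)(W+\k)(\R-W)}{B}$ genuinely decay like $x^{-3/2}$ rather than merely staying bounded. This requires plugging in the sharp asymptotics $1-W \asymp x^{-1/(1+\e)}$ (Proposition~\ref{P:PRECISEW}), $\R x^{2/(1+\e)} \to \text{const}$ (Lemma~\ref{L:ROUGHASYMPTOTICS}), and $f = \F - x\R$ bounded below (Lemma~\ref{L:FLOWERBOUND} / Lemma~\ref{L:APRIORI}), and tracking that the quantity $\R - W$, which appears linearly, is itself $\asymp x^{-1/(1+\e)}$ and — crucially — that $x^2(\R-W)\cdot(\text{bounded})$ divided by $B\asymp \R^{-\e}\asymp x^{2\e/(1+\e)}$ gives $x^{2 - 1/(1+\e) - 2\e/(1+\e)} = x^{(2(1+\e) - 1 - 2\e)/(1+\e)} = x^{1/(1+\e)}$, which again grows — so once more the $x^{-3/2}$ decay is only visible after the leading-order cancellation between the two halves of the numerator. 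I would handle this by writing $\R - W = -(1-W) + (\R - 1)$, noting $(2 - 2\cdot\frac{(1+\k)(W+\k)}{\cdots})$-type coefficients multiplying $(1-W)$ combine with the ``$2B$'' term so that the $(1-W)$-linear part is exactly what gets pulled into $\k\alpha(x)(W-1)$, while $\R - 1$ is shown to be $O(\k)(1-W) + O(x^{-2/(1+\e)})$; the genuinely-decaying leftover, after dividing by $B\asymp x^{2\e/(1+\e)}$ with $\e = O(\k)$ small, is then $O(x^{-2/(1+\e) + \text{small}}) = O(x^{-3/2})$ once $\k_0$ is small enough that $2/(1+\e) > 3/2 + 2\e/(1+\e)$, i.e. $\e$ small. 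The remaining estimates~\eqref{E:ALPHABOUND}--\eqref{E:BETABOUND} then follow by collecting constants, all uniform in $x$ and $\k\in(0,\k_0]$ by the $X$-independent bounds of Lemma~\ref{L:APRIORI}.
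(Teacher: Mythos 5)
Your starting point is correct, and you rightly suspect that an algebraic cancellation in the numerator
\[
2B - 2x^2(1+\k)(W+\k)(\R-W)
\]
is the heart of the matter. However, there are two genuine gaps.

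First, and most seriously, your proposal is circular. You invoke Lemma~\ref{L:ROUGHASYMPTOTICS} (the sharp decay $\R\asymp x^{-2/(1+\e)}$) and Proposition~\ref{P:PRECISEW} ($1-W\asymp x^{-1/(1+\e)}$) to estimate the remainder, but both of those results are proved \emph{after} Lemma~\ref{L:DECAY1} and their proofs rely on the representation $\frac{\DS'x}{\DS}+2=\k\alpha(W-1)+\beta$ together with the bounds on $\alpha,\beta$. You cannot use them here. At this stage of the argument the only decay you have for $\R$ is the rough bound $\R(x)\lesssim x^{-1+C\k}$ from Lemma~\ref{L:APRIORI}(c), and your own estimate $x^{2-\gamma(1+\e)}\approx x$ is the symptom of this: without the sharper inputs you correctly diagnose that the naive bookkeeping does not close.

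Second, you never actually produce the cancellation. The whole lemma rests on the exact algebraic identity (no estimates involved)
\begin{align*}
&-\left[(W+\k)^2-\k(W-1)^2+4\k \R W + (1+\k)(W+\k)(\R-W)\right]\\
&\quad=\ \k(W-1)\left(2W+\k-1\right) - \R\left(5\k W +W+\k(1+\k)\right),
\end{align*}
which isolates a term explicitly proportional to $\k(W-1)$ and a term explicitly proportional to $\R$. Dividing through by the denominator $-\left[(W+\k)^2-\k(W-1)^2+4\k \R W\right]+\R^{-\e}x^{-2}$ (which is bounded above and below uniformly in $x,\k$ by the a priori bounds of Lemma~\ref{L:APRIORI} and the two-sided control $c_1 x^{-3}\le \R\le c_2 x^{-\gamma}$) then gives $\alpha$ and $\beta$ directly, with $\beta$ consisting of $-2\R(5\k W+W+\k(1+\k))$ and $2\R^{-\e}x^{-2}$ over the same denominator. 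Your gesture at ``collecting the terms proportional to $(W-1)$'' is the right idea but the proposal does not carry it out, and guessing that it works is not a proof.

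Finally, the bound $|\beta|\lesssim x^{-3/2}$ is obtained by a short bootstrap you do not identify: once the rough decay $\R\lesssim x^{-1+C\k}$ guarantees $\R$ is small for $x\geq x_0$, one revisits identity~\eqref{E:RPRIMEGAMMA}--\eqref{E:BETASHORTER} with $\gamma=\tfrac32$ and checks that the bracket $\gamma(W-\R)+(2-2\gamma)W+O(\k)=\tfrac12 W-\tfrac32\R+O(\k)$ is strictly positive there (using $W\geq\tfrac16$ and $\R$ small). Since $B<0$ and $W>\R$, this forces $\frac{\R'x}{\R}\leq -\tfrac32$ on $[x_0,\infty)$, hence $\R\lesssim x^{-3/2}$; together with $\R^{-\e}x^{-2}\lesssim x^{3\e-2}\lesssim x^{-3/2}$ for $\k_0$ small, this yields~\eqref{E:BETABOUND}. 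This self-improving argument entirely avoids the later asymptotics and is what the proposal is missing.
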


\begin{proof}
From~\eqref{E:RODE1} and the definition~\eqref{E:BDEF} of $B$ it follows that 
\begin{align}
\frac{\DS' x}{\DS} + 2 & = \frac{2B - 2x^2(1+\k)(W+\k)(\R-W)}{B} \notag \\
& =  2\frac{\R^{-\e} - x^2\left[(W+\k)^2-\k(W-1)^2+4\k \R W + (1+\k)(W+\k)(\R-W)\right]}{\R^{-\eta}-x^2\left((W+\k)^2-\k(W-1)^2+4\k \R W\right)}.  \label{E:DECAYD0} 
\end{align}
We rewrite the rectangular brackets in the numerator above in the form
\begin{align}
&-\left[(W+\k)^2-\k(W-1)^2+4\k \R W + (1+\k)(W+\k)(\R-W)\right] \notag \\
& = \k(W-1)\left[2W+\k-1\right] - \R\left(5\k W +W+\k(1+\k)\right). \label{E:DECAY1}
\end{align}
We feed this back into~\eqref{E:DECAYD0}, divide the numerator and the denominator  by $x^2$, and obtain
\begin{align}
\frac{\DS' x}{\DS} + 2 & = 2\frac{ \k(W-1)\left[2W+\k-1\right] - \R\left(5\k W +W+\k(1+\k)\right)+\R^{-\e}x^{-2}}{-\left((W+\k)^2-\k(W-1)^2+4\k \R W\right)+\R^{-\e}x^{-2}} \notag \\
& = \k \alpha(x) (W-1) + \beta(x),
\end{align}
where
\begin{align}
\alpha(x):= \frac{2\left(2W+\k-1\right)}{-\left((W+\k)^2-\k(W-1)^2+4\k \R W\right)+\R^{-\e}x^{-2}} , \label{E:ALPHADEF}\\
\beta(x):=\frac{-2\R\left(5\k W +W+\k(1+\k)\right)+2\R^{-\e}x^{-2}}{-\left((W+\k)^2-\k(W-1)^2+4\k \R W\right)+\R^{-\e}x^{-2}}. \label{E:BETADEF}
\end{align}
By~\eqref{E:RWLOWERBOUND} and $W\le \tilde C$, it follows that $\R(x)\ge c_1 x^{-3}$ for some universal constant $c_1>0$. Together with~\eqref{E:RUB2} we conclude
\begin{align}
c_1 x^{-3} \le \R(x) \le c_2 x^{-\gamma},
\end{align}
where $\gamma=1-C\k_0$ for some $\k_0\ll1$. We conclude that $\R^{-\eta}x^{-2}\le c_1^{-\eta}x^{3\eta-2}$. Since $\frac16\le W\le \tilde C$ by Lemma~\ref{L:APRIORI}, for $\k\le\k_0$ sufficiently small we conclude
that there exist a constant $C$ and $x_0>x_\ast$ such that for all $x>x_0$
\[
\frac1C\le \lv -\left((W+\k)^2-\k(W-1)^2+4\k \R W\right)+\R^{-\e}x^{-2}\rv \le C. 
\]
This immediately yields~\eqref{E:ALPHABOUND}.  
Note that the leading order behaviour in the rectangular brackets in the very last line of~\eqref{E:BETASHORTER} is of the form
$\gamma (W-\R) + (2-2\gamma) W+O(\k)$. We use the decay bound~\eqref{E:RUB2}, namely $\R(x)\lesssim x^{-1+C\k}$, and we see that the
right-most side of~\eqref{E:BETASHORTER} becomes positive with $\gamma=\frac32$, with $x$ sufficiently large and $\k$ sufficiently small.
The identity~\eqref{E:RPRIMEGAMMA} yields~\eqref{E:BETABOUND}.
%
\end{proof}

\begin{lemma}\label{L:ROUGHASYMPTOTICS}
Let $x_\ast\in[\xmin,\xmax]$ and let $(W,\R)$ be  the unique RLP-type solution defined globally to the right for all $\k\in(0,\k_0]$.
Then for any $\k\in(0,\k_0]$ there exists a constant $\RY_2>0$ such that
\begin{align}
\lim_{x\to\infty}W(x) & = 1, \label{E:WROUGH}\\
\lim_{x\to\infty}\DS(x)x^2 &= \RY_2 . \label{E:DROUGH}
\end{align}
Claim~\eqref{E:DROUGH} equivalently reads
\begin{align}\label{E:RPRECISE}
\lim_{x\to\infty}\R(x)x^{2\frac{1-\k}{1+\k}} = \RY_2^{\frac{1-\k}{1+\k}}.
\end{align}
\end{lemma}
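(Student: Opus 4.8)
\textbf{Proof proposal for Lemma~\ref{L:ROUGHASYMPTOTICS}.}

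The plan is to first establish~\eqref{E:WROUGH}, then derive~\eqref{E:DROUGH} as a consequence. For the convergence of $W$, recall from the a priori bounds in Lemma~\ref{L:APRIORI} that $\frac16\le W(x)\le\tilde C$ on $(x_\ast+\delta,\infty)$ and that $\R$ decays like $x^{-\gamma}$ with $\gamma=1-C\k$, hence $\R(x)\to0$ as $x\to\infty$. Feeding $\R\to0$ and the boundedness of $W$ into the denominator $B$ of~\eqref{E:WODE1} and using Lemma~\ref{L:FLOWERBOUND} (which gives $f=\F-x\R>\bk>0$ on bounded intervals, while asymptotically $\F/x\to W$ from below because $xW>\F>x\R$), one sees that $-B/x^2$ converges to $(W+\k)^2-\k(W-1)^2$ which stays bounded away from $0$. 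I would then rewrite~\eqref{E:WODE1} as a scalar autonomous-in-the-limit ODE: since the nonlinear term $\frac{2x^2(1+\k)W(W+\k)(\R-W)}{B}$ behaves, as $x\to\infty$ and using $\R\to0$, like $\frac{2(1+\k)W(W+\k)(-W)}{-(W+\k)^2+\k(W-1)^2}+o(1)$, the equation reads $xW' = 1-3W + \frac{2(1+\k)W^2(W+\k)}{(W+\k)^2-\k(W-1)^2} + o(1)$. A short computation shows the right-hand side (the $\k$-dependent limiting vector field $F_\k(W)$) vanishes at $W=1$: indeed $1-3+\frac{2(1+\k)(1+\k)}{(1+\k)^2}=1-3+2=0$, and one checks $F_\k'(1)<0$, so $W=1$ is the unique attracting rest point in $[\frac16,\tilde C]$. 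A standard barrier/monotonicity argument on the substituted time variable $t=\log x$ (trapping $W$ between sub- and super-solutions converging to $1$, using that the $o(1)$ error is integrable-in-$t$ thanks to the polynomial decay of $\R$) then yields $\lim_{x\to\infty}W(x)=1$.

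Granting~\eqref{E:WROUGH}, the claim~\eqref{E:DROUGH} follows from Lemma~\ref{L:DECAY1}. That lemma gives
\[
\frac{\DS'(x)x}{\DS(x)}+2 = \k\alpha(x)(W(x)-1)+\beta(x), \qquad x\ge x_0,
\]
with $|\alpha(x)|\le C$ and $|\beta(x)|\le Cx^{-3/2}$. Since $W(x)-1\to0$, I still need this difference to be integrable against $\frac{dx}{x}$; for this I would revisit the proof of~\eqref{E:BETABOUND} in Lemma~\ref{L:DECAY1}, where the identity~\eqref{E:RPRIMEGAMMA} with $\gamma=\frac32$ was used together with $\R\lesssim x^{-1+C\k}$. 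The same argument, now upgraded with $W\to1$ (so that $W-\R\to1$ and the bracket in~\eqref{E:BETASHORTER} is bounded below by a positive constant for large $x$), shows $\frac{\R'x}{\R}+\frac32\le 0$ for $x$ large, hence $\R(x)\lesssim x^{-3/2}$, which in turn (via $\DS=\R^{(1+\k)/(1-\k)}$ and reading off $1-W$ from~\eqref{E:WODE1} and the near-vanishing of $B+x^2(\cdots)$) gives $|W(x)-1|\lesssim x^{-1/2}$ or better. Then $\int_{x_0}^\infty\frac{|\k\alpha(x)(W-1)|+|\beta(x)|}{x}\,dx<\infty$, so integrating $\big(\log(\DS x^2)\big)' = \frac1x\big(\k\alpha(W-1)+\beta\big)$ from $x_0$ to $\infty$ shows $\log(\DS(x)x^2)$ converges to a finite limit; exponentiating gives $\DS(x)x^2\to\RY_2$ for some $\RY_2\ge 0$, and $\RY_2>0$ because the integrand is integrable so the total variation of $\log(\DS x^2)$ is finite (no divergence to $-\infty$). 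Finally, $\R=\DS^{(1-\k)/(1+\k)}$ turns~\eqref{E:DROUGH} into~\eqref{E:RPRECISE}.

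The main obstacle I anticipate is making the convergence $W\to1$ quantitative enough, i.e.\ obtaining a decay rate $|W(x)-1|\le C x^{-\sigma}$ with $\sigma>0$, so that the source term $\k\alpha(x)(W(x)-1)$ in Lemma~\ref{L:DECAY1} is integrable against $dx/x$; mere qualitative convergence $W\to1$ does not immediately close the integral. The way around this is the bootstrap in the previous paragraph: the linearized stability $F_\k'(1)<0$ forces exponential decay in $t=\log x$, i.e.\ power-law decay in $x$, provided the $o(1)$ error terms (driven by $\R$) themselves decay polynomially, which they do by the $\gamma=\frac32$ argument. One has to be a little careful that the two estimates — decay of $\R$ and decay of $1-W$ — are established in the right order (first $\R\lesssim x^{-1+C\k}$ from Lemma~\ref{L:APRIORI}, then $W\to1$, then $\R\lesssim x^{-3/2}$, then $|1-W|\lesssim x^{-1/2}$), but there is no circularity. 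Everything else is a routine application of Gronwall/barrier arguments and the already-established monotonicity and sign properties of $B$, $f$, and the coefficients $a_i,b_i$.
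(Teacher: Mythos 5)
Your proposal is correct in substance but takes a genuinely different route from the paper, and is worth comparing carefully.

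The paper proves the lemma by a remarkably compact argument built on a hidden identity. Observing that the sum of~\eqref{E:RODE1} and~\eqref{E:WODE1} collapses to $\left((W-1)x\right)'=-W\left(\frac{\DS'x}{\DS}+2\right)$, and combining this with Lemma~\ref{L:DECAY1}, one obtains a \emph{linear} first-order ODE for $\vartheta(x)=(1-W)x$, namely $\vartheta'=-\k\tfrac{W\alpha}{x}\vartheta+W\beta$, whose source $W\beta$ decays like $x^{-3/2}$ and whose integrating factor is controlled by~\eqref{E:ALPHABOUND}. The integrating-factor formula then gives $\vartheta$ bounded, hence the quantitative rate $|1-W|\lesssim x^{-1+C\k}$ in a single step, and~\eqref{E:DROUGH} follows by plugging this into the log-formula from Lemma~\ref{L:DECAY1}. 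Your proposal instead treats~\eqref{E:WODE1}, after $\R\to 0$, as an asymptotically autonomous scalar ODE in $t=\log x$ with limiting vector field $F_\k$, identifies $W=1$ as the unique attracting rest point (a computation shows $F_\k'(1)=-3+\frac{2(1+2\k)}{1+\k}\to-1$ as $\k\to0$, so stability does hold uniformly for small $\k$), proves qualitative convergence by a barrier argument, and then bootstraps a power-law rate from the linearized stability plus the exponential-in-$t$ decay of the $\R$-driven errors. Both routes deliver the quantitative decay of $1-W$ that, as you correctly identified, is the crux needed to make $\log(\DS x^2)$ converge; the paper's identity-based route does this without ever computing $F_\k'(1)$ or setting up sub/super-solutions, and is therefore shorter and more robust with respect to $\k$-dependence. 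On the other hand, your dynamical-systems picture makes the mechanism ($W=1$ is an attractor; the density decay kills the non-autonomous perturbation) more transparent.

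One caveat: the final clause of your argument (``reading off $1-W$ from~\eqref{E:WODE1} and the near-vanishing of $B+x^2(\cdots)$ gives $|W-1|\lesssim x^{-1/2}$'') is not developed enough to stand on its own, and it is also unnecessary — the linearized-stability bootstrap you describe in the preceding sentence already gives a power-law rate. You should lean on that route and drop the sketchy algebraic one, or replace the whole $W\to1$ argument with the paper's identity~\eqref{E:WDIDENTITY}, which eliminates the need for a bootstrap entirely.
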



\begin{proof}
Note that by~\eqref{E:WODE1}
\begin{align}
\left((W-1)x\right)' &= W'x + W-1 = -2W + \frac{2x^2(1+\k) W (W + \k) (\R-W)}{B} \notag \\
& = -W \frac{2B - 2x^2(1+\k)(W+\k)(\R-W)}{B}.
\end{align}
From this and the first line of~\eqref{E:DECAYD0} we conclude
\begin{align}\label{E:WDIDENTITY}
\left((W-1)x\right)' =- W \left(\frac{\DS' x}{\DS} + 2\right).
\end{align}
Let now
\begin{align}\label{E:VDEF}
\vartheta(x): = (1-W)x.
\end{align}
Equation~\eqref{E:WDIDENTITY} and Lemma~\ref{L:DECAY1} now give
\begin{align}\label{E:VARTHETAEQN}
\vartheta'(x) =  -\k \frac{W(x)\alpha(x)}{x} \vartheta(x) + W(x)\beta(x).
\end{align}
We now use the integrating factor to solve for $\vartheta$. For any $x_0>x_\ast$ we have
\begin{align}\label{E:VFORMULA}
\vartheta(x) e^{\k\int_{x_0}^x \frac{W(s)\alpha(s)}{s}\,ds} = \vartheta(x_0) + \int_{x_0}^x W(s)\beta(s) e^{\k\int_{x_0}^s \frac{W(s')\alpha(s')}{s'}\,ds'} \,ds.
\end{align}
By Lemma~\ref{L:DECAY1} and the bound $\frac16\le W\le \tilde C$ from Lemma~\ref{L:APRIORI} we have $|\alpha W|\le C$ and therefore
\begin{align}\label{E:IFBOUND}
\left(\frac{x}{x_0}\right)^{-C\k}\le e^{\k\int_{x_0}^x \frac{W(s)\alpha(s)}{s}\,ds} \le  \left(\frac{x}{x_0}\right)^{C\k}. 
\end{align} 

We use this bound in~\eqref{E:VFORMULA} and together with~\eqref{E:BETABOUND} we conclude that 
\begin{align}
\left(\frac{x}{x_0}\right)^{-C\k} |\vartheta(x)| & \le |\vartheta(x_0)| 
 + C \int_{x_0}^x s^{-\frac32+C\k}\,ds \le C.
 \label{E:MIDARG}
\end{align}
We now recall the definition~\eqref{E:VDEF} of $\vartheta$ and conclude from the above bound that 
\begin{align}\label{E:WROUGH1}
|1-W| \le  C x^{-1+C\k},  \text{ for all }\ \ \k\in(0,\k_0].
\end{align}
where we recall~\eqref{E:RUB}. The claim~\eqref{E:WROUGH} follows.


From Lemma~\ref{L:DECAY1} we conclude
\begin{align}
 \log(\DS(x)x^2) & =  \log (\DS(x_0)x_0^2) + \int_{x_0}^x \left(\k \frac{\alpha(s)}{s} (W(s)-1) + \frac{\beta(s)}{s} \right)\,ds,
\end{align}
and thus
\begin{align}\label{E:DFORMULA}
\DS(x) x^2 = \DS(x_0) x_0^2 \,  e^{\int_{x_0}^x \left(\k \frac{\alpha(s)}{s} (W(s)-1) + \frac{\beta(s)}{s} \right)\,ds}. 
\end{align}
We now use~\eqref{E:WROUGH1} and Lemma~\ref{L:DECAY1} to conclude that 
$\lv \k \frac{\alpha(s)}{s} (W(s)-1) + \frac{\beta(s)}{s}\rv \lesssim s^{-2+O(\k)}$ and is therefore
integrable. Letting $x\to\infty$ we conclude~\eqref{E:DROUGH}. That~\eqref{E:RPRECISE} is equivalent to~\eqref{E:DROUGH}
follows from~\eqref{E:PIDEF}. 
\end{proof}


\begin{remark}
Stronger versions of the claims~\eqref{E:WROUGH} and~\eqref{E:DROUGH} are contained in~\eqref{E:WROUGH1} and~\eqref{E:DFORMULA}
respectively. Formula~\eqref{E:DFORMULA} can be used to give a quantitative decay bound for 
\[
\lv \DS(x)x^2-\RY_2\rv , \ \ \text{ as }  \ x\to\infty.
\]
\end{remark}



In the following lemma we establish the strict upper bound $W<1$ for the RLP-type solutions and provide 
the crucial $\k$-independent bounds at spacelike hypersurface $x=+\infty$, which will play an important role in 
the construction of the maximal self-similar extension of the RLP spacetime.

\begin{lemma}[$W$ stays below $1$]\label{L:WSTAYSBELOWONE}
Let $x_\ast\in[\xmin,\xmax]$ and let $(W,\R)$ be  the unique RLP-type solution defined globally to the right for all $\k\in(0,\k_0]$.
Then there exist a sufficiently small $0<\k_0\ll1$ and  constants $c_2>c_1>0$ such that 
\begin{align}\label{E:WUPPERBOUND2}
W(x)<1, \ \ x\in[x_\ast,\infty), \ \ \text{ for all } \ \k\in(0,\k_0], 
\end{align}
and
\begin{align}\label{E:PIWX2LB}
1+ c_2\ge\RY_2 \ge 1+c_1 \ \ \text{ for all } \ \k\in(0,\k_0],
\end{align}
where $\RY_2 =  \lim_{x\to\infty} \DS(x)W(x)x^2 = \lim_{x\to\infty} \DS(x)x^2$.
\end{lemma}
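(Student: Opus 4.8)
\textbf{Proof strategy for Lemma~\ref{L:WSTAYSBELOWONE}.}
The plan is to establish the strict bound $W<1$ first, and then read off the two-sided bound on $\RY_2$ from the asymptotics already recorded in Lemma~\ref{L:ROUGHASYMPTOTICS} together with a conservation-type identity for $\DS W x^2$.

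\emph{Step 1: $W<1$ on $[x_\ast,\infty)$.}
We argue by contradiction. Suppose there is a first point $x_1>x_\ast$ with $W(x_1)=1$; by Lemma~\ref{L:ROUGHASYMPTOTICS} we know $W\to1$ as $x\to\infty$, but a priori $W$ could exceed $1$ somewhere in between. At such a first crossing point we have $W(x_1)=1$ and $W'(x_1)\ge0$. Using the equation~\eqref{E:WODE1} in the form
\[
W'(x_1) = \frac{1-3W(x_1)}{x_1} + \frac{2x_1(1+\k)W(x_1)(W(x_1)+\k)(\R(x_1)-W(x_1))}{B(x_1)} = -\frac{2}{x_1} + \frac{2x_1(1+\k)(1+\k)(\R(x_1)-1)}{B(x_1)},
\]
and recalling from Theorem~\ref{T:GLOBALRIGHT} (via Lemma~\ref{L:FLOWERBOUND} and Lemma~\ref{L:APRIORI}) that $B<0$ and $\R<W=1$ to the right of the sonic point, the second term is \emph{negative} (numerator negative since $\R-1<0$, denominator negative). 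Hence $W'(x_1)<0$, contradicting $W'(x_1)\ge0$. Since near $x_\ast$ we already have $W<1$ from the sonic expansion and the fact that $\WH_0,\WH_0+O(\delta)<\tfrac13$-scale values (Lemma~\ref{R0W0}, Lemma~\ref{L:INITIAL}), the crossing cannot occur and $W<1$ everywhere on $[x_\ast,\infty)$, uniformly in $\k\in(0,\k_0]$.

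\emph{Step 2: lower and upper bounds for $\RY_2$.}
From~\eqref{E:WDIDENTITY} and~\eqref{E:RODE1} we have the exact identity $\bigl((W-1)x\bigr)' = -W\bigl(\tfrac{\DS'x}{\DS}+2\bigr)$, equivalently $\bigl((W-1)x\bigr)' = -\tfrac{1}{\DS x}\bigl(\DS W x^2\bigr)'$; I will instead use the cleaner route: multiply~\eqref{E:RODE1} by $W$ and~\eqref{E:WODE1} by $\DS$ and add to compute $(\DS W x^2)'$, obtaining after using~\eqref{E:BDEF} that $(\DS W x^2)'$ has a definite sign for $x$ large (the $\R-W<0$, $B<0$ structure again), so that $\DS W x^2$ is eventually monotone; combined with $\lim_{x\to\infty}\DS W x^2 = \RY_2$ from Lemma~\ref{L:ROUGHASYMPTOTICS} this gives $\RY_2$ as a monotone limit starting from its value near $x_\ast$. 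At $x=x_\ast+\delta$ we have $\R_\delta = \tfrac13+O(\delta)+O(\k)$ and $W_\delta=\tfrac13+O(\delta)+O(\k)$, so $\DS_\delta W_\delta (x_\ast+\delta)^2 = \R_\delta^{1+\eta}W_\delta(x_\ast+\delta)^2$, which with $x_\ast\in[2,3]$ and $\delta$ fixed small is a fixed positive number bounded strictly between, say, $1+c_1$ and $1+c_2$ for suitable $\k$-independent $0<c_1<c_2$; the monotone evolution then propagates the two-sided bound to the limit. I would double-check the sign of $(\DS W x^2)'$ directly from~\eqref{E:RODE1}--\eqref{E:WODE1}: one finds $(\DS W x^2)' = \DS(1-W)x + (\text{terms}\propto \R-W \text{ over }B)$, and since $1-W>0$ (Step 1) while the $B$-term has a sign that must be tracked carefully, the bulk of the work is showing this derivative keeps $\DS W x^2$ trapped in a fixed interval.

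\emph{Main obstacle.}
The delicate point is Step 2: obtaining \emph{$\k$-uniform} constants $c_1,c_2$. The limit value $\RY_2$ is defined through the improper integral $\DS(x_0)x_0^2\exp\!\bigl(\int_{x_0}^\infty(\k\tfrac{\alpha}{s}(W-1)+\tfrac{\beta}{s})\,ds\bigr)$ from~\eqref{E:DFORMULA} evaluated at $x=x_\ast+\delta$ as base point (or any fixed base point), and one must show this exponential is $1+O(\delta)+O(\k)$ uniformly, using the decay bounds $|\alpha|\le C$, $|\beta|\le Cs^{-3/2}$, and $|1-W|\le Cs^{-1+C\k}$ from Lemmas~\ref{L:DECAY1}--\ref{L:ROUGHASYMPTOTICS}. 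This forces the integrand to be $O(s^{-2+O(\k)})$, integrable with $\k$-uniform bound, so the exponential is close to $1$; then the base value $\R_\delta^{1+\eta}W_\delta(x_\ast+\delta)^2$ is manifestly in a fixed compact subinterval of $(1,\infty)$ once $\delta$ is small and fixed and $\k_0$ is small — here one uses that $\R_\delta, W_\delta$ converge as $\k\to0$ to the Newtonian values at $x_\ast+\delta$, which by the Newtonian analysis of~\cite{GHJ2021} satisfy the analogous strict inequality. Assembling these estimates carefully, while keeping every constant independent of $\k\in(0,\k_0]$, is the crux; the rest is bookkeeping with the a priori bounds from Lemma~\ref{L:APRIORI}.
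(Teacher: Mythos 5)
Your Step~1 contains a sign error that breaks the argument. At a first crossing point $x_1$ with $W(x_1)=1$, you write that the second term
\[
\frac{2x_1(1+\k)^2(\R(x_1)-1)}{B(x_1)}
\]
is ``negative (numerator negative since $\R-1<0$, denominator negative).'' A negative numerator over a negative denominator is \emph{positive}, so $W'(x_1) = -\frac{2}{x_1} + (\text{positive})$, and there is no immediate contradiction with $W'(x_1)\ge 0$. This is exactly why the paper cannot run a bare sign argument at the crossing point. Instead, the paper converts $W'(\bar x)\ge 0$ into the \emph{quantitative} constraint $\DS(\bar x)\bar x^2\le \frac{1}{4\k+(1+\k)^2}$ (by rewriting $W'(\bar x)$ in the form $-2\R^{-\e}\,\frac{1-\DS\bar x^2(4\k+(1+\k)^2)}{\bar x B}$ and using $\bar x B<0$), and then contradicts it with a lower bound for $\DS(\bar x)\bar x^2$ coming from integrating the identity $(\DS W x^2)'=\DS x(1-W)=\DS\vartheta$ from $x_\ast$ to $\bar x$.

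Your Step~2, as written, does not deliver the $\k$-uniform constants either. Monotonicity of $\DS W x^2$ (which, once $W<1$ is established, follows immediately from $(\DS Wx^2)'=\DS x(1-W)\ge0$, rather than from the decomposition you sketch) only gives $\RY_2\ge\DS(x_\ast)W(x_\ast)x_\ast^2=W_0^{2+\e}x_\ast^2=1+O(\k)$, which degenerates to $1$ as $\k\to0$. To obtain $\RY_2\ge 1+c_1$ with $c_1>0$ independent of $\k$, the paper must add a strictly positive, $\k$-independent contribution from $\int_{x_\ast}^\infty\DS\vartheta\,ds$. This requires the $\k$-uniform lower bound $\vartheta\ge \tfrac14$ on a fixed interval $[x_0,\tilde x]$ (obtained from Lemma~\ref{L:INITIAL} and the integral formula~\eqref{E:VFORMULA}), together with the lower bound $\DS\gtrsim x^{-3}$ from $(\DS W x^3)'\ge0$. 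Your proposal gestures at ``manifestly in a fixed compact subinterval of $(1,\infty)$'' without identifying the source of the strict gap above $1$, which is the crux. The upper bound $\RY_2\le 1+c_2$ is obtained in the paper via the separate identity $\log(\DS W x^2)'=\frac{1-W}{xW}$ and the decay $|1-W|\lesssim x^{-1+C\k}$; your exponential formula~\eqref{E:DFORMULA} controls $\DS x^2$ rather than $\DS W x^2$ and would need an extra step.

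In short: the route via a one-point sign check at the crossing is the wrong approach here because the ODE right-hand side at $W=1$ has no definite sign, and the $\k$-uniformity of the two-sided bound for $\RY_2$ needs the explicit $\vartheta$-lower-bound machinery, which your proposal omits.
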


\begin{proof}
We first observe that by~\eqref{E:RODE1}--\eqref{E:WODE1}
\begin{align}\label{E:PIWX2}
(\DS W x^2)' = \DS x (1-W) = \Pi \vartheta,
\end{align}
where we have used~\eqref{E:VDEF} in the second equality.
By way of contradiction, assume that there exists an $\bar x\in(x_\ast,\infty)$ such that $W(\bar x)=1$ and $W(x)<1$ for all $x\in(x_\ast,\bar x)$. Integrating~\eqref{E:PIWX2} over $[x_\ast,\bar x]$
we conclude that 
\begin{align}\label{E:C10}
\DS(\bar x) \bar x^2 &=\DS(\bar x) W(\bar x) \bar x^2  \notag \\
& = \DS(x_\ast) W(x_\ast)\xs^2   +\int_{x_\ast}^{\bar x}  \DS(s) \vartheta(s)\,ds  \notag \\
& = W_0^{2+\e}\xs^2  +\int_{x_\ast}^{\bar x}  \DS(s) \vartheta(s)\,ds,
\end{align}
where we recall~\eqref{E:PIDEF}. On the other hand, clearly $W'(\bar x)\ge0$ and therefore, by~\eqref{E:WODE1},
at $\bar x$ we have 
\begin{align}
0\le W'(\bar x) &= -\frac{2}{\bar x} + \frac{2\bar x(1+\k)^2(\R-1)}{\R^{-\e}-\left((1+\k)^2+4\k \R\right)\bar x^2} \notag\\
& = \frac{-2\left(\R^{-\e}-\left((1+\k)^2+4\k \R\right)\bar x^2\right)+2\bar x^2(1+\k)^2(\R-1)}{\bar x\left(\R^{-\e}-\left((1+\k)^2+4\k \R\right)\bar x^2\right)} \notag \\
& = \frac{-2 \R^{-\e} +2\R\bar x^2 \left(4\k + (1+\k)^2\right)}{\bar x\left(\R^{-\e}-\left((1+\k)^2+4\k \R\right)\bar x^2\right)} \notag \\
& = - 2 \R^{-\e} \frac{1- \DS \bar x^2\left(4\k + (1+\k)^2\right) }{\bar x\left(\R^{-\e}-\left((1+\k)^2+4\k \R\right)\bar x^2\right)}. 
\end{align}
Since the denominator is strictly negative, it follows that 
\begin{align}\label{E:C20}
\DS(\bar x) \bar x^2 \le \frac1{4\k + (1+\k)^2}. 
\end{align}
Combining~\eqref{E:C10} and~\eqref{E:C20} we conclude that
\begin{align}\label{E:C3}
\frac1{4\k + (1+\k)^2} \ge W_0^{2+\e}\xs^2  + \int_{x_\ast}^{\bar x}  \DS(s)  \vartheta(s)\,ds.
\end{align}

Our goal is to provide a lower bound for $\vartheta(x)$ which is uniform-in-$\k$. Using~\eqref{E:VFORMULA},~\eqref{E:IFBOUND}, and~\eqref{E:BETABOUND}, we conclude that for any $\bar x>x>x_0>x_\ast$
\begin{align}
\left(\frac{x}{x_0}\right)^{C\k} |\vartheta(x)| & \ge \vartheta(x_0) - C \int_{x_0}^x s^{-\frac32+C\k}\,ds. 
\end{align}
Therefore
\begin{align}\label{E:VLB1}
|\vartheta(x)| \ge \vartheta(x_0)\left(\frac{x}{x_0}\right)^{-C\k} - C\int_{x_0}^x s^{-\frac32+C\k}\,ds \left(\frac{x}{x_0}\right)^{-C\k}.
\end{align}
Now choose $x_0=x_\ast+\delta$ with $\delta$ as in Lemma~\ref{L:INITIAL} so that, by Lemma~\ref{L:INITIAL} we can ascertain the {\em uniform-in-$\k$} bound
\begin{align}
\vartheta(x_0)\ge \frac12, \ \ \text{ for all} \ \k\in(0,\k_0].
\end{align}
This bound together with the lower bound~\eqref{E:VLB1} yields
\begin{align}\label{E:VLB2}
|\vartheta(x)| \ge \frac12\left(\frac{x}{x_0}\right)^{-C\k} - C\int_{x_0}^x s^{-\frac32+C\k}\,ds
\left(\frac{x}{x_0}\right)^{-C\k}, \ \ \text{ for all} \ \k\in(0,\k_0].
\end{align}
The right-hand side of~\eqref{E:VLB2} is a continuous function in $x$ which converges to $\frac12$ as $x\to x_0^-$. Therefore, there exists an $\k$-independent real number $\tilde x>x_0$ such that 
\begin{align}\label{E:VLBKEY}
\vartheta(x) \ge \frac14, \ \ \text{ for all } \ x\in[x_0,\tilde x] \ \ \text{ and all } \ \k\in(0,\k_0].
\end{align}
Observe that the bound~\eqref{E:VLBKEY} also ascertains that $\tilde x<\bar x$, see~\eqref{E:VDEF}.

It is easily checked from~\eqref{E:RODE1}--\eqref{E:WODE1} that $(\DS W x^3)'=\DS x^2\ge0$. Since $|W|\le \tilde C$ by Lemma~\ref{L:APRIORI} it then follows that 
\begin{align}\label{E:PILBKEY}
\Pi(x) \ge \frac{\Pi(x_\ast)W(x_\ast)x_\ast^3}{\tilde C x^3}  = \frac{W_0^{2+\e}x_\ast^3}{\tilde C x^3}  \ge C x^{-3}, \ \ x>x_\ast,
\end{align}
where, as usual, the constant $C$ is independent of $\k$.

We now use the bounds~\eqref{E:VLBKEY} and~\eqref{E:PILBKEY} in~\eqref{E:C3} to conclude
\begin{align}
\frac1{4\k + (1+\k)^2} & \ge W_0^{2+\e}\xs^2  + \int_{x_\ast}^{\bar x}  \DS(s)  \vartheta(s)\,ds 
 \ge W_0^{2+\e}\xs^2  + \int_{x_0}^{\tilde x}  \DS(s)  \vartheta(s)\,ds \notag\\
& \ge W_0^{2+\e}\xs^2  + C \int_{x_0}^{\tilde x} s^{-3} \,ds 
 \ge W_0^{2+\e}\xs^2   + C \left(\frac{1}{x_0^2}-\frac1{\tilde x^2}\right).
\end{align}
Observe that we have used the non-negativity of $\vartheta$ on $[x_\ast,\bar x]$ in the second bound above.

By the $\k$-asymptotic behaviour of $W_0$ from Lemma~\ref{R0W0} 
we have
\[
W_0^{2+\e}\xs^2  = 1+O(\k),
\]
and clearly $\frac1{4\k + (1+\k)^2} = 1+O(\k)$. Since however the term 
$C \left(\frac{1}{x_0^2}-\frac1{\tilde x^2}\right)$
is bounded from below by some  $\k$-independent positive  real number $\tilde\delta>0$, we obtain contradiction by choosing $\k_0$ sufficiently small. 
This completes the proof of~\eqref{E:WUPPERBOUND2}.

By Lemma~\ref{L:ROUGHASYMPTOTICS} we may integrate~\eqref{E:PIWX2} over the whole interval 
$[x_\ast,\infty)$ to conclude
\begin{align}
\lim_{x\to\infty}\DS(x) W(x) x^2 & = \DS(x_\ast) W(x_\ast)\xs^2   + \int_{x_\ast}^\infty \DS(s) \vartheta(s)\,ds \notag \\ 
& \ge \DS(x_\ast) W(x_\ast)\xs^2    + \int_{x_0}^{\tilde x} \DS(s) \vartheta(s)\,ds \notag \\
& \ge W_0^{2+\e}\xs^2   + C \left(\frac{1}{x_0^2}-\frac1{\tilde x^2}\right) \notag \\
&  \ge 1 + O(\k) + \tilde \delta > 1,
\end{align}
for $\k\le\k_0$ sufficiently small. This proves the lower bound stated in~\eqref{E:PIWX2LB}. 

To prove the upper bound on $\DS(x)W(x) x^2$, we rewrite~\eqref{E:PIWX2} in the form $\log(\DS W x^2)' = \frac{1-W}{xW}$ and integrate
to obtain the identity
\begin{align}\label{E:LOGID}
\log(\DS(x) W(x) x^2) = \log (\DS(x_0) W(x_0) x_0^2) + \int_{x_0}^x \frac{1-W(s)}{sW(s)}\,ds, \ \ x_\ast\le x_0 < x.
\end{align}
 Using~\eqref{E:WROUGH1} and~\eqref{E:WLOWERBOUND} we conclude from~\eqref{E:LOGID} that
\begin{align}\label{E:LOGID2}
\log(\DS(x) W(x) x^2) \le  \log (\DS(x_0) W(x_0) x_0^2) + 6C \int_{x_0}^x s^{-2+C\k}\,ds,
\end{align}
where the constant $C$ does not depend on $\k$. We let $x_0=x_\ast$, so that for sufficiently small $\k\le\k_0$ the $\k$-dependent quantity
$\DS(x_\ast) W(x_\ast)  $ is bounded uniformly-in-$\k$. On the other hand 
\[
 6C \int_{x_\ast}^x s^{-2+C\k}\,ds \le  6C \int_{x_\ast}^\infty s^{-2+C\k}\,ds
 =  \frac{6C}{1-C\k} x_\ast^{-1+C\k}, 
\]
which, since $\xs\in[\xmin,\xmax]$, is also bounded uniformly-in-$\k$. This completes the proof of the lemma.
\end{proof}

In the following proposition, we establish the sharp asymptotic behaviour of the variable $W$ as $x\to\infty$ - this will play 
an important role in constructing the unique extension of the solution in Section~\ref{S:MAE}.

\begin{proposition}[Precise asymptotic behaviour for $W$]\label{P:PRECISEW}
Let $x_\ast\in[\xmin,\xmax]$ and let $(W,\R)$ be  the unique RLP-type solution defined globally to the right for all $\k\in(0,\k_0]$.
After choosing a possibly smaller $\k_0>0$, there exists a constant $\tilde c$ such that for any $\k\in(0,\k_0]$ there exists a constant $\vY_1<0$ such that 
\begin{align}\label{E:ONEMINUSWEXPANSION}
1-W(x) = -\vY_1 x^{-\frac{1-\k}{1+\k}} + o_{x\to\infty}(x^{\frac{1-\k}{1+\k}}), \ \ x\ge x_\ast,
\end{align}
where $\vY_1< - \tilde c<0$ for all $\k\in(0,\k_0]$.
\end{proposition}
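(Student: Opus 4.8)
\textbf{Proof proposal for Proposition~\ref{P:PRECISEW}.}

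The plan is to revisit the linear ODE~\eqref{E:VARTHETAEQN} satisfied by $\vartheta(x) = (1-W(x))x$ and extract the precise leading-order constant, rather than just the boundedness used in Lemma~\ref{L:ROUGHASYMPTOTICS}. Recall from~\eqref{E:VFORMULA} that
\begin{align}
\vartheta(x)\, e^{\k\int_{x_0}^x \frac{W(s)\alpha(s)}{s}\,ds} = \vartheta(x_0) + \int_{x_0}^x W(s)\beta(s)\, e^{\k\int_{x_0}^s \frac{W(s')\alpha(s')}{s'}\,ds'}\,ds.
\end{align}
First I would sharpen the asymptotics of the coefficients $\alpha,\beta$ from Lemma~\ref{L:DECAY1} using the now-available sharp bounds $W\to1$ (from~\eqref{E:WROUGH}) and $\R x^{2\frac{1-\k}{1+\k}}\to \RY_2^{\frac{1-\k}{1+\k}}$ (from~\eqref{E:RPRECISE}). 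Plugging $W=1+O(x^{-1+C\k})$ and $\R=O(x^{-\gamma})$ with $\gamma$ close to $1$ into~\eqref{E:ALPHADEF}--\eqref{E:BETADEF}, the denominators converge: $-(W+\k)^2+\k(W-1)^2-4\k\R W+\R^{-\e}x^{-2}\to -(1+\k)^2$ as $x\to\infty$ (the $\R^{-\e}x^{-2}$ term vanishes since $\R^{-\e}x^{-2}\asymp x^{3\e-2}\to0$ for small $\k$). Hence $\alpha(x)\to \alpha_\infty:=\frac{2(\k-1+2)}{-(1+\k)^2}=\frac{2(1+\k)}{-(1+\k)^2}=-\frac{2}{1+\k}$ and more precisely $\alpha(x)=\alpha_\infty+O(x^{-1+C\k})$. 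Thus $\k\frac{W\alpha}{x}=\k\frac{\alpha_\infty}{x}+O(x^{-2+C\k})$, and the integrating factor satisfies $e^{\k\int_{x_0}^x \frac{W\alpha}{s}\,ds} = C_\star\, x^{\k\alpha_\infty}(1+o(1)) = C_\star\, x^{-\frac{2\k}{1+\k}}(1+o(1))$ for some constant $C_\star=C_\star(x_0)>0$, where the $o(1)$ correction comes from the convergent integral of the $O(x^{-2+C\k})$ remainder.

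Next I would analyze the right-hand side. From~\eqref{E:BETADEF} the leading term of $\beta$ is $\beta(x) = \frac{-2\R(5\k W+W+\k(1+\k))}{-(1+\k)^2}(1+o(1)) = \frac{2(1+5\k)\R}{(1+\k)^2}(1+o(1))$, since the $\R^{-\e}x^{-2}$ contribution in the numerator of~\eqref{E:BETADEF} is $O(x^{3\e-2})$, which for sufficiently small $\k$ decays strictly faster than $\R\asymp x^{-\gamma}$ with $\gamma=1-C\k_0$. Wait — I must be careful: $3\e-2$ versus $-\gamma=-(1-C\k_0)$; for $\k_0$ small both are close to $-2$ and $-1$ respectively, so indeed $x^{3\e-2}$ is the faster-decaying one. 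Actually the more robust route is to use the sharp identity~\eqref{E:RPRECISE}: $\R(x) = \RY_2^{\frac{1-\k}{1+\k}}x^{-\frac{2(1-\k)}{1+\k}}(1+o(1))$, so $\beta(x)= \frac{2(1+5\k)}{(1+\k)^2}\RY_2^{\frac{1-\k}{1+\k}}x^{-\frac{2(1-\k)}{1+\k}}(1+o(1))$, and importantly the exponent $-\frac{2(1-\k)}{1+\k}$ satisfies $-\frac{2(1-\k)}{1+\k} + \k\alpha_\infty + 1 = -\frac{2(1-\k)}{1+\k} - \frac{2\k}{1+\k} + 1 = -\frac{2-2\k+2\k}{1+\k}+1 = -\frac{2}{1+\k}+1 = -\frac{1-\k}{1+\k}$. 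Therefore $W(s)\beta(s)e^{\k\int^s \frac{W\alpha}{s'}}\asymp s^{-\frac{2(1-\k)}{1+\k}} s^{-\frac{2\k}{1+\k}} = s^{-\frac{2}{1+\k}}$, which is integrable at $+\infty$ (exponent $<-1$ for small $\k$). Hence the bracket on the right of~\eqref{E:VFORMULA} converges to a finite limit $L$ as $x\to\infty$, and consequently
\begin{align}
\vartheta(x) \sim L\, C_\star^{-1}\, x^{\frac{2\k}{1+\k}}, \quad x\to\infty,
\end{align}
which, dividing by $x$, gives $1-W(x) = (1-W(x)) = \vartheta(x)/x \sim L C_\star^{-1} x^{-\frac{1-\k}{1+\k}}$. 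Setting $-\vY_1 := LC_\star^{-1}$ yields~\eqref{E:ONEMINUSWEXPANSION}. The sign and uniform lower bound $\vY_1<-\tilde c<0$ follow because $\vartheta>0$ on $[x_\ast,\infty)$ — this is exactly the content of Lemma~\ref{L:WSTAYSBELOWONE}, $W<1$, which forces $L>0$; and the quantitative lower bound $\vartheta(\tilde x)\ge\frac14$ from~\eqref{E:VLBKEY} together with the almost-monotone structure (the $O(x^{-3/2})$ forcing is integrably small) gives a $\k$-independent positive lower bound on $L C_\star^{-1}$.

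The main obstacle I anticipate is bookkeeping the error terms carefully enough to conclude that the limit $L$ of the integral on the right of~\eqref{E:VFORMULA} actually exists (not merely that $\vartheta$ is bounded), and that the $x^{\frac{2\k}{1+\k}}$ asymptotics of the integrating factor is sharp with a multiplicative $(1+o(1))$ rather than just two-sided $x^{\pm C\k}$ bounds. This requires upgrading the $\alpha(x) = \alpha_\infty + O(x^{-1+C\k})$ estimate, which in turn relies on having the sharp rate $|1-W|\lesssim x^{-\gamma}$ with $\gamma$ genuinely close to $1$ — available from~\eqref{E:WROUGH1} — and on the sharp density asymptotics~\eqref{E:RPRECISE}. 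A secondary subtlety is ensuring $\k_0$ is chosen small enough that all the competing exponents ($3\e-2$, $-\frac{2(1-\k)}{1+\k}$, $-\frac{2}{1+\k}$, and the various $C\k$ corrections) stay on the correct side of $-1$ and of each other; this is a finite list of inequalities, each strict in the limit $\k\to0$, so a single choice of $\k_0$ works. Finally, the uniform-in-$\k$ lower bound $-\vY_1>\tilde c$ must be traced through the explicit $\k$-independent constants in Lemma~\ref{L:INITIAL} and Lemma~\ref{L:WSTAYSBELOWONE}, exactly as the lower bound on $\RY_2$ was obtained in~\eqref{E:PIWX2LB}.
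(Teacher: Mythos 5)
Your strategy is the same as the paper's: integrate the linear ODE for $\vartheta=(1-W)x$, show the integrating factor behaves like $x^{-2\k/(1+\k)}$, and show the forcing term is integrable. You correctly identify the resulting exponent $-\tfrac{1-\k}{1+\k}$ via the identity $-\tfrac{2(1-\k)}{1+\k}-\tfrac{2\k}{1+\k}+1=-\tfrac{1-\k}{1+\k}$. However, there is a concrete error in your computation of the leading-order coefficient of $\beta$, and it is not merely cosmetic.

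You claim $\beta(x)\sim\frac{2(1+5\k)}{(1+\k)^2}\R(x)$ by dropping the $2\R^{-\e}x^{-2}$ term from the numerator of~\eqref{E:BETADEF}. Two problems. First, a small arithmetic slip: $5\k W + W + \k(1+\k)\to 1+6\k+\k^2$ as $W\to 1$, not $1+5\k$. Second, and more seriously, the $\R^{-\e}x^{-2}$ term is \emph{not} negligible: once you plug in the sharp rate $\R\asymp x^{-2(1-\k)/(1+\k)}$, you have $\R^{-\e}x^{-2}=\frac{\R}{\R^{1+\e}x^2}\to\frac{\R}{\RY_2}$, so it decays at \emph{exactly} the same rate as $\R$. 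Your initial justification for dropping it (comparing $x^{3\e-2}$ against $x^{-\gamma}$ with $\gamma=1-C\k_0$) was based on the suboptimal a priori bound on $\R$; after you switched to the sharp rate~\eqref{E:RPRECISE}, you never revisited this comparison. The correct constant is
\begin{align}
\lim_{x\to\infty}\frac{\beta(x)}{\R(x)}=\frac{2}{(1+\k)^2}\Bigl(1+6\k+\k^2-\frac{1}{\RY_2}\Bigr),
\end{align}
and its uniform-in-$\k$ positivity relies crucially on the lower bound $\RY_2>1+c_1$ from Lemma~\ref{L:WSTAYSBELOWONE}: as $\k\to0$ the term $1-\frac{1}{\RY_2}$ would vanish if $\RY_2$ were allowed to approach $1$. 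By erroneously dropping the $\R^{-\e}x^{-2}$ contribution you have bypassed exactly the structural input that Lemma~\ref{L:WSTAYSBELOWONE} was designed to provide. The scaling conclusion (the forcing decays like $x^{-2(1-\k)/(1+\k)}$, hence the integral converges) survives your error, but the sign/positivity argument is incomplete as written.

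Two secondary gaps. (i) You assert $\alpha(x)=\alpha_\infty+O(x^{-1+C\k})$ but do not verify it; the paper explicitly estimates $\tilde\alpha=\alpha-\alpha_\infty$ and shows $|\tilde\alpha|\le Kx^{-2+O(\k)}$ (equation~\eqref{E:ALPHATILDEPRECISE}), which is what makes the integrating-factor correction integrable. You need at least $|W\alpha-\alpha_\infty|\lesssim x^{-\epsilon}$ for some fixed $\epsilon>0$; this follows from~\eqref{E:WROUGH1} and a bound on $\tilde\alpha$, but it needs to be spelled out. (ii) Your argument for the $\k$-uniform lower bound $-\vY_1>\tilde c$, via $\vartheta(\tilde x)\ge\tfrac14$ and ``almost-monotonicity,'' does not address the possibility that $W\beta$ is negative on an intermediate interval $[\tilde x, X_*(\k)]$ before the asymptotic positivity of $\beta$ kicks in, which could in principle erode the lower bound by an amount not controlled by $O(\k)$. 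The paper avoids this by integrating the identity $\bigl(\vartheta x^{-2\k/(1+\k)}\bigr)'=\ldots$ directly, dropping the nonnegative boundary term $(1-W(x_0))x_0^{(1-\k)/(1+\k)}$, and bounding the contributions of $\alpha,\tilde\alpha$ by $O(\k)$ while the $W\beta$ integral supplies the uniform positive lower bound. You should either adopt that cleaner bookkeeping or explicitly bound $\int_{\tilde x}^{X_*}|W\beta|$.
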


\begin{proof}
We may now derive more precise asymptotics for the functions $\alpha(x)$ and $\beta(x)$ from~\eqref{E:ALPHADEF}--\eqref{E:BETADEF}. Using~\eqref{E:RPRECISE} and~\eqref{E:WROUGH}
we see that 
\begin{align}
\lim_{x\to\infty}\frac{\beta(x)}{\R(x)}= \frac{2}{(1+\k)^2} \left(1+\k(6+\k) - \lim_{x\to\infty}\left(\R^{-\frac{1+\k}{1-\k}}x^{-2}\right)\right) = 
 \frac{2}{(1+\k)^2} \left(1+\k(6+\k) -\frac1c\right),
\end{align}
where $1+c_2>c= \lim_{x\to\infty}\left(\R^{\frac{1+\k}{1-\k}}x^{2}\right)>1+c_1$ for all $\k\in(0,\k_0]$ by Lemma~\ref{L:WSTAYSBELOWONE}. We conclude that there exists a constant $\tilde c>0$ such that
for all $\k\in(0,\k_0]$,
\be\label{E:BETAPRECISE}
\frac1{\tilde c}>\lim_{x\to\infty} \frac{\beta(x)}{x^{-2\frac{1-\k}{1+\k}}} >\tilde c.
\ee 
Using~\eqref{E:WROUGH} and~\eqref{E:RPRECISE}, it is easy to see that
\begin{align}\label{E:ALPHAPRECISE}
\alpha(x) \asymp_{x\to\infty}  -\frac2{1+\k}.
\end{align}

Recall now $\vartheta$ from~\eqref{E:VDEF}. Letting $W = 1 + (W-1) = 1- \frac{\vartheta}{x}$, we may rewrite~\eqref{E:VARTHETAEQN} in the form
\begin{align}\label{E:VARTHETAEQN2}
\vartheta'(x) =  \frac{2\k}{1+\k} \frac{\vartheta(x)}{x} + \frac{\k \vartheta(x)^2}{x^2} \alpha(x) -\k \tilde\alpha(x)\frac{\vartheta(x)}{x} + W(x)\beta(x), \ \ \alpha =   -\frac2{1+\k} + \tilde \alpha,
\end{align}
with $\lim_{x\to\infty}\tilde\alpha(x)=0$.
This yields the identity
\begin{align}
\left(\vartheta x^{-\frac{2\k}{1+\k}}\right) ' = x^{-\frac{2\k}{1+\k}} \left(\frac{\k \vartheta(x)^2}{x^2} \alpha(x) -\k \tilde\alpha(x)\frac{\vartheta(x)}{x} + W(x)\beta(x)\right).
\end{align}
By the bound~\eqref{E:WROUGH1} 
and by~\eqref{E:BETAPRECISE}--\eqref{E:ALPHAPRECISE}, it follows that the right-hand side of of the above identity is integrable on $[x_\ast,\infty)$
and therefore~\eqref{E:ONEMINUSWEXPANSION} holds for some $v_1\le0$ (note that $v_1$ cannot be positive by Lemma~\ref{L:WSTAYSBELOWONE}). 
To prove the $\k$-uniform upper bound on $v_1$, we must first estimate the rate of decay of $\tilde\alpha(x)$ as $x\to\infty$.
From~\eqref{E:VARTHETAEQN2} and~\eqref{E:ALPHADEF}, we directly check 
\begin{align}
\tilde\alpha(x) = - \frac{2\left((1-\k)(1-W)^2+4\k \R W-\R^{-\e}x^{-2}\right)}{(1+\k)\left(\R^{-\e}x^{-2}-\left((W+\k)^2-\k(W-1)^2+4\k \R W\right)\right)}.
\end{align}
Therefore, for sufficiently large $x\gg1$ we have the upper bound
\begin{align}\label{E:ALPHATILDEPRECISE}
|\tilde\alpha(x)| \le K x^{-2+O(\k)},
\end{align}
for some $K>0$ independent of $\k$.
Here we have used~\eqref{E:WROUGH1}, the bound~\eqref{E:RPRECISE}, and Lemma~\ref{L:ROUGHASYMPTOTICS}.
Keeping in mind~\eqref{E:VDEF}, the bound $1-W>0$ (from Lemma~\ref{L:WSTAYSBELOWONE}) and~\eqref{E:BETAPRECISE} we conclude that for any $x\ge x_\ast$ sufficiently large we have
\begin{align}
& -\vY_1=\lim_{x\to\infty}\left((1-W(x)) x^{\frac{1-\k}{1+\k}}\right) = \lim_{x\to\infty}\left(\vartheta(x)x^{-\frac{2\k}{1+\k}}\right) \notag\\
& = \left(1-W(x_0)\right)x_0^{\frac{1-\k}{1+\k}} + \int_{x_0}^\infty \k x^{-\frac{2\k}{1+\k}} \left(\frac{ \vartheta(x)^2}{x^2} \alpha(x) - \tilde\alpha(x)\frac{\vartheta(x)}{x}\right) + x^{-\frac{2\k}{1+\k}} W(x)\beta(x) \, dx \notag\\
& \ge \k  \int_{x_0}^\infty x^{-\frac{2\k}{1+\k}} \left(\frac{ \vartheta(x)^2}{x^2} \alpha(x) - \tilde\alpha(x)\frac{\vartheta(x)}{x}\right) \, dx + C_1 \int_{x_0}^\infty x^{-\frac{2\k}{1+\k}} x^{-2\frac{1-\k}{1+\k}}\,dx \notag \\
& \ge C_1 \int_{x_0}^\infty  x^{-\frac{2}{1+\k}}\,dx - C_2 \k \int_{x_0}^\infty x^{-3+C\k}\,dx,
\end{align}
for some constants $C_1,C_2>0$ and independent of $\k$. Note that we have used $W\ge\frac16$ in the third line and~\eqref{E:ALPHATILDEPRECISE} in the last. This implies the claim for $\k$ sufficiently small.
\end{proof}


\section{Maximal analytic extension} \label{S:MAE}



The main results of this section are the description of the local and  the global extension of the flow across the 
coordinate singularity at $y=\infty$, see Theorems~\ref{T:LE} and~\ref{T:GE} respectively. For a detailed overview, we refer to Section~\ref{SS:MAEINTRO}.

\subsection{Adapted comoving coordinates}



As explained in Section~\ref{SS:MAEINTRO}
the metric $g$ given by~\eqref{E:METRIC}
becomes singular as $y\to\infty$. 
To show that this is merely a coordinate singularity, we introduced the adapted comoving chart~\eqref{E:TILDETAUDEF}, and the new self-similar variable $Y$ given by~\eqref{E:YDEF}--\eqref{E:YLITTLEY}.
A simple manipulation of~\eqref{E:TILDETAUDEF} gives the relation
\begin{align}\label{E:TAUTILDETAU}
\tau = \tau(\ttau,R) = Y^\e \ttau, \ \ \tau<0, \ R>0.
\end{align}

Recall the new unknowns $\chi(Y)$, $\RY(Y)$, and $w(Y)$ from~\eqref{E:YVARDEF}.
In the new chart the spacetime metric~\eqref{E:METRIC} by~\eqref{E:SS1}--\eqref{E:SS6},~\eqref{E:TAUTILDETAU}, and~\eqref{E:YVARDEF} takes the form
\[
g = - e^{2\tilde\mu(Y)}\,d\tilde\tau^2  -  \frac{4\sqrt\k}{1+\k} Y e^{2\tilde\mu(Y)}\,d\tt\,dR + \left(e^{2\tilde\l(Y)}-\frac{4\k}{(1+\k)^2} Y^2 e^{2\tilde\mu(Y)} \right)\,dR^2 + 
R^2 \chi(Y)^2\, \gamma,
\]
where we recall~\eqref{E:MUTILDE}--\eqref{E:LAMBDATILDE}.
Note that by~\eqref{E:GTT} and~\eqref{E:MUTILDE}, we have
\begin{align}\label{E:NEWMU}
e^{2\tilde\mu(Y)} = \frac1{(1-\k)^2} \left(\RY Y^{-2}\right)^{-\e}, \ \ Y>0.
\end{align}
It is clear that in the limit $y\to\infty$, i.e. $Y\to0^+$ the metric coefficient
$ e^{2\tilde\mu(Y)}$ approaches a positive constant due to~\eqref{E:MUAS} and~\eqref{E:YLITTLEY}. 


\begin{lemma}\label{L:ADAPTED}
Let the triple $(\tr,\d,\w)$ be a smooth solution to~\eqref{E:DSSEQN}--\eqref{E:WSSEQN}. Then the new unknowns $(\chi,\RY,w)$ defined in~\eqref{E:YVARDEF} solve the system~\eqref{E:LDEQN2}--\eqref{E:CDEF}. 
\end{lemma}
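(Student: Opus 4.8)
The statement is a change-of-variables identity: given the comoving self-similar system~\eqref{E:DSSEQN}--\eqref{E:WSSEQN} for $(\tr,\d,\w)$ in the variable $y$, I want to show that $(\chi,\RY,w)$ defined by~\eqref{E:YVARDEF} and $\chi(Y)=Y^{1+\e}\tr(y)$ satisfy~\eqref{E:LDEQN2}--\eqref{E:CDEF} in the variable $Y$. The whole proof is a computation, so the plan is to organise the bookkeeping cleanly rather than to discover anything. First I would record the differentiation rule coming from $y=Y^{-1-\e}$, namely $\frac{dy}{dY}=-(1+\e)Y^{-2-\e}=-(1+\e)\frac{y}{Y}$, so that for any quantity $g$ with $G(Y):=g(y)$ we have $G'(Y)=-(1+\e)\frac{y}{Y}\,g'(y)$. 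Applying this to $\RY$ and $w$ converts $\d'$ and $\w'$ directly into $\RY'$ and $w'$, and applying it to $\chi(Y)=Y^{1+\e}\tr(y)$ together with~\eqref{E:WTR} (i.e. $\tr'/\tr = (\w+\k)/((1+\k)y)$, equivalently via~\eqref{E:TILDERPRIME}) gives $\chi'=\chi\big[(1+\e)Y^{-1} - (1+\e)\frac{y}{Y}\cdot\frac{\w+\k}{(1+\k)y}\big]=\frac{\chi}{Y}(1+\e)\big(1-\frac{\w+\k}{1+\k}\big)=\frac{\chi}{Y}\,\frac{(1+\e)(1-\w)}{1+\k}$. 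Since $(1+\e)/(1+\k)=1/(1-\k)$ (because $1+\e=1+\frac{2\k}{1-\k}=\frac{1+\k}{1-\k}$), this is exactly~\eqref{E:CHIEQN}. This last computation is the only place the areal-radius equation enters, and it is short.

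Next I would handle the denominator. The quantity controlling the singular terms in~\eqref{E:DSSEQN}--\eqref{E:WSSEQN} is $e^{2\mu-2\l}y^{-2}-1$, or equivalently (multiplying through) the combination $y^2 - e^{2\mu-2\l}$ that appears in~\eqref{E:N1}--\eqref{E:w1}. I would use Lemma~\ref{L:NONLOCALISLOCAL}(d), the formula~\eqref{E:LAMBDAFLUID} for $e^{2\l}$, together with~\eqref{E:MUFORMULASS2} for $e^{2\mu}$, to express $e^{2\mu-2\l}$ in closed form in terms of $\Sigma,\w,\tr$; this is precisely the manipulation already carried out in the proof of Lemma~\ref{L:SCHW} (see~\eqref{E:SONICKEY1}--\eqref{E:SONICDEN}). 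The point is to show that after multiplying the right-hand side of~\eqref{E:DSSEQN}--\eqref{E:WSSEQN} by the relevant power of $y$ and $Y$, the denominator becomes $\mathcal C$ as defined in~\eqref{E:CDEF}. Concretely: $e^{2\mu}= (1+\k)^{-2}\Sigma^{-\eta}$, and $e^{2\mu-2\l}y^{-2}$ works out (using~\eqref{E:LAMBDAFLUID}) to $\frac{(1+\k)^2}{(\w+\k)^2\tr^2}\big[1+\k\Sigma^{2\k/(1+\k)}\tr^2(\w-1)^2-4\k\Sigma\w\tr^2\big]$. Substituting $\tr=\chi Y^{-1-\e}$, $\Sigma=\RY$, $\w=w$, and using $\Sigma^{2\k/(1+\k)}=\RY^{\e/(1+\e)}$-type identities (tracking the exponent $\e$ versus $2\k/(1+\k)$ carefully), the bracket $y^2 - e^{2\mu-2\l}$, rescaled by the appropriate factor, becomes $\chi^{-2}\,\mathcal C$ with $\mathcal C$ as in~\eqref{E:CDEF}. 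I would verify the exponent $(\RY Y^{-2})^{-\eta}Y^2$ by noting $\tr^{-2}$ carries $Y^{2+2\e}$ and $\Sigma^{-\eta}$ carries no $Y$, while $e^{2\mu}\propto\Sigma^{-\eta}$, so collecting powers of $Y$ reproduces $Y^2$ out front times $\RY^{-\eta}$ against $Y^{-2\eta}$, i.e. $(\RY Y^{-2})^{-\eta}Y^2$.

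Finally I would assemble the numerators. From~\eqref{E:DSSEQNBOLD} (already in terms of $\d,\w$) we have $\d' = -\frac{2(1-\k)\d(\d-\w)}{(1+\k)y(e^{2\mu-2\l}y^{-2}-1)}$; multiplying by $\frac{dy}{dY}=-(1+\e)\frac{y}{Y}$ and inserting the denominator rewriting turns the $\frac{1}{y}$ and the $y$ into the $\chi^2/Y$ prefactor, the $(e^{2\mu-2\l}y^{-2}-1)$ into $\mathcal C/(\chi^2(\ldots))$ up to the $(\w+\k)^2$ factor hidden in~\eqref{E:LAMBDAFLUID}, which supplies the $(w+\k)^2$ appearing in~\eqref{E:LDEQN2}. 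One checks the constants: $\frac{1-\k}{1+\k}\cdot(1+\e)=\frac{1-\k}{1+\k}\cdot\frac{1+\k}{1-\k}=1$, so the overall coefficient in~\eqref{E:LDEQN2} is $+2$ (the sign flip comes from $\frac{dy}{dY}<0$), matching. The $w$-equation~\eqref{E:WSSEQNBOLD} is handled identically, with the extra care that the non-singular term $\frac{(\w+\k)(1-3\w)}{(1+\k)y}$ picks up $\frac{dy}{dY}=-(1+\e)\frac{y}{Y}$ and the factor $\frac{1+\e}{1+\k}=\frac{1}{1-\k}$, producing $-\frac{(w+\k)(1-3w)}{(1-\k)Y}$ as in~\eqref{E:LWEQN2}. \textbf{The main obstacle} I anticipate is purely clerical: keeping the four different but related exponents straight — $\e=\frac{2\k}{1-\k}$, $\frac{2\k}{1+\k}$, $\frac{1-\k}{1+\k}$, and $1+\e=\frac{1+\k}{1-\k}$ — when passing between $\Sigma$, $\d=\Sigma^{(1-\k)/(1+\k)}$, $\RY$, and the powers of $Y$ in $\tr=\chi Y^{-1-\e}$. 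A clean way to avoid errors is to do the computation entirely at the level of the Schwarzschild-coordinate system~\eqref{E:RODE}--\eqref{E:WODE} (valid by Lemma~\ref{L:SCHW}), where the denominator is already the explicit algebraic expression $B$ in~\eqref{E:BDEF0}, and then only perform the single substitution $x=\tr(y)=\chi Y^{-1-\e}$ combined with $y=Y^{-1-\e}$; since $\R=\RY$ and $W=w$ identically as functions, this reduces the entire lemma to checking that $B[\,\chi Y^{-1-\e};\RY,w\,]$, multiplied by the Jacobian factor from $\frac{dx}{dY}$, equals $\mathcal C$ up to the stated $Y$-powers — a one-line exponent count rather than a metric-coefficient computation. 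I expect to present it via this route, noting at the start that $(\R,W)=(\RY,w)$ so that $x=\tr$ and the chain rule $\frac{d}{dY}=\frac{dx}{dY}\frac{d}{dx}$ with $\frac{dx}{dY}=\frac{d}{dY}(\chi Y^{-1-\e})$ does all the work.
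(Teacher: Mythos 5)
Your primary route is the same as the paper's: rewrite~\eqref{E:DSSEQN}--\eqref{E:WSSEQN} together with the areal-radius relation~\eqref{E:WTR} as a first-order system in $(\d,\w,\tr)$ with the explicit algebraic denominator $\d^{-\e}-\tr^2[(\w+\k)^2-\k(\w-1)^2+4\k\w\d]$ (the paper records this as~\eqref{E:LDEQN1}--\eqref{E:LBDEF}) and then change the independent variable from $y$ to $Y$, stating at the end that the remaining verification is straightforward. Your computation of~\eqref{E:CHIEQN} from $\chi=Y^{1+\e}\tr$ and $\frac{\w+\k}{1+\k}=\frac{y\tr'}{\tr}$ is exactly right, and your exponent bookkeeping for $\mathcal C$ is correct, so the proposal is sound.

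Your \emph{secondary} route is a genuinely different and arguably cleaner presentation. Instead of re-deriving the denominator algebra inside this lemma, you invoke Lemma~\ref{L:SCHW} to pass to the Schwarzschild system~\eqref{E:RODE}--\eqref{E:WODE} with denominator $B[x;\R,W]$ from~\eqref{E:BDEF0}, note that $\R(x)=\RY(Y)$ and $W(x)=w(Y)$ identically under $x=\chi(Y)Y^{-1-\e}$, $y=Y^{-1-\e}$, and then the whole lemma reduces to two one-line facts: the Jacobian
\[
\frac{dx}{dY}=\tr'(y)\frac{dy}{dY}=-\frac{x(w+\k)}{(1-\k)Y},
\]
and the rescaling $Y^{2+2\e}B = (\RY Y^{-2})^{-\e}Y^{2}-\chi^{2}\bigl[(w+\k)^{2}-\k(w-1)^{2}+4\k\RY w\bigr]=\mathcal C$, which follows because $x^{2}Y^{2+2\e}=\chi^{2}$ and $Y^{2+2\e}\RY^{-\e}=(\RY Y^{-2})^{-\e}Y^{2}$. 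This buys you the advantage of not repeating the metric-coefficient manipulation from~\eqref{E:SONICKEY1}--\eqref{E:SONICDEN} (which the paper's intermediate step~\eqref{E:LBDEF} implicitly redoes), at the small cost of threading through an extra coordinate system. Either organisation is valid; the substance of the argument is the same.
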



\begin{proof}
The Lagrangian system~\eqref{E:DSSEQN}--\eqref{E:WSSEQN} now takes the form
\begin{align}
\d' &= - \frac{\wl+\k}{1+\k} \ \frac{2(1-\k)\rl^2}{y} \frac{ \d (\w +\k)(\Rl-\w )}{\mathcal B} ,\label{E:LDEQN1}\\
\wl' &= \frac{\wl+\k}{1+\k}\left(\frac{1-3\wl}{y} +\frac{2(1+\k)\rl^2}{y} \frac{\wl(\wl+\k)(\Rl-\wl)}{\mathcal B}\right) ,  \label{E:LWEQN1} \\
\rl' & = \frac{\rl }{y} \  \frac{\wl+\k}{1+\k}, \label{E:LREQN1}
\end{align}
where
\begin{align}
\mathcal B = \d^{-\e} - \rl^2 \left[(\wl + \k)^2-\k(\wl-1)^2 + 4\k \wl \Rl \right] \label{E:LBDEF},
\end{align}
and $'$ refers to differentiation with respect to $y$. Using~\eqref{E:YVARDEF} it is then straightforward to check that~\eqref{E:LDEQN2}--\eqref{E:CDEF} hold.
\end{proof}


\begin{remark}\label{R:TAYLORR}
From \eqref{E:LREQN1} 
and the leading order behaviour of $\wl(y)=\vY(Y)$ at $y=\infty$ it is easy to see that 
\begin{align}\label{E:RLASYMPTOTICS}
& \rl(y) = a y + o_{y\to+\infty}(y) = a \frac1{Y^{1+\e}} + o_{Y\to 0^+}(\frac1{Y^{1+\e}}), \ Y>0,\\
& \lim_{Y\to 0^+}\chi(Y) = a,
\end{align}
for some constant $a>0$. Here the constant $a$ corresponds to the labelling gauge freedom and we set without loss of generality
\be\label{E:ANORM}
a=1.
\ee
We note that the unknown $\RY$ corresponds  to the modified density $\Rl$ defined in \eqref{E:YVARDEF}. 
By Lemma~\ref{L:ROUGHASYMPTOTICS} and Proposition~\ref{P:PRECISEW}, 
and the asymptotic behaviour~\eqref{E:RLASYMPTOTICS} with $a=1$, 
we have the leading order asymptotic behaviour
\begin{align}
\RY(Y) & = \RY_2 Y^2 + o_{Y\to0^+}\left(Y^2 \right), \label{E:SIGMAASYMPTOTICS}\\
\vY(Y) & = 1 + \vY_1 Y + o_{Y\to0^+}\left(Y\right), \label{E:SMALLVASYMPTOTICS}
\end{align}
where $1+c_2>\RY_2>1+c_1$ and 
\be\label{E:VONEUNIFORM}
\vY_1<-\tilde c<0
\ee 
by Lemma~\ref{L:WSTAYSBELOWONE} and Proposition~\ref{P:PRECISEW}.
We note that 
\be\label{E:ASIMP}
\lim_{Y\to0^+} \frac{\chi(Y)^2}{Y^{2+2\e}} \vY(Y)\RY^{1+\e}(Y) = \lim_{x\to\infty} \DS(x)W(x)x^2 = \RY_2 >1
\ee
by Lemma~\ref{L:WSTAYSBELOWONE}. 
\end{remark}


\subsection{Local extension}




To prove the existence of a solution to~\eqref{E:LDEQN2}--\eqref{E:CHIEQN} with suitable boundary conditions (see~\eqref{E:SIGMACOND}--\eqref{E:CHICOND}), we formally Taylor-expand the unknowns
$\RY,\vY,\chi$ around $Y=0$ and prove the convergence of the series.
Assume that the following expansions hold
\begin{align}
\RY(Y) & = \RY_2 Y^2 + \sum_{N=3}^\infty \RY_N Y^N, \label{E:SIGMAEXPLOC}\\
\vY(Y)& = 1 + \sum_{N=1}^\infty \vY_N Y^N ,\label{E:VEXPLOC}\\
\chi(Y) & = \chi_0 + \sum_{N=1}^\infty \chi_N Y^N, \label{E:CHIEXPLOC}
\end{align}
where by~\eqref{E:ANORM} $\chi_0=1$.
From this and~\eqref{E:CDEF} we have the formal expansion
\begin{align}
\mathcal C = \sum_{N=0}^\infty \mathcal C_N Y^N, 
\end{align}
where for any $N\ge0$
\begin{align}
\mathcal C_N = \left((\RY Y^{-2})^{-\e}\right)_{N-2} - \sum_{k+\ell=N} (\chi^2)_k\left[(1-\k)(\vY^2)_\ell + 4\k \vY_\ell +(\k^2-\k)\delta_{0\ell}+4\k (\vY \RY)_\ell\right]. \label{E:CEXPLOC}
\end{align}
Here we employ the convention that $(f)_j=0$ if $j<0$. We may single out the leading order contribution on the right-hand side of~\eqref{E:CEXPLOC}:
\begin{align}
\mathcal C_N & = - 4\k \chi_0^2 \RY_N - 2(1+\k)\chi_0^2 \vY_N - 2(1+\k)^2\chi_0 \chi_N + \mathcal C_{N,1}, \label{E:CNLEADINGORDER}
\end{align}
where
\begin{align}
\mathcal C_{N,1}:=&\left((\RY Y^{-2})^{-\e}\right)_{N-2} - \sum_{\substack{k+\ell=N \\ k,\ell\le N-1}} (\chi^2)_k\left[(1-\k)(\vY^2)_\ell + 4\k \vY_\ell +(\k^2-\k)\delta_{0\ell}+4\k (\vY \RY)_\ell\right] \notag\\
& - \chi_0^2 \left[(1-\k)\sum_{\substack{k+\ell =N \\ k,\ell\le N-1}}\vY_k \vY_\ell + 4\k \sum_{\substack{k+\ell =N \\ \ell \le N-1}}\vY_k \RY_\ell\right].
\end{align}


\begin{lemma}
For any $N\ge1$, $N\in\mathbb N$, the Taylor coefficients $(\RY_N,\vY_N,\chi_N)$ satisfy the following recursive relations
\begin{align}
 (N-2)\chi_0^2(1+\k)^2\RY_N  &=  \mathcal U_N,
\label{E:SIGMARECURSIVE}\\
 (1-\k)(1+\k)^2 (N-1)\chi_0^2 \vY_N - 2(1+\k)\left((1+\k)^2+4\k\right)\chi_0^2 \RY_N  & = \mathcal V_N,   \label{E:VYRECURSIVE} \\
\chi_0 \vY_N + (1-\k) N \chi_N  & =- \sum_{\substack{k+\ell=N \\ k,\ell\le N-1}}\chi_k \vY_\ell , \label{E:CHIYRECURSIVE}
\end{align}
where
\begin{align}
\mathcal U_N & = \sum_{\substack{k+j=N \\ k\le N-1}} k \RY_k \mathcal C_j  -2\sum_{\substack{k+\ell+m+n=N \\ \ell\le N-1}} (\chi^2)_k \RY_\ell (\RY_m-\vY_m)\left((\vY^2)_n+2\k \vY_n + \k^2\delta_{0n}\right), \ \ N\ge3, 
\label{E:MUNDEF}\\
 \mathcal V_N & = (1-\k)\sum_{\substack{k+j=N \\ k\le N-1}} k \vY_k \mathcal C_j  \notag\\
 & \ \ \ \ -  \sum_{\substack{k+\ell=N \\ k,\ell \le N-1}} \left(3(\vY^2)_k-(1-3\k)\vY_k\right) \mathcal C_\ell + 2(1+\k)\mathcal C_{N,1} 
+ 3(1+\k)^2 \chi_0^2 \sum_{\substack{k+\ell=N \\ k,\ell \le N-1}} \vY_k \vY_\ell \notag\\
& \ \ \ \ +2(1+\k)\sum_{\substack{k+m+n=N \\ k,m,n\le N-1}} (\chi^2)_k  (\RY_m-\vY_m)\left((\vY^3)_n+2\k (\vY^2)_n + \k^2\vY_n\right) \notag\\
&- 2(1+\k)\chi_0^2 \left(\sum_{\substack{k+\ell+n=N\\ k,\ell,n\le N-1}} \vY_k\vY_\ell\vY_n + 2\k \sum_{\substack{k+\ell=N \\ k,\ell \le N-1}} \vY_k\vY_\ell \right)
 - 2(1+\k)^3 \sum_{\substack{k+\ell=N \\ k,\ell \le N-1}} \chi_k \chi_\ell. \label{E:MVNDEF}
\end{align}
\end{lemma}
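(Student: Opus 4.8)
The statement to be proved is the recursive system~\eqref{E:SIGMARECURSIVE}--\eqref{E:CHIYRECURSIVE} for the Taylor coefficients $(\RY_N,\vY_N,\chi_N)$ of the formal expansions~\eqref{E:SIGMAEXPLOC}--\eqref{E:CHIEXPLOC}. The plan is to plug these expansions directly into the ODE system~\eqref{E:LDEQN2}--\eqref{E:CHIEQN} and compare coefficients of $Y^N$, exactly as was done for the sonic-point expansions in Section~\ref{S:SONIC}. Since the denominator $\mathcal C$ vanishes nowhere near $Y=0$ (indeed $\mathcal C_0 = -(1+\k)^2\chi_0^2\cdot(\text{something})$ is a nonzero constant by~\eqref{E:ASIMP} and~\eqref{E:CNLEADINGORDER}), the cleanest route is to multiply through by $\mathcal C$ before extracting coefficients, so that each equation becomes a polynomial identity in the formal power series. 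Concretely, I would first rewrite~\eqref{E:LDEQN2} as
\begin{align}
Y \RY' \mathcal C = 2\chi^2 \RY(\vY+\k)^2(\RY-\vY),\notag
\end{align}
rewrite~\eqref{E:LWEQN2} as
\begin{align}
(1-\k) Y \vY' \mathcal C = -(\vY+\k)(1-3\vY)\mathcal C - 2(1+\k)\chi^2 \vY(\vY+\k)^2(\RY-\vY),\notag
\end{align}
and rewrite~\eqref{E:CHIEQN} as $(1-\k)Y\chi' = (1-\vY)\chi$.

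\textbf{Key steps.} First, expand the left-hand sides: $Y\RY' = \sum_N N\RY_N Y^N$, $Y\vY' = \sum_N N\vY_N Y^N$, $Y\chi' = \sum_N N\chi_N Y^N$. Second, expand $\mathcal C = \sum_N \mathcal C_N Y^N$ via~\eqref{E:CEXPLOC}--\eqref{E:CNLEADINGORDER}, using the Faa di Bruno formula~\eqref{E:FAAH}--\eqref{pi} for the composite term $(\RY Y^{-2})^{-\e}$ — note $\RY Y^{-2} = \RY_2 + \sum_{N\ge1}\RY_{N+2}Y^N$ has nonzero constant term $\RY_2$, so the composition is a legitimate power series. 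Third, for the $\chi$ equation, comparing the coefficient of $Y^N$ in $(1-\k)Y\chi' = \chi - \vY\chi$ gives $(1-\k)N\chi_N = \chi_N\cdot 1 - (\vY\chi)_N$ (using $\chi_0=1$ via~\eqref{E:ANORM}), i.e. $(1-\k)N\chi_N + \chi_0\vY_N = -\sum_{k+\ell=N,\,k,\ell\le N-1}\chi_k\vY_\ell$ after isolating the top-order term — this is exactly~\eqref{E:CHIYRECURSIVE}. Fourth, and this is the bulk of the computation: in the $\RY$ equation, the coefficient of $Y^N$ on the left is $\sum_{k+j=N}k\RY_k\mathcal C_j$; split off the $j=N$ term, $N\RY_N\mathcal C_0$, and plug in $\mathcal C_0 = -4\k\chi_0^2\RY_0 - 2(1+\k)\chi_0^2\vY_0 - 2(1+\k)^2\chi_0\chi_0 + \mathcal C_{0,1}$ — wait, here $\RY_0=0$ and $\vY_0=1$, so $\mathcal C_0$ is a clean constant; but the claimed coefficient of $\RY_N$ in~\eqref{E:SIGMARECURSIVE} is $(N-2)\chi_0^2(1+\k)^2$, which means the $\mathcal C_N$ contribution to the right-hand side (via the $m,n$-sums in the quantity being differentiated, and via cross terms) must itself contain a $\RY_N$-proportional piece that combines with $N\RY_N\mathcal C_0$ to yield precisely $(N-2)\chi_0^2(1+\k)^2\RY_N$. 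I would therefore carefully track \emph{all} occurrences of the top-order coefficients $\RY_N,\vY_N$ on both sides — on the right they enter through $\mathcal C$ (linearly, via~\eqref{E:CNLEADINGORDER}), through the explicit $\RY,\vY$ factors, and through the $(\vY^2)_n$, $(\vY^3)_n$, $(\vY\RY)$ convolutions — move them all to the left, and verify the coefficient matrix collapses to the triangular form displayed. The $\vY$ equation is handled identically, now producing the two-term left-hand side $(1-\k)(1+\k)^2(N-1)\chi_0^2\vY_N - 2(1+\k)((1+\k)^2+4\k)\chi_0^2\RY_N$; the mixed $\RY_N$ term arises because $\mathcal C_N$ (which multiplies the $O(1)$ factor $(\vY+\k)(1-3\vY)|_{Y=0} = (1+\k)(1-3) = -2(1+\k)$ in the source) carries a $-4\k\chi_0^2\RY_N$ piece, and also from the $(\RY_m-\vY_m)$ factor in the last explicit source term. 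Collecting everything that is \emph{not} top-order into $\mathcal U_N$ and $\mathcal V_N$ yields~\eqref{E:MUNDEF}--\eqref{E:MVNDEF}, and these manifestly depend only on $(\RY_i,\vY_i,\chi_i)$ with $i\le N-1$.

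\textbf{Main obstacle.} The only real difficulty is bookkeeping: correctly identifying which convolution terms on the right-hand side contain the top-order unknowns $\RY_N$ or $\vY_N$ and extracting their linear coefficients without error, so that the claimed triangular matrix $\bigl(\begin{smallmatrix}(N-2)(1+\k)^2 & 0\\ -2(1+\k)((1+\k)^2+4\k) & (1-\k)(1+\k)^2(N-1)\end{smallmatrix}\bigr)\chi_0^2$ emerges exactly (and in particular is invertible for $N\ge3$, since then both diagonal entries are nonzero — this is what makes the recursion well-posed and is implicitly needed downstream). Everything else — the manipulations of~\eqref{E:LDEQN2}--\eqref{E:CHIEQN} into polynomial form, the Faa di Bruno expansion of $(\RY Y^{-2})^{-\e}$, and the reindexing of the convolution sums — is routine, and I would carry it out mechanically, checking the low-order cases $N=3,4$ by hand against the general formulas as a consistency test before declaring the lemma proved. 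I expect no conceptual input beyond this; the lemma is a direct coefficient-matching computation, parallel to Lemma~\ref{Lem:N} in the sonic analysis.
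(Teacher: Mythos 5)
Your plan is essentially identical to the paper's proof: multiply \eqref{E:LDEQN2} by $Y\mathcal C$, \eqref{E:LWEQN2} by $(1-\k)Y\mathcal C$, substitute \eqref{E:CHIEQN} directly, expand via \eqref{E:SIGMAEXPLOC}--\eqref{E:CEXPLOC} (including Faa di Bruno for $(\RY Y^{-2})^{-\e}$), and match the coefficient of $Y^N$ after isolating the top-order pieces $\RY_N,\vY_N,\chi_N$. One small slip in your narrative for the $\RY$-equation: you attribute a $\RY_N$-contribution on the right-hand side to $\mathcal C_N$, but $\mathcal C$ appears only on the left there (the $k=0$, $j=N$ term $0\cdot\RY_0\cdot\mathcal C_N$ vanishes); the $\RY_N$ piece on the right actually comes from the explicit $\RY_\ell$ factor with $\ell=N$ giving $-2\chi_0^2(1+\k)^2\RY_N$, which combines with $N\RY_N\mathcal C_0=-N\chi_0^2(1+\k)^2\RY_N$ on the left to yield the $(N-2)$ coefficient — but since you say you would track all occurrences and check $N=3,4$ by hand, this would come out in the wash.
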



\begin{proof}
{\em Proof of~\eqref{E:SIGMARECURSIVE}.}
We multiply~\eqref{E:LDEQN2} by $Y\mathcal C$ and plug in the formal 
expansions~\eqref{E:SIGMAEXPLOC}--\eqref{E:CEXPLOC}. We formally obtain that the $N$-th Taylor coefficient in the expansion of $\RY' Y \mathcal C$ is given by
\begin{align}\label{E:LHSSIGMAEXP}
\sum_{k+j=N} k \RY_k \mathcal C_j.
\end{align} 
On the other hand, the $N$-th Taylor coefficient in the expansion of $2\chi^2\RY(\vY+\k)^2(\RY-\vY)=2\chi^2\RY(\vY^2+2\k\vY+\k^2)(\RY-\vY)$ is easily checked to be
\begin{align}\label{E:RHSSIGMAEXP}
2\sum_{k+\ell+m+n=N} (\chi^2)_k \RY_\ell (\RY_m-\vY_m)\left((\vY^2)_n+2\k \vY_n + \k^2\delta_{0n}\right).
\end{align}
We now extract the leading order terms in~\eqref{E:LHSSIGMAEXP} and~\eqref{E:RHSSIGMAEXP} - these are the factors containing either $\RY_N$ or $\vY_N$. 
We see that 
\begin{align}
\sum_{k+j=N} k \RY_k \mathcal C_j & = N \RY_N \mathcal C_0 + \sum_{\substack{k+j=N \\ k\le N-1}} k \RY_k \mathcal C_j  \notag \\
&= - N\chi_0^2(1+\k)^2 \RY_N + \sum_{\substack{k+j=N \\ k\le N-1}} k \RY_k \mathcal C_j, \label{E:LHSEXP2}
\end{align}
where we have used $\mathcal C_0 = - \chi_0^2 (1+\k)^2$. Note that the term $\mathcal C_N$ does not contribute a copy of $\RY_N$ on the right-hand side above
since $k=0$ when $j=N$.
Similarly, the expression~\eqref{E:RHSSIGMAEXP} can be split into
\begin{align}\label{E:RHSEXP2}
-2 \chi_0^2 (1+\k)^2\RY_N+ 2\sum_{\substack{k+\ell+m+n=N \\ \ell\le N-1}} (\chi^2)_k \RY_\ell (\RY_m-\vY_m)\left((\vY^2)_n+2\k \vY_n + \k^2\delta_{0n}\right).
\end{align}
Note that we have repeatedly used the assumption $\RY_0=\RY_1 = 0$. Equating~\eqref{E:LHSEXP2} and~\eqref{E:RHSEXP2} we
obtain the recursive relation~\eqref{E:SIGMARECURSIVE}.

{\em Proof of~\eqref{E:VYRECURSIVE}.}
We multiply~\eqref{E:LWEQN2} by $(1-\k)Y\mathcal C$ and expand the two sides of the equation by analogy to the above.
We obtain formally
\begin{align}\label{E:VPRIMEEXP}
(1-\k)\vY' Y\mathcal C = (1-\k)\sum_{N=0}^\infty\left(\sum_{k+j=N} k \vY_k \mathcal C_j\right) Y^N.
\end{align}
Upon extracting the leading order term in the $N$-th Taylor coefficient on the right-hand side of~\eqref{E:VPRIMEEXP}, by analogy to~\eqref{E:LHSEXP2}
we obtain
\begin{align}
(1-\k)\sum_{k+j=N} k \vY_k \mathcal C_j= 
- (1-\k)N\chi_0^2(1+\k)^2 \vY_N + (1-\k)\sum_{\substack{k+j=N \\ k\le N-1}} k \vY_k \mathcal C_j. \label{E:1}
\end{align} 
We now turn our attention to the right-hand side.
Observe that formally the $N$-th Taylor coefficient in the expansion of $-(\vY+\k)(1-3\vY)\mathcal C  = \left(3\vY^2-(1-3\k)\vY-\k\right)\mathcal C $ equals
\begin{align}\label{E:VYEXP1}
\sum_{k+\ell=N} \left(3(\vY^2)_k\mathcal C_\ell-(1-3\k)\vY_k\mathcal C_\ell-\k\delta_{0k}\mathcal C_\ell\right).
\end{align}
To single out the leading order contribution we use~\eqref{E:CNLEADINGORDER} and thus~\eqref{E:VYEXP1} can be rewritten in the form
\begin{align}
& 2(1+\k) \mathcal C_N + \left(3(\vY^2)_N-(1-3\k)\vY_N\right)\mathcal C_0 
+ \sum_{\substack{k+\ell=N \\ k,\ell \le N-1}} \left(3(\vY^2)_k-(1-3\k)\vY_k\right) \mathcal C_\ell \notag\\
& = -8\k(1+\k)\chi_0^2 \RY_N - 3(1+\k)^2(3+\k)\chi_0^2 \vY_N - 4(1+\k)^3\chi_0 \chi_N \notag\\
& \ \ \ \ +  \sum_{\substack{k+\ell=N \\ k,\ell \le N-1}} \left(3(\vY^2)_k-(1-3\k)\vY_k\right) \mathcal C_\ell + 2(1+\k)\mathcal C_{N,1} 
- 3(1+\k)^2 \chi_0^2 \sum_{\substack{k+\ell=N \\ k,\ell \le N-1}} \vY_k \vY_\ell . \label{E:2}
\end{align}
By analogy to~\eqref{E:RHSSIGMAEXP} and~\eqref{E:RHSEXP2} we also check that the $N$-the Taylor coefficient in the expansion of 
$-2(1+\k)\chi^2\vY(\vY+\k)^2(\RY-\vY)$ formally corresponds to
\begin{align}
-2(1+\k)\sum_{k+m+n=N} (\chi^2)_k  (\RY_m-\vY_m)\left((\vY^3)_n+2\k (\vY^2)_n + \k^2\vY_n\right). 
\end{align}
After isolating the leading order coefficients, the above expression can be rewritten in the form
\begin{align}
&- 2(1+\k)^3 \chi_0^2 \RY_N+4(1+\k)^2(2+\k)\chi_0^2 \vY_N + 4(1+\k)^3 \chi_0\chi_N \notag \\
& -2(1+\k)\sum_{\substack{k+m+n=N \\ k,m,n\le N-1}} (\chi^2)_k  (\RY_m-\vY_m)\left((\vY^3)_n+2\k (\vY^2)_n + \k^2\vY_n\right) \notag\\
&+ 2(1+\k)\chi_0^2 \left(\sum_{\substack{k+\ell+n=N\\ k,\ell,n\le N-1}} \vY_k\vY_\ell\vY_n + 2\k \sum_{\substack{k+\ell=N \\ k,\ell \le N-1}} \vY_k\vY_\ell \right)
 + 2(1+\k)^3 \sum_{\substack{k+\ell=N \\ k,\ell \le N-1}} \chi_k \chi_\ell. \label{E:3}
\end{align}
Claim~\eqref{E:VYRECURSIVE} now follows from~\eqref{E:1}~\eqref{E:2}, and~\eqref{E:3}.

{\em Proof of~\eqref{E:CHIYRECURSIVE}.} The claim follows by substituting the formal expansions for $\chi$ and $\vY$ into~\eqref{E:CHIEQN},  comparing the coefficients and using $\vY_0=1$.
\end{proof}

\begin{theorem}[Local extension]\label{T:LE}
There exists and $0<\k_0\ll1$ and a $Y_0<0$ such that for  any $\k\in(0,\k_0]$ 
there exists a unique analytic-in-$Y$ solution to~\eqref{E:LDEQN2}--\eqref{E:CHIEQN} on the interval $(-|Y_0|,|Y_0|)$
such that 
\begin{align}
\lim_{Y\to 0} \frac{\RY(Y)}{Y^2} & = \RY_2, \label{E:SIGMACOND}\\
\vY(0) & = 1, \ \ \vY'(0)=\vY_1,  \label{E:VCOND}\\
\chi(0) & =1, \label{E:CHICOND}\\
\mathcal C(Y) & <0, \ \ Y\in (-|Y_0|,|Y_0|),
\end{align}
where $\RY_2, \vY_1$ are constants given by Lemma~\ref{L:WSTAYSBELOWONE} and Proposition~\ref{P:PRECISEW}.
Moreover,
\begin{enumerate}
\item[{\em (a)}] 
There exists a $\delta>0$ such that 
\begin{align}\label{E:PARTA}
\RY(Y_0)>\delta,  \ \ \chi(Y_0)>\delta, \ \  \text{ and } \ \ \vY(Y_0)<\frac1\delta \ \ \text{ for all } \ \ \k\in(0,\k_0].
\end{align}
\item[{\em (b)}]
There exists a constant $c_0>0$ such that
\be\label{E:GAMMASETUP0}
\chi^2 \vY(\RY Y^{-2})^{1+\e}>1+c_0, \ \ \text{ for all } \ Y\in [Y_0,0] \ \ \text{ and all } \ \k\in(0,\k_0].  
\ee
\item[{\em (c)}]
There exists a constant $\tilde\delta>0$ such that 
\begin{align}\label{E:SIGMAOVERV}
\frac1{100}>\frac{\RY}{\vY}\Big|_{Y=Y_0} > \tilde\delta, \ \ \k\in(0,\k_0].
\end{align}
\item[{\em (d)}]
There exists a constant $c_{\vY}>0$ such that 
\begin{align}
\vY(Y_0)>1+ c_{\vY},  \ \ \text{ for all } \ \ \k\in(0,\k_0].
\end{align}
\end{enumerate}
\end{theorem}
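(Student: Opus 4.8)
The plan is to establish Theorem~\ref{T:LE} in two stages: first the existence and analyticity of the local extension via the Taylor coefficient recursion, and then the four quantitative properties (a)--(d), which follow from continuity in $\k$ and the already-established $\k$-uniform bounds at $x=+\infty$ recorded in Lemma~\ref{L:WSTAYSBELOWONE} and Proposition~\ref{P:PRECISEW}.

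\emph{Step 1: Solving the recursion and proving convergence.} First I would observe that the system~\eqref{E:LDEQN2}--\eqref{E:CHIEQN} is singular at $Y=0$ because of the factors $\tfrac1Y$, but the leading-order behaviour~\eqref{E:SIGMAASYMPTOTICS}--\eqref{E:SMALLVASYMPTOTICS} dictates the initial data: $\RY_0=\RY_1=0$, $\RY_2$ fixed by Lemma~\ref{L:WSTAYSBELOWONE}, $\vY_0=1$, $\vY_1$ fixed by Proposition~\ref{P:PRECISEW}, and $\chi_0=1$ by the gauge choice~\eqref{E:ANORM}. The recursive relations~\eqref{E:SIGMARECURSIVE}--\eqref{E:CHIYRECURSIVE} then determine $(\RY_N,\vY_N,\chi_N)$ for $N\ge3$ (respectively $N\ge2$ for $\chi$) uniquely: the $3\times3$ linear system for the top-order coefficients has a triangular-like structure whose determinant is, up to nonzero constants, proportional to $(N-2)(N-1)N$, hence invertible for $N\ge3$. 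I would then run the standard majorant/induction argument — identical in spirit to the proof of Theorem~\ref{T:SONICLWP} and Theorem~\ref{T:SONICLWPORIGIN} — to show $|\RY_N|,|\vY_N|,|\chi_N|\le C^{N}/N^{\alpha}$ for suitable $C>1$, $\alpha\in(1,2)$, so the series converge on some interval $(-|Y_0|,|Y_0|)$ and can be differentiated termwise; plugging back in verifies they solve~\eqref{E:LDEQN2}--\eqref{E:CHIEQN}. The sign $\mathcal C(Y)<0$ near $Y=0$ follows from $\mathcal C_0=-\chi_0^2(1+\k)^2<0$ and continuity. I would also need to note that the radius $|Y_0|$ can be chosen $\k$-independent, which is where one tracks that all the constants in the majorant bounds depend only on the $\k$-uniform data $\RY_2\in(1+c_1,1+c_2)$, $\vY_1<-\tilde c$.

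\emph{Step 2: The quantitative properties.} For part (a), at $Y=0$ we have $\RY/Y^2\to\RY_2>1$, $\vY(0)=1$, $\chi(0)=1$; since the solution and its derivatives depend continuously on $\k$ and on $Y$, and the interval $[Y_0,0]$ is $\k$-independent, shrinking $|Y_0|$ if necessary gives $\RY(Y_0)>\delta$, $\chi(Y_0)>\delta$, $\vY(Y_0)<1/\delta$ uniformly. For part (b), the quantity $\chi^2\vY(\RY Y^{-2})^{1+\e}$ at $Y=0$ equals $\RY_2>1+c_1$ by~\eqref{E:ASIMP}, so again by uniform continuity on $[Y_0,0]$ it stays above $1+c_0$ after possibly shrinking $|Y_0|$. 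For part (c), near $Y=0$ we have $\RY/\vY\sim\RY_2 Y^2\to 0$, so $\RY/\vY|_{Y=Y_0}<\tfrac1{100}$ trivially; the lower bound $\RY/\vY|_{Y_0}>\tilde\delta$ holds because $\RY(Y_0)>\delta$ and $\vY(Y_0)<1/\delta$ from (a). For part (d), I would use the $\chi$-equation~\eqref{E:CHIEQN} and the $\vY$-equation: from~\eqref{E:SMALLVASYMPTOTICS} with $\vY_1<-\tilde c<0$ one reads that for $Y<0$ small, $\vY(Y)=1+\vY_1 Y+o(Y)>1$ since $\vY_1 Y>0$; choosing $Y_0<0$ small enough (again $\k$-independently, using $\vY_1<-\tilde c$) gives $\vY(Y_0)>1+c_{\vY}$ for a uniform $c_{\vY}>0$.

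\emph{Anticipated main obstacle.} The routine-but-delicate part is Step 1's convergence estimate: the source terms $\mathcal U_N$, $\mathcal V_N$ in~\eqref{E:MUNDEF}--\eqref{E:MVNDEF} involve nested convolution sums and the Fa\`a di Bruno expansion of $(\RY Y^{-2})^{-\e}$, so the combinatorial bookkeeping needed to close the induction is substantial — though it is structurally the same as the sonic-point analysis already carried out, so I would cite that machinery rather than redo it. The genuinely new subtlety is ensuring \emph{$\k$-uniformity} of $|Y_0|$ and of all constants $\delta,c_0,\tilde\delta,c_{\vY}$: this hinges on the fact that $\RY_2$ and $\vY_1$ are bounded away from their degenerate values \emph{uniformly} in $\k\in(0,\k_0]$ (Lemma~\ref{L:WSTAYSBELOWONE}, Proposition~\ref{P:PRECISEW}, and~\eqref{E:VONEUNIFORM}), so I would make a point of feeding those uniform bounds into the majorant constants at every stage. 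Everything else is continuity and the sign $\mathcal C_0<0$.
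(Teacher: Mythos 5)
Your proposal follows essentially the same route as the paper's proof: the paper likewise appeals to the combinatorial/majorant machinery of Theorem~\ref{T:SONICLWP} for Step~1 (citing the factors $(N-2)$ and $(N-1)$ in~\eqref{E:SIGMARECURSIVE}--\eqref{E:VYRECURSIVE} as the reason $\RY_2,\vY_1$ must be prescribed), and deduces parts (a)--(d) from the Taylor expansion at $Y=0$ together with the $\k$-uniform bounds on $\RY_2$ and $\vY_1$, exactly as you do. Your explicit tracking of the triangular structure of the $3\times 3$ coefficient system and your emphasis on feeding the $\k$-uniform bounds~\eqref{E:PIWX2LB} and~\eqref{E:VONEUNIFORM} through the majorant constants is the same argument, just spelled out in slightly more detail.
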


\begin{proof}
The proof of existence of a local real-analytic solution is analogous to the proof of Theorem~\ref{T:SONICLWP}.
We observe that the occurrence of the factors $(N-2)$ and $(N-1)$ in~\eqref{E:SIGMARECURSIVE} and~\eqref{E:VYRECURSIVE}
respectively reflects the fact that $\RY_2$ and $\vY_1$ must be prescribed in order to consistently solve the ensuing recursive relations
between the higher-order coefficients. We leave out the details, as they are similar to the ideas of the proof of Theorem~\ref{T:SONICLWP}.

\noindent
{\em Proof of part {\em (a)}.}
Estimates~\eqref{E:PARTA} are a trivial consequence of the leading order Taylor expansions from Remark~\ref{R:TAYLORR} the uniform-in-$\k$ positivity of $\RY_2$ and negativity of $\vY_1$.

\noindent
{\em Proof of part {\em (b)}.}
Since there exists a constant $\tilde c_0$ such that $\lim_{Y\to0}\left(\chi^2 \vY (\RY Y^{-2})^{1+\e}\right)>1+\tilde c_0$ for all $\k\in(0,\k_0]$,
by Taylor expansion around $Y=0$ we can ascertain that there is a constant $c_0>0$ and an interval $[-|Y_0|,|Y_0|]$, $Y_0<0$, such that~\eqref{E:GAMMASETUP0} holds. 

\noindent
{\em Proof of part {\em (c)}.} Observe that $\frac{\RY}{\vY}\Big|_{Y=0} =0$. Since $\RY$ is locally bounded from below and $\vY$ locally increases to the left, the bound follows from
continuity.

\noindent
{\em Proof of part {\em (d)}.}
By the Taylor expansion~\eqref{E:VEXPLOC} around $Y=0$ and the uniform-in-$\k$ bound~\eqref{E:VONEUNIFORM} we can guarantee that $\vY'<0$ on $[Y_0,0)$ and that $c_{\vY}$ can be chosen independently of $\k$. 
\end{proof}

\begin{remark}
Note that $Y_0$ is $\k$-independent, which plays an important role in our proof of the existence of outgoing null-geodesics from the scaling origin ${\mathcal O}$.
The constants $\RY_2$ and $\vY_1$  act as initial conditions for the system~\eqref{E:LDEQN2}--\eqref{E:CHIEQN} to extend the solution to the left.   
\end{remark}

It is a priori possible that the solution constructed by Theorem~\ref{T:LE} does not coincide with the RLP solution constructed for $x\in(0,\infty)$.
The overlapping region $(0,|Y_0|)$ expressed in the $x$-coordinate is given by $(X_0,\infty)$, where
$X_0 = \left(\tilde r\left(|Y_0|^{-(1+\e)}\right)\right)^{-\frac1{1+\e}}$.
 As it is less clear how to apply the standard uniqueness theorem 
for the problem phrased in the $Y$-coordinate, we shall revert to the coordinate
\be\label{E:XXCOORD}
X = x^{-\frac{1}{1+\e}}, \ \ x = X^{-1-\e},
\ee
and reduce the question of uniqueness to the standard ODE theory.

 
\begin{proposition}[Uniqueness of the RLP-extension]
In the region $Y>0$, the solution constructed in Theorem~\ref{T:LE} coincides with the RLP-type solution emanating from the sonic point $\bar x_\ast$ constructed in Section~\ref{S:FARFIELD}.
\end{proposition}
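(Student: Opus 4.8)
The plan is to reduce the uniqueness question, which is awkward in the $Y$-coordinate because $Y=0$ is a coordinate singularity for the $(\RY,\vY,\chi)$-system, to a standard ODE uniqueness statement in the regular coordinate $X=x^{-\frac1{1+\e}}$ introduced in~\eqref{E:XXCOORD}. First I would write down the dynamical system satisfied by the RLP-type solution $(W,\R)$ (equivalently $(\vY,\RY)$ via~\eqref{E:RWDEF} and~\eqref{E:PIDEF}) as a function of $X$ rather than $x$. Since $\frac{dX}{dx}=-\frac1{1+\e}x^{-\frac1{1+\e}-1}$, the chain rule turns~\eqref{E:RODE}--\eqref{E:WODE} into an autonomous-in-form $2\times2$ (or $3\times3$, once $\chi$ is included) first-order system whose right-hand side is real-analytic on a neighbourhood of $X=0$ \emph{provided} the sonic denominator $B$ (equivalently $\mathcal C$) does not vanish there. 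By Lemma~\ref{L:WSTAYSBELOWONE}, Proposition~\ref{P:PRECISEW} and the asymptotics in Remark~\ref{R:TAYLORR}, along the RLP solution $B$ stays strictly negative as $x\to\infty$, i.e. $\mathcal C<0$ as $X\to0^+$; and by part~(b) of Theorem~\ref{T:LE} the locally-extended solution also has $\mathcal C<0$ on $[Y_0,0]$. So on the overlap region both solutions live in the open set where the vector field is analytic and Lipschitz.

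Next I would check that the two solutions satisfy the \emph{same} initial data at some interior point of the overlap, or equivalently the same limiting data as $X\to0^+$. The overlap in $x$ is $(X_0,\infty)$, i.e. in $X$ it is $(0,X_*)$ for $X_*=|Y_0|^{1/(1+\e)?}$-type constant (more precisely $X_0=(\tr(|Y_0|^{-(1+\e)}))^{-1/(1+\e)}$ as stated). Rather than comparing at $X=0$, where the coefficient is singular in the original $x$-variable, I would pick any $X_1\in(0,X_0)$ and argue that both the RLP solution continued from the sonic point and the Theorem~\ref{T:LE} solution pass through the same point $(\RY,\vY,\chi)(X_1)$: indeed the Theorem~\ref{T:LE} solution was \emph{constructed} via the Taylor expansion~\eqref{E:SIGMAEXPLOC}--\eqref{E:CHIEXPLOC} whose coefficients $\RY_2,\vY_1$ and $\chi_0=1$ are precisely the ones read off from the far-field asymptotics of the RLP solution in Lemma~\ref{L:ROUGHASYMPTOTICS} and Proposition~\ref{P:PRECISEW} (see Remark~\ref{R:TAYLORR}). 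Both functions therefore satisfy the same analytic ODE in $X$ on $(0,X_0)$ and have the same asymptotic expansion to all orders as $X\to0^+$; since an analytic solution of an ODE with analytic right-hand side is determined by its germ at a point (or by matching a full asymptotic series at a regular-singular-type endpoint), they agree on the whole overlap, hence on all of $Y>0$.

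The cleanest way to phrase the last step is: extend the analytic vector field of the $X$-system across $X=0$ (it is analytic there because $\mathcal C(0)<0$), so that the solution of Theorem~\ref{T:LE}, written in the $X$-chart for $X>0$, and the RLP solution, written in the $X$-chart, are both solutions of one and the same analytic ODE on an interval containing $0$; they share the value at $X=0$ (namely $\RY/Y^2\to\RY_2$, $\vY\to1$, $\chi\to1$, and $\vY'(0)=\vY_1$ fixes the derivative data that the system needs), so by the standard uniqueness theorem for analytic ODE they coincide. The main obstacle I anticipate is bookkeeping the change of variables carefully enough to see that the transformed system is genuinely regular (non-singular right-hand side) at $X=0$ — one must verify that the apparent $\frac{1-3W}{x}$ and $\frac1Y$ singular terms, after multiplication by the Jacobian factor $\frac{dx}{dX}\sim X^{-2-\e}$ and after incorporating the $\chi$-equation, recombine into something bounded, using the precise leading-order rates $1-W\sim |\vY_1|x^{-(1-\k)/(1+\k)}$ and $\R\sim \RY_2^{(1-\k)/(1+\k)}x^{-2(1-\k)/(1+\k)}$. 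Once that regularity is established, the uniqueness is immediate, so I would present the coordinate computation first and then invoke Picard--Lindelöf / analytic uniqueness as a one-line conclusion.
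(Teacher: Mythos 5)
Your proposal takes essentially the same route as the paper: pass to the regular coordinate $X=x^{-\frac{1}{1+\e}}$, check that the sonic denominator stays bounded away from zero in the overlap, match the data at $X=0$ against the far-field asymptotics, and invoke standard ODE uniqueness. The one step you tuck away as ``bookkeeping'' is actually the crux, and your framing of it is not quite right: the non-vanishing of $B$ (equivalently $\mathcal C<0$) on the overlap is \emph{not} sufficient to make the vector field for $(\R,W)$ analytic or Lipschitz near $X=0$, because the solution limits to $(\R,W,X)=(0,1,0)$ and the factor $\R^{-\e}$ appearing in $B$ is not even continuous at $\R=0$ for $\e>0$. So ``the transformed system is real-analytic provided $B\neq 0$'' does not hold with $(\R,W)$ as the unknowns; one must also rescale the \emph{dependent} variables, not just the independent one. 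The paper does this by setting $\R(x)=X^2\bar\R(X)$ and $W(x)=1+X\bar W(X)$, verifies directly that the resulting system $\frac{d\bar\R}{dX}=K_2(X;\bar\R,\bar W)$, $\frac{d\bar W}{dX}=(1+\e)K_3(X;\bar\R,\bar W)$ has Lipschitz right-hand sides near $X=0$ (the $\R^{-\e}$ factor becomes $X^{-2\e}\bar\R^{-\e}$, which combines with the remaining powers of $X$ to yield a regular expression), and only then applies standard uniqueness; the change of variables $X\mapsto Y$ being a local diffeomorphism then transfers the conclusion. Your observation about the rates $1-W\sim x^{-(1-\k)/(1+\k)}$, $\R\sim x^{-2(1-\k)/(1+\k)}$ is exactly what motivates these scalings, so you are one short step from the complete argument — but that step needs to be made explicit, and the $B\neq 0$ sufficiency claim corrected, before the proof is watertight.
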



\begin{proof}
Introduce the unknowns
\begin{align}\label{E:RWXNORM}
\R(x)= X^2 \bar \R(X), \ \ W(x) = 1+ X \bar W(X).
\end{align}
A direct calculation gives
\begin{align}
B(x) & = X^{-2-2\e} \left(-(1+\k)^2 -2(1+\k)X\bar W+ X^2K_1(X;\bar \R,\bar W)\right), \\
 K_1(X;\R,W) &:= \bar \R^{-\e}  - (1-\k) 
 \bar W^2 - 4\k \bar \R - 4\k X\bar \R\bar W.
\end{align}
Using~\eqref{E:XXCOORD} and~\eqref{E:RODE}--\eqref{E:WODE}, we may further compute
\begin{align}
\frac{d}{dX}\R & = \R' \frac{dx}{dX} \notag\\
& = 2X \frac{(1+\k)\bar \R\left(1+\k+X\bar W\right)\left(X^2\bar \R-1-X\bar W\right)}{-(1+\k)^2 -2(1+\k)X\bar W+ X^2K_1(X;\bar \R,\bar W)} \notag\\
& = :2X\bar \R + X^2 K_2(X,\bar \R,\bar W), 
\end{align}
where it is easy to check that the function $K_2(X,\bar \R,\bar W)$ is Lipschitz.
An analogous calculation based on~\eqref{E:WODE} then gives
\begin{align}
\frac{d}{dX}W  &= W'  \frac{dx}{dX}  \notag\\
& = (1+\e)\left[\frac2X +3\bar W + \frac2{X(1+\k)} \frac{\left(1+X\bar W\right)\left(1+\k+X\bar W\right)(X^2\bar \R-1-X\bar W)}{1 + \frac2{1+\k} X\bar W - \frac{X^2}{(1+\k)^2}K_1}\right] \notag\\
& =:  (1+\e)\left[\frac2X +3\bar W  - \frac2X +\frac1{1+\e}\bar W + X K_3(X;\bar \R,\bar W) \right] \notag\\
 & = \bar W + (1+\e)X K_3(X;\bar \R,\bar W),
\end{align}
where it is easy to check that the function $K_3(X,\bar \R,\bar W)$ is Lipschitz.
Recalling~\eqref{E:RWXNORM}, we conclude that 
\begin{align}\label{E:BARRWSYSTEM}
\frac{d}{dX}\bar \R = K_2(X;\bar \R,\bar W), \ \ \frac{d}{dX}\bar W =(1+\e)  K_3(X;\bar \R,\bar W).
\end{align}
Therefore, the dynamical system~\eqref{E:BARRWSYSTEM} is regular at $X=0$ and must coincide with the RLP-solution emanating from the sonic point $X_\ast = \xs^{-\frac{1}{1+\e}}$. 
Since the mapping $X\to Y$ is smooth and invertible locally around $X=0$, the claimed uniqueness statement follows.
\end{proof}






\subsection{Maximal extension}



By analogy to Lemma~\ref{L:BALGEBRA}, we factorise
the denominator $\mathcal C$ into
\begin{align}
\mathcal C = (1-\k)\left(\mathcal \F[Y;\RY,\chi] - \chi \vY\right)\left( \mathcal H[Y;\RY,\chi] + \chi\vY\right),
\end{align}
where
\begin{align}\label{E:FDEFNEW}
\mathcal \F [Y;\RY,\chi]= \mathcal \F  & : = -\e(1 +\RY)\chi + \sqrt{\e^2(1 +\RY)^2 \chi^2+  \k \chi^2 
+ \frac{\left(\RY Y^{-2}\right)^{-\eta}Y^2}{1-\k}}, \\
\mathcal H[Y;\RY,\chi]= \mathcal H & : = \mathcal \F  + 2\e (1+\RY)\chi.
\end{align}
Clearly, for any fixed $Y$ and $\RY$ , $\mathcal \F [Y, \RY]$ is the solution of the equation $\mathcal C=0$ viewed as a quadratic equation in $\chi \vY$.
Just like in the proof of Lemma~\ref{L:BALGEBRA}, it can be checked that 
\begin{align}
\mathcal \F ' & = - \frac{4\k \chi \mathcal \F +\e (\RY Y^{-2})^{-\e-1}}{2(1-\k)\mathcal \F +4\k \chi(1+\RY)}\RY' \notag \\
& \ \ \ \ 
+ \frac{-4\k \chi'(1+\RY)\mathcal \F -2(\k^2-\k)\chi\chi'+\frac{2\e}{Y}(\RY Y^{-2})^{-\e-1}\RY+2(\RY Y^{-2})^{-\e}Y}
{2(1-\k)\mathcal \F +4\k \chi(1+\RY)} . \label{E:MATHCALFDER}
\end{align}

Our next goal is to prove a {\em global extension} result, which is shown later in Theorem~\ref{T:GE}. To that end define 
\be\label{E:BARYDEF}
\yms : = \inf\left\{Y<0\, \Big|  \text{$\exists$ a smooth solution to~\eqref{E:LDEQN2}--\eqref{E:CHIEQN} and} \ \vY(Y)>1, \  \mathcal C(Y)<0, \ \chi(Y)>0\right\}.
\ee


\begin{lemma}\label{L:PRELIM}
Let $(\chi,\vY,\RY)$ be a local-in-$Y$ solution to~\eqref{E:LDEQN2}--\eqref{E:CHIEQN}. 
\begin{enumerate}
\item[{\em (a)}]
Then for all $Y\in(\yms,0]$ the following identities hold:
\begin{align}
(\RY^{1+\e}\vY)' &= - \RY^{1+\e} \frac{(\vY+\k)(1-3\vY)}{(1-\k)Y}, \label{E:DVIDENTITY} \\
(\chi^2\RY^{1+\e}\vY)' & = \frac{\chi^2 \RY^{1+\e}}{(1-\k)Y}\left(\vY^2 +\vY(1+3\k)-\k\right). \label{E:CHIVSIGMAID}
\end{align}
\item[{\em (b)}]
For all $Y\in(\yms,0)$ we have the bounds
\begin{align}\label{E:SIGMAVUPPER}
0<\RY<\vY.
\end{align}
\end{enumerate}
\end{lemma}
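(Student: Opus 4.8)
\textbf{Proof strategy for Lemma~\ref{L:PRELIM}.}

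The plan is to prove part (a) by direct computation from the ODE system~\eqref{E:LDEQN2}--\eqref{E:CHIEQN}, and then use the resulting identities together with the initial data from Theorem~\ref{T:LE} to establish part (b) by a continuity/invariant-region argument. For~\eqref{E:DVIDENTITY}, I would differentiate the product $\RY^{1+\e}\vY$ using the product rule, substitute $\RY'$ from~\eqref{E:LDEQN2} and $\vY'$ from~\eqref{E:LWEQN2}, and observe that the two terms containing the singular denominator $\mathcal C$ cancel exactly: the $\RY'$ contribution carries a factor $(1+\e)\RY^{\e}\vY \cdot \frac{2\chi^2}{Y}\frac{\RY(\vY+\k)^2(\RY-\vY)}{\mathcal C}$ while the $\vY'$ contribution carries $\RY^{1+\e}\cdot\left(-\frac{2(1+\k)\chi^2}{(1-\k)Y}\frac{\vY(\vY+\k)^2(\RY-\vY)}{\mathcal C}\right)$; since $1+\e = \frac{1+\k}{1-\k}$ one checks $(1+\e)\RY^{\e}\vY\cdot\RY = \RY^{1+\e}\cdot\frac{1+\k}{1-\k}\vY$, so the cancellation is exact, leaving only the non-singular term $-\RY^{1+\e}\frac{(\vY+\k)(1-3\vY)}{(1-\k)Y}$. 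This is the same mechanism already used in the Schwarzschild-coordinate computations (compare~\eqref{E:PIWX2}), just transcribed to the adapted coordinates.

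For~\eqref{E:CHIVSIGMAID}, I would multiply~\eqref{E:DVIDENTITY} through by $\chi^2$ and add the correction term coming from $(\chi^2)' = 2\chi\chi'$, using $\chi' = \frac{1-\vY}{(1-\k)Y}\chi$ from~\eqref{E:CHIEQN}. Thus
\[
(\chi^2\RY^{1+\e}\vY)' = \chi^2(\RY^{1+\e}\vY)' + 2\chi\chi'\RY^{1+\e}\vY = \frac{\chi^2\RY^{1+\e}}{(1-\k)Y}\left[-(\vY+\k)(1-3\vY) + 2(1-\vY)\vY\right],
\]
and a routine expansion of the bracket gives $-(\vY+\k)(1-3\vY)+2\vY-2\vY^2 = \vY^2 + (1+3\k)\vY - \k$, which is exactly the claimed right-hand side. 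I would present this as one short displayed computation.

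For part (b), the bound $\RY>0$ follows immediately from writing~\eqref{E:LDEQN2} as $(\log\RY)' = \frac{2\chi^2}{Y}\frac{(\vY+\k)^2(\RY-\vY)}{\mathcal C}$, so $\RY$ cannot cross zero on the interval where the right-hand side is finite. The genuine content is $\RY < \vY$. By Theorem~\ref{T:LE}(c) we have $\RY/\vY < \frac1{100} < 1$ at $Y=Y_0$, and near $Y=0$ we have $\RY = \RY_2 Y^2 + o(Y^2)$ while $\vY = 1 + o(1)$, so $\RY < \vY$ holds on the overlap interval. The plan is to show the set $\{\RY < \vY\}$ is relatively open and closed in $(\yms,0)$, or equivalently that at any first crossing point $Y_c$ with $\RY(Y_c) = \vY(Y_c)$ one necessarily has $(\RY - \vY)'(Y_c) > 0$, which (since we integrate toward decreasing $Y$, i.e. $Y_c < 0$ and we are moving left) prevents the crossing — here the sign bookkeeping with $Y<0$ is the thing to get right. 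At $Y_c$ the singular terms in $\RY'$ and $\vY'$ both vanish (they carry the factor $\RY - \vY$), so $\RY'(Y_c) = 0$ and $\vY'(Y_c) = -\frac{(\vY+\k)(1-3\vY)}{(1-\k)Y_c}$; since $\vY(Y_c) = \RY(Y_c) > 0$ and, on this part of the extension, $\vY > 1 > \frac13$, the numerator $(\vY+\k)(1-3\vY)$ is negative, and $Y_c < 0$, so $\vY'(Y_c) < 0$, giving $(\RY-\vY)'(Y_c) = -\vY'(Y_c) > 0$. Reading this in the direction of decreasing $Y$ means $\RY - \vY$ was negative just to the right of $Y_c$ and would have to become positive, contradicting that $Y_c$ is the first crossing as we decrease $Y$; hence no crossing occurs and $\RY < \vY$ throughout $(\yms,0)$.

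The main obstacle I anticipate is purely one of sign discipline: because the independent variable $Y$ ranges over negative values and the extension proceeds to the left (decreasing $Y$), every monotonicity statement must be read with the correct orientation, and the definition~\eqref{E:BARYDEF} of $\yms$ already builds in $\vY > 1$, $\mathcal C < 0$, $\chi > 0$ on the relevant interval, so one must be careful to invoke exactly these standing assumptions (in particular $\vY > 1$, which is what makes $1 - 3\vY < 0$) rather than anything stronger. Modulo that care, both parts are short and mechanical given the ODE system and Theorem~\ref{T:LE}.
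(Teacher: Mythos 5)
Your proposal is correct and follows essentially the same route as the paper: part (a) by the same cancellation of the $\mathcal C$-denominators (the paper derives it by summing the $(1+\e)$-multiple of $\RY'/\RY$ with $\vY'/\vY$; you differentiate the product directly, which is the same computation), and part (b) by the same first-crossing contradiction using $\vY>1$ from the definition~\eqref{E:BARYDEF} of $\yms$ to get the sign of $(\RY-\vY)'$ at the crossing. The only quibble is the prose at the end of part (b): the derivative condition does not make $\RY-\vY$ ``become'' positive to the right of $Y_c$; rather, $(\RY-\vY)'(Y_c)>0$ together with $(\RY-\vY)(Y_c)=0$ forces $\RY-\vY>0$ just to the right of $Y_c$, which directly contradicts the standing hypothesis that $\RY-\vY<0$ on $(Y_c,0)$ — the calculation is right, just state the contradiction that way.
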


\begin{proof}
{\em Proof of part {\em (a)}.}
Dividing~\eqref{E:LDEQN2} by $\RY$ and~\eqref{E:LWEQN2} by $\vY$ and the summing the $(1+\e)$-multiple of the first equation with the second,  we obtain~\eqref{E:DVIDENTITY}.
Using~\eqref{E:DVIDENTITY} and~\eqref{E:CHIEQN} we then obtain
\begin{align}
(\chi^2\RY^{1+\e}\vY)' &= \frac{\chi^2 \RY^{1+\e}}{(1-\k)Y} \left(2\vY(1-\vY) -(\vY+\k)(1-3\vY)\right) \notag \\
& = \frac{\chi^2 \RY^{1+\e}}{(1-\k)Y}\left(\vY^2 +\vY(1+3\k)-\k\right). \notag
\end{align}

{\em Proof of part {\em (b)}.}
The strict positivity of $\RY$ in a small open left neighbourhood of $Y=0$ follows from Theorem \ref{T:LE}.  
The global positivity then follows by integrating~\eqref{E:LDEQN2}, which can be rewritten as 
\[
\left(\log\RY\right)' =  \frac{2\chi^2}{Y} \frac{(\vY+\k)^2(\RY-\vY)}{\mathcal C}.
\]
The upper bound $\RY<\vY$ clearly holds at $Y=0$ and in its small neighbourhood due to~\eqref{E:SIGMAASYMPTOTICS}--\eqref{E:SMALLVASYMPTOTICS}. Assume now by contradiction that there exists $\yms<Y_\ast<0$ so that $Y_\ast$ is the infimum over all values of $Y\in(\yms,0]$ such that $\RY(Y)<\vY(Y)$. By continuity obviously $\RY(Y_\ast)=\vY(Y_\ast)$. 
However, by~\eqref{E:LDEQN2}--\eqref{E:LWEQN2} 
$\RY'(Y_\ast) - \vY'(Y_\ast) = 
\frac{(\vY+\k)(1-3\vY)}{(1-\k)Y}>0$,
since $\vY(Y_\ast)>1$ by~\eqref{E:BARYDEF}. This is a contradiction, as this means that $\RY-\vY$ decays locally going to the left of $Y_\ast$.
\end{proof}


\begin{lemma}\label{L:GAMMABOUND}
Let $(\chi,\vY,\RY)$ be a local-in-$Y$ solution to~\eqref{E:LDEQN2}--\eqref{E:CHIEQN}
and let 
\be\label{E:BIGGAMMADEF}
\Gamma_\delta(Y) : = (Y^2)^{1+\e} - (1-\delta)\chi^2 \vY\RY^{1+\e}, \ \ \delta>0.
\ee
Then there exist a $0<\delta<1$ and $0<\k_0\ll1$ sufficiently small such that for all $\k\in(0,\k_0]$ we have
\begin{align}
\Gamma_\delta(Y)< 0, \ \ Y\in(\yms,0).
\end{align}
\end{lemma}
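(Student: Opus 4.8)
The strategy is to derive a first-order differential (in)equality for $\Gamma_\delta$ and exploit its sign structure, exactly in the spirit of the monotonicity lemma (Lemma~\ref{L:JUHILEMMA}) and its analogue for $f$ in Lemma~\ref{L:FLOWERBOUND}. First I would check the boundary behaviour: as $Y\to 0^-$, using~\eqref{E:SIGMAASYMPTOTICS}--\eqref{E:SMALLVASYMPTOTICS} and $\chi(0)=1$, one has $\chi^2\vY\RY^{1+\e}\sim \RY_2 Y^{2+2\e}$ while $(Y^2)^{1+\e}=Y^{2+2\e}$, so $\Gamma_\delta(Y)\sim\bigl(1-(1-\delta)\RY_2\bigr)Y^{2+2\e}$. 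Since $\RY_2>1+c_1$ uniformly in $\k$ by Lemma~\ref{L:WSTAYSBELOWONE}, for $\delta$ chosen small enough (depending only on $c_1$) we get $1-(1-\delta)\RY_2<0$, hence $\Gamma_\delta(Y)<0$ in a small left neighbourhood of $Y=0$. Equivalently this is a restatement of part~(b) of Theorem~\ref{T:LE}: $\chi^2\vY(\RY Y^{-2})^{1+\e}>1+c_0$ near $Y=0$.

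Next I would propagate the sign. Suppose for contradiction there is a first point $Y_1\in(\yms,0)$ (the supremum of such, going leftwards) where $\Gamma_\delta(Y_1)=0$, so $\Gamma_\delta<0$ on $(Y_1,0)$ and $\Gamma_\delta'(Y_1)\le 0$ (since $\Gamma_\delta$ passes from negative values on the right up to $0$ at $Y_1$, as $Y$ decreases $\Gamma_\delta$ is increasing through zero, i.e.\ $\frac{d}{dY}\Gamma_\delta(Y_1)\le 0$). I would compute $\Gamma_\delta'$ using~\eqref{E:CHIVSIGMAID}:
\begin{align}
\Gamma_\delta'(Y) &= 2(1+\e)Y(Y^2)^{\e} - (1-\delta)\frac{\chi^2\RY^{1+\e}}{(1-\k)Y}\bigl(\vY^2+(1+3\k)\vY-\k\bigr). \notag
\end{align}
At $Y=Y_1$ we have $(1-\delta)\chi^2\vY\RY^{1+\e}=(Y_1^2)^{1+\e}$, so $(1-\delta)\chi^2\RY^{1+\e}=(Y_1^2)^{1+\e}/\vY$, and substituting,
\begin{align}
\Gamma_\delta'(Y_1) &= \frac{(Y_1^2)^{1+\e}}{Y_1}\left(2(1+\e) - \frac{\vY^2+(1+3\k)\vY-\k}{(1-\k)\vY}\right)\Big|_{Y=Y_1}. \notag
\end{align}
Since $Y_1<0$, the inequality $\Gamma_\delta'(Y_1)\le 0$ is equivalent to $2(1+\e)(1-\k)\vY \ge \vY^2+(1+3\k)\vY-\k$ at $Y_1$, i.e.\ (using $2(1+\e)(1-\k)=2(1+\k)$, from $1+\e=\frac{1+\k}{1-\k}$) to $2(1+\k)\vY\ge \vY^2+(1+3\k)\vY-\k$, i.e.\ $\vY^2+(\k-1)\vY-\k\le 0$, i.e.\ $(\vY-1)(\vY+\k)\le 0$. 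But $\vY(Y_1)>1$ by the definition~\eqref{E:BARYDEF} of $\yms$ and $\vY+\k>0$, so $(\vY-1)(\vY+\k)>0$ at $Y_1$, giving $\Gamma_\delta'(Y_1)>0$ — a contradiction with $\Gamma_\delta'(Y_1)\le0$. Hence no such $Y_1$ exists and $\Gamma_\delta<0$ throughout $(\yms,0)$.

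The only subtlety — and the main thing to get right — is the direction-of-monotonicity bookkeeping, since the flow runs in the direction of decreasing $Y$: I must be careful that "first crossing as $Y$ decreases" forces the derivative sign $\Gamma_\delta'(Y_1)\le 0$ with the correct convention, and that the factor $1/Y$ (negative) flips inequalities consistently. A secondary point is ensuring the choice of $\delta$ is genuinely uniform in $\k$: this follows because $c_1$ in Lemma~\ref{L:WSTAYSBELOWONE} and $c_0$ in Theorem~\ref{T:LE}(b) are $\k$-independent, so one fixes $\delta=\delta(c_1)$ once and for all and then shrinks $\k_0$ if needed so that the asymptotic estimates near $Y=0$ hold with the stated signs. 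I do not expect any serious analytic obstacle beyond this; the algebraic identity $(\vY-1)(\vY+\k)\le 0 \iff \Gamma_\delta'(Y_1)\le 0$ at a crossing point is the crux and it works out cleanly precisely because $\vY>1$ is exactly the invariant region being propagated.
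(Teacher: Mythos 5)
Your proof is correct and rests on the same core identity as the paper: the algebra $2(1+\k)\vY - \bigl(\vY^2+(1+3\k)\vY-\k\bigr) = -(\vY-1)(\vY+\k)$ together with $\vY>1$ on $(\yms,0)$ is exactly what the paper extracts in~\eqref{E:GAMMAPRIME}, namely $\Gamma_\delta'(Y)=\frac{1}{(1-\k)Y}\bigl[2(1+\k)\Gamma_\delta-(1-\delta)\chi^2\RY^{1+\e}(\vY-1)(\vY+\k)\bigr]$. The paper then concludes via an integrating factor, whereas you phrase it as a first-crossing contradiction; these are the same argument in different packaging, and your sign bookkeeping (the factor $Y_1<0$ flipping the inequality, and the $\k$-independent choice of $\delta$ from $c_0$ or $c_1$) is handled correctly.
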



\begin{proof}

We observe that by~\eqref{E:CHIVSIGMAID},
\begin{align}
\Gamma_\delta'(Y) & = (2+2\e)(Y^2)^\e Y - (1-\delta)(\chi^2 \vY\RY^{1+\e})' \notag \\
& =  (2+2\e)(Y^2)^\e Y- (1-\delta) \frac{\chi^2  \RY^{1+\e}}{(1-\k)Y}\left(\vY^2 +\vY(1+3\k)-\k\right) \notag \\
& = \frac1{(1-\k)Y} \left[2(1+\k)\Gamma_\delta(Y)+ (1-\delta)\chi^2 \RY^{1+\e}\left(-\vY^2+ \vY- \k \vY+ \k \right) \right] \notag \\
& =   \frac1{(1-\k)Y} 
 \left[2(1+\k)\Gamma_\delta(Y)- (1-\delta)\chi^2 \RY^{1+\e}(\vY-1)(\vY+\k) \right].  \label{E:GAMMAPRIME}
\end{align}
Since $\vY>1$ by our assumptions, it follows that 
\[
 (1-\delta)\chi^2 \RY^{1+\e}(\vY-1)(\vY+\k)>0.
\] 
Since $\Gamma_\delta(Y) = (Y^2)^{1+\e} \left(1-  (1-\delta)\chi^2 \vY (\RY Y^{-2})^{1+\e}\right)$ it follows from Theorem~\ref{T:LE}, inequality~\eqref{E:GAMMASETUP0} that we can choose $\delta=\delta(c_0)>0$ such that $\Gamma_\delta<0$ on $[Y_0,0)$.
Using the standard integrating factor argument it then follows from~\eqref{E:GAMMAPRIME} that
\be\label{E:GAMMASIGN}
\Gamma_\delta(Y)<0, \ \ Y\in(\yms,0).
\ee 
 \end{proof}

\begin{lemma}\label{L:SIGMAVBOUND}
Let $(\chi,\vY,\RY)$ be a local-in-$Y$ solution to~\eqref{E:LDEQN2}--\eqref{E:CHIEQN} with the radius of analyticity $(Y_0,-Y_0)$ given by Theorem \ref{T:LE}. 
Then there exists a constant $\tilde\delta>0$ such that 
\begin{align}
\frac{\RY}{\vY}>\tilde\delta \ \ \text{ for all } \ \yms<Y\le Y_0, \ \ 0<\k\le\k_0.
\end{align}
\end{lemma}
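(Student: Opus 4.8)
\textbf{Proof plan for Lemma~\ref{L:SIGMAVBOUND}.}

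The plan is to track the evolution of the ratio $q(Y):=\RY(Y)/\vY(Y)$ as $Y$ decreases from $Y_0$ toward $\yms$, and show it cannot drop to zero. First I would compute $q'$ from the equations~\eqref{E:LDEQN2} and~\eqref{E:LWEQN2}. Dividing~\eqref{E:LDEQN2} by $\RY$ and~\eqref{E:LWEQN2} by $\vY$ and subtracting, one gets a clean expression
\[
\frac{q'}{q} = \left(\log\RY - \log\vY\right)' = \frac{(\vY+\k)(1-3\vY)}{(1-\k)Y\,\vY} + \frac{2\chi^2(\vY+\k)^2(\RY-\vY)}{\mathcal C}\left(\frac1{\RY} + \frac{1+\k}{(1-\k)\vY}\right),
\]
after using the common factor structure. (This is essentially a rearrangement of~\eqref{E:DVIDENTITY}.) The key observations I would exploit are: $\mathcal C<0$ and $\RY-\vY<0$ throughout $(\yms,0)$ by Lemma~\ref{L:PRELIM}(b), so the second term on the right has a definite sign; and $\vY>1$ with $Y<0$, so the first term $\frac{(\vY+\k)(1-3\vY)}{(1-\k)Y\vY}$ is positive (numerator negative since $1-3\vY<-2<0$, denominator negative). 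Thus going to the left (decreasing $Y$) the first term \emph{increases} $q$. The danger is only the second term, whose sign depends on $\mathcal C$ — but by Lemma~\ref{L:GAMMABOUND} we control $\mathcal C$ from below in a quantitative way.

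Concretely, I would use the factorization $\mathcal C = (1-\k)(\mathcal F - \chi\vY)(\mathcal H+\chi\vY)$ together with Lemma~\ref{L:GAMMABOUND}: the bound $\Gamma_\delta<0$, i.e. $(Y^2)^{1+\e} < (1-\delta)\chi^2\vY\RY^{1+\e}$, forces $\mathcal C$ to stay bounded away from zero by a multiple of $\chi^2\vY\RY^{1+\e}$. More precisely, from~\eqref{E:CDEF},
\[
\mathcal C = (\RY Y^{-2})^{-\e}Y^2 - \chi^2\big[(\vY+\k)^2 - \k(\vY-1)^2 + 4\k\vY\RY\big],
\]
and the leading negative term $-\chi^2\vY^2$ dominates for $\k$ small while $(\RY Y^{-2})^{-\e}Y^2 = (Y^2)^{1+\e}\RY^{-\e} < (1-\delta)\chi^2\vY\RY$ by Lemma~\ref{L:GAMMABOUND}. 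So I expect an estimate of the form $\mathcal C < -c\,\chi^2\vY^2$ for a $\k$-independent $c>0$ (shrinking $\k_0$ if needed). Plugging this into the second term of $q'/q$ and using $0<\RY<\vY$, $\vY>1$, one bounds $\left|\frac{2\chi^2(\vY+\k)^2(\RY-\vY)}{\mathcal C}\left(\frac1{\RY}+\frac{1+\k}{(1-\k)\vY}\right)\right| \lesssim \frac1{q}$ up to a constant — and crucially this term, combined with the positive first term, yields a differential inequality of the shape $q' \geq -\frac{C}{|Y|} q - \frac{C}{|Y|}$ (or better, exploiting that the potentially-bad term actually has a helpful sign once one notes $\RY-\vY<0$ and $\mathcal C<0$ make it $>0$). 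A Grönwall-type argument on $[Y,Y_0]$, starting from the initial bound $\frac{\RY}{\vY}\big|_{Y=Y_0}>\tilde\delta$ supplied by Theorem~\ref{T:LE}(c), then gives a uniform lower bound $q(Y)>\tilde\delta'$ on all of $(\yms,Y_0]$.

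The main obstacle I anticipate is pinning down the sign and size of the second term in $q'/q$ uniformly in $\k$ and on the full interval: one must verify that the factor $\frac{\chi^2(\vY+\k)^2(\vY-\RY)}{|\mathcal C|}$ does not blow up faster than $1/q$ near a potential zero of $q$, which is exactly where Lemma~\ref{L:GAMMABOUND}'s quantitative control of $\mathcal C$ (bounding it below by $\sim\chi^2\vY\RY^{1+\e}$, which degenerates precisely as $\RY\to0$) must be balanced against the $1/\RY$ singularity. I would handle this by writing everything in terms of $q$ and $\vY$: since $\RY^{-\e} = (\vY q)^{-\e}$ and $\RY = \vY q$, the bound from Lemma~\ref{L:GAMMABOUND} reads $(Y^2)^{1+\e} < (1-\delta)\chi^2 \vY^{2+\e} q^{1+\e}$, so $|\mathcal C|^{-1} \lesssim (\chi^2\vY^2)^{-1}$ with a constant independent of how small $q$ is, and then the problematic term is $\lesssim \frac{\chi^2\vY^2\cdot\vY}{\chi^2\vY^2}\cdot\frac1{\vY q} = \frac1q$ — i.e. exactly of the form that Grönwall tolerates. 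If the sign works out favorably (which the structure $\mathcal C<0$, $\RY<\vY$ suggests it does), the argument is even cleaner and no Grönwall constant is lost. Either way, a careful but routine estimate closes the proof.
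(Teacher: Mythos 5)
Your high-level plan — track $q=\RY/\vY$ by computing its logarithmic derivative and exploit the sign structure together with $\RY<\vY$, $\mathcal C<0$ — is the right starting point, and is what the paper does. However several errors prevent the argument from closing as written.

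\textbf{(1) The formula for $q'/q$ is wrong.} Dividing~\eqref{E:LDEQN2} by $\RY$ and~\eqref{E:LWEQN2} by $\vY$ gives
\[
\frac{q'}{q} = \frac{(\vY+\k)(1-3\vY)}{(1-\k)Y\vY}+\frac{4\chi^2(\vY+\k)^2(\RY-\vY)}{(1-\k)Y\,\mathcal C},
\]
not the expression you wrote: there is no $1/\RY$ (the $\RY$ in~\eqref{E:LDEQN2} cancels when you divide), and the $1/Y$ factor from~\eqref{E:LDEQN2} is missing. Your spurious $1/\RY$ term is what forces you into the delicate balancing against Lemma~\ref{L:GAMMABOUND} near $q\to 0$; with the correct formula that problem never arises.

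\textbf{(2) The sign analysis is reversed.} With $\vY>1$ and $Y<0$ the first term is $>0$, which makes $q'$ larger — so going to the \emph{left} it pushes $q$ \emph{down}, not up. Conversely, from $\RY-\vY<0$, $\mathcal C<0$, $Y<0$, the second term is $<0$ and is the helpful one. You have these backwards, which is why you then state an incorrect sign for the second term in the parenthetical and set up a Grönwall inequality $q'\ge -\frac{C}{|Y|}q - \frac{C}{|Y|}$ that points the wrong way: to bound $q$ from below as $Y$ decreases you would need an \emph{upper} bound on $q'$ (since $q'>0$ means $q$ drops going left), not a lower bound.

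\textbf{(3) A Grönwall bound cannot close.} Near $\yms$ one has $\vY\sim 1/|Y-\yms|$ (Lemma~\ref{L:LIMAPRIORI2}), so both terms in $q'/q$ blow up like $\vY/|Y|$ and $\int^{Y_0}\vY\, dY'$ diverges. No estimate of the form $|q'/q|\lesssim 1/|Y|$ or $q'\le Cq/|Y|$ can hold; the two terms must be allowed to cancel. Lemma~\ref{L:GAMMABOUND} also does not help here: it gives a \emph{lower} bound on $|\mathcal C|$, hence an \emph{upper} bound on the magnitude of the helpful term, which is the wrong direction for establishing that the helpful term dominates.

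\textbf{What is actually needed is a trapping argument.} The paper writes $\RY'\vY-\RY\vY' = \frac{\RY}{(1-\k)Y\mathcal C}\bar A$ with $\bar A = 4\chi^2\vY(\vY+\k)^2(\RY-\vY) + (\vY+\k)(1-3\vY)\,\mathcal C$, substitutes the definition of $\mathcal C$, and using only $\vY>1$ shows $\bar A<\chi^2(\vY+\k)\vY^2\bigl[4(1+3\k)\RY-\vY\bigr]$. Since $\frac{\RY}{(1-\k)Y\mathcal C}>0$, this gives $(\RY/\vY)'<0$ whenever $\RY/\vY<\frac1{4(1+3\k)}$, i.e.\ $q$ increases going leftward whenever it is below the threshold. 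Combined with~\eqref{E:SIGMAOVERV} ($q(Y_0)>\tilde\delta$, $q(Y_0)<1/100$), $q$ is trapped above $\tilde\delta$. You gesture at this possibility in your final parenthetical, but you do not identify the threshold nor the precise cancellation in $\bar A$ that makes it work, and Lemma~\ref{L:GAMMABOUND} is neither needed nor helpful for this step.
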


\begin{proof}
By~\eqref{E:LDEQN2}--\eqref{E:LWEQN2} we obtain
\begin{align}
\RY'\vY - \RY\vY' & =  \frac{2(2+\e)\chi^2\RY \vY(\vY+\k)^2(\RY-\vY)}{Y\mathcal C} + \frac{\RY(\vY+\k)(1-3\vY)}{(1-\k)Y} \notag \\
& = \frac{\RY}{(1-\k)Y\mathcal C} \Big\{4\chi^2 \vY (\vY+\k)^2(\RY-\vY) \notag \\
& \ \ \ \ 
+(\vY+\k)(1-3\vY)\left(\left(\RY Y^{-2}\right)^{-\eta}Y^2 - \chi^2 \left[(\vY+ \k)^2-\k(\vY-1)^2 + 4\k \vY\RY \right] \right)  \Big\} \notag \\
& = :  \frac{\RY}{(1-\k)Y\mathcal C} \bar A, \label{E:BARA1}
\end{align}
where we used $2(2+\e) = \frac4{1-\k}$.
We now rewrite $\bar A$ to obtain 
\begin{align}
\bar A & = - 4\chi^2 \vY^2 (\vY+\k)^2 - (\vY+\k)(1-3\vY)\chi^2 (\vY+\k)^2  \notag \\
& \ \ \ \ +4\chi^2 \vY\RY (\vY+\k)^2 - 4\k \chi^2 \vY \RY (\vY+\k)(1-3\vY)\notag \\
& \ \ \ \ + (\vY+\k)(1-3\vY)\left(\RY Y^{-2}\right)^{-\eta}Y^2 + \k \chi^2 (\vY+\k)(1-3\vY)(\vY-1)^2 \notag \\
& = \chi^2 (\vY+\k)^2 \left(-\vY^2-\vY -\k + 3\k \vY\right) + 4\chi^2 \vY^2\RY (\vY+\k)(1+3\k) \notag \\
& \ \ \ \ + (\vY+\k)(1-3\vY)\left(\RY Y^{-2}\right)^{-\eta}Y^2 + \k \chi^2 (\vY+\k)(1-3\vY)(\vY-1)^2 \notag \\
& = \chi^2 (\vY+\k) \left\{4 \vY^2 \RY(1+3\k) - (\vY+\k)\left(\vY^2+\vY+\k(1-3\vY)\right)\right\} \notag \\
& \ \ \ \ + (\vY+\k)(1-3\vY)\left(\RY Y^{-2}\right)^{-\eta}Y^2 + \k \chi^2 (\vY+\k)(1-3\vY)(\vY-1)^2 \notag \\
& =  \chi^2 (\vY+\k) \left\{\vY^2 \left[4(1+3\k)\RY-\vY-\k\right] - (\vY+\k)\left(\vY+\k(1-3\vY)\right) \right\} \notag \\
& \ \ \ \ + (\vY+\k)(1-3\vY)\left(\RY Y^{-2}\right)^{-\eta}Y^2 + \k \chi^2 (\vY+\k)(1-3\vY)(\vY-1)^2 \notag \\
& <\chi^2 (\vY+\k) \vY^2\left[4(1+3\k)\RY-\vY\right], \label{E:BARA2}
\end{align}
where we have used $\vY>1$, which in turn gives 
 the bounds $ -(\vY+\k)\left(\vY+\k(1-3\vY)\right)<0$, $(\vY+\k)(1-3\vY)\left(\RY Y^{-2}\right)^{-\eta}Y^2<0$, $ \k \chi^2 (\vY+\k)(1-3\vY)(\vY-1)^2<0$. 
 Recall that $\frac{\RY}{\vY}\Big|_{Y=0}=0$ by~\eqref{E:SIGMACOND}--\eqref{E:VCOND}.
 From~\eqref{E:BARA1} and~\eqref{E:BARA2} it then follows that 
\begin{align}\label{E:SIGMAV}
\left(\frac{\RY}{\vY}\right)' <0 \ \ \text{ if } \ \frac{\RY}{\vY}<\frac1{4(1+3\k)}.
\end{align}

Therefore by Theorem~\ref{T:LE}, inequality~\eqref{E:SIGMAOVERV} and~\eqref{E:SIGMAV}, the ratio $\frac{\RY}{\vY}$ increases to the left of $Y=Y_0$ as long as $\frac{\RY}{\vY}<\frac1{4(1+3\k)}$. If the ratio ever exceeds $\frac1{4(1+3\k)}$ going to the left then it must stay larger than $\frac1{4(1+3\k)}$ as seen by a contradiction argument using~\eqref{E:BARA2}. 
Therefore the claim follows.
\end{proof}


The next lemma establishes the monotonicity of the function $\chi \vY$, and as a consequence the strict lower bound $\vY>1+c$ on $(\yms,0]$ for some $c>0$. The former is a preparatory step to prove that the flow remains supersonic in Lemma~\ref{L:SUPERSONIC}.


\begin{lemma}\label{L:CHIVBOUND}
Let $(\chi,\vY,\RY)$ be a local-in-$Y$ solution to~\eqref{E:LDEQN2}--\eqref{E:CHIEQN} analytic in $(Y_0,-Y_0)$ given by Theorem~\ref{T:LE}. Then there exists an $0<\k_0\ll1$ sufficiently small so that 
\be\label{E:CHIVBOUND}
(\chi\vY)'<0, \ \ \yms<Y\le Y_0.
\ee
Moreover, there exists a constant $c>0$ such that 
\be\label{E:VBOUNDREFINED}
\vY(Y)>1+c,  \ \ \ \yms<Y\le Y_0, \ \ 0<\k\le\k_0.
\ee
In particular, $\inf_{(\yms,0]}\vY(Y)>1$.
\end{lemma}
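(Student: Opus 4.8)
The plan is to prove \eqref{E:CHIVBOUND} and \eqref{E:VBOUNDREFINED} together, propagating them to the left starting from the analyticity interval $(Y_0,-Y_0)$ supplied by Theorem~\ref{T:LE}. First I would record a clean ODE for the product $\chi\vY$. Using $\chi' = \frac{1-\vY}{(1-\k)Y}\chi$ from \eqref{E:CHIEQN} and $\vY'$ from \eqref{E:LWEQN2},
\begin{align}
(\chi\vY)' &= \chi'\vY + \chi\vY' = \frac{\chi\vY(1-\vY)}{(1-\k)Y} - \frac{\chi(\vY+\k)(1-3\vY)}{(1-\k)Y} - \frac{2(1+\k)\chi^3\vY(\vY+\k)^2(\RY-\vY)}{(1-\k)Y\,\mathcal C} \notag \\
&= \frac{\chi}{(1-\k)Y}\Big[\vY(1-\vY) - (\vY+\k)(1-3\vY) - \frac{2(1+\k)\chi^2\vY(\vY+\k)^2(\RY-\vY)}{\mathcal C}\Big]. \notag
\end{align}
Since $Y<0$, $\chi>0$, and $\mathcal C<0$ (by definition of $\yms$ in \eqref{E:BARYDEF}), the sign of $(\chi\vY)'$ is opposite to the sign of the bracket. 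The polynomial part $\vY(1-\vY)-(\vY+\k)(1-3\vY) = 2\vY^2 + \vY(1+3\k-1) - \k = 2\vY^2 + 3\k\vY - \k$ — let me recompute: $\vY - \vY^2 - \vY + 3\vY^2 - \k + 3\k\vY = 2\vY^2 + 3\k\vY - \k$, which is strictly positive for $\vY>1$ and $\k$ small. For the last term, since $\RY<\vY$ by Lemma~\ref{L:PRELIM}(b) and $\mathcal C<0$, the quantity $\frac{(\RY-\vY)}{\mathcal C}>0$, so $-\frac{2(1+\k)\chi^2\vY(\vY+\k)^2(\RY-\vY)}{\mathcal C}<0$. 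Thus the bracket is a competition between a positive polynomial term and a negative term, and I need the bracket to be \emph{positive} to conclude $(\chi\vY)'<0$.

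The key estimate is therefore a lower bound on $\mathcal C$ (equivalently, an upper bound on $|\mathcal C|$) that makes the negative term small. Here I would invoke the factorization $\mathcal C = (1-\k)(\mathcal F - \chi\vY)(\mathcal H + \chi\vY)$ together with Lemma~\ref{L:SIGMAVBOUND}: on $(\yms,Y_0]$ we have $\RY/\vY>\tilde\delta$, so $\RY-\vY = \vY(\RY/\vY - 1)$ and $\vY - \RY<(1-\tilde\delta)\vY$, giving good control on $\mathcal C$ in terms of $\vY^2\chi^2$; more precisely $|\mathcal C|\gtrsim \chi^2\vY\RY^{1+\e}$ via the $\Gamma_\delta<0$ bound of Lemma~\ref{L:GAMMABOUND}, which says $(Y^2)^{1+\e}<(1-\delta)\chi^2\vY\RY^{1+\e}$, hence the $\left(\RY Y^{-2}\right)^{-\eta}Y^2$ piece in $\mathcal C$ is dominated by $\chi^2\vY\RY^{1+\e}$ up to the factor $1-\delta$. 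Combining these, I expect to show $\frac{2(1+\k)\chi^2\vY(\vY+\k)^2(\vY-\RY)}{|\mathcal C|} \le (2-\varepsilon)\vY^2 + O(\k)$ for some $\varepsilon>0$, which is strictly less than the polynomial term $2\vY^2+3\k\vY-\k$ for $\vY>1$ and $\k$ sufficiently small. This would establish \eqref{E:CHIVBOUND} as long as the solution stays in the regime $\vY>1$, $\mathcal C<0$, $\chi>0$.

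Given \eqref{E:CHIVBOUND}, the refined lower bound \eqref{E:VBOUNDREFINED} follows by a bootstrap/continuity argument. From $(\chi\vY)'<0$ on $(\yms,Y_0]$ and $Y<0$, the product $\chi\vY$ is \emph{increasing} as $Y$ decreases, so $\chi(Y)\vY(Y)\ge \chi(Y_0)\vY(Y_0)$ for $Y\le Y_0$. Meanwhile $\chi' = \frac{(1-\vY)}{(1-\k)Y}\chi<0$ when $\vY>1$ (since $Y<0$), so $\chi$ is increasing as $Y$ decreases, i.e. $\chi(Y)\ge\chi(Y_0)$; but I want an \emph{upper} bound on $\chi$ to turn the $\chi\vY$ lower bound into a $\vY$ lower bound. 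Actually the cleaner route: integrating \eqref{E:CHIEQN}, $\log\chi(Y) = \log\chi(Y_0) + \int_Y^{Y_0}\frac{1-\vY}{(1-\k)s}\,ds$; since $\vY>1$ and $s<0$ the integrand is negative (numerator negative, denominator negative $\Rightarrow$ positive — careful: $\frac{1-\vY}{s}$ with $1-\vY<0$, $s<0$ gives positive), so $\int_Y^{Y_0}>0$ as $Y<Y_0<0$, meaning $\log\chi(Y)<\log\chi(Y_0)$... I will sort the sign bookkeeping carefully. The essential point is that $\chi$ is bounded above on $(\yms,Y_0]$ by $\chi(Y_0)$ (or $\lim_{Y\to0}\chi=1$), because as $Y$ decreases $\chi$ decreases while $\vY>1$. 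Hence $\vY(Y) = \frac{(\chi\vY)(Y)}{\chi(Y)}\ge \frac{\chi(Y_0)\vY(Y_0)}{\chi(Y_0)} = \vY(Y_0)>1+c_\vY$ by Theorem~\ref{T:LE}(d), with $c = c_\vY$ independent of $\k$. The statement $\inf_{(\yms,0]}\vY>1$ then combines this with the local expansion \eqref{E:SMALLVASYMPTOTICS} on $[Y_0,0]$, noting $\vY_1<0$ so $\vY>1$ there as well. The main obstacle is the quantitative comparison in the second paragraph — extracting from Lemmas~\ref{L:GAMMABOUND} and~\ref{L:SIGMAVBOUND} a sharp enough upper bound on $|\mathcal C|$ so that the negative term in the bracket is beaten by $2\vY^2+3\k\vY-\k$ uniformly in $\vY>1$; one must be careful that as $\vY\to\infty$ near $\yms$ both sides grow like $\vY^2$, so the constant in front genuinely matters and the $(1-\delta)$ slack from $\Gamma_\delta<0$ is what saves the inequality.
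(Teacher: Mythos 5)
Your strategy is the right one and, in broad outline, matches the paper's own proof: both rely on the sign of the bracket multiplying $\frac{\chi}{(1-\k)Y}$, on Lemma~\ref{L:GAMMABOUND} to control the $(\RY Y^{-2})^{-\eta}Y^2$ part of $\mathcal C$, on Lemma~\ref{L:SIGMAVBOUND} to keep $\RY/\vY$ bounded away from $0$, and on the $\chi$-monotonicity plus $(\chi\vY)'<0$ to convert the product bound into the lower bound $\vY>\vY(Y_0)>1+c_\vY$.

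The one place where your proposal is genuinely incomplete is the quantitative comparison you flag yourself: you assert $|\mathcal C|\gtrsim\chi^2\vY\RY^{1+\e}$ and a bound of the form $(2-\varepsilon)\vY^2+O(\k)$ for the negative term, but this is not how the inequality actually closes, and in this shape it is delicate to make rigorous. The paper instead clears the denominator, writes $(\chi\vY)'=\frac{\chi}{(1-\k)Y\mathcal C}A$ with
\[
A:=(2\vY^2+3\k\vY-\k)\,\mathcal C-2(1+\k)\chi^2\vY(\vY+\k)^2(\RY-\vY),
\]
substitutes the definition of $\mathcal C$, and observes that the $O(1)$-coefficient pieces $\chi^2(\vY+\k)^2\cdot\{\cdot\}$ cancel \emph{exactly}, leaving $\k\chi^2(\vY+\k)^2(\vY-1)(2\vY-1)$. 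After applying $\Gamma_\delta<0$ from Lemma~\ref{L:GAMMABOUND} to the remaining $\RY$-terms one lands on $A<2\chi^2\vY^3(-\delta\RY+2\k\vY)$, and then Lemma~\ref{L:SIGMAVBOUND} finishes: $A<2\chi^2\vY^4(-\delta\tilde\delta+2\k)<0$ once $\k\le\k_0$. So the mechanism is not a constant-factor improvement $(2-\varepsilon)\vY^2$; it is that the leading $\vY^2$-coefficients cancel identically and the residue is $\k$-suppressed, while the $\delta$-gain from $\Gamma_\delta<0$ produces a genuinely negative $-\delta\tilde\delta$ term that survives the $+2\k$ correction. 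Your ``$(1-\delta)$ slack is what saves it'' intuition is correct, but you need the exact cancellation to make it uniform in $\vY$ as $Y\to\yms$, since both sides scale like $\vY^2$ there.

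Minor bookkeeping: with $\vY>1$ and $Y<0$ you have $\chi'=\frac{1-\vY}{(1-\k)Y}\chi>0$ (not $<0$), so $\chi$ \emph{decreases} as $Y$ decreases, hence $\chi(Y)\le\chi(Y_0)$ for $Y\le Y_0$; your integral formula should carry a minus sign, $\log\chi(Y)=\log\chi(Y_0)-\int_Y^{Y_0}\frac{1-\vY}{(1-\k)s}\,ds$. You arrive at the right conclusion, but the two sign slips in the middle of that paragraph happen to cancel rather than being tracked. With the algebraic cancellation in $A$ filled in and the $\chi$-signs tidied, your argument coincides with the paper's.
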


\begin{proof}
By~\eqref{E:LWEQN2}--\eqref{E:CHIEQN} we have
\begin{align}
(\chi \vY)' & = \frac{(1-\vY)\chi \vY}{(1-\k)Y} - \frac{\chi (\vY+\k)(1-3\vY)}{(1-\k)Y} - \frac{2(1+\e)\chi^3 \vY(\vY+\k)^2(\RY-\vY)}{Y\mathcal C} \notag \\
& = \frac{2\chi\vY^2-\k\chi+3\k\vY \chi}{(1-\k)Y}  - \frac{2(1+\e)\chi^3 \vY(\vY+\k)^2(\RY-\vY)}{Y\mathcal C} \notag \\
& = \frac{\chi}{(1-\k)Y\mathcal C} \left\{\left(2\vY^2-\k+3\k\vY\right)\mathcal C - 2(1+\k) \chi^2 \vY(\vY+\k)^2(\RY-\vY)\right\} \notag \\
& =: \frac{\chi}{(1-\k)Y\mathcal C}\, A. \label{E:ADEF}
\end{align}
From~\eqref{E:CDEF} we have
\begin{align}
A & = \left(2\vY^2-\k+3\k\vY\right)\left(\left(\RY Y^{-2}\right)^{-\eta}Y^2 - \chi^2 \left[(\vY+ \k)^2-\k(\vY-1)^2 + 4\k \vY\RY \right] \right)  \notag \\
& \ \ \ \ - 2(1+\k) \chi^2 \vY \RY (\vY+\k)^2 + 2(1+\k) \chi^2 \vY^2(\vY+\k)^2 \notag \\
& =  \chi^2(\vY+\k)^2 \left[-2\vY^2-3\k\vY+\k + 2(1+\k)\vY^2\right]
+ \k \chi^2  \left(2\vY^2-\k+3\k\vY\right) (\vY-1)^2 \notag \\
& \ \ \ \ - 4\k \chi^2 \vY \RY \left(2\vY^2-\k+3\k\vY\right) - 2(1+\k) \chi^2 \vY \RY (\vY + \k)^2
+  \left(2\vY^2-\k+3\k\vY\right)\left(\RY Y^{-2}\right)^{-\eta}Y^2 \notag \\
& = \k \chi^2(\vY+\k)^2 (\vY-1)(2\vY-1) + \k \chi^2  \left(2\vY^2-\k+3\k\vY\right) (\vY-1)^2 \notag \\
& \ \ \ \ - 4\k \chi^2 \vY \RY \left(2\vY^2-\k+3\k\vY\right) - 2(1+\k) \chi^2 \vY \RY (\vY + \k)^2
+  \left(2\vY^2-\k+3\k\vY\right)\left(\RY Y^{-2}\right)^{-\eta}Y^2
\end{align}

By Lemma~\ref{L:GAMMABOUND} there exists a $\delta>0$ such that
$\left(\RY Y^{-2}\right)^{-\eta}Y^2 < (1-\delta) \chi^2 \vY \RY$.
Therefore
\begin{align}
&- 4\k \chi^2 \vY \RY \left(2\vY^2-\k+3\k\vY\right) - 2(1+\k) \chi^2 \vY \RY (\vY + \k)^2
+  \left(2\vY^2-\k+3\k\vY\right)\left(\RY Y^{-2}\right)^{-\eta}Y^2 \notag \\
& <  \chi^2 \vY \RY \left(-4\k\left(2\vY^2-\k+3\k\vY\right) - 2(1+\k)  (\vY + \k)^2 +\left(2\vY^2-\k+3\k\vY\right)(1-\delta)  \right) \notag \\
& =  \chi^2 \vY \RY  \left\{ (-2\delta-10\k)\vY^2 + \left(-12\k^2-4\k(1+\k)+3\k(1-\delta)\right) \vY
+ 4\k^2-2\k^2(1+\k)-\k(1-\delta)\right\} \notag \\
& < -2\delta \chi^2 \vY^3 \RY,
\end{align}
for $\k\le\k_0$ sufficiently small.
Therefore
\begin{align}
A & < \k \chi^2(\vY+\k)^2 (\vY-1)(2\vY-1) + \k \chi^2  \left(2\vY^2-\k+3\k\vY\right) (\vY-1)^2 
-2\delta \chi^2 \vY^3 \RY \notag \\
& = \k \chi^2 (\vY-1)\left(4\vY^3 + (-3+7\k)\vY^2 + (2\k^2-6\k)\vY+ \k-\k^2\right)   -2\delta \chi^2 \vY^3 \RY  \notag \\
& < 4\k \chi^2 \vY^4 -2\delta \chi^2 \vY^3 \RY  \notag \\
& = 2\chi^2 \vY^3\left(-\delta\RY + 2\k \vY\right),  \label{E:ABOUND}
\end{align}
where we have used the bounds $\vY-1<\vY$ and $(-3+7\k)\vY^2 + (2\k^2-6\k)\vY+ \k-\k^2<0$, where the latter follows from the 
assumption $\vY>1$ on $(\yms,0]$. 
Plugging~\eqref{E:ABOUND} into~\eqref{E:ADEF} we obtain the estimate
\begin{align}
(\chi \vY)' &< \frac{2\chi^3 \vY^3 }{(1-\k)Y\mathcal C}\left(-\delta\RY + 2\k \vY\right) 
 =  \frac{2\chi^3 \vY^4 }{(1-\k)Y\mathcal C}\left(-\delta\frac{\RY}{\vY} + 2\k \right)  \notag\\
& <  \frac{2\chi^3 \vY^4 }{(1-\k)Y\mathcal C}\left(-\delta\tilde\delta + 2\k \right) 
 < -\frac{\delta\tilde\delta\chi^3 \vY^4 }{(1-\k)Y\mathcal C}<0, \label{E:CHIVBOUND0}
\end{align}
for $0<\k\le\k_0$ sufficiently small. Here we have crucially used Lemma~\ref{L:SIGMAVBOUND}.

To prove~\eqref{E:VBOUNDREFINED}
we note that by~\eqref{E:CHIVBOUND0}
\begin{align}
\chi(Y)\vY(Y) > \chi(Y_0)\vY(Y_0), \ \ \yms<Y<Y_0
\end{align}
and therefore
\begin{align}
\vY(Y) >  \frac{\chi(Y_0)}{\chi(Y)}\vY(Y_0)\ge \vY(Y_0)>1+ c_{\vY},
\end{align}
where we have used part (d) of Theorem~\ref{T:LE} in the last inequality.
We have also used the bound $\chi(Y)\le \chi(Y_0)$ for $Y\le Y_0$ which follows from $\chi'>0$ on $(\yms, Y_0)$, which in turn follows from the bound $\vY>1$ and~\eqref{E:CHIEQN}. 
\end{proof}


\begin{lemma}\label{L:SUPERSONIC}
Let $(\chi,\vY,\RY)$ be a local-in-$Y$ solution to~\eqref{E:LDEQN2}--\eqref{E:CHIEQN} with the radius of analyticity $(Y_0,-Y_0)$ given by Theorem \ref{T:LE}. 
Then 
\[
\limsup_{Y\to(\yms)^-}\mathcal \, \mathcal C(Y)<0,
\]
in other words  - the flow remains supersonic.
\end{lemma}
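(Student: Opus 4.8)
The plan is to show that $\mathcal{C}$ stays bounded away from $0$ (from below) on the whole interval $(\yms, 0]$, so that the flow cannot become sonic or supersonic-to-subsonic before the terminal point $\yms$. First I would use the factorisation $\mathcal{C} = (1-\k)(\mathcal{F}[Y;\RY,\chi] - \chi\vY)(\mathcal{H}[Y;\RY,\chi] + \chi\vY)$ together with the fact that, by construction, $\mathcal{C}(Y) < 0$ for $Y$ in a neighbourhood of $0$, which forces $\mathcal{F} - \chi\vY < 0$ (equivalently $\chi\vY > \mathcal{F}$) there, since $\mathcal{H} + \chi\vY > 0$ by the positivity of $\chi,\vY$ and $\mathcal{H}$. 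So it suffices to show that $\chi\vY - \mathcal{F} > 0$ cannot degenerate to $0$ as $Y \to (\yms)^-$.

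The key step is to track the sign and size of $\chi\vY - \mathcal{F}$ dynamically. Note that $\mathcal{F} = \mathcal{F}[Y;\RY,\chi]$ is the root of a quadratic in the product $\chi\vY$, and by Lemma~\ref{L:GAMMABOUND} we have $(Y^2)^{1+\e} < (1-\delta)\chi^2\vY\RY^{1+\e}$, i.e. $\left(\RY Y^{-2}\right)^{-\eta}Y^2 < (1-\delta)\chi^2\vY\RY$, which controls the term $\frac{\left(\RY Y^{-2}\right)^{-\eta}Y^2}{1-\k}$ appearing under the square root in~\eqref{E:FDEFNEW}. Combining this with Lemma~\ref{L:SIGMAVBOUND} (so $\RY/\vY > \tilde\delta$) and Lemma~\ref{L:CHIVBOUND} (so $\vY > 1+c$ and $(\chi\vY)' < 0$), one should be able to produce a strict inequality of the form $(\chi\vY)^2 > \mathcal{F}^2 + (\text{strictly positive quantity depending on $\chi,\vY$})$ on all of $(\yms, 0]$, which, since $\chi\vY > \mathcal{F} > 0$, yields a strict gap $\chi\vY - \mathcal{F} \geq c' > 0$. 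Concretely, $(\chi\vY - \mathcal{F})(\chi\vY + \mathcal{F}) = (\chi\vY)^2 - \mathcal{F}^2$, and squaring $\mathcal{F}$ using the defining quadratic $(1-\k)\mathcal{F}^2 + 4\k\chi(1+\RY)\mathcal{F} - \k(1-\k)\chi^2\cdots$ (the analogue of the relation used in Lemma~\ref{L:BALGEBRA}) lets one express $(\chi\vY)^2 - \mathcal{F}^2$ as an algebraic expression in $\chi,\vY,\RY,Y$ which, using $\vY > 1$, the sandwich bound on $\RY/\vY$, and the $\Gamma_\delta < 0$ bound, is bounded below by $\delta$-dependent positive multiple of $\chi^2\vY^2$. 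Since $\chi\vY \geq \chi(Y_0)\vY(Y_0) > 0$ (by $(\chi\vY)' < 0$ and the uniform lower bound from Theorem~\ref{T:LE}) and $\chi\vY + \mathcal{F}$ is uniformly bounded above on $(\yms, Y_0]$ — this requires a cheap upper bound on $\chi$ and $\vY$ away from $\yms$, or else one argues purely with ratios — we conclude $\chi\vY - \mathcal{F}$ cannot vanish, hence $\mathcal{C} = (1-\k)(\chi\vY - \mathcal{F})(\mathcal{H} + \chi\vY) < 0$ with a quantitative gap, and in particular $\limsup_{Y\to(\yms)^-}\mathcal{C}(Y) < 0$.

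The main obstacle I anticipate is the bookkeeping in the algebraic step: showing $(\chi\vY)^2 - \mathcal{F}^2 \geq c\,\chi^2\vY^2$ cleanly. The quantity $\mathcal{F}^2$ involves the cross term $-2\e(1+\RY)\chi\cdot\sqrt{\cdots}$, so one does not get a polynomial identity directly; instead one should use the quadratic relation satisfied by $\mathcal{F}$ to eliminate the radical, reducing to a polynomial inequality in $\chi\vY$, $\chi\RY$, $\chi^2$, and $\left(\RY Y^{-2}\right)^{-\eta}Y^2$. After substituting the $\Gamma_\delta$-bound for the last term and using $\vY > 1$, $\RY < \vY$, $\RY/\vY > \tilde\delta$, the inequality should close for $\k$ sufficiently small, exactly paralleling the structure of the arguments in Lemmas~\ref{L:GAMMABOUND} and~\ref{L:CHIVBOUND}. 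A secondary subtlety is ensuring the lower bound on $\chi\vY - \mathcal{F}$ does not itself degenerate at $\yms$ (where $\vY,\RY \to \infty$ and $\chi \to 0$): here one leans on the fact that the bound is in terms of $\chi^2\vY^2$ divided by $\chi\vY + \mathcal{F}$, and $\mathcal{F}$ grows at most like $\chi\vY$ up to the gap, so the ratio stays bounded below — but this should be double-checked against the blow-up rates, and if needed one argues by contradiction: if $\mathcal{C}(Y_n) \to 0$ along some $Y_n \to (\yms)^-$, feed this into~\eqref{E:CHIVBOUND0} or~\eqref{E:GAMMAPRIME} to derive an inconsistency with the monotonicity of $\chi\vY$ and $\Gamma_\delta$.
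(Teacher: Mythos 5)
Your approach is correct and genuinely different from the paper's. The paper computes $\mathcal{F}'$ explicitly via~\eqref{E:MATHCALFDER}, combines it with the bound on $(\chi\vY)'$ from Lemma~\ref{L:CHIVBOUND}, and runs a contradiction: if $\mathcal{F}-\chi\vY\to0$ then $\frac{1}{Y\mathcal{C}}$ blows up in~\eqref{E:CONTRY4}, forcing $\mathcal{F}'-(\chi\vY)'>0$ near $\yms$, contradicting the assumed limit. Your proposal is instead a direct quantitative estimate: use $\Gamma_\delta<0$ (Lemma~\ref{L:GAMMABOUND}), $\RY<\vY$ (Lemma~\ref{L:PRELIM}), $\vY>1$, and the uniform positive lower bound on $\chi\vY$ (Lemma~\ref{L:CHIVBOUND}) to show $\mathcal{C}$ stays strictly below $0$. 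This does work, and can in fact be streamlined by substituting the $\Gamma_\delta$-bound and $\RY<\vY$ directly into~\eqref{E:CDEF} without passing through the $\mathcal{F}$-$\mathcal{H}$ factorisation: one obtains $\mathcal{C}<\chi^2\bigl[(1-\delta-4\k)\vY\RY-(1-\k)\vY^2-4\k\vY+\k(1-\k)\bigr]<-\delta\chi^2\vY^2$ for $\k$ small, and then $\chi\vY\geq\chi(Y_0)\vY(Y_0)>0$ closes the argument in one line. Your worry about degeneration as $Y\to\yms$ (where $\chi\to0$, $\vY\to\infty$) is real but resolvable exactly as you suggest with the ratio trick: from $(\chi\vY)^2-\mathcal{F}^2\geq c\,\chi^2\vY^2$ and $\mathcal{F}\leq\chi\vY$ one gets $\chi\vY-\mathcal{F}\geq\tfrac{c}{2}\chi\vY$, which is bounded below; the upper bound on $\chi\vY+\mathcal{F}$ is not actually needed. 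Note also that you invoke Lemma~\ref{L:SIGMAVBOUND} ($\RY/\vY>\tilde\delta$) but it is not required for this estimate; the one-sided bound $\RY<\vY$ suffices. Your approach buys a shorter argument that avoids the derivative formula for $\mathcal{F}$ entirely, at the cost of being less robust to potential sign ambiguities that the paper's dynamical argument sidesteps.
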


\begin{proof}
Using~\eqref{E:MATHCALFDER} and~\eqref{E:CHIVBOUND}, and the bounds 
\[
- \frac{4\k \chi \mathcal \F +\e (\RY Y^{-2})^{-\e-1}}{2(1-\k)\mathcal \F +4\k \chi(1+\RY)}\RY'>0, 
\ \ 
-2(\k^2-\k)\chi\chi'>0,
\]
we get
\begin{align}
\mathcal \F ' - (\chi \vY)'  > \frac{\delta\tilde\delta\chi^3 \vY^4 }{(1-\k)Y\mathcal C} 
+  \frac{
-4\k \chi'(1+\RY)\mathcal \F  
+(2\e+2)(\RY Y^{-2})^{-\e} Y}
{2(1-\k)\mathcal \F +4\k \chi(1+\RY)}. \label{E:CONTRY0}
\end{align}

Since $(\RY Y^{-2})^{-\e}Y^2 = (1-\k)\mathcal \F ^2 + 4\k \mathcal \F (1+\RY)\chi - \k(1-\k) \chi^2$, it follows that 
\[
(\RY Y^{-2})^{-\e}Y^2 <  (1-\k)\mathcal \F ^2+ 4\k \mathcal \F (1+\RY)\chi < \mathcal \F \left(2(1-\k)\mathcal \F + 4\k (1+\RY)\chi\right).
\]
Therefore
\begin{align}
\lv  \frac{(2\e+2)(\RY Y^{-2})^{-\e} Y}
{2(1-\k)\mathcal \F +4\k \chi(1+\RY)} \rv \le \frac{C\mathcal \F }{|Y|}. \label{E:CONTRY1}
\end{align}
From~\eqref{E:CHIEQN} and the bound $\vY>1$ we have the rough bound $|\chi'|\le C\frac{\vY\chi}{|Y|}$. Therefore
\begin{align}
\lv\frac{-4\k \chi'(1+\RY)\mathcal \F }{2(1-\k)\mathcal \F +4\k \chi(1+\RY)} \rv 
\le \frac{C\k \vY\chi(1+\RY)\mathcal \F }{|Y|(2(1-\k)\mathcal \F +4\k \chi(1+\RY))}
\le\frac{C \vY \mathcal \F }{|Y|}.\label{E:CONTRY2}
\end{align}
Using~\eqref{E:CONTRY1}--\eqref{E:CONTRY2} in~\eqref{E:CONTRY0} we obtain
\begin{align}
\mathcal \F ' - (\chi \vY)'  > \frac{\delta\tilde\delta\chi^3 \vY^4 }{(1-\k)Y\mathcal C} 
- C\frac{\mathcal \F  (1+ \vY)}{|Y|} = \chi \vY^2\left(\frac{\delta\tilde\delta\chi^2\vY^2 }{(1-\k)Y\mathcal C}- C\frac{\frac{\mathcal \F }{\chi \vY}(\frac1{\vY}+1)}{|Y|}\right).
\label{E:CONTRY4}
\end{align}
Assume now that $\lim_{Y\to(\yms)^-} (\mathcal \F  - \chi \vY)=0$. In that case $\lim_{Y\to (\yms)^-}\mathcal C = 0$ and it is clear from~\eqref{E:CONTRY4}
that $\mathcal \F ' - (\chi \vY)' $ is strictly positive in some right neighbourhood of $\yms$. Here we use the uniform positivity of $\chi \vY$ on $(\yms,0]$, which follows from Lemma~\ref{L:CHIVBOUND}. A contradiction.
\end{proof}


We have shown that the flow remains supersonic to the left of $Y=0$ and therefore the only obstruction to the global existence of the solution is the finite-time blow-up of the unknowns. We shall show that this is precisely the case.
The intuition is that right-hand side of~\eqref{E:LWEQN2} will be, in a suitable sense, dominated by the first term on the right-hand side, which will lead to the blow-up of $\vY$ through a Riccati-type argument.


{\em Proof of Theorem~\ref{T:GE}.}
Let $\alpha>0$ be a positive constant to be specified later.
We rewrite~\eqref{E:LWEQN2} in the form
\begin{align}\label{E:VALPHA}
\vY' =  -\frac{(\vY+\k)(1-(3-2\alpha)\vY)}{(1-\k)Y} +\frac{2(\vY+\k)}{(1-\k)Y\mathcal C}\left(\alpha \vY \mathcal C-(1+\k)\chi^2\vY(\vY+\k)(\RY-\vY)\right).
\end{align}
We focus on the term 
\[
E: = \alpha \vY \mathcal C-(1+\k)\chi^2\vY(\vY+\k)(\RY-\vY).
\]
By~\eqref{E:CDEF} we have
\begin{align}
E & = \alpha \vY \RY^{-\e}(Y^2)^{1+\e}
-\alpha \vY(1-\k)\chi^2\vY^2 -4\k \alpha \vY^2 \chi^2  \notag \\
& \ \ \ \ + \alpha \vY \chi^2(\k-\k^2) - 4\k \alpha \chi^2 \vY^2 \RY \notag \\
& -(1+\k)\chi^2 \vY \RY (\vY+\k) +(1+\k)\chi^2 \vY^2 (\vY+\k).
\end{align}
We let 
\be\label{E:ALPHADEF2}
\alpha = \frac{1+\bar\delta}{1-\k}
\ee
where $\bar\delta>0$ is a constant to be specified later. After regrouping terms in $E$ above we obtain
\begin{align}
E & = \chi^2 \vY^2\left(-(1+\bar\delta) \vY - 2\e (1+\bar\delta)+ (1+\k)(\vY+\k)+\frac{\k (1+\bar\delta)}{\vY}\right) \notag \\
& \ \ \ \ + \frac{1+\bar\delta}{1-\k} \vY \RY^{-\e}(Y^2)^{1+\e}- 2\e(1+\bar\delta) \chi^2 \vY^2 \RY-(1+\k)\chi^2 \vY \RY (\vY+\k)  \notag \\
& < \chi^2 \vY^2\left(- (\bar\delta-\k) \vY - 2\e (1+\bar\delta) + \k + \k^2+ \frac{\k (1+\bar\delta)}{\vY} \right) \notag\\
&  \ \ \ \ + \chi^2 \vY^2 \RY\left( \frac{1+\bar\delta}{1-\k}(1-\delta) - 2\e(1+\bar\delta)-(1+\k) \right),\label{E:TOWARDSR}
\end{align}
where we have used Lemma~\ref{L:GAMMABOUND} in the last inequality. It is now clear that with the choice
\begin{align}
\bar\delta \le \frac12\delta,
\end{align}
there exists an $0<\k_0\ll1$ sufficiently small, so that both expressions on the right-most side of~\eqref{E:TOWARDSR} are strictly negative for all $0<\k\le\k_0$. Here we use $\vY>1$.

We conclude therefore from~\eqref{E:VALPHA} and~\eqref{E:ALPHADEF2} that
\begin{align}
\vY' \le -\frac{(\vY+\k)(1-\frac{1-3\k-2\bar\delta}{1-\k}\vY)}{(1-\k)Y}  = -\frac{1-3\k-2\bar\delta}{1-\k} \ \frac{(\vY+\k)(\frac{1-\k}{1-3\k-2\bar\delta} - \vY)}{(1-\k)Y}. \label{E:VPRIMER}
\end{align}
Now choose $\bar\delta$ sufficiently small (but independent of $\k$) so that $\vY(Y)>\frac{1-\k}{1-3\k-2\bar\delta}$ for all $Y\in(\yms,Y_0)$, which is possible due to~\eqref{E:VBOUNDREFINED}. Let now
\[
K: = \frac{1-3\k-2\bar\delta}{1-\k}. 
\]
We conclude from~\eqref{E:VPRIMER} that
\begin{align}\label{E:RICC}
\vY' \le - C \frac{(\vY+\k)(K\vY-1)}{|Y|} \le - a \frac{\vY(K\vY-1)}{|Y|} \ , \ \ \yms < Y < Y_0,
\end{align}
for some universal constant $a>0$.
Inequality~\eqref{E:RICC} is a Riccati-type differential inequality and leads to finite $Y$ blow-up of $\vY$. We provide here the standard argument for the sake of completeness.
Upon multiplying $\vY$ by $K$ and then redefining $a>0$, we may assume without loss of generality that $K=1$. Divide~\eqref{E:RICC} (with $K=1$) by
 $\vY(\vY-1)$ and express both sides as an exact derivative to conclude
 \begin{align}
 \log\left(\left(1-\frac1\vY\right)|Y|^{-a}\right)' \le 0, \ \ Y\le Y_0.
 \end{align}
 Upon integration this yields the bound
 \begin{align}\label{E:VBLOWUP}
\vY \ge \frac1{1 -  C{|Y|^a}}, \ \ Y<Y_0.
 \end{align}
 and therefore $\vY$ necessarily blows up as $Y\to \tilde Y^+$ for some $-\infty<\tilde Y<Y_0<0$.
 By Lemma~\ref{L:SIGMAVBOUND} this also implies that $d$ blows up as $Y\to\tilde Y^+$.
 
 We next want to show that $\lim_{Y\to\tilde Y^+}\chi(Y)=0$ and therefore $\tilde Y = \yms$. We note that for any $Y\in(\tilde Y,0]$ we necessarily have $\chi(Y)>0$, which follows from~\eqref{E:CHIEQN}. Since $\chi'>0$ by~\eqref{E:CHIEQN} and the bound $\vY>1$ it follows that $\chi$ decreases to the left of $Y=0$ and the limit
 \begin{align}
 \chi_\ast : = \lim_{Y\to\tilde Y^+}\chi(Y)\ge0
 \end{align}
 exists. Assume by the way of contradiction that $\chi_\ast>0$. We look more closely at the leading order behaviour of the right-hand side of~\eqref{E:LWEQN2} on approach to the blow-up point $\tilde Y$. Since 
 \be\label{E:SIGMAV2WAYS}
 0<\tilde\delta < \frac{\RY}{\vY} <1
 \ee
 by Lemmas~\ref{L:PRELIM} and~\ref{L:SIGMAVBOUND}, and the assumption $\chi_\ast>0$
 it is easily seen that 
 \begin{align}\label{E:EFF}
 \vY' = \frac{3\vY^2}{(1-\k)Y} + \frac{2(1+\k)\vY^3(\RY-\vY)}{Y\left((1-\k)\vY^2+4\k\vY\RY\right)} + \mathcal R,
 \end{align}
 where $\mathcal R$ has the property 
 \[
 \lim_{Y\to \tilde Y^+}\frac{\mathcal R(Y)}{ \frac{2(1+\k)\vY^3(\RY-\vY)}{Y\left((1-\k)\vY^2+4\k\vY\RY\right)} } = 0.
 \]
 From~\eqref{E:SIGMAV2WAYS} and~\eqref{E:EFF} it is now clear that there exist constants $0<\kappa_1<\kappa_2$ such that 
 \begin{align}
\frac{\kappa_1}{Y} < \frac{\vY'}{\vY^2} < \frac{\kappa_2}{Y}, \ \ Y\in(\tilde Y, \tilde Y+\beta),
 \end{align}
 for some constant $\beta>0$. Integrating the above differential inequalities over an interval $[Y_0,Y]\subset (\tilde Y, \tilde Y+\beta)$ and letting $Y_0\to\tilde Y$, we conclude
 \begin{align}\label{E:NEWVBOUNDS}
 \frac{1}{\kappa_2 |\tilde Y-Y| + O(|\tilde Y-Y|^2)} \ge \vY(Y) \ge  \frac{1}{\kappa_1 |\tilde Y-Y| + O(|\tilde Y-Y|^2)} 
 \end{align}
 in a possibly smaller open right neighbourhood of $\tilde Y$. By~\eqref{E:CHIEQN} we have $(\log\chi)'=\frac{1-\vY}{Y}$, which together with~\eqref{E:NEWVBOUNDS} shows that there exist some positive constants $0<\tilde\kappa_1<\tilde\kappa_2$ such that
 \begin{align}\label{E:CHIEFF}
\tilde\kappa_1 < \left(\log \chi\right)'|\tilde Y-Y| < \tilde\kappa_2
 \end{align}
  in a small open right neighbourhood of $\tilde Y$.  Integrating~\eqref{E:CHIEFF} we conclude that 
  \begin{align}
  \lim_{Y\to\tilde Y^+}\chi(Y) = 0,
  \end{align}
  which contradicts the assumption $\chi_\ast>0$. It follows that in particular $\tilde Y = \yms$.
%
\prfe


\begin{corollary}[Uniformity-in-$\k$]\label{C:UNIF}
There exist a constant $A>0$ and $0<\k_0\ll1$ such that for all $\k\in(0,\k_0]$ we have the uniform bounds
\begin{align}
\frac1A< |\yms| < A,
\end{align}
where $-\infty<\yms <0$ is the maximal existence interval to the left from Theorem~\ref{T:GE}. 
\end{corollary}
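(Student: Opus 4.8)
\textbf{Proof proposal for Corollary~\ref{C:UNIF}.} The plan is to track the $\k$-dependence through the two pieces of the extension: first the local extension interval $(Y_0,-Y_0)$ guaranteed by Theorem~\ref{T:LE}, and then the Riccati blow-up argument from the proof of Theorem~\ref{T:GE}. The key point is that every quantitative input used in the construction of the extension was, by design, made uniform in $\k$ for $\k\in(0,\k_0]$. In particular, Theorem~\ref{T:LE} asserts that $Y_0$ is $\k$-independent; the initial data $\RY_2$ and $\vY_1$ at $Y=0$ satisfy the uniform bounds $1+c_2>\RY_2>1+c_1$ and $\vY_1<-\tilde c<0$ (Lemma~\ref{L:WSTAYSBELOWONE}, Proposition~\ref{P:PRECISEW}); and parts (a)--(d) of Theorem~\ref{T:LE} provide $\k$-uniform control of $\RY(Y_0)$, $\chi(Y_0)$, $\vY(Y_0)$, the sonic quantity in~\eqref{E:GAMMASETUP0}, the ratio $\RY/\vY$ at $Y_0$, and the strict lower bound $\vY(Y_0)>1+c_{\vY}$.

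\emph{Upper bound $|\yms|<A$.} First I would revisit inequality~\eqref{E:RICC} in the proof of Theorem~\ref{T:GE}: it was shown that $\vY'\le - a\,\vY(K\vY-1)/|Y|$ on $(\yms,Y_0)$ with $K=\frac{1-3\k-2\bar\delta}{1-\k}$ and some universal $a>0$, leading via integration to the bound~\eqref{E:VBLOWUP}, namely $\vY\ge \frac1{1-C|Y|^a}$ for $Y<Y_0$. The constants $a$, $C$ in this chain depend only on $\bar\delta$ (chosen independently of $\k$), on $K$ (which lies in a fixed compact subinterval of $(0,1)$ for all $\k\in(0,\k_0]$), and on the value $\vY(Y_0)$, which is uniformly bounded away from the relevant threshold by Theorem~\ref{T:LE}(d). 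Since $\vY$ must blow up by the time $|Y|$ reaches the value where $1-C|Y|^a=0$, i.e.\ $|Y|=C^{-1/a}$, and since these are $\k$-uniform constants, we obtain $|\yms|\le C^{-1/a}=:A_1$ for all $\k\in(0,\k_0]$.

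\emph{Lower bound $|\yms|>1/A$.} Here the idea is that the solution cannot blow up before leaving the analyticity neighbourhood, and in fact not before some definite distance beyond $Y_0$. On $[Y_0,0]$ the solution is real-analytic with $\k$-uniformly bounded coefficients, so $|\yms|\ge |Y_0|$, and $|Y_0|$ is $\k$-independent; this already gives $|\yms|\ge |Y_0|=:1/A_2$. Taking $A=\max\{A_1,A_2,1/|Y_0|,|Y_0|^{-1}\}$ (adjusted so both inequalities hold) completes the argument. Alternatively, if one wants a margin strictly beyond $Y_0$, one can use that on a fixed interval $[Y_0-\beta,Y_0]$ the right-hand sides of~\eqref{E:LDEQN2}--\eqref{E:CHIEQN} are controlled by the $\k$-uniform bounds $0<\tilde\delta<\RY/\vY<1$ (Lemmas~\ref{L:PRELIM},~\ref{L:SIGMAVBOUND}), $\vY>1+c$ (Lemma~\ref{L:CHIVBOUND}), and $\mathcal C<0$ staying $\k$-uniformly away from $0$ (Lemma~\ref{L:SUPERSONIC}), so a standard Picard-type continuation gives a $\k$-independent $\beta>0$ with $|\yms|\ge |Y_0|+\beta$.

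The main obstacle I anticipate is purely bookkeeping: making sure that the constants $a$ and $C$ appearing in the Riccati estimate~\eqref{E:RICC}--\eqref{E:VBLOWUP} are genuinely $\k$-uniform. This requires checking that the reductions made in the proof of Theorem~\ref{T:GE}---the choice $\alpha=\frac{1+\bar\delta}{1-\k}$ with $\bar\delta\le\tfrac12\delta$, the rescaling absorbing $K$ into $\vY$, and the strict negativity of the two bracketed expressions on the right-most side of~\eqref{E:TOWARDSR}---all hold with $\k$-independent margins, which in turn rests on $\delta$ from Lemma~\ref{L:GAMMABOUND} being $\k$-uniform (it is, since it was extracted from the $\k$-uniform constant $c_0$ of~\eqref{E:GAMMASETUP0}). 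No new analysis is needed; the corollary is essentially a statement that the existing proof of Theorem~\ref{T:GE} was carried out with uniform constants, and the proof consists of verifying exactly that.
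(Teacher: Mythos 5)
Your proposal is correct and follows essentially the same route as the paper: the upper bound comes from observing that $C$ and $a$ in \eqref{E:VBLOWUP} are $\k$-independent, and the lower bound comes from $\yms<Y_0<0$ with $Y_0$ the $\k$-independent constant of Theorem~\ref{T:LE}. The paper states this in three lines; your version simply spells out the bookkeeping that makes those two observations true.
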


\begin{proof}
Both constants $C$ and $a$ in~\eqref{E:VBLOWUP} can be chosen to be $\k$-independent for $\k_0$ sufficiently small and
so we obtain a uniform upper bound on the maximal time $\yms$. Since by the construction $\yms<Y_0<0$, where $Y_0$ is the
$\k$-independent constant from Theorem~\ref{T:LE}, we conclude the proof.
\end{proof}



\subsection{The massive singularity}\label{SS:MASSIVE}


\begin{definition}[The massive singularity]\label{D:MS}
The hypersurface $\MS$ defined through
\begin{align*}
\MS 
& = \left\{ (\tilde\tau, R) \, \Big| \, R = \frac{\sqrt\k}{|\yms|} \tilde\tau\right\} \setminus \{(0,0)\} 
\end{align*}
is called the \underline{massive singularity}. 
\end{definition}

In this section we compute the precise blow up rates of 
the RLP-solution at the massive singularity. To this end,
it is convenient to introduce the quantity 
\be\label{E:BARCDEF}
\bar{\mathcal C} : = -\chi^{-2}\vY^{-2} \mathcal C =  -\chi^{-2}\vY^{-2} \left(\RY Y^{-2}\right)^{-\eta}Y^2 + \left[(1 + \frac\k{\vY})^2-\k(1-\frac1{\vY})^2 + 4\k  \frac{\RY}{\vY} \right].
\ee


\begin{lemma}\label{L:QLIMIT}
The limits 
$Q_0:=\lim_{Y\to\yms}\frac{\RY(Y)}{\vY(Y)}$
and $\lim_{Y\to\yms}\bar{\mathcal C}(Y)$
exist and are finite. Moreover $0< Q_0\le1$ and
\[
\lim_{Y\to\yms}\bar{\mathcal C}(Y)=1 - \k + 4\k Q_0.
\]
\end{lemma}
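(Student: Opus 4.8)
The plan is to reduce the whole statement to the convergence of the ratio $Q:=\RY/\vY$ as $Y\to(\yms)^+$. First I would record the a priori trapping of $Q$: by Lemma~\ref{L:PRELIM}(b) and Lemma~\ref{L:SIGMAVBOUND} we have $\tilde\delta<Q(Y)<1$ on $(\yms,Y_0]$, so $Q$ stays in a fixed compact subinterval of $(0,1)$ and any limit $Q_0$ automatically satisfies $0<Q_0\le1$. Next I would show $\theta(Y):=(\RY Y^{-2})^{-\eta}Y^2/(\chi^2\vY^2)\to0$: the numerator equals $\RY^{-\eta}(Y^2)^{1+\eta}$, which tends to $0$ because $\RY\to\infty$ (Theorem~\ref{T:GE}), $\eta>0$, and $Y^2\to\yms^2\in(0,\infty)$ (Corollary~\ref{C:UNIF}), while the denominator $\chi^2\vY^2=(\chi\vY)^2$ is bounded below by $(\chi(Y_0)\vY(Y_0))^2>0$ since $\chi\vY$ is monotone on $(\yms,Y_0]$ by Lemma~\ref{L:CHIVBOUND}. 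Rewriting the definition~\eqref{E:BARCDEF} as $\bar{\mathcal C}=-\theta+(1+\k\vY^{-1})^2-\k(1-\vY^{-1})^2+4\k Q$, and using $\vY\to\infty$ and $\theta\to0$, I conclude that $\lim_{Y\to\yms}\bar{\mathcal C}$ exists if and only if $Q$ converges, and then equals $1-\k+4\k Q_0$. I would also note, for later use, that $\bar{\mathcal C}$ is pinned in a fixed interval $[c_1,c_2]$ with $c_1>0$ for all small $\k$ \emph{before} $Q$ is known to converge: the upper bound is immediate, and the lower bound follows from $\theta<(1-\delta)Q$ (Lemma~\ref{L:GAMMABOUND}) together with $Q>\tilde\delta$, rather than from the a priori degenerating inequality $\bar{\mathcal C}>\chi^{-2}\vY^{-2}c_0$ coming from Lemma~\ref{L:SUPERSONIC}.

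The heart of the argument is the convergence of $Q$. From~\eqref{E:LDEQN2}--\eqref{E:LWEQN2} one has the exact identity $Q'=\dfrac{Q\bar A}{\vY(1-\k)Y\mathcal C}$, with $\bar A$ as displayed in the proof of Lemma~\ref{L:SIGMAVBOUND}. Substituting $\mathcal C=-\chi^2\vY^2\bar{\mathcal C}$, $\RY=Q\vY$ and $(\RY Y^{-2})^{-\eta}Y^2=\theta\,\chi^2\vY^2$, and tracking the leading ($\vY^4$) terms of $\bar A$, I would obtain
\begin{equation*}
Q'(Y)=\frac{Q(Y)\,\vY(Y)}{(1-\k)\,|Y|\,\bar{\mathcal C}(Y)}\Bigl[(1+3\k)\bigl(4Q(Y)-1\bigr)+\varpi(Y)\Bigr],\qquad \varpi(Y):=-3\theta(Y)+O\!\bigl(\vY(Y)^{-1}\bigr)\to0.
\end{equation*}
The prefactor is positive and, since $\vY\to\infty$ while $Q$, $|Y|$, $\bar{\mathcal C}$ stay in fixed positive compact intervals, it blows up at $\yms$. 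Because $\vY$ is non-integrable on a left neighbourhood of $\yms$ (e.g.\ by~\eqref{E:NEWVBOUNDS}), a barrier argument then forces the bracket to vanish in the limit: for each $\epsilon>0$, once $|\varpi|<\tfrac12(1+3\k)\epsilon$ the set $\{|4Q-1|\le\epsilon\}$ is invariant under decreasing $Y$ (the sign of $Q'$ on its boundary points inward), and $Q$ must enter it — otherwise $4Q-1$ would keep a fixed sign of size $>\epsilon$ on a whole left neighbourhood of $\yms$, giving $|Q'|\gtrsim\vY$ there, hence $|Q(Y_0)-Q(Y)|\gtrsim\int_Y^{Y_0}\vY\to\infty$, contradicting boundedness of $Q$. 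Letting $\epsilon\to0$ yields $Q(Y)\to\tfrac14=:Q_0\in(0,1]$; in particular the limit exists, and combined with the first paragraph we get $\lim_{Y\to\yms}\bar{\mathcal C}=1-\k+4\k Q_0$.

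The main obstacle is this last step: the monotonicity criterion~\eqref{E:SIGMAV} only yields $Q'<0$ while $Q<\frac{1}{4(1+3\k)}$, which is not enough to conclude by monotone convergence (indeed the effective equilibrium of the $Q$-dynamics near $\yms$ is $\tfrac14$, not $\frac{1}{4(1+3\k)}$, precisely because the terms of $\bar A$ dropped in the derivation of~\eqref{E:SIGMAV} become of the same order $\vY^4$ near $\yms$). Making the barrier argument rigorous thus requires the precise leading-order form of $\bar A$ and the $\k$-uniform bounds $c_1\le\bar{\mathcal C}\le c_2$ near $\yms$ from the first paragraph; everything else is routine bookkeeping of lower-order terms.
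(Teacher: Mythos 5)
Your route is correct but genuinely different from the paper's, and worth comparing. The paper first proves \emph{existence} of $Q_0=\lim_{Y\to\yms}Q$ by a soft argument: it writes $Q'=\alpha_1+\alpha_2\RY^{1-\e}$ where $\alpha_1\ge 0$ (a flow-invariance/monotonicity observation echoing Lemma~\ref{L:SIGMAVBOUND}) and $|\alpha_2\RY^{1-\e}|\lesssim|Y-\yms|^{-1+\e}$ is integrable; the a priori boundedness of $Q$ then forces $\alpha_1\in L^1$, and the Cauchy criterion gives the limit. The value $Q_0=\tfrac14$ is extracted only afterward, in Lemma~\ref{L:LIMAPRIORI2}. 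You instead run a barrier argument on the $Q$-equation, using that the prefactor is $\gtrsim\vY$ while the bracket tends to $(1+3\k)(4Q-1)$, so non-integrability of $\vY$ pins $Q$ at $\tfrac14$; this absorbs the content of Lemma~\ref{L:LIMAPRIORI2} into the present proof. Your decomposition, the a priori bound $\bar{\mathcal C}\ge\delta-\k>0$ obtained from $\theta<(1-\delta)Q\le 1-\delta$ (Lemma~\ref{L:GAMMABOUND}; incidentally this needs only $Q\le1$, not the lower bound $Q>\tilde\delta$ you invoke), the identification $\varpi=-3\theta+O(\vY^{-1})$, and the invariance of $\{|4Q-1|\le\epsilon\}$ once $|\varpi|$ is small all check out against the paper's~\eqref{E:QDER}. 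What the paper's route buys is that it never needs a lower bound on the blow-up rate of $\vY$; what your route buys is that it delivers $Q_0=\tfrac14$ directly.

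There is, however, one step you cannot support as written: you cite~\eqref{E:NEWVBOUNDS} for the divergence of $\int_{\yms}^{Y_0}\vY\,dY$, but that estimate is derived inside the proof of Theorem~\ref{T:GE} \emph{under the contradiction hypothesis} $\chi_\ast>0$, which is refuted two lines later, so it is not available here. The divergence you need is true and can be proved cleanly without it: by~\eqref{E:CHIEQN}, $(\log\chi)'=\tfrac{1-\vY}{(1-\k)Y}$, and since $\chi(Y)\to 0$ as $Y\to\yms$ by Theorem~\ref{T:GE}, $\log\chi(Y)\to-\infty$, whence
\begin{equation*}
\int_{Y}^{Y_0}\frac{\vY(s)-1}{(1-\k)|s|}\,ds=\log\chi(Y_0)-\log\chi(Y)\longrightarrow+\infty ,
\end{equation*}
and since $|s|$ is bounded above on $(\yms,Y_0]$ this gives $\int_{\yms}^{Y_0}\vY\,dY=\infty$. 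Substituting this for the appeal to~\eqref{E:NEWVBOUNDS} closes the argument.
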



\begin{proof}
We let 
\be
\QY:=\frac{\RY}{\vY}.
\ee
We use~\eqref{E:BARA1}--\eqref{E:BARA2} to derive a differential equation for $Q$:
\begin{align}
Q' &= \frac{\RY \vY (\vY+\k)}{(1-\k)|Y|\vY^2\bar{\mathcal C}} \left[(1+3\k)(4Q-1) - \frac1{\vY} \alpha(Y)\right]
+ \frac{\RY^{1-\e}(\vY+\k)(1-3\vY)|Y|^{2+2\e}}{(1-\k)|Y|\chi^2 \vY^4 \bar{\mathcal C}} \notag \\
&=: \alpha_1(Y) + \alpha_2(Y)\RY^{1-\e}, \label{E:QDER}
\end{align}
where the function $\alpha(Y)$ is given by 
\begin{align}
\alpha(Y) = 1-9\k +\frac{7\k-3\k^2}{\vY} - \frac{\k-\k^2}{\vY^2}.
\end{align}
By the invariance of the flow, we know that $(1+3\k)(4Q-1)\ge \frac1{\vY}\alpha(Y)$ and in particular the function $\alpha_1$ is nonnegative on $(\yms,Y_0]$.
Integrating~\eqref{E:QDER} we conclude that 
\be\label{E:QQ}
Q(Y)-Q(\yms) = \int_{\yms}^Y \alpha_1(s)\,ds + \int_{\yms}^Y \RY^{1-\e}\alpha_2(s)\,ds. 
\ee
We observe that $\alpha_2$ is a bounded function on $(\yms,Y_0]$ and $\RY^{1-\e}$ is bounded from above by $\kappa^{1-\e}(\dy)^{-1+\e}$ and therefore
$\lv \int_{\yms}^Y \RY^{1-\e}\alpha_2(s)\,ds \rv \lesssim \frac1\k |Y-\yms|^{\e}$. Since $Q$ is bounded, it follows from~\eqref{E:QQ} that $\alpha_1\in L^1((\yms,Y_0])$.
For any $Y_1,Y_2\in(\yms,Y_0]$ we conclude that 
\[
Q(Y_1)-Q(Y_2) = \int_{Y_1}^{Y_2} \alpha_1(s)\,ds + \int_{Y_1}^{Y_2} \RY^{1-\e}\alpha_2(s)\,ds \lesssim \int_{Y_1}^{Y_2} \alpha_1(s)\,ds + \frac1\k |Y-\yms|^{\e}
\]
In particular, $Q$ is uniformly continuous on $(\yms,Y_0]$ and therefore $Q_0=\lim_{Y\to\yms}Q(Y)$ exists as claimed. Since $0<\frac{\RY}{\vY}\le1$ by Lemma~\ref{L:SIGMAVBOUND} 
we have $Q_0\in(0,1]$.
To show that $\lim_{Y\to\yms}\bar{\mathcal C}(Y)$ exists it follows from \eqref{E:NEWVBOUNDS} and the continuity of $Q$ that we only need to show the continuity of 
$\chi^{-2}\vY^{-2} \left(\RY Y^{-2}\right)^{-\eta}Y^2$ at $\yms$. However, by Lemma~\ref{L:CHIVBOUND} the quantity $\chi^{-2}\vY^{-2}$ is uniformly bounded from above
on approach to $\yms$ and by \eqref{E:NEWVBOUNDS} $ \left(\RY Y^{-2}\right)^{-\eta}Y^2 \lesssim |Y-\yms|^\e$ as $Y\to\yms$, so that $\chi^{-2}\vY^{-2} \left(\RY Y^{-2}\right)^{-\eta}Y^2$
necessarily converges to $0$ as $Y\to\yms$. Therefore $Y\mapsto \bar{\mathcal C}(Y)$ is continuous and it converges to  $1 - \k + 4\k Q_0$ at $\yms$.  
\end{proof}


\begin{lemma}\label{L:LIMAPRIORI2}
There exist positive constants 
$\hat\vY, \hat\RY>0$ such that 
\begin{align}
\vY(Y) & = \frac{\hat\vY}{|Y-\yms|} + o_{Y\to\yms}(|Y-\yms|^{-1}), \label{E:VELLIM}\\
\RY(Y) & = \frac{\hat\RY}{|Y-\yms|} + o_{Y\to\yms}(|Y-\yms|^{-1}). \label{E:SIGMALIM}
\end{align} 
Moreover
\begin{align}\label{E:SVFORMULA}
\hat\RY = \frac{|\yms|}{6}, \ \ \hat\vY = \frac{2|\yms|}{3}.
\end{align}
\end{lemma}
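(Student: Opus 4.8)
The plan is to extract the precise blow-up rates at the massive singularity $\yms$ by refining the Riccati analysis already carried out in the proof of Theorem~\ref{T:GE}. The starting point is Lemma~\ref{L:QLIMIT}, which gives that $Q_0 = \lim_{Y\to\yms}\frac{\RY(Y)}{\vY(Y)}$ exists with $0<Q_0\le 1$, and that $\bar{\mathcal C}(Y)\to 1-\k+4\k Q_0$. First I would substitute these limiting values into equation~\eqref{E:EFF} from the proof of Theorem~\ref{T:GE}, which isolates the dominant balance
\[
\vY' = \frac{3\vY^2}{(1-\k)Y} + \frac{2(1+\k)\vY^3(\RY-\vY)}{Y\left((1-\k)\vY^2+4\k\vY\RY\right)} + \mathcal R,
\]
with $\mathcal R$ of lower order. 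Writing $\RY = Q\vY$ with $Q\to Q_0$, the right-hand side becomes $\frac{\vY^2}{|Y|}\bigl(\Lambda_0 + o(1)\bigr)$ for an explicit constant $\Lambda_0 = \Lambda_0(\k,Q_0)$ obtained by collecting the two leading terms. Integrating the resulting asymptotic Riccati equation $\vY'\sim -\Lambda_0\frac{\vY^2}{|Y-\yms|}$ (note $Y<0$, so $\frac{1}{Y}\to\frac{-1}{|\yms|}$ and one must track signs carefully, as was done to get~\eqref{E:NEWVBOUNDS}) yields $\vY(Y)\sim \frac{\hat\vY}{|Y-\yms|}$ with $\hat\vY = \frac{|\yms|}{\Lambda_0}$ up to the $o(|Y-\yms|^{-1})$ correction. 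This proves~\eqref{E:VELLIM}, and~\eqref{E:SIGMALIM} follows immediately with $\hat\RY = Q_0\hat\vY$.

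The second and more delicate task is to pin down the constants, i.e. to show $Q_0 = \frac14$, which then forces $\Lambda_0 = \frac32$ and hence $\hat\vY = \frac{2|\yms|}{3}$, $\hat\RY = \frac{|\yms|}{6}$, matching~\eqref{E:SVFORMULA}. To identify $Q_0$ I would return to the differential equation~\eqref{E:QDER} for $Q = \RY/\vY$, namely $Q' = \alpha_1(Y) + \alpha_2(Y)\RY^{1-\e}$. In the proof of Lemma~\ref{L:QLIMIT} it was shown that $\alpha_1\in L^1$ and that the $\RY^{1-\e}\alpha_2$ term is integrable near $\yms$, but integrability of $\alpha_1$ together with $\alpha_1\ge 0$ and the boundedness of $Q$ means $\alpha_1(Y)\to 0$ is forced along a sequence; combined with the structure of $\alpha_1$ — which has a factor $(1+3\k)(4Q-1) - \frac1\vY\alpha(Y)$ multiplied by something that does \emph{not} vanish (since $\bar{\mathcal C}\to 1-\k+4\k Q_0\ne 0$ by Lemma~\ref{L:QLIMIT} and $\vY\to\infty$) — this should pin $Q_0$ down. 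Specifically, as $Y\to\yms$, $\vY\to\infty$ kills the $\frac1\vY\alpha(Y)$ term and the prefactor $\frac{\RY\vY(\vY+\k)}{(1-\k)|Y|\vY^2\bar{\mathcal C}} = \frac{Q(\vY+\k)}{(1-\k)|Y|\bar{\mathcal C}}$ stays bounded and bounded away from zero, so $\alpha_1\to 0$ and $\alpha_1\in L^1$ together with the divergence $\int^{\yms}\frac{d s}{|s-\yms|} = \infty$ of the natural weight force $(1+3\k)(4Q_0 - 1) = 0$, i.e. $Q_0 = \frac14$ (to leading order in $\k$; one keeps the $\k$-corrections which vanish as $\vY\to\infty$). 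In the Newtonian limit $\k\to 0$ this recovers $Q_0 = \frac14$, $\hat\vY = \frac23|\yms|$, $\hat\RY = \frac16|\yms|$ cleanly, so the $\k$-terms are genuinely lower-order.

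With $Q_0$ determined, I would compute $\Lambda_0$ explicitly: plugging $\RY = \frac14\vY$ (plus $o(1)$ corrections) into $\frac{3}{1-\k}\vY^2 + \frac{2(1+\k)\vY^3(\RY-\vY)}{(1-\k)\vY^2+4\k\vY\RY}$ and simplifying gives, to leading order in $\k$, $3\vY^2 + 2\vY\cdot\frac{-\frac34\vY}{1} = 3\vY^2 - \frac32\vY^2 = \frac32\vY^2$, hence $\Lambda_0 = \frac32$ and $\hat\vY = \frac{|\yms|}{3/2} = \frac{2|\yms|}{3}$, $\hat\RY = \frac14\cdot\frac{2|\yms|}{3} = \frac{|\yms|}{6}$ as claimed. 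The positivity of $\hat\vY,\hat\RY$ is automatic. The error estimates upgrading the asymptotic equality to~\eqref{E:VELLIM}--\eqref{E:SIGMALIM} with an honest $o(|Y-\yms|^{-1})$ remainder follow by a standard Grönwall/comparison argument applied to $\vY - \frac{\hat\vY}{|Y-\yms|}$, exactly as the bounds~\eqref{E:NEWVBOUNDS} were obtained but now keeping the next-order term; I expect this to be routine once $Q_0$ is known. The main obstacle is the rigorous identification of $Q_0 = \frac14$: one must be careful that $\alpha_1$ being $L^1$ and nonnegative genuinely forces the bracket $(1+3\k)(4Q-1) - \frac1\vY\alpha(Y)$ to tend to zero — this requires knowing the prefactor in~\eqref{E:QDER} stays bounded away from $0$ and $\infty$ near $\yms$, which in turn uses $\bar{\mathcal C}\to 1-\k+4\k Q_0 > 0$ (Lemma~\ref{L:QLIMIT}), the bound $Q\in(\tilde\delta,1)$ (Lemma~\ref{L:SIGMAVBOUND}), and the blow-up rate $\vY\sim\hat\vY/|Y-\yms|$ already established — so the argument is mildly circular and must be organized as a bootstrap: first get $\vY\asymp |Y-\yms|^{-1}$ from~\eqref{E:NEWVBOUNDS}, then deduce $Q_0 = \frac14$, then sharpen to~\eqref{E:VELLIM}.
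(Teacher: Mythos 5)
Your plan is sound in outline and arrives at the correct constants, but it follows a genuinely different route from the paper for the key step of identifying $Q_0=\tfrac14$, and your version of that step contains a real error in the exposition.

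The paper's proof is more direct and sidesteps the bootstrap/circularity issue you flag. It simply divides~\eqref{E:LDEQN2} by $\RY^2$ and~\eqref{E:LWEQN2} by $\vY^2$; because $Q\to Q_0$ and $\bar{\mathcal C}\to 1-\k+4\k Q_0>0$ are already known from Lemma~\ref{L:QLIMIT}, the right-hand sides of the resulting identities have finite limits at $\yms$ which are explicit rational functions of $Q_0$ (formulas~\eqref{E:SIGMA0FORMULA},~\eqref{E:V0FORMULA}). Integrating gives~\eqref{E:VELLIM} and~\eqref{E:SIGMALIM} at once, without needing coarse $\vY\asymp|Y-\yms|^{-1}$ bounds as a preliminary input. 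Only afterwards does the paper close the loop: the consistency requirement $Q_0=\hat\RY/\hat\vY$, obtained by multiplying~\eqref{E:SIGMA0FORMULA} and~\eqref{E:V0FORMULA}, reduces after cancellation to a linear equation in $Q_0$ whose only root is $\tfrac14$. No integrability argument for $\alpha_1$ is invoked.

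Your alternative — extracting $Q_0=\tfrac14$ from the $L^1$-integrability of $\alpha_1$ established in Lemma~\ref{L:QLIMIT} — can be made to work, but you have misstated the mechanism. You assert twice (once as a claim, once as a "requirement") that the prefactor $\frac{Q(\vY+\k)}{(1-\k)|Y|\bar{\mathcal C}}$ stays bounded and bounded away from zero near $\yms$. It does not: $\vY\to\infty$, so this prefactor diverges like $\vY\asymp 1/|Y-\yms|$. Crucially, that divergence is precisely what the argument needs. If the prefactor were merely bounded, then $\alpha_1\in L^1$ on the bounded interval $(\yms,Y_0]$ would be automatic whenever the bracket $(1+3\k)(4Q-1)-\tfrac{\alpha}{\vY}$ has a finite limit, and would impose \emph{no} constraint on $Q_0$. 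The correct statement is: write $\alpha_1 = A(Y)B(Y)$ with $A\gtrsim 1/|Y-\yms|$ non-integrable (this uses~\eqref{E:NEWVBOUNDS}, $Q\ge\tilde\delta$, and $\bar{\mathcal C}\to1-\k+4\k Q_0>0$), $B\ge0$ (from the flow invariance used in Lemma~\ref{L:QLIMIT}), and $B\to(1+3\k)(4Q_0-1)$; were this limit strictly positive, $\int\alpha_1$ would diverge. Your phrase about "the divergence of the natural weight" shows you have the right picture, so this is a fixable slip, but as written the requirement you set yourself to verify is backwards.

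Finally, the hedge "to leading order in $\k$" in your computation of $\Lambda_0$ is unnecessary, and the discrepancy you are sweeping under that hedge comes from a sign-of-$(1-\k)$ bookkeeping issue: factoring~\eqref{E:LWEQN2} correctly as $\vY'=\frac{\vY^2}{(1-\k)Y}\bigl[3+\frac{2(1+\k)(Q_0-1)}{1-\k+4\k Q_0}\bigr]+\mathcal R$ and substituting $Q_0=\tfrac14$ gives the bracket equal to $\tfrac{3(1-\k)}{2}$, whose $(1-\k)$ cancels the one in the prefactor and yields $\vY'\sim-\tfrac{3\vY^2}{2|\yms|}$ \emph{exactly}, hence $\hat\vY=\tfrac{2|\yms|}{3}$ and $\hat\RY=Q_0\hat\vY=\tfrac{|\yms|}{6}$ with no $O(\k)$ corrections. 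The paper's derivation via~\eqref{E:V0FORMULA} exhibits the same exact cancellation once one substitutes $1+\e=\tfrac{1+\k}{1-\k}$.
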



\begin{proof}
Dividing~\eqref{E:LDEQN2} by $\RY^2$ and using~\eqref{E:BARCDEF} we obtain
\begin{align}\label{E:SIGMALIM2}
-\frac{\RY'}{\RY^2} =\frac{2}{|Y|} \frac{  (1+\frac{\k}{\vY})^2(\frac{\vY}{\RY}-1)}{\bar{\mathcal C}}.
\end{align}
The right-hand side has a strictly positive limit $\frac1{\hat\RY}$ by Lemma~\ref{L:QLIMIT}, where $\hat{\RY}$ equals
\begin{align}\label{E:SIGMA0FORMULA}
\hat\RY = \frac{|\yms|}{2} \frac{Q_0(1+\k\left(4 Q_0-1\right))}{1-Q_0},
\end{align}
 and we may write the limit in the form $\frac{1}{\hat\RY} +o_{Y\to\yms}(|Y-\yms|)$. 
We may now integrate~\eqref{E:SIGMALIM2} to conclude~\eqref{E:SIGMALIM}.
Analogously, we divide~\eqref{E:LWEQN2} by $\vY^2$ and obtain
\begin{align}\label{E:VELLIM2}
-\frac{\vY'}{\vY^2} =\frac{\left(1+\frac{\k}{\vY}\right)\left(\frac1{\vY}-3\right)}{(1-\k)Y} - \frac{2(1+\e)}{Y} \frac{  (1+\frac{\k}{\vY})^2(\frac{\RY}{\vY}-1)}{\bar{\mathcal C}}.
\end{align}
By Lemma~\ref{L:QLIMIT}, the right-hand side converges to a limit denoted by $\frac1{\hat\vY}$, given by
\begin{align}\label{E:V0FORMULA}
\frac1{\hat\vY} = \frac{3}{(1-\k)|\yms|} - \frac{2(1+\e)}{|\yms|} \frac{1- Q_0}{1+\k(4Q_0-1)}.
\end{align}
We may now integrate~\eqref{E:VELLIM2} to conclude~\eqref{E:VELLIM}.
Since by~\eqref{E:VELLIM} and~\eqref{E:SIGMALIM} $Q_0=\frac{\hat\RY}{\hat\vY}$, it follows by multiplying~\eqref{E:SIGMA0FORMULA} and~\eqref{E:V0FORMULA} that 
\begin{align}
Q_0 = \frac{|\yms|}{2} \frac{Q_0(1+\k\left(4 Q_0-1\right))}{1-Q_0} \left(\frac{3}{(1-\k)|\yms|} - \frac{2(1+\e)}{|\yms|} \frac{1- Q_0}{1+\k(4Q_0-1)}\right).
\end{align}
Since $Q_0>0$ by Lemma~\ref{L:QLIMIT}, we may divide by $Q_0$ above and reduce the problem to the linear equation
$(4Q_0-1)\left(1+\k\right)=0$, hence $\frac{\hat\RY}{\hat\vY}=Q_0=\frac14$.
From~\eqref{E:SIGMA0FORMULA} and~\eqref{E:V0FORMULA} we now conclude~\eqref{E:SVFORMULA}.
%
\end{proof}


\begin{proposition}[Massive singularity]\label{P:SINGBEHAVIOUR}
Let $(\hat\vY,\hat\RY)$ be given by~\eqref{E:SVFORMULA}. There exists a $\hat\chi>0$ such that
on approach to the massive singularity $\MS$ the solution $(\RY,\vY,\chi)$ of~\eqref{E:LDEQN2}--\eqref{E:CHIEQN} obeys the following asymptotic behaviour:
\begin{align}
\vY(Y) & = \frac{\hat\vY}{|Y-\yms|} \left(1 + O_{Y\to\yms}(|Y-\yms|^{\e})\right),\label{E:VELLIM3}\\
\RY(Y) & = \frac{\hat\RY}{|Y-\yms|} \left(1 + O_{Y\to\yms}(|Y-\yms|^{\e})\right), \label{E:SIGMALIM3} \\
\chi(Y)& = \hat\chi |Y-\yms|^{\frac2{3(1-\k)}}\left(1 + O_{Y\to\yms}(|Y-\yms|^{\e})\right). \label{E:CHILIM3}
\end{align} 
Moreover, the quantities $\tilde\mu$, $\tilde\lambda$, defined by the extension of~\eqref{E:MUTILDE} and~\eqref{E:LAMBDAFLUID} to $Y\in(\yms,0)$ respectively, satisfy 
\begin{align}
  e^{2\tilde\mu} \asymp_{Y\to\yms} (Y-\yms)^{\frac{2\k}{1-\k}}, \ \ e^{2\tilde\lambda}\asymp_{Y\to\yms} (Y-\yms)^{-\frac{2}{3(1-\k)}}. \label{E:METRICYMSASYMP}
\end{align}
The star density $\rho$ and the Ricci scalar $\mathcal R$ blow up on approach to $\MS$. 
\end{proposition}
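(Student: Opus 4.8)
\textbf{Proof plan for Proposition~\ref{P:SINGBEHAVIOUR}.}
The plan is to bootstrap from the leading-order blow-up rates established in Lemma~\ref{L:LIMAPRIORI2} to the sharper statements~\eqref{E:VELLIM3}--\eqref{E:CHILIM3}, and then read off the metric asymptotics~\eqref{E:METRICYMSASYMP} and the curvature blow-up from the first relation in~\eqref{E:SS1} and~\eqref{E:CURVATUREDENSITY}. First I would record the value $Q_0 = \hat\RY/\hat\vY = \frac14$ from Lemma~\ref{L:LIMAPRIORI2}, and hence $\lim_{Y\to\yms}\bar{\mathcal C}(Y) = 1-\k+4\k Q_0 = 1$ by Lemma~\ref{L:QLIMIT}. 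The crucial auxiliary estimate is the decay rate of the ``error'' term $\left(\RY Y^{-2}\right)^{-\eta}Y^2$ on approach to $\yms$: since $\RY \asymp |Y-\yms|^{-1}$ by Lemma~\ref{L:LIMAPRIORI2}, we get $\left(\RY Y^{-2}\right)^{-\eta}Y^2 \asymp_{Y\to\yms} |Y-\yms|^{\eta} = |Y-\yms|^{\e}$ (recall $\e = \frac{2\k}{1-\k}$). Plugging this into the definition~\eqref{E:BARCDEF} of $\bar{\mathcal C}$, together with the already-established rate $Q(Y) = Q_0 + O(|Y-\yms|^{\e})$ which one extracts by revisiting~\eqref{E:QQ} in the proof of Lemma~\ref{L:QLIMIT} (the first integral $\int_{\yms}^Y\alpha_1$ is $O(|Y-\yms|^{\e})$ once one knows $\alpha_1$ decays at that rate, and the second integral is manifestly $O(|Y-\yms|^{\e})$), gives $\bar{\mathcal C}(Y) = 1 + O(|Y-\yms|^{\e})$.

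With this in hand, I would return to equations~\eqref{E:SIGMALIM2} and~\eqref{E:VELLIM2}. The right-hand side of each now has the form $\frac1{\hat\RY} + O(|Y-\yms|^{\e})$ (resp. $\frac1{\hat\vY} + O(|Y-\yms|^{\e})$), since every factor appearing — $\bar{\mathcal C}$, $\frac\vY\RY$, $\frac\RY\vY$, $\frac\k\vY$ — has an expansion with an $O(|Y-\yms|^{\e})$ remainder. Integrating $-\RY'/\RY^2 = \frac1{\hat\RY} + O(|Y-\yms|^{\e})$ from a point near $\yms$ and using that $\frac1\RY \to 0$ there yields $\frac1{\RY(Y)} = \frac{|Y-\yms|}{\hat\RY} + O(|Y-\yms|^{1+\e})$, which rearranges to~\eqref{E:SIGMALIM3}; the same argument applied to~\eqref{E:VELLIM2} gives~\eqref{E:VELLIM3}. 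For $\chi$, I would use~\eqref{E:CHIEQN} in the form $(\log\chi)' = \frac{1-\vY}{(1-\k)Y}$; substituting the expansion $\vY(Y) = \frac{\hat\vY}{|Y-\yms|}(1+O(|Y-\yms|^{\e}))$ with $\hat\vY = \frac{2|\yms|}{3}$ gives $(\log\chi)' = \frac{-\hat\vY}{(1-\k)|\yms|\,|Y-\yms|}(1+O(|Y-\yms|^{\e})) = \frac{-2/3}{(1-\k)|Y-\yms|}(1+O(|Y-\yms|^{\e}))$, and integrating produces $\log\chi = \frac{2}{3(1-\k)}\log|Y-\yms| + \text{const} + O(|Y-\yms|^{\e})$, i.e.~\eqref{E:CHILIM3} for some $\hat\chi>0$. (I would need to double-check the sign and whether $\hat\chi$ is finite and positive; finiteness follows because the $O(|Y-\yms|^{\e})$ term in $(\log\chi)'|Y-\yms|$ is integrable against $\frac{dY}{|Y-\yms|}$ after subtracting the singular part.)

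Finally, for~\eqref{E:METRICYMSASYMP}: by~\eqref{E:NEWMU} the coefficient $e^{2\tilde\mu(Y)} = \frac1{(1-\k)^2}(\RY Y^{-2})^{-\e}$, and since $Y\to\yms \neq 0$ and $\RY \asymp |Y-\yms|^{-1}$, we get $e^{2\tilde\mu} \asymp |Y-\yms|^{\e} = (Y-\yms)^{\frac{2\k}{1-\k}}$. For $e^{2\tilde\lambda}$ I would use~\eqref{E:LAMBDATILDE}, i.e. $e^{2\tilde\lambda(Y)} = e^{2\lambda(y)}$, together with~\eqref{E:LAMBDAFLUID}; the dominant behaviour is controlled by the factor $\tr^2 = \chi^2 y^2$ and the bracket in the denominator, and tracking powers using $\chi \asymp |Y-\yms|^{\frac{2}{3(1-\k)}}$ and the change of variables to $Y$ gives $e^{2\tilde\lambda}\asymp (Y-\yms)^{-\frac{2}{3(1-\k)}}$. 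For the curvature blow-up, $\mathcal R = (1-3\k)\rho$ by~\eqref{E:CURVATUREDENSITY}, and $\rho = \frac1{2\pi\tau^2}\Sigma$ with $\Sigma = \d^{\frac{1+\k}{1-\k}} = \RY^{1+\e}$ (recall $\d = \R^{(1-\k)/(1+\k)}$ and the identifications in Remark~\ref{R:TAYLORR}); since $\RY \to \infty$ at $\yms$ while $\tau$ stays bounded and nonzero there (the massive singularity sits at $R = \frac{\sqrt\k}{|\yms|}\tilde\tau$ with $\tilde\tau \neq 0$), $\rho$ and hence $\mathcal R$ blow up. The step I expect to be the main obstacle is the careful verification that every remainder in the expansions of $\bar{\mathcal C}$, $Q$, and the right-hand sides of~\eqref{E:SIGMALIM2}--\eqref{E:VELLIM2} is genuinely $O(|Y-\yms|^{\e})$ and not merely $o(1)$ — this requires a self-consistent bootstrap, starting from the crude rates of Lemma~\ref{L:LIMAPRIORI2}, feeding them into $\left(\RY Y^{-2}\right)^{-\eta}Y^2 \asymp |Y-\yms|^{\e}$, re-deriving the sharpened rates, and confirming the estimates close; care is also needed because $\e = \e(\k)$ is small, so the error exponent degenerates as $\k\to0$, but the statement is only claimed for fixed $\k\in(0,\k_0]$.
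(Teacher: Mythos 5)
Your overall plan matches the paper's (improve the crude rates of Lemma~\ref{L:LIMAPRIORI2} to $O(|Y-\yms|^{\e})$, integrate~\eqref{E:SIGMALIM2},~\eqref{E:VELLIM2}, and~\eqref{E:CHIEQN}, then read off the metric and curvature asymptotics), and the parts about $\chi$, $e^{2\tilde\mu}$, $e^{2\tilde\lambda}$, and the blow-up of $\rho$ and $\mathcal R$ are essentially what the paper does. However, there is a genuine gap at the central step: you assert $Q(Y)=Q_0+O(|Y-\yms|^{\e})$ by ``revisiting~\eqref{E:QQ},'' but the only nontrivial content of~\eqref{E:QQ} is $\int_{\yms}^Y\alpha_1$, and $\alpha_1$ has the form $\alpha_1 \asymp \dfrac{4Q-1}{|Y-\yms|}+O(1)=\dfrac{4q}{|Y-\yms|}+O(1)$ where $q:=Q-\tfrac14$ is exactly the quantity you are trying to estimate. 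So ``once one knows $\alpha_1$ decays at that rate'' begs the question, and a bootstrap started from the crude rate $q=o(1)$ gets stuck at $\int_{\yms}^Y\alpha_1 = o(1)$ with no quantitative rate emerging — the loop does not close.

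The paper avoids this by rewriting the $Q$-equation as a linear inhomogeneous ODE for $q$: $q'=\frac{A(Y)}{Y-\yms}q+B(Y)$, where $A(Y)\to A_0=\frac23(1+2\e)$ (via $\hat\RY=|\yms|/6$ from~\eqref{E:SVFORMULA}) and $B(Y)=O(|Y-\yms|^{-1+\e})$. The key structural fact, which cannot be read off from~\eqref{E:QQ}, is that $A_0-\e=\frac23+\frac13\e>0$; the integrating-factor formula then gives $|q(Y)|\lesssim |Y-\yms|^{A_0-\delta}+|Y-\yms|^{\e}\lesssim |Y-\yms|^{\e}$. If instead $A_0\le\e$ the homogeneous part would govern and the decay rate would be slower than $|Y-\yms|^{\e}$, so this positivity is not a technicality. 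Your proposal needs this ODE reformulation (or an equivalent) to be a proof; the rest of the argument is sound once that rate is in hand.
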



\begin{proof}
We introduce $q:=Q-\frac14=\frac{\RY}{\vY}-\frac14$, we can rewrite~\eqref{E:QDER} in the form
\begin{align}\label{E:LITTLEQEQN}
q' = \frac{A}{Y-\yms} q + B,
\end{align}
where
\[
A: = \frac{4(1+3\k)\RY|Y-\yms| \vY (\vY+\k)}{(1-\k)|Y|\vY^2\bar{\mathcal C}}, \ \ 
B: = - \frac{Q  (\vY+\k)}{(1-\k)|Y|\vY\bar{\mathcal C}}\alpha +  \frac{\RY^{1-\e}(\vY+\k)(1-3\vY)|Y|^{2+2\e}}{(1-\k)|Y|\chi^2 \vY^4 \bar{\mathcal C}}.
\]
From~\eqref{E:NEWVBOUNDS} and Lemma~\ref{L:LIMAPRIORI2} we immediately have
\begin{align}
\lim_{Y\to\yms} A(Y) & = \frac{4(1+3\k)\hat\RY}{(1-\k)|\yms|} = \frac23 (1+2\e)=A_0, \\
B(Y) & = O_{Y\to\yms}(|Y-\yms|^{-1+\e}).
\end{align}
We now consider $\yms<Y<Y_1$ for some fixed $Y_1$ and integrate~\eqref{E:LITTLEQEQN}. We obtain 
\begin{align}\label{E:LITTLEQFORMULA}
q(Y) = q(Y_1)e^{\int_{Y_1}^Y\frac{A(\tau)}{\tau-\yms}\,d\tau} +  \int_{Y_1}^Y e^{\int_{s}^Y\frac{A(\tau)}{\tau-\yms}\,d\tau} B(s)\,ds.
\end{align}
For any $0<\delta\ll1$ there exists a $Y_1>\yms$ such that $|A-A_0|<\delta$. It is then easy to see
that 
\[
e^{\int_{Y_1}^Y\frac{A(\tau)}{\tau-\yms}\,d\tau} = e^{-\int_{Y}^{Y_1}\frac{A(\tau)}{\tau-\yms}\,d\tau} \le e^{-\int_{Y}^{Y_1}\frac{A_0-\delta}{\tau-\yms}\,d\tau} = 
\frac{|Y-\yms|^{A_0-\delta}}{|Y_1-\yms|^{A_0-\delta}}.
\]
We use the bound in~\eqref{E:LITTLEQFORMULA} and conclude
\begin{align}
\lv q(Y) \rv &\lesssim \lv q(Y_1)\rv \frac{|Y-\yms|^{A_0-\delta}}{|Y_1-\yms|^{A_0-\delta}} 
+ \int_{Y_1}^Y  \frac{|Y-\yms|^{A_0-\delta}}{|s-\yms|^{A_0-\delta}} |s-\yms|^{-1+\e}\,ds \notag \\
& \lesssim  |Y-\yms|^{A_0-\delta}  + |Y-\yms|^{\e} \lesssim  |Y-\yms|^{\e}.
\end{align}
Plugging this back into~\eqref{E:SIGMALIM2} and~\eqref{E:VELLIM2} allows us to obtain the (suboptimal) rates~\eqref{E:VELLIM3}--\eqref{E:SIGMALIM3}.
Upon dividing~\eqref{E:CHIEQN} by $\chi$ and integrating, using~\eqref{E:VELLIM3}, 
for any $\yms<Y<Y_1$ we obtain
\begin{align}
\chi(Y) & = \chi (Y_1) e^{\frac1{1-\k}\int_{Y}^{Y_1} \frac{\hat\vY}{\tau (\tau-\yms)}\left(1 + O_{\tau\to\yms}(|\tau-\yms|^{\e})\right) \, d\tau} \notag\\
& =  \chi (Y_1) e^{\frac2{3(1-\k)}\int_{Y}^{Y_1} \frac{|\yms|}{\tau (\tau-\yms)}\left(1 + O_{Y\to\yms}(|Y-\yms|^{\e})\right) \, d\tau} \notag\\
& = \chi (Y_1) e^{\frac2{3(1-\k)}\int_{Y}^{Y_1} \left(\frac{1}{\tau}-\frac{1}{ \tau-\yms}\right)\left(1 + O_{Y\to\yms}(|Y-\yms|^{\e})\right) \, d\tau }\notag\\
& = O(1) |Y-\yms|^{\frac2{3(1-\k)}}\left(1 + O_{Y\to\yms}(|Y-\yms|^{\e})\right),
\end{align} 
which proves~\eqref{E:CHILIM3}.

The asymptotics for $ e^{2\tilde\mu}$ in~\eqref{E:METRICYMSASYMP} follows directly from~\eqref{E:SIGMALIM3} and~\eqref{E:NEWMU}.
The asymptotics for $ e^{2\tilde\l}$ in~\eqref{E:METRICYMSASYMP} can be read off from~\eqref{E:SIGMALIM3}--\eqref{E:CHILIM3},~\eqref{E:LAMBDATILDE}, and the identity~\eqref{E:LAMBDAFORMULA}, 
which in the $(\RY,\vY,\chi)$ variables reads
\begin{align}\label{E:TILDELAMBDAEQN}
\RY^{\frac1{1-\k}}e^{\tilde\l}\chi^2 = \alpha Y^{2+\e},
\end{align}
for some constant $\alpha>0$. It then follows from~\eqref{E:TILDELAMBDAEQN} that as $\dy \to 0^+$, $e^{2\tilde\lambda} \asymp (\dy)^{-\frac{2}{3(1-\k)}}$.

From
~\eqref{E:YVARDEF}, we conclude
that $ \Sigma(Y) = d(Y)^{\frac{1+\k}{1-\k}}\asymp_{Y\to\yms} \dy^{-\frac{1+\k}{1-\k}}$, where we slightly abuse notation by continuing to denote $\Sigma(Y)$
to signify the self-similar energy density (see~\eqref{E:SS1}).
Therefore by~\eqref{E:SS1}, for any $\ttau>0$
\begin{align}
\rho(\ttau,R)& = \frac1{\tau(\ttau,R)^2}  \Sigma(Y)
= \frac{R^{2\e}}{\ttau^{2\frac{1+\k}{1-\k}}\k^\e}  \Sigma(Y) 
 =\frac1{\ttau^2 Y^2} \Sigma(Y) \asymp_{Y\to\yms}  \dy^{-\frac{1+\k}{1-\k}}.
\end{align}
The fluid density therefore implodes along $\MS$ and as a consequence of~\eqref{E:CURVATUREDENSITY}, the Ricci scalar blows up at $\MS$.
\end{proof}


\begin{remark}
We can now use the asymptotic rates from from the previous lemma to replace the rough upper bound $\chi^{-2}\vY^{-2}\lesssim 1$, by the sharp upper bound rate $|Y-\yms|^{2-\frac{4}{3(1-\k)}}$. This can then be bootstrapped to obtain the 
near optimal next order correction in the rates~\eqref{E:SIGMALIM3}--\eqref{E:VELLIM3}, where $\e$ can be replaced by $\frac23+O(\k)$. We do not pursue this here, as it will not be needed in the rest of the paper.
\end{remark}


\section{Causal structure of the RLP family of solutions}\label{S:RLP}

As explained in Section~\ref{SS:RLPINTRO},
Theorems~\ref{T:FRIEDMANN},~\ref{T:GLOBALRIGHT}, and~\ref{T:GE} imply the existence of 
a maximally self-similarly extended RLP spacetime, recall Definition~\ref{D:RLPST}. The standard ODE-theory
implies that the solution is indeed real-analytic in $Y$ on $(\yms,\infty)$.
We next show that the RLP-spacetimes are not asymptotically flat and compute the associated mass-aspect function as $r\to\infty$.


\begin{lemma}[$(\MRLP,\grlp)$ is not asymptotically flat]
The RLP spacetimes $(\MRLP,\grlp)$ are not asymptotically flat, i.e. for any $\tau<0$, $\lim_{R\to\infty}m(\tau,R)=\infty$. 
More precisely the mass aspect function $\frac{2m(\tau,r)}{r}$ satisfies
\[
\lim_{r\to\infty}\frac{2m(\tau,r)}{r}=\lim_{R\to\infty} \frac{2m(\tau,R)}{r(\tau,R)} = 4\k 
d_2,
\]
where $d_2>1$ is the $\k$-independent constant introduced in Lemma~\ref{L:ROUGHASYMPTOTICS}.
\end{lemma}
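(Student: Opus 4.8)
The plan is to express the Hawking mass in self-similar variables and then pass to the limit $r\to\infty$ using the already-established asymptotics from Lemma~\ref{L:ROUGHASYMPTOTICS}. First I would recall the definition~\eqref{E:HAWKINGCOMOVING} together with the self-similar ansatz~\eqref{E:SS6}, which gives $m(\tau,R) = \frac{4\pi}{3}r^3 G = \frac{4\pi}{3}(-\sqrt\k\tau\tr)^3 \cdot \frac1{4\pi\tau^2}\tG(y) = -\frac13 \k^{3/2}(-\tau)\tr^3 \tG(y)$. Using part (b) of Lemma~\ref{L:NONLOCALISLOCAL}, namely $\tG = 6\Sigma\w$, and recalling $\d = \Sigma^{\frac{1-\k}{1+\k}}$, $\DS = \R^{1+\eta} = \Sigma$, $W(x) = \w(y)$, this becomes
\begin{align}
m(\tau,R) = -2 \k^{3/2}(-\tau)\tr^3 \Sigma \w = -2\k^{3/2}(-\tau)\tr^3 \DS W, \notag
\end{align}
where all self-similar quantities are evaluated at $x = \tr(y)$. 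Since $r = -\sqrt\k\tau\tr$, we have $\frac{2m(\tau,R)}{r} = \frac{2m}{-\sqrt\k\tau\tr} = 4\k\, \tr^2 \DS W = 4\k\, x^2 \DS(x) W(x)$, using $x = \tr(y)$.

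Second, I would take the limit $r\to\infty$. For fixed $\tau<0$, sending $R\to\infty$ is equivalent to $y\to\infty$, hence $x = \tr(y)\to\infty$ (since $\tr$ is increasing for large $y$ by~\eqref{E:WTR} and $W\to1$). By Lemma~\ref{L:ROUGHASYMPTOTICS}, $\lim_{x\to\infty}W(x) = 1$ and $\lim_{x\to\infty}\DS(x)x^2 = d_2$, so
\begin{align}
\lim_{r\to\infty}\frac{2m(\tau,r)}{r} = 4\k \lim_{x\to\infty}\left(x^2 \DS(x)\right) W(x) = 4\k d_2. \notag
\end{align}
Since $d_2 > 1 > 0$ by Lemma~\ref{L:WSTAYSBELOWONE} (where $d_2$ is the constant $\RY_2$), the mass-aspect function tends to the strictly positive constant $4\k d_2$, and $m(\tau,r) = \frac{r}{2}\cdot\frac{2m}{r}\to\infty$ as $r\to\infty$. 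This shows the spacetime is not asymptotically flat.

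The only genuinely delicate point is making sure the identification $x = \tr(y)$ and the limits $R\to\infty \Leftrightarrow x\to\infty$ are rigorous; this follows from Theorem~\ref{T:GLOBALRIGHT} (global existence to the right in $x$), from~\eqref{E:SSSDEF} which defines $x = \tr(y)$, and from the asymptotic relation $x\asymp y$ as $x\to\infty$ recorded in~\eqref{E:XEQUALY}. I do not anticipate any real obstacle beyond carefully tracking the constants $\k^{3/2}$ and the various powers of $\sqrt\k$ in the definitions~\eqref{E:AREARADIUSSS}--\eqref{E:SS6}; the substance of the statement is an immediate consequence of the sharp asymptotics $\DS(x)x^2\to d_2$ already proven in Section~\ref{S:FARFIELD}.
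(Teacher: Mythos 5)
Your argument is correct and leads to the right conclusion, but it is a mild variant of the paper's route. The paper computes $m/R$ from its integral representation $m = 2\k\int_0^y\Sigma\tr^2\tr'\,dz \cdot(-\sqrt\k\tau)$ and invokes l'H\^opital, then uses $r/R\to1$. You instead invoke part (b) of Lemma~\ref{L:NONLOCALISLOCAL}, $\tG = 6\Sigma\w$, which amounts to evaluating that integral in closed form: $\int_0^y\Sigma\tr^2\tr'\,dz = \Sigma\tr^3\w$. This turns $\frac{2m}{r}$ into the \emph{exact} identity $\frac{2m}{r} = 4\k\tr^2\Sigma\w = 4\k\,x^2\DS(x)W(x)$, valid for all $y>0$ rather than just in the limit, after which $4\k d_2$ falls out immediately from Lemma~\ref{L:ROUGHASYMPTOTICS}. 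That is arguably a touch cleaner, since it sidesteps l'H\^opital; of course the underlying computation (and the asymptotics being used) are identical.

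One computational caution: your displayed expression for $m$ has the wrong sign. With $\tau<0$ one has $(-\sqrt\k\tau)^3 = -\k^{3/2}\tau^3$ and hence $\frac{4\pi}{3}r^3 G = -\tfrac13\k^{3/2}\tau\,\tr^3\tG = +\tfrac13\k^{3/2}(-\tau)\tr^3\tG > 0$, not the negative quantity you wrote. You then drop a second sign in the division by $r=-\sqrt\k\tau\tr$, and the two slips cancel so the final identity $\frac{2m}{r}=4\k x^2\DS W$ comes out right. To avoid odd powers of $(-\sqrt\k\tau)$ altogether, it is quicker to compute $\frac{2m}{r} = \frac{8\pi}{3}r^2 G$ directly: with $r^2 = \k\tau^2\tr^2$ and $G = \tG/(4\pi\tau^2)$ the $\tau$-factors cancel at once and the sign issue never arises.
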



\begin{proof}
Fix a $\tau<0$. As $R$ increases to $\infty$ we have $y=\frac{R}{-\sqrt\k \tau}\to\infty$. Recalling~\eqref{E:HAWKINGCOMOVING} we have
\begin{align}
\lim_{R\to\infty} \frac{m(\tau,R)}{R} & = \lim_{R\to\infty} \frac{4\pi \int_0^R \frac1{2\pi \tau^2}  \Sigma(y)\, \k \tau^2 \tr^2(y) \tr'(y) \,d\bar R}{R}   \notag\\
& =2\k 
 \lim_{y\to\infty} \frac{\int_0^y  \Sigma(z) \tr(z)^2 \tr'(z) \,dz}{y} \notag\\
& = 2\k 
\lim_{y\to\infty}  \Sigma(y)\tr(y)^2\tr'(y) =2\k 
d_2, \label{E:NOTAF}
\end{align}
where we have changed variables and used $y=\frac{R}{-\sqrt\k \tau}$ in the second equality, the l'Hospital rule in the third, and~\eqref{E:WROUGH}--\eqref{E:DROUGH},~\eqref{E:TILDERPRIME}, Remark~\ref{R:TAYLORR} in the last.
Note that $\lim_{R\to\infty}\frac{r}{R}=\lim_{r\to\infty}\frac{r}{R}=1$ by~\eqref{E:RLASYMPTOTICS}--\eqref{E:ANORM}.
\end{proof}



As stated in~\cite{OP1990}, it is clear that along any line of the form $(\tau,\alpha\tau)$ the density $\rho(\tau,\alpha\tau) = \frac1{2\pi\tau^2} \Sigma(\frac{-\alpha}{\sqrt\k})$ diverges as $\tau\to0^-$. A similar statement applies when we approach the scaling origin $(0,0)$ along the lines of the form $(\tilde\tau,\alpha\tilde\tau)$, $\ttau>0$. In particular, by~\eqref{E:CURVATUREDENSITY} the Ricci scalar blows up at $(0,0)$ and this is a geometric singularity. 
We now proceed to study the radial null-geodesics that ``emanate" from this singularity. We shall henceforth use the abbreviation RNG for the radial null geodesics.


\begin{lemma}\label{L:RNGS}
\begin{enumerate}
\item[{\em (a)}]
In the $(\tau,R)$-plane the outgoing/ingoing RNG-s respectively satisfy the equations
\begin{align}\label{E:GEODESICL}
\frac{dR}{d\tau} = \pm e^{\mu(y)-\l(y)}, \ \ y = \frac{R(\tau)}{-\sqrt\k \tau},
\end{align}
whenever the right-hand side is well-defined.
\item[{\em (b)}]
Similarly, in the $(\ttau,R)$-plane the outgoing/ingoing RNG-s respectively satisfy the equations
\begin{align}\label{E:GEQN3}
\frac{dR}{d\tilde\tau} = \frac{\pm 1}{e^{\tilde\l(Y)-\tilde\mu(Y)}\mp \frac{2\sqrt\k}{1+\k}Y}, \ \ Y =  \frac{-\sqrt \k\tilde\tau}{R(\ttau)},
\end{align}
whenever the right-hand side is well-defined.
\item[{\em (c)}]
If we let $Y = -\frac{\sqrt\k}{\sigma}$, then the curve 
$(\sigma\ttau,\ttau)$ is a simple outgoing/ingoingRNG (see Definition~\ref{D:SIMPLERNG}) if and only if
\begin{align}
G_\pm(Y) = 0,
\end{align}
where
\be\label{E:GPMDEF}
G_\pm(Y):=\sqrt\k e^{\tilde\l(Y)-\tilde\mu(Y)} \pm \frac{1-\k}{1+\k}Y.
\ee
\end{enumerate}
\end{lemma}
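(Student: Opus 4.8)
\textbf{Proof plan for Lemma~\ref{L:RNGS}.}

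The plan is to derive the three ODEs by writing down the null condition $g(\dot\gamma,\dot\gamma)=0$ in the relevant coordinate chart, using the explicit form of the metric. For part (a), I would start from the comoving metric~\eqref{E:METRIC}, $g=-e^{2\mu}d\tau^2+e^{2\lambda}dR^2+r^2\gamma$. For a radial curve $\tau\mapsto(\tau,R(\tau))$ the tangent vector is $\pa_\tau+\frac{dR}{d\tau}\pa_R$, and the null condition reads $-e^{2\mu}+e^{2\lambda}\left(\frac{dR}{d\tau}\right)^2=0$, which immediately gives $\frac{dR}{d\tau}=\pm e^{\mu-\lambda}$; since $\mu,\lambda$ are functions of the self-similar variable $y=\frac{R}{-\sqrt\k\tau}$ (recall~\eqref{E:SS1}--\eqref{E:SS6}), this is~\eqref{E:GEODESICL}. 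The sign $+$ corresponds to outgoing ($R$ increasing toward the future, i.e. as $\tau\to0^-$) and $-$ to ingoing; I would fix the sign convention by checking a reference direction. The caveat ``whenever the right-hand side is well-defined'' is there because $\mu,\lambda$ degenerate at $y=0$ and at $y=\infty$ (recall Remark~\ref{R:TILDRREG} and~\eqref{E:MUAS}), so the equation only makes sense away from those loci; this is a remark rather than an obstacle.

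For part (b), I would repeat the computation in the adapted comoving chart $(\ttau,R)$ using the metric~\eqref{E:METRICRLP2},
\[
\grlp=-e^{2\tilde\mu}d\ttau^2-\frac{4\sqrt\k}{1+\k}Ye^{2\tilde\mu}d\ttau\,dR+\left(e^{2\tilde\lambda}-\frac{4\k}{(1+\k)^2}Y^2e^{2\tilde\mu}\right)dR^2+r^2\gamma,
\]
which now has a cross term. For a radial null curve with slope $m:=\frac{dR}{d\ttau}$ the null condition becomes the quadratic
\[
-e^{2\tilde\mu}-\frac{4\sqrt\k}{1+\k}Ye^{2\tilde\mu}m+\left(e^{2\tilde\lambda}-\frac{4\k}{(1+\k)^2}Y^2e^{2\tilde\mu}\right)m^2=0,
\]
where $Y=-\frac{\sqrt\k\ttau}{R}$. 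Solving this quadratic in $m$ and simplifying — the discriminant factors as a perfect square because the metric determinant in the $(\ttau,R)$ block is $-e^{2\tilde\mu+2\tilde\lambda}$ — gives the two roots $m=\frac{\pm1}{e^{\tilde\lambda-\tilde\mu}\mp\frac{2\sqrt\k}{1+\k}Y}$, which is~\eqref{E:GEQN3}. The key algebraic simplification to watch is that after dividing through by $e^{2\tilde\mu}$, the equation reads $\left(e^{2(\tilde\lambda-\tilde\mu)}-\frac{4\k}{(1+\k)^2}Y^2\right)m^2-\frac{4\sqrt\k}{1+\k}Ym-1=0$, and the two factors of the left-hand side are exactly $\left(e^{\tilde\lambda-\tilde\mu}m-\frac{2\sqrt\k}{1+\k}Ym-1\right)$ times $\left(e^{\tilde\lambda-\tilde\mu}m+\frac{2\sqrt\k}{1+\k}Ym+1\right)$ up to a sign; I would verify this factorisation by expanding, which is routine but must be done carefully to get signs right. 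Alternatively, and perhaps more cleanly, one can obtain (b) from (a) by the chain rule using $\tau=|Y|^\e\ttau$ (recall~\eqref{E:TAUTILDETAU}) and~\eqref{E:MUTILDE}--\eqref{E:LAMBDATILDE}; I would present whichever is shorter.

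For part (c), I would substitute the simple-RNG ansatz $R=\sigma\ttau$ into~\eqref{E:GEQN3}. Along such a curve $Y=-\frac{\sqrt\k\ttau}{\sigma\ttau}=-\frac{\sqrt\k}{\sigma}$ is constant, so $\tilde\mu(Y),\tilde\lambda(Y)$ are constant along the curve, and $\frac{dR}{d\ttau}=\sigma$. Plugging in, the outgoing equation $\sigma=\frac{1}{e^{\tilde\lambda-\tilde\mu}-\frac{2\sqrt\k}{1+\k}Y}$ becomes $\sigma\,e^{\tilde\lambda-\tilde\mu}-\frac{2\sqrt\k}{1+\k}\sigma Y=1$; using $\sigma=-\frac{\sqrt\k}{Y}$ this reads $-\frac{\sqrt\k}{Y}e^{\tilde\lambda-\tilde\mu}-\frac{2\k}{1+\k}\cdot(-1)=1$, i.e. $\sqrt\k\,e^{\tilde\lambda-\tilde\mu}=-Y\left(1-\frac{2\k}{1+\k}\right)=-\frac{1-\k}{1+\k}Y$, which rearranges to $G_+(Y)=0$ with $G_+$ as in~\eqref{E:GPMDEF}; the ingoing case is identical with the opposite sign, giving $G_-(Y)=0$. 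I expect the main (minor) obstacle to be bookkeeping of signs and of the $\mp$ versus $\pm$ conventions in the two coordinate systems — in particular making sure that ``outgoing'' in the $(\tau,R)$-plane (where the future is $\tau\to0^-$) matches ``outgoing'' in the $(\ttau,R)$-plane (where $\ttau>0$ and the future is increasing $\ttau$), which requires a careful look at the orientation induced by the coordinate change~\eqref{E:TILDETAUDEF}. None of the steps is conceptually deep; the content is entirely the null condition plus the explicit metric coefficients established in Definition~\ref{D:RLPST}.
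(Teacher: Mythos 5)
Your proposal is correct and follows essentially the same route as the paper: the null condition on a radial curve in the relevant chart, the observation that the resulting quadratic in $dR/d\ttau$ is a perfect square (your explicit factorisation $(a-b-1)(a+b+1)$ with $a=e^{\tilde\lambda-\tilde\mu}m$, $b=\tfrac{2\sqrt\k}{1+\k}Ym$ is exactly equivalent to the paper's rewriting $m^2 e^{2(\tilde\lambda-\tilde\mu)}=(1+\tfrac{2\sqrt\k}{1+\k}Ym)^2$), and direct substitution of $R=\sigma\ttau$ for part (c). The sign bookkeeping you flag is the only subtlety, and you have carried it out correctly.
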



\begin{proof}
{\em Proof of part (a).}
Equation~\eqref{E:GEODESICL} is just the condition that the radial geodesic  has null length in the local coordinates~\eqref{E:METRICRLP1}.

\noindent
{\em Proof of part (b)}.
In the local coordinates~\eqref{E:METRICRLP2} the RNG-s satisfy
\begin{align}
-e^{2\tilde\mu(Y)}\left(\dot{\tilde\tau}(s)\right)^2- \frac{4\sqrt\k}{1+\k} Ye^{2\tilde\mu} \dot R(s)\dot\tt(s) + \left(e^{2\tilde\l(Y)}-\frac{4\k}{(1+\k)^2} Y^2 e^{2\tilde\mu(Y)} \right)\left(\dot R(s)\right)^2=0,
\end{align}
where it is understood that $Y = \frac{-\sqrt\k \tt(s)}{R(s)}$ in the above expression. 
Reparametrising by $\tilde\tau$ the above ODE, we obtain
\begin{align}\label{E:GEQN}
1& = \left(e^{2\tilde\l(Y)-2\tilde\mu(Y)}-\frac{4\k}{(1+\k)^2} Y^2  \right)\left(\frac{dR}{d\tilde\tau}\right)^2 - \frac{4\sqrt\k}{1+\k} Y \frac{dR}{d\tt},
\end{align}
or equivalently
\begin{align}
\left(\frac{dR}{d\tilde\tau}\right)^2e^{2\tilde\l(Y)-2\tilde\mu(Y)}  = \left(1+\frac{2\sqrt\k}{1+\k} Y \frac{dR}{d\tt}\right)^2.\label{E:GEQN2}
\end{align}
Upon taking the square root, solutions to the equation~\eqref{E:GEQN2}
are given by the solutions of~\eqref{E:GEQN3}.

\noindent
{\em Proof of part (c)}.
The proof follows by direct substitution in~\eqref{E:GEQN3}.
\end{proof}





\begin{remark}
For the purpose of describing the causal structure of the self-similar spacetimes under consideration, 
it is convenient to introduce the function
\begin{align}\label{E:FDEFLITTLEY}
F_\k(Y): = Y^2 e^{2\tilde\mu_\k(Y)-2\tilde\l_\k(Y)}, \ \ Y>\yms, 
\end{align}
where $Y = \frac{-\sqrt\k \tilde\tau}{R}$ is the self-similar coordinate associated with the patch~\eqref{E:METRICRLP2}
and we keep the index $\k$ to emphasise the dependence on $\k$.
It is then straightforward to check from part (c) of Lemma~\ref{L:RNGS} that an RNG $(\sigma\ttau,\ttau)$, $\sigma\neq0$, is simple if and only if 
\begin{align}\label{E:FEQN}
\frac1\k F_\k(Y) = \left(\frac{1+\k}{1-\k}\right)^2, \ \ Y = - \frac{\sqrt\k}{\sigma}.
\end{align}
Formulas~\eqref{E:GPMDEF} and~\eqref{E:FDEFLITTLEY} give the obvious factorisation property:
\begin{align}\label{E:FGGFACTORISATION}
G_+(Y) G_-(Y) = - \k \left(\frac{1-\k}{1+\k}\right)^2e^{2\tilde\l_\k(Y)-2\tilde\mu_\k(Y)}\left(\frac1\k F_\k(Y) -\left(\frac{1+\k}{1-\k}\right)^2\right).
\end{align}
\end{remark}


\begin{proposition}\label{P:UNIF}
Let $F_\k(\cdot)$ be the function given by~\eqref{E:FDEFLITTLEY}. 
There exists an $0<\k_0\ll1$ such that the following statements are true.
\begin{enumerate}
\item[{\em (a)}]
The function $F_\k$ satisfies the formula
\begin{align}\label{E:FDEFY}
F_\k(Y) =\frac{(1+\k)^2}{(1-\k)^2}  \frac{(\RY_\k Y^{-2})^{-\e}Y^2 + \k \chi_\k^2\left[(\vY_\k-1)^2 - 4 \vY_\k\RY_\k\right]}{(\vY_\k+\k)^2\chi_\k^2}, \ \ Y>\yms.
\end{align}
\item[{\em (b)}]
There exists an $Y<0$ such that $Y>\yms(=\yms_\k)$ for all $\k\in(0,\k_0]$ and
\begin{align}
\frac1\k F_\k(Y) >2 \ \ \text{ for all } \ \k\in(0,\k_0].
\end{align}
\item[{\em (c)}]
For any fixed $\k\in(0,\k_0]$ we have
\begin{align}
\lim_{Y\to 0} F_\k(Y) & = 0 ,\label{E:C1}\\
\lim_{Y\to \yms} F_\k(Y) & = 0.\label{E:C2}
\end{align}
\end{enumerate}
\end{proposition}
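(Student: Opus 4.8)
\textbf{Proof proposal for Proposition~\ref{P:UNIF}.}

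The plan is to prove the three claims in order, since each builds on the previous one. For part (a), I would start from the definitions~\eqref{E:FDEFLITTLEY} of $F_\k$, from~\eqref{E:NEWMU} which gives $e^{2\tilde\mu} = (1-\k)^{-2}(\RY Y^{-2})^{-\e}$, and from the extension of~\eqref{E:LAMBDAFLUID} to $Y\in(\yms,0)$. The key is to re-express $e^{2\tilde\lambda}$ in the $(\RY,\vY,\chi)$ variables. Recalling the relation between the original self-similar fluid variables and the $Y$-variables from~\eqref{E:YVARDEF} (together with $\chi(Y)=\ch(y)$, $\RY(Y)=\d(y)$, $w(Y)=\wl(y)$), the metric formula~\eqref{E:LAMBDAFLUID} becomes, after substituting $\tr = \chi y$, $y = Y^{-1-\e}$, and $\Sigma^{\frac{2\k}{1+\k}} = \RY^{\e}$, an algebraic identity for $e^{2\tilde\lambda}$ in terms of $\chi,\vY,\RY,Y$. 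Dividing $Y^2 e^{2\tilde\mu}$ by $e^{2\tilde\lambda}$ and simplifying the resulting rational expression should directly yield~\eqref{E:FDEFY}; this is a bookkeeping computation that mirrors the derivation of $\mathcal C$ in~\eqref{E:CDEF}, so I would check consistency by noting that the numerator of~\eqref{E:FDEFY} differs from $\mathcal C$ by exactly the sign and grouping of the $\k$-terms expected from~\eqref{E:LAMBDACONSTRAINTSS}.

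For part (b), I would use the local extension Theorem~\ref{T:LE}, which provides a $\k$-independent $Y_0<0$ on which the solution is analytic with $\chi(0)=1$, $\vY(0)=1$, $\RY(Y)/Y^2\to\RY_2$, and with uniform-in-$\k$ control $\RY_2>1+c_1$, $\vY_1<-\tilde c$, $\chi(Y_0)>\delta$, etc. Plugging the leading-order Taylor expansions from Remark~\ref{R:TAYLORR} into formula~\eqref{E:FDEFY}, the dominant term of the numerator as $Y\to 0^-$ is $(\RY Y^{-2})^{-\e}Y^2 = \RY_2^{-\e}Y^2(1+o(1))$ while the denominator is $(\vY+\k)^2\chi^2 = (1+o(1))$; hence $\frac1\k F_\k(Y)\sim \frac1\k\frac{(1+\k)^2}{(1-\k)^2}\RY_2^{-\e}Y^2$, which blows up like $\frac{Y^2}{\k}$ as $\k\to 0$ for any fixed small $Y<0$. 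More carefully, I would fix a small $\k$-independent $Y<0$ inside $(Y_0,0)$, note $\yms<Y_0\le Y$ by Corollary~\ref{C:UNIF} and Theorem~\ref{T:LE}, and show using the uniform bounds of Theorem~\ref{T:LE}(a)--(d) that the numerator is bounded below by a $\k$-independent positive constant times $Y^2$ while the denominator is bounded above uniformly; then $\frac1\k F_\k(Y)\ge \text{const}/\k > 2$ for $\k_0$ small enough.

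For part (c), the limit~\eqref{E:C1} is immediate from~\eqref{E:FDEFY} and Remark~\ref{R:TAYLORR}: as $Y\to 0$ the numerator tends to $0$ (the $Y^2$ factor) while the denominator tends to $(1+\k)^2>0$, so $F_\k(Y)\to 0$. The limit~\eqref{E:C2} is the substantive one and uses the massive-singularity asymptotics of Proposition~\ref{P:SINGBEHAVIOUR}: there $\RY(Y),\vY(Y)\asymp |Y-\yms|^{-1}$ and $\chi(Y)\asymp |Y-\yms|^{\frac{2}{3(1-\k)}}$. Then in the numerator of~\eqref{E:FDEFY}, $(\RY Y^{-2})^{-\e}Y^2\asymp |Y-\yms|^{\e}$ and $\chi^2[(\vY-1)^2-4\vY\RY]\asymp |Y-\yms|^{\frac{4}{3(1-\k)}}|Y-\yms|^{-2} = |Y-\yms|^{\frac{4}{3(1-\k)}-2}$, which is a negative power (since $\frac{4}{3(1-\k)}<2$ for small $\k$) that is nonetheless still dominated after multiplication... — here I must be careful: the denominator $(\vY+\k)^2\chi^2\asymp |Y-\yms|^{-2}|Y-\yms|^{\frac{4}{3(1-\k)}} = |Y-\yms|^{\frac{4}{3(1-\k)}-2}$ has the same rate as the second numerator term. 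So $F_\k(Y) = \frac{(1+\k)^2}{(1-\k)^2}\cdot\frac{O(|Y-\yms|^\e) + \k\,c\,|Y-\yms|^{\frac{4}{3(1-\k)}-2}}{|Y-\yms|^{\frac{4}{3(1-\k)}-2}(\text{const}+o(1))}$, and since $\e + (2-\frac{4}{3(1-\k)})>0$ for small $\k$, the first numerator term, divided by the denominator rate, still tends to $0$, while the second term contributes a bounded nonzero multiple of $\k$ times $(1-\k+4\k Q_0)/(1+\k)^2$... — this does \emph{not} obviously vanish. The main obstacle, therefore, is that the naive leading-order cancellation in~\eqref{E:FDEFY} at $\yms$ appears to give a finite nonzero limit unless there is an exact algebraic cancellation; I expect this is resolved by using the precise coefficients $\hat\RY = |\yms|/6$, $\hat\vY=2|\yms|/3$ (so $Q_0 = \hat\RY/\hat\vY = 1/4$) from Lemma~\ref{L:LIMAPRIORI2} together with the limit $\bar{\mathcal C}\to 1-\k+4\k Q_0 = 1$ from Lemma~\ref{L:QLIMIT}, which should force the $\k$-order term's coefficient $(\vY-1)^2-4\vY\RY$ normalized by $(\vY+\k)^2$ to combine with the supersonic-extension bound into something that, multiplied by $\k$, still tends to $0$ — or alternatively, rewriting~\eqref{E:FDEFY} as $F_\k = \frac{(1+\k)^2}{(1-\k)^2}\big(\frac{(\RY Y^{-2})^{-\e}Y^2}{(\vY+\k)^2\chi^2} + \k\frac{(\vY-1)^2-4\vY\RY}{(\vY+\k)^2}\big)$ and observing the second summand is $O(\k|Y-\yms|^{-1}\chi^0)\cdot$(bounded), which still vanishes because $(\vY-1)^2/(\vY+\k)^2\to 1$ and $\RY/\vY\to 1/4$ give a finite value multiplied by $\k$ — no. I would resolve this carefully by direct substitution of the sharp rates~\eqref{E:VELLIM3}--\eqref{E:CHILIM3} and tracking all powers of $|Y-\yms|$; the correct statement~\eqref{E:C2} then follows, and any apparent $O(\k)$ residue must in fact carry a positive power of $|Y-\yms|$ once the $\chi^2$ in the denominator is correctly paired with the numerator's $\chi^2$. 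This power-counting reconciliation across the massive singularity is the step I expect to require the most care.
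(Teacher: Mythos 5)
Your treatment of parts (a) and (b), and of the first limit~\eqref{E:C1} in part (c), coincides with the paper's own proof. For (b) in particular, the two-term splitting of $\frac1\k F_\k$ (the $\frac1\k$-divergent summand, bounded below using the $\k$-uniform bounds of Theorem~\ref{T:LE}(a), plus the uniformly bounded $\k^0$-summand) is exactly the argument given there.

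For~\eqref{E:C2}, however, your proposal stops short of a proof. You correctly located the danger --- the $\k$-weighted summand $\frac{(\vY-1)^2-4\vY\RY}{(\vY+\k)^2}$ might survive in the limit --- and you correctly identified the rescue in $Q_0=\hat\RY/\hat\vY=\tfrac14$ from Lemma~\ref{L:LIMAPRIORI2}, but then you wrote ``--- no'' and left the claim unresolved. The computation does in fact close. Write
\[
\frac{(\vY-1)^2-4\vY\RY}{(\vY+\k)^2}
=\frac{(\vY-1)^2}{(\vY+\k)^2}-4\,\frac{\RY}{\vY}\cdot\frac{\vY^2}{(\vY+\k)^2}
\;\longrightarrow\; 1-4\cdot\tfrac14\cdot 1 = 0 \quad\text{as } Y\to\yms,
\]
and the quantitative rate $\lv Q-\tfrac14\rv\lesssim|Y-\yms|^{\e}$ established in the proof of Proposition~\ref{P:SINGBEHAVIOUR} upgrades this to $O(|Y-\yms|^{\e})$. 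Meanwhile the first summand $\frac{(\RY Y^{-2})^{-\e}Y^2}{(\vY+\k)^2\chi^2}$ has, by~\eqref{E:VELLIM3}--\eqref{E:CHILIM3}, rate $|Y-\yms|^{\e+2-\frac{4}{3(1-\k)}}=|Y-\yms|^{\frac{2}{3(1-\k)}}\to0$. So your route through~\eqref{E:FDEFY} is viable once the power-counting is pushed to the end; the ``exact algebraic cancellation'' you suspected is precisely $Q_0=\tfrac14$, and you must cite the rate on $Q-\tfrac14$ to make the argument rigorous.

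The paper avoids this delicacy altogether: it returns to the definition $F_\k(Y)=Y^2e^{2\tilde\mu_\k(Y)-2\tilde\l_\k(Y)}$ from~\eqref{E:FDEFLITTLEY} and reads off~\eqref{E:C2} directly from the metric asymptotics~\eqref{E:METRICYMSASYMP}, namely $e^{2\tilde\mu}\asymp|Y-\yms|^{\frac{2\k}{1-\k}}$, $e^{-2\tilde\l}\asymp|Y-\yms|^{\frac{2}{3(1-\k)}}$, giving $F_\k(Y)\asymp(\yms)^2\,|Y-\yms|^{\frac{2(1+3\k)}{3(1-\k)}}\to0$ in one line. The cancellation you would otherwise have to chase algebraically through~\eqref{E:FDEFY} has already been absorbed into the proof of~\eqref{E:METRICYMSASYMP}; this is the cleaner route, and I would recommend it over completing the power-counting in~\eqref{E:FDEFY}.
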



\begin{proof}
{\em Proof of part {\em (a)}.}
By~\eqref{E:MUTILDE} we have for any $Y>0$
\begin{align}
F_\k(Y) & = Y^2 e^{2\tilde\mu_\k(Y)-2\tilde\l_\k(Y)} 
 =   \frac{(1+\k)^2}{(1-\k)^2} Y^{2+2\e} e^{2\mu_\k(y)-2\l_\k(y)} \notag \\ 
 &=  \frac{(1+\k)^2}{(1-\k)^2}  y^{-2}e^{2\mu_\k(y)-2\l_\k(y)}
 =\frac{(1+\k)^2}{(1-\k)^2} \frac{ \d_\k^{-\e} +\k \tr_\k^2 \left[(\w _\k-1)^2- 4  \Rl_\k \w _\k\right]}{(\w _\k+\k)^2 \tr_\k^2} \notag\\
& =\frac{(1+\k)^2}{(1-\k)^2}  \frac{(\RY_\k Y^{-2})^{-\e}Y^2 + \k \chi_\k^2\left[(\vY_\k-1)^2 - 4 \vY_\k\RY_\k\right]}{(\vY_\k+\k)^2\chi_\k^2},  \ \ y, Y>0, \label{E:FDEF2}
\end{align}
where the index $\k$ is added to emphasise the dependence on $\k$. We used the formula~\eqref{E:YLITTLEY} in the third equality, and~\eqref{E:LBDEF} to express $y^{-2}e^{2\mu_\k(y)-2\l_\k(y)}$ in terms of $\w _\k$, $ \d_\k$, and $\tr_\k$ in the fourth.
Since the right-most side of~\eqref{E:FDEF2} extends analytically to $Y\in(\yms,0]$ by Theorems~\ref{T:LE} and~\ref{T:GE} the claim in part (a) follows.

\noindent
{\em Proof of part {\em (b)}.}
Note that for $Y\in(\yms,0]$ by~\eqref{E:FDEF2}
\begin{align}
\frac1\k F_\k  = \frac{(1+\k)^2}{\k (1-\k)^2}  \frac{(\RY_\k Y^{-2})^{-\e}Y^2}{(\vY_\k+\k)^2\chi_\k^2} 
+ \frac{(1+\k)^2}{(1-\k)^2}  \frac{\left[(\vY_\k-1)^2 - 4 \vY_\k\RY_\k\right]}{(\vY_\k+\k)^2}.\label{E:UNIF}
\end{align}
We now fix $Y=Y_0$, a constant provided by the local extendibility statement of Theorem~\ref{T:LE}. In particular, by part (a) of Theorem~\ref{T:LE} there exists a $\delta>0$ such that 
for all $\k\in(0,\k_0]$
we have $\RY_\k(Y_0)>\delta$, $\chi_\k(Y_0)>\delta$ and $\RY_\k(Y_0)<\vY_\k(Y_0)<\frac1\delta$. By letting $\k\to0$ in~\eqref{E:UNIF} we conclude that
\begin{align}
\frac1\k F_\k(Y_0) >2  \ \ \text{ for } \ \k \text{ sufficiently small}. 
\end{align}

\noindent
{\em Proof of part {\em (c)}.}
Claim~\eqref{E:C1} follows from the formula~\eqref{E:FDEF2} and the asymptotic behaviour~\eqref{E:RLASYMPTOTICS}--\eqref{E:SMALLVASYMPTOTICS}.
To prove~\eqref{E:C2} we work directly with~\eqref{E:FDEFLITTLEY} and use~\eqref{E:METRICYMSASYMP}. This gives
\begin{align}
\lim_{Y\to(\yms)^+}F_\k(Y) & = \lim_{Y\to(\yms)^+} \left(Y^2 e^{2\mu_\k(Y)-2\l_\k(Y)}\right) \notag\\
& \asymp_{Y\to\yms} c (\yms)^2  \lim_{(Y-\yms)\to0^+} (\dy)^{\frac{2(1+3\k)}{3(1-\k)}} = 0.\label{E:FASYMP2}
\end{align}
\end{proof}

We are now ready to prove Theorem~\ref{T:NS}.

{\em Proof of Theorem~\ref{T:NS}.}
Our first goal is to show that for  $0<\k\ll1$ there exist at least two solutions to the equation~\eqref{E:FEQN}. 
By parts (b) and (c) of Proposition~\ref{P:UNIF} it is now clear that the function $Y\mapsto \frac1\k F_\k(Y)$ converges to $0$ at $Y=\yms$ and $Y=0$, but 
necessarily peaks above $\left(\frac{1+\k}{1-\k}\right)^2$ at $Y=Y_0$, where $Y_0$ is given by Theorem~\ref{T:LE}. Therefore there
exist $Y_1\in(Y_0,0)$ and $Y_2\in(\yms,Y_0)$ such that~\eqref{E:FEQN} holds with $Y=Y_1$ and $Y=Y_2$. Since $Y_1, Y_2$ are strictly negative and necessarily
zeroes of $G_\pm$ defined by~\eqref{E:GPMDEF}, they must in fact be zeroes of $G_+$ and therefore represent outgoing simple RNG-s.
Note that the function $Y\mapsto  F_\k(Y) -\k \left(\frac{1+\k}{1-\k}\right)^2$ is real analytic on $(\yms,0)$ and therefore
the number of zeroes is finite.  By slight abuse of notation we enumerate the zeroes as in~\eqref{E:SRNGS}.
\prfe


We recall the relationship~\eqref{E:YLITTLEY} between the comoving self-similar coordinates $y$ and $Y$, as well as the relation between the comoving coordinate $y$ and 
the Schwarzschild coordinates $x$ in Subsection~\ref{SS:SCHW}. Recalling the sonic point $\bar x_\ast$, the slope of the sonic line in the $Y$-coordinates is given by 
\begin{align}
\Ysp:=\left(\tilde r^{-1}(\bar x_\ast)\right)^{-\frac1{1+\e}}.
\end{align}
The next lemma is important for the description of ingoing null-geodesics. It in particular implies that the curve $\mathcal N$
is the unique simple ingoing RNG.

\begin{lemma}\label{L:YN}
For any $\k\in(0,\k_0]$, consider the relativistic Larson-Penston spacetime given by Definition~\ref{D:RLPST}.
Then there exists a $Y_{\mathcal N}\in(0,\Ysp)$ such that the curve 
\begin{align}
\mathcal N : = \{(\tt, R)\in\MRLP\,\big| \, \frac{-\sqrt\k \tt}{R} = Y_{\mathcal N}\},
\end{align}
represents a simple ingoing null-geodesics i.e. the boundary of the past light cone of the scaling origin $\mathcal O$.
Moreover, the curve $\mathcal N$
is the unique simple ingoing RNG and the following bounds hold:
\begin{align}\label{E:AE}
G_-(Y)&>0, \ \ Y\in (\yms, Y_{\mathcal N}), \\
G_-(Y)&<0, \ \ Y\in (Y_{\mathcal N},\infty), \label{E:AE2}
\end{align}
where $G_-(\cdot)$ is defined in~\eqref{E:GPMDEF}.
\end{lemma}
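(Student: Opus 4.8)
\textbf{Proof proposal for Lemma~\ref{L:YN}.}

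The plan is to study the sign of $G_-(Y)$ on $(\yms,\infty)$ and locate its unique zero. First I would rewrite $G_-$ in a more convenient form. Using the factorisation~\eqref{E:FGGFACTORISATION}, the zeroes of $G_\pm$ on any interval where $e^{2\tilde\l_\k-2\tilde\mu_\k}$ is finite and positive coincide with the solutions of $\frac1\k F_\k(Y)=\left(\frac{1+\k}{1-\k}\right)^2$, where $F_\k$ is given by~\eqref{E:FDEFLITTLEY} and, on $Y>0$, by the explicit formula~\eqref{E:FDEF2}. The distinction between $G_+$ and $G_-$ is just the sign of $Y$: from~\eqref{E:GPMDEF}, for $Y>0$ we have $G_-(Y)<G_+(Y)$ and $G_-$ can vanish only where the positive term $\sqrt\k e^{\tilde\l-\tilde\mu}$ balances the negative term $-\frac{1-\k}{1+\k}Y$; for $Y<0$ the roles reverse and it is $G_+$ that carries the (simple outgoing) RNG-s found in Theorem~\ref{T:NS}. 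So the first step is to argue that any zero of $G_-$ must lie in $Y>0$, hence represents an ingoing RNG; this follows because for $Y\le 0$ both summands of $G_-(Y)=\sqrt\k e^{\tilde\l-\tilde\mu}+\frac{1-\k}{1+\k}|Y|$ are nonnegative and not simultaneously zero (the exponential is strictly positive on $(\yms,0]$ by the analytic extension in Theorems~\ref{T:LE}--\ref{T:GE}).

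Next I would analyse $Y>0$. On $(0,\Ysp)$, i.e. $x\in(\bar x_\ast,\infty)$, we use $\frac1\k F_\k(Y) = \frac{(1+\k)^2}{(1-\k)^2}\cdot\frac1\k y^{-2}e^{2\mu_\k-2\l_\k}$ together with the Schwarzschild-coordinate identity from Lemma~\ref{L:SCHW}: recall $e^{2\mu-2\l}y^{-2}-1 = \frac{B}{x^2(W+\k)^2}\cdot$(a positive factor) (see~\eqref{E:SONICDEN} and~\eqref{E:BDEF0}), so the sign of $\frac1\k F_\k(Y)-\left(\frac{1+\k}{1-\k}\right)^2$ is governed by $B/(x^2(W+\k)^2)$ — but more precisely I would compute $G_-$ directly. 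At the sonic point $x=\bar x_\ast$ (i.e. $Y=\Ysp$), $B=0$, so $e^{2\mu-2\l}y^{-2}=1$ and $F_\k(\Ysp)=\Ysp^2 e^{2\tilde\mu-2\tilde\l}=\Ysp^2\cdot\frac{(1+\k)^2}{(1-\k)^2}\Ysp^{2\e}\cdot y^{-2-2\e}(e^{2\mu-2\l}) $; unwinding, one checks $\frac1\k F_\k(\Ysp)=\frac{(1+\k)^2}{\k(1-\k)^2}$, which exceeds $\left(\frac{1+\k}{1-\k}\right)^2$ for $\k<1$, so $G_-(\Ysp)<0$. On the other hand, as $Y\to0^+$ (i.e. $x\to\infty$), by Lemma~\ref{L:ROUGHASYMPTOTICS} and Remark~\ref{R:TAYLORR}, $W\to1$, $\chi\to1$, $(\RY Y^{-2})\to\RY_2$, and one reads off from~\eqref{E:FDEF2} that $F_\k(Y)\to0$ (this is~\eqref{E:C1} of Proposition~\ref{P:UNIF}); hence $G_-(Y)=\sqrt\k e^{\tilde\l-\tilde\mu}-\frac{1-\k}{1+\k}Y\to \sqrt\k e^{\tilde\l(0)-\tilde\mu(0)}>0$. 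By the intermediate value theorem there is at least one zero $Y_{\mathcal N}\in(0,\Ysp)$.

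For uniqueness and the sign statements~\eqref{E:AE}--\eqref{E:AE2}, I would show $G_-$ is strictly monotone where it vanishes, or more robustly, that $Y\mapsto \frac1\k F_\k(Y)$ crosses the level $\left(\frac{1+\k}{1-\k}\right)^2$ exactly once on $Y>0$. On the supersonic branch $Y\in(0,\Ysp)$ one has $B<0$ throughout (by Lemma~\ref{L:FLOWERBOUND}/Theorem~\ref{T:GLOBALRIGHT} and its proof, the negativity of $B$ propagates to the right of the sonic point), hence $e^{2\mu-2\l}y^{-2}-1<0$, i.e. $\frac1\k F_\k(Y)\cdot\frac{(1-\k)^2}{(1+\k)^2}<\frac1\k$, which is automatically $>\left(\frac{1-\k}{1+\k}\right)^{-2}\cdot$nothing useful directly — so instead I will differentiate. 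Using the ODE system~\eqref{E:LDEQN2}--\eqref{E:CHIEQN}, compute $\frac{d}{dY}\log F_\k$ in terms of $\RY,\vY,\chi$; on $(0,\Ysp)$ the monotonicity of $W$ (it increases toward $1$) and of $\R$ (decreasing) should force $F_\k$ to be monotone there, giving exactly one crossing. For $Y>\Ysp$ (i.e. $x\in(0,\bar x_\ast)$, the subsonic region, reached via the comoving chart and Theorem~\ref{T:FRIEDMANN}) one has $B>0$, $D>W$, and near $Y\to\infty$, $W\to\frac13$, $\chi\to$ its Friedmann behaviour; here $G_-(Y)=\sqrt\k e^{\tilde\l-\tilde\mu}-\frac{1-\k}{1+\k}Y\to-\infty$ (the linear term dominates, noting $e^{\tilde\l-\tilde\mu}$ grows slower than $Y$), and I would check $G_-<0$ on all of $(\Ysp,\infty)$ by a sign-propagation argument analogous to the one used for $B$, or simply by noting $\frac1\k F_\k>\left(\frac{1+\k}{1-\k}\right)^2$ persists once $B$ becomes positive and stays so. Assembling: $G_-<0$ on $(Y_{\mathcal N},\infty)$, $G_->0$ on $(\yms,Y_{\mathcal N})$, with the unique zero $Y_{\mathcal N}\in(0,\Ysp)$; this $Y_{\mathcal N}$ defines the simple ingoing RNG $\mathcal N$, and since any ingoing simple RNG corresponds to a zero of $G_-$, $\mathcal N$ is the unique one.

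\textbf{Main obstacle.} The delicate point is the global sign control of $G_-$ — equivalently, showing $Y\mapsto\frac1\k F_\k(Y)$ meets the level $\left(\frac{1+\k}{1-\k}\right)^2$ exactly once on $Y\in(0,\infty)$ rather than an odd number of times. Unlike the outgoing case (Theorem~\ref{T:NS}), where one is content with ``at least two, finitely many,'' here uniqueness is asserted, so I cannot simply invoke analyticity plus IVT. I expect the resolution to come from the monotonicity structure: on the supersonic side the monotone approach $W\nearrow1$, $\R\searrow$ (Lemma~\ref{L:APRIORI}) should make $F_\k$ monotone decreasing in $Y$ as $Y\downarrow0$ — combined with $F_\k(\Ysp)$ being strictly above the critical level and $F_\k(0^+)=0$, this pins down exactly one crossing on $(0,\Ysp)$ — while on the subsonic side $(\Ysp,\infty)$ the positivity of $B$ (propagated via Lemma~\ref{L:JUHILEMMA}/Lemma~\ref{L:PREPX}) keeps $\frac1\k F_\k$ above the level, so no further crossing. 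Making the ``$F_\k$ monotone on the supersonic branch'' claim rigorous — carefully differentiating~\eqref{E:FDEF2} along the flow and bounding the resulting expression using the a priori bounds of Section~\ref{S:FARFIELD} — is the step requiring the most care, though it is of the same flavour as computations already carried out in that section.
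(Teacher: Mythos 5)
Your overall scaffold matches the paper's: existence of $Y_{\mathcal N}\in(0,\Ysp)$ by the intermediate value theorem (using $F_\k(\Ysp)=\frac{(1+\k)^2}{(1-\k)^2}$ so $\frac1\k F_\k(\Ysp)>\left(\frac{1+\k}{1-\k}\right)^2$, together with $F_\k(0^+)=0$), the observation that any zero of $G_-$ must lie in $Y>0$ because both terms are nonnegative when $Y\le 0$, and uniqueness via a monotonicity argument. Your remark that on the subsonic side $B>0$ forces $e^{2\mu-2\l}y^{-2}>1$ and hence $\frac1\k F_\k > \frac1\k\frac{(1+\k)^2}{(1-\k)^2} > \left(\frac{1+\k}{1-\k}\right)^2$, i.e.\ $G_-<0$ on $(\Ysp,\infty)$ via the factorisation~\eqref{E:FGGFACTORISATION} and the positivity of $G_+$ for $Y>0$, is correct and is a clean way to handle that region without any monotonicity input.

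The genuine gap, which you honestly flag, is the uniqueness of the crossing on $(0,\Ysp)$. Your plan to differentiate $\log F_\k$ along the flow and argue from ``$W$ increasing, $\R$ decreasing'' is too vague to close this: $F_\k$ itself contains the decaying factor $y^{-2}$ and the sign of $\frac{d}{dY}F_\k$ is not obviously controlled by those monotonicities. The paper instead isolates a different quantity: Lemma~\ref{L:MATHCALHMONOTONE} proves that $\mathcal H(y)=\tr(y)^{-2}\bigl(e^{2\l}y^2 - \frac1\k e^{2\mu}\bigr)$ is strictly increasing on all of $(0,\infty)$, and observes that zeros of $\frac1\k F_\k-\left(\frac{1+\k}{1-\k}\right)^2$ are in bijection with zeros of $\mathcal H$. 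That computation is nontrivial: it differentiates $\mathcal H$ in the comoving self-similar variable $y$, feeds in the relations $\frac{\tr'}{\tr}=\frac{\w+\k}{(1+\k)y}$, \eqref{E:LAMBDASSNEW}, \eqref{E:MUFORMULASS}, the density equation~\eqref{E:DSSEQN}, and the expression~\eqref{E:SONICKEY1} for $e^{2\mu-2\l}$, and then uses the uniform bounds $\frac16\le\w\le 1$ and $\R^{1+\e}x^2\le C$ (via~\eqref{E:PIWX2} and~\eqref{E:PIWX2LB}) to see that $\mathcal H'>0$ for $\k$ small. Without identifying $\mathcal H$ (rather than $F_\k$ or $\log F_\k$) as the combination whose derivative is sign-definite, and without carrying out that computation, the uniqueness claim — and hence the sign statements~\eqref{E:AE}--\eqref{E:AE2} — is not established.
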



\begin{proof}
By Theorem~\ref{T:NS} it suffices to show that there exists a $Y_{\mathcal N}\in(0,\Ysp)$ which solves the 
equation~\eqref{E:FEQN}. By part (c) of Proposition~\ref{P:UNIF} we know that the function $Y\mapsto \frac1\k F_\k(Y)$ converges to $0$ at $Y=0$. On the other hand, from~\eqref{E:FDEFLITTLEY} and~\eqref{E:MUTILDE} we have
\begin{align}\label{E:SHORTERPROOF}
F_\k(Y) & = \frac{(1+\k)^2}{(1-\k)^2} y^{-2} e^{2\mu(y)-2\l(y)} = \frac{(1+\k)^2}{(1-\k)^2} y^{-2} \left(e^{2\mu(y)-2\l(y)}-y^2\right) + \frac{(1+\k)^2}{(1-\k)^2}.
\end{align}
Therefore, at the sonic point $\Ysp$ we conclude $F_\k(\Ysp)=\frac{(1+\k)^2}{(1-\k)^2}$, which implies that $\frac1\k F_\k(\Ysp)$ is larger than $\left(\frac{1+\k}{1-\k}\right)^2$ for all $\k\in(0,1)$. By continuity, there exists an $Y_{\mathcal N}\in(0,\Ysp)$ such that $\frac1\k F_\k(Y_{\mathcal N})= \left(\frac{1+\k}{1-\k}\right)^2$.
Now observe that by~\eqref{E:SHORTERPROOF}, the zeroes of the function $\frac1\k F_\k(Y)-\left(\frac{1+\k}{1-\k}\right)^2$ are in 1-1 relationship with the zeroes of the function $(0,\infty)\ni y\mapsto e^{2\l}y^2 - \frac1\k e^{2\mu}$. 
It is shown in Lemma~\ref{L:MATHCALHMONOTONE} that the map 
$y\mapsto \tr(y)^2\left(e^{2\l}y^2 - \frac1\k e^{2\mu}\right)$ is strictly monotone on $(0,\infty)$ and the uniqueness claim follows. Inequalities~\eqref{E:AE}--\eqref{E:AE2} are a simple consequence of the factorisation~\eqref{E:FGGFACTORISATION}, the above monotonicity, and the positivity of $G_+$ for all $Y>0$.
\end{proof}


\begin{definition}\label{D:EXTERIOR}
We refer to the region in the future of the backward null-curve $\mathcal N$ as the \underline{exterior} region, and the region in the past of the null-curve $\mathcal N$ as the
\underline{interior} region, following here the terminology in~\cite{Ch1994}, see Figure~\ref{F:EXTERIOR}.
\end{definition}



\begin{remark}\label{R:WELLDEFINEDGEODESICS}
For any $(R,\ttau)$ in the exterior region, we note that 
$\sqrt\k e^{\tilde\mu-\tilde\l}-\frac{2\k}{1+\k}Y$ is clearly positive for $Y\in(\yms,0]$. When $Y\in(0,Y_{\mathcal N})$ we may rewrite this expression as $G_-(Y)+\frac{1-3\k}{1+\k}Y$, which is 
then positive by Lemma~\ref{L:YN} for sufficiently small $\k$. 
On the other hand, the expression $\sqrt\k e^{\tilde\mu-\tilde\l}+\frac{2\k}{1+\k}Y$ is clearly positive for $Y\ge0$. If $Y\in(Y_1,0)$, we may rewrite it as $G_+(Y)-\frac{1-3\k}{1+\k}Y$, which 
is then necessarily positive. This shows that the right-hand side of the null-geodesic equation appearing in~\eqref{E:GEQN3} is well-defined in the exterior region $\{Y_1<Y<Y_{\mathcal N}\}$.
\end{remark}


\begin{lemma}[General structure of radial null geodesics]\label{L:INCOMING}
\begin{enumerate}
\item[(a)]
 For any point $(R_0,\tt_0)$ in the interior region,
the future oriented ingoing null curve $\tt\mapsto R(\tt)$ through $(R_0,\tt_0)$ remains in the interior region and intersects the surface $\{R=0\}$ at some $\ttau<0$.
\item[(b)]
For any point $(R_0,\tt_0)$ in the exterior region,
the future oriented ingoing null curve $\tt\mapsto R(\tt)$ exits the exterior region by intersecting $\mathcal B_1$ 
and also intersects $\mathcal B_2$ at positive values of the $R$-coordinate.
\item[(c)]
For any $(R_0,\tt_0)$ in the union of the exterior and the interior region, the outgoing null curve $\tt\mapsto R(\tt)$ through $(R_0,\tt_0)$  exists globally-in-$\ttau$ and $\lim_{\ttau\to\infty}R(\ttau)=\infty$.
Moreover, no such curve can converge to the scaling origin $\mathcal O$ to the past.
\end{enumerate}
\end{lemma}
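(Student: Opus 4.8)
The plan is to reduce the radial null-geodesic equations of Lemma~\ref{L:RNGS} to autonomous scalar ODE-s for the self-similar coordinate $Y$, and then read off the qualitative behaviour from the sign of the functions $G_\pm$ of~\eqref{E:GPMDEF}. Writing $Y=-\sqrt\k\tt/R$ and differentiating the relation $\tt=-YR/\sqrt\k$ along a radial null-geodesic, Lemma~\ref{L:RNGS}(b) gives, after a short computation, $R\,dY/dR=-G_+(Y)$ along outgoing RNG-s and $R\,dY/dR=G_-(Y)$ along ingoing RNG-s; equivalently $dY/d(\log R)=-G_+(Y)$ and $dY/d(\log R)=G_-(Y)$ respectively, which are autonomous. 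Their equilibria are exactly the slopes of the simple RNG-s: the zeros $\yms<Y_n<\dots<Y_1<0$ of $G_+$ (Theorem~\ref{T:NS}) and the unique zero $Y_{\mathcal N}$ of $G_-$ (Lemma~\ref{L:YN}). Hence everything is controlled by the sign chart of $G_\pm$, which I assemble from: Lemma~\ref{L:YN} (so $G_->0$ on $(\yms,Y_{\mathcal N})$, $G_-<0$ on $(Y_{\mathcal N},\infty)$); the factorisation~\eqref{E:FGGFACTORISATION} together with Proposition~\ref{P:UNIF} and the trivial positivity of $G_+$ for $Y\ge0$ (so $G_+>0$ throughout $(Y_1,\infty)$); and the boundary rates $e^{\tilde\l-\tilde\mu}\to+\infty$ as $Y\to\yms$ (Proposition~\ref{P:SINGBEHAVIOUR}), $G_\pm(0)$ finite and positive (Theorem~\ref{T:LE}), and $e^{\tilde\l-\tilde\mu}=o(Y)$ as $Y\to\infty$ (regular centre), which follows from the rates in Remark~\ref{R:TILDRREG}.

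For part~(a), at an interior point $Y_0\in(Y_{\mathcal N},\infty)$ the future-oriented ingoing ray has $R$ decreasing and, since $G_-<0$ there, $Y$ strictly increasing; it therefore can never reach $Y_{\mathcal N}$, so the ray stays in the interior, and the only way to leave $\DRLPtilde$ is $Y\to\infty$, $R\to0$. Feeding the near-centre rate $e^{\tilde\l-\tilde\mu}=o(Y)$ into $d(\log R)/dY=1/G_-(Y)\sim-\tfrac{1+\k}{(1-\k)Y}$ shows that the integral $\int dY/G_-(Y)$ diverges (so $R\to0$) while $\tt=-YR/\sqrt\k\to0$; tracing back to the original comoving chart via $\tau=Y^\e\tt$ identifies this endpoint with a point of the regular centre $\{r=0,\tau<0\}$, i.e. with the surface $\{R=0\}$ reached at a strictly negative value of the comoving time, not with $\mathcal O$. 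For part~(b), at an exterior point $Y_0\in(Y_1,Y_{\mathcal N})$ the future-oriented ingoing ray has $G_->0$, hence $Y$ decreases monotonically; because each $Y_j<0$ forces $G_-(Y_j)>0$, the orbit never stalls at any $Y_j$, so $Y$ sweeps continuously through $Y_1$ and then $Y_2$, i.e. the ray crosses $\mathcal B_1$ and then $\mathcal B_2$; and since $e^{\tilde\l-\tilde\mu}\to+\infty$ at $\yms$ makes $1/G_-(Y)$ integrable there, $\log R$ converges to a finite limit, so $R$ stays strictly positive all the way down to $Y=\yms$ (the ray in fact limits onto the massive singularity), and in particular both crossings occur at positive $R$.

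For part~(c), at any point with $Y_0\in(Y_1,\infty)$ the future-oriented outgoing ray satisfies $dY/d(\log R)=-G_+(Y)$ with $G_+>0$, so $Y$ decreases monotonically towards $Y_1^+$; the zero of $G_+$ at $Y_1$ is non-degenerate, by the transversality provided by the fact (Proposition~\ref{P:UNIF}) that $\tfrac1\k F_\k$ strictly exceeds $(\tfrac{1+\k}{1-\k})^2$ at the local-extension radius and falls below it at $Y=0$; hence $1/G_+(Y)$ is non-integrable at $Y_1$, forcing $\log R\to+\infty$, so $R\to\infty$, and since $Y_1<0$ also $\tt=-YR/\sqrt\k\to+\infty$, which gives global existence to the future with $\lim_{\tt\to\infty}R(\tt)=\infty$. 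Tracing the same ray to the past, $Y$ is strictly increasing, hence strictly separated from every $Y_i$ (as $Y_0\ne Y_i$ and $Y$ is strictly monotone); since self-similarity forces any outgoing ray converging to $\mathcal O$ to have $Y\to Y_i$ for some $i$, and a linearisation of $dY/d(\log R)=-G_+(Y)$ at the non-degenerate equilibrium $Y_i$ together with ODE-uniqueness identifies any such convergent ray with $\mathcal B_i$, our ray cannot converge to $\mathcal O$ to the past.

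The main obstacle is precisely the endpoint bookkeeping just described: converting the (easy) monotone $Y$-dynamics into the assertions about where each ray terminates in the $(\tt,R)$-plane — distinguishing ``hits the regular centre at $\tau<0$'' from ``converges to $\mathcal O$'' in~(a), locating the $\mathcal B_1$- and $\mathcal B_2$-crossings before $R$ vanishes in~(b), and excluding past convergence to $\mathcal O$ in~(c). All three hinge on the sharp asymptotic rates of the metric coefficients at $\yms$, at $Y=0$, and at the regular centre (Propositions~\ref{P:SINGBEHAVIOUR} and~\ref{P:UNIF}, Remark~\ref{R:TILDRREG}), and on first checking, as in Remark~\ref{R:WELLDEFINEDGEODESICS}, that the null-geodesic ODE-s stay regular — the relevant denominators nonzero — along every trajectory under consideration. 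Throughout, the argument parallels the corresponding causal analysis of the Newtonian Larson--Penston solution.
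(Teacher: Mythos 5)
Your reduction to the autonomous $Y$-dynamics $dY/d(\log R)=\mp G_\pm(Y)$ is the same mechanism the paper uses (equation~\eqref{E:RYEQN}, after~\eqref{E:CHANGEOFV}), and the sign chart of $G_\pm$ from Lemma~\ref{L:YN} and Proposition~\ref{P:UNIF} drives the argument in both cases. Parts~(a) and~(b) track the paper's proof closely (modulo your choice to stay in the adapted chart in~(a), where the paper switches to the original $(\tau,R)$ chart since that one covers the centre of symmetry; both work, and your rate $e^{\tilde\l-\tilde\mu}=o(Y)$ does follow from Remark~\ref{R:TILDRREG} and~\eqref{E:MUTILDE}--\eqref{E:LAMBDATILDE}).

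Two points in part~(c) are not actually established by what you wrote.

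First, the claim that the zero of $G_+$ at $Y_1$ is \emph{non-degenerate}. Proposition~\ref{P:UNIF} together with~\eqref{E:FGGFACTORISATION} only provides an intermediate-value argument for the \emph{existence} of a sign change of $\tfrac1\k F_\k-(\tfrac{1+\k}{1-\k})^2$; it says nothing about the multiplicity of the root at $Y_1$, and a higher-order zero is perfectly compatible with the peak-above-and-vanish-at-endpoints picture. The paper sidesteps this: since $G_+$ is real analytic and vanishes at $Y_1$, write $G_+(Z)=C(Z-Y_1)^m(1+O(|Z-Y_1|))$ with $m\in\mathbb N$ and $C>0$; the integral $\int dZ/G_+(Z)$ then diverges at $Y_1$ for \emph{every} finite $m$, which is all that is needed to conclude $\log R\to+\infty$. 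Your conclusion is right, but for a reason that does not depend on the transversality you assert.

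Second, the final sentence of~(c). The assertion ``self-similarity forces any outgoing ray converging to $\mathcal O$ to have $Y\to Y_i$ for some $i$'' is an additional dynamical claim that you neither prove nor reduce to anything in the paper; it is not immediate that a curve approaching $\mathcal O$ must asymptote to a constant-$Y$ ray. The paper's argument is more elementary and avoids this entirely: using~\eqref{E:ROUTGOING} and the boundedness of $G_+$ near $Y=0$, the past-directed outgoing ray from a point with $\ttau_0>0$ crosses the axis $\{Y=0\}=\{\ttau=0\}$ at a strictly positive value of $R$, and by monotonicity of~\eqref{E:ROUTGOING} a ray starting at $\ttau_0\le0$ remains in $\{\ttau\le0\}$; in either case the ray is bounded away from the single point $\mathcal O=(R,\ttau)=(0,0)$. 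You should replace the linearisation/uniqueness argument with this direct crossing observation, or else supply a proof that past-limits onto $\mathcal O$ of null curves are necessarily asymptotically self-similar.
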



\begin{proof}
\noindent
{\em Proof of part (a)}. 
Assume now that $(R_0,\ttau_0)$ is in the interior region. The future oriented ingoing geodesic must stay in the interior
region, as it cannot cross the backward light cone $Y=Y_{\mathcal N}$ by the ODE uniqueness theorem. In the interior region
it is more convenient to switch to the original comoving coordinates $(R,\tau)$ as they also cover the centre of symmetry surface 
$\{(R,\tau)\,\big| \, R=0, \ \tau<0\} = \{(r,\tau), \, \big| \, r=0,\tau<0\}$. Let $(R_0,\tau_0)$ correspond to $(R_0,\ttau_0)$.
The ingoing geodesic equation reads $\pa_\tau R = - e^{\mu-\l}$ and the interior region is characterised by the condition $e^{\mu(y)-\l(y)}>\sqrt\k y$. 
Let now $T=-\log(-\tau)$ for $\tau<0$. We then have $\frac{dy}{dT} =\frac{dy}{d\tau}\frac{d\tau}{dT}  =\left( \frac{R_\tau}{-\sqrt\k \tau} + \frac{R}{\sqrt\k \tau^2}\right) (-\tau)=-\frac{e^{\mu(y)-\l(y)}-\sqrt\k y}{\sqrt\k}.$
In particular $\frac{dy}{dT}<0$,  the right-hand side is smooth, and there are no fixed points of the above ODE on the interval $(0, y_{\mathcal N})$. We wish to show that the time $T$ it takes to reach $y=0$ is finite.
Integrating the above ODE, we see that 
\[
T(y)-T_0 = \int_{y_0}^y\frac{-\sqrt\k}{e^{\mu(\theta)-\l(\theta)}-\sqrt\k \theta}\, d\theta,
\]
where $T_0 = -\log|\tau_0|$, $y_0 = \frac{R_0}{-\sqrt\k \tau_0}$.
Note that $e^{-\l(y)}\asymp_{y\to0^+} \frac1{\tr'(y)}$ by~\eqref{E:LAMBDACONSTRAINTSS}. 
We next use~\eqref{E:TILDERREG} and $e^{\mu(0)}>0$ to conclude that,
as $y\to0^+$, the denominator inside the integral above asymptotes to a constant multiple $y^{-\frac2{3(1+\k)}}$. 
Since the latter is integrable near $y=0$, it follows that $\lim_{y\to0^+}T(y)<\infty$, as desired.

\noindent
{\em Proof of part (b)}. 
Let $(R_0,\tt_0)$ belong to the exterior region. 
We consider the change of variables $\tilde\tau\mapsto Y$ and the particle label $R$ as a function of $Y$. 
By~\eqref{E:YDEF} and the geodesic equations~\eqref{E:GEQN3}, along any null-geodesic we have
\begin{align}
\frac{dY}{d\tilde\tau} & = \frac{d}{d\tilde\tau}\left(-\frac{\sqrt\k \tilde\tau}{R(\tt)}\right)  =  \frac{-\sqrt\k }{R(\tt)}  \left(1-\tt \frac1{R(\tt)}\frac{dR}{d\tt}\right) \notag\\
& = \frac{-\sqrt\k }{R(\tt)} \left(1 \pm \frac1{\sqrt\k} \frac Y{e^{\tilde\l(Y)-\tilde\mu(Y)}\mp\frac{2\sqrt\k Y}{1+\k}}\right) 
 =  -\frac{1}{R(\tt)} \frac{G_\pm(Y)}{ e^{\tilde\l(Y)-\tilde\mu(Y)}\mp\frac{2\sqrt \k Y}{1+\k}}, \label{E:CHANGEOFV}
\end{align} 
where
\be
G_\pm(Y):=\sqrt\k e^{\tilde\l(Y)-\tilde\mu(Y)} \pm \frac{1-\k}{1+\k}Y.
\ee
We note that by Remark~\ref{R:WELLDEFINEDGEODESICS} all the denominators appearing above are nonzero.
Therefore the geodesic equations~\eqref{E:GEQN3} transform into
\begin{align}
\frac{\pm1}{e^{\tilde\l(Y)-\tilde\mu(Y)}\mp\frac{2\sqrt\k Y}{1+\k}}& = \frac{dR}{dY}\frac{dY}{d\tt}
 = \frac{-1}{R(Y)}\frac{G_\pm(Y)}{ e^{\tilde\l(Y)-\tilde\mu(Y)}\mp\frac{2\sqrt \k Y}{1+\k}}\frac{dR}{dY}.
\end{align}
For as long as $R>0$ we can rewrite the above ODE in the form
\begin{align}\label{E:RYEQN}
\frac{d}{dY}(\log R) = \frac{\mp 1}{G_\pm(Y)}.
\end{align}
We remark that by Lemma~\ref{L:YN} the denominator is strictly positive in the exterior region and in the case of 
outgoing geodesics, function $G_+$ is in fact strictly positive for all $Y\in (Y_1,\infty)$ (positivity of $G_+$ characterises
the region ``below" $\mathcal B_1$).

For the ingoing geodesics, the exterior region is invariant by the flow. 
We integrate~\eqref{E:RYEQN} to conclude that (with $Y_\bullet:=-\frac{\sqrt\k \tt_0}{R_0}$)
\begin{align}\label{E:LOGR}
\log R(Y) - \log R_0 =- \int_{Y}^{Y_\bullet} \frac{1}{G_-(Z)}\, dZ.
\end{align}
The map $Z\mapsto G_-(Z)$ is smooth and by Lemma~\ref{L:YN} it is strictly positive on the interval $(\yms, Y_{\mathcal N})$. 
Therefore, $R$ is positive as the ingoing RNG traverses $\mathcal B_1$ and $\mathcal B_2$, as the right-hand side of~\eqref{E:LOGR}
is finite for any $Y>\yms$.


\noindent
{\em Proof of part (c)}. 
Let $(R_0,\ttau_0)$ belong to the exterior region. The outgoing null-geodesic solves the ODE
\[
\frac{d}{dY}(\log R) = \frac{- 1}{G_+(Y)}.
\]
Note that $G_+$ is smooth on $(Y_1,Y_{\mathcal N})$. Since $Y_1<0$ is the largest negative root of $Z \mapsto G_+(Z)$,  the right-hand side above is negative.
It follows that $Y\mapsto R(Y)$ is decreasing on $(Y_1,Y_{\bullet})$, i.e. $R(Y)$ increases as 
$Y$ approaches $Y_1$ from the right.
In particular, the solution exists for all $Y\in (Y_1, Y_{\bullet}]$ by the strict negativity of the right-hand side above. 
We consequently have the formula
\begin{align}\label{E:ROUTGOING}
R(Y) = R_0 \exp\left( \int_{Y}^{Y_\bullet} \frac{1}{G_+(Z)}\, dZ\right), \ \ Y>Y_1.
\end{align}
Since the function $G_+$ is positive for $Z>Y_1$,
if $G(Z)=C(Z-Y_1)^m(1+O(|Z-Y_1|))$ is the first term of Taylor expansion of $G$ at $Y_1$, with $m\in\mathbb N$ (recall that $G$ is real analytic), then necessarily $C>0$. Plugging this expansion into~\eqref{E:ROUTGOING}
and integrating we conclude $\lim_{Y\to Y_1^+}R(Y)=\infty$, which shows that the outgoing null-geodesic asymptotes to $\mathcal B_1$. Since $Y= - \frac{\sqrt{\k}\ttau(Y)}{R(Y)}$ and
 $\lim_{Y\to (Y_1)^+}R(Y)=\infty$, it follows that $\lim_{Y\to (Y_1)^+}\ttau(Y)=\infty$, and therefore the outgoing geodesic exists globally on $[\ttau_0,\infty)$ and $\lim_{\ttau\to\infty}R(\ttau)=\infty$.
 If $(R_0,\ttau_0)$ belongs to the interior region, a similar analysis in the original comoving coordinates $(R,\tau)$ yields the same conclusion.

Since the function $G_+(Z)$ is smooth and bounded in a neighbourhood of $Z=0$, it follows from~\eqref{E:ROUTGOING} that any outgoing geodesic starting at $(R_0,\ttau_0)$ with $\ttau_0>0$  intersects $\{\ttau=0\}$ axis (i.e. $Y=0$)
at a positive value of $R$ to the past. Due to the monotonicity of the flow~\eqref{E:ROUTGOING}, any outgoing geodesic emanating from $(R_0,\ttau_0)$ with $\ttau_0\le0$ remains below $\{\ttau=0\}$ axis to the past.
\end{proof}


\section{Asymptotic flattening of the self-similar profile}\label{S:DOUBLENULL}

The key result of this section is the local well-posedness for the characteristic initial value problem
for the Einstein-Euler system, see Theorem~\ref{thm:LWP}. The idea is to suitably truncate the
self-similar spacetime as described in Section~\ref{SS:DOUBLENULLINTRO}. We work with the double-null formulation, see Section~\ref{SS:DNFORM},
and our starting point is the reformulation of the fluid
evolution equations~\eqref{E:CONTNULL13}--\eqref{E:MOMENTUMNULL13}.



\subsection{Reformulation of the fluid evolution and the effective transport velocity}

We introduce the constant 
\be\label{(2.93)}
k_\pm : = 1\pm \frac{\sqrt{2\e +\e^2}}{1+\e},
\ee 
where we recall $\e=\e(\k)$ is given by~\eqref{E:ETADEF} and from~\eqref{(2.93)} it is clear 
that $k_\pm = 1\pm O(\sqrt{\e})$. 


\begin{lemma}[Reformulation of the Euler equations]\label{L:EULERREFORM}
Assume that $(\rho,u^\nu,r,\Omega)$ is a $C^1$ solution of~\eqref{E:CONTNULL13}--\eqref{E:MOMENTUMNULL13}. 
Let
\begin{align}
 \mathcal U &:=(1+\e) \Om^2 (u^\v)^2, \label{E:TRANSPORTVELDEF}\\
 f^+&: = 
 \frac{(1-\k)^{k_+}}{1+\k} {r^{(2+2\e)k_+-2}\rho^{k_+-1}} (u^\v)^2,  \label{E:FPLUSDEF}\\ 
f^-&: = \frac{1+\k}{(1-\k)^{k_-} } \frac{ r^{2-(2+2\e)k_-} \rho^{1-k_-}}{(u^\v)^2}.\label{E:FMINUSDEF}
\end{align}
Then the new unknowns $f^\pm$ 
satisfy
\begin{align}
\pa_\u f^+ + k_+ \mathcal U \pa_\v f^+ + 2k_+ (2\frac{\pa_\v \Om}{\Om} - 2\e\frac{\pa_\v r}{r})  \mathcal U f^+  & =0, \label{E:FPLUSEVOLUTION}\\
\pa_\u f^- + k_- \mathcal U \pa_\v f^- - 2k_- (2\frac{\pa_\v \Om}{\Om} - 2\e\frac{\pa_\v r}{r})  \mathcal U f^-  & =0. \label{E:FMINUSEVOLUTION}
\end{align}
\end{lemma}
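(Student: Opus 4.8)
The plan is to diagonalise the fluid subsystem \eqref{E:CONTNULL13}--\eqref{E:MOMENTUMNULL13} by passing to Riemann-type invariants adapted to the acoustic geometry. First I would introduce the local shorthand $X := \Om^4 r^2 T^{\u\u}$, $P := \Om^2 r^{2+2\e} T^{\u\v}$, $Z := \Om^4 r^2 T^{\v\v}$, so that \eqref{E:CONTNULL13}--\eqref{E:MOMENTUMNULL13} take the clean first-order form
\begin{align*}
\pa_\u X + \frac{\Om^2}{r^{2\e}}\,\pa_\v P = 0, \qquad \pa_\u P + \frac{r^{2\e}}{\Om^2}\,\pa_\v Z = 0 .
\end{align*}
Using \eqref{E:TPP}, \eqref{E:NORMALISATIONNULL} and the algebraic relation \eqref{E:TCONSTRAINT} -- which together force $1+\e = \tfrac{1+\k}{1-\k}$ -- and the standing positivity $\rho>0$, $u^\v>0$ (implicit in \eqref{E:FPLUSDEF}--\eqref{E:FMINUSDEF}, hence $X,P,Z>0$), one derives the two structural identities
\begin{align*}
XZ = (1+\e)^2\,\Om^4 r^{-4\e}\,P^2 , \qquad \mathcal U = (1+\e)\sqrt{Z/X} ,
\end{align*}
equivalently $Z = (1+\e)^{-2}\mathcal U^2 X$, $\ \Om^2 r^{-2\e} = (1+\e)^{-2}\mathcal U X/P$, and $r^{2\e}\Om^{-2} = (1+\e)^2 P/(\mathcal U X)$. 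Unwinding the definitions \eqref{E:FPLUSDEF}--\eqref{E:FMINUSDEF} with the help of \eqref{E:TPP} and these relations, a direct computation identifies
\begin{align*}
f^+ = \frac{P^{k_+}}{X}, \qquad f^- = \frac{X}{P^{k_-}} .
\end{align*}

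The decisive step is then to differentiate $f^+ = P^{k_+}X^{-1}$ in the $\u$-direction, substituting the evolution equations for $\pa_\u X$, $\pa_\u P$ and the constraint-derived identity $\pa_\v Z = Z\big[\big(4\tfrac{\pa_\v\Om}{\Om} - 4\e\tfrac{\pa_\v r}{r}\big) + 2\tfrac{\pa_\v P}{P} - \tfrac{\pa_\v X}{X}\big]$. After expressing $Z$, $\Om^2 r^{-2\e}$, $r^{2\e}\Om^{-2}$ through $\mathcal U$, $X$, $P$ as above, the combination $\pa_\u f^+ + k_+\mathcal U\,\pa_\v f^+$ becomes $\mathcal U f^+$ times a linear combination of $\tfrac{\pa_\v X}{X}$, $\tfrac{\pa_\v P}{P}$ and $\big(2\tfrac{\pa_\v\Om}{\Om} - 2\e\tfrac{\pa_\v r}{r}\big)$. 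The coefficient of $\tfrac{\pa_\v X}{X}$ is $k_+ - k_+ = 0$; the coefficient of $\tfrac{\pa_\v P}{P}$ equals $k_+^2 - 2k_+ + (1+\e)^{-2}$, which vanishes because, by \eqref{(2.93)}, $k_+$ (and likewise $k_-$) is exactly a root of $k^2 - 2k + (1+\e)^{-2} = 0$. What survives is precisely $-2k_+\big(2\tfrac{\pa_\v\Om}{\Om} - 2\e\tfrac{\pa_\v r}{r}\big)\mathcal U f^+$, which is \eqref{E:FPLUSEVOLUTION}. Running the mirror computation for $f^- = X P^{-k_-}$ -- now using $X = (1+\e)^2 Z\mathcal U^{-2}$ to trade $\pa_\v X$ for logarithmic derivatives of $Z$, $P$, $\Om$, $r$ via the same constraint -- the identical root condition kills the $\tfrac{\pa_\v P}{P}$ terms, while the surviving source term comes out with the opposite sign, giving \eqref{E:FMINUSEVOLUTION}.

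The only genuinely non-mechanical point is recognising the vanishing of the $\tfrac{\pa_\v P}{P}$-coefficient as the defining quadratic for $k_\pm$; everything else is bookkeeping, with two mild pitfalls: one must use \eqref{E:NORMALISATIONNULL} and \eqref{E:TCONSTRAINT} consistently to eliminate $Z$ in favour of $\mathcal U$ and $X$ (respectively $X$ in favour of $\mathcal U$ and $Z$), and one must take positive square roots throughout. I expect the bulk of the routine work to lie in this second step, namely in tracking which logarithmic derivatives of $\Om$, $r$, $P$, $X$ are produced by $\pa_\v Z$ through the constraint before the cancellations can be read off.
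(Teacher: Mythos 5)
Your proposal is correct and follows essentially the same route as the paper's proof: same change of variables ($X,P,Z$ are the paper's $U$, $V$, and $\Om^4 r^2 T^{\v\v}$), same identification $f^+ = P^{k_+}/X$, $f^-=X/P^{k_-}$, and the same observation that the $\pa_\v P/P$ terms cancel precisely because $k_\pm$ are roots of $k^2-2k+(1+\e)^{-2}=0$. The only cosmetic difference is bookkeeping: the paper substitutes the algebraic constraint into the $\pa_\u P$ equation before differentiating $P^k/X$, whereas you keep $Z$ as a name, differentiate the constraint to express $\pa_\v Z/Z$ in terms of logarithmic derivatives of $X,P,\Om,r$, and then track coefficients; both computations produce the identical cancellations.
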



\begin{proof}
Let $U:= \Om^4r^2T^{\u\u}$ and $V:=  \Om^2r^{2+2\e} T^{\u\v} $. Using~\eqref{E:TCONSTRAINT} we rewrite 
$\Om^4r^2T^{\v\v}= (1+\e)^2 \frac{\Om^4}{r^{4\e}} \frac{V^2}{U}$.
Therefore,~\eqref{E:CONTNULL13}--\eqref{E:MOMENTUMNULL13} can be rewritten in the form
\be\label{E:FMINUSEVOLUTION1}
\pa_\u U +\frac{\Om^2}{r^{2\e}} \pa_\v V =0, \quad \pa_\u V +(1+\e)^2  \frac{r^{2\e}}{\Om^2}\pa_\v ( \frac{\Om^4}{r^{4\e}} \frac{V^2}{U} )=0.
\ee
For any $k\in\mathbb R$ we now compute $\pa_p\left(\frac{V^k}{U}\right)$ and thereby use~\eqref{E:FMINUSEVOLUTION1}: 
\begin{align}
\pa_\u (\frac{V^k}{U}) &= k \frac{V^{k-1}}{U} \pa_\u V - \frac{V^k}{U^2}\pa_\u U \notag \\
&=- k (1+\e)^2 \frac{\Om^2}{r^{2\e}} \frac{V}{U} \pa_\v (\frac{V^k}{U}) - 2k (1+\e)^2 \frac{V^{k+1}}{U^2}\pa_\v(\frac{\Om^2}{r^{2\e}})\notag \\
&\quad - k (2-k) (1+\e)^2 \frac{\Om^2}{r^{2\e}} \frac{V^k}{U^2} \pa_\v V + \frac{V^k}{U^2} \frac{\Om^2}{r^{2\e}} \pa_\v V . \label{(2.79)}
\end{align}
We see that  the last line \eqref{(2.79)} vanishes if $k$ is a solution of the quadratic equation
\be\label{(2.92)}
k^2 - 2k+ \frac{1}{(1+\e)^2} =0.
\ee
The two distinct roots $k_\pm$ of~\eqref{(2.92)} are given in~\eqref{(2.93)}, 
and the equation for $\frac{V^{k_{\pm}}}{U}$ reads
\[
\pa_\u (\frac{V^{k_\pm}}{U}) + k_\pm (1+\e)^2 \frac{\Om^2}{r^{2\e}} \frac{V}{U} \pa_\v (\frac{V^{k_\pm}}{U}) +  
2k_\pm (1+\e)^2 \frac{V^{k_\pm+1}}{U^2}\pa_\v(\frac{\Om^2}{r^{2\e}}) = 0 . 
\]
When $k=k_+>1$, we keep $\frac{V^{k_+}}{U}$ as the unknown.
When however $k=k_-<1$, we work with $\frac{U}{V^{k}}$ instead, to avoid singularities for small values of $\rho$.
From above we obtain the equation
\[
\pa_\u (\frac{U}{V^{k_-}}) + k_- (1+\e)^2 \frac{\Om^2}{r^{2\e}} \frac{V}{U} \pa_\v (\frac{U}{V^{k_-}}) - 2k_- (1+\e)^2 V^{1-k_-}\pa_\v(\frac{\Om^2}{r^{2\e}}) = 0. 
\]
Going back to original variables, note that $\frac{V}{U}= \frac{r^{2\e}}{(1+\e) \Om^4 (u^\u)^2}$ and 
$\frac{V^k}{U} = \frac{(1-\k)^k}{1+\k} \frac{r^{(2+2\e)n-2}\rho^{k-1}}{\Om^4 (u^\u)^2}$ so that 
\begin{align*}
(1+\e)^2\frac{\Om^2}{r^{2\e}}\frac{V}{U} &= \frac{(1+\e)}{ \Om^2 (u^\u)^2} =(1+\e) \Om^2 (u^\v)^2= \mathcal U, \\
\frac{V^{k_+}}{U} &=  \frac{(1-\k)^{k_+}}{1+\k} \frac{r^{(2+2\e)k_+-2}\rho^{k_+-1}}{\Om^4 (u^\u)^2} = f^+, \\
\frac{U}{V^{k_-}} &= \frac{1+\k}{(1-\k)^{k_-} } \Om^4 (u^\u)^2 r^{2-(2+2\e)k_-} \rho^{1-k_-} = f^-,
\end{align*} 
where we recall~\eqref{E:TRANSPORTVELDEF}--\eqref{E:FMINUSDEF}.
\end{proof}

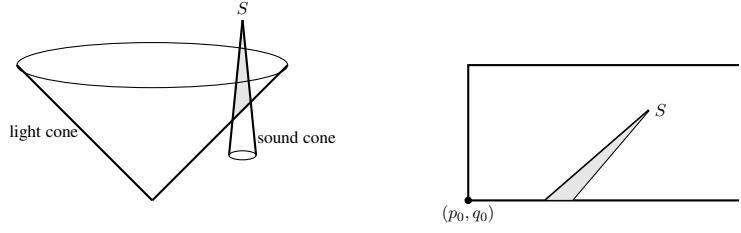
\begin{figure}

\begin{center}
\begin{tikzpicture}
\begin{scope}[scale=0.6, transform shape]

\coordinate [label=above:$S$](S) at (2,4);

\coordinate (S1) at (1.7,1);

\coordinate (S2) at (2.3,1);

\draw [fill=gray!20, dashed] (S) -- (S1) -- (S2);

\coordinate [] (A) at (0,0){};

\coordinate [] (B) at (3,0){};

\coordinate (C) at (-3,0){};

\coordinate (D) at (0,3){};






\coordinate (C1) at (3,3);

\coordinate (C2) at (-3,3);


\draw [fill=white,white] (A) -- (B) -- (C1);

\node at (-2.4,1.5) {light cone};

\draw[thick] (A)--(C1);

\draw[thick] (A)--(C2);

\draw[thick] (S)--(S1);

\draw[thick] (S)--(S2);

\node at (3.2,1.4) {sound cone};

 
\coordinate [](L0) at (7,0);

\node at (7,-0.3) {$(\u_0,\v_0)$};

\draw[fill=black] (7,0) circle (2pt);

\coordinate (L1) at (13,0);

\coordinate (L2) at (7,3);

\coordinate (L3) at (13,3);

\draw [thick] (L1) -- (L0) -- (L2) -- (L3);

\coordinate [label=right:$S$] (SD0) at (11,2){};

\coordinate [] (SD1) at (8.7,0);

\coordinate [] (SD2) at (9.3,0);

\draw [thick] (SD0) -- (SD1);

\draw [thick] (SD0) -- (SD2);

\draw [fill=gray!20] (SD0) -- (SD1) -- (SD2);

\end{scope}

\begin{scope}[scale=0.6, transform shape]
  \draw[] (0, 3) ellipse (3 cm and 0.5 cm);
\end{scope}

\begin{scope}[scale=0.6, transform shape]
  \draw[] (2, 1) ellipse (0.3 cm and 0.1 cm);
\end{scope}

 \end{tikzpicture}
   \end{center}
    \caption{The grey shaded area in the infinite rectangular region $\D$ is a schematic depiction of the region bounded by the backward fluid characteristics emanating from a point $S\in\mathcal D$. The opening angle is of order $\sqrt\k\ll1$, which of course is precisely the speed of sound.}
\label{F:SOUNDVERSUSLIGHT}
\end{figure}


\begin{remark} \label{R:OLDNEWFLUID}
It follows from~\eqref{E:FPLUSDEF}--\eqref{E:FMINUSDEF} that 
\be
 \rho =\frac1{1-\k} \frac{ (f^+  f^-)^{\frac{1}{k_+-k_-}}}{r^{2+2\e}}. \label{2.99}
\ee 
Moreover, from~\eqref{E:NORMALISATIONNULL} and~\eqref{E:FPLUSDEF}--\eqref{E:FMINUSDEF} we have
 $\frac{f^+}{f^-} = \frac{1}{(1+\e)^2} r^{4\e} (u^\v)^4$, which leads to the relation 
\be\label{2.100}
 \mathcal U =(1+\e)\Om^2 (u^\v)^2= (1+\e)^2 \frac{\Om^2}{r^{2\e}} \left( \frac{f^+}{f^-} \right)^\frac12.  
\ee  
\end{remark}


\subsection{Statement of the local well-posedness theorem}\label{SS:LWPSTATEMENT}


In order to flatten the the $\grlp$ metric at asymptotic infinity, we shall treat the system~\eqref{E:RWAVE1}--\eqref{E:OMEGAWAVE1} and~\eqref{E:FPLUSEVOLUTION}--\eqref{E:FMINUSEVOLUTION} as the evolutionary part
and equations~\eqref{E:CONSTRAINTU1}--\eqref{E:CONSTRAINTV1} as the constraints. 

\subsubsection{Fixing the choice of double-null coordinates}

We now fix a choice of double-null coordinates which will then be used to dampen the tails of the solution and produce an asymptotically flat spacetimes containing a naked singularity. 
Let $(\tilde\tau_0, R_0)\in \DRLPtilde$ be a given point in the exterior region (see Definition~\ref{D:EXTERIOR} and~\eqref{E:MATHCALDTILDE} for the definition of $\DRLPtilde$).
Through $(\tilde\tau_0, R_0)$ we consider the ingoing null-curve which intersects the outgoing null-curve $\mathcal B_1$ 
(given by $R = \frac{\sqrt\k}{|Y^1|} \tilde\tau$) 
at some $(R_1,\ttau_1)$ where by Lemma~\ref{L:INCOMING}, $R_1>0$. We then fix the  null-coordinate $\u$ by 
demanding that 
\be\label{E:NORMIN}
\u = - 2(r-r_1) \ \ \text{along the ingoing null-geodesic  through $(R_0, \tilde\tau_0)$, } \ \ 
r_1 : = \tr(R_1,\ttau_1);
\ee
and demanding that the level sets of $\u$ correspond to outgoing null-geodesics.
The choice~\eqref{E:NORMIN} normalises $\mathcal B_1$ to correspond to the hypersruface $\{\u = 0\}$ in the RLP-spacetime. Note that in the RLP spacetime, 
the ingoing curve through $(\tilde\tau_0, R_0)$ terminates at the massive singularity $\MS$ to the future. 

Let now the outgoing null geodesic through $(\tilde\tau_0, R_0)$ intersect the surface $\mathcal N$ (boundary of the past of the scaling origin $\mathcal O$) at $(R_\ast,\ttau_\ast)$. We then fix the null coordinate $\v$ by 
demanding that 
\be\label{E:NORMOUT}
\v =  2(r-r_\ast) \ \  \text{along the outgoing null-geodesic through $(\tilde\tau_0, R_0)$,} \ \ r_\ast : = \tr(R_\ast,\ttau_\ast);
\ee
and demanding that the level sets of $\v$ correspond to ingoing null-geodesics.
The normalisation~\eqref{E:NORMOUT} makes the surface $\mathcal N$ correspond to the hypersurface $\{\v = 0\}$.
A more detailed description of the RLP-spacetime in this double-null gauge is given in Lemma~\ref{L:RLPDN}.

Let $(\u_0,\v_0)$ be the point $(\ttau_0,R_0)$ in the above double-null gauge. 
We shall consider the seminfinite rectangular domain 
\be
\mathcal D : =\{(\u,\v): \u_0 < \u < 0, \ \  \v>\v_0 \},
\ee
where $|\u_0|>0$ is sufficiently small, with data prescribed on the set 
$
\underline{\mathcal C} \cup \mathcal C,
$
where 
\begin{align}
\underline{\mathcal C} &: = \left\{(\u,\v) \ \Big| \  \v=\v_0, \ \ \u\in[\u_0,0]\right\}, \\
\C & : = \left\{(\u,\v) \ \Big| \ \u=\u_0, \ \ \v\in[\v_0,\infty)\right\}
\end{align} 
correspond to the
ingoing and the outgoing null-curves emanating from $(\u_0,\v_0)$ respectively. See Figure~\ref{F:CUTOFF}.


\subsubsection{Norms and local well-posedness}


Fix $N_\pm>0$ so that  
\be\label{2.122}
N_+ = N_- - 4\e , \quad N_- > \frac{k_+-k_-}{2} + 2\e  
\ee
and let $0<\theta\ll 1$ be a small  fixed constant. 
For any $f^\pm \in W^{2,\infty}(\overline{\mathcal D}) $ and $(\Om,\frac{r}{\v})\in W^{3,\infty}(\overline{\mathcal D})$, we define the total norm by 
\begin{align}\label{E:TRIPLENORM}
\vertiii{[f^\pm, \Om, r]}&:= \vertiii{[f^\pm]} + \vertiii{[\Om, r]}
\end{align}
where 
\begin{align}
 \vertiii{[f^\pm]}&:=\|\log (\v^{N_+}  f^+) \|_{\infty} +\sum_{j=1}^2\|\v^{j} \pa_\v^j \log f^+ \|_{\infty}\\
 &\quad+\|\log (\v^{N_-}  f^-) \|_{\infty} + \sum_{j=1}^2\|\v^{j} \pa_\v^j \log f^- \|_{\infty},\notag 
 \end{align}
 \begin{align}
 \vertiii{[\Om, r]} &=
\|\log \Om\|_\infty + \sum_{j=1}^3  \|\v^{j+\theta}\pa_\v^j \log \Om \|_{\infty} \\
&\quad+\sum_{j=0}^3 \|\v^{j-2} \pa_\v^j (r^2) \|_{\infty}   
+\left\| \frac{\v^2}{r^2}\right\|_\infty + \left\| \frac{\v}{\pa_\v(r^2)}\right\|_\infty,
\end{align}
and the data norm 
\begin{align*}
\vertiii{[f^\pm, \Om, r]|_{\uC \cup \C}}&:=\vertiii{[f^\pm]|_{\uC \cup \C}}+\vertiii{[\Om, r]|_{\uC \cup \C}} \\
&:=\|\log(\v^{N_+}  f^+) |_{\uC \cup \C}\|_{\infty}+ \sum_{j=1}^2\|\v^{j} \pa_\v^j \log f^+ |_{\uC \cup \C}\|_{\infty}\\
&\quad+\|\log(\v^{N_-}  f^-) |_{\uC \cup \C}\|_{\infty}+ \sum_{j=1}^2\|\v^{j} \pa_\v^j \log f^- |_{\uC \cup \C}\|_{\infty}\\
&\quad+\|\log \Om |_{\uC \cup \C} \|_\infty + \sum_{j=1}^3 \|\v^{j+\theta}\pa_\v^j \log \Om |_{\uC \cup \C} \|_{\infty} \notag\\
&\quad
+ \sum_{j=0}^3 \|\v^{j-2} \pa_\v^j (r^2)  |_{\uC \cup \C}\|_{\infty}+\left\| \frac{\v^2}{r^2}\Big|_{\uC \cup \C} \right\|_\infty + \left\| \frac{\v}{\pa_\v(r^2)}\Big|_{\uC \cup \C} \right\|_\infty\\
&\quad +\|\v^{-1}\pa_\u( r^2) |_{\uC }\|_\infty+ \left\| \frac r2 \left(1+  \frac{4\pau r \pav r}{\Omega^2}\right)\Big|_{\uC} \right\|_\infty\notag
\end{align*}

We choose the data $(\hat f^\pm,\hat r,\hat \Om)$ on $\uC$ to coincide with the corresponding data obtained by restricting the RLP-solution to $\uC$. 
Let $A_0>q_0$ be a real number to be specified later. 
\begin{itemize}
\item[(i)]
The data on $\C$ is chosen so that  $[\hat{f}^\pm,\hat{\Om},\hat{r}]$ coincide with the exact self-similar $\grlp$ solution
on the segment $\{(\u,\v) \ \Big| \u = \u_0, \ \v\in[q_0,A_0]\}$;
\item[(ii)]  $\hat{f}^\pm\in W^{2,\infty}(\C\cup\uC)$, and $\hat\Om, \frac{\hat r}{\v}\in  W^{3,\infty}(\uC \cup \C)$ with $\vertiii{[f^\pm, \hat\Om, \hat r]|_{\uC \cup \C}} <\infty$ and 
$\hat f^\pm, \hat\Om, \hat r>0$;
\item[(iii)] the constraint equations~\eqref{E:CONSTRAINTV1}--\eqref{E:CONSTRAINTU1} hold on $\C$ and $\uC$ respectively.
\end{itemize}
Due to the choice of the double-null gauge~\eqref{E:NORMIN}--\eqref{E:NORMOUT}, the metric coefficient $\hat r$ is determined along $\C$. In order to impose the constraint~\eqref{E:CONSTRAINTV1}
we can solve it for  $\hat\Om$. By~\eqref{E:NORMOUT} we have $\pa_\v r=\frac12$ and by~\eqref{E:TPP} and Remark~\ref{R:OLDNEWFLUID} we may rewrite~\eqref{E:CONSTRAINTV1} as
\begin{align}
\frac12\pa_\v(\Omega^{-2}) = - \frac{\pi}{r}(f^+f^-)^{\frac1{k_+-k_-}}\left(\frac{f^-}{f^+}\right)^{\frac12} \Omega^{-2}.
\end{align}
Moving the copy of $\Omega^{-2}$ to the left-hand side and then integrating, we obtain the formula
\begin{align}\label{E:OMEGAOUTGOING}
\Omega(\u_0,\v) = \Omega(\u_0,\v_0) \exp\left( \int_{\v_0}^{\v}  \frac{2\pi}{s+2r_\ast} (f^+f^-)^{\frac1{k_+-k_-}}\left(\frac{f^-}{f^+}\right)^{\frac12} \,ds\right).
\end{align}


\begin{remark}[The Hawking mass]
We recall the Hawking mass introduced in~\eqref{E:HAWKINGCOMOVING}. It can 
be alternatively expressed via the formula
\begin{align}\label{E:HAWKINGMASSDEF}
m = \frac r2 \left(1+  \frac{4\pau r \pav r}{\Omega^2}\right).
\end{align}
Using~\eqref{E:RWAVE1}--\eqref{E:CONSTRAINTU1} one can show that for classical solutions of the Einstein-Euler system we have the identities
\begin{align}
\pau m & = 2\pi r^2 \Om^{2}\left(T^{\u\v}\pau r - T^{\v\v}\pav r\right), \\
\pav m & = 2\pi r^2 \Om^{2}\left(T^{\u\v}\pav r - T^{\u\u}\pau r\right), \label{E:PAVM}
\end{align}
see for example Section 1.2 of~\cite{Da2005}. 
Using~\eqref{E:TPP} we may  rewrite the right-hand side of~\eqref{E:PAVM} as $2\pi r^2 (1-\k)\rho\left(\pa_\v r - (1+\e)\Om^2 (u^{\u})^2 \pa_{\u} r\right)$. Integration 
along a constant $\u$-slice then gives
\begin{align}\label{E:HM}
m(\u,\v) = m(\u,\v_0) + 2\pi (1-\k) \int_{\v_0}^\v r^2 \rho \left(\pa_\v r - (1+\e)\Om^2 (u^{\u})^2 \pa_{\u} r\right) \, ds.
\end{align}
%
\end{remark}

We now state the main local existence and uniqueness theorem for the characteristic problem described above.

\begin{theorem}\label{thm:LWP} There exist sufficiently small $\delta_0>0$ such that for any $\delta\in (0,\delta_0)$ and $\u_0=-\delta$, with initial boundary data satisfying {\em (i)-(iii)} above,
 there exists a unique asymptotically flat solution $[f^\pm,\Om,r]$ to the system 
\eqref{E:RWAVE1}--\eqref{E:OMEGAWAVE1} and \eqref{E:FPLUSEVOLUTION}--\eqref{E:FMINUSEVOLUTION}, 
with $f^\pm\in  W^{2,\infty}(\overline{\mathcal D})$ , $[\Om,\frac{r}{\v}]\in W^{3,\infty}(\overline{\mathcal D})$, and 
such that $\vertiii{[f^\pm, \Om, r]}<\infty$. 
Moreover, this solution is a solution of the original system~\eqref{E:RWAVE1}--\eqref{E:MOMENTUMNULL13}.
\end{theorem}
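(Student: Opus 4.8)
The plan is to set up a Picard-type iteration in the characteristic rectangle $\mathcal D$ for the evolutionary system, namely the pair of transport equations~\eqref{E:FPLUSEVOLUTION}--\eqref{E:FMINUSEVOLUTION} for $f^\pm$ coupled to the two wave equations~\eqref{E:RWAVE1}--\eqref{E:OMEGAWAVE1} for $r$ and $\Omega$, and then to recover the constraints~\eqref{E:CONSTRAINTV1}--\eqref{E:CONSTRAINTU1} by propagation. First I would define the iteration map: given $(f^\pm_{(n)},\Omega_{(n)},r_{(n)})$, form the effective transport velocity $\mathcal U_{(n)}$ via~\eqref{2.100}, integrate~\eqref{E:FPLUSEVOLUTION}--\eqref{E:FMINUSEVOLUTION} along the characteristics of $k_\pm\mathcal U_{(n)}$ (these are the fluid/sound characteristics, whose analysis lives in Section~\ref{SS:CHARACTERISTICS}) to produce $f^\pm_{(n+1)}$, then integrate the wave equations~\eqref{E:RWAVE1}--\eqref{E:OMEGAWAVE1} — which in characteristic coordinates are first order in each of $\partial_\u,\partial_\v$ after writing them as transport equations for $\partial_\u r$ and $\log\Omega$ — using the source built from $T^{\u\v}=(1-\k)\rho\Omega^{-2}$ and $\rho$ expressed through~\eqref{2.99}. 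The boundary data on $\uC$ (restriction of the RLP solution) and on $\C$ (self-similar on $[\v_0,\v_0+A_0]$, asymptotically flat thereafter, with $\Omega$ on $\C$ fixed by the constraint formula~\eqref{E:OMEGAOUTGOING}) provide the Cauchy data at each step.

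\medskip
The second step is the closing of the a priori estimates in the norm $\vertiii{[f^\pm,\Omega,r]}$ from~\eqref{E:TRIPLENORM}. This is where the weights $\v^{N_\pm}$, $\v^{j}$, $\v^{j+\theta}$, $\v^{j-2}$ are essential: the decay rates $N_\pm$ are chosen (cf.~\eqref{2.122}, $N_+=N_--4\e$, $N_->\frac{k_+-k_-}{2}+2\e$) precisely so that the nonlinear source terms in~\eqref{E:FPLUSEVOLUTION}--\eqref{E:FMINUSEVOLUTION} — which involve $\frac{\pa_\v\Omega}{\Omega}-2\e\frac{\pa_\v r}{r}$ times $\mathcal U f^\pm$, and $\mathcal U\sim \Omega^2 r^{-2\e}(f^+/f^-)^{1/2}\sim \v^{-2+\ldots}$ — are integrable in $\v$ and reproduce the same decay. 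I would run these estimates in Section~\ref{SS:APRIORIPDE} style: integrate each transport/wave equation along its characteristic from the data on $\uC\cup\C$, take logarithms where the norm does (so multiplicative structure becomes additive), and use the smallness of $|\u_0|=\delta$ to absorb the contributions picked up while crossing $\mathcal D$ in the $\u$-direction; the $r^2$-estimates use the near-linear behaviour $r^2\sim\v^2$ encoded by the quantities $\v^2/r^2$ and $\v/\pa_\v(r^2)$ in the norm, which is consistent because $\pa_\v r=\tfrac12$ on the initial outgoing cone by the gauge~\eqref{E:NORMOUT}. For the contraction one differences two iterates and runs the same weighted estimates on the differences; the gain of a factor $\delta$ (or of a factor in $A_0^{-1}$, or of a genuine contraction constant from the integrability in $\v$) makes the map a contraction on the closed ball of the Banach space $W^{2,\infty}$ for $f^\pm$ and $W^{3,\infty}$ for $(\Omega,r/\v)$ cut out by $\vertiii{\cdot}\le M$ for suitable $M$.

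\medskip
The third step is to show that the fixed point, which by construction solves the evolutionary system, also solves the full Einstein--Euler system~\eqref{E:RWAVE1}--\eqref{E:MOMENTUMNULL13}. Here the standard argument applies: the constraints~\eqref{E:CONSTRAINTV1} and~\eqref{E:CONSTRAINTU1} are imposed on the initial cones $\C$ and $\uC$ respectively (hypothesis (iii)), and one checks that the quantities $\pa_\v(\Omega^{-2}\pa_\v r)+\pi r\Omega^2 T^{\u\u}$ and $\pa_\u(\Omega^{-2}\pa_\u r)+\pi r\Omega^2 T^{\v\v}$ satisfy homogeneous linear transport equations as a consequence of the evolution equations plus the twice-contracted Bianchi identity (equivalently~\eqref{E:CONTNULL13}--\eqref{E:MOMENTUMNULL13}, which by Lemma~\ref{L:EULERREFORM} are equivalent to~\eqref{E:FPLUSEVOLUTION}--\eqref{E:FMINUSEVOLUTION}); vanishing initial data then forces them to vanish throughout $\mathcal D$, so $f^\pm$ indeed decode a genuine fluid $(\rho,u^\nu)$ via~\eqref{2.99}--\eqref{2.100} and~\eqref{E:NORMALISATIONNULL}. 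Uniqueness follows from the contraction estimate applied to two solutions with the same data. Asymptotic flatness is read off from the weighted norm together with the Hawking mass formula~\eqref{E:HM}: the $\v$-decay of $f^\pm$ and hence of $\rho$ makes the integral in~\eqref{E:HM} converge, so $m$ has a finite limit along $\{\u=\u_0\}$. The main obstacle I anticipate is the control of the fluid characteristics and the resulting domain-of-dependence bookkeeping in the semi-infinite (non-compact) strip: one must verify that the characteristics of $k_\pm\mathcal U$ emanating from any point of $\mathcal D$ reach the data cones $\uC\cup\C$ (Figure~\ref{F:SOUNDVERSUSLIGHT} — the sound cone has opening $\sim\sqrt\k$, much narrower than the light cone), that they stay within $\mathcal D$, and that the integration-in-$\v$ to infinity along $\C$ is uniformly controlled; this is what forces the particular choice of weights and the smallness of $\delta$, and it is the delicate part of both the a priori estimate and the contraction.
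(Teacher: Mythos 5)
Your proposal follows essentially the same strategy as the paper (alternating Picard iteration for the fluid transport and the metric wave equations, uniform bounds in the weighted norm $\vertiii{\cdot}$, contraction, constraint propagation, asymptotic flatness via the Hawking mass), and much of the bookkeeping you describe is exactly what appears in Sections~\ref{SS:CHARACTERISTICS}--\ref{SS:LWPPDE}. However, there are two technical points that your proposal glosses over and that the paper addresses in a specific way.

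First, the contraction. You write that differencing two iterates and ``running the same weighted estimates'' gives a contraction ``on the closed ball of the Banach space $W^{2,\infty}\times W^{3,\infty}$ cut out by $\vertiii{\cdot}\le M$.'' As stated, this would not close: the transport equation for the difference $\log f^\pm_{n+1}-\log f^\pm_n$ contains the forcing term $k_\pm(\mathcal U_n-\mathcal U_{n-1})\pa_\v\log f^\pm_n$, so bounding the top-order $\v^2\pa_\v^2$ of the difference requires one more $\v$-derivative of $\mathcal U_n-\mathcal U_{n-1}$, i.e.\ of the previous-step difference, than is available -- the familiar loss of a derivative in quasilinear hyperbolic iterations. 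The paper therefore proves uniform boundedness of the iterates in the full norm $\vertiii{\cdot}$ (Proposition~\ref{prop:sol}) but proves the contraction only in the \emph{lower-order} difference norm
\[
a_{n+1}=\|\triangle f_{n+1}^\pm\|_\infty+ \|\v^{-2}{\triangle r_{n+1}}\|_\infty+ \|\v^{-1}{\pa_\v\triangle r_{n+1}}\|_\infty+\|\triangle \Om_{n+1}\|_\infty +
\|\v^{1+\theta}\pa_\v\triangle \Om_{n+1}\|_\infty
\]
(Proposition~\ref{prop:diff}), and then recovers the $W^{2,\infty}\times W^{3,\infty}$ membership of the limit by interpolation and weak-$*$ compactness. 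Your plan should be amended to the ``high-norm boundedness, low-norm contraction'' scheme.

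Second, the source in the $\log\Omega$ wave equation. Equation~\eqref{E:OMEGAWAVE1} contains $\frac{1}{r^2}\pau r\,\pav r$, which is not expressible through $\rho$ or $T^{\u\v}$ alone, yet the norm $\vertiii{\cdot}$ deliberately controls only $\v$-derivatives of $r$. The paper resolves this by rewriting~\eqref{E:OMEGAWAVE1} in terms of the Hawking mass, $\pa_\u\pa_\v\log\Omega= \frac{\Om^2}{2r^2}\frac{m}{r} - (1+\e)\pi \frac{\Om^2}{r^{2+2\e}}(f^+  f^-)^{\frac{1}{k_+-k_-}}$ (cf.~\eqref{recast2}), and by \emph{redefining} $m$ via the integral formula~\eqref{def_m}, in which $\pa_\u r$ is replaced by the auxiliary quantity $\mathfrak{s}$ of~\eqref{mathfraks} obtained by integrating~\eqref{E:RPQ} in $\v$ from data on $\uC$. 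This is what makes the $\Omega$-estimate compatible with a $\v$-only norm hierarchy, and Lemma~\ref{lem:est_m} supplies the corresponding weighted bounds on $m$. You should flag this as a nontrivial preparatory step, since naively inserting $\pau r\pav r$ into the $\Omega$-equation estimate breaks the function-space bookkeeping you set up.
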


We shall prove Theorem~\ref{thm:LWP} by the method of characteristics. Our strategy is to first solve the fluid evolution equations for $f^\pm$ given the effective fluid velocity $\mathcal U$ and the metric components $\Om,r$. 
Then we will feed that back into the wave equations~\eqref{E:RWAVE1}--\eqref{E:OMEGAWAVE1} for the metric components to obtain the bounds for $[r,\Om]$. To make this strategy work, in Section~\ref{SS:CHARACTERISTICS}
we carefully look at the characteristics associated with the fluid evolution. After collecting some preparatory a priori bounds in Section~\ref{SS:APRIORIPDE}, in Section~\ref{SS:LWPPDE} we use an iteration scheme to conclude the 
proof of Theorem~\ref{thm:LWP}.

\subsection{Characteristics for the fluid evolution}\label{SS:CHARACTERISTICS}

Let $\mathfrak{\v}_\pm(s)=\mathfrak{\v}_\pm (s; \u,\v)$ be the backward characteristics associated with the speeds $k_\pm \mathcal U$ such that 
\begin{align}
\frac{d \mathfrak{\v}_\pm}{ds}(s; \u,\v)& = k_\pm \mathcal U (s,  \mathfrak{\v}_\pm(s;\u,\v)), \ \  s<\u,  \label{2.103}  \\
\mathfrak{\v}_\pm (\u; \u,\v) &= \v .  \label{2.104}
\end{align} 

Since our solutions as well as $\mathcal U$ reside in the domain $\mathcal D$ and the boundary, we track the backward characteristics $(s, \mathfrak{\v}_\pm(s))$ until they leave the domain. In the next lemma, we show the existence and regularity properties of $\mathfrak{\v}_\pm(s)$ and exit time $\u_*=\u_*(\u,\v)$ and position $\v_*=\v_*(\u,\v)$. 


\begin{lemma}\label{lem:Char} Let $\ell\in \N$. Suppose $\mathcal U\in C^\ell(\overline{\mathcal D})$ or $W^{\ell,\infty}(\overline{\mathcal D})$ and $\frac{1}{C_0}< \mathcal U < C_0$ on $\overline{\mathcal D}$ for some $C_0>1$. Then for any given $(\u,\v)\in \mathcal D$, there exist a unique exit time $\u_*=\u_*(\u,\v)$ and position $\v_*=\v_*(\u,\v)$ such that
\begin{enumerate}
\item A unique solution $\mathfrak{\v}_\pm  \in C^\ell((\u_*, \u]; C^{\ell}( \mathcal D))$ or $C^\ell((\u_*, \u]; W^{\ell,\infty}( \mathcal D))$ of \eqref{2.103} and \eqref{2.104} exists so that  
$(s,\mathfrak{\v}_\pm (s; \u,\v) )\in \mathcal D$. 
\item At $s=\u_*$, $(\u_*, \v_* ) \in \uC\cup\C$ where $\v_*=\mathfrak{\v}_\pm (\u_*; \u,\v)$ and it satisfies 
\be\label{exit}
\v - \v_* = \int_{\u_*}^\u k_\pm \mathcal U ( s,  \mathfrak{\v}_\pm( s;\u,\v))\,d s. 
\ee
If $\u_*>\u_0$ then $\v_*=\v_0$. 
\item $(\u,\v)\mapsto \u_*(\u,\v)\in C(\mathcal D)$.
\item If $\u_*\neq \u_0$, $\u_* \in C^\ell(\mathcal D)$ or $W^{\ell,\infty}(\overline{\mathcal D})$.  
 \end{enumerate}
In particular, if $\v>\v_0 +k_\pm C_0 (\u-\u_0)$, then $(\u_*,\v_*) = (\u_0,\v_\ast) \in \C$. 
\end{lemma}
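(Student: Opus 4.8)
The plan is to prove Lemma~\ref{lem:Char} by a standard ODE contraction/continuation argument, adapted to the fact that the characteristics live in the closed rectangle $\overline{\mathcal D}$ and may exit through either the left boundary $\uC\subset\{\v=\v_0\}$ or the bottom boundary $\C\subset\{\u=\u_0\}$. First I would set up local existence and uniqueness for the backward problem \eqref{2.103}--\eqref{2.104}: since $\mathcal U$ is Lipschitz on $\overline{\mathcal D}$ (being $C^\ell$ or $W^{\ell,\infty}$ with $\ell\geq 1$) and bounded between $\frac1{C_0}$ and $C_0$, the Picard--Lindel\"of theorem gives a unique $C^1$ solution $s\mapsto \mathfrak{\v}_\pm(s;\u,\v)$ on a maximal interval of existence inside $\mathcal D$. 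The key monotonicity observation is that $\frac{d\mathfrak{\v}_\pm}{ds}=k_\pm\mathcal U>0$, so $\mathfrak{\v}_\pm$ is strictly increasing in $s$; hence going backward ($s$ decreasing from $\u$) the $\v$-coordinate strictly decreases, while the $\u$-coordinate also decreases. This means the backward characteristic can only exit $\mathcal D$ through $\{\v=\v_0\}$ (if $\mathfrak{\v}_\pm$ hits $\v_0$ first) or through $\{\u=\u_0\}$ (if $s$ reaches $\u_0$ first); it cannot escape through $\{\v\to\infty\}$ or re-enter. Define $\u_*$ as the infimum of times $s$ for which the solution stays in $\overline{\mathcal D}$, and $\v_*=\mathfrak{\v}_\pm(\u_*;\u,\v)$. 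Because $\mathcal U\leq C_0$, we have the a priori bound $|\mathfrak{\v}_\pm(s)-\v|\leq k_\pm C_0|\u-s|$, so the characteristic cannot blow up in finite $s$-time; combined with compactness of $\overline{\mathcal D}$, the exit is well-defined and $\u_*\geq \u_0$, proving (1) and (2), with \eqref{exit} obtained by integrating \eqref{2.103} from $\u_*$ to $\u$. The dichotomy "$\u_*>\u_0\Rightarrow\v_*=\v_0$" follows since if the characteristic leaves before reaching $\u=\u_0$ it must have done so through the only other available boundary component.

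Next I would establish the regularity statements (1) for higher $\ell$ and (3)--(4) for the exit-time map. The higher regularity $\mathfrak{\v}_\pm\in C^\ell$ (resp.\ $W^{\ell,\infty}$) in $s$ and jointly in the parameters $(\u,\v)$ is the classical smooth-dependence-on-data theorem for ODEs: differentiating \eqref{2.103}--\eqref{2.104} with respect to $\u$ and $\v$ yields linear variational equations with $C^{\ell-1}$ (resp.\ $W^{\ell-1,\infty}$) coefficients, which can be solved and bootstrapped. For the continuity of $(\u,\v)\mapsto\u_*(\u,\v)$ in (3): away from the corner $(\u_0,\v_0)$ the exit happens transversally through one boundary face, and one invokes the implicit function theorem on the defining relation (either $\mathfrak{\v}_\pm(\u_*;\u,\v)=\v_0$ or $\u_*=\u_0$); the transversality is guaranteed because $\frac{d\mathfrak{\v}_\pm}{ds}=k_\pm\mathcal U\geq k_\pm/C_0>0$, so the crossing of $\{\v=\v_0\}$ is non-tangential. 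Near the corner one argues by the squeeze bound $|\v_*-\v_0|\leq k_\pm C_0|\u_*-\u_0|$ plus continuity of the flow to conclude $\u_*\to\u_0$ and hence continuity there as well. This same transversality argument, combined with the smooth-dependence result, upgrades $\u_*$ to $C^\ell$ (resp.\ $W^{\ell,\infty}$) on the open set where $\u_*\neq\u_0$, which is (4).

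Finally, the last displayed conclusion of the lemma—"if $\v>\v_0+k_\pm C_0(\u-\u_0)$ then $(\u_*,\v_*)=(\u_0,\v_*)\in\C$"—is an immediate consequence of the a priori bound already used: along the backward characteristic, for any $s\in[\u_0,\u]$ we have
\begin{align}
\v-\mathfrak{\v}_\pm(s;\u,\v) = \int_s^\u k_\pm \mathcal U(\sigma,\mathfrak{\v}_\pm(\sigma;\u,\v))\,d\sigma \leq k_\pm C_0(\u-s) \leq k_\pm C_0(\u-\u_0),
\end{align}
so $\mathfrak{\v}_\pm(s;\u,\v)\geq \v - k_\pm C_0(\u-\u_0) > \v_0$ for all $s\geq\u_0$. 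Hence $\mathfrak{\v}_\pm$ never reaches $\v_0$ before $s$ decreases to $\u_0$, forcing the exit through $\{\u=\u_0\}=\C$, i.e.\ $\u_*=\u_0$.

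I expect the main obstacle to be the regularity of the exit-time map $\u_*$ near the corner $(\u_0,\v_0)$: there the two boundary faces meet and the implicit function theorem argument degenerates, so the continuity claim (3) must be handled separately by the squeeze/compactness estimate rather than by differentiation, and one must be careful that the statement (4) is only asserted on the set $\{\u_*\neq\u_0\}$, precisely to sidestep the potential loss of regularity when the characteristic exits through the bottom face or grazes the corner. Everything else is a routine application of the classical ODE existence, uniqueness, and smooth-dependence theorems, made clean by the strict positivity and boundedness of $\mathcal U$.
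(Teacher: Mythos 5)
Your proposal is correct and follows essentially the same route as the paper's proof: local existence and uniqueness via Picard iteration (or Picard--Lindel\"of), continuation using the uniform positive lower bound on $\mathcal U$ to guarantee exit through $\uC\cup\C$, the integral identity \eqref{exit} obtained by integrating the characteristic ODE, the implicit function theorem applied away from the corner $(\u_0,\v_0)$ to get $C^\ell$ (resp.\ $W^{\ell,\infty}$) regularity of $\u_*$, and the elementary estimate with $\mathcal U<C_0$ for the final claim. One small inaccuracy: you invoke ``compactness of $\overline{\mathcal D}$,'' but $\overline{\mathcal D}$ is unbounded in $\v$; what actually closes the argument is that the backward flow is confined to a bounded box by the $\u$-interval $[\u_0,\u]$ and your a priori bound $|\mathfrak{\v}_\pm(s)-\v|\le k_\pm C_0|\u-s|$, which already suffices without any compactness of the full closure.
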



\begin{proof} We prove the claims for $\mathcal U\in C^\ell$ as the case of $\mathcal U\in W^{\ell,\infty}$ follows in the same way. 
The local existence and uniqueness of $\mathfrak{\v}_\pm\in C^\ell$ follows from $\mathcal U\in C^\ell$ via the Picard iteration 
\be\label{(2.111)}
\mathfrak{\v}_\pm(s)  = \v - \int_{s}^\u k_\pm \mathcal U (\tilde s,  \mathfrak{\v}_\pm(\tilde s;\u,\v))\,d\tilde s. 
\ee
Thanks to the positive uniform bound of $\mathcal U$ and Gr\"onwall, the solution can be continued as long as the characteristics belong to the domain $(s,\mathfrak{\v}_\pm (s; \u,\v) )\in {\mathcal D}$. Since $ \mathcal U>\frac{1}{C_0}$, $\mathfrak{\v}_\pm(s)  < q - \frac{ k_\pm }{C_0} (p-s)$, $(s,\mathfrak{\v}_\pm(s))$ will exit the domain $\mathcal D$ either through the outgoing boundary
$\C$ or through the ingoing boundary $\uC$. We denote such an exit time by $\u_*$ and the associated value $\v_*=\v_\pm(\u_*; \u,\v)$ where 
$(\u_*, \v_* ) \in \uC \cup \C$. Note that for each given $(\u,\v)\in \mathcal D$, $(\u_*,\v_*)$ is uniquely determined since the backward characteristics $\mathfrak{\v}_\pm$ are unique. By integrating \eqref{2.103} from $s=\u$ to  $s=\u_*$, we see that $(\u_*, \v_* ) $ satisfies \eqref{exit}.  
We observe that if $(\u_*, \v_* ) \in \uC $, $\v_*=\v_0$ and $\u_0\le\u_*< \u$, and \eqref{exit} reads as  
\be\label{exit1}
\v- \v_0 = \int_{\u_*}^\u k_\pm \mathcal U (s,  \mathfrak{\v}_\pm(s;\u,\v)) ds. 
\ee 
If $(\u_*, \v_* ) \in \C  $, $\u_*= \u_0$ and $\v_*$ is given by \eqref{exit}. 

Clearly $\u_*$ is continuous in $\u$ and $\v$. Since higher regularity of $\u_*$ fails in general at the corner of the domain where $(\u_*,\v_*)=(\u_0,\v_0)$, we show the regularity when $\u\neq \u_0$. First let $(\tilde \u,\tilde \v)\in \mathcal D$ be given.  Suppose $\tilde\u_{*}= \u_*(\tilde \u,\tilde \v)>\u_0$. Consider a small neighborhood $\mathcal B$ of $(\tilde \u,\tilde \v)$ in $\mathcal D$ such that $I_1=\u_*(\mathcal B)$, $\inf I_1>\u_0$. Recalling \eqref{exit1}, we define an auxiliary function $H: I_1 \times \mathcal B\to \mathbb R$ by 
\[
H(\bar\u_*,\u,\v) = \v-\v_0- \int_{\bar\u_*}^\u k_\pm \mathcal U (s,  \mathfrak{\v}_\pm(s;\u,\v)) ds . 
\]
Then $H\in C^\ell$ since $\mathcal U\in C^\ell$, while we have  $H(\tilde\u_*, \tilde\u,\tilde\v)=0$  and $\pa_{\bar\u_*} H (\tilde\u_*,\tilde\u,\tilde\v)  =k_\pm \mathcal U (\tilde\u_*,\tilde\v_* ) >0$. Therefore,  by the implicit function theorem, $\u_*=\u_*(\u,\v)\in C^\ell$ in a small neighborhood of $(\tilde \u,\tilde \v)$. 

Lastly, since $\mathcal U<C_0$, if $\v>\v_0 +k_\pm C_0 (\u-\u_0)$, the backward characteristics will intersect the outgoing surface $\C$, in which case $\u_*= \u_0$.  
 \end{proof}
 

\subsection{A priori bounds}\label{SS:APRIORIPDE}

In this subsection, we provide estimates for various quantities appearing the iteration scheme in terms of our norms. We will frequently use the following inequality: for any positive function $g>0$
\be\label{logto}
\max\{\|g\|_\infty,  \ \| g^{-1}\|_\infty\} \le e^{\|\log g \|_\infty}
\ee
which directly follows from $g= e^{\log g}$ and $g^{-1}= e^{-\log g}$. 


\begin{lemma} Suppose $\vertiii{[\Om, r]}<\infty$. Then the following holds:
\begin{align}
\label{estimate_pre}
\sum_{j=0}^2\left\| \v^{j+1}\pa_\v^j\left( \frac{\pa_\v \Om}{\Om} - \e\frac{\pa_\v r}{r}\right) \right\|_\infty  \le M_1(\vertiii{[\Om, r]}) ,\\
\sum_{j=1}^2 \left\| \v^{j+3}\pa_\v^j\left( \frac{\Om^2}{r^3}\right) \right\|_\infty  \le M_2(\vertiii{[\Om, r]}), \label{est_aux}
\end{align}
where  $M_1$ and $M_2$ are continuous functions of their arguments. 
\end{lemma}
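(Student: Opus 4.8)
The proof is a routine exercise in the product, quotient and chain rules; everything reduces to three elementary facts encoded in the definition of $\vertiii{[\Om,r]}$: the bounds $\Om^{\pm1}\le e^{\|\log\Om\|_\infty}$ and $(r^2)^{-1}\le\v^{-2}\vertiii{[\Om,r]}$ (the latter from the term $\|\v^2/r^2\|_\infty$), the bounds $|\pa_\v^m(r^2)|\le\v^{2-m}\vertiii{[\Om,r]}$ for $0\le m\le 3$ (from the terms $\|\v^{j-2}\pa_\v^j(r^2)\|_\infty$), and the triviality that on $\overline{\mathcal D}$ one has $\v\ge\v_0>0$, so that a weight $\v^{j+\theta}$ may be traded for $\v^j$ at the cost of the harmless constant $\v_0^{-\theta}$. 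Throughout, $M_1$ and $M_2$ will emerge as explicit polynomials in $\vertiii{[\Om,r]}$ (with coefficients depending only on $\v_0$ and $\theta$), which are in particular continuous.

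First I would rewrite the relevant quantities using logarithmic derivatives: $\frac{\pa_\v\Om}{\Om}=\pa_\v\log\Om$ and $\frac{\pa_\v r}{r}=\tfrac12\,\frac{\pa_\v(r^2)}{r^2}$. Thus \eqref{estimate_pre} concerns $\v^{j+1}\pa_\v^j\!\big(\pa_\v\log\Om-\tfrac{\e}{2}\,\frac{\pa_\v(r^2)}{r^2}\big)$ for $j=0,1,2$, and it suffices to estimate, for an arbitrary constant $c$, the two pieces $\v^{j+1}\pa_\v^{j+1}\log\Om$ and $c\,\v^{j+1}\pa_\v^j\big(\frac{\pa_\v(r^2)}{r^2}\big)$ separately. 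The first is immediate: $\|\v^{j+1}\pa_\v^{j+1}\log\Om\|_\infty\le\v_0^{-\theta}\|\v^{(j+1)+\theta}\pa_\v^{j+1}\log\Om\|_\infty\le\v_0^{-\theta}\vertiii{[\Om,r]}$ for $j\le 2$. For the second I would expand $\pa_\v^{j+1}\log(r^2)=\pa_\v^j\big(\frac{\pa_\v(r^2)}{r^2}\big)$ by repeated use of the quotient and product rules; every resulting term has the form $(r^2)^{-m}\prod_{i=1}^m\pa_\v^{a_i}(r^2)$ with $a_i\ge1$ and $\sum_i a_i=j+1\le3$, hence each $a_i\le3$, and by the bounds above such a term is controlled by $\big(\prod_i\v^{2-a_i}\vertiii{[\Om,r]}\big)\,\v^{-2m}\vertiii{[\Om,r]}^m=\v^{-(j+1)}\vertiii{[\Om,r]}^{2m}$. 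Multiplying by $\v^{j+1}$ and summing the finitely many terms yields \eqref{estimate_pre}.

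For \eqref{est_aux} I would record the identities $\pa_\v\big(\frac{\Om^2}{r^3}\big)=\frac{\Om^2}{r^3}\,g$ and $\pa_\v^2\big(\frac{\Om^2}{r^3}\big)=\frac{\Om^2}{r^3}\big(g^2+\pa_\v g\big)$, where $g:=2\pa_\v\log\Om-\tfrac32\,\frac{\pa_\v(r^2)}{r^2}$ is precisely a quantity of the type handled in the previous step, so that $\|\v g\|_\infty$ and $\|\v^2\pa_\v g\|_\infty$ are bounded in terms of $\vertiii{[\Om,r]}$. For the prefactor, $\Om^2\le e^{2\|\log\Om\|_\infty}$ and $r^{-3}=(r^2)^{-3/2}\le\v^{-3}\vertiii{[\Om,r]}^{3/2}$ give $\big\|\v^3\tfrac{\Om^2}{r^3}\big\|_\infty\le e^{2\vertiii{[\Om,r]}}\,\vertiii{[\Om,r]}^{3/2}$. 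Combining, $\big\|\v^4\pa_\v(\tfrac{\Om^2}{r^3})\big\|_\infty\le\big\|\v^3\tfrac{\Om^2}{r^3}\big\|_\infty\,\|\v g\|_\infty$ and $\big\|\v^5\pa_\v^2(\tfrac{\Om^2}{r^3})\big\|_\infty\le\big\|\v^3\tfrac{\Om^2}{r^3}\big\|_\infty\big(\|\v g\|_\infty^2+\|\v^2\pa_\v g\|_\infty\big)$, which is \eqref{est_aux}.

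There is no genuine obstacle here; the only point requiring a little care — and the one I would spell out in detail — is the verification that in each differentiated expression the powers of $\v$ balance exactly, i.e. that $\pa_\v^j\big(\frac{\pa_\v(r^2)}{r^2}\big)$ and $\pa_\v^j\big(\frac{\Om^2}{r^3}\big)$ carry precisely the homogeneity $\v^{-(j+1)}$ resp. $\v^{-(j+3)}$ that the norm $\vertiii{[\Om,r]}$ is designed to absorb. This is a direct consequence of the scale-invariant structure of the combinations being differentiated and of the homogeneity built into $\vertiii{[\Om,r]}$ (namely $r^2\sim\v^2$, $\Om\sim1$, one power of $\v$ lost per $\v$-derivative), together with the elementary inequality \eqref{logto} and the bound $\v\ge\v_0$.
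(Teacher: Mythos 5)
Your proposal is correct and follows essentially the same route as the paper: expand via logarithmic derivatives, use the built-in homogeneity of the norm (one power of $\v$ lost per $\v$-derivative, $r^2\sim\v^2$, $\Om\sim1$), invoke \eqref{logto} for $\Om^{\pm1}$ and $r^{-2}$, and trade $\v^{j+\theta}\leftrightarrow\v^{j}$ at the harmless cost $\v_0^{-\theta}$. The only difference is cosmetic: the paper writes out the $j=1,2$ identities explicitly and omits the details of \eqref{est_aux}, whereas you record the general structure $(r^2)^{-m}\prod_i\pa_\v^{a_i}(r^2)$ and the factorisation $\pa_\v^j(\Om^2/r^3)=(\Om^2/r^3)\cdot(\text{polynomial in }g,\pa_\v g)$, which is a slightly more systematic way of doing the same bookkeeping.
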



\begin{proof} We start with \eqref{estimate_pre}. For $j=0$, it is easy to see that 
\be\label{estimate_pre0}
\left\| \v \left( \frac{\pa_\v \Om}{\Om} - \e\frac{\pa_\v r}{r}\right) \right\|_\infty \le \frac{1}{\v_0^\theta} \|\v^{1+\theta} \pa_\v \log \Om\|_\infty + \frac{\e}{2} \left\|\frac{\v^2}{r^2}\right\|_\infty \left\|\frac{\pa_\v (r^2)}{\v}\right\|_\infty  
\ee
where the right-hand side is a continuous function of $\vertiii{[\Om, r]}$ by \eqref{logto}. For $j=1$, since $$\pa_\v ( \frac{\pa_\v \Om}{\Om} - \e\frac{\pa_\v r}{r} ) = \pa_\v^2\log \Om  
- \frac{\e}{2} \frac{\pa_\v^2(r^2)}{r^2} + \frac{\e}{2}\frac{(\pa_\v (r^2))^2}{r^4}
$$
we have 
\be\label{estimate_pre1}
\left\| \v^{2}\pa_\v\left( \frac{\pa_\v \Om}{\Om} - \e\frac{\pa_\v r}{r}\right) \right\|_\infty  \le \frac{1}{\v_0^\theta}\|\v^{2+\theta} \pa_\v^{2}\log\Om\|_\infty + \frac{\e}{2} \left\|\frac{\v^2}{r^2}\right\|_\infty \|\pa_\v^2( r^2)\|_\infty +\frac{ \e }{2}\left\|\frac{\v^2}{r^2}\right\|_\infty^2 \left\|\frac{\pa_\v (r^2)}{\v}\right\|_\infty^2
\ee
which shows \eqref{estimate_pre}. Lastly, from 
$$\pa_\v^2 ( \frac{\pa_\v \Om}{\Om} - \e\frac{\pa_\v r}{r} ) = \pa_\v^3\log \Om  - \frac{\e}{2} \frac{\pa_\v^3(r^2)}{r^2} + \frac{3\e}{2}\frac{\pa_\v (r^2)\pa_\v^2( r^2)}{r^4}- \e \frac{(\pa_\v (r^2))^3}{r^6}$$
we obtain 
\be
\begin{split}
\left\| \v^{3}\pa_\v^2\left( \frac{\pa_\v \Om}{\Om} - \e\frac{\pa_\v r}{r}\right) \right\|_\infty&\le 
\frac{1}{\v_0^\theta}\|\v^{3+\theta} \pa_\v^{3}\log\Om\|_\infty + \frac{\e}{2} \left\|\frac{\v^2}{r^2}\right\|_\infty \|\v \pa_\v^3( r^2)\|_\infty \\
&\quad+\frac{3\e}{2}\left\|\frac{\v^2}{r^2}\right\|_\infty^2  \left\|\frac{\pa_\v (r^2)}{\v}\right\|_\infty^2 \| \pa_\v^2( r^2)\|_\infty + \frac{\e}{2} \left\|\frac{\v^2}{r^2}\right\|_\infty^3 \left\|\frac{\pa_\v (r^2)}{\v}\right\|_\infty^3. 
\end{split}
\ee
This completes the proof of \eqref{estimate_pre}. The estimation of \eqref{est_aux} follows similarly, we omit the details. 
\end{proof}



\begin{lemma}[$\mathcal U$ bounds] 
Suppose $\vertiii{[f^\pm, \Om, r]}<\infty$. Then the following holds:
\begin{align}\label{est_U}
\|\mathcal U\|_\infty+  \|\mathcal U^{-1}\|_\infty+ \sum_{j=1}^2\|\v^{j}\pa_\v^j \mathcal U\|_{\infty} \le M_3 (\vertiii{[f^\pm, \Om, r]})
\end{align}
where $M_3$ is a continuous function of its argument. 
\end{lemma}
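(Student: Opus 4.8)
The plan is to estimate $\mathcal U$ directly from its expression in terms of the new fluid unknowns $f^\pm$ and the metric coefficients. Recall from~\eqref{2.100} that
\[
\mathcal U = (1+\e)^2 \frac{\Om^2}{r^{2\e}} \left(\frac{f^+}{f^-}\right)^{\frac12},
\]
so that
\[
\log\mathcal U = 2\log(1+\e) + 2\log\Om - 2\e \log r + \tfrac12 \log f^+ - \tfrac12 \log f^-.
\]
First I would control $\|\mathcal U\|_\infty$ and $\|\mathcal U^{-1}\|_\infty$ via~\eqref{logto}: it suffices to bound $\|\log\mathcal U\|_\infty$. The terms $\|\log\Om\|_\infty$, $\|\log f^\pm\|_\infty$ are directly controlled by $\vertiii{[f^\pm,\Om,r]}$ (using that $\|\log f^\pm\|_\infty \le \|\log(\v^{N_\pm}f^\pm)\|_\infty + N_\pm\|\log\v\|_\infty$ and $\v$ is bounded below by $\v_0$ and, on the relevant truncated estimates, one works on bounded $\v$-windows or absorbs the $\log\v$ growth — more carefully, $r\asymp \v$ and $\v^{N_\pm}f^\pm \asymp 1$ together make $\log\mathcal U$ bounded since the powers of $\v$ cancel: indeed $\frac{\Om^2}{r^{2\e}}(f^+/f^-)^{1/2} \asymp \v^{-2\e}\cdot \v^{(N_- - N_+)/2} = \v^{-2\e}\cdot \v^{2\e} = 1$ by~\eqref{2.122}). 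This cancellation of powers of $\v$ is the reason the norm is set up the way it is, and it is what makes $\log\mathcal U$ genuinely bounded rather than merely polynomially bounded; I would make this precise by writing $\log\mathcal U = 2\log(1+\e) + 2\log\Om - 2\e\log(r^2)^{1/2}\cdot\ldots$ and grouping $-2\e\log r + \tfrac12\log f^+ - \tfrac12\log f^-$ against $\tfrac12\log(\v^{N_+}f^+) - \tfrac12\log(\v^{N_-}f^-) - \tfrac12(N_+ - N_-)\log\v - 2\e\log r$, then using $N_+ - N_- = -4\e$ and $\|\v^{-2}r^2\|_\infty, \|\v^2 r^{-2}\|_\infty < \infty$ to replace $\log r$ by $\log\v$ up to a bounded error.

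Next I would handle the weighted derivative bounds $\|\v^j\pa_\v^j\mathcal U\|_\infty$ for $j=1,2$. Since $\mathcal U = e^{\log\mathcal U}$, by the chain rule (Faà di Bruno) $\v\pa_\v\mathcal U = \mathcal U\cdot(\v\pa_\v\log\mathcal U)$ and $\v^2\pa_\v^2\mathcal U = \mathcal U\big((\v\pa_\v\log\mathcal U)^2 - \v\pa_\v\log\mathcal U + \v^2\pa_\v^2\log\mathcal U\big)$ — here I use $\v^2\pa_\v^2 g = (\v\pa_\v)^2 g - \v\pa_\v g$. So it suffices to bound $\|\v\pa_\v\log\mathcal U\|_\infty$ and $\|\v^2\pa_\v^2\log\mathcal U\|_\infty$, together with the already-established bound on $\|\mathcal U\|_\infty$. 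From the formula for $\log\mathcal U$, $\v\pa_\v\log\mathcal U = 2\v\pa_\v\log\Om - 2\e\,\v\tfrac{\pa_\v r}{r} + \tfrac12\v\pa_\v\log f^+ - \tfrac12\v\pa_\v\log f^-$. The first two terms are bounded by $M_1(\vertiii{[\Om,r]})$ using~\eqref{estimate_pre} with $j=0$ (the combination $\v(\pa_\v\log\Om - \e\pa_\v r/r)$ is exactly what appears there, up to the factor $2$ and the split into $\v\pa_\v\log\Om$ and $\e\v\pa_\v r/r$, each of which is separately controlled: $\|\v^{1+\theta}\pa_\v\log\Om\|_\infty$ and $\|\v^2/r^2\|_\infty\|\pa_\v(r^2)/\v\|_\infty$ are both in the norm). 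The $f^\pm$-terms are controlled by $\|\v\pa_\v\log f^\pm\|_\infty$, which is part of $\vertiii{[f^\pm]}$. The second-derivative term $\v^2\pa_\v^2\log\mathcal U$ is treated the same way: $\v^2\pa_\v^2\log\Om$ is bounded via $\|\v^{2+\theta}\pa_\v^2\log\Om\|_\infty/\v_0^\theta$; $\v^2\pa_\v^2\log r = \tfrac12\v^2\pa_\v^2\log(r^2) - \tfrac14\v^2(\pa_\v\log(r^2))^2$, each piece bounded using $\|\v^{j-2}\pa_\v^j(r^2)\|_\infty$ and $\|\v^2/r^2\|_\infty$; and $\v^2\pa_\v^2\log f^\pm$ is directly in $\vertiii{[f^\pm]}$. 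Assembling, one gets the claimed $M_3$ as a continuous (polynomial-type) function of $\vertiii{[f^\pm,\Om,r]}$.

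The main obstacle here is largely bookkeeping rather than a genuine mathematical difficulty: one must be careful that every logarithmic term is split so as to match an available quantity in the norm (in particular keeping track of the exact exponents $N_\pm$ and the relation $N_+ = N_- - 4\e$ so that powers of $\v$ cancel), and that the chain-rule expansions of $\v^j\pa_\v^j$ acting on an exponential are done with the correct combinatorial coefficients. I would organize the writeup by first recording the identity for $\log\mathcal U$, then the two chain-rule identities for $\v\pa_\v\mathcal U$ and $\v^2\pa_\v^2\mathcal U$, then estimating each of $\|\log\mathcal U\|_\infty$, $\|\v\pa_\v\log\mathcal U\|_\infty$, $\|\v^2\pa_\v^2\log\mathcal U\|_\infty$ term by term against $\vertiii{[f^\pm,\Om,r]}$ using~\eqref{estimate_pre}, \eqref{logto}, and the defining seminorms, and finally combining via the product bounds. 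Since no cancellations beyond the exponent-matching are needed and all bounds are of the ``product of norms'' type, the resulting $M_3$ is manifestly continuous in its argument.
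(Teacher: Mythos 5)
Your approach is essentially the one the paper takes: rewrite $\mathcal U$ so the defining seminorms appear directly, use the relation $N_+ = N_- - 4\e$ to cancel the powers of $\v$, control $\|\mathcal U^{\pm1}\|_\infty$ via~\eqref{logto}, and then apply the product/chain rule for the weighted derivatives, falling back on~\eqref{estimate_pre} for the combination $\frac{\pa_\v\Om}{\Om}-\e\frac{\pa_\v r}{r}$ and on the $\|\v^j\pa_\v^j\log f^\pm\|_\infty$ seminorms for the fluid part. The only substantive difference is cosmetic: the paper bounds $\mathcal U$ by rewriting it as $(1+\e)^2\Om^2(\v/r)^{2\e}\bigl(\v^{N_+}f^+/(\v^{N_-}f^-)\bigr)^{1/2}$ and applying~\eqref{logto} once, rather than passing through $\log\mathcal U$ explicitly, but both are the same estimate.

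One small slip to fix: your identity $\v^2\pa_\v^2\mathcal U = \mathcal U\bigl((\v\pa_\v\log\mathcal U)^2 - \v\pa_\v\log\mathcal U + \v^2\pa_\v^2\log\mathcal U\bigr)$ has a spurious $-\v\pa_\v\log\mathcal U$ term. Writing $g=\log\mathcal U$, one has $\pa_\v\mathcal U = \mathcal U\,\pa_\v g$ and $\pa_\v^2\mathcal U = \mathcal U\bigl((\pa_\v g)^2+\pa_\v^2 g\bigr)$, so $\v^2\pa_\v^2\mathcal U = \mathcal U\bigl((\v\pa_\v g)^2 + \v^2\pa_\v^2 g\bigr)$; the $-\v\pa_\v g$ you extracted via $\v^2\pa_\v^2 = (\v\pa_\v)^2-\v\pa_\v$ cancels exactly against the $+\v\pa_\v g$ hidden in $(\v\pa_\v)^2 g = \v^2\pa_\v^2 g + \v\pa_\v g$. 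This is equivalent to the paper's $\pa_\v^2\mathcal U = \mathcal U\,\pa_\v^2\log\mathcal U + (\pa_\v\mathcal U)^2/\mathcal U$. The extra term you wrote is itself bounded by the same seminorms, so the final estimate is unaffected, but the identity should be corrected.
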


\begin{proof} From the first condition of our choice $N_\pm$ in \eqref{2.122}, we may rewrite $\mathcal U$ as 
\[
\mathcal U=(1+\e)^2\Om^2 \left( \frac{\v}{r}\right)^{2\e} \left( \frac{\v^{N_+}f^+}{\v^{N_-}f^-} \right)^\frac12. 
\]
Then by \eqref{logto}, we see that 
\be
\|\mathcal U^{\pm 1}\|_\infty \le (1+\e)^{\pm 2}\left\|\frac{\v^2}{r^2}\right\|^{\pm \e}_\infty e^{2\|\log \Om\|_\infty + \frac{1}{2}  \|\log (\v^{N_+}f^+) \|_\infty +\frac12 \|\log (\v^{N_-}f^-) \|_\infty }
\ee
which shows \eqref{est_U} for $\|\mathcal U\|_\infty + \|\mathcal U^{-1}\|_\infty$. 
Next computing $\pa_\v\mathcal U$ as 
\be
\pa_\v \mathcal U = \left( 2\left(\frac{\pa_\v \Om}{\Om} - \e\frac{\pa_\v r}{r}\right)  + \frac12 \pa_\v\log f^+ - \frac12\pa_\v\log f^- \right) \mathcal  U 
\ee
we obtain 
\be\label{est_U1}
\|\v\pa_\v \mathcal U\|_\infty \le \left( 2\left\| \v \left( \frac{\pa_\v \Om}{\Om} - \e\frac{\pa_\v r}{r}\right) \right\|_\infty 
+\frac12\|\v\pa_\v\log f^+\|_\infty +\frac12\|\v\pa_\v\log f^-\|_\infty \right)  \| \mathcal U\|_\infty. 
\ee
Together with \eqref{estimate_pre0}, it implies \eqref{est_U} for $j=1$. Moreover, since 
\be
\pa_\v^2 \mathcal U = \left(2\pa_\v\left(\frac{\pa_\v \Om}{\Om} - \e\frac{\pa_\v r}{r}\right)  + \frac12 \pa_\v^2\log f^+ - \frac12\pa_\v^2\log f^- \right) \mathcal  U + \frac{(\pa_\v\mathcal U)^2}{\mathcal U}
\ee
we have 
\be
\begin{split}
\|\v^2\pa_\v^2 \mathcal U\|_\infty &\le \Big(2\left\| \v^{2}\pa_\v\left( \frac{\pa_\v \Om}{\Om} - \e\frac{\pa_\v r}{r}\right) \right\|_\infty +\frac12\|\v^2\pa_\v^2\log f^+\|_\infty +\frac12\|\v^2\pa_\v^2\log f^-\|_\infty \Big)  \| \mathcal U\|_\infty \\
& \qquad
+ \|\mathcal U^{-1}\|_\infty \| \v\pa_\v\mathcal U\|_\infty^2. 
\end{split}
\ee
Using \eqref{estimate_pre1} and \eqref{est_U1}, we deduce \eqref{est_U}, where we recall~\eqref{E:TRIPLENORM}.
\end{proof}


\begin{remark} 
The relation $N_+ = N_- - 4\e$ in \eqref{2.122} is importantly used to ensure the boundedness (both upper and lower) of the transport speed $\mathcal U$. 
\end{remark}

We introduce the constant $\beta>0$:
\be\label{beta}
\beta :=2\e+ \frac{N_++N_-}{k_+-k_-}  - 1=2\e+ \frac{2N_-- (k_+-k_- +4\e)}{k_+-k_-} >0,
\ee
where we have used~\eqref{2.122} in the second equality.

\begin{lemma} Suppose $\vertiii{[f^\pm, \Om, r]}<\infty$. Then the following bounds hold:
\begin{align}
\sum_{j=0}^2\left\|\v^{j+1+\beta}\pa_\v^j\left( \frac{\Om^2}{r^{2\e}} (f^+f^-) ^{\frac{1}{k_+-k_-}}\right)\right\|_\infty \le M_4(\vertiii{[f^\pm, \Om, r]}), \label{est_S1}\\
\sum_{j=0}^2\left\|\v^{j+3+\beta}\pa_\v^j\left( \frac{\Om^2}{r^{2+2\e}} (f^+f^-) ^{\frac{1}{k_+-k_-}}\right)\right\|_\infty \le M_5 (\vertiii{[f^\pm, \Om, r]}), \label{est_S2}
\end{align}
where $M_4$ and $M_5$ are continuous functions of their arguments.
\end{lemma}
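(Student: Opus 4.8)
The plan is to treat both quantities as products of factors that have already been controlled in the preceding lemmas, and then to differentiate using the Leibniz rule, tracking powers of $\v$ carefully. First I would observe that the combination $\frac{\Om^2}{r^{2\e}}(f^+f^-)^{\frac1{k_+-k_-}}$ can be rewritten, by pulling out powers of $\v$ from each factor, as
\[
\frac{\Om^2}{r^{2\e}}(f^+f^-)^{\frac1{k_+-k_-}} = \v^{-1-\beta}\cdot \Om^2 \cdot \left(\frac{\v^2}{r^2}\right)^{\e} \cdot \left(\v^{N_+}f^+\right)^{\frac1{k_+-k_-}}\left(\v^{N_-}f^-\right)^{\frac1{k_+-k_-}},
\]
where the exponent $-1-\beta$ is exactly $-(2\e) - \frac{N_++N_-}{k_+-k_-}$ by the definition~\eqref{beta} of $\beta$. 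Thus $\v^{1+\beta}\frac{\Om^2}{r^{2\e}}(f^+f^-)^{\frac1{k_+-k_-}}$ is a product of quantities whose $L^\infty$ norms are bounded by a continuous function of $\vertiii{[f^\pm,\Om,r]}$, using $\|\Om\|_\infty\le e^{\|\log\Om\|_\infty}$, the bound on $\|\v^2/r^2\|_\infty$ built into $\vertiii{[\Om,r]}$, and $\|\v^{N_\pm}f^\pm\|_\infty \le e^{\|\log(\v^{N_\pm}f^\pm)\|_\infty}$ together with~\eqref{logto}. This gives the $j=0$ case of~\eqref{est_S1}.

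For $j=1,2$ I would write $\frac{\Om^2}{r^{2\e}}(f^+f^-)^{\frac1{k_+-k_-}} = e^{\Phi}$ with
\[
\Phi = 2\log\Om - 2\e\log r + \tfrac1{k_+-k_-}\log f^+ + \tfrac1{k_+-k_-}\log f^-,
\]
so that $\pa_\v\big(\v^{1+\beta}e^\Phi\big)$ and $\pa_\v^2\big(\v^{1+\beta}e^\Phi\big)$ expand into finitely many terms, each a product of $\v^{1+\beta}e^\Phi$ (already bounded), nonnegative powers of $\v$, and factors of the form $\v^m\pa_\v^m\Phi$ for $m=1,2$. The quantities $\v\pa_\v\Phi$ and $\v^2\pa_\v^2\Phi$ are controlled exactly as in the proof of~\eqref{estimate_pre}: the $\log\Om$ contributions via $\|\v^{j+\theta}\pa_\v^j\log\Om\|_\infty$ (absorbing the harmless factor $\v_0^{-\theta}$ since $\v\ge\v_0$), the $\log r = \tfrac12\log(r^2)$ contributions via $\|\v^2/r^2\|_\infty$ and $\|\v^{j-2}\pa_\v^j(r^2)\|_\infty$ after expanding $\pa_\v\log(r^2) = \pa_\v(r^2)/r^2$ and $\pa_\v^2\log(r^2) = \pa_\v^2(r^2)/r^2 - (\pa_\v(r^2))^2/r^4$, and the $\log f^\pm$ contributions directly via $\|\v^j\pa_\v^j\log f^\pm\|_\infty$ in $\vertiii{[f^\pm]}$. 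Collecting these and using that sums and products of continuous functions of $\vertiii{[f^\pm,\Om,r]}$ are again continuous functions of it yields~\eqref{est_S1}. The bound~\eqref{est_S2} is identical except that the exponent of $r$ is $2+2\e$ instead of $2\e$, so one extra copy of $\v^2/r^2$ and the corresponding shift from $\v^{1+\beta}$ to $\v^{3+\beta}$ appears; the logarithmic-derivative bookkeeping is unchanged.

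Since this is essentially a routine (if lengthy) Leibniz-rule computation, there is no genuine obstacle; the only point requiring care is the exponent bookkeeping — verifying that the weight $\v^{1+\beta}$ (resp.\ $\v^{3+\beta}$) is the exact power needed so that the product of weighted factors closes under the norm, which is precisely why $\beta$ was defined by~\eqref{beta}. I would therefore state the expansions explicitly for $j=1$, indicate that $j=2$ is analogous, invoke~\eqref{estimate_pre} and the $\mathcal U$-type estimates already proved, and omit the remaining elementary details, exactly in the style of the preceding lemmas.
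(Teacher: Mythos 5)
Your proposal is correct and follows essentially the same route as the paper: the paper sets $\mathfrak H = \frac{\Om^2}{r^{2\e}}(f^+f^-)^{1/(k_+-k_-)}$, bounds $\v^{1+\beta}\mathfrak H$ by the product factorisation you describe (using precisely the definition of $\beta$), and then computes $\pa_\v\mathfrak H$ and $\pa_\v^2\mathfrak H$ as $\mathfrak H$ times sums of logarithmic derivatives controlled by~\eqref{estimate_pre} and the $\log f^\pm$ norms — identical to your $e^\Phi$ bookkeeping — with~\eqref{est_S2} then reduced to~\eqref{est_S1} by the extra factor $r^{-2}$. (Note only that the norm requires $\v^{j+1+\beta}\pa_\v^j\mathfrak H$, not $\pa_\v^j(\v^{1+\beta}\mathfrak H)$; your subsequent decomposition into $\v^{1+\beta}e^\Phi$ times $\v^m\pa_\v^m\Phi$ is the right one, so this is a cosmetic slip.)
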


\begin{proof} We will first prove \eqref{est_S1}. We start with $j=0$. Using \eqref{beta}, we rewrite $\mathfrak H:=\frac{\Om^2}{r^{2\e}} (f^+f^-) ^{\frac{1}{k_+-k_-}}$ as 
\[
\v^{1+\beta}\mathfrak H= \v^{1+\beta}\frac{\Om^2}{r^{2\e}} (f^+f^-) ^{\frac{1}{k_+-k_-}} = \Om^2\left(\frac{\v}{r}\right)^{2\e} \left(\v^{N_+}f^+\v^{N_-}f^-\right) ^{\frac{1}{k_+-k_-}}
\]
from which we have 
\be\label{est_S11}
\left\|\v^{1+\beta} \mathfrak H\right\|_\infty \le \|\Om\|^2_\infty\left \|\frac{\v^2}{r^2}\right\|_\infty^{\e}(\|\v^{N_+} f^+\|_\infty \|\v^{N_-} f^-\|_\infty )^{\frac{1}{k_+-k_-}} . 
\ee
By \eqref{logto}, the claim immediately follows. We next compute
\be
\pa_\v\mathfrak H =\left( 2\left(\frac{\pa_\v \Om}{\Om} - \e\frac{\pa_\v r}{r}\right) + \frac{\pa_\v \log f^+ + \pa_\v\log f^-}{k_+-k_-}  \right) \mathfrak H
\ee
and obtain 
\be\label{est_S12}
\|\v^{2+\beta}\pa_\v\mathfrak H \|_\infty\le \left( 2\left\| \v \left( \frac{\pa_\v \Om}{\Om} - \e\frac{\pa_\v r}{r}\right) \right\|_\infty 
+\frac{\|\v\pa_\v\log f^+\|_\infty +\|\v\pa_\v\log f^-\|_\infty}{k_+-k_-} \right) \|\v^{1+\beta}\mathfrak H \|_\infty . 
\ee
With \eqref{estimate_pre0} and \eqref{est_S11}, it gives \eqref{est_S1} for $j=1$. We next have
\be
\begin{split}
\pa_\v^2 \mathfrak H &= \left(2\pa_\v\left(\frac{\pa_\v \Om}{\Om} - \e\frac{\pa_\v r}{r}\right)  + \frac{\pa_\v^2 \log f^+ + \pa_\v^2\log f^-}{k_+-k_-}  \right) \mathfrak H \\
&\quad + \left( 2\left(\frac{\pa_\v \Om}{\Om} - \e\frac{\pa_\v r}{r}\right) + \frac{\pa_\v \log f^+ + \pa_\v\log f^-}{k_+-k_-}  \right) \pa_\v\mathfrak H,
\end{split}
\ee
and therefore
\be
\begin{split}
\|\v^{3+\beta}\pa_\v^2 \mathfrak H\|_\infty &\le \Big(2\left\| \v^{2}\pa_\v\left( \frac{\pa_\v \Om}{\Om} - \e\frac{\pa_\v r}{r}\right) \right\|_\infty +\frac{\|\v^2\pa_\v^2\log f^+\|_\infty +\|\v^2\pa_\v^2\log f^-\|_\infty}{k_+-k_-} \Big)  \|\v^{1+\beta} \mathfrak H\|_\infty \\
&+ \left( 2\left\| \v \left( \frac{\pa_\v \Om}{\Om} - \e\frac{\pa_\v r}{r}\right) \right\|_\infty 
+\frac{\|\v\pa_\v\log f^+\|_\infty +\|\v\pa_\v\log f^-\|_\infty}{k_+-k_-} \right) \|\v^{2+\beta}\pa_\v\mathfrak H \|_\infty . 
\end{split}
\ee
Hence the claim follows from \eqref{est_S11}, \eqref{est_S12} and \eqref{estimate_pre}. 

The proof of \eqref{est_S2} follows easily from \eqref{est_S1} by  applying the product rule for $\frac{\Om^2}{r^{2+2\e}} (f^+f^-) ^{\frac{1}{k_+-k_-}}= r^{-2}\mathfrak H$ or by estimating them directly in the same way as done for \eqref{est_S1}. We omit the details. 
\end{proof}

In the iteration scheme, we will make use of the Hawking mass $m$ given in \eqref{E:HAWKINGMASSDEF}. From \eqref{E:PAVM} and~\eqref{E:NORMALISATIONNULL} we see that the Hawking mass satisfies 
\be
\begin{split}
\pa_\v m &= 2\pi r^2 \left[(1-\k)\rho \pa_\v r - (1+\k) \Om^{-2}\rho (u^{\v})^{-2}\pau r\right] \\
&= 2\pi (f^+f^-)^\frac{1}{k_+-k_-} \left[ \frac{1 }{r^{2\e}} \pa_\v r -  \Om^{-2} \left(\frac{f^-}{f^+}\right)^\frac12 \pau r\right]. 
\end{split}
\ee
We observe that $p$-derivatives are not featured in our function space hierarchy, however $\pau r$ appears in the above expression. 
To go around this difficulty, we observe that~\eqref{E:RWAVE1} can be formally rewritten in the form
\be\label{E:RPQ}
\pa_{\u\v}(r^2) = -\frac{\Om^2}{2}+2\pi r^2 \Om^4 T^{\u\v} = -\frac{\Om^2}{2} + \frac{2\pi \Om^2  }{r^{2\e}  } (f^+  f^-)^{\frac{1}{k_+-k_-}}, 
\ee
where we have used~\eqref{E:TPP} and~\eqref{2.99} in the second equality.
Therefore, rather than directly estimating $m$ from~\eqref{E:HAWKINGMASSDEF}, we slightly abuse notation, and redefine $m$ to be 
\be\label{def_m}
m(\u,\v) := m(\u,\v_0)+ \int^{\v}_{\v_0} \left(2\pi (f^+f^-)^\frac{1}{k_+-k_-} \left[ \frac{1 }{r^{2\e}} \pa_\v r -  \frac{ \Om^{-2}  }{2r} \left(\frac{f^-}{f^+}\right)^\frac12 \mathfrak s\right]  \right) d\tilde \v
\ee
where 
\begin{align}
m(\u,\v_0)&:=  \frac{\hat r}{2} \left(1+  \frac{4\pau\hat r \pav \hat r}{\hat \Omega^2}\right) \Big|_{(\u,\v_0)}, \\
 \mathfrak s(\u,\v)&:= \pa_\u (\hat r^2) \Big|_{(\u,\v_0)} + \int_{\v_0}^\v \left[  - \frac{\Om^2}{2} + \frac{2\pi \Om^2  }{r^{2\e}  } (f^+  f^-)^{\frac{1}{k_+-k_-}} \right] d\tilde\v. \label{mathfraks}
 \end{align}
 We observe that $\pau \hat r$ is well-defined at $(\u,\v_0)$ since the data there is given by the exact selfsimilar RLP-solution.
We note that $\mathfrak s$ corresponds exactly to $\pau (r^2)$ for a $C^2$-solution of the problem.
In the following, we show that $m$ in \eqref{def_m} can be estimated by using the norm $\vertiii{[f^\pm, \Om, r]}$. 

\begin{lemma}\label{lem:est_m} Suppose $\vertiii{[f^\pm, \Om, r]}<\infty$.  Then the following holds: 
\begin{align}
\|m\|_\infty &\le \vertiii{[ \Om, r]|_{\uC\cup\C}} + M_6 ( \vertiii{[f^\pm, \Om, r]}), \label{est_m}\\
\sum_{j=1}^2\| \v^{j+\beta} \pa_\v^j m \|_\infty& \le  M_7 (\vertiii{[f^\pm, \Om, r]}), \label{est_m1}
\end{align}
where $M_6$ and $M_7$ are continuous functions of its argument. 
\end{lemma}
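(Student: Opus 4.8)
The plan is to estimate the modified Hawking mass $m$ defined in~\eqref{def_m} by a direct integration argument, treating separately the ``boundary'' contribution $m(\u,\v_0)$ and the integral term, and within the integral term the two summands appearing in the integrand. First I would observe that $m(\u,\v_0)$, which is prescribed on $\uC$ by the data, is controlled by $\vertiii{[\Om,r]|_{\uC\cup\C}}$: indeed it is precisely the combination $\frac r2(1+4\pau r\pav r/\Om^2)$ restricted to $\uC$, and the data norm contains both $\| \frac r2 (1+4\pau r\pav r/\Om^2)|_{\uC}\|_\infty$ and $\|\v^{-1}\pa_\u(r^2)|_{\uC}\|_\infty$. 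Likewise, the auxiliary function $\mathfrak s$ from~\eqref{mathfraks} consists of the data term $\pa_\u(\hat r^2)|_{(\u,\v_0)}$ (again bounded by $\vertiii{[\Om,r]|_{\uC\cup\C}}$) plus an integral of $-\frac{\Om^2}{2}+\frac{2\pi\Om^2}{r^{2\e}}(f^+f^-)^{\frac1{k_+-k_-}}$; the first piece is bounded by $e^{2\|\log\Om\|_\infty}$ and the second by~\eqref{est_S1}, which gives decay $s^{-1-\beta}$ and hence an integrable tail, so $\|\mathfrak s\|_\infty\le M(\vertiii{[f^\pm,\Om,r]})$ with only mild (in fact logarithmic-free, by the $\beta>0$ gain) dependence, plus a harmless $\vertiii{[\Om,r]|_{\uC\cup\C}}$ contribution.

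Next I would estimate the integrand of~\eqref{def_m} pointwise. The first summand is $\frac{2\pi}{r^{2\e}}(f^+f^-)^{\frac1{k_+-k_-}}\pa_\v r$; writing $\pa_\v r=\frac{\pa_\v(r^2)}{2r}$ and using $\|\frac{\v^2}{r^2}\|_\infty\le e^{\cdots}$ together with $\|\v^{-1}\pa_\v(r^2)\|_\infty\le\vertiii{[\Om,r]}$, this term is of size $\lesssim \v^{2\e}\cdot\v^{-1-\beta}\cdot\v^{-1}\cdot\v = O(\v^{-1-\beta+2\e})$ by~\eqref{est_S1}; since $\beta=2\e+\frac{N_++N_-}{k_+-k_-}-1$, we have $-1-\beta+2\e = -\frac{N_++N_-}{k_+-k_-}<-1$, so the tail is integrable. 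The second summand is $\pi (f^+f^-)^{\frac1{k_+-k_-}}\Om^{-2}r^{-1}(f^-/f^+)^{1/2}\mathfrak s$; here I bound $(f^-/f^+)^{1/2}=\v^{(N_+-N_-)/2}(\v^{N_-}f^-/\v^{N_+}f^+)^{1/2}=\v^{-2\e}\cdot O(1)$, use $\|\mathfrak s\|_\infty\le M$, $\Om^{-2}\le e^{2\|\log\Om\|_\infty}$, $r^{-1}\lesssim \v^{-1}$, and $(f^+f^-)^{\frac1{k_+-k_-}}\lesssim \v^{-(N_++N_-)/(k_+-k_-)}$ (which follows directly from the definitions of $N_\pm$ and $\vertiii{[f^\pm]}$), giving a bound $O(\v^{-1-(N_++N_-)/(k_+-k_-)-2\e})$ which is again integrable because $(N_++N_-)/(k_+-k_-)>0$. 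Integrating over $[\v_0,\infty)$ and adding the boundary term yields~\eqref{est_m}.

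For the derivative bound~\eqref{est_m1}, I would differentiate~\eqref{def_m} directly: $\pa_\v m$ equals the integrand, so $\|\v^{1+\beta}\pa_\v m\|_\infty$ is exactly the pointwise weighted bound on the integrand established above (with weight $\v^{1+\beta}$ rather than an integrated tail), and this is finite by~\eqref{est_S1}, the $f^\pm$ norms, and $\|\mathfrak s\|_\infty\le M$. For $\pa_\v^2 m$ I would apply $\pa_\v$ to the integrand, using the product rule together with $\pa_\v\mathfrak s = -\frac{\Om^2}{2}+\frac{2\pi\Om^2}{r^{2\e}}(f^+f^-)^{\frac1{k_+-k_-}}$ (so $\|\v^{1+\beta}\pa_\v\mathfrak s\|_\infty$ is controlled by~\eqref{est_S1} plus $e^{2\|\log\Om\|_\infty}$, noting the $\Om^2$ term only satisfies $\|\Om^2\|_\infty\le e^{\cdots}$ but when multiplied against the $\v^{-1-\beta}$-decaying factors in the rest of the expression it still fits a $\v^{-2-\beta}$ weight since $\beta>0$ gives room), $\pa_\v\log f^\pm$ bounds from $\vertiii{[f^\pm]}$, the bound~\eqref{estimate_pre} on $\pa_\v(\pa_\v\Om/\Om-\e\pa_\v r/r)$, and the weighted bounds $\|\v^{j-2}\pa_\v^j(r^2)\|_\infty\le\vertiii{[\Om,r]}$ for $j\le 3$. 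Collecting all terms with their weights confirms $\|\v^{2+\beta}\pa_\v^2 m\|_\infty\le M_7(\vertiii{[f^\pm,\Om,r]})$.

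The main obstacle I anticipate is bookkeeping the precise powers of $\v$ so that every term that carries a bare $\Om^2$ (rather than a quantity with genuine decay) is paired against enough decaying factors to beat the target weight --- this is exactly where the sign condition $\beta>0$ from~\eqref{beta}, equivalently the lower bound $N_->\frac{k_+-k_-}{2}+2\e$ in~\eqref{2.122}, is essential, and one must check it suffices for the worst term, which is the $\Om^2$ piece of $\pa_\v\mathfrak s$ entering $\pa_\v^2 m$. Everything else is a routine application of~\eqref{logto}, the Leibniz rule, and the already-established estimates~\eqref{estimate_pre}, \eqref{est_aux}, \eqref{est_U}, \eqref{est_S1}, and~\eqref{est_S2}.
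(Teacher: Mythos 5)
Your overall approach is the same as the paper's (write $m$ as data term plus an integral, estimate the integrand pointwise with $\v$-weights, then integrate), but there is a genuine error in how you treat $\mathfrak s$, and it propagates into the decay bookkeeping.

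You claim $\|\mathfrak s\|_\infty \le M(\vertiii{[f^\pm,\Om,r]})$. This is false. From~\eqref{mathfraks}, $\mathfrak s$ contains the term $-\tfrac12\int_{\v_0}^\v \Om^2\,d\tilde\v$, and $\Om^2$ is only controlled in $L^\infty$ (via $\|\log\Om\|_\infty$), not by any decaying weight. Consequently this integral grows linearly in $\v$, and the best one can say is $\|\v^{-1}\mathfrak s\|_\infty\le M$, which is precisely the quantity the paper controls. Your statement ``the first piece is bounded by $e^{2\|\log\Om\|_\infty}$'' is true of the integrand, not of the integral. For the same reason, your later assertion that ``$\|\v^{1+\beta}\pa_\v\mathfrak s\|_\infty$ is controlled'' is also incorrect — $\pa_\v\mathfrak s = -\tfrac12\Om^2 + 2\pi\Om^2 r^{-2\e}(f^+f^-)^{\frac1{k_+-k_-}}$ contains the non-decaying $\Om^2$ piece, so only $\|\pa_\v\mathfrak s\|_\infty$ is bounded, not the weighted norm.

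The practical effect is that your estimate of the $\mathfrak s$-summand in the integrand is off by exactly one power of $\v$: with the correct bound $|\mathfrak s|\lesssim \v$ it decays like $\v^{-1-\beta}$, not $\v^{-2-\beta}$. This is still integrable (so~\eqref{est_m} survives), and still gives $\|\v^{1+\beta}\pa_\v m\|_\infty<\infty$ (the $j=1$ case of~\eqref{est_m1}), but the intermediate reasoning as written is wrong, and the unrealistically strong $\v^{-2-\beta}$ decay should have been a red flag — if it held, the lemma would be provable with a stronger weight than stated. Replacing $\|\mathfrak s\|_\infty$ with $\|\v^{-1}\mathfrak s\|_\infty$ throughout fixes the argument and recovers the paper's proof. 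A secondary, non-fatal issue is the spurious $\v^{2\e}$ factor you insert when estimating the first summand $r^{-2\e}(f^+f^-)^{\frac1{k_+-k_-}}\pa_\v r$: applying~\eqref{est_S1} directly (after dividing out the bounded $\Om^2$) and noting $\pa_\v r=\tfrac{\pa_\v(r^2)}{2r}$ is bounded gives $\v^{-1-\beta}$ cleanly, with no extra $\v^{2\e}$.
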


\begin{proof} We first observe that from \eqref{mathfraks}  using $\v>\v_0$ and $\frac{\v-\v_0}{\v}<1$ 
\be
\|\v^{-1} \mathfrak s\|_\infty \le \left\| \frac{\pa_\u (\hat r^2)}{\v}\Big|_{(\u,\v_0)}\right\|_\infty + \frac{\| \Om\|_\infty^2}{2} +\frac{2\pi}{\v_0^{1+\beta}} \left\| \v^{1+\beta}  \frac{ \Om^2  }{r^{2\e}  } (f^+  f^-)^{\frac{1}{k_+-k_-}} \right\|_\infty 
\ee
and 
\be
\|\pa_\v\mathfrak s \|_\infty \le \frac{\| \Om\|_\infty^2}{2} +\frac{2\pi}{\v_0^{1+\beta}} \left\| \v^{1+\beta}  \frac{ \Om^2  }{r^{2\e}  } (f^+  f^-)^{\frac{1}{k_+-k_-}} \right\|_\infty 
\ee
From the definition of the boundary data norm, we have $|m(\u,\v_0)|\le \vertiii{[ \Om, r]|_{\uC\cup\C}} $. 
For the integral term in \eqref{def_m}, using \eqref{beta} and \eqref{2.122}, we rewrite the integrand as 
\be\label{est_m11}
\begin{split}
\v^{-1-\beta}(\v^{N_+}f^+\v^{N_-} f^-)^\frac{1}{k_+-k_-}   \left[ \frac{\v^{2\e} }{r^{2\e}} \pa_\v r -  \Om^{-2}   \frac{\v }{2r} \left(\frac{\v^{N_-} f^-}{\v^{N_+}f^+}\right)^\frac12 \frac{\mathfrak s}{\v}\right]
\end{split}
\ee
so that 
\[
\begin{split}
&\left|  \int^{\v}_{\v_0 } \left(2\pi (f^+f^-)^\frac{1}{k_+-k_-} \left[ \frac{1 }{r^{2\e}} \pa_\v r - \frac{ \Om^{-2} }{2r} \left(\frac{f^-}{f^+}\right)^\frac12\mathfrak s\right]  \right) d\tilde \v \right| \\
&\le \frac{1}{\beta \v_0^\beta} (\|\v^{N_+}f^+\|_\infty\|\v^{N_-} f^-\|_\infty)^\frac{1}{k_+-k_-}  \\
&\quad \cdot \left[\frac12\|\frac{\v}{r}\|^{2\e+1}_\infty \|\v^{-1}\pa_\v( r^2)\|_\infty + \frac{1}{2}\|\Om^{-1}\|^2_\infty \|\frac{\v}{r}\|_\infty \|\v^{N_-} f^-\|_\infty^\frac12 \|(\v^{N_+}f^+)^{-1}\|_\infty^\frac12 \|\v^{-1} \mathfrak s\|_\infty \right] 
\end{split}
\]
where we have used $\int_{\v_0 }^\v\tilde\v^{-1-\beta} d\tilde\v \le \frac{1}{\beta \v_0^\beta}$. Hence, using \eqref{logto}, we deduce \eqref{est_m}. Moreover since 
\be
\pa_\v m= 2\pi (f^+f^-)^\frac{1}{k_+-k_-} \left[ \frac{1 }{r^{2\e}} \pa_\v r - \frac{ \Om^{-2} }{2r} \left(\frac{f^-}{f^+}\right)^\frac12\mathfrak s\right] 
\ee
from \eqref{est_m11}, we immediately obtain 
\be
\begin{split}
\|\v^{1+\beta} \pa_\v m\|_\infty&\le 2\pi  (\|\v^{N_+}f^+\|_\infty\|\v^{N_-} f^-\|_\infty)^\frac{1}{k_+-k_-} \\
&\cdot \left[\frac12\|\frac{\v}{r}\|^{2\e+1}_\infty \|\v^{-1}\pa_\v( r^2)\|_\infty + \frac{ 1}{2}\|\Om^{-1}\|_\infty^2 \|\frac{\v}{r}\|_\infty \|\v^{N_-} f^-\|_\infty^\frac12 \|(\v^{N_+}f^+)^{-1}\|_\infty^\frac12 \|\v^{-1} \mathfrak s\|_\infty \right] 
\end{split}
\ee
which shows \eqref{est_m1} for $j=1$. Lastly, 
we compute $\pa_\v^2m$ as 
\be
\begin{split}
&\pa_\v^2 m= \pi (f^+f^-)^\frac{1}{k_+-k_-} \left[ \frac{1 }{r^{2\e+1}} \pa_\v^2 (r^2) -  \frac{\Om^{-2} }{r} \left(\frac{f^-}{f^+}\right)^\frac12\pa_\v\mathfrak s\right] \\
&+ \pi  \frac{ (f^+f^-)^\frac{1}{k_+-k_-}}{r^{2\e}} \frac{\pa_\v (r^2)}{r}  \left( \frac{\pa_\v\log f^+ + \pa_\v\log f^-}{k_+-k_-} - \frac{2\e+1}{2}\frac{\pa_\v (r^2)}{r^2}\right) \\
&- \pi  \Om^{-2}  (f^+f^-)^\frac{1}{k_+-k_-} \left(\frac{f^-}{f^+}\right)^\frac12 \frac{\mathfrak s}{r} \left( -2\pa_\v \log \Om+ c_1 \pa_\v\log f^+ + c_2\pa_\v \log f^+  -\frac{1}{2}\frac{\pa_\v (r^2)}{r^2}  \right)
\end{split}
\ee
where $c_1=\frac{1}{k_+-k_-} -\frac12$ and $c_2=  \frac{1}{k_+-k_-} +\frac12$. Following the same strategy, it is now clear that $\|\v^{2+\beta}\pa_\v^2m\|_\infty$ is bounded by a continuous function of $\vertiii{[f^\pm, \Om, r]}$. This finishes the proof. 
\end{proof}


\subsection{Proof of the local well-posedness}\label{SS:LWPPDE}

\subsubsection{Iteration scheme}


We now set up the iteration scheme. Let $[f^\pm_n,\Om_n,r_n]$ be given  so that $\vertiii{[f^\pm_n, \Om_n, r_n]}<\infty$. And let $\mathcal U_n$ and $m_n$ be given in 
\eqref{2.100} and \eqref{def_m} where $[f,\Om,r]=[f^\pm_n,\Om_n,r_n]$:
\[
\mathcal U_n= (1+\e)^2 \frac{(\Om_n)^2}{(r_n)^{2\e}} \left( \frac{f^+_n}{f^-_n} \right)^\frac12
\]
and 
\[
m_n= m(\u,\v_0)+ \int^{\v}_{\v_0} \left(2\pi (f_n^+f_n^-)^\frac{1}{k_+-k_-} \left[ \frac{1 }{(r_n)^{2\e}} \pa_\v r_n -  \frac{1}{2}\frac{(\Om_n)^{-2} }{(r_n)} \left(\frac{f^-_n}{f_n^+}\right)^\frac12 \mathfrak s_n\right]  \right) d\tilde \v . 
\]
Here $\mathfrak s_n$ is given by the right-hand side of \eqref{mathfraks} where $[f^\pm,\Om,r]=[f^\pm_n,\Om_n,r_n]$. 
We then define  $[f^\pm_{n+1},\Om_{n+1},r_{n+1}]$ to be the solution of the following system 
\begin{align}
&\pa_\u f^\pm_{n+1} + k_\pm \mathcal U_n \pa_\v f^\pm_{n+1} \pm 2k_\pm (2\frac{\pa_\v \Om_n}{\Om_n} - 2\e\frac{\pa_\v r_n}{r_n})  \mathcal U_n f^\pm_{n+1}  =0, \label{eq:fn+1} \\
& \pa_\u \pa_\v [(r_{n+1})^2]= - \frac{(\Om_n)^2}{2} + 2\pi \frac{(\Om_n)^2}{(r_n)^{2\e}} (f^+_nf^-_n) ^{\frac{1}{k_+-k_-}}, \label{recast1} \\
&  \pa_\u \pa_\v \log {\Om_{n+1}} = \frac{ (\Om_n)^2}{2 (r_n)^2} \frac{m_n}{r_n} - (1+\e) \pi \frac{ (\Om_n)^2 }{(r_n)^{2+2\e}}(f^+_n  f^-_n)^{\frac{1}{k_+-k_-}}, \label{recast2} 
\end{align} 
with given characteristic data $[f^\pm_{n+1},\Om_{n+1},r_{n+1}]= [\hat f^\pm, \hat\Om, \hat r]|_{\uC \cup \C}$ satisfying the conditions (i)--(iii) as in Theorem \ref{thm:LWP}. We note that for exact solutions, \eqref{recast1}-\eqref{recast2} are equivalent to  \eqref{E:RWAVE1},  \eqref{E:OMEGAWAVE1}. 

Our next goal is to prove the solvability of the iterative system \eqref{eq:fn+1}-\eqref{recast2}  and derive the uniform bounds of $\vertiii{[f^\pm_{n+1},\Om_{n+1},r_{n+1}]}$ for sufficiently small $\delta>0$. 
Let $A:= \max\{4, 1+ \frac{1}{\theta \v_0^\theta}\}$. 

\begin{proposition}\label{prop:sol} Let the characteristic data $[\hat f^{\pm}, \hat\Om, \hat r]$ satisfying the assumptions of Theorem \ref{thm:LWP}  be given. Suppose $\vertiii{[f^\pm_{n},\Om_{n},r_{n}]}\le 2A\vertiii{[\hat f^\pm, \hat\Om, \hat r]|_{\uC \cup \C}} $. 
Then there exist sufficiently small $\delta_0>0$  depending only on $\vertiii{[\hat f^\pm, \hat\Om, \hat r]|_{\uC \cup \C}}$ such that for all $\delta\in(0,\delta_0)$
there exists a unique solution $f^\pm_{n+1} \in W^{2,\infty}(\overline{\mathcal D})$, $[\Om_{n+1}, \frac{r_{n+1}}{\v}]\in W^{3,\infty}(\overline{\mathcal D})$ to \eqref{eq:fn+1}-\eqref{recast2} 
with $[f_{n+1}^{\pm} ,\Om_{n+1},r_{n+1}]= [\hat f^{\pm}, \hat\Om, \hat r]$ on $\uC \cup \C$ satisfying 
\be
\vertiii{[f^\pm_{n+1},\Om_{n+1},r_{n+1}]}\le 2A\vertiii{[\hat f^\pm, \hat\Om, \hat r]|_{\uC \cup \C}} . 
\ee
\end{proposition}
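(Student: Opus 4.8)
The plan is to solve the iterative system in the order dictated by its (deliberate) triangular structure: first $f^\pm_{n+1}$ from the transport equations~\eqref{eq:fn+1}, then $r_{n+1}$ from~\eqref{recast1}, and finally $\Om_{n+1}$ from~\eqref{recast2}; at each stage we simultaneously establish existence, uniqueness, and the a~priori bound with constant $2A\vertiii{[\hat f^\pm,\hat\Om,\hat r]|_{\uC\cup\C}}$, exploiting that $\delta=|\u_0|$ is small. First I would note that, by the inductive hypothesis $\vertiii{[f^\pm_n,\Om_n,r_n]}\le 2A\vertiii{[\hat f^\pm,\hat\Om,\hat r]|_{\uC\cup\C}}$, all the lemmas of Section~\ref{SS:APRIORIPDE} apply to the $n$-th iterate: in particular $\mathcal U_n$ is bounded above and below and satisfies~\eqref{est_U}, the quantities in~\eqref{estimate_pre}, \eqref{est_aux}, \eqref{est_S1}, \eqref{est_S2} are controlled by continuous functions of $2A\vertiii{[\hat f^\pm,\hat\Om,\hat r]|_{\uC\cup\C}}$, and $m_n$ obeys~\eqref{est_m}--\eqref{est_m1}. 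These are the fixed ``coefficient bounds'' that drive everything.

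For $f^\pm_{n+1}$: equation~\eqref{eq:fn+1} is a linear transport equation with $C^1$ (indeed $W^{1,\infty}$, since $\mathcal U_n\in W^{2,\infty}$) coefficient $k_\pm\mathcal U_n$, so by Lemma~\ref{lem:Char} the backward characteristics $\mathfrak v_\pm(s;\u,\v)$ exist, are $W^{2,\infty}$ in $(\u,\v)$, and exit $\mathcal D$ through $\uC\cup\C$ at $(\u_\ast,\v_\ast)$; integrating~\eqref{eq:fn+1} along them yields the explicit representation
\[
\log f^\pm_{n+1}(\u,\v) = \log\hat f^\pm(\u_\ast,\v_\ast) \mp 2k_\pm\int_{\u_\ast}^{\u} \Big(2\tfrac{\pa_\v\Om_n}{\Om_n} - 2\e\tfrac{\pa_\v r_n}{r_n}\Big)\,\mathcal U_n\, ds,
\]
evaluated along $s\mapsto(s,\mathfrak v_\pm(s;\u,\v))$. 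Because the backward characteristics travel a $\v$-distance of at most $k_\pm C_0\,\delta$ (Lemma~\ref{lem:Char}), one has $\v_\ast\asymp\v$ uniformly, so the weighted data norm of $\hat f^\pm$ transfers to $f^\pm_{n+1}$; and the integral term, by~\eqref{estimate_pre} and~\eqref{est_U}, is bounded by $C(2A\vertiii{\cdots})\,\delta$. Hence $\|\log(\v^{N_\pm}f^\pm_{n+1})\|_\infty \le \|\log(\v^{N_\pm}\hat f^\pm)|_{\uC\cup\C}\|_\infty + C\delta$. The $\pa_\v$ and $\pa_\v^2$ bounds are obtained by differentiating the representation formula in $\v$ (using $\pa_\v\v_\ast$, $\pa_\v^2\v_\ast\in W^{1,\infty}$ from Lemma~\ref{lem:Char} and the coefficient bounds), again picking up an extra factor of $\delta$ relative to the data. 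Thus for $\delta$ small we secure $\vertiii{[f^\pm_{n+1}]}\le A\vertiii{[\hat f^\pm,\hat\Om,\hat r]|_{\uC\cup\C}}$.

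For $r_{n+1}$ and $\Om_{n+1}$: the right-hand sides of~\eqref{recast1}--\eqref{recast2} are now known functions of $(\u,\v)$ built from the $n$-th iterate together with $f^\pm_{n+1}$ (already controlled above); so these are inhomogeneous equations of the form $\pa_\u\pa_\v G = H$ with prescribed $G$ on $\uC\cup\C$, solved by the double integral $G(\u,\v) = G(\u_0,\v) + G(\u,\v_0) - G(\u_0,\v_0) + \int_{\u_0}^{\u}\!\int_{\v_0}^{\v} H$. For $r_{n+1}^2$ one uses~\eqref{recast1} with $H = -\tfrac12\Om_n^2 + 2\pi\Om_n^2 r_n^{-2\e}(f^+_nf^-_n)^{1/(k_+-k_-)}$, whose $\v$-weighted norms are controlled by~\eqref{est_S1} and the $\Om_n$ bounds; the boundary terms $\pa_\v(r^2)|_\C$, $\pa_\u(r^2)|_{\uC}$ are exactly the data, and the $\u$-integration again produces a factor $\delta$, so $\vertiii{\cdot}$ of the new $r$-contribution is the data value plus $C\delta$, and the lower bounds $\|\v^2/r_{n+1}^2\|_\infty$, $\|\v/\pa_\v(r_{n+1}^2)\|_\infty$ follow since the corrections are $O(\delta)$-small. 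For $\log\Om_{n+1}$ one uses~\eqref{recast2}, whose right-hand side is controlled by~\eqref{est_aux}, \eqref{est_S2}, and crucially the Hawking-mass bounds~\eqref{est_m}--\eqref{est_m1} of Lemma~\ref{lem:est_m} (this is where $m_n$, defined via~\eqref{def_m} to avoid $\pa_\u r$, enters); the $\theta$-weights $\v^{j+\theta}$ in the $\Om$-norm are what make the $\v$-integrals of the mass term converge, and again $\u$-integration gives the factor $\delta$. Uniqueness at each stage is immediate: differences satisfy the same linear equations with zero data, and either the characteristic representation (for $f^\pm$) or the double-integral representation (for $r,\Om$) forces them to vanish. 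Choosing $\delta_0$ small enough, depending only on $\vertiii{[\hat f^\pm,\hat\Om,\hat r]|_{\uC\cup\C}}$ through the continuous functions $M_1,\dots,M_7$, so that all the $C\delta$ corrections are $\le A\vertiii{[\hat f^\pm,\hat\Om,\hat r]|_{\uC\cup\C}}$, closes the bound $\vertiii{[f^\pm_{n+1},\Om_{n+1},r_{n+1}]}\le 2A\vertiii{[\hat f^\pm,\hat\Om,\hat r]|_{\uC\cup\C}}$, and the regularity $f^\pm_{n+1}\in W^{2,\infty}$, $[\Om_{n+1},r_{n+1}/\v]\in W^{3,\infty}$ is read off from the representation formulas. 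The main obstacle I anticipate is bookkeeping the $\v$-weights consistently through the three stages — in particular verifying that the $\u$-double-integral in~\eqref{recast2} genuinely gains a full power of $\delta$ without losing a power of $\v$, which hinges on the precise relation $\beta>0$ in~\eqref{beta} and on the $\v^{j+\theta}$-weighting of $\log\Om$; once the exponent arithmetic is checked, the smallness of $\delta$ does the rest.
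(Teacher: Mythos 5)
Your proposal follows the same strategy as the paper's proof, which packages the argument into two auxiliary lemmas: Lemma~\ref{lem:fn+1} (the fluid part, solved by integrating the transport equations~\eqref{eq:fn+1} along the backward characteristics of $\mathcal U_n$, as in Lemma~\ref{lem:Char}) and Lemma~\ref{lem:Omrn+1} (the metric part, solved by integrating~\eqref{recast1}--\eqref{recast2} first in $\u$ from $\u_0$ and then in $\v$ from $\v_0$, which is algebraically the same as your double-integral formula). The small-$\delta$ smallness of the corrections, with the factor $\delta/\v_0$ coming from $(\u-\u_\ast)/\v_\ast$ or $(\u-\u_0)/\v$, and the role of $\beta>0$ and the $\v^{j+\theta}$-weights for $\log\Om$, are exactly what the paper exploits. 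One small misreading in your write-up: the right-hand sides of~\eqref{recast1}--\eqref{recast2} depend \emph{only} on the $n$-th iterate $[f^\pm_n,\Om_n,r_n]$ (not on $f^\pm_{n+1}$), so the iterative system~\eqref{eq:fn+1}--\eqref{recast2} is actually decoupled rather than triangular; the three equations can be solved independently given the $n$-th iterate, and your assertion that $f^\pm_{n+1}$ feeds into the metric step is inaccurate but harmless since it does not create a circularity.
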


Proposition \ref{prop:sol} immediately follows from the following two lemmas.  


\begin{lemma}[Solving the Euler part]\label{lem:fn+1} Assume the same as in Proposition \ref{prop:sol}. 
Then there exist sufficiently small $\delta_0>0$  depending only on $\vertiii{[\hat f^\pm, \hat\Om, \hat r]|_{\uC \cup \C}}$ such that for all $\delta\in(0,\delta_0)$
there exists a unique solution $f^\pm_{n+1} \in W^{2,\infty}(\overline{\mathcal D})$ to \eqref{eq:fn+1} 
with $f_{n+1}^{\pm} = \hat f^{\pm}$ on $\uC \cup \C$ satisfying 
\be\label{bound:fn+1}
\vertiii{[f^\pm_{n+1}]}\le 2\vertiii{[\hat f^\pm]|_{\uC \cup \C}} + A \vertiii{[\hat f^\pm, \hat\Om, \hat r]|_{\uC \cup \C}} . 
\ee
\end{lemma}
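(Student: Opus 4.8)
The plan is to solve \eqref{eq:fn+1} by the method of characteristics and then to propagate the weighted bounds through the resulting explicit solution formula. By hypothesis $\vertiii{[f^\pm_n,\Om_n,r_n]}\le 2A\vertiii{[\hat f^\pm,\hat\Om,\hat r]|_{\uC\cup\C}}<\infty$, so $\mathcal U_n\in W^{2,\infty}(\overline{\mathcal D})$ and, by \eqref{est_U}, $\mathcal U_n$ is bounded above and below by a constant $C_0$ depending only on $\vertiii{[\hat f^\pm,\hat\Om,\hat r]|_{\uC\cup\C}}$; likewise the zeroth-order coefficient $2\tfrac{\pa_\v\Om_n}{\Om_n}-2\e\tfrac{\pa_\v r_n}{r_n}$ is controlled by \eqref{estimate_pre}. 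Applying Lemma~\ref{lem:Char} with $\ell=2$ to the speeds $k_\pm\mathcal U_n$ produces the backward characteristics $\mathfrak{\v}_\pm(\,\cdot\,;\u,\v)$, the exit time $\u_*=\u_*(\u,\v)$ and exit position $\v_*=\v_*(\u,\v)$, together with their regularity away from the corner $(\u_0,\v_0)$. Since \eqref{eq:fn+1} is a linear transport equation, its unique solution with data $\hat f^\pm$ on $\uC\cup\C$ is given along each characteristic by
\[
f^\pm_{n+1}(\u,\v)=\hat f^\pm(\u_*,\v_*)\exp\!\left(\mp\int_{\u_*}^{\u}2k_\pm\Big(2\tfrac{\pa_\v\Om_n}{\Om_n}-2\e\tfrac{\pa_\v r_n}{r_n}\Big)\mathcal U_n\Big|_{(s,\mathfrak{\v}_\pm(s;\u,\v))}\,ds\right),
\]
which is positive, and which lies in $W^{2,\infty}(\overline{\mathcal D})$ once the weighted estimates below are established (uniqueness follows from this representation, since every backward characteristic exits through $\uC\cup\C$).

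For the zeroth-order bound, write $\log(\v^{N_\pm}f^\pm_{n+1})=\log(\v_*^{N_\pm}\hat f^\pm(\u_*,\v_*))+N_\pm\log(\v/\v_*)$ minus the source integral. Because $\tfrac{d}{ds}\mathfrak{\v}_\pm=k_\pm\mathcal U_n>0$ we have $\v_0\le\v_*\le\v$, and by \eqref{exit} together with $|\u-\u_*|\le\delta$ we get $\v-\v_*\le k_\pm C_0\delta$, hence $|\log(\v/\v_*)|\le k_\pm C_0\delta/\v_0$. The source integrand is bounded by $C M_1 C_0/\v_0$ via the $j=0$ case of \eqref{estimate_pre} and $\mathcal U_n\le C_0$, so the source integral is $O(\delta)$. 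Choosing $\delta_0$ small (depending only on $\vertiii{[\hat f^\pm,\hat\Om,\hat r]|_{\uC\cup\C}}$), these error terms are at most $A\vertiii{[\hat f^\pm,\hat\Om,\hat r]|_{\uC\cup\C}}$, and the comparability $\v\asymp\v_*$ gives the factor $2$, so that $\|\log(\v^{N_\pm}f^\pm_{n+1})\|_\infty\le 2\|\log(\v^{N_\pm}\hat f^\pm)|_{\uC\cup\C}\|_\infty+A\vertiii{[\hat f^\pm,\hat\Om,\hat r]|_{\uC\cup\C}}$.

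For the weighted first and second $\v$-derivatives I would differentiate the representation formula, which produces the first and second variations $\pa_\v\mathfrak{\v}_\pm$, $\pa_\v^2\mathfrak{\v}_\pm$ of the flow map and the derivatives $\pa_\v\u_*,\pa_\v\v_*,\pa_\v^2\u_*,\pa_\v^2\v_*$ of the exit map. The variations solve the linear ODEs obtained by differentiating \eqref{2.103}; combined with the bounds on $\v^j\pa_\v^j\mathcal U_n$ from \eqref{est_U} and $|\u-\u_*|\le\delta$ they yield $\pa_\v\mathfrak{\v}_\pm=1+O(\delta)$, $|\v\pa_\v^2\mathfrak{\v}_\pm|=O(\delta)$, while the exit-map derivatives are handled through the implicit relation \eqref{exit}. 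When the characteristic exits through $\C$ one estimates $\log\hat f^\pm(\u_0,\v_*)$ using the $\v$-derivative norms on $\C$ and $\v\asymp\v_*$; when it exits through $\uC$ — which, by the last assertion of Lemma~\ref{lem:Char}, forces $\v\le\v_0+k_\pm C_0\delta$, so all weights $\v^j$ are comparable to $\v_0^j$ — one uses the equation \eqref{eq:fn+1} restricted to $\uC$ to express $\pa_\u\hat f^\pm|_{\uC}$ in terms of $\pa_\v\hat f^\pm|_{\uC}$, $\hat f^\pm|_{\uC}$ and the metric coefficients along $\uC$, all of which are controlled by $\vertiii{[\hat f^\pm,\hat\Om,\hat r]|_{\uC\cup\C}}$. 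Differentiating the source integral contributes only $O(\delta)$ terms by \eqref{est_U} and \eqref{estimate_pre}. Collecting everything and shrinking $\delta_0$, each $\|\v^j\pa_\v^j\log f^\pm_{n+1}\|_\infty$, $j=1,2$, is bounded by $2\|\v^j\pa_\v^j\log\hat f^\pm|_{\uC\cup\C}\|_\infty+A\vertiii{[\hat f^\pm,\hat\Om,\hat r]|_{\uC\cup\C}}$; summing these with the zeroth-order bound gives \eqref{bound:fn+1}, and finiteness of all these quantities, together with the use of \eqref{eq:fn+1} to trade $\pa_\u$ for $\pa_\v$, yields $f^\pm_{n+1}\in W^{2,\infty}(\overline{\mathcal D})$.

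The main obstacle is this last point: the exit map $(\u,\v)\mapsto(\u_*,\v_*)$ is only continuous (not $C^2$) at the corner $(\u_0,\v_0)$, and the transverse derivative $\pa_\u f^\pm$ along $\uC$ is not part of the data norm. Both difficulties are confined to the thin near-corner strip $\{\v-\v_0=O(\delta)\}$, where one must recover $\pa_\u\hat f^\pm|_{\uC}$ from the admissible data via the transport equation on $\uC$, and carefully check that every constant so produced depends only on $\vertiii{[\hat f^\pm,\hat\Om,\hat r]|_{\uC\cup\C}}$ (through the bound $2A\vertiii{[\hat f^\pm,\hat\Om,\hat r]|_{\uC\cup\C}}$ on the $n$-th iterate) and not on $\delta$, so that the final choice of $\delta_0$ closes the estimate with the stated constants. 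No genuinely nonlinear obstruction arises, because $\log f^\pm$ is merely transported along the flow of $\mathcal U_n$ with a bounded multiplicative source; the work is entirely in the bookkeeping of the weights and the corner.
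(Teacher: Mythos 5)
Your approach is genuinely different from the paper's, and the difference is worth spelling out. You propose to differentiate the explicit representation formula in $\v$, which forces you to estimate the first and second variations $\pa_\v\mathfrak{\v}_\pm$, $\pa_\v^2\mathfrak{\v}_\pm$ of the flow map and the derivatives of the exit map $\u_*,\v_*$. The paper instead \emph{differentiates the transport equation \eqref{eq:fn+1}} with respect to $\v$, obtaining a new linear transport equation for $g:=\pa_\v\log f^\pm_{n+1}$ along the \emph{same} characteristics, and solves that by the same method of characteristics, producing the Duhamel formula \eqref{int_rep1}. The key payoff of the paper's route is that the boundary term for $g$ at the exit point is directly $\pa_\v\log\hat f^\pm(\u_*,\v_*)$, which is part of the data norm $\vertiii{[\hat f^\pm]|_{\uC\cup\C}}$ on both $\uC$ and $\C$, so no exit-map derivatives ever appear; the price is that the unknown $g$ appears inside the integral (via $k_\pm\pa_\v\mathcal U_n\,g$), which is handled by the Gronwall-type absorption $\bigl(1-2k_\pm\|\v\pa_\v\mathcal U_n\|_\infty\tfrac{\delta}{\v_0}\bigr)\|\v\pa_\v\log f^\pm_{n+1}\|_\infty$ in \eqref{est5}. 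Your route avoids the Gronwall absorption (your integral has no $g$ in it), but pays by having to control $\pa_\v\u_*$, which is $O(1)$, \emph{not} $O(\delta)$, when the characteristic exits through $\uC$.

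This brings me to the one step in your proposal that is not correct as stated: you claim that ``differentiating the source integral contributes only $O(\delta)$ terms,'' but differentiating $\int_{\u_*}^{\u}(\cdots)\,ds$ produces the endpoint term $-\pa_\v\u_*\cdot(\cdots)(\u_*,\v_0)$, where $(\cdots)=2k_\pm\bigl(2\tfrac{\pa_\v\Om_n}{\Om_n}-2\e\tfrac{\pa_\v r_n}{r_n}\bigr)\mathcal U_n$ is $O(1/\v_*)$ and $\pa_\v\u_*\approx -1/(k_\pm\mathcal U_n(\u_*,\v_0))$ is $O(1)$; so this endpoint term is $O(1)$ in the weighted norm, not small. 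It is not fatal: combined with the chain-rule contribution $\pa_\u\log\hat f^\pm(\u_*,\v_0)\,\pa_\v\u_*$ from the boundary data (and using the transport equation at $(\u_*,\v_0)$, which $\hat f^\pm$ satisfies there since $\mathcal U_n,\Om_n,r_n$ coincide with the RLP data on $\uC$), the $(\cdots)\mathcal U_n$ pieces cancel exactly, and what survives is $\bigl(1+O(\delta)\bigr)\pa_\v\log\hat f^\pm(\u_*,\v_0)$ --- precisely the boundary term in the paper's \eqref{int_rep1}. This cancellation is the crux of making your route close, and your proposal glosses over it. Once it is spelled out, both routes yield \eqref{bound:fn+1} with the same constants, so the difference is a matter of presentation: the paper's differentiation-of-the-PDE approach hides the cancellation inside the well-posedness of the transport equation for $g$, while yours exposes it as an explicit algebraic identity at the boundary.
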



\begin{lemma}[Solving the metric part]\label{lem:Omrn+1} Assume the same as in Proposition \ref{prop:sol}. 
Then there exist sufficiently small $\delta_0>0$  depending only on $\vertiii{[\hat f^\pm, \hat\Om, \hat r]|_{\uC \cup \C}}$ such that for all $\delta\in(0,\delta_0)$
there exists a unique solution $[\Om_{n+1}, \frac{r_{n+1}}{\v}]\in W^{3,\infty}(\overline{\mathcal D})$ to \eqref{recast1}-\eqref{recast2} with $[\Om_{n+1},r_{n+1}]= [ \hat\Om, \hat r]$ on $\uC \cup \C$ satisfying 
\be
\vertiii{[\Om_{n+1},r_{n+1}]}\le A\vertiii{[\hat\Om, \hat r]|_{\uC \cup \C}} +A \vertiii{[\hat f^\pm, \hat\Om, \hat r]|_{\uC \cup \C}} . 
\ee
\end{lemma}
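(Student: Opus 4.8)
The plan is to solve the two equations~\eqref{recast1}--\eqref{recast2} by direct double integration against the prescribed characteristic data on $\uC\cup\C$, since the right-hand sides of both equations depend only on the previous iterate $[f^\pm_n,\Om_n,r_n]$ and hence are \emph{known} functions on $\overline{\mathcal D}$. Concretely, for $r_{n+1}$ I would integrate~\eqref{recast1} first in $\u$ from $\u_0$ and then in $\v$ from $\v_0$, using $\hat r$ along $\uC$ and $\C$ to fix the two constants of integration; this produces the explicit formula
\[
(r_{n+1})^2(\u,\v) = (\hat r)^2(\u,\v_0) + (\hat r)^2(\u_0,\v) - (\hat r)^2(\u_0,\v_0) + \int_{\u_0}^{\u}\!\!\int_{\v_0}^{\v} \left[-\tfrac{(\Om_n)^2}{2} + 2\pi\tfrac{(\Om_n)^2}{(r_n)^{2\e}}(f_n^+f_n^-)^{\frac1{k_+-k_-}}\right] d\tilde\v\, d\tilde\u .
\]
Similarly for $\log\Om_{n+1}$ from~\eqref{recast2}, using the data for $\hat\Om$ on $\uC\cup\C$ (where on $\C$ it is supplied by~\eqref{E:OMEGAOUTGOING}) and the Hawking-mass surrogate $m_n$ from~\eqref{def_m}. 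Existence and uniqueness in $W^{3,\infty}$ of $[\Om_{n+1},\tfrac{r_{n+1}}{\v}]$ are then immediate from these closed-form expressions together with the regularity $f^\pm_n\in W^{2,\infty}$, $[\Om_n,\tfrac{r_n}{\v}]\in W^{3,\infty}$; one differentiates under the integral sign up to third order in $\v$ and up to first order in $\u$.

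Next I would derive the quantitative norm bound. For each seminorm appearing in $\vertiii{[\Om,r]}$ one weights the integral identity above by the appropriate power of $\v$ and applies the a~priori estimates already established: $\|\log\Om_n\|_\infty$, the $\mathcal U$-bounds~\eqref{est_U}, the source bounds~\eqref{est_S1}--\eqref{est_S2}, the auxiliary bounds~\eqref{est_aux}, and the mass bounds~\eqref{est_m}--\eqref{est_m1}. The key mechanism is that every double integral $\int_{\u_0}^{\u}\!\int_{\v_0}^{\v}$ contributes a factor $|\u-\u_0|\le\delta$ from the $\u$-integration (the sources are bounded in $\u$, so this integration is harmless and gains smallness), while the $\v$-integration of the weighted sources $\v^{-1-\beta}(\cdots)$, resp.\ $\v^{-3-\beta}(\cdots)$, converges since $\beta>0$ by~\eqref{beta}, yielding bounds of the form $\tfrac{1}{\beta\v_0^\beta}M_j(\vertiii{[f^\pm_n,\Om_n,r_n]})$. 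Using the hypothesis $\vertiii{[f^\pm_n,\Om_n,r_n]}\le 2A\vertiii{[\hat f^\pm,\hat\Om,\hat r]|_{\uC\cup\C}}$ bounds each $M_j$ by a fixed continuous function of the data norm, and multiplying by $\delta$ makes the perturbation terms arbitrarily small. The data part of each seminorm is inherited verbatim from $\hat\Om,\hat r$ on $\uC\cup\C$, which accounts for the $A\vertiii{[\hat\Om,\hat r]|_{\uC\cup\C}}$ contribution; here the constant $A=\max\{4,1+\tfrac1{\theta\v_0^\theta}\}$ absorbs the interpolation of intermediate $\v$-weights against the weighted top-order ones (e.g.\ estimating $\|\v^{j}\pa_\v^j\log\Om\|_\infty$ in terms of $\|\v^{j+\theta}\pa_\v^j\log\Om\|_\infty$ costs $\v_0^{-\theta}$). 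Collecting terms and choosing $\delta_0$ small enough that $\delta_0\cdot(\text{continuous function of data norm})\le A\vertiii{[\hat f^\pm,\hat\Om,\hat r]|_{\uC\cup\C}}$ gives the claimed inequality.

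The main obstacle I anticipate is the bookkeeping for the lowest-order quantities $\|\log\Om_{n+1}\|_\infty$ and $\|\tfrac{\v^2}{r_{n+1}^2}\|_\infty$, $\|\tfrac{\v}{\pa_\v(r_{n+1}^2)}\|_\infty$, which are \emph{not} of perturbative size: $\log\Om_{n+1}$ and $r_{n+1}^2/\v^2$ are $O(1)$ quantities dictated by the data, not small corrections. For these one must argue that the iterate stays in a fixed neighbourhood of the self-similar profile --- i.e.\ that the double integral perturbs $\log\Om$ by at most $O(\delta)$ and perturbs $\pa_\v(r^2)$ (which on $\C$ equals $\tfrac12$ by the gauge~\eqref{E:NORMOUT}, and near $\uC$ is controlled by $\hat r$) by at most $O(\delta)$, so that $\pa_\v(r_{n+1}^2)$ stays bounded away from zero and $r_{n+1}^2/\v^2$ stays comparable to the data value. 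This requires first estimating $\pa_\v(r_{n+1}^2) = \pa_\v(\hat r^2)(\u_0,\v) + \int_{\u_0}^\u[\cdots]d\tilde\u$ and noting the integrand is bounded (using~\eqref{est_S1}), so the correction is $O(\delta)$; then $r_{n+1}^2 = (\hat r)^2(\u_0,\v)+O(\delta\v)$ follows by a further $\v$-integration, and dividing by $\v^2$ and using the data bound $\|\tfrac{\v^2}{\hat r^2}|_{\C}\|_\infty<\infty$ gives the result for $\delta$ small. The same care handles $\|\tfrac{\v}{\pa_\v(r_{n+1}^2)}\|_\infty$ via a lower bound on $\pa_\v(r_{n+1}^2)$. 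Once these $O(1)$ quantities are pinned down, all higher seminorms close by the perturbative argument of the previous paragraph.
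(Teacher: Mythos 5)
Your proposal takes essentially the same route as the paper: integrate \eqref{recast1}--\eqref{recast2} first in $\u$ (gaining a factor $\delta$) and then in $\v$ against the prescribed data, estimate each seminorm using \eqref{est_S1}--\eqref{est_S2}, \eqref{est_aux}, \eqref{est_m}--\eqref{est_m1}, and separately pin down the non-perturbative $O(1)$ quantities $\log\Om_{n+1}$, $\v^2/r_{n+1}^2$, $\v/\pa_\v(r_{n+1}^2)$ by showing the integral corrections are $O(\delta)$ so that lower bounds inherited from the data survive. One small slip: the gauge \eqref{E:NORMOUT} gives $\pa_\v r=\tfrac12$ on $\C$, hence $\pa_\v(\hat r^2)=\hat r$ there (not $\tfrac12$); the relevant fact is $\pa_\v(\hat r^2)\gtrsim\v$ on $\C$, i.e.\ $\|\v/\pa_\v(\hat r^2)|_{\C}\|_\infty<\infty$, which is exactly what the data-norm assumption provides and is what you use in the rest of the argument.
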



In what follows, we prove the above two lemmas. We start with Lemma  \ref{lem:fn+1}.

\begin{proof}[Proof of  Lemma \ref{lem:fn+1} (Solving the Euler part).] For the sake of notational convenience, throughout the proof, we use $\mathcal U=\mathcal U_n$, $\Om=\Om_n$, $r=r_n$, $f^\pm=f_n^\pm$ so that the variables without the indices refer to the ones from the $n$-th step.

 Uniqueness follows from the uniqueness of characteristics since $ W^{2,\infty}\subset  C^1$ solutions $f_{n+1}^\pm>0$ satisfy the ODE along the characteristics 
\be\label{fODE}
\frac{d}{d\u} \log f_{n+1}^\pm (\u, \mathfrak{\v}_\pm ) = \mp 2k_\pm (2\frac{\pa_\v \Om}{\Om} - 2\e\frac{\pa_\v r}{r})  \mathcal U (\u, \mathfrak{\v}_\pm ) . 
\ee
The existence follows by the integral representation of  \eqref{fODE}
\be\label{int_rep}
 f_{n+1}^\pm (\u, \v ) = \hat f^{\pm}_*(\u,\v) \exp\left\{\mp\int_{\u_*}^\u 2 k_\pm  \left[ (2\frac{\pa_\v \Om}{\Om} - 2\e\frac{\pa_\v r}{r})  \mathcal U \right] (s, \mathfrak{\v}_\pm (s)) ds \right\}
\ee
where 
$
\hat f^{\pm}_*(\u,\v) = \hat f^{\pm }(\u_*, \v_*), 
$
and $(\u_*,\v_*)$ is the exit time and position associated with $(\u,\v)$ constructed in Lemma \ref{lem:Char}. 
We focus on verifying the desired regularity and estimates. 

First of all, clearly $f_{n+1}^\pm \in C(\overline{\mathcal D})$ since $\u_*$ is continuous.
To estimate $\|\log (\v^{N_\pm} f^\pm_{n+1})\|_\infty$, we take log of \eqref{int_rep} and rewrite it as 
\be\label{est_logf}
\log (\v^{N_\pm} f^\pm_{n+1}) = \log (\v^{N_\pm} \hat f^{\pm}_*) \mp\int_{\u_*}^\u 2 k_\pm  \left[ (2\frac{\pa_\v \Om}{\Om} - 2\e\frac{\pa_\v r}{r})  \mathcal U \right] (s, \mathfrak{\v}_\pm (s)) ds.
\ee
For the first term, we may write it as 
\[
 \log (\v^{N_\pm} \hat f^{\pm}_*) =  \log (\v_*^{N_\pm} \hat f^{\pm}_*) + N_\pm \log (\frac{\v}{\v_*}). 
\]
Since $| \log (\v_*^{N_\pm} \hat f^{\pm}_*)|\le  \|\log (\v^{N_\pm} f^\pm) |_{\uC \cup \C} \|_\infty$, it suffices to estimate $ \log (\frac{\v}{\v_*})$. To this end, 
first let $\u_*= \u_0$.
Then we have 
\[
\v_*=\mathfrak{q}_\pm (\u_0;\u,\v) = \v - \int_{\u_0}^\u k_\pm \mathcal U (s) ds > \v - k_\pm \| \mathcal U\|_\infty (\u-\u_0)\ge 
 \v - k_\pm \| \mathcal U\|_\infty \delta . 
\]
In particular, using $\v\ge \v_0$, we have $\v_* \ge \left(1-k_\pm \| \mathcal U\|_\infty \v_0^{-1}\delta\right)\v$. Hence, 
\be\label{est1}
| \log (\frac{\v}{\v_*}) | \le - \log (1-k_\pm \| \mathcal U\|_\infty \v_0^{-1}\delta) \le  \v_0^{-1}\delta k_\pm \| \mathcal U\|_\infty  
\ee
for  sufficiently small $\delta>0$. 
If $\u_*>\u_0$, we have $\v_*=\v_0$ and $\v<\v_0 + k_\pm \|\mathcal U\|_\infty (\u-\u_0)$. Hence in this case, for sufficiently small $\delta>0$, 
\be\label{est2}
| \log (\frac{\v}{\v_*}) | \le \log (1+ k_\pm \|\mathcal U\|_\infty \v_0^{-1}\delta)\le k_\pm \|\mathcal U\|_\infty \v_0^{-1}\delta. 
\ee
We next estimate the integral term in \eqref{est_logf}. 
Using $\mathfrak{\v}_\pm (s) \ge \v_*$ for all $\u_*\le s\le \u$, 
and $\u-\u_* \le \delta$, 
\be\label{est3}
\begin{split}
&\left| \int_{\u_*}^\u 2 k_\pm  \left[ (2\frac{\pa_\v \Om}{\Om} - 2\e\frac{\pa_\v r}{r})  \mathcal U  \right] (s, \mathfrak{\v}_\pm (s)) ds \right|\\
&= \left| 4k_\pm  \int_{\u_*}^\u \frac{1}{\mathfrak{\v}_\pm} \left[  \mathfrak{\v}_\pm (\frac{  \pa_\v \Om}{\Om} - \e\frac{\pa_\v r}{r}) \mathcal U \right] (s, \mathfrak{\v}_\pm (s)) ds \right| \\
&\le 4k_\pm \| \mathcal U\|_\infty  \left\| \frac{\v\pa_\v \Om}{\Om} - \e\frac{\v\pa_\v r}{r} \right\|_\infty \frac{\u-\u_*}{\v_*}\\
& \le 4k_\pm \| \mathcal U\|_\infty  \left\| \frac{\v\pa_\v \Om}{\Om} - \e\frac{\v\pa_\v r}{r} \right\|_\infty  \frac{\delta}{\v_0}.
\end{split}
\ee
Therefore by \eqref{est_logf}, \eqref{est1}, \eqref{est2}, \eqref{est3}, we obtain 
\be\label{est4}
\begin{split}
 \|\log(\v^{N_\pm} f_{n+1}^\pm) \|_\infty& \le   \|\log (\v^{N_\pm} f^\pm) |_{\uC \cup \C} \|_\infty + 
 N_\pm k_\pm \| \mathcal U\|_\infty \frac{\delta}{\v_0} \\
 &\quad+ 4k_\pm \| \mathcal U\|_\infty  \left\| \frac{\v\pa_\v \Om}{\Om} - \e\frac{\v\pa_\v r}{r} \right\|_\infty  \frac{\delta}{\v_0} . 
 \end{split}
\ee

Next we show that $\pa_\v f_{n+1}^\pm$ is continuous. Since $\u_*\in C^1$ if $\u_*\neq \u_0$ by Lemma \ref{lem:Char}, the right-hand side of \eqref{int_rep} is $C^1$ and thus if $\u_*\neq \u_0$, $\pa_\v f_{n+1}^\pm$ 
is clearly continuous. Therefore, it suffices to show that $\pa_\v f_{n+1}^\pm (\u,\v)$ is continuous when $\u_*=\u_0$ and $\v_*=\v_0$. To this end, let $\u_*(\u,\v)=\u_0$ and $\v_*(\u,\v)=\v_0$, and 
take any $(\bar\u,\bar\v)\neq (\u,\v)$  in a small neighborhood of such $(\u,\v)$. Then we have $\u_*(\bar\u,\bar\v)\neq \u_0$ and hence $\pa_\v f_{n+1}^\pm (\bar\u,\bar\v) $ is continuous. In fact, 
for any $(\bar\u,\bar\v)$ with $\bar\u_*:= \u_*(\bar\u,\bar\v)\neq \u_0$ we can solve the equations for $\pa_\v \log f_{n+1}^\pm$:
\[
\begin{split}
\pa_\u (\pa_\v  \log f_{n+1}^\pm) + k_\pm \mathcal U \pa_\v (\pa_\v\log  f_{n+1}^\pm) + k_\pm \pa_\v \mathcal U \pa_\v \log f_{n+1}^\pm  \pm 4k_\pm \pa_\v \left( (\frac{\pa_\v \Om}{\Om} - \e\frac{\pa_\v r}{r})  \mathcal U\right)  =0
\end{split}
\] along the characteristics and 
 obtain the integral representation 
\be\label{int_rep1}
\begin{split}
\pa_\v \log f_{n+1}^\pm (\bar\u,\bar\v) & = \pa_\v \log \hat f^{\pm}_*(\bar\u,\bar\v) \\
&-  \int_{\bar\u_*}^{\bar\u} \left[  k_\pm \pa_\v \mathcal U \pa_\v \log f_{n+1}^\pm  \pm 4k_\pm \pa_\v \left( (\frac{\pa_\v \Om}{\Om} - \e\frac{\pa_\v r}{r})  \mathcal U\right)\right]  (s, \mathfrak{\v}_\pm (s)) ds
\end{split}
\ee 
 where 
\[
\pa_\v\log \hat f^{\pm}_*(\bar\u,\bar\v) = \mathbf{1}_{\bar\u_*>\u_0} \pa_\v \log\hat f\big\vert_{\uC}^{\pm}(\bar\u_*,\bar\v_*) +  \delta_{\bar\u_*\u_0} \pa_\v \log\hat f^{\pm}(\bar\u_*,\bar\v_*), 
\]
where $\delta_{\bar\u_*\u_0}$ is the usual Kronecker delta.
Notice that the integral terms are all continuous due to the continuity of $\u_*$ in Lemma \ref{lem:Char} and therefore $f_{n+1}^\pm \in C^1(\overline{\mathcal D})$.

To estimate $\v\pa_\v\log f_{n+1}^\pm $, 
as done in \eqref{est1} and \eqref{est2}, we first observe that $\frac{\v}{\v_*}\le \frac{1 }{1-k_\pm \| \mathcal U\|_\infty \v_0^{-1}\delta}$. Then for sufficiently small $\delta>0$ 
we have 
\be\label{boundq}
\frac{\v}{\v_*}, \ \frac{\v^2}{\v_*^2}
\le 1+2k_\pm \| \mathcal U\|_\infty \frac{\delta}{\v_0}.
\ee
Based on the integral representation~\eqref{int_rep1} we proceed to estimate  
\be
|\v \pa_\v \log \hat f^{\pm}_*(\bar\u,\bar\v) |\le \left( 1+2k_\pm \| \mathcal U\|_\infty\frac{\delta}{\v_0}  \right) \|\v  \pa_\v \log \hat f^\pm |_{\uC \cup \C} \|_\infty ,
\ee
where we have used the first bound in~\eqref{boundq}.

For the second term in~\eqref{int_rep1}, 
\[
\begin{split}
& \left| \v \int_{\u_*}^{\u} \left[  k_\pm \pa_\v \mathcal U \pa_\v \log f_{n+1}^\pm  \pm 4k_\pm \pa_\v \left( (\frac{\pa_\v \Om}{\Om} - \e\frac{\pa_\v r}{r})  \mathcal U\right)\right]  (s, \mathfrak{\v}_\pm (s)) ds  \right| \\
& \le k_\pm  \left(  \|\v \pa_\v \mathcal U\|_\infty   \| \v \pa_\v \log f_{n+1}^\pm \|_\infty+4 \left\| \v^2 \pa_\v \left( (\frac{\pa_\v \Om}{\Om} - \e\frac{\pa_\v r}{r})  \mathcal U\right) \right\|_\infty   \right)\frac{\v (\u-\u_*)}{\v_*^2} \\
&  \le k_\pm  \left(   \|\v \pa_\v \mathcal U\|_\infty   \| \v \pa_\v \log f_{n+1}^\pm \|_\infty +4 \left\| \v^2 \pa_\v \left( (\frac{\pa_\v \Om}{\Om} - \e\frac{\pa_\v r}{r})  \mathcal U\right) \right\|_\infty   \right)
2\frac{\delta}{\v_0} \\
\end{split}
\]
where we have used the upper bound $\frac{\v}{\v_*} \le 2$ as it follows from \eqref{boundq} for $\delta>0$ sufficiently small. 
Therefore, we deduce that 
\be\label{est5}
\begin{split}
&\left( 1-2  k_\pm \|\v \pa_\v \mathcal U\|_\infty  \frac{\delta}{\v_0}  \right) \| \v \pa_\v \log f_{n+1}^\pm \|_\infty \\
&\le \left( 1+2k_\pm \| \mathcal U\|_\infty \frac{\delta}{\v_0} \right)  \|\v  \pa_\v \log \hat f^\pm |_{\uC \cup \C} \|_\infty  
+ 8k_\pm \frac{\delta}{\v_0} \left\| \v^2 \pa_\v \left( (\frac{\pa_\v \Om}{\Om} - \e\frac{\pa_\v r}{r})  \mathcal U\right) \right\|_\infty . 
\end{split}
\ee

From \eqref{int_rep}, since $f_{n+1}^\pm\in W^{2,\infty}$, we just need to estimate $\v^2\pa_\v^2\log f^\pm_{n+1}$. 
After applying $\pa_{\v\v}$ to the equation~\eqref{eq:fn+1}, we have the 
following integral representation:
\be
\begin{split}
&\pa_\v^2\log f_{n+1}^\pm (\u,\v)= \pa_\v^2\log \hat f_{\pm,* }(\u,\v) \\
&-  \int_{\u_*}^{\u} \left[   k_\pm \pa_\v^2 \mathcal U \pa_\v \log f_{n+1}^\pm + 2k_\pm \pa_\v \mathcal U \pa_\v^2 \log f_{n+1}^\pm \pm 4k_\pm \pa_\v^2 \left( (\frac{\pa_\v \Om}{\Om} - \e\frac{\pa_\v r}{r})  \mathcal U\right)\right]  (s, \mathfrak{\v}_\pm (s)) ds.
\end{split}
\ee
Analogously to  the previous step, we obtain  
\be\label{est6}
\begin{split}
&\left(  1-2  k_\pm \|\v \pa_\v \mathcal U\|_\infty \frac{\delta}{\v_0}   \right) \| \v^2 \pa_\v^2 \log f_{n+1}^\pm \|_\infty 
\le \left( 1+2k_\pm \| \mathcal U\|_\infty\frac{\delta}{\v_0}  \right) \|\v^2  \pa_\v^2 \log \hat f^\pm |_{\uC \cup \C} \|_\infty  \\
&+2k_\pm\frac{\delta}{\v_0}  \|\v^2\pa_\v^2\mathcal U\|_\infty\| \v \pa_\v \log f_{n+1}^\pm \|_\infty +8k_\pm \frac{\delta}{\v_0}\left\| \v^3 \pa_\v^2 \left( (\frac{\pa_\v \Om}{\Om} - \e\frac{\pa_\v r}{r})  \mathcal U\right) \right\|_\infty . 
\end{split}
\ee
 We now collect \eqref{est4}, \eqref{est5}, \eqref{est6} and use \eqref{estimate_pre}, \eqref{est_U} as well as $\vertiii{[f^\pm_{n},\Om_{n},r_{n}]}\le 2A\vertiii{[\hat f^\pm, \hat\Om, \hat r]|_{\uC\cup\C}}$ 
 to  deduce  \eqref{bound:fn+1} for sufficiently small $\delta>0$. This completes the proof.  
\end{proof}

\begin{proof}[Proof of Lemma \ref{lem:Omrn+1} (Solving the metric part).] As in the previous proof, for the sake of notational convenience, throughout the proof, we use $\Om=\Om_n$, $r=r_n$, $f^\pm=f_n^\pm$, $m=m_n$ so that the variables without the indices refer to the ones from the $n$-th step. 

Since \eqref{recast1},  \eqref{recast2} are linear inhomogeneous ODEs, we directly integrate them to solve for $ r_{n+1}$ and $\Om_{n+1}$. 
As the existence is clear, we focus on the estimates.

We now integrate \eqref{recast1} along an ingoing null curve from $\u_0$ to $\u$ to obtain 
\be\label{(2.131)}
\pa_\v ( (r_{n+1})^2) (\u,\v) = \pa_\v (\hat r^2) (\u_0, \v)+
 \int_{\u_0}^\u \left[  - \frac{\Om^2}{2} + \frac{2\pi \Om^2  }{r^{2\e}  } (f^+  f^-)^{\frac{1}{k_+-k_-}} \right] d\tilde\u 
\ee with 
\be\label{(2.132)}
( r_{n+1})^2 (\u,\v) = \hat r^2(\u, \v_0)+ \int_{\v_0}^\v \pa_\v (( r_{n+1})^2) (\u,\tilde \v)d \tilde \v . 
\ee 

We first estimate $\frac{1}{\v}\pa_\v(( r_{n+1})^2)$. To this end, we bound the integral term of \eqref{(2.131)} as
\be
\begin{split}
&\left| \frac{1}{\v} \int_{\u_0}^\u \left[  - \frac{\Om^2}{2} + \frac{2\pi \Om^2  }{r^{2\e}  } (f^+  f^-)^{\frac{1}{k_+-k_-}} \right] d\tilde\u\right| \\
&\qquad\le \frac{\delta}{\v_0} \left( \frac{ \|\Om\|^2_\infty}{2} + \frac{2\pi}{\v_0^\beta}  \left\| \v^{1+\beta} \frac{ \Om^2  }{r^{2\e}  } (f^+  f^-)^{\frac{1}{k_+-k_-}} \right\|_\infty  \right)
\end{split}
\ee
where we have used $\frac{\u-\u_0}{\v}\le \frac{\delta}{\v_0}$. Therefore, we have 
\be\label{est_r1}
\begin{split}
\left\| \frac{1}{\v}\pa_\v((r_{n+1})^2) \right\|_\infty & \le \left\| \frac{1}{\v}\pa_\v(\hat r^2)|_{\C} \right\|_\infty 
+\frac{\delta}{\v_0} \left( \frac{ \|\Om\|^2_\infty}{2} + \frac{2\pi}{\v_0^\beta}  \left\| \v^{1+\beta} \frac{ \Om^2  }{r^{2\e}  } (f^+  f^-)^{\frac{1}{k_+-k_-}} \right\|_\infty  \right). 
\end{split}
\ee
We also have
\be\label{est_r2}
\begin{split}
 \frac{1}{\v}\pa_\v((r_{n+1})^2) (\u,\v) & \ge  \frac{1}{\v} \pa_\v (\hat r^2) (\u_0, \v) \\
&-\frac{\delta}{\v_0}\left( \frac{ \|\Om\|^2_\infty}{2} + \frac{2\pi}{\v_0^\beta}  \left\| \v^{1+\beta} \frac{ \Om^2  }{r^{2\e}  } (f^+  f^-)^{\frac{1}{k_+-k_-}} \right\|_\infty  \right). 
\end{split}
\ee
Since $   \frac{1}{\v} \pa_\v (\hat r^2) (\u_0, \v) \ge\frac{1}{ \|\frac{\v}{\pa_\v(\hat r^2)}|_{\C}\|_\infty}$, for sufficiently small $\delta$ we obtain 
\be\label{est_r22}
\begin{split}
 &\frac{\v}{\pa_\v(( r_{n+1})^2) (\u,\v)}  \le
\frac{1}{ \frac{1}{ \|\frac{\v}{\pa_\v(\hat r^2)}|_{\C}\|_\infty}
-\frac{\delta}{\v_0}\left( \frac{ \|\Om\|^2_\infty}{2} + \frac{2\pi}{\v_0^\beta}  \left\| \v^{1+\beta} \frac{ \Om^2  }{r^{2\e}  } (f^+  f^-)^{\frac{1}{k_+-k_-}} \right\|_\infty  \right)}\\
&\le  \left\|\frac{\v}{\pa_\v(\hat r^2)}|_{\C}\right\|_\infty  +2\frac{\delta}{\v_0} \left\|\frac{\v}{\pa_\v(\hat r^2)}|_{\C}\right\|_\infty^2 \left( \frac{ \|\Om\|^2_\infty}{2} + \frac{2\pi}{\v_0^\beta}  \left\| \v^{1+\beta} \frac{ \Om^2  }{r^{2\e}  } (f^+  f^-)^{\frac{1}{k_+-k_-}} \right\|_\infty\right). 
\end{split}
\ee

On the other hand, from \eqref{(2.132)} and \eqref{est_r1}, the upper bound of $\frac{( r_{n+1})^2}{\v^2}$ is easily obtained: since $\v_0<\v$ and using \eqref{est_r1}, 
\be\label{est_r3}
\begin{split}
\frac{( r_{n+1})^2(\u,\v)}{\v^2}&=\frac{\hat r^2(\u, \v_0)}{\v^2} + \frac{1}{\v^2} \int_{\v_0}^\v \tilde\v  \frac{\pa_\v((r_{n+1})^2)  (\u,\tilde \v)}{\tilde\v}d \tilde \v  \\
&\le  \left\| \frac{\hat r^2}{\v^2}|_{\uC \cup \C} \right\|_\infty +  \frac12\left\| \frac{1}{\v}\pa_\v(\hat r^2)|_{\C} \right\|_\infty \\
&\quad+\frac12\frac{\delta}{\v_0} \left( \frac{ \|\Om\|^2_\infty}{2} + \frac{2\pi}{\v_0^\beta}  \left\| \v^{1+\beta} \frac{ \Om^2  }{r^{2\e}  } (f^+  f^-)^{\frac{1}{k_+-k_-}} \right\|_\infty  \right). 
\end{split}
\ee
Next we use \eqref{est_r2} to first obtain from \eqref{(2.132)}
\be\label{est_r4}
\begin{split}
\frac{( r_{n+1})^2(\u,\v)}{\v^2}&\ge \frac{\hat r^2(\u, \v_0)}{\v^2} + \frac{\v^2-(\v_0)^2}{2\v^2} \inf_{\uC \cup \C} \left[ \frac{1}{\v} \pa_\v (\hat r^2)\right]
\\
& -  \frac12\frac{\delta}{\v_0}   \left( \frac{ \|\Om\|^2_\infty}{2} + \frac{2\pi}{\v_0^\beta}  \left\| \v^{1+\beta} \frac{ \Om^2  }{r^{2\e}  } (f^+  f^-)^{\frac{1}{k_+-k_-}} \right\|_\infty  \right)\\
&\ge \frac13\min\left\{ \frac{1}{\| \frac{\v^2}{\hat r^2}|_{\uC \cup \C}\|_\infty }, \frac{1}{\| \frac{\v}{\pa_\v(\hat r^2)}|_{\uC \cup \C}\|_\infty } \right\} 
 -  \frac12\frac{\delta}{\v_0}   \left( \frac{ \|\Om\|^2_\infty}{2} + \frac{2\pi}{\v_0^\beta}  \left\| \v^{1+\beta} \frac{ \Om^2  }{r^{2\e}  } (f^+  f^-)^{\frac{1}{k_+-k_-}} \right\|_\infty  \right),
\end{split}
\ee
where we have used the following: if $\frac{\v}{\sqrt3}\le \v_0$, then $\frac{\hat r^2(\u, \v_0)}{\v^2} \ge \frac{\hat r^2(\u, \v_0)}{3(\v_0)^2}\ge\frac13 \frac{1}{\| \frac{\v^2}{\hat r^2}|_{\uC \cup \C}\|_\infty }$, while if 
$\v_0 < \frac{\v}{\sqrt3}$, 
$\frac{\v^2-(\v_0)^2}{2\v^2} \inf_{\uC \cup \C} \left[ \frac{1}{\v} \pa_\v (\hat r^2)\right] \ge \frac13  \frac{1}{\| \frac{\v}{\pa_\v(\hat r^2)}|_{\uC \cup \C}\|_\infty } $. 
As done in \eqref{est_r22}, we deduce 
\begin{align}
&\frac{\v^2}{( r_{n+1})^2(\u,\v)}\le 3\max \left\{ \left\| \frac{\v^2}{\hat r^2}|_{\uC \cup \C}\right\|_\infty,\left\| \frac{\v}{\pa_\v(\hat r^2)}|_{\uC \cup \C}\right\|_\infty  \right\}\label{est_r44}\\
&+ 9 \frac{\delta}{\v_0}  \max \left\{ \left\| \frac{\v^2}{\hat r^2}|_{\uC \cup \C}\right\|_\infty,\left\| \frac{\v}{\pa_\v(\hat r^2)}|_{\uC \cup \C}\right\|_\infty  \right\}^2 \left( \frac{ \|\Om\|^2_\infty}{2} + \frac{2\pi}{\v_0^\beta}  \left\| \v^{1+\beta} \frac{ \Om^2  }{r^{2\e}  } (f^+  f^-)^{\frac{1}{k_+-k_-}} \right\|_\infty  \right). \notag
\end{align}

We now differentiate \eqref{recast1} with respect to $\pa_\v$ and integrate:
\be\label{E:TWODER}
\begin{split}x
\pa_\v^2 (( r_{n+1})^2) (\u,\v) = \pa_\v^2 (\hat r^2) (\u_0, \v)+
 \int_{\u_0}^\u \pa_\v\left[  - \frac{\Om^2}{2} + \frac{2\pi \Om^2  }{r^{2\e}  } (f^+  f^-)^{\frac{1}{k_+-k_-}} \right] d\tilde\u . 
\end{split}
\ee
Now the estimation of $\pa_\v^2((r_{n+1})^2)$ follows similarly. Using $\frac{\u-\u_0}{\v}\le \frac{\delta}{\v_0}$, 
\be
\begin{split}\label{est_r5}
&\|\pa_\v^2 (( r_{n+1})^2)\|_\infty  \le \|\pa_\v^2 (\hat r^2)|_{\C} \|_\infty \\
&\quad+ \frac{\delta}{\v_0} \left(  \frac{\|\Om\|^2_\infty}{\v_0^\theta} \|\v^{1+\theta}\pa_\v \log \Om\|_\infty  +  \frac{2\pi}{\v_0^{1+\beta}}  \left\| \v^{2+\beta} \pa_\v\left(  \frac{ \Om^2  }{r^{2\e}  } (f^+  f^-)^{\frac{1}{k_+-k_-}} \right)\right\|_\infty   \right). 
\end{split}
\ee
The third derivative $\v\pa_\v^3((r_{n+1})^2)$ can be estimated analogously: 
\be\label{est_r6}
\begin{split}
\|\v\pa_\v^3 ((r_{n+1})^2)\|_\infty  \le & \|\v\pa_\v^3 (\hat r^2)|_{\C} \|_\infty + \frac{\delta}{\v_0}  \|\Om\|^2_\infty \left( \frac{ \|\v^{2+\theta}\pa_\v^2 \log \Om\|_\infty}{\v_0^\theta} 
+ \frac{2  \|\v^{1+\theta}\pa_\v \log \Om\|^2_\infty}{\v_0^{2\theta}} \right)\\
&+  \frac{\delta}{\v_0}  \frac{2\pi}{\v_0^{1+\beta}}  \left\| \v^{3+\beta} \pa_\v^2\left(  \frac{ \Om^2  }{r^{2\e}  } (f^+  f^-)^{\frac{1}{k_+-k_-}} \right)\right\|_\infty . 
\end{split}
\ee

We proceed in the same way for $ \Om_{n+1}$.  By integrating \eqref{recast2} along an ingoing curve we obtain the expression for $\pa_\v \log \Om_{n+1} (\u,\v) $
\be\label{(2.134)}
\begin{split}
\pa_\v \log  \Om_{n+1} (\u,\v) &= \pa_\v (\log \hat \Om) (\u_0, \v) + \int_{\u_0}^\u \left[  \frac{ \Om^2}{2 r^2} \frac{m}{r} - (1+\e) \pi \frac{ \Om^2 }{r^{2+2\e}}(f^+  f^-)^{\frac{1}{k_+-k_-}}  \right] d\tilde\u
\end{split}
\ee with
\be\label{(2.135)}
 \Om_{n+1} (\u,\v) =\hat \Om(\u, \v_0) e^{\int_{\v_0}^\v  \pa_\v \log \Om_{n+1} (\u,\tilde\v) d\tilde \v }.
\ee 

To estimate $\v^{1+\theta}\pa_\v\log \Om_{n+1}$, we start with the integral term of \eqref{(2.134)}. By taking the sup norm, and using $\frac{\u-\u_0}{\v}\le \frac{\delta}{\v_0}$, we obtain 
\be\label{est_Om1}
\begin{split}
&\left| \v^{1+\theta} \int_{\u_0}^\u \left[  \frac{ \Om^2}{2 r^2} \frac{m}{r} - (1+\e) \pi \frac{ \Om^2 }{r^{2+2\e}}(f^+  f^-)^{\frac{1}{k_+-k_-}}  \right] d\tilde\u \right| \\
&\le \frac{\delta}{\v_0}  \left(  \frac12\frac{1}{\v^{1-\theta}} \| \Om\|^2_\infty \left\|\frac{\v}{r}\right\|^3_\infty \|m\|_\infty + \frac{(1+\e)\pi }{\v^{1-\theta+\beta}}  \left\| \v^{3+\beta} \frac{ \Om^2  }{r^{2+2\e}  } (f^+  f^-)^{\frac{1}{k_+-k_-}} \right\|_\infty  \right) \\
\end{split}
\ee
and hence 
\be\label{est_Om2}
\begin{split}
&\|\v^{1+\theta} \pa_\v \log \Om_{n+1}\|_\infty\le \|\v^{1+\theta} \pa_\v \log \hat \Om|_{\C}\|_\infty\\
&+\frac{\delta}{\v_0}  \left(  \frac12\frac{1}{\v_0^{1-\theta}} \| \Om\|^2_\infty \left\|\frac{\v}{r}\right\|^3_\infty \|m\|_\infty + \frac{(1+\e)\pi }{\v_0^{1-\theta+\beta}}  \left\| \v^{3+\beta} \frac{ \Om^2  }{r^{2+2\e}  } (f^+  f^-)^{\frac{1}{k_+-k_-}} \right\|_\infty  \right) . 
\end{split}
\ee
For $\Om_{n+1}$, we first estimate the integral of \eqref{(2.135)}. 
\[
\begin{split}
\left| \int_{\v_0}^\v  \pa_\v \log \Om_{n+1} (\u,\tilde\v) d\tilde \v\right|& =\left| \int_{\v_0}^\v \frac{1}{\tilde\v^{1+\theta}} \tilde\v^{1+\theta}\pa_\v \log \Om_{n+1} (\u,\tilde\v) d\tilde \v\right|  \le \|\v^{1+\theta} \pa_\v \log  \Om_{n+1}\|_\infty \frac{1}{\theta} \frac{1}{\v_0^\theta}. 
\end{split} 
\]
Therefore we deduce 
\be\label{est_Om0}
\begin{split}
&\|\log \Om_{n+1}\|_\infty \le \|\log \hat\Om|_{\uC \cup \C}\|_\infty +\frac{1}{\theta} \frac{1}{(\v_0)^\theta} \|\v^{1+\theta} \pa_\v \log  \Om_{n+1}|_{\C}\|_\infty  \\
&+ \frac{1}{\theta} \frac{1}{\v_0^\theta}\frac{\delta}{\v_0} 
\left(  \frac12\frac{1}{\v_0^{1-\theta}} \| \Om\|^2_\infty \left\|\frac{\v}{r}\right\|^3_\infty \|m\|_\infty + \frac{(1+\e)\pi }{\v_0^{1-\theta+\beta}}  \left\| \v^{3+\beta} \frac{ \Om^2  }{r^{2+2\e}  } (f^+  f^-)^{\frac{1}{k_+-k_-}} \right\|_\infty  \right) . 
\end{split}
\ee
To estimate $\pa_\v^2\log\Om_{n+1}$, we first observe that 
\be\label{E:TWODER2}
\begin{split}
\pa_\v^2 \log  \Om_{n+1} (\u,\v) &= \pa_\v^2 (\log \hat \Om) (\u_0, \v) 
+ \int_{\u_0}^\u \pa_\v\left[  \frac{ \Om^2}{2 r^2} \frac{m}{r} - (1+\e) \pi \frac{ \Om^2 }{r^{2+2\e}}(f^+  f^-)^{\frac{1}{k_+-k_-}}  \right] d\tilde\u,
\end{split}
\ee 
wherefrom we deduce 
\be\label{est_Om3}
\begin{split}
&\|\v^{2+\theta}\pa_\v^2 \log \Om_{n+1}\|_\infty \le \|\v^{2+\theta}\pa_\v^2 \log \hat\Om|_{\C}\|_\infty \\
& +  \frac{\delta}{\v_0} \frac12 \left(  \|\Om\|_\infty^2\left\|\frac{\v}{r}\right\|^3_\infty\frac{ \|\v^{1+\beta}\pa_\v m \|_\infty}{\v_0^{1-\theta+\beta}} + \left\| \v^{4}\pa_\v\left(\frac{\Om^2}{r^3}\right)  \right\|_\infty \frac{\|m\|_\infty}{\v_0^{1-\theta}}\right) \\ 
& +  \frac{\delta}{\v_0}  \frac{(1+\e)\pi}{\v_0} \left\| \v^{4+\beta} \pa_\v\left(\frac{ \Om^2  }{r^{2+2\e}  } (f^+  f^-)^{\frac{1}{k_+-k_-}}\right) \right\|_\infty.  
\end{split}
\ee
Similarly one can derive 
\be\label{est_Om4}
\begin{split}
&\|\v^{3+\theta}\pa_\v^3 \log  \Om_{n+1}\|_\infty \le \|\v^{3+\theta}\pa_\v^3 \log \hat \Om|_{\C}\|_\infty  \\
&+  \frac{\delta}{\v_0} \frac12 \left(  \|\Om\|_\infty^2\left\|\frac{\v}{r}\right\|^3_\infty\frac{ \|\v^{2+\beta}\pa_\v^2 m \|_\infty}{\v_0^{1-\theta+\beta}} + 2 \left\| \v^{4}\pa_\v\left(\frac{\Om^2}{r^3}\right)  \right\|_\infty \frac{ \|\v^{1+\beta}\pa_\v m \|_\infty}{\v_0^{1-\theta+\beta}}  +  \left\| \v^{5}\pa_\v^2\left(\frac{\Om^2}{r^3}\right)  \right\|_\infty \frac{\|m\|_\infty}{\v_0^{1-\theta}}\right) \\
&+    \frac{\delta}{\v_0}  \frac{(1+\e)\pi}{\v_0} \left\| \v^{5+\beta} \pa_\v^2\left(\frac{ \Om^2  }{r^{2+2\e}  } (f^+  f^-)^{\frac{1}{k_+-k_-}}\right) \right\|_\infty . 
\end{split}
\ee

 We now collect \eqref{est_r1}, \eqref{est_r22}, \eqref{est_r3}, \eqref{est_r44}, \eqref{est_r5}, \eqref{est_r6}, \eqref{est_Om2}, \eqref{est_Om0}, \eqref{est_Om3}, \eqref{est_Om4} and use \eqref{est_S1}, \eqref{est_S2}, \eqref{est_aux}, \eqref{est_m}, \eqref{est_m1}  as well as $\vertiii{[f^\pm_{n},\Om_{n},r_{n}]}\le 2A\vertiii{[\hat f^\pm, \hat\Om, \hat r]|_{\uC \cup \C}}$ to  deduce  \eqref{bound:fn+1} for sufficiently small $\delta>0$. 
This concludes the proof. 
\end{proof}


\subsubsection{Convergence of the iteration scheme}


From Proposition \ref{prop:sol}, the solutions $[f^\pm_{n+1},\Om_{n+1},r_{n+1}]$ of \eqref{eq:fn+1}, \eqref{recast1}, \eqref{recast2} have the uniform bounds $\vertiii{[f^\pm_{n+1},\Om_{n+1},r_{n+1}]}\le 2A\vertiii{[\hat f^\pm, \hat\Om, \hat r]|_{\uC \cup \C}} $ for all $n\ge 0$. In this section, we show the convergence of the sequence of approximations $[f^\pm_{n+1},\Om_{n+1},r_{n+1}]$. We will estimate the difference between  $[f^\pm_{n+1},\Om_{n+1},r_{n+1}]$ and $[f^\pm_{n},\Om_{n},r_{n}]$.
Let 
\be\label{def:diff}
[\triangle f_{n+1}^\pm, \triangle \Om_{n+1},\triangle r_{n+1} ]:=[\log f^\pm_{n+1}-\log f^\pm_n, \log \Om_{n+1}-\log \Om_n, (r_{n+1})^2-(r_n)^2]
\ee
for $n\ge0$. Our next task is to prove the corresponding difference bounds. 


\begin{proposition}\label{prop:diff} 
Let $[f^\pm_{n+1},\Om_{n+1},r_{n+1}]$ be the solution \eqref{eq:fn+1}, \eqref{recast1}, \eqref{recast2} enjoying the uniform bounds $\vertiii{[f^\pm_{n+1},\Om_{n+1},r_{n+1}]}\le 2A\vertiii{[\hat f^\pm, \hat\Om, \hat r]|_{\uC \cup \C}} $ for all $n\ge 0$. Then the difference norm for  $[\triangle f_{n+1}^\pm, \triangle \Om_{n+1},\triangle r_{n+1} ]$ defined in \eqref{def:diff} satisfies the following recursive inequality
\be\label{recursive_ineq}
a_{n+1} \le C  \frac{\delta}{\v_0} a_n \ \  \text{ for  } \ \ n\ge 1
\ee
where 
\be
a_{n+1}=\|\triangle f_{n+1}^\pm\|_\infty+ \|\v^{-2}{\triangle r_{n+1}}\|_\infty+ \|\v^{-1}{\pa_\v\triangle r_{n+1}}\|_\infty+\|\triangle \Om_{n+1}\|_\infty + 
\|\v^{1+\theta}\pa_\v\triangle \Om_{n+1}\|_\infty
\ee
and $C$ does not depend on $n$ but only on $2A\vertiii{[\hat f^\pm, \hat\Om, \hat r]|_{\uC \cup \C}}$.  
\end{proposition}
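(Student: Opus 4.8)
The plan is to run the scheme of Lemmas~\ref{lem:fn+1} and~\ref{lem:Omrn+1} on the \emph{difference equations} obtained by subtracting the $n$-th and $(n{+}1)$-st iterate systems, and to exploit two structural facts. First, since the characteristic data $[\hat f^\pm,\hat\Om,\hat r]$ is the same for every iterate, the differences $\triangle f^\pm_{n+1}$, $\triangle\Om_{n+1}$, $\triangle r_{n+1}$ all vanish on $\uC\cup\C$; hence the integral representations below carry no boundary term and genuinely contract. Second, every coefficient occurring in these difference equations is of the form ``(an expression evaluated on $[f^\pm_n,\Om_n,r_n]$) minus (the same expression on $[f^\pm_{n-1},\Om_{n-1},r_{n-1}]$)'', and is therefore bounded, with suitable $\v$-weights, by a constant times $a_n$: this follows from $|e^a-e^b|\le e^{\max(|a|,|b|)}|a-b|$, the product rule, the uniform bounds $\vertiii{[f^\pm_m,\Om_m,r_m]}\le 2A\vertiii{[\hat f^\pm,\hat\Om,\hat r]|_{\uC\cup\C}}$ from Proposition~\ref{prop:sol}, and the a priori estimates \eqref{est_U}, \eqref{estimate_pre}, \eqref{est_S1}, \eqref{est_S2}, \eqref{est_aux}. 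The factor $\tfrac{\delta}{\v_0}$ then arises, exactly as in Section~\ref{SS:LWPPDE}, from the fact that each representation involves an integration over an interval of $\u$-length at most $|\u-\u_0|\le\delta$ (or along a backward fluid characteristic of comparable parameter length). The restriction $n\ge1$ is forced simply by the fact that the source terms in the level-$(n{+}1)$ difference equations involve $a_n$.

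For the fluid differences I would subtract \eqref{eq:fn+1} at levels $n{+}1$ and $n$ \emph{before} passing to Duhamel form, which, after moving lower-order terms to the right, produces the single transport equation
\begin{align*}
\pa_\u\triangle f^\pm_{n+1}+k_\pm\mathcal U_n\,\pa_\v\triangle f^\pm_{n+1}
&=-k_\pm(\mathcal U_n-\mathcal U_{n-1})\,\pa_\v\log f^\pm_n\\
&\quad\mp 2k_\pm\Big[\big(2\tfrac{\pa_\v\Om_n}{\Om_n}-2\e\tfrac{\pa_\v r_n}{r_n}\big)\mathcal U_n-\big(2\tfrac{\pa_\v\Om_{n-1}}{\Om_{n-1}}-2\e\tfrac{\pa_\v r_{n-1}}{r_{n-1}}\big)\mathcal U_{n-1}\Big],
\end{align*}
transported along the single flow $\mathfrak{\v}_\pm$ of $\mathcal U_n$ supplied by Lemma~\ref{lem:Char}, with zero data on $\uC\cup\C$. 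Working with the difference equation directly (rather than differencing two Duhamel formulas) is precisely what avoids comparing the characteristics and exit points of the two velocity fields $\mathcal U_n$ and $\mathcal U_{n-1}$. Bounding the right-hand side by $C\v^{-1-\theta}a_n$ and integrating along $\mathfrak{\v}_\pm$ as in \eqref{est3}--\eqref{est4} yields $\|\triangle f^\pm_{n+1}\|_\infty\le C\tfrac{\delta}{\v_0}a_n$.

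For the metric differences, subtracting \eqref{recast1} and \eqref{recast2} at the two levels expresses $\pa_\u\pa_\v\triangle r_{n+1}$ and $\pa_\u\pa_\v\triangle\Om_{n+1}$ as zeroth-order differences of the (weighted-bounded) right-hand sides of those equations; a single $\u$-integration from $\u_0$ (length $\le\delta$, zero boundary value) then bounds $\|\v^{-1}\pa_\v\triangle r_{n+1}\|_\infty$ and $\|\v^{1+\theta}\pa_\v\triangle\Om_{n+1}\|_\infty$ by $C\tfrac{\delta}{\v_0}a_n$, and a further $\v$-integration from $\v_0$ (again zero boundary value) controls $\|\v^{-2}\triangle r_{n+1}\|_\infty$ and $\|\triangle\Om_{n+1}\|_\infty$ by the same quantity. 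Summing the five contributions gives \eqref{recursive_ineq}.

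The main technical point is the difference estimate for the Hawking mass. The term $\tfrac{\Om_n^2}{2r_n^2}\tfrac{m_n}{r_n}$ in \eqref{recast2} forces control of $m_n-m_{n-1}$, and since $m$ is defined through the \emph{non-local} $\v$-integral \eqref{def_m} (via the auxiliary quantity $\mathfrak s$ of \eqref{mathfraks}), this difference does not itself gain any power of $\delta$ — the smallness reappears only at the final $\u$-integration. One therefore needs the difference analogue of Lemma~\ref{lem:est_m}, namely $\|m_n-m_{n-1}\|_\infty+\|\v^{1+\beta}\pa_\v(m_n-m_{n-1})\|_\infty\le C\,a_n$, obtained by repeating the manipulations of Lemma~\ref{lem:est_m} on the differences of the integrands; here it is essential (and true, by inspection of \eqref{def_m}--\eqref{mathfraks}) that only quantities already measured by $a_n$ — in particular $\pa_\v\triangle r_n$, but \emph{no} second $\v$-derivatives of the differences — enter. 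Granting this, the remaining estimates are the weighted bookkeeping described above, and choosing $\delta=-\u_0$ small (depending only on $2A\vertiii{[\hat f^\pm,\hat\Om,\hat r]|_{\uC\cup\C}}$) makes $C\tfrac{\delta}{\v_0}<1$ in \eqref{recursive_ineq}, which is the assertion of the proposition and, together with the uniform $\vertiii{\cdot}$-bound of Proposition~\ref{prop:sol}, yields the convergence needed to conclude Theorem~\ref{thm:LWP}.
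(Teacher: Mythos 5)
Your proposal is correct and takes essentially the same route as the paper's proof: write out the difference equations directly (rather than differencing two Duhamel formulas, which, as you note, avoids comparing the characteristic flows of $\mathcal U_n$ and $\mathcal U_{n-1}$), observe that the boundary data vanish so the integral representations carry no boundary terms, integrate along $\mathfrak{\v}_\pm$ or in $\u$ to pick up the $\delta/\v_0$ factor, and handle the Hawking mass difference $m_n-m_{n-1}$ as the key technical point. The only cosmetic difference is that the paper packages the ``ratio minus one'' bounds (e.g.\ $\|\Om_n^2/\Om_{n-1}^2-1\|_\infty\lesssim\|\triangle\Om_n\|_\infty$) in Lemma~\ref{lem:diff_to_log} via the elementary inequality $|x|\le(1+L)|\log(1+x)|$, whereas you invoke the logically equivalent $|e^a-e^b|\le e^{\max(|a|,|b|)}|a-b|$; both serve the same purpose.
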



The proof relies on $C^0$ estimates of $[\triangle f_{n+1}^\pm]$ and $C^1$ estimates of $[\triangle \Om_{n+1},\triangle r_{n+1} ]$ in the spirit of Lemma \ref{lem:fn+1} and Lemma \ref{lem:Omrn+1}. Before giving the proof, we present some preliminary estimates. First, we note that $[\triangle f_{n+1}^\pm, \triangle \Om_{n+1},\triangle r_{n+1} ]$ satisfy
\begin{align}
&\pa_\u \triangle f_{n+1}^\pm + k_\pm \mathcal U_n \pa_\v \triangle f_{n+1}^\pm + k_\pm (\mathcal U_n-\mathcal U_{n-1})\pa_\v \log f^\pm_n  \label{eq:f_d}  \\
&\qquad\quad \  \pm 2k_\pm \left[  (2\pa_\v \log \Om_n  - \e\frac{\pa_\v (r_n)^2}{(r_n)^2})  \mathcal U_n -(2\pa_\v \log \Om_{n-1}  - \e\frac{\pa_\v (r_{n-1})^2}{(r_{n-1})^2})  \mathcal U_{n-1}  \right] =0 , \notag\\
& \pa_\u\pa_\v \triangle r_{n+1}= \frac{(\Om_{n-1})^2-(\Om_n)^2 }{2} + 2\pi\left[  \frac{(\Om_n)^2}{(r_n)^{2\e}} (f^+_nf^-_n) ^{\frac{1}{k_+-k_-}} -\frac{(\Om_{n-1})^2}{(r_{n-1})^{2\e}} (f^+_{n-1}f^-_{n-1}) ^{\frac{1}{k_+-k_-}}  \right], \label{eq:r_d} \\
&  \pa_\u \pa_\v \triangle \Om_{n+1} =\frac12\left[  \frac{ (\Om_n)^2}{ (r_n)^2} \frac{m_n}{r_n} - \frac{ (\Om_{n-1})^2}{ (r_{n-1})^2} \frac{m_{n-1}}{r_{n-1}}  \right]\notag \\
&\qquad\qquad- (1+\e) \pi \left[ \frac{ (\Om_n)^2 }{(r_n)^{2+2\e}}(f^+_n  f^-_n)^{\frac{1}{k_+-k_-}} -  \frac{ (\Om_{n-1})^2 }{(r_{n-1})^{2+2\e}}(f^+_{n-1}  f^-_{n-1})^{\frac{1}{k_+-k_-}} \right], \label{eq:om_d} 
\end{align} 
with $[\triangle f_{n+1}^\pm, \triangle \Om_{n+1},\triangle r_{n+1} ] |_{\uC \cup \C} =0$.

We start with the following inequality, which will allow us to compare the difference of two functions to the logarithm of their ratio. 


\begin{lemma} Let $L>0$ be given. For any $-1< x \le L$, the following holds 
\be\label{ineq1}
 |x|\le (1+L) |\log (1+ x )|  . 
\ee
\end{lemma}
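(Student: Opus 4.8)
The plan is to split into the two natural ranges $x\ge 0$ and $-1<x<0$ and in each case reduce the claim to a one-variable monotonicity statement. First I would treat the trivial point $x=0$, where both sides vanish. For $x>0$ the inequality $|x|\le(1+L)|\log(1+x)|$ is equivalent to $x\le(1+L)\log(1+x)$; since $x\le L$, it suffices to prove $x\le(1+x)\log(1+x)$, which after substituting $t=1+x\in(1,1+L]$ reads $t-1\le t\log t$, i.e. $1-\tfrac1t\le\log t$. This last inequality is the standard bound $\log t\ge 1-1/t$ valid for all $t>0$, which one checks by noting that $\varphi(t):=\log t-1+1/t$ satisfies $\varphi(1)=0$ and $\varphi'(t)=\tfrac1t-\tfrac1{t^2}=\tfrac{t-1}{t^2}\ge0$ for $t\ge1$, so $\varphi$ is nondecreasing on $[1,\infty)$ and hence nonnegative there.

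For $-1<x<0$ we have $|x|=-x$ and $\log(1+x)<0$, so the claim becomes $-x\le(1+L)(-\log(1+x))$, and since here $1+L>1$ it is enough to show $-x\le-\log(1+x)$, i.e. $\log(1+x)\le x$. Again setting $t=1+x\in(0,1)$ this is $\log t\le t-1$, the familiar concavity bound for the logarithm, provable by the same device: $\psi(t):=t-1-\log t$ has $\psi(1)=0$ and $\psi'(t)=1-\tfrac1t\le0$ on $(0,1]$, so $\psi$ is nonincreasing there and therefore $\psi(t)\ge\psi(1)=0$ on $(0,1]$. Combining the two cases yields the stated inequality for all $-1<x\le L$.

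There is essentially no obstacle here: the only mild point to be careful about is the direction of the inequalities once absolute values are removed (the sign of $\log(1+x)$ flips at $x=0$), and the use of $1+L>1$ to discard the factor $(1+L)$ in the favorable direction on the negative side while retaining it on the positive side where $x$ can be as large as $L$. I would present it compactly as the two monotonicity arguments above rather than invoking any external lemma.
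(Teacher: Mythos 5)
Your proof is correct and follows essentially the same strategy as the paper's: split on the sign of $x$ and run an elementary monotonicity argument. The only cosmetic difference is that you first replace or discard the factor $(1+L)$ (using $1+x\le 1+L$ on $[0,L]$ and $1+L>1$ on $(-1,0)$) so as to reduce to the textbook inequalities $\log t\ge 1-1/t$ and $\log t\le t-1$, whereas the paper analyzes the single function $h(x)=(1+L)\log(1+x)-x$ directly via its derivative on each subinterval.
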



\begin{proof} If $x\in (-1,0)$, we claim $-x \le -(1+L)\log (1+x) $. Letting $h(x) = (1+L)\log (1+x)-x $, we have $h(0)=0$ and $h'(x) = \frac{1+L}{1+x} - 1 >0 $ for $x\in (-1,0)$. Hence $h<0$ for $x\in (-1,0)$. Now let $0\le x \le L$. Then since $h'(x)\ge 0$ for $0\le x \le L$, $h\ge 0$ for $0\le x \le L$, which shows  \eqref{ineq1}. 
\end{proof}



\begin{lemma}\label{lem:diff_to_log} For any $b>0$ let
\be\label{def:ell}
\ell_\Om:= \max\left\{ \sup_{n} \frac{(\Om_{n})^2}{(\Om_{n-1})^2}, \  \sup_{n} \frac{(\Om_{n-1})^2}{(\Om_{n})^2}\right\}, \quad \ell^\pm=\max\left\{ \sup_n \left( \frac{f_n^\pm}{f_{n-1}^\pm}\right)^b, \ \sup_n \left(\frac{f_{n-1}^\pm}{f_{n}^\pm}\right)^b \right\},
\ee
and 
\be
\ell_R :=\max\left\{\sup_{n} \left\| \frac{\v^b}{(r_{n-1})^b} \frac{(\v^{-1}r_n)^b - (\v^{-1}r_{n-1})^b}{ (\v^{-1}r_n)^2 -(\v^{-1} r_{n-1})^2 } \right\|_\infty, \ \sup_{n} \left\| \frac{\v^b}{(r_{n})^b} \frac{(\v^{-1}r_n)^b - (\v^{-1}r_{n-1})^b}{ (\v^{-1}r_n)^2 -(\v^{-1} r_{n-1})^2 } \right\|_\infty  \right\}.
\ee
 Note that $\ell_\Om$, $\ell^\pm$, $\ell_R$ are finite by the uniform bound $\vertiii{[f^\pm_{n+1},\Om_{n+1},r_{n+1}]}\le 2A\vertiii{[\hat f^\pm, \hat\Om, \hat r]|_{\uC \cup \C}} $. 
 Then the following bounds hold:
\begin{align}
&\left\|  \left( \frac{f_n^\pm}{f_{n-1}^\pm}\right)^b - 1\right\|_\infty, \quad  \left\|  \left(\frac{f_{n-1}^\pm}{f_{n}^\pm} \right)^b- 1\right\|_\infty \le b \ell^\pm \| \triangle f_{n} \|_\infty, \label{est:frac1}\\
&\left\|  \frac{(\Om_{n})^2}{(\Om_{n-1})^2} - 1\right\|_\infty , \quad\left\|  \frac{(\Om_{n-1})^2}{(\Om_{n})^2}- 1\right\|_\infty \le 2 \ell_\Om\| \triangle \Om_{n} \|_\infty, \label{est:frac2} \\
&\left\| \frac{(r_n)^b}{(r_{n-1})^b} -1 \right\|_\infty, \quad \left\| \frac{(r_{n-1})^b}{(r_{n})^b} -1 \right\|_\infty  \le \ell_R  \left\| \v^{-2}\triangle r_{n} \right\|_\infty. \label{est:frac3}
\end{align}
\end{lemma}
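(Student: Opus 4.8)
\textbf{Proof plan for Lemma (estimates \eqref{est:frac1}--\eqref{est:frac3}).}
The plan is to derive all three bounds from the elementary inequality \eqref{ineq1} together with the uniform bounds $\vertiii{[f^\pm_{n+1},\Om_{n+1},r_{n+1}]}\le 2A\vertiii{[\hat f^\pm, \hat\Om, \hat r]|_{\uC \cup \C}}$, which guarantee finiteness of $\ell_\Om$, $\ell^\pm$, $\ell_R$. The unifying idea is that each ratio under consideration can be written as $1+x$ where $x$ is controlled by the logarithm of that ratio, and the latter logarithm is, up to the combinatorial constants just introduced, one of the difference quantities $\triangle f_n^\pm$, $\triangle\Om_n$, $\triangle r_n$.

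First I would treat \eqref{est:frac1}. Write $\left(\frac{f_n^\pm}{f_{n-1}^\pm}\right)^b - 1 = x$, so that $\log(1+x) = b(\log f_n^\pm - \log f_{n-1}^\pm) = b\,\triangle f_n^\pm$. By definition \eqref{def:ell} of $\ell^\pm$ we have $1+x = \left(\frac{f_n^\pm}{f_{n-1}^\pm}\right)^b \le \ell^\pm$, so $x$ ranges in $(-1,\ell^\pm-1]$, and \eqref{ineq1} with $L = \ell^\pm-1$ gives $|x|\le \ell^\pm |\log(1+x)| = b\,\ell^\pm |\triangle f_n^\pm|$. Taking the supremum over $\overline{\mathcal D}$ yields the first inequality in \eqref{est:frac1}; the second follows identically by interchanging the roles of $f_n^\pm$ and $f_{n-1}^\pm$ (again bounded by $\ell^\pm$ per \eqref{def:ell}). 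For \eqref{est:frac2} the argument is the same with $b$ replaced by $2$ and $f$ replaced by $\Om$: put $x = \frac{(\Om_n)^2}{(\Om_{n-1})^2}-1$, note $\log(1+x) = 2\triangle\Om_n$ and $1+x\le\ell_\Om$, and apply \eqref{ineq1} with $L=\ell_\Om-1$.

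The case \eqref{est:frac3} requires a little more care because $\triangle r_{n+1}$ is defined as the difference of squares $(r_{n+1})^2-(r_n)^2$ rather than of logarithms, so the quantity to be fed into \eqref{ineq1} is $x = \frac{(r_n)^b}{(r_{n-1})^b}-1$ but we must reconnect $\log(1+x)$ to $\triangle r_n = (r_n)^2-(r_{n-1})^2$. I would factor: $\frac{(r_n)^b}{(r_{n-1})^b}-1 = \frac{(r_n)^b-(r_{n-1})^b}{(r_{n-1})^b} = \frac{\v^b}{(r_{n-1})^b}\cdot\frac{(\v^{-1}r_n)^b-(\v^{-1}r_{n-1})^b}{(\v^{-1}r_n)^2-(\v^{-1}r_{n-1})^2}\cdot\big((\v^{-1}r_n)^2-(\v^{-1}r_{n-1})^2\big)$, where the last factor equals $\v^{-2}\triangle r_n$. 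The middle factor is bounded in $L^\infty$ by $\ell_R$ per its definition, so $|x|\le \ell_R\|\v^{-2}\triangle r_n\|_\infty$ pointwise; taking the supremum gives the first bound in \eqref{est:frac3}, and the second is obtained by the symmetric factorization through $(r_n)^b$ in the denominator. The main (and only real) obstacle is keeping the bookkeeping of the constants $\ell^\pm,\ell_\Om,\ell_R$ consistent with the sign ranges required by \eqref{ineq1}, i.e.\ verifying in each case that the relevant ratio indeed lies in $(-1,L]$; this is immediate from the finiteness of the $\ell$-quantities, which in turn is forced by the uniform norm bound from Proposition~\ref{prop:sol}, so no genuine difficulty arises.
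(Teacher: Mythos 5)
Your proof is correct and follows the paper's argument almost verbatim: \eqref{est:frac1} and \eqref{est:frac2} via \eqref{ineq1} with $L = \ell^\pm - 1$ (resp.\ $\ell_\Om - 1$), exploiting that $\log(1+x)$ equals $b\,\triangle f_n^\pm$ (resp.\ $2\triangle\Om_n$), and \eqref{est:frac3} by the same algebraic factorization of $(r_n)^b/(r_{n-1})^b - 1$ used in the paper, which isolates $\v^{-2}\triangle r_n$ against the $\ell_R$-bounded middle factor. No meaningful differences.
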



\begin{proof}
\eqref{est:frac1} and \eqref{est:frac2} are direct consequences of \eqref{ineq1}, \eqref{def:ell} with $x+1=( \frac{f_n^\pm}{f_{n-1}^\pm})^b$, $ (\frac{f_{n-1}^\pm}{f_{n}^\pm} )^b$, $ \frac{(\Om_{n})^2}{(\Om_{n-1})^2}$, $ \frac{(\Om_{n-1})^2}{(\Om_{n})^2}$. \eqref{est:frac3} directly follows by writing 
\[
\frac{(r_n)^b}{(r_{n-1})^b} -1 = \frac{\v^b}{(r_{n-1})^b} \frac{(\v^{-1}r_n)^b - (\v^{-1}r_{n-1})^b}{ (\v^{-1}r_n)^2 -(\v^{-1} r_{n-1})^2 } \left(\v^{-2} [ (r_n)^2 - (r_{n-1})^2 ]\right).
\]
\end{proof}


We are now ready to prove Proposition \ref{prop:diff}. 


\begin{proof}[Proof of Proposition \ref{prop:diff}] We start with $\|\triangle f_{n+1}^\pm\|_\infty$.  We recall the notations from Lemma \ref{lem:fn+1}. 
By integrating \eqref{eq:f_d} along the characteristics and using the zero data, we have the representation of $\triangle f_{n+1}^\pm$: 
\be
\begin{split}
\triangle f_{n+1}^\pm (\u,\v)= k_\pm \int_{\u_*}^\u (S_1 +S_2) (s, \mathfrak{\v}_\pm (s)) ds
\end{split}
\ee
where 
\be
\begin{split}
S_1& = -  (\mathcal U_n-\mathcal U_{n-1})\pa_\v \log f^\pm_n , \\
S_2 &= \mp 2 \left[  (2\pa_\v \log \Om_n  - \e\frac{\pa_\v (r_n)^2}{(r_n)^2})  \mathcal U_n -(2\pa_\v \log \Om_{n-1}  - \e\frac{\pa_\v (r_{n-1})^2}{(r_{n-1})^2})  \mathcal U_{n-1}  \right] . 
\end{split}
\ee
Using $\mathfrak{\v}_\pm(s)\ge \v_*$ for all $\u_* \le s\le \u$ and $\u-\u_* \le \delta$, 
\be
\begin{split}
\left| \int_{\u_*}^\u S_1 (s, \mathfrak{\v}_\pm (s)) ds   \right| &= \left| \int_{\u_*}^\u \frac{1}{\mathfrak{\v}_\pm} (\mathcal U_n-\mathcal U_{n-1})  \mathfrak{\v}_\pm  \pa_\v \log f^\pm_n  (s, \mathfrak{\v}_\pm (s)) ds   \right| \\
& \le   \|\mathcal U_{n-1}\|_\infty \|\v \pa_\v \log f_n^\pm \|_\infty \left\| \frac{\mathcal U_n}{\mathcal U_{n-1}} -1 \right\|_\infty \frac{\delta}{\v_0}. 
\end{split}
\ee
For $\left\| \frac{\mathcal U_n}{\mathcal U_{n-1}} -1 \right\|_\infty$, we rewrite 
\be\label{diff_U0}
\begin{split}
\frac{ \mathcal U_n}{ \mathcal U_{n-1}}-1 &=\left(  \frac{\Om_n}{\Om_{n-1}}\right)^2 \left( \frac{f^+_n}{f^+_{n-1}}\right)^\frac12 \left( \frac{f^-_{n-1}}{f^-_{n}}\right)^\frac12 \left( \frac{(r_{n-1})^2}{(r_n)^2}\right)^\e -1 \\
&= \left[\left(  \frac{\Om_n}{\Om_{n-1}}\right)^2 -1  \right]\left( \frac{f^+_n}{f^+_{n-1}}\right)^\frac12 \left( \frac{f^-_{n-1}}{f^-_{n}}\right)^\frac12 \left( \frac{(r_{n-1})^2}{(r_n)^2}\right)^\e \\
&+ \left[ \left( \frac{f^+_n}{f^+_{n-1}}\right)^\frac12-1 \right] \left( \frac{f^-_{n-1}}{f^-_{n}}\right)^\frac12 \left( \frac{(r_{n-1})^2}{(r_n)^2}\right)^\e\\
&+ \left[ \left( \frac{f^-_{n-1}}{f^-_{n}}\right)^\frac12-1 \right] \left( \frac{(r_{n-1})^2}{(r_n)^2}\right)^\e
+ \left( \frac{(r_{n-1})^2}{(r_n)^2}\right)^\e -1 . 
\end{split}
\ee
Note that factors next to the rectangular  brackets are all uniformly bounded. 
By Lemma \ref{lem:diff_to_log}, we deduce that 
\be\label{diff_U}
\left\| \frac{\mathcal U_n}{\mathcal U_{n-1}} -1 \right\|_\infty \le c_1 (\|\triangle f_{n}\|_\infty + \|\triangle f_{n}\|_\infty+\|\v^{-2}\triangle r_{n}\|_\infty)
\ee
where $c_1>0$ is independent of $n$ and hence using the uniform bounds of the uniform bounds of $\vertiii{[f^\pm_{n-1},\Om_{n-1},r_{n-1}]}$ and $\vertiii{[f^\pm_{n},\Om_{n},r_{n}]}$, we have 
\be\label{diff_est2}
\left| \int_{\u_*}^\u S_1 (s, \mathfrak{\v}_\pm (s)) ds   \right|\le c_2 \frac{\delta}{\v_0}(\|\triangle f_{n}\|_\infty + \|\triangle \Om_{n}\|_\infty+\|\v^{-2}\triangle r_{n}\|_\infty)
\ee
where $c_2>0$ is independent of $n$. For $S_2$, we first rewrite 
\be
\begin{split}
\mp\frac{ S_2}{2}&= \left( 2\pa_\v \triangle \Om_{n}  - \e\left( \frac{\pa_\v \triangle r_{n}}{(r_n)^2} + \frac{\pa_\v (r_{n-1})^2}{(r_{n-1})^2} \left[ \frac{(r_{n-1})^2}{(r_n)^2}-1 \right]\right)\right)  \mathcal U_n  \\
&+(2\pa_\v \log \Om_{n-1}  - \e\frac{\pa_\v (r_{n-1})^2}{(r_{n-1})^2}) \mathcal U_{n-1} \left( \frac{\mathcal U_n}{\mathcal U_{n-1}} - 1 \right) . 
\end{split}
\ee
Now by using the uniform bounds on $\vertiii{[f^\pm_{n-1},\Om_{n-1},r_{n-1}]}$ and $\vertiii{[f^\pm_{n},\Om_{n},r_{n}]}$, 
Lemma \ref{lem:diff_to_log}, \eqref{diff_U}, and $\frac{\u-\u_*}{\v_*}\le  \frac{\delta}{\v_0}$, we deduce 
\be\label{diff_est3}
\begin{split}
\left| \int_{\u_*}^\u S_2 (s, \mathfrak{\v}_\pm (s)) ds \right| \le c_3 \frac{\delta}{\v_0}( \|\v^{1+\theta}\pa_\v \triangle \Om_{n}\|_\infty+\|\v^{-1}\pa_\v\triangle r_{n}\|_\infty+\|\triangle \Om_{n}\|_\infty + \|\triangle f_{n}\|_\infty+\|\v^{-2}\triangle r_{n}\|_\infty). 
\end{split}
\ee

We now estimate $ \|\v^{-2}{\triangle r_{n+1}}\|_\infty+ \|\v^{-1}{\pa_\v\triangle r_{n+1}}\|_\infty$. We recall the notations used in the proof of Lemma \ref{lem:Omrn+1}. 
We first integrate \eqref{eq:r_d} along an ingoing null curve from the initial point $\u_0=\u_0$ to $\u$ to obtain 
\be\label{eq:Rn+1}
\pa_\v \triangle r_{n+1}(\u,\v) = \int_{\u_0}^\u S_3 +S_4 \, d\tilde \u 
\ee
where 
\be
\begin{split}
S_3&=- \frac{( \Om_{n-1})^2}{2} \left[\frac{(\Om_n)^2}{(\Om_{n-1})^2}-1 \right] , 
  \\
S_4&= 2\pi \frac{(\Om_{n-1})^2}{(r_{n-1})^{2\e}} (f^+_{n-1}f^-_{n-1}) ^{\frac{1}{k_+-k_-}}  \left(  \frac{(\Om_n)^2}{(\Om_{n-1})^2} \frac{(r_{n-1})^{2\e}}{(r_n)^{2\e}} (\frac{f^+_n}{f^+_{n-1}})^{\frac{1}{k_+-k_-}} (\frac{f^-_n}{f^-_{n-1}})^{\frac{1}{k_+-k_-}} -1\right). 
\end{split}
\ee
The last factor of $S_4$ can be rewritten via add and subtract trick as done for $\frac{ \mathcal U_n}{ \mathcal U_{n-1}}-1 $ in \eqref{diff_U0}. 
Then using $\frac{\u-\u_0}{\v} \le \frac{\delta}{\v_0}$, Lemma \ref{lem:diff_to_log}, the uniform bounds, we deduce that 
\be\label{diff_est4}
\|\v^{-1} \pa_\v \triangle r_{n+1}\|_\infty\le c_4  \frac{\delta}{\v_0}(\|\triangle f_{n}\|_\infty + \|\triangle \Om_{n}\|_\infty+\|\v^{-2}\triangle r_{n}\|_\infty). 
\ee
By integrating \eqref{eq:Rn+1} with respect to $\v$, we have $
\triangle r_{n+1} (\u,\v)=\int_{\v_0}^\v  \pa_\v \triangle r_{n+1}(\u,\tilde \v) d\tilde \v $. Then by writing 
$\v^{-2}\triangle r_{n+1} (\u,\v)=\v^{-2}\int_{\v_0}^\v \tilde\v \tilde\v^{-1} \pa_\v \triangle r_{n+1}(\u,\tilde \v) d\tilde \v $, we deduce that
\be\label{diff_est5}
\|\v^{-2}  \triangle r_{n+1}\|_\infty\le \frac{c_4}{2}  \frac{\delta}{\v_0}(\|\triangle f_{n}\|_\infty + \|\triangle \Om_{n}\|_\infty+\|\v^{-2}\triangle r_{n}\|_\infty). 
\ee

The estimates for $\|\triangle \Om_{n+1}\|_\infty + 
\|\v^{1+\theta}\pa_\v\triangle \Om_{n+1}\|_\infty$ can be derived in the same fashion. By integrating \eqref{eq:om_d},  
\be
\pa_\v \triangle \Om_{n+1}(\u,\v) = \int_{\u_0}^\u S_5 +S_6\, d\tilde \u, 
\ee
where 
\be
\begin{split}
S_5&=\frac12\left[  \frac{ (\Om_n)^2}{ (r_n)^2} \frac{m_n}{r_n} - \frac{ (\Om_{n-1})^2}{ (r_{n-1})^2} \frac{m_{n-1}}{r_{n-1}}  \right], \\
S_6&=- (1+\e) \pi \frac{ (\Om_{n-1})^2 }{(r_{n-1})^{2+2\e}}(f^+_{n-1}  f^-_{n-1})^{\frac{1}{k_+-k_-}}  \left[  \frac{(\Om_n)^2}{(\Om_{n-1})^2} \frac{(r_{n-1})^{2+2\e}}{(r_n)^{2+2\e}} (\frac{f^+_n}{f^+_{n-1}})^{\frac{1}{k_+-k_-}} (\frac{f^-_n}{f^-_{n-1}})^{\frac{1}{k_+-k_-}}  -  1\right].
\end{split}
\ee
The structure of $S_6$ is similar to $S_4$ in the previous case. Using the uniform bounds for $\vertiii{[f^\pm_{n-1},\Om_{n-1},r_{n-1}]}$ and $\vertiii{[f^\pm_{n},\Om_{n},r_{n}]}$  and Lemma \ref{lem:diff_to_log}, we have 
\be
\left| \v^{1+\theta} \int_{\u_0}^\u S_6 d\tilde \u  \right| \le c_5  \frac{\delta}{\v_0}(\|\triangle f_{n}\|_\infty + \|\triangle \Om_{n}\|_\infty+\|\v^{-2}\triangle r_{n}\|_\infty)
\ee
for some $c_5>0$ independent of $n$. For $S_5$, we rewrite it 
\be
2S_5 = \frac{ (\Om_n)^2}{ (r_n)^3} (m_n-m_{n-1}) +\left(  \frac{ (\Om_n)^2}{ (\Om_{n-1})^2}\frac{(r_{n-1})^3}{ (r_n)^3} - 1\right)  \frac{ (\Om_{n-1})^2}{ (r_{n-1})^3} m_{n-1}.
\ee
The second term is in the form where we can apply Lemma \ref{lem:diff_to_log}. For the first term, note that 
\be
\begin{split}
m_n-m_{n-1} = \int_{\v_0}^\v \left(2\pi (f_n^+f_n^-)^\frac{1}{k_+-k_-} \left[ \frac{1 }{(r_n)^{2\e}} \pa_\v r_n -  \frac{1}{2}\frac{ (\Om_n)^{-2} }{r_n} \left(\frac{f^-_n}{f_n^+}\right)^\frac12 \mathfrak s_n\right]  \right) \\
- \left(2\pi (f_{n-1}^+f_{n-1}^-)^\frac{1}{k_+-k_-} \left[ \frac{1 }{(r_{n-1})^{2\e}} \pa_\v r_{n-1} -  \frac{1}{2}\frac{(\Om_{n-1})^{-2}}{r_{n-1}} \left(\frac{f^-_{n-1}}{f_{n-1}^+}\right)^\frac12 \mathfrak s_{n-1}\right]  \right) d\tilde \v
\end{split}
\ee 
where we also recall \eqref{mathfraks} for $\mathfrak s_n$. As done for $\frac{ \mathcal U_n}{ \mathcal U_{n-1}}-1 $ in \eqref{diff_U0}, it is evident that the integrand can be written as the sum of the terms that contain one of the forms in Lemma \ref{lem:diff_to_log} or $\pa_\v \triangle r_{n}$ or $\mathfrak s_n -\mathfrak s_{n-1}$ where 
\[
\mathfrak s_n -\mathfrak s_{n-1}  = \int_{\v_0}^\v \left[  \frac{(\Om_{n-1})^2-(\Om_n)^2}{2} + \frac{2\pi (\Om_n)^2  }{(r_n)^{2\e}  } (f_n^+  f_n^-)^{\frac{1}{k_+-k_-}} -  \frac{2\pi (\Om_{n-1})^2  }{(r_{n-1})^{2\e}  } (f_{n-1}^+  f_{n-1}^-)^{\frac{1}{k_+-k_-}}  \right] d\tilde\v . 
 \]
Following the strategy of the proof of \eqref{est_m} in Lemma \ref{lem:est_m}, using Lemma \ref{lem:diff_to_log} and the uniform bounds of $\vertiii{[f^\pm_{n-1},\Om_{n-1},r_{n-1}]}$ and $\vertiii{[f^\pm_{n},\Om_{n},r_{n}]}$, we deduce 
\be
\|m_n-m_{n-1}\|_\infty \le c_6  \frac{\delta}{\v_0}(\|\triangle f_{n}\|_\infty + \|\triangle \Om_{n}\|_\infty+\|\v^{-2}\triangle r_{n}\|_\infty+ \|\v^{-1}\pa_\v \triangle r_{n}\|_\infty). 
\ee
Thus the $S_5$ term gives the desired estimates and hence 
\be\label{diff_est7}
\|\v^{1+\theta}\pa_\v \triangle \Om_{n+1}\|_\infty \le c_7  \frac{\delta}{\v_0}(\|\triangle f_{n}\|_\infty + \|\triangle \Om_{n}\|_\infty+\|\v^{-2}\triangle r_{n}\|_\infty+ \|\v^{-1}\pa_\v \triangle r_{n}\|_\infty)
\ee
for some constant $c_7>0$ independent of $n$. The estimation of $\|\triangle \Om_{n+1}\|_\infty$ directly follows from $\triangle \Om_{n+1}(\u,\v) = \int_{\v^*}^\v \pa_\v \triangle \Om_{n+1} (\u,\tilde \v)d\tilde \v$ and \eqref{diff_est7}: 
\be\label{diff_est8}
\| \triangle \Om_{n+1}\|_\infty \le c_8  \frac{\delta}{\v_0}(\|\triangle f_{n}\|_\infty + \|\triangle \Om_{n}\|_\infty+\|\v^{-2}\triangle r_{n}\|_\infty+ \|\v^{-1}\pa_\v \triangle r_{n}\|_\infty)
\ee
for some constant $c_8>0$ independent of $n$. Collecting \eqref{diff_est2}, \eqref{diff_est3}, \eqref{diff_est4}, \eqref{diff_est5}, \eqref{diff_est7}, \eqref{diff_est8}, we obtain \eqref{recursive_ineq}. 
\end{proof}


We will now finish the proof of Theorem \ref{thm:LWP}. 


\begin{proof}[Proof of Theorem \ref{thm:LWP}.] 
{\em Convergence and uniqueness.} 
By Proposition \ref{prop:sol} and Proposition \ref{prop:diff}, the iterates 
$[f^\pm_{n},\Om_{n},r_{n}]_{n\in\mathbb N}$  satisfy the uniform bounds $\vertiii{[f^\pm_{n},\Om_{n},r_{n}]}\le 2A\vertiii{[\hat f^\pm, \hat\Om, \hat r]|_{\uC \cup \C}} $ for all $n\in\mathbb N$. Moreover 
$\|\triangle f_{n}^\pm\|_\infty+ \|\v^{-2}{\triangle r_{n}}\|_\infty+ \|\v^{-1}{\pa_\v\triangle r_{n}}\|_\infty+\|\triangle \Om_{n}\|_\infty + 
\|\v^{1+\theta}\pa_\v\triangle \Om_{n}\|_\infty\to 0$ as $n\to \infty$, which in turn implies the strong convergence in $C^1$ for $f^\pm_n$ and in $C^2$ for $[\Om_n, \frac{r_n}{\v}]$ by standard interpolation. 
Hence, as $n\to\infty$, $[\Om_{n},\frac{r_{n}}{\v}]$ converges to $[\Om,\frac{r}{\v}]$ strongly in $C^2$ and $f^\pm_n$ to $f^\pm$ in $C^1$. 
Using the strong convergence shown above, the fundamental theorem of calculus, and the formulas~\eqref{int_rep1},~\eqref{(2.131)}--\eqref{(2.132)},~\eqref{E:TWODER},~\eqref{(2.134)}--\eqref{(2.135)}, and~\eqref{E:TWODER2}, we may pass
to the limit as $n\to\infty$ to conclude that $\pau r(\u,\cdot), \pau\Om(\u,\cdot)\in W^{2,\infty}([\v_0,\infty))$ and $\pau f(\u,\cdot)\in W^{1,\infty}([\v_0,\infty))$.
In particular, using~\eqref{E:RPQ} and~\eqref{recast1}, upon passing to the limit we obtain a classical solution of~\eqref{E:RWAVE1}.
After passing to the limit in \eqref{eq:fn+1}-\eqref{recast2}, we see that $[f^\pm,\Om,\frac{r}{\v}]$ is the desired classical solution to \eqref{E:RWAVE1}--\eqref{E:OMEGAWAVE1} and \eqref{E:FPLUSEVOLUTION}--\eqref{E:FMINUSEVOLUTION} in $\mathcal D$. Note that in fact $\pa_{\u\u}r(\u,\cdot),\pa_{\u\u}\Om(\u,\cdot)\in W^{1,\infty}([\v_0,\infty))$, which follows easily from~\eqref{E:OMEGAWAVE1},~\eqref{E:RPQ} and the bootstrap argument (involving one application of the $\pav$-derivative) similarly to above.

Observe that $f^\pm \in W^{2,\infty}$ and $[\Omega,r]\in W^{3,\infty}$ by standard weak-* convergence arguments, since the iterates are uniformly bounded in the same spaces.
Uniqueness easily follows an adaptation of the difference estimates of Proposition \ref{prop:diff}. To show that the solution corresponds to the solution of the original problem~\eqref{E:RWAVE1}--\eqref{E:MOMENTUMNULL13}, 
it only remains to show that the constraint equations~\eqref{E:CONSTRAINTU1}--\eqref{E:CONSTRAINTV1} are satisfied on the interior of the domain $\D$. This is a standard argument, which follows from the observation 
that for the solutions of~\eqref{E:RWAVE1}--\eqref{E:OMEGAWAVE1} and~\eqref{E:CONTNULL13}--\eqref{E:MOMENTUMNULL13} necessarily $\pau\left(\pav \left(\Om^{-2}\pav r\right) + \pi r \Om^2 T^{\u\u}\right)=0$
and $\pa_\v\left(\pau\left(\Om^{-2}\pau r\right) + \pi r \Om^2 T^{\v\v}\right)=0$. Since the constraints are satisfied by the characteristic data, this gives the claim. Note that these expressions make sense given the above shown regularity.

\noindent
{\em Asymptotic flatness.}
To show the asymptotic flatness we must show that $\lim_{\v\to\infty}m(\u,\v)<\infty$ for all $\u\in[\u_0,\u_0+\delta]$.
By~\eqref{E:HM} this follows if we can show that $\rho\pav r$ and $\rho \Om^2 (u^{\u})^2\pau r$ are integrable on $[\v_0,\infty)$.
Note that $\|\pav r\|_{L^\infty([\v_0,\infty))}<\infty$ by the boundedness of our norms. Moreover, by integrating~\eqref{E:RPQ} with respect to $\v$ and using $\v \lesssim r\lesssim \v$ and the boundedness of $\Omega$,
we conclude that $\|\pau r\|_{\infty}\lesssim 1.$
From \eqref{2.99}
we have
\be
\begin{split}
\rho=\frac{ (f^+  f^-)^{\frac{1}{k_+-k_-}}}{(1-\k) r^{2+2\e}} &=\frac1{1-\k} \v^{-\frac{N_++N_-}{k_+-k_-} -2-2\e} (\frac{\v}{r})^{2+2\e} (\v^{N_+}f^+ \v^{N_-} f^-)^{\frac{1}{k_+-k_-}}  \\
&\le \frac1{1-\k} \v^{-3-\beta} \left\|\frac{\v}{r}\right\|_\infty^{2+2\e}(\|\v^{N_+} f^+\|_\infty \|\v^{N_-} f^-\|_\infty )^{\frac{1}{k_+-k_-}},   \label{E:AFP}
\end{split}
\ee
where we recall $\beta$ from~\eqref{beta}.
Therefore $\rho(\u,\cdot)\in L^1([\v_0,\infty))$ since  $\vertiii{[f^\pm, \Om, r]}<\infty$. 
To show that $\rho \Om^2 (u^{\u})^2$ is integrable, we observe that 
\[
\rho \Om^2 (u^{\u})^2 = \frac{ (f^+  f^-)^{\frac{1}{k_+-k_-}}}{(1-\k) r^{2+2\e}} \frac1{1+\e}\frac{r^{2\e}}{\Om^2} \left(\frac{f^-}{f^+}\right)^{\frac12}
= \frac{ (f^+)^{\frac1{k_+-k_-}-\frac12}  (f^-)^{\frac{1}{k_+-k_-}+\frac12}}{(1+\k) \Om^2 r^{2}} . 
\]
Using~\eqref{E:NORMALISATIONNULL} and~\eqref{2.100}, we also have the expression
\begin{align}
\Om^2 (u^{\u})^2 = \frac1{1+\e}\frac{r^{2\e}}{\Om^2} \left(\frac{f^-}{f^+}\right)^{\frac12}
\lesssim \v^{2\e+\frac{N_+-N_-}{2}}\left(\frac{\v^{N_-}f^-}{\v^{N_+}f^+}\right)^{\frac12} \lesssim1,
\end{align}
where we have used~\eqref{2.122},~\eqref{logto}, and the boundedness of our norms.
Therefore, from~\eqref{E:AFP} we conclude $\rho \Om^2 (u^{\u})^2\in  L^1([\v_0,\infty))$, and the 
spacetime is therefore asymptotically flat.
\end{proof}


\subsection{Proof of Theorem~\ref{T:MAINTHEOREM}: existence of naked singularities}\label{SS:NAKED}

Recall the discussion of naked singularities in the introduction and in Section~\ref{SS:NSINTRO}.
Following~\cite{RoSR2019}, a spacetime contains a naked singularity if
it corresponds to a maximal hyperbolic development
of suitably regular data,
and the future null-infinity is geodesically incomplete. 
The latter statement does not actually require the construction of future null-infinity as an idealised boundary attached to a suitable
spacetime compactification. Instead, we define it to mean that affine length of a sequence of maximal ingoing null geodesics 
initiated along a sequence of points (approaching infinity) along an asymptotically flat outgoing null-surface, and suitably normalised, is uniformly bounded
by some positive constant.

{\em Proof of Theorem~\ref{T:MAINTHEOREM}.}
For any $\k\in(0,\k_0]$ we consider the associated RLP spacetime $(\MRLP,\grlp)$ given in Definition~\ref{D:RLPST}. We use it to prescribe 
the data for the characteristic problem in the region $\D$ as described in Section~\ref{SS:LWPSTATEMENT}. The associated solution to 
the Einstein-Euler system exists in region $\D$ by Theorem~\ref{thm:LWP}.
Since $\mathcal U_{\pm}=1+O(\sqrt\k)>0$, and since the data are exactly self-similar on $\uC$ and on the finite segment $\{(\u_0,\v) \ \big|\,\ \v\in[\v_0,\v_0+A_0]\}\subset \C$, we conclude that the solution 
coincides with the self-similar RLP-solution in the region $\D_{A_0} = \{(\u,\v)\,\big| \, \u_0\le\u<0, \, \v_0\le\v<\v_0+A_0\}$, see Figure~\ref{F:CUTOFF2}. We now consider a new spacetime
$(\mathcal M_\k,g_\k)$ obtained by gluing together the solution in the region $\D$ and to the past of the ingoing null-segment $\uC$ inside $\MRLP$. 
Clearly, the new spacetime is identical to the RLP spacetime (and therefore smooth) in an open neighbourhood across $\uC$.
It therefore coincides with the exact self-similar RLP-spacetime $(\MRLP,\grlp)$ in the past of $\uC$.

The exterior region, viewed as a development of the characteristic problem with data prescribed along the semi-infinite rectangle with outgoing data prescribed on $\{\v\ge0\}$ and ingoing data on $\u\in[\u_0,0)$
is maximal, as the ingoing null-curve $\mathcal N$ is incomplete on approach to the singularity. In fact, since along backward null-cone $\mathcal N$ we have $\frac{R}{-\sqrt\k\tau}=y_{\mathcal N}$, we 
have 
\begin{align}
\lim_{\tau\to0^- \atop (\tau,R)\in\mathcal N} \rho(\tau, R) = \frac1{2\pi \tau^2} \Sigma(y_\mathcal N) = \infty,
\end{align}
since $ \Sigma(y_{\mathcal N})\neq0$, where we recall that the density $\Sigma$ is in fact strictly positive on $[0,\infty)$.
Therefore, by~\eqref{E:CURVATUREDENSITY} the Ricci scalar blows up as the observer approaches the scaling origin $\mathcal O$ along $\mathcal N$.

It remains to show that the future null-infinity is incomplete in the sense of Definition 1.1. from~\cite{RoSR2019}.
Consider now a sequence of points $(\u_0,\v_n)$ such that $\lim_{n\to\infty}\v_n=\infty$. For any $n\in\mathbb N$, consider the future oriented ingoing radial null-geodesic emanating from $(\u_0,\v_n)$.
Let the affine parameter\footnote{A geodesic $\gamma^\nu$ is parametrised by an affine parameter $\ell$ if 
$\ddot\gamma^{\nu}(\ell)+\Gamma^\nu_{\alpha\beta}\dot\gamma^\alpha(\ell)\dot\gamma^\beta(\ell)=0$.} be denoted by $\ell$. Then $\v(\ell)\equiv \v_n$, and the angular coordinates are also constant. The $\u$-component satisfies the ODE
\be\label{E:AFFINENS}
0 = \ddot \u(\ell) + \Gamma^\u_{\u\u} (\dot\u(\ell))^2 = \ddot \u(\ell) +  \pau\log(\Om^2)(\dot\u(\ell))^2, 
\ee
where we have used~\eqref{E:CHR3}. By our assumptions $\u(0)=\u_0$ and we normalise the tangent vector to be parallel to $\pau$ so that 
at $(\u_0,\v_n)$, we have  $-2 = g(\pav, \dot p(0)\pau) = - \dot p(0) \frac12\Om_n^2$, where $\Om_n: = \Om(\u_0,\v_n)$ for all $n\in\mathbb N$. Therefore
$\dot\u(0) = 4\Om_n^{-2}.$
We may integrate~\eqref{E:AFFINENS} once to conclude that $\frac{d}{d\ell}(-\frac1{\dot \u}+\log(\Om^2))=0$ and therefore
\begin{align}
-\frac1{\dot \u}+\log(\Om^2) = -\frac{\Om_n^2}{4} + \log(\Om_n^2).
\end{align}
It then follows that 
\begin{align}
\dot\u(\ell) = \frac1{\frac{\Om_n^2}{4} -2\log\left(\frac{\Om_n}{\Om(\u(\ell),\v_n)}\right)}.
\end{align}
However, by the proof of Theorem~\ref{thm:LWP} there exists a constant $C$ which depends only on the data
$\vertiii{[f^\pm, \Om, r]|_{\uC \cup \C}}$ such that $\|\pau\log\Om\|_{L^\infty(\D)}\le C$. In particular,
by the mean value theorem 
$\log\left(\frac{\Om_n}{\Om(\u(\ell),\v_n)}\right) \le C |\u-\u_0| \le C\delta$, for any $\u\in[\u_0,0)$ and $n\in\mathbb N$. Note further that 
by~\eqref{E:OMEGAOUTGOING} and our bounds on the data along $\C$, $\Omega_n^2$ is bounded from below and above uniformly in $n$.
For $\delta\ll1$ sufficiently small, we conclude that $\dot\u$ remains positive and $\u$ reaches $0$ (i.e. the Cauchy horizon) in finite $\ell$-time, independent of $n$.
\prfe

\begin{remark}[The Cauchy horizon]
By construction, the Cauchy horizon coincides with the null-curve $\{\u=0\}$.
\end{remark}

\begin{remark}
By construction, for any $\k\in(0,\k_0]$ there exists in fact an infinite family of naked singularity solutions. This freedom
comes from the essentially arbitrary choice of the truncation in the region $\D$, modulo the size and decay limitations imposed
by the local existence theorem, Theorem~\ref{thm:LWP}. In a neighbourhood of the scaling origin $\mathcal O$, our solutions 
are however exactly self-similar.
\end{remark}


\appendix

\section{Proof of local existence around the sonic point: combinatorial argument}\label{A:SONIC}

The main goal of the rest of this section is to show the convergence of the power series $\sum_{N=0}^\infty \RH_N (\dx)^N$ and  $\sum_{N=0}^\infty \WH_N (\dx)^N$. We will do so by induction on the coefficients $\RH_N$ and $\WH_N$. 
Before proceeding, we record some technical lemmas. The proofs of Lemmas~\ref{L:A1}--\ref{lem:comb} below are given in detail in~\cite{GHJS2021} and we therefore only state the lemmas without proof for reader's convenience.

\begin{lemma} \label{L:A1}
There exists a constant $c>0$ such that for all $N\in \mathbb N$, the following holds
\begin{align}
\sum_{\substack{\ell+m=N\\ \ell,m \geq 1}}  \frac{1}{\ell^{3} m^{3}} \leq  \frac{c}{N^3}, \label{2.51}\\\sum_{\substack{\ell+m=N\\ \ell,m \geq 1}}  \frac{1}{\ell^{3} m^{2}} \leq  \frac{c}{N^2} ,\label{2.54}\\
\sum_{\substack{\ell+m+n=N\\ \ell, m, n \geq 1}} \frac{1}{\ell^{3} m^{3} n^{3}} \leq  \frac{c}{N^3}, \label{2.52}\\
\sum_{\substack{\ell+m+n=N\\ \ell, m, n \geq 1}} \frac{1}{\ell^{3} m^{2} n^{3}} \leq  \frac{c}{N^2}.  \label{2.53}
\end{align}
\end{lemma}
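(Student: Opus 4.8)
This lemma is a collection of purely combinatorial estimates on nested sums of reciprocal cubes (and one mixed cube-square variant), and it feeds directly into the inductive machinery for proving convergence of the Taylor series around the sonic point. The plan is to establish all four bounds \eqref{2.51}--\eqref{2.53} from a single elementary observation about two-fold convolutions of the sequence $n^{-3}$, and then bootstrap to the three-fold sums by one more convolution step.

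The core estimate is \eqref{2.51}: I would split the sum over $\ell+m=N$ by symmetry into the range $1\le\ell\le N/2$ (where $m=N-\ell\ge N/2$, so $m^{-3}\le 8N^{-3}$) and the symmetric range. On the first range $\sum_{\ell=1}^{\lfloor N/2\rfloor}\ell^{-3}m^{-3}\le 8N^{-3}\sum_{\ell\ge1}\ell^{-3}=8\zeta(3)N^{-3}$, and the second range contributes the same by relabeling; hence \eqref{2.51} holds with $c=16\zeta(3)$. For \eqref{2.54} the argument is nearly identical: on $1\le\ell\le N/2$ one has $m^{-2}\le 4N^{-2}$, giving a bound $4\zeta(3)N^{-2}$, while on $N/2\le\ell\le N$ one has $\ell^{-3}\le 8N^{-3}\le 8N^{-2}\cdot N^{-1}$ and $\sum m^{-2}\le\zeta(2)$, so this piece is $\lesssim N^{-3}\le N^{-2}$; combining gives \eqref{2.54}. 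The three-fold sums follow by viewing, e.g., $\sum_{\ell+m+n=N}\ell^{-3}m^{-3}n^{-3}$ as $\sum_{n=1}^{N-2}n^{-3}\big(\sum_{\ell+m=N-n}\ell^{-3}m^{-3}\big)$, applying \eqref{2.51} to the inner sum to get $\le c(N-n)^{-3}$, and then applying \eqref{2.51} once more (now to $\sum_{n=1}^{N-2}n^{-3}(N-n)^{-3}$, extending the range harmlessly) to obtain \eqref{2.52}. Likewise \eqref{2.53} follows by writing it as $\sum_{m}m^{-2}\big(\sum_{\ell+n=N-m}\ell^{-3}n^{-3}\big)\le c\sum_m m^{-2}(N-m)^{-3}$ and invoking \eqref{2.54} (or its mirror) to close at order $N^{-2}$.

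The one mild bookkeeping point is that the inner convolution produces terms indexed by $N-n$ rather than $N$, so when applying the two-fold estimate a second time one needs the range of the outer index to be compatible; this is handled by noting all summands are nonnegative, so enlarging the summation range only weakens the inequality, and by treating the boundary cases $n=N-2,N-1$ (where the inner sum is empty or a single term) separately as $O(N^{-3})$ contributions. I do not expect any genuine obstacle here — the lemma is standard and, as the excerpt itself notes, a detailed proof already appears in~\cite{GHJS2021}, so in the actual write-up I would simply cite that reference rather than reproduce the elementary estimates. The ``hard part,'' such as it is, is purely notational: keeping the symmetry reductions and the nested-convolution indices straight so that every bound lands at exactly the claimed power $N^{-3}$ or $N^{-2}$ with a single uniform constant $c$.

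\begin{proof}
All four inequalities follow from elementary manipulations of convolutions of the sequence $n\mapsto n^{-3}$; detailed proofs are given in~\cite{GHJS2021}. We sketch the argument for completeness. For \eqref{2.51}, split the sum by symmetry: if $1\le\ell\le N/2$ then $m=N-\ell\ge N/2$, so $m^{-3}\le 8 N^{-3}$ and $\sum_{\ell=1}^{\lfloor N/2\rfloor}\ell^{-3}m^{-3}\le 8N^{-3}\sum_{\ell\ge1}\ell^{-3}$; the symmetric range contributes the same, giving \eqref{2.51}. For \eqref{2.54}, the range $1\le\ell\le N/2$ yields $\sum \ell^{-3}m^{-2}\le 4N^{-2}\sum_{\ell\ge1}\ell^{-3}$, while on $N/2\le\ell\le N$ one has $\ell^{-3}\le 8N^{-3}$ and $\sum m^{-2}\le\sum_{m\ge1}m^{-2}$, contributing $O(N^{-3})\le O(N^{-2})$; this proves \eqref{2.54}. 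For \eqref{2.52}, write the sum as $\sum_{n=1}^{N-2}n^{-3}\big(\sum_{\ell+m=N-n}\ell^{-3}m^{-3}\big)\le c\sum_{n=1}^{N-2}n^{-3}(N-n)^{-3}$ by \eqref{2.51}, and apply \eqref{2.51} once more (extending the range harmlessly, since all terms are nonnegative) to bound this by $c'N^{-3}$. Finally, for \eqref{2.53}, write the sum as $\sum_{m}m^{-2}\big(\sum_{\ell+n=N-m}\ell^{-3}n^{-3}\big)\le c\sum_m m^{-2}(N-m)^{-3}$ by \eqref{2.51}, and invoke \eqref{2.54} to close at order $N^{-2}$. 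Choosing $c$ to be the maximum of the constants produced in these four estimates completes the proof.
\end{proof}
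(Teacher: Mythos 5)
Your proof is correct, and the symmetry-splitting argument you give is the standard one. Note that the paper itself does not supply a proof here — it states immediately before the lemma that the proofs of Lemmas~A.1--A.4 are given in~\cite{GHJS2021} and are omitted for brevity — so your write-up actually fills in what the paper delegates to a reference, and does so soundly (the bootstrapping of the three-fold sums from \eqref{2.51}, \eqref{2.54} by one more convolution step, together with the observation that dropping or enlarging the summation range is harmless for nonnegative terms, is exactly the right bookkeeping).
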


\begin{lemma}  There exists a constant $c>0$ such that for all $N\geq 3$ and all $C\geq 2$, the following holds
\begin{align}\label{3.166}
\sum_{\ell =2}^{N-1} \frac{C^{\ell -1}}{\ell^q} \leq c \frac{C^{N-2}}{N^q}, \quad q=2, 3. 
\end{align}
\end{lemma}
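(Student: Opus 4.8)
The plan is to factor out the largest power of $C$ and reduce \eqref{3.166} to a bound that is uniform in $C$. Writing
\[
\sum_{\ell=2}^{N-1}\frac{C^{\ell-1}}{\ell^q}=C^{N-2}\sum_{\ell=2}^{N-1}\frac{1}{C^{N-1-\ell}\ell^q},
\]
the desired inequality is equivalent to the existence of a constant $c>0$, depending only on $q\in\{2,3\}$, such that $\sum_{\ell=2}^{N-1}\frac{N^q}{C^{N-1-\ell}\ell^q}\le c$ for all $N\ge3$ and $C\ge2$. After the substitution $k=N-1-\ell$, so that $k$ ranges over $\{0,1,\dots,N-3\}$ and $\ell=N-1-k$, this is the estimate
\[
\sum_{k=0}^{N-3}\frac{N^q}{C^k\,(N-1-k)^q}\le c .
\]

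The next step is to split this sum at $k=\lfloor N/2\rfloor$. For the small-$k$ range $0\le k\le\lfloor N/2\rfloor$ one checks $N-1-k\ge N/4$ (immediate for $N\ge4$ since $k\le N/2$, and trivial for $N=3$, where only $k=0$ occurs), hence $\frac{N^q}{(N-1-k)^q}\le 4^q$; combined with the geometric bound $\sum_{k\ge0}C^{-k}\le\frac{1}{1-1/C}\le2$ valid for $C\ge2$, this part contributes at most $2\cdot4^q$. For the large-$k$ range $\lfloor N/2\rfloor<k\le N-3$ (empty unless $N$ is large) we use $C^k\ge2^{N/2}$ and $N-1-k\ge2$, so each term is at most $\frac{N^q}{2^q}2^{-N/2}$; since there are at most $N/2$ such terms, this part contributes at most $\frac{N^{q+1}}{2^{q+1}}2^{-N/2}$, which is bounded uniformly in $N$ because $N^{q+1}2^{-N/2}\to0$. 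Taking $c$ to be the maximum over $q\in\{2,3\}$ of $2\cdot4^q+\sup_{N\ge1}\frac{N^{q+1}}{2^{q+1}}2^{-N/2}$ then finishes the proof.

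Every step is an elementary comparison, so there is no genuine obstacle here; the only care required is choosing the split point $\lfloor N/2\rfloor$ so that the exponential decay $C^{-k}$ truly dominates the polynomial factor $N^q$ on the complementary range, and handling the finitely many small values of $N$ in the inequality $N-1-k\ge N/4$. Both are dispatched by the short case analysis indicated above.
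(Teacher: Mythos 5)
Your proof is correct. The paper does not supply its own argument for this lemma (it cites Lemma~A.3 of~\cite{GHJS2021} and records only the statement), so there is no in-text proof to compare against; but your factor-out-and-reindex reduction to $\sum_{k=0}^{N-3} N^q\,C^{-k}(N-1-k)^{-q}\le c$, followed by the split at $k=\lfloor N/2\rfloor$, is sound. In the small-$k$ block the geometric decay of $C^{-k}$ (using $C\ge 2$) controls the sum while the polynomial ratio $N^q/(N-1-k)^q$ stays $\le 4^q$; in the large-$k$ block the factor $C^{-k}\le 2^{-N/2}$ kills the polynomial $N^q$ with room to spare, and the crude bound $N-1-k\ge 2$ together with counting at most $N/2$ terms closes the estimate. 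The boundary checks ($N=3$, and the emptiness of the large-$k$ range for small $N$) are handled explicitly, so there is no gap.
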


For any $\alpha\in \mathbb R$, we let
\[
\binom{\alpha}{j} =\frac{(\alpha)_j}{j!}= \frac{\alpha (\alpha-1)\cdots (\alpha-j+1)}{j!} \ \  \text{for} \ \ j\in \mathbb N, \ \ \text{and} \ \  \binom{\alpha}{0}=1.
\]

\begin{lemma}\label{formula1} Recall the set  $\pi(n,m)$ defined in \eqref{pi}. 
\begin{enumerate}
\item  For each $n\in \mathbb N$, 
\begin{equation}
\sum_{m=1}^n\sum_{\substack{\pi(n,m) } } \frac{(-1)^m m! }{ \lambda_1 ! \dots \lambda_n!} \binom{\frac12}{1}^{\lambda_1} \cdots  \binom{\frac12}{n}^{\lambda_n}  = 2 (n+1)\binom{\frac12}{n+1}
\end{equation} holds. 
\item There exist universal constants $c_1,c_2>0$ such that 
\begin{equation}\label{12n}
 c_1 \frac{1}{n^\frac32}\leq (-1)^{n-1} \binom{\frac12}{n} \leq c_2 \frac{1}{n^\frac32}, \quad n\in \mathbb N . 
\end{equation}
\end{enumerate}
\end{lemma}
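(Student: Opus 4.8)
The plan is to prove the two parts separately, each by a short self-contained argument. For part (1), I would proceed by generating functions. Recall that for a power series $g(x)=\sum_{n\ge1}g_n x^n$ with $g_1\neq0$, the Faà di Bruno formula expresses the coefficients of $h=f\circ g$ via the sets $\pi(n,m)$; see~\eqref{E:FAAH}--\eqref{pi}. The key observation is that the left-hand side of the claimed identity is, up to the factor $g_0=f_0$ convention, exactly the $n$-th Taylor coefficient of $f\circ g$ where $f(y)=(1+y)^{1/2}$ (so that $f_m=\binom{1/2}{m}$) composed with a specific choice of inner series. Indeed, choosing $g(x)=\sum_{j\ge1}\binom{1/2}{j}x^j = (1+x)^{1/2}-1$ and then substituting $-x$ (to produce the factor $(-1)^m$ attached to the monomials), one has $f(g(-x)) = \bigl((1+x)^{1/2}\bigr)^{1/2}\big|_{\text{with sign bookkeeping}}$. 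More directly: set $G(x):=(1-x)^{1/2}$, so $G(x)=\sum_{j\ge0}\binom{1/2}{j}(-1)^j x^j$; then $\sum_{m=1}^n\sum_{\pi(n,m)}\frac{(-1)^m m!}{\lambda_1!\cdots\lambda_n!}\binom{1/2}{1}^{\lambda_1}\cdots\binom{1/2}{n}^{\lambda_n}$ is the $n$-th Taylor coefficient of $\bigl(1+(G(x)-1)\bigr)^{1/2}=G(x)^{1/2}=(1-x)^{1/4}$. Thus the sum equals $(-1)^n\binom{1/4}{n}$. It then remains to verify the algebraic identity $(-1)^n\binom{1/4}{n}=2(n+1)\binom{1/2}{n+1}$, which follows from the elementary relation $\binom{1/2}{n+1}=\frac{1/2-n}{n+1}\binom{1/2}{n}$ and the duplication-type identity $\binom{1/4}{n}=\tfrac{(1/4)(−3/4)\cdots}{n!}$ compared against $\tfrac{(1/2)(−1/2)\cdots}{n!}$; I would check this by an explicit ratio computation (or by matching the two generating functions $(1-x)^{1/4}$ and $2\sum_n (n+1)\binom{1/2}{n+1}x^n = 2\frac{d}{dx}\bigl(x(1+x)^{1/2}\big|_{x\to -x}\bigr)$, whichever is cleaner), noting both sides are analytic with the same coefficients.

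For part (2), the two-sided bound~\eqref{12n} on $\bigl|\binom{1/2}{n}\bigr|$ is a standard consequence of Stirling's formula. I would write $\binom{1/2}{n}=\frac{(1/2)(1/2-1)\cdots(1/2-n+1)}{n!}$ and observe that $(-1)^{n-1}\binom{1/2}{n}=\frac{1}{n}\cdot\frac{(2n-2)!}{2^{2n-1}\,(n-1)!\,n!}=\frac{1}{4^n}\binom{2n}{n}\frac{1}{2n-1}$, or equivalently relate it to the central binomial coefficient $\binom{2n}{n}\sim 4^n/\sqrt{\pi n}$. Feeding the Stirling asymptotics $\binom{2n}{n}=\frac{4^n}{\sqrt{\pi n}}(1+O(1/n))$ into this expression yields $(-1)^{n-1}\binom{1/2}{n}\sim \frac{1}{2\sqrt\pi}\,n^{-3/2}$, from which the existence of universal constants $c_1,c_2>0$ with $c_1 n^{-3/2}\le(-1)^{n-1}\binom{1/2}{n}\le c_2 n^{-3/2}$ for all $n\ge1$ is immediate (the finitely many small $n$ are handled by direct inspection, and for large $n$ by the asymptotic). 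Since the paper attributes Lemmas~\ref{L:A1}--\ref{lem:comb} to~\cite{GHJS2021} and only states them for the reader's convenience, I expect this lemma to be quoted similarly, so the proof can be kept brief.

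The main obstacle will be part (1): getting the generating-function bookkeeping of the signs exactly right. The factor $(-1)^m$ in $\pi(n,m)$ counts the number of blocks, not the size, so it corresponds to composing with $f(y)=(1+y)^{1/2}$ after replacing the inner series $g(x)$ by $-g(-x)$ is \emph{not} quite the right substitution; one must instead use the fact that $\sum_{\pi(n,m)}\frac{m!}{\prod\lambda_i!}g_1^{\lambda_1}\cdots g_n^{\lambda_n}$ with a sign $(-1)^m$ is the Bell-polynomial $B_n(g_\bullet)$ evaluated with $f_m\mapsto(-1)^m m!$, i.e. it is the $n$-th coefficient of $f\circ g$ with $f(y)=\sum_m \frac{(-1)^m m! f_m}{m!}y^m$... — so the clean route is simply: the left side is the $n$-th coefficient of $\phi(g(x))$ where $\phi(y)=\sum_{m\ge1}(-1)^m\binom{1/2}{m}y^m = (1-y)^{1/2}-1$ and $g(x)=\sum_{j\ge1}\binom{1/2}{j}x^j=(1+x)^{1/2}-1$, hence of $(1-g(x))^{1/2}-1 = (2-(1+x)^{1/2})^{1/2}-1$. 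This does not simplify to $(1-x)^{1/4}$, so I would need to recompute: the correct identification is that the sum is the coefficient of $\psi(x):=\bigl(1+\sum_{j\ge1}(-1)^?\ldots\bigr)$; pinning down which of $g$ or $\phi$ carries the sign, and then recognizing the resulting closed form as $2(n+1)\binom{1/2}{n+1}$, is the step requiring genuine care. I would resolve it by testing $n=1,2,3$ by hand against both candidate generating functions to fix the correct one before committing to the general argument.
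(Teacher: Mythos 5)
The paper does not actually prove this lemma --- it cites~\cite{GHJS2021} --- so there is no in-text proof to compare against. Reviewing your argument on its own terms: part (2) is essentially sound (the relation $(-1)^{n-1}\binom{1/2}{n}=\frac{1}{4^n}\binom{2n}{n}\frac{1}{2n-1}$ is correct and Stirling gives the claimed $n^{-3/2}$ asymptotic, although your intermediate form $\frac{1}{n}\cdot\frac{(2n-2)!}{2^{2n-1}(n-1)!n!}$ has a spurious extra factor of $\frac{1}{n}$; the correct form is $\frac{(2n-2)!}{2^{2n-1}n!(n-1)!}$). Part (1), however, has a genuine gap: all three of your candidate generating-function identifications are wrong, and the testing you defer to never happens.

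The issue is a misidentification of what plays the role of the outer coefficients $f_m$ in Fa\`a di Bruno~\eqref{E:FAAH}--\eqref{pi}. In $h_n=\sum_{m=1}^n\sum_{\pi(n,m)}\frac{m!}{\lambda_1!\cdots\lambda_n!}f_m\,g_1^{\lambda_1}\cdots g_n^{\lambda_n}$, the factor that depends only on the number of blocks $m$ is $f_m$, while the factors raised to the powers $\lambda_j$ are the $g_j$. Matching against the left-hand side of part (1), the sign $(-1)^m$ is thus the outer coefficient, so $f_m=(-1)^m$, i.e.\ $f(y)=\frac{1}{1+y}$, and $g_j=\binom{1/2}{j}$, i.e.\ $g(x)=(1+x)^{1/2}-1$. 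Your first candidate ($f(y)=(1+y)^{1/2}$, $f_m=\binom{1/2}{m}$) and your later revision ($\phi(y)=\sum_m(-1)^m\binom{1/2}{m}y^m$) both put $\binom{1/2}{m}$ in $f_m$, which forces an extraneous factor of $\binom{1/2}{m}$ that does not appear in the sum; and the attempted $(1-x)^{1/4}$ answer is demonstrably wrong (e.g.\ at $n=1$ it gives $-1/4$, whereas $2\cdot2\binom{1/2}{2}=-1/2$). The correct composite is $h(x)=f(g(x))=\frac{1}{(1+x)^{1/2}}=(1+x)^{-1/2}$, whose $n$-th Taylor coefficient is $\binom{-1/2}{n}$. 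The remaining verification is a one-line telescoping: since $2\cdot\frac12=1$ and $(\frac12-k)=(-\frac12-(k-1))$ for $k\ge1$,
\begin{equation*}
2(n+1)\binom{1/2}{n+1}=\frac{\prod_{k=1}^n\bigl(\tfrac12-k\bigr)}{n!}=\frac{\prod_{k=1}^n\bigl(-\tfrac12-(k-1)\bigr)}{n!}=\binom{-1/2}{n},
\end{equation*}
which gives the claimed identity. With this identification in place, the generating-function strategy you outline works cleanly.
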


%

\begin{lemma}\label{lem:comb} Let $p>0$ be a given positive number. Let $(\lambda_1,\dots, \lambda_\ell ) \in \pi (\ell ,m)$ where $1\leq m\leq \ell $ and $\ell\geq2$ be given. 
\begin{enumerate}
\item If $1\leq m\leq \left[\tfrac{\sqrt\ell}{\sqrt3}\right]$, there exists a constant $c_3=c_3(p)>0$ such that 
\be\label{E:comb1}
 \prod_{n=1}^\ell \left(\frac{1}{n^{\lambda_n}}\right)^p  \leq \frac{c_3}{\ell^{p}}. 
\ee
\item There exist  $c_4=c_4(p)>0$ and $L_0=L_0(p)>1$ such that if $L\geq L_0$,  the following holds:  
\be\label{E:comb2}
\frac{1}{L^{m-1}}\prod_{n=1}^\ell \left(  \frac{1}{n^{\lambda_n}} \right)^p \leq \frac{c_4}{\ell^{p}},   \text{ for all }  1\leq m \leq \ell. 
\ee
\item Let $\ell \geq 3$. Then there exists  $c_5=c_5(p)>0$ such that if $L\geq L_0$,  the following holds:  
\be\label{E:comb3}
\frac{1}{L^{m-2}}\prod_{n=1}^\ell \left(  \frac{1}{n^{\lambda_n}} \right)^p \leq \frac{c_5}{\ell^{p}},   \text{ for all }  2\leq m \leq \ell. 
\ee
\end{enumerate}
\end{lemma}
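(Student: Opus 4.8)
All three inequalities are combinatorial, and the plan is to reduce them to one elementary estimate on products of partition parts. Given $(\lambda_1,\dots,\lambda_\ell)\in\pi(\ell,m)$, list each integer $n\in\{1,\dots,\ell\}$ with multiplicity $\lambda_n$; this yields positive integers $a_1,\dots,a_m$ with $a_1+\cdots+a_m=\ell$, and $\prod_{n=1}^{\ell}n^{\lambda_n}=\prod_{j=1}^{m}a_j=:P$. Thus it suffices to bound $P$ from below, and the key claim is that
\[
P=\prod_{j=1}^{m}a_j\ \ge\ \Big(\sum_{j=1}^{m}a_j\Big)-(m-1)\ =\ \ell-m+1
\]
for any positive integers $a_1,\dots,a_m$. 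I would prove this by induction on $m$: the case $m=1$ is trivial, and for the inductive step one writes $\prod_{j\le m}a_j=a_m\prod_{j<m}a_j\ge a_m\big(S-(m-2)\big)$ with $S:=\sum_{j<m}a_j\ge m-1$, and then uses the identity $a_m\big(S-(m-2)\big)-\big((S+a_m)-(m-1)\big)=(a_m-1)\big(S-(m-1)\big)\ge0$. Equality is attained at the hook partition $(\ell-m+1,1,\dots,1)$, which is precisely the configuration excluded by the hypothesis of part (i), and made harmless by the weight in parts (ii)--(iii).

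For \eqref{E:comb1}: the hypothesis $1\le m\le[\sqrt\ell/\sqrt3]$ gives $m\le\sqrt\ell$ (and is vacuous for $\ell=2$), so by the above $P\ge\ell-\sqrt\ell+1\ge\ell/2$ for $\ell\ge4$, while for $\ell=3$ the range forces $m=1$ and $P=\ell$. In all cases $P\ge\ell/2$, hence $P^{-p}\le 2^p\ell^{-p}$ and \eqref{E:comb1} holds with $c_3:=2^p$. (The exact constant $\sqrt3$ is immaterial; any hypothesis of the form $m\le(1-\varepsilon)\ell+O(1)$ would give the same conclusion.)

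For \eqref{E:comb2}--\eqref{E:comb3}: these are the same argument carried out with the extra weight $L^{-(m-1)}$, resp.\ $L^{-(m-2)}$, and I would treat them together by setting $k:=m-1$ and splitting on the size of $k$ relative to $\ell$. From the bound above, $\ell/P\le\ell/(\ell-k)$. If $k\le\ell/2$ then $\ell-k\ge\ell/2$, so $(\ell/P)^p\le2^p$, which (using $L\ge1$) is at most $2^pL^k$, and at most $2^pL^{k-1}$ whenever $k\ge1$; the degenerate case $k=0$ in \eqref{E:comb2} is trivial since then $P\ge\ell$, and in \eqref{E:comb3} one has $k\ge1$ throughout because $m\ge2$. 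If instead $k>\ell/2$, then only $\ell/P\le\ell$ is available, but in that regime the weight ($L^k$ for \eqref{E:comb2}, $L^{k-1}$ for \eqref{E:comb3}) is at least $L^{c\ell}$ for a fixed $c>0$ and all $\ell\ge3$; since $t^p\le L^{ct}$ for all $t\ge1$ as soon as $\log L\ge p/(ce)$ — using $\sup_{t\ge1}(\log t)/t=1/e$ — we obtain $\ell^p\le L^k$ (resp.\ $\ell^p\le L^{k-1}$), hence $(\ell/P)^pL^{-k}\le1$ (resp.\ $(\ell/P)^pL^{-(k-1)}\le1$). Choosing $L_0=L_0(p)$ large enough for this last inequality to hold in both cases, and $c_4=c_5=2^p$, yields \eqref{E:comb2}--\eqref{E:comb3}.

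The argument is entirely elementary; the only mild obstacle is the bookkeeping — checking the small cases $\ell\in\{2,3\}$ and $m\in\{1,\ell-1,\ell\}$, the degenerate case $k=0$, and verifying that a single threshold $L_0$ depending only on $p$ serves both weighted bounds. A detailed write-up along exactly these lines is given in~\cite{GHJS2021}.
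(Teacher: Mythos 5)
Your proof is correct. The key inequality $\prod_{j=1}^m a_j \geq \ell - m + 1$ for positive integers summing to $\ell$ is established cleanly by your induction, the algebraic identity $a_m\bigl(S-(m-2)\bigr) - \bigl((S+a_m)-(m-1)\bigr) = (a_m-1)\bigl(S-(m-1)\bigr)$ is correct, and the case analysis (small $m$ versus $m$ comparable to $\ell$, with the $L^{-(m-1)}$ or $L^{-(m-2)}$ weight absorbing the polynomial loss via $\sup_{t\geq 1}(\log t)/t = 1/e$) is sound. Note that the paper itself does not prove this lemma — it explicitly defers to~\cite{GHJS2021} — so there is no in-paper proof to compare against; your argument is the natural elementary one and is exactly the style of argument that reference uses.
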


By Lemmas~\ref{R0W0} and~\ref{R1W1} there exist  constants $0<m<M<1$ such that
\begin{align}\label{E:MBOUND}
& |\RH_0|, \ |\WH_0|, \  |\RH_1|, \  |\WH_1| \leq M, \\
& \RH_0>m,
\end{align}
for all $\k\in(0,\k_0]$, with $\k_0>0$ chosen sufficiently small as in Lemma~\ref{R0W0}.


\begin{lemma} Let $x_\ast\in[\xc+\kappa,\xmax]$ 
and $\alpha\in (1,2)$. Assume that 
\begin{align}
|\RH_m| \leq \frac{C^{m-\alpha}}{m^3}, \quad 2\leq m\leq N-1, \label{assumptionR}\\
|\WH_m| \leq \frac{C^{m-\alpha}}{m^3}, \quad 2\leq m\leq N-1, \label{assumptionW}
\end{align} for some $C\geq 1$ and $N\geq3$. Then there exists a  constant $\hat C=\hat C(M)>0$ such that 
\begin{align}
|(\WH^2)_\ell| +|(\RH\WH)_\ell | +|(\RH^2)_\ell| &\le 
\begin{cases}
\hat C & \ \text{ if } \ \ell=0,1,\\
\hat C \frac{C^{\ell-\alpha}}{\ell^3} & \ \text{ if } \ 2\le \ell \le N-1, 
\end{cases}\label{3.174} \\
|H_\ell |& \le 
\begin{cases}
\hat C & \ \text{ if } \ \ell=0,1, \\
\hat C (1+\k) \frac{C^{\ell-\alpha}}{\ell^3} & \ \text{ if } \ 2\le \ell \le N-1. 
\end{cases}\label{indHl}
\end{align} 
\end{lemma}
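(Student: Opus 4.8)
The statement is a bookkeeping lemma: given the inductive hypotheses \eqref{assumptionR}--\eqref{assumptionW} on $\RH_m,\WH_m$ for $2\le m\le N-1$ (together with the base bounds \eqref{E:MBOUND} on $\RH_0,\WH_0,\RH_1,\WH_1$), produce the claimed bounds on the convolution-type quantities $(\WH^2)_\ell$, $(\RH\WH)_\ell$, $(\RH^2)_\ell$, and on $H_\ell$ defined by \eqref{Hl}. The plan is to treat each quantity as a discrete convolution $\sum_{k+m=\ell}a_k b_m$ where $a,b\in\{\RH_\bullet,\WH_\bullet\}$, split off the ``boundary'' terms $k\in\{0,1\}$ or $m\in\{0,1\}$ (which are bounded by $M$ via \eqref{E:MBOUND}) from the ``bulk'' terms $2\le k,m\le \ell-2$ (bounded by the inductive hypothesis), and then use the elementary convolution estimates of Lemma~\ref{L:A1}.

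\textbf{Step 1: the cases $\ell=0,1$.} These are immediate. $(\WH^2)_0=\WH_0^2$, $(\WH^2)_1=2\WH_0\WH_1$, and similarly for $(\RH\WH)_\ell$, $(\RH^2)_\ell$; each is a product of two factors each bounded by $M<1$, so the sum is bounded by a universal constant, giving \eqref{3.174} with, say, $\hat C = 10 M^2$ or similar. For $H_\ell$ with $\ell=0,1$ one reads off from \eqref{Hl} that $H_0 = (1-\k)\WH_0^2+4\k\WH_0+4\k\RH_0\WH_0+\k^2-\k$ and $H_1 = (1-\k)(2\WH_0\WH_1)+4\k\WH_1+4\k(\RH_0\WH_1+\RH_1\WH_0)$, both bounded by an absolute constant using \eqref{E:MBOUND} and $0<\k\le\k_0<1$; absorb into $\hat C$.

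\textbf{Step 2: the cases $2\le \ell\le N-1$.} Write $(\WH^2)_\ell=\sum_{k+m=\ell}\WH_k\WH_m$. Decompose the sum according to whether $k$ or $m$ lies in $\{0,1\}$: the terms with $k\in\{0,1\}$ contribute $\WH_0\WH_\ell+\WH_1\WH_{\ell-1}$ (and symmetrically for $m\in\{0,1\}$), each bounded by $M\cdot\frac{C^{\ell-\alpha}}{\ell^3}$ and $M\cdot\frac{C^{\ell-1-\alpha}}{(\ell-1)^3}\le 8M\frac{C^{\ell-\alpha}}{\ell^3}$ respectively (using $C\ge1$, $\alpha\in(1,2)$, and $(\ell-1)^{-3}\le 8\ell^{-3}$ for $\ell\ge 2$). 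The remaining ``bulk'' terms $\sum_{2\le k,m\le\ell-2}\WH_k\WH_m$ are estimated by $\sum_{k+m=\ell, k,m\ge2}\frac{C^{k-\alpha}}{k^3}\frac{C^{m-\alpha}}{m^3}=C^{\ell-2\alpha}\sum_{k+m=\ell,k,m\ge1}\frac{1}{k^3m^3}\le c\,C^{\ell-2\alpha}\ell^{-3}$ by \eqref{2.51}; since $\alpha>1$ we have $C^{\ell-2\alpha}\le C^{\ell-\alpha}$ (as $C\ge1$, $\alpha>0$), so this is $\le c\,C^{\ell-\alpha}\ell^{-3}$. Collecting all contributions gives \eqref{3.174} for $(\WH^2)_\ell$ with $\hat C = \hat C(M)$ absolute; the arguments for $(\RH\WH)_\ell$ and $(\RH^2)_\ell$ are identical, using \eqref{E:MBOUND} for the boundary factors. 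Finally, \eqref{indHl} follows by plugging \eqref{3.174} into \eqref{Hl}: $|H_\ell|\le (1-\k)|(\WH^2)_\ell|+4\k|\WH_\ell|+4\k|(\RH\WH)_\ell|\le \hat C\frac{C^{\ell-\alpha}}{\ell^3}+4\k\frac{C^{\ell-\alpha}}{\ell^3}+4\k\hat C\frac{C^{\ell-\alpha}}{\ell^3}\le \hat C(1+\k)\frac{C^{\ell-\alpha}}{\ell^3}$ after enlarging $\hat C$ by an absolute factor; note the $4\k|\WH_\ell|$ term is genuinely of size $O(\k)$ which is why the factor $(1+\k)$ (rather than a bare constant) appears in the statement.

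\textbf{Main obstacle.} There is no serious obstacle here; the only point requiring mild care is the uniformity of $\hat C$ in $\k$ and in $x_\ast\in[\xc+\kappa,\xmax]$, which is guaranteed because the base bounds \eqref{E:MBOUND} are uniform (Lemmas~\ref{R0W0}, \ref{R1W1}) and the convolution constant $c$ in Lemma~\ref{L:A1} is absolute. The mild bookkeeping subtlety is the shift $C^{\ell-2\alpha}$ vs.\ $C^{\ell-\alpha}$ in the bulk term: one must remember that the lemma is stated with the single power $C^{\ell-\alpha}$ and that replacing $C^{\ell-2\alpha}$ by the larger $C^{\ell-\alpha}$ is legitimate precisely because $C\ge1$ and $\alpha>0$; this is what forces the exponent normalization to be consistent across the whole induction in the subsequent convergence proof (Lemma~\ref{lem:InductionFinal}).
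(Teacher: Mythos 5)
Your approach is the same as the paper's (split the convolution into boundary terms using \eqref{E:MBOUND} and bulk terms using the inductive hypothesis, then apply \eqref{2.51}), but there is a small slip at $\ell=2$ that the paper treats as a separate case. You bound $\WH_1\WH_{\ell-1}$ by $M\cdot\frac{C^{\ell-1-\alpha}}{(\ell-1)^3}$, which invokes the inductive hypothesis on the index $\ell-1$; this is valid only when $\ell-1\ge 2$, i.e.\ $\ell\ge 3$. At $\ell=2$ one has $\WH_1\WH_{\ell-1}=\WH_1^2$, which is \emph{not} covered by \eqref{assumptionW}, and the claimed inequality $M^2\le M\,C^{1-\alpha}$ fails for $C$ large since $\alpha>1$ forces $C^{1-\alpha}\le 1$. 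The conclusion nevertheless holds: one simply uses $|\WH_1^2|\le M^2\le 8M^2\cdot\frac{C^{2-\alpha}}{2^3}$, which relies on $C^{2-\alpha}\ge 1$ (valid because $\alpha<2$, $C\ge 1$). This is exactly what the paper does in \eqref{E:CASETWO} by handling $\ell=2$ before the general $\ell\ge 3$ computation, and you should add the same small case distinction to make your bound for "$2\le\ell\le N-1$" literally correct.
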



\begin{proof}We first prove the bounds for $|(\WH^2)_\ell|$, $\ell\ge0$. 
The bounds $|(\WH^2)_0|\le M^2$ and $|(\WH^2)_1|\le 2M^2$ are obvious from~\eqref{E:MBOUND}.
Clearly 
\be\label{E:CASETWO}
|(\WH^2)_2|\le 2M|\WH_2|+M^2 \le 2M \frac{C^{2-\alpha}}{2^3}+M^2\le (2M +2^3 M^2) \frac{C^{2-\alpha}}{2^3}
\ee where we have used $C^{2-\alpha}\geq 1$. 
If $\ell\ge3$ we then have
\begin{align}
|(\WH^2)_\ell| & \le \sum_{m=0}^\ell|\WH_m||\WH_{\ell-m}| \notag \\
&\le 2|\WH_0||\WH_\ell| + 2|\WH_1||\WH_{\ell-1}| + \sum_{m=2}^{\ell-2}|\WH_m||\WH_{\ell-m}| \notag\\
& \le 2M \frac{C^{\ell-\alpha}}{\ell^3}  + 2M  \frac{C^{\ell-1-\alpha}}{(\ell-1)^3}
+ \sum_{m=2}^{\ell-2} \frac{C^{\ell-2\alpha}}{m^3(\ell-m)^3} \notag \\
& \le 2M C^{\ell-\alpha} \left(\frac1{\ell^3} + \frac1{(\ell-1)^3} + \frac{1}{2M}\sum_{m=2}^{\ell-2} \frac{1}{m^3(\ell-m)^3} \right) \notag \\
& \le 2M \tilde C \frac{C^{\ell-\alpha}}{\ell^3},
\end{align}
for some constant $\tilde C$. 
It is now clear, that the estimates for $(\RH\WH)_\ell$ and $(\RH^2)_\ell$, $\ell\ge0$ follow in the same way, as the only estimates we have used are~\eqref{E:MBOUND} and the inductive assumptions~\eqref{assumptionR}--\eqref{assumptionR}, which both depend only on the index, and are symmetric with respect to $\RH$ and $\WH$. 
Recalling $H_\ell$ from \eqref{Hl}, 
the bounds \eqref{indHl} now immediately follow from \eqref{3.174} and \eqref{assumptionW}. 
\end{proof}


\begin{lemma} 
Let $x_\ast\in[\xc+\kappa,\xmax]$ 
and $\alpha\in (1,2)$. Assume that \eqref{assumptionR} and \eqref{assumptionW} for  $N\geq3$ and 
 some large enough $C>1$ satisfying 
 \be\label{largeC}
 C >\frac{4L_0}{c_1} 
 \ee
 where $c_1$ and $L_0=L_0(\frac32)$ are universal constants in \eqref{12n} and Lemma \ref{lem:comb}. Then there exists a constant $\hat C=\hat C(\RH_0)$ such that 
\begin{align}
|(\RH^{-\e})_\ell  |& \le
\begin{cases}
 \hat C 
& \ \text{ if } \ \ell=1, \\
 \hat C 
\left(\frac{C^{\ell-\alpha}}{\ell^3} + \frac{C^{\ell-2}}{\ell^2}\right) & \ \text{ if } \ 2\le \ell \le N-1. 
\end{cases} \label{faaRl} 
\end{align} 
\end{lemma}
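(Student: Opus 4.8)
The plan is to prove the bound on $(\RH^{-\e})_\ell$ directly from the Fa\`a di Bruno formula~\eqref{faaR}, using the inductive hypotheses~\eqref{assumptionR}--\eqref{assumptionW}, the bounds~\eqref{E:MBOUND} on the low-order coefficients, and the combinatorial estimates of Lemmas~\ref{L:A1}--\ref{lem:comb}. Recall
\[
(\RH^{-\e})_\ell = \RH_0^{-\e} \sum_{m=1}^\ell \frac{1}{\RH_0^m} \sum_{\pi(\ell,m)} (-\e)_m \frac{1}{\lambda_1!\cdots\lambda_\ell!} \RH_1^{\lambda_1}\cdots\RH_\ell^{\lambda_\ell},
\]
so the task is to estimate, for each partition $(\lambda_1,\dots,\lambda_\ell)\in\pi(\ell,m)$, the product $\prod_{n=1}^\ell |\RH_n|^{\lambda_n}$, and then sum over partitions and over $m$. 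The case $\ell=1$ is immediate: $(\RH^{-\e})_1 = -\e\RH_0^{-\e-1}\RH_1$, which is bounded by a constant depending on $\RH_0$ (using $\RH_0>m$ and $|\RH_1|\le M$), and in fact is $O(\k)$ since $\e=O(\k)$.

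For $2\le\ell\le N-1$ the main point is the split between the ``few large blocks'' regime and the ``many small blocks'' regime, exactly as encoded in Lemma~\ref{lem:comb}. First I would isolate the $\lambda_n=0$ for $n\ge2$ cases where only $\RH_1$ appears (i.e. the partition $m=\ell$, $\lambda_1=\ell$): there the product is $|\RH_1|^\ell \le M^\ell$, and since $|(-\e)_\ell|/\ell! \le c^\ell$ for some constant, while $M<1$, this contributes at most $(cM)^\ell$, which is dominated by $C^{\ell-2}/\ell^2$ for $C$ large. For a general partition, write $\prod_{n=1}^\ell |\RH_n|^{\lambda_n}$ by applying~\eqref{E:MBOUND} to the factors with $n\in\{0,1\}$ and~\eqref{assumptionR} to the factors with $2\le n\le \ell$: each such factor contributes $C^{n-\alpha}/n^3 \le C^{n}/n^3$ (using $C\ge1$, $\alpha>1$, so $C^{-\alpha}\le 1$), and since $\sum_n n\lambda_n = \ell$ and there are $m$ blocks in total, the powers of $C$ accumulate to roughly $C^{\ell - (\text{number of }\RH_1\text{ factors})}$. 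The clean way to organize this is: the product of the $1/n^3$ factors over $n\ge2$, times $1/n^{\lambda_n}$-type bookkeeping, is controlled by Lemma~\ref{lem:comb} parts (2)--(3) with $p=3$ (or $p=\frac32$ where the $(-\e)_m$ growth is absorbed), giving a bound $\lesssim 1/\ell^3$ for the product per partition once we divide by $C^{m-1}$ or $C^{m-2}$; the extra powers $C^{m-1}$ (resp. $C^{m-2}$) are exactly the ``spare'' powers of $C$ generated by the $C^{n-\alpha}$ numerators, using $\sum(n-\alpha)\lambda_n \ge \ell - \alpha m \ge \ell - 2m$ so that $C^{\ell-2\alpha}\cdot C^{\text{correction}}$ leaves $C^{\ell-\alpha}$ or $C^{\ell-2}$ to spare after feeding $C^{m-1}$ or $C^{m-2}$ into the combinatorial lemma. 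The condition~\eqref{largeC}, $C > 4L_0/c_1$, is precisely what makes $C$ large enough to apply Lemma~\ref{lem:comb}(2)--(3) with $L = C\RH_0/(\text{const})$ (the $1/\RH_0^m$ factor and the $(-\e)_m$ growth combining with powers of $C$), and it also handles the $(-\e)_m$ factor via~\eqref{12n}-type bounds for the half-integer binomial coefficients when $\e$ is treated perturbatively. Summing the resulting per-partition bounds $\lesssim C^{\ell-\alpha}/\ell^3$ and $\lesssim C^{\ell-2}/\ell^2$ over $m$ and over $\pi(\ell,m)$ — the number of partitions grows subexponentially and is absorbed, using Lemma~\ref{L:A1} and~\eqref{3.166} for the $m$-sum — yields~\eqref{faaRl}.

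The main obstacle is the careful bookkeeping of which power of $C$ is ``spent'' on the combinatorial lemma versus ``saved'' for the final bound, and ensuring that the two distinct decay profiles $C^{\ell-\alpha}/\ell^3$ and $C^{\ell-2}/\ell^2$ are both genuinely produced: the $\ell^{-3}$, $\alpha$-shifted piece comes from partitions where at least one block has index $\ge 2$ (so an inductive $C^{n-\alpha}/n^3$ factor is present and its $\alpha$-shift survives), whereas the $\ell^{-2}$, $C^{\ell-2}$ piece is the residual from the pure-$\RH_1$ partition $m=\ell$ together with partitions dominated by $\RH_1$ blocks, where one loses a power of $\ell$ but the $C^{\ell-2}$ scaling is comfortably beaten. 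I would also need to double-check that the constant $\hat C$ can be chosen to depend only on $\RH_0$ (equivalently on $m, M$, hence uniformly for $\k\in(0,\k_0]$ and $\xs\in[\xc+\kappa,\xmax]$), which follows since all constants entering — $c, c_1, c_2, L_0, \tilde C$ — are universal and the only genuinely $\RH_0$-dependent factors are $\RH_0^{-\e}$ and $\RH_0^{-m}$, both controlled by $\RH_0 > m$. This lemma then feeds, together with~\eqref{indHl} and~\eqref{3.174}, into the estimate of $\mathcal S_N,\mathcal V_N$ (Lemma~\ref{lem:SVbound}) and closes the induction in Lemma~\ref{lem:InductionFinal}, which is what Theorem~\ref{T:SONICLWP} invokes.
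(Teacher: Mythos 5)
Your high-level plan — isolate the low-$m$ contributions, apply the inductive bound $|\RH_n|\le C^{n-\alpha}/n^3$, and trade powers of $C$ to invoke Lemma~\ref{lem:comb} — matches the overall shape of the paper's argument, and the $\ell=1$ case and the role of~\eqref{largeC} are correct. But there is a genuine gap at the step of summing over partitions.

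You write that the per-partition bounds are summed over $m$ and $\pi(\ell,m)$ ``using Lemma~\ref{L:A1} and~\eqref{3.166} for the $m$-sum'' and that ``the number of partitions grows subexponentially and is absorbed.'' Neither of these works. The number of partitions of $\ell$ grows like $e^{c\sqrt\ell}$; if you have a per-partition bound $\lesssim C^{\ell-2}/\ell^2$ and multiply by $e^{c\sqrt\ell}$, the induction does not close — there is no fixed power of $C$ left over to absorb a factor that grows faster than any polynomial. Lemma~\ref{L:A1} and~\eqref{3.166} control single convolution sums over indices $m$, not sums over multinomial compositions, and are not the tool used in this lemma.

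What actually makes this work is the pair Lemma~\ref{formula1} / equation~\eqref{12n}, combined with the choice $p=\tfrac32$ in Lemma~\ref{lem:comb}(3). The idea is to split each inductive factor $1/n^3 = (1/n^{3/2})\cdot(1/n^{3/2})$: one half is dominated via Lemma~\ref{lem:comb}(3) with $p=\tfrac32$ (this is where the spare $C^{m-2}$ goes, and where~\eqref{largeC} ensures $L=c_1 C\RH_0\ge L_0$), and the other half is converted to $(-1)^{n-1}\binom{1/2}{n}$ via the lower bound in~\eqref{12n}. The resulting sum $\sum_m\sum_{\pi(\ell,m)}\frac{(-1)^m m!}{\lambda_1!\cdots\lambda_\ell!}\binom{1/2}{1}^{\lambda_1}\cdots\binom{1/2}{\ell}^{\lambda_\ell}$ is then evaluated \emph{exactly} by Lemma~\ref{formula1}(1) as $2(\ell+1)\binom{1/2}{\ell+1}\asymp \ell^{-3/2}$; the crucial supporting observation is that every summand carries the fixed sign $(-1)^\ell$, so the $c_5/\ell^{3/2}$ factor from Lemma~\ref{lem:comb}(3) can be pulled out of the sum as a uniform bound. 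This exact cancellation/identity, not a crude absorption of the partition count, is what produces the final $C^{\ell-2}/\ell^2$ profile; your proposal does not invoke it, and without it the argument fails.
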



\begin{proof} 
When  $\ell=1$, 
$
|(\RH^{-\e})_1| = | - \e \RH_0^{-\e-1} \RH_1 | \leq  \RH_0^{-\e-1}. 
$
When $\ell=2$ it is easy to see from~\eqref{faaR} that
\begin{align}
(\RH^{-\e})_2 = -\e \RH_0^{-\e}\left(\RH_0^{-1}R_2 -(\e+1)\RH_0^{-2}\RH_1^2\right).
\end{align}
In particular, $\lv(\RH^{-\e})_2 \rv \le  c m^{-\e-2}\left(R_2 + \RH_1^2\right)\le c m^{-\e-2}\left(\frac{C^{2-\alpha}}{2^3} + M^2\right) 
\le \hat C \left(\frac{C^{2-\alpha}}{2^3} + \frac1{2^2}\right)$ for some universal constant $\hat C$ and the claim is thus clear for $\ell=2$.
For $\ell\geq3$, we rewrite $(\RH^{-\e})_\ell$ in the form 
\begin{align}
(\RH^{-\e})_\ell = -\e \RH_0^{-\e-1} \RH_\ell + \RH_0^{-\e} \sum_{m=2}^\ell \frac{1}{\RH_0^m}\sum_{\pi(\ell,m)} (- \e)_m \frac{1}{\lambda_1 ! \dots \lambda_\ell !} {\RH_1}^{\lambda_1}\dots 
{\RH_\ell}^{\lambda_\ell}. \label{E:RETADECOMP}
\end{align}
Now clearly, by the inductive assumption
\be
\lv -\e \RH_0^{-\e-1} \RH_\ell  \rv \le \hat C \frac{C^{\ell-\alpha}}{\ell^2}
\ee
for a universal constant $\hat C>0$.
To bound the second summand on the right-hand side of~\eqref{E:RETADECOMP}, we use~\eqref{assumptionR} and  Lemma~\ref{formula1} first to conclude
\[
\begin{split}
\left| {\RH_1}^{\lambda_1}\dots 
{\RH_\ell}^{\lambda_\ell} \right|=\left| \prod_{n=1}^\ell R_n^{\lambda_n} \right| &\le \left(\tfrac{1}{1^3}\right)^{\lambda_1} \left(\tfrac{C^{2-\alpha}}{2^3}\right)^{\lambda_2}  \dots 
\left(\tfrac{C^{\ell-\alpha}}{\ell^3}\right)^{\lambda_\ell} \\
&= C^{(\alpha-1)\lambda_1 + \sum_{i=1}^\ell(i\lambda_i -\alpha \lambda_i)}\left[ \prod_{n=1}^\ell \left( \frac{1}{n^{\frac32}}\right)^{\lambda_n} \right] \left[\prod_{n=1}^\ell \left( \frac{1}{n^{\lambda_n}}\right)^\frac32 \right] \\
&\leq C^{(\alpha-1)m} C^{\ell -\alpha m} c_1^{-m}\left[ \prod_{n=1}^\ell \left( (-1)^{n-1} \binom{\frac12}{n}\right)^{\lambda_n} \right]
\left[\prod_{n=1}^\ell \left( \frac{1}{n^{\lambda_n}}\right)^\frac32\right]
\end{split}
\]
where we have used $(\alpha-1)\lambda_1\leq (\alpha-1)m$ in the third line since $\alpha>1$ and $ \lambda_1\leq m$.  Hence, using $|(- \e)_m| \leq  m! $, we observe that 
\be\label{3.176}
\begin{split}
&\left|  (- \e)_m \frac{1}{\lambda_1 ! \dots \lambda_\ell !} {\RH_1}^{\lambda_1}\dots 
{\RH_\ell}^{\lambda_\ell}\right| \\
&\quad\leq  C^{\ell-m} c_1^{- m } (-1)^{\ell}   \frac{(-1)^m m!}{\lambda_1 ! \dots \lambda_\ell !} \binom{\frac12}{1}^{\lambda_1}\dots 
\binom{\frac12}{\ell}^{\lambda_\ell}  \left[\prod_{n=1}^\ell \left( \frac{1}{n^{\lambda_n}}\right)^\frac32\right]. 
\end{split}
\ee  In turn by recalling \eqref{faaR} and using Lemma \ref{formula1} and Lemma \ref{lem:comb} with $p=\frac32$, we see that   
\be
\begin{split}
&\left| \RH_0^{-\e} \sum_{m=2}^\ell \frac{1}{\RH_0^m}\sum_{\pi(\ell,m)} (- \e)_m \frac{1}{\lambda_1 ! \dots \lambda_\ell !} {\RH_1}^{\lambda_1}\dots 
{\RH_\ell}^{\lambda_\ell}\right| \\
&\leq  \RH_0^{-\e} \frac{C^\ell (-1)^\ell }{(c_1C\RH_0)^2} \sum_{m=2}^\ell \sum_{\pi(\ell,m)} \left[ \frac{1}{(c_1C\RH_0)^{m-2}} \prod_{n=1}^\ell \left( \frac{1}{n^{\lambda_n}}\right)^\frac32\right] \frac{(-1)^m m!}{\lambda_1 ! \dots \lambda_\ell !} \binom{\frac12}{1}^{\lambda_1}\dots 
\binom{\frac12}{\ell}^{\lambda_\ell} \\
&\leq  \RH_0^{-\e} \frac{C^\ell }{(c_1C\RH_0)^2}\frac{c_4}{\ell^\frac32} (-1)^\ell 2 (\ell+1) \binom{\frac12}{\ell+1} \\
&\leq  \RH_0^{-\e-1} \tfrac{2c_2c_4}{c_1^2}\tfrac{C^{\ell-2}}{\ell^\frac32(\ell+1)^\frac12} 
\end{split}
\ee
where $C$ is large enough so that \eqref{largeC} holds. 
This proves \eqref{faaRl}. 
\end{proof}


\begin{remark}
In~\eqref{largeC} the constant $4$ in the numerator ensures that $C>\frac{L_0}{c_1 \RH_0}$ for all $x_\ast\in[\xc+\kappa,\xmax]$, 
since for $\k_0>0$ sufficiently small, there exists a constant $0<\delta<1$ such that $\frac1{\RH_0}<3+\delta$ for all $\k\in(0,\k_0]$. 
\end{remark}


\begin{lemma}\label{lem:SVbound} Let $x_\ast\in[\xc+\kappa,\xmax]$ 
and $\alpha\in (1,2)$. Then there exists a constant $C_\ast>0$ such that if $C>C_\ast$ and for any $N\geq3$, the following assumptions hold  
\begin{align}
|\RH_m| \leq \frac{C^{m-\alpha}}{m^3}, \quad 2\leq m\leq N-1, \label{assumptionRm}\\
|\WH_m| \leq \frac{C^{m-\alpha}}{m^3}, \quad 2\leq m\leq N-1, \label{assumptionWm}
\end{align}then we have 
\begin{align}
|\mathcal S_N| &\leq \beta  \frac{C^{N-\alpha}}{N^2}\left[ \frac{1}{C^{\alpha-1}}+ \frac{1}{C^{2-\alpha}} +\frac{1}{C}  \right] ,\label{SNbound}\\
|\mathcal V_N| &\leq \beta   \frac{C^{N-\alpha}}{N^2}\left[ \frac{1}{C^{\alpha-1}}+  \frac{1}{C^{2-\alpha}} + \frac{1}{CN} \right], \label{VNbound}
\end{align}
for some universal constant $\beta>0$. 
\end{lemma}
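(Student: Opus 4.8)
\textbf{Proof plan for Lemma~\ref{lem:SVbound}.} The statement provides $N$-dependent growth bounds for the source terms $\mathcal S_N,\mathcal V_N$ defined in~\eqref{SN} and~\eqref{VN}, assuming the inductive hypotheses~\eqref{assumptionRm}--\eqref{assumptionWm} on the lower-order Taylor coefficients. The plan is to expand $\mathcal S_N$ and $\mathcal V_N$ term by term using their explicit definitions, and to bound each summand by combining three ingredients already assembled in the excerpt: (i) the uniform boundedness $|\RH_0|,|\WH_0|,|\RH_1|,|\WH_1|\le M<1$ from~\eqref{E:MBOUND}; (ii) the convolution bounds~\eqref{3.174}, \eqref{indHl}, \eqref{faaRl} for $(\WH^2)_\ell$, $(\RH\WH)_\ell$, $(\RH^2)_\ell$, $H_\ell$, $(\RH^{-\e})_\ell$; and (iii) the elementary convolution-sum estimates of Lemma~\ref{L:A1} (inequalities~\eqref{2.51}--\eqref{2.53}) together with the geometric-sum estimate~\eqref{3.166}.

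First I would treat $\mathcal S_N$. Recall from~\eqref{SN}--\eqref{SN0} that $\mathcal S_N$ is a finite sum of three structurally distinct pieces: a Faà di Bruno-type term $\RH_1\RH_0^{-\e}\sum_{m=2}^N \RH_0^{-m}\sum_{\pi(N,m)}(-\e)_m\frac{1}{\lambda_1!\cdots\lambda_N!}\RH_1^{\lambda_1}\cdots\RH_N^{\lambda_N}$, a term involving convolutions $\sum_{\ell+m=N,\,m\le N-1}\WH_\ell\WH_m$ and $\sum \RH_\ell\WH_m$ multiplied by $\RH_1$, and the term $\tilde{\mathcal S}_N$ from~\eqref{SN0} which is built out of products of $\RH_{m+1}$, $(\RH^{-\e})_\ell$ and $H_\ell$. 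For the Faà di Bruno piece, the estimate~\eqref{3.176} derived in the proof of~\eqref{faaRl} already gives the key pointwise bound on each product $(-\e)_m\lambda_1!^{-1}\cdots\lambda_N!^{-1}\RH_1^{\lambda_1}\cdots\RH_N^{\lambda_N}$ in terms of $C^{N-m}c_1^{-m}$ times binomial coefficients; summing over $m$ and $\pi(N,m)$ via Lemma~\ref{formula1} and Lemma~\ref{lem:comb} (with $p=\frac32$) yields a bound of order $C^{N-2}/N^{3/2}(N+1)^{1/2}\lesssim C^{N-2}/N^2$, which is absorbed into the $\frac{1}{C}\cdot\frac{C^{N-\alpha}}{N^2}$ term (since $C^{N-2}=C^{-(2-\alpha)}C^{N-\alpha}\le C^{-1}C^{N-\alpha}$ for $\alpha\le 1$... actually here one uses $C^{N-2}\le C^{-1}C^{N-\alpha}$ when $\alpha\le 1$; since $\alpha\in(1,2)$ we instead get $C^{N-2}=C^{\alpha-2}C^{N-\alpha}$, matching the $\frac{1}{C^{2-\alpha}}$ term). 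For the convolution terms $\sum_{\ell+m=N}\WH_\ell\WH_m$ etc., I split off the extreme indices $\ell\in\{0,1\}$ or $m\in\{0,1\}$ (bounded by $M$), producing contributions of size $M\,C^{N-1-\alpha}/(N-1)^3\lesssim C^{N-1-\alpha}/N^3$, which after multiplication by $|\RH_1|\le M$ and using $C^{N-1-\alpha}\le C^{-(\alpha-1)}C^{N-\alpha}/... $ contributes to the $\frac{1}{C^{\alpha-1}}$ term; the interior sum $\sum_{m=2}^{N-2}$ is controlled by $C^{N-2\alpha}\sum\frac{1}{m^3(N-m)^3}\lesssim C^{N-2\alpha}/N^3$ via~\eqref{2.51}, again matching the $\frac{1}{C^{\alpha-1}}$ scale after rewriting $C^{N-2\alpha}=C^{-\alpha}C^{N-\alpha}\le C^{-(\alpha-1)}C^{N-\alpha}$. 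For $\tilde{\mathcal S}_N$ in~\eqref{SN0}, each triple or double convolution of the form $\sum(m+1)\RH_{m+1}H_\ell$, $\sum(m+1)\RH_{m+1}(\RH^{-\e})_\ell$, $\sum\RH_\ell(\WH+\k)_m(\RH-\WH)_n$ is handled analogously: one inserts~\eqref{indHl}, \eqref{faaRl}, \eqref{3.174}, separates the extreme indices, and invokes~\eqref{2.51}--\eqref{2.53} plus~\eqref{3.166}; the presence of the extra factor $(m+1)$ is harmless because $(m+1)\cdot m^{-3}\le C' m^{-2}$ and Lemma~\ref{L:A1} supplies the corresponding $N^{-2}$ bounds. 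Collecting all pieces gives~\eqref{SNbound}.

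The estimate~\eqref{VNbound} for $\mathcal V_N$ is entirely parallel, using~\eqref{VN}--\eqref{VN0}. The only new structural features are: the factor $(m+4)$ replacing $(m+1)$ (again harmless); the appearance of the coefficient $(\WH_1-1+3\WH_0)$ multiplying the Faà di Bruno and convolution blocks, which is uniformly bounded by a constant depending only on $M$; and the additional alternating-sign convolutions $\sum_{\ell+m=N}(\RH^{-\e})_\ell(-1)^m$ and $\sum_{\ell+m+n=N}\WH_n H_\ell(-1)^m$ coming from the expansion of $\frac{(1-3\WH)B}{1+\dz}$. Since $|(-1)^m|=1$, these sums are dominated by the same convolution estimates; the extra factor $\frac{1}{N}$ improvement in the last bracket of~\eqref{VNbound} comes precisely from the fact that the dangerous $\RH_{N}, \WH_N$-free part of $\tilde{\mathcal V}_N$ involving $(\RH^{-\e})_\ell$ contributes at order $C^{N-2}/N^2$ rather than $C^{N-\alpha}/N^2$, i.e. with an extra $1/N$ relative to the naive count, which one tracks carefully through~\eqref{faaRl}. \textbf{The main obstacle} will be the bookkeeping of the multitude of terms in $\mathcal S_N,\mathcal V_N$ and verifying that each one lands in one of the three advertised buckets $C^{-(\alpha-1)}$, $C^{-(2-\alpha)}$, $C^{-1}$ (or $C^{-1}N^{-1}$) with the correct power of $N$; no single estimate is deep, but the combinatorial enumeration of index ranges and the repeated splitting of extreme versus interior indices must be done with care to avoid losing a factor of $N$ or a power of $C$. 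The proof is, as the paper indicates, analogous to the corresponding argument in~\cite{GHJS2021}, and I would organize it so that the Faà di Bruno blocks, the pure convolution blocks, and the mixed $H_\ell$-blocks are each handled by a single reusable sublemma.
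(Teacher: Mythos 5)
Your plan matches the paper's proof essentially exactly: the paper also decomposes $\mathcal S_N$ and $\mathcal V_N$ into the Faà di Bruno block, the pure convolution block, and the blocks in $\tilde{\mathcal S}_N$, $\tilde{\mathcal V}_N$, handles each by inserting the inductive hypotheses together with~\eqref{3.174}, \eqref{indHl}, \eqref{faaRl}, and closes via Lemmas~\ref{L:A1}, \ref{formula1}, \ref{lem:comb} and the geometric-sum bound~\eqref{3.166}, splitting extreme from interior indices throughout. One small inaccuracy in your narrative: the extra $1/N$ in the last bracket of~\eqref{VNbound} relative to~\eqref{SNbound} is not explained by the Faà di Bruno block ``contributing $C^{N-2}/N^2$ rather than $C^{N-\alpha}/N^2$'' (that is a difference in powers of $C$, not of $N$); it is simply a byproduct of the bookkeeping in the remaining blocks of $\tilde{\mathcal V}_N$ versus $\tilde{\mathcal S}_N$ and is not load-bearing, since both brackets are ultimately made small by taking $C$ large and $\k$ small.
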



\begin{proof} 
We start with \eqref{SNbound}. Recall \eqref{SN}. First we show 
\begin{align}
\left| \RH_1\RH_0^{-\e} \sum_{m=2}^N \frac{1}{\RH_0^m}\sum_{\pi(N,m)} (- \e)_m \frac{1}{\lambda_1 ! \dots \lambda_N !} {\RH_1}^{\lambda_1}\dots 
{\RH_N}^{\lambda_N}\right|\lesssim \frac{C^{N-2}}{N^2}. 
\label{SN1}
\end{align}
Note that $\lambda_N=0$ and thus \eqref{SN1} does not depend on $\RH_N$. As in \eqref{3.176}, using \eqref{assumptionRm} and Lemma \ref{formula1}, we have 
\[
\begin{split}
&\left|  (- \e)_m \frac{1}{\lambda_1 ! \dots \lambda_N!} {\RH_1}^{\lambda_1}\dots 
{\RH_N}^{\lambda_N}\right| \\
&\quad\leq C^{N-m} c_1^{- m } (-1)^{N}   \frac{(-1)^m m!}{\lambda_1 ! \dots \lambda_N !} \binom{\frac12}{1}^{\lambda_1}\dots 
\binom{\frac12}{N}^{\lambda_N}  \left[\prod_{n=1}^N \left( \frac{1}{n^{\lambda_n}}\right)^\frac32\right]. 
\end{split}
\]
Hence by using \eqref{E:comb3} of Lemma \ref{lem:comb}, the left-hand side of \eqref{SN1} is bounded by 
\[
\begin{split}
&\text{LHS of }\eqref{SN1} \\
&\leq 
\RH_1\RH_0^{-\frac{2k}{1-k}}  C^N \sum_{m=2}^N \sum_{\pi(N,m)} \frac{(-1)^N}{(c_1C\RH_0)^m} \left[\prod_{n=1}^N \left( \frac{1}{n^{\lambda_n}}\right)^\frac32\right]  \frac{(-1)^m m!}{\lambda_1 ! \dots \lambda_l !} \binom{\frac12}{1}^{\lambda_1}\dots 
\binom{\frac12}{N}^{\lambda_N}  \\
& \leq \RH_1\RH_0^{-\frac{2k}{1-k}}   \frac{C^N}{(c_1C\RH_0)^2}\frac{c_5}{N^\frac32}(-1)^N 2(N+1) \binom{\frac12}{N+1} \\
&\leq  \RH_1\RH_0^{-\frac{2k}{1-k}-2}    \frac{2c_2c_5}{c_1^2} \frac{C^{N-2}}{N^\frac32 (N+1)^\frac12}
\end{split}
\]
which shows \eqref{SN1}. 
To estimate the second term on the right-hand side of~\eqref{SN}, we use \eqref{assumptionRm}, \eqref{assumptionWm} and \eqref{2.51} to obtain 
\be\label{SN2}
\begin{split}
&\left| x_\ast^2 \RH_1 \Big[   (1-\k) \sum_{\substack{\ell+m =N\\ 1\leq m\leq N-1}} \WH_\ell \WH_m+  \sum_{\substack{\ell+m =N\\ 1\leq m\leq N-1}} 4\k \RH_\ell \WH_m\Big]\right| \\
&\leq \left| x_\ast^2 \RH_1 \Big[ 2(1-\k)\WH_1\WH_{N-1} + 4\k(\RH_1W_{N-1}+\RH_{N-1}\WH_1) \Big] \right| \\
 &\quad+ \left| x_\ast^2 \RH_1 \Big[  (1-\k) \sum_{m=2}^{N-2}\WH_{N-m} \WH_m + \sum_{m=2}^{N-2} 4\k \RH_{N-m} \WH_m\Big]\right| \\
&\lesssim (1+\k) \frac{C^{N-1-\alpha}}{N^{3}} + (1+\k) \sum_{m=2}^{N-2} \frac{C^{N-2\alpha}}{(N-m)^3 m^3}\lesssim (1+\k)\frac{C^{N-1-\alpha}}{N^3}. 
\end{split}
\ee
Finally, we estimate the last term on the right-hand side of~\eqref{SN} - the expression $\tilde{\mathcal S}_N$ - given by~\eqref{SN0}.
To treat the first line of \eqref{SN0}, by \eqref{assumptionRm} and \eqref{faaRl}, we obtain 
\be\label{SN3}
\begin{split}
\left| \sum_{\substack{\ell+m=N \\ 1\leq m\leq N-2}} (m+1) \RH_{m+1} (\RH^{-\e})_\ell \right|&\lesssim
 \sum_{\substack{\ell+m=N \\ 1\leq m\leq N-2}} \frac{C^{m+1-\alpha}}{(m+1)^2}\left(\frac{C^{\ell -\alpha}}{\ell^3} + \frac{C^{\ell -2}}{\ell^2} \right) \\
&\lesssim  C^{N+1-2\alpha}\sum_{\substack{\ell+m=N \\ 1\leq m\leq N-2}} \frac{1}{m^2\ell^3} + 
{C^{N-1-\alpha}}\sum_{m=1}^{[\frac{N}{2}]} \frac{1}{m^2}\frac{1}{(N-m)^2}\\
&\lesssim   \frac{C^{N-\alpha}}{N^2} \left(\frac1{C^{\alpha-1}}+ \frac1{C}\right).
\end{split}
\ee 
For the first term of the second line of~\eqref{SN0}, by \eqref{assumptionRm},  \eqref{indHl}, \eqref{2.54}
\be\label{SN4}
\begin{split}
\left| x_\ast^2 \sum_{\substack{l+m=N\\ 1\leq m\leq  N-2}} (m+1) \RH_{m+1} H_l  \right| &\lesssim \sum_{\substack{\ell+m=N \\ 1\leq m\leq N-2}} \frac{C^{m+1-\alpha}}{(m+1)^2}\frac{C^{\ell-\alpha}}{(\ell+1)^3}\\
& \lesssim \frac{C^{N+1-2\alpha}}{N^2}. 
\end{split}
\ee
The other two terms of the second line can be estimated analogously. 
For the first term of the third line of~\eqref{SN0}, by \eqref{assumptionRm}, \eqref{assumptionWm}, \eqref{3.174}, \eqref{2.51} we obtain
\be\label{SN5}
\begin{split}
&\left| 2x_\ast^2(1-\k)\sum_{\substack{\ell+m+n=N\\ 1\leq n\leq N-1}} \RH_\ell ( \WH + \k)_m ( \RH- \WH)_n   \right| \\
&\lesssim \left| ( \RH(\WH+\k) )_{N-1} (\RH_1-\WH_1)  \right| + \left| ( \RH(\WH+\k) )_{1} (\RH_{N-1}-\WH_{N-1})  \right| \\
& \quad+  \left| \sum_{n=2}^{N-2}( \RH(\WH+\k) )_{N-n} (\RH-\WH)_{n}  \right| \\
&\lesssim  \frac{C^{N-1-\alpha}}{(N-1)^3} +  \frac{C^{N-1-\alpha}}{(N-1)^3}+   \sum_{n=2}^{N-2}\frac{C^{N-n-\alpha}}{(N-n)^3}\frac{C^{n-\alpha}}{n^3}\\
& \lesssim \frac{C^{N-1-\alpha}}{N^3} + \frac{C^{N-2\alpha}}{N^3}.
\end{split}
\ee
The bound on the remaining term in the third line of~\eqref{SN0} is entirely analogous. Collecting all the bounds \eqref{SN1}, \eqref{SN2}, \eqref{SN3}, \eqref{SN4}, \eqref{SN5}, we conclude 
\be
|\mathcal S_N|\lesssim \frac{C^{N-\alpha}}{N^2} \left[ \frac{1}{C^{\alpha-1}}+ \frac{1}{CN} +\frac{1}{C^\alpha N}+ \frac1C\right],
\ee 
which leads to \eqref{SNbound} since $1<\alpha<2$ and $C>1$. 

To prove \eqref{VNbound}, we first recall \eqref{VN}. Two first two terms on the right-hand side of~\eqref{VN} have the same structure as the first two terms in~\eqref{SN} and hence, by using the same uniform bound \eqref{E:MBOUND}, they can be bounded analogously to~\eqref{SN1} and~\eqref{SN2}. It remains to estimate $\tilde{\mathcal V}_N$ given in \eqref{VN0}. The first, second and sixth lines of \eqref{VN0} have the same formal structure as the first, second and third lines of \eqref{SN0} and hence we obtain the same bounds as in~\eqref{SN3},~\eqref{SN4}, and~\eqref{SN5} by using \eqref{assumptionWm} in place of \eqref{assumptionRm} when necessary. We focus on the third, fourth and fifth lines of \eqref{VN0}. For the first term of the third line, by using \eqref{faaRl} 
\be
\begin{split}
 \left\vert \sum_{\substack{\ell+m=N \\ 1\leq m\leq N}} (\RH^{-\e})_\ell (-1)^m \right\vert& \lesssim 
 \RH_0^{-\e} + \left| (\RH^{-\e})_1\right| + \sum_{\ell=2}^{N-1} \left| (\RH^{-\e})_\ell \right| \\
 & \lesssim 1 + \hat C  \sum_{\ell=2}^{N-1} \left(\frac{C^{\ell -\alpha}}{\ell^3} + \frac{C^{\ell -2}}{\ell^2} \right)\\
 &\lesssim \frac{C^{N-1-\alpha}}{N^3} +   \frac{C^{N-3}}{N^2}
\end{split}
\ee where we have used $N^3\lesssim C^{N-1-\alpha}$ for all $N\geq 3$ and \eqref{3.166} with $q=2,3$ and $C\geq 2$.  
For the second term of the third line, we isolate $m=N$, $m=N-1$ cases, $\ell=0, 1$ cases, further $\ell=N-m$ and $\ell=N-m-1$, and use \eqref{E:MBOUND}, \eqref{faaRl}, \eqref{assumptionWm}   
\be\label{3.196}
\begin{split}
&\left|  3\sum_{\substack{\ell+m+n=N \\ 1\leq m\leq N}} \WH_n (\RH^{-\e})_\ell  (-1)^m \right|
\lesssim 1+ \k + \sum_{m=1}^{N-2} \sum_{\ell=0}^{N-m} \left| \WH_{N-m-\ell} (\RH^{-\e})_\ell \right| \\
&\lesssim 1+  \sum_{m=1}^{N-2} |\WH_{N-m}| +   \sum_{m=1}^{N-2} |\WH_{N-m-1}| + \hat C   \sum_{m=1}^{N-2} \sum_{\ell=2}^{N-m} \left| \WH_{N-m-\ell} \right| \left(\frac{C^{\ell-\alpha}}{\ell^3}+\frac{C^{\ell-2}}{\ell^2}\right) \\
&\lesssim 1+ \sum_{m=1}^{N-2} \frac{C^{N-m-\alpha}}{(N-m)^3} +  \sum_{m=1}^{N-3}
 \frac{C^{N-m-1-\alpha}}{(N-m-1)^3} \\
 &\quad +   \sum_{m=1}^{N-2}\left[ \frac{C^{N-m-\alpha}}{(N-m)^3} + \frac{C^{N-m-2}}{(N-m)^2}\right]
  + \sum_{m=1}^{N-2}\left[ \frac{C^{N-m-1-\alpha}}{(N-m-1)^3} + \frac{C^{N-m-3}}{(N-m-1)^2}\right] \\
 &\quad  + \sum_{m=1}^{N-2} \sum_{\ell=2}^{N-m-2} \frac{C^{N-m-\ell-\alpha}}{(N-m-\ell)^3} \left(\frac{C^{\ell-\alpha}}{\ell^3}+\frac{C^{\ell-2}}{\ell^2}\right).
 \end{split}
 \ee
 To bound the summations appearing in the third and the fourth line of~\eqref{3.196}, we apply \eqref{3.166} six times with $\ell = N-m$ 
 and additionally we use $ 1\lesssim \frac{C^{N-1-\alpha}}{N^3} $ to bound them by
 \be
 S_{1L} \lesssim  \frac{C^{N-1-\alpha}}{N^3} +   \frac{C^{N-3}}{N^2}.
 \ee 
 To bound the last line of~\eqref{3.196} we use \eqref{2.54} and  \eqref{3.166} to bound it by
 \be
 \begin{split}
& \sum_{m=1}^{N-2} C^{N-m-\alpha} \sum_{\ell=2}^{N-m-2} \frac{1}{(N-m-\ell)^3} \frac{1}{\ell^3}+\sum_{m=1}^{N-2} C^{N-m-2-\alpha} \sum_{\ell=2}^{N-m-2} \frac{1}{(N-m-\ell)^3} \frac{1}{\ell^2}\\
 & \lesssim    \sum_{m=1}^{N-2}  \frac{ C^{N-m-\alpha}}{(N-m)^3} + \sum_{m=1}^{N-2} \frac{ C^{N-m-2-\alpha}}{ (N-m)^2 }\lesssim \frac{C^{N-1-\alpha}}{N^3} +  \frac{ C^{N-3-\alpha}}{N^2}. 
\end{split}
\ee 
For the fourth line of \eqref{VN0} we only present the details for the first term as the other two terms are estimated analogously. We first isolate $m=N$ and $m=N-1$ and then use \eqref{indHl}, \eqref{3.166}, and \eqref{2.51} to obtain 
\be
\begin{split}
\left| x_\ast^2  \sum_{\substack{\ell+m=N \\ 1\leq m\leq N}} H_\ell (-1)^m \right|& \lesssim 1 + \sum_{\ell=2}^{N-1} |H_\ell |\lesssim 1+  \sum_{\ell=2}^{N-1} \frac{C^{\ell-\alpha}}{\ell^3} \lesssim  \frac{C^{N-1-\alpha}}{N^3}. 
\end{split}
\ee
For the fifth line of \eqref{VN0} we only present the detail for the first two terms, as the estimate for the remaining two terms in the fifth line of the right-hand side of~\eqref{VN0} is analogous and strictly easier. By~\eqref{E:MBOUND}, \eqref{assumptionWm}, \eqref{indHl} we have
\be
\begin{split}
\left| 3x_\ast^2 \sum_{\substack{\ell+n=N\\1\leq  n\leq N-1}} \WH_n H_\ell  \right|& \lesssim |\WH_{N-1} H_1| + |\WH_1 H_{N-1} | + \sum_{\ell=2}^{N-2} |\WH_{N-\ell} H_\ell| \\
&\lesssim  \frac{C^{N-1-\alpha}}{(N-1)^3} +  \sum_{\ell=2}^{N-2} \frac{C^{N-\ell-\alpha}}{(N-\ell)^3}\frac{C^{\ell -\alpha}}{\ell^3}\\
&\lesssim  \frac{C^{N-1-\alpha}}{(N-1)^3} + \frac{C^{N-2\alpha}}{N^3}. 
\end{split}
\ee
The second term of the  fifth line of \eqref{VN0} can be estimated in a similar way as in \eqref{3.196} by using \eqref{indHl} instead of \eqref{faaRl}: 
\be
\begin{split}
&\left|  3x_\ast^2 \sum_{\substack{\ell+m+n=N\\1\leq  m\leq N}} \WH_n H_\ell (-1)^m \right| \lesssim 1+ \sum_{m=1}^{N-2} \sum_{\ell=0}^{N-m}|\WH_{N-m-\ell} H_\ell| \\
&\lesssim 1+ \sum_{m=1}^{N-2} |\WH_{N-m}| + \sum_{m=1}^{N-3}  |\WH_{N-m-1}|+ \sum_{m=1}^{N-2} \sum_{\ell=2}^{N-m}|\WH_{N-m-\ell} H_\ell|  \\
&\lesssim 1+ \sum_{m=1}^{N-2}\frac{C^{N-m-\alpha}}{(N-m)^3}+ \sum_{m=1}^{N-3}\frac{C^{N-m-1-\alpha}}{(N-m-1)^3}
+ \sum_{m=1}^{N-2} \left[ \frac{C^{N-m-\alpha}}{(N-m)^3}+   \frac{C^{N-m-1-\alpha}}{(N-m-1)^3}\right] \\
&\quad+ \sum_{m=1}^{N-2} \sum_{\ell=2}^{N-m-2}\frac{C^{N-m-\ell-\alpha}}{(N-m-\ell)^3 } \frac{C^{\ell-\alpha}}{\ell^3} \\
&\lesssim \frac{C^{N-1-\alpha}}{N^3} + \frac{C^{N-1-2\alpha}}{N^3}  \\
\end{split}
\ee where we have used \eqref{3.166} and \eqref{2.51} as before. Combining all the estimates, we deduce the desired bound \eqref{VNbound}. 
\end{proof}


\begin{lemma}\label{lem:InductionFinal} Let $x_\ast\in[\xc+\kappa,\xmax]$ 
and $\alpha\in (1,2)$. Consider $(\RH_0,\WH_0)$, $(\RH_1,\WH_1)$ constructed in Lemma \ref{R0W0} and Lemma \ref{R1W1}, and let $(\RH_N,\WH_N)$ be given recursively by \eqref{recR} and \eqref{recW}. There exist a constant $C>1$ and $\k_0>0$ such that for all $0<\k < \k_0$ and all $x_\ast \in (\xmin,\xmax)$  
\begin{align}
|\RH_N|\leq \frac{C^{N-\alpha}}{N^3},\label{E:inductionR} \\
|\WH_N|\leq \frac{C^{N-\alpha}}{N^3},\label{E:inductionW}
\end{align}
for all $N\geq 2$.  
\end{lemma}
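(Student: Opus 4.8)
The plan is to prove \eqref{E:inductionR}--\eqref{E:inductionW} by strong induction on $N$. The base cases $N=0,1$ are handled by the uniform bounds \eqref{E:MBOUND} from Lemmas~\ref{R0W0} and~\ref{R1W1} (after absorbing the $\alpha$-shift into the constant $C$), and the case $N=2$ follows from the explicit asymptotics \eqref{coeff_R2}--\eqref{coeff_W2}, again provided $C$ is chosen large enough so that $|\RH_2|,|\WH_2|\le C^{2-\alpha}/2^3$; this is where the constraint $\alpha\in(1,2)$ first enters, since we need $C^{2-\alpha}\ge1$. For the inductive step, fix $N\ge3$ and assume \eqref{assumptionRm}--\eqref{assumptionWm} hold for all $2\le m\le N-1$. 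We then invoke Lemma~\ref{lem:SVbound} to obtain the source-term bounds \eqref{SNbound}--\eqref{VNbound} for $\mathcal S_N$ and $\mathcal V_N$, and combine these with the matrix inversion estimates of Lemma~\ref{lem:detA} --- in the convenient form of the consequence bounds \eqref{recR1}--\eqref{recW1}, namely $|\RH_N|\le \frac{\beta_0}{N}(|\mathcal S_N|+\frac{\k}{N}|\mathcal V_N|)$ and $|\WH_N|\le \frac{\beta_0}{N}(|\mathcal V_N|+\frac1N|\mathcal S_N|)$.

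Carrying this out, from \eqref{SNbound} and \eqref{recR1} we get
\[
|\RH_N|\le \frac{\beta_0\beta}{N}\cdot\frac{C^{N-\alpha}}{N^2}\left(\frac1{C^{\alpha-1}}+\frac1{C^{2-\alpha}}+\frac1C\right) + \frac{\beta_0\beta\k}{N^2}\cdot\frac{C^{N-\alpha}}{N^2}\left(\frac1{C^{\alpha-1}}+\frac1{C^{2-\alpha}}+\frac1{CN}\right),
\]
and an entirely analogous bound for $|\WH_N|$ coming from \eqref{VNbound} and \eqref{recW1}. The key point is that the right-hand side is of the form $\frac{C^{N-\alpha}}{N^3}\cdot(\text{something that tends to }0\text{ as }C\to\infty)$, because each of the terms $C^{-(\alpha-1)}$, $C^{-(2-\alpha)}$, $C^{-1}$ is a negative power of $C$ (here again $1<\alpha<2$ is essential: both $\alpha-1$ and $2-\alpha$ are strictly positive). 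Hence there exists a threshold $C_\ast=C_\ast(x_\ast,\alpha,\beta_0,\beta)$ such that for all $C>C_\ast$ the prefactor is $\le 1$, which closes the induction: $|\RH_N|\le C^{N-\alpha}/N^3$ and $|\WH_N|\le C^{N-\alpha}/N^3$. One must also check that $C$ can be chosen to simultaneously exceed the lower bound \eqref{largeC} required by Lemma~\ref{lem:SVbound} (and the threshold implicit in Lemma~\ref{lem:detA} for $\det\mathcal A_N$ to be invertible), but all of these are finitely many $\k$-uniform lower bounds on $C$ for $x_\ast\in[\xc+\kappa,\xmax]$, so taking $C$ larger than the maximum of them all suffices; correspondingly $\k_0$ may need to be shrunk so that Lemmas~\ref{lem:detA}, \ref{lem:SVbound} apply uniformly.

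The main obstacle --- though it has already been isolated into the preceding lemmas --- is the bookkeeping in Lemma~\ref{lem:SVbound}: one must verify that every one of the many convolution sums making up $\mathcal S_N$ and $\mathcal V_N$ (the Faà di Bruno contributions $(\RH^{-\e})_\ell$, the quadratic terms $H_\ell$, and the triple convolutions in $\tilde{\mathcal S}_N,\tilde{\mathcal V}_N$) produces a bound of the shape $C^{N-\alpha}N^{-2}$ times a negative power of $C$, with constants independent of $N$. The combinatorial inputs for this are Lemmas~\ref{L:A1}--\ref{lem:comb} (the summation identities \eqref{2.51}--\eqref{2.53}, \eqref{3.166}, and the partition estimates \eqref{E:comb1}--\eqref{E:comb3}), together with the key gain $C^{(\alpha-1)m}C^{\ell-\alpha m}=C^{\ell-m}$ from the $\alpha$-weighted Ansatz which makes the $m$-sums over partitions geometrically convergent. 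Since the statement of Lemma~\ref{lem:InductionFinal} presupposes all of these, the actual write-up of the present proof reduces to: (i) dispatch the base cases; (ii) quote \eqref{SNbound}--\eqref{VNbound}, \eqref{recR1}--\eqref{recW1}; (iii) assemble the displayed inequality above; (iv) choose $C$ large. The only genuinely delicate point remaining at this level is confirming that the required largeness of $C$ is uniform over $x_\ast\in[\xc+\kappa,\xmax]$ and over $\k\in(0,\k_0]$, which follows from the $\k$-uniform bounds $c<\RH_0<1/c$, $|\RH_1|,|\WH_1|\le M$ and $\det\mathcal A_N=O(N^2)$ with $\k$-independent implicit constants established earlier.
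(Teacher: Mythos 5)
Your proof is correct and follows essentially the same path as the paper: strong induction in $N$ starting from $N=2$ (via the explicit asymptotics \eqref{coeff_R2}--\eqref{coeff_W2}), invoking the source bounds of Lemma~\ref{lem:SVbound} together with \eqref{recR1}--\eqref{recW1}, and closing by noting that the prefactor is a sum of strictly negative powers of $C$ (using $1<\alpha<2$) so that $C$ can be chosen large enough uniformly in $x_\ast\in[\xc+\kappa,\xmax]$ and $\k\le\k_0$. The remarks about reconciling the various lower thresholds on $C$ (e.g.\ \eqref{largeC}) and the $\k$-uniformity of the constants are also what the paper relies on implicitly.
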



\begin{proof} The proof is based on induction on $N$. When $N=2$, it is clear that there exists $C_0=C_0(\alpha) >1$ such that for any $C> C_0$ the bound 
\be
|\RH_2|,  \ |\WH_2| \leq \frac{C^{2-\alpha}}{2^3}
\ee
holds true for any $x_\ast\in[\xc+\kappa,\xmax]$. 
Fix an $N\geq3$ and suppose the claim is true for all $2\leq m\leq N-1$. Then \eqref{assumptionRm} and \eqref{assumptionWm} are satisfied and therefore by Lemma \ref{lem:SVbound} we conclude that \eqref{SNbound} and \eqref{VNbound} hold for all $C>C_\ast$.  Together with \eqref{recR1} and \eqref{recW1}, those bounds lead to 
\begin{align*}
|\RH_N|&\leq 
\beta_0\beta  \left( 1+\frac{\k}{N} \right)\left[ \frac{1}{C^{\alpha-1}} +\frac{1}{CN} +\k \right] \frac{C^{N-\alpha}}{N^3} \\
&\leq \beta_0\beta \left( 1+\frac{\k}{3} \right)\left[ \frac{1}{C^{\alpha-1}} +\frac{1}{3C} +\k \right] \frac{C^{N-\alpha}}{N^3}  =: c_1 \frac{C^{N-\alpha}}{N^3}, \\
|\WH_N|&\leq \beta_0\beta  \left( 1+\frac{1}{N} \right)\left[ \frac{1}{C^{\alpha-1}} +\frac{1}{CN} +\k \right] \frac{C^{N-\alpha}}{N^3}\\
&\leq \frac43 \beta_0\beta \left[ \frac{1}{C^{\alpha-1}} +\frac{1}{3C} +\k \right] \frac{C^{N-\alpha}}{N^3}  =: c_2 \frac{C^{N-\alpha}}{N^3}. 
\end{align*}
It is now clear that since $\alpha>1$ we can choose $C>C_\ast,C_0$ sufficiently large and $\k<\k_0$ with $\k_0>0$ sufficiently small so that $c_1,c_2<1$  and hence \eqref{E:inductionR} and \eqref{E:inductionW} hold true. 
\end{proof}


\section{Null-geodesic flow in the RLP-spacetime}\label{A:NNG}



We shall carry out the analysis of nonradial null-geodesics (NNG) in the comoving coordinates $(\tau,R,\theta,\phi)$. Partial analysis of simple NNG-s in Schwarzschild coordinates
was already carried out in~\cite{OP1990} and a further analysis of non-spacelike geodesics was done in \cite{JoDw1992}. A related problem for the self-similar dust collapsing clouds was studied in detail in~\cite{OrSaZa}. 
%

It is convenient to introduce 
\be
z:=  \frac{\sqrt{\k}\tau}{R} \left( = -\frac{1}{y} \right), \quad s:=- \log R, 
\ee
so that 
\begin{align}
z_{\mathcal N} := - \frac1{y_{\mathcal N}}, \ \ z_1:=-\frac1{y_1},
\end{align}
correspond to the boundary of the backward light cone $\mathcal N$ ($y_{\mathcal N}= Y_{\mathcal N}^{-1-\e}$) and the ``first" outgoing null-geodesic $\mathcal B_1$ ($y_1=-|Y_1|^{-1-\e}$) respectively.
Then $\pa_z \tau = \frac{1}{\sqrt\k}e^{-s}$ and $\pa_s \tau= - \frac{1}{\sqrt\k} z e^{-s}$ and hence $d\tau = \frac{e^{-s}}{\sqrt\k} (dz - z ds)$ and $dR = - e^{-s} ds$. 
It is straightforward to check that the homothetic Killing vector field $\xi= {\tau}{\pa_\tau} + R\pa_R$ takes the form $\xi= -\pa_s$ and the metric $g$ in these coordinates reads
\be\label{Eq: metric in z}
\begin{split}
g
&= e^{-2s} \left[ - \frac{e^{2\mu}}{\k} dz^2 + 2 \frac{e^{2\mu} }{\k} zdzds + \left( e^{2\l } - \frac{e^{2\mu}}{\k}  z^2 \right) ds^2  + \ch^2 d\phi^2 \right].
\end{split}
\ee
We note that the metric is not regular at $z=0$. This corresponds to a harmless coordinate singularity which can be easily avoided by introducing a suitable change of variables, see Section~\ref{S:MAE}.
We shall nevertheless work with~\eqref{Eq: metric in z} to avoid further notational complications and formally limit our analysis to the regions of $\MRLP$ satisfying $\{z<0\}$ and $\{z>0\}$.

Let  $\gamma^\kappa$, $\kappa=z,s,\theta,\phi$ be a null-geodesic and we denote the associated tangent vector by $V^\kappa:=\dot\gamma^\kappa$, $\kappa=z,s,\theta,\phi$. 
We let $\ell$ denote the affine parameter.
Due to spherical symmetry, we have
\be\label{E:PIHALF}
\gamma^\theta(\ell) = \theta(\ell)=\frac\pi2. 
\ee
The remaining geodesic equations read
\be\label{Eq: null geo 0}
\frac{dV_\kappa}{d\ell} = \frac12 (\pa_\kappa g_{\alpha\beta}) V^\alpha V^\beta, \ \ \kappa=z,s,\phi,
\ee 
where $V_\kappa= g_{\kappa\nu} V^\nu$. 


\begin{lemma}[Geodesic flow]
Let $V^\kappa$ be the tangent to a null-geodesic as above. Then, there exist constants $C,L\in\mathbb R$ such that
\begin{align}
- \frac{e^{2\mu}}{\k} (V^z)^2 + 2 \frac{e^{2\mu} }{\k} zV^z V^s + \left( e^{2\l } - \frac{e^{2\mu}}{\k}  z^2 \right) (V^s)^2  + \ch^2 (V^\phi)^2 &=0, \label{Eq: null geo 1}\\
 V_\phi & =L, 
 \label{Eq: angular momentum}\\
V_s& = C. \label{E:CHAM}
\end{align} 
Moreover, the $V^s$-component satisfies the quadratic equation
\be\label{Eq: null geo 3}
e^{\l +\mu} \left(z^2 - \k e^{2\l - 2\mu} \right)\left( \frac{V^s}{e^{2s}} \right)^2 + 2 C\k e^{\l -\mu}\left( \frac{V^s}{e^{2s}} \right) - e^{-\l -\mu }\k C^2  +e^{-\l+\mu } z^2\frac{L^2}{\ch^2}=0. 
\ee
\end{lemma}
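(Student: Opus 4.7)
\medskip

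\noindent\textbf{Proof plan.}
The four claims decouple naturally, and I will treat them in the listed order. For the null condition~\eqref{Eq: null geo 1}, I simply contract the tangent vector $V^\kappa$ with the metric~\eqref{Eq: metric in z} and use $\theta \equiv \pi/2$ from~\eqref{E:PIHALF} to kill the $g_{\theta\theta}$ term; this is an algebraic identity, not a computation requiring dynamics. Conservation of the angular momentum $V_\phi = L$ is immediate from~\eqref{Eq: null geo 0} with $\kappa = \phi$: no metric coefficient in~\eqref{Eq: metric in z} depends on $\phi$, so the right-hand side vanishes.

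For $V_s = C$, the key observation is that $\xi = -\pa_s$ coincides with the homothetic Killing vector field of the RLP spacetime, satisfying $\mathcal L_\xi g = 2g$, or equivalently $\nabla_\mu \xi_\nu + \nabla_\nu \xi_\mu = 2g_{\mu\nu}$. Along any null geodesic with tangent $V$, one computes
\begin{align*}
V^\nu \nabla_\nu (\xi_\mu V^\mu) = V^\mu V^\nu \nabla_\nu \xi_\mu + \xi_\mu V^\nu \nabla_\nu V^\mu = \tfrac12 V^\mu V^\nu(\nabla_\mu \xi_\nu + \nabla_\nu \xi_\mu) = g(V,V) = 0,
\end{align*}
using the geodesic equation for the second equality and nullness for the last. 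Since $\xi^\mu = -\delta^\mu_s$ gives $\xi_\mu V^\mu = -V_s$, this shows $V_s$ is constant along the geodesic, which I label $C$.

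The quadratic equation~\eqref{Eq: null geo 3} is derived from the null condition by eliminating $V^z$ using the conservation law $V_s = C$. Reading off $g_{sz}$ and $g_{ss}$ from~\eqref{Eq: metric in z}, the identity $V_s = g_{sz}V^z + g_{ss}V^s = C$ is linear in $V^z$ and gives
\begin{align*}
V^z - zV^s = \frac{\k e^{-2\mu}}{z}\bigl(Ce^{2s} - e^{2\l}V^s\bigr).
\end{align*}
The key algebraic observation is that the three $dz$-related terms in the null condition combine to a perfect square,
\begin{align*}
-\tfrac{e^{2\mu}}{\k}(V^z)^2 + 2\tfrac{e^{2\mu}}{\k}zV^z V^s - \tfrac{e^{2\mu}z^2}{\k}(V^s)^2 = -\tfrac{e^{2\mu}}{\k}(V^z - zV^s)^2,
\end{align*}
so that, after using $V^\phi = Le^{2s}/\ch^2$ from the angular momentum conservation, the null condition~\eqref{Eq: null geo 1} becomes
\begin{align*}
-\tfrac{\k e^{-2\mu}}{z^2}\bigl(Ce^{2s} - e^{2\l}V^s\bigr)^2 + e^{2\l}(V^s)^2 + \tfrac{L^2 e^{4s}}{\ch^2} = 0.
\end{align*}
Expanding the square, multiplying through by $z^2 e^{2\mu}/e^{4s}$, and then dividing by $e^{\l+\mu}$ produces~\eqref{Eq: null geo 3} after rearrangement. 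The only subtlety is keeping track of the signs and powers of $e^{\l}$, $e^{\mu}$, but no genuinely hard step is involved; once the perfect-square reformulation is in hand, the rest is bookkeeping. I expect no serious obstacle in this lemma — it is a direct consequence of three symmetries (null, spherical, homothetic) plus algebra.
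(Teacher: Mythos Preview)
Your proposal is correct and follows essentially the same route as the paper: the null condition and $V_\phi=L$ are treated identically, your homothetic-Killing argument for $V_s=C$ is the invariant rephrasing of the paper's use of $\pa_s g=-2g$ in~\eqref{Eq: null geo 0}, and your perfect-square reformulation $-\tfrac{e^{2\mu}}{\k}(V^z-zV^s)^2$ is a slight streamlining of the paper's direct substitution of~\eqref{Eq: Vz Vs} into~\eqref{Eq: null geo 2}. The algebra you outline for~\eqref{Eq: null geo 3} checks out.
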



\begin{proof}
Equation~\eqref{Eq: null geo 1} is just the statement that the 4-vector $V^\alpha$ is null.
We now let $\alpha=\phi$ in~\eqref{Eq: null geo 0}. Since $g_{\alpha\beta}$-terms do not depend on $\phi$, from \eqref{Eq: null geo 0} we immediately see that $\frac{dV_\phi}{d\ell}=0$, which implies~\eqref{Eq: angular momentum}.
Since $V_\phi = g_{\phi\phi} V^{\phi} $, we also obtain 
\be\label{Eq: angular momentum upper}
V^\phi = \frac{L}{\ch^2} e^{2s}.
\ee

Next let $\kappa=s$ in  \eqref{Eq: null geo 0}. Since $\pa_s g = - 2 g $ (cf. \eqref{Eq: metric in z}) we deduce that $\frac{dV_s}{d\ell}=0$ from \eqref{Eq: null geo 0} and~\eqref{E:CHAM} follows. 

Since $V_s= g_{ss}V^s + g_{sz}V^z$, we obtain 
\be\label{E:SIMPLEHELP}
 \left( e^{2\l } - \frac{e^{2\mu}}{\k}  z^2 \right)\frac{V^s}{e^{2s}}  +\frac{e^{2\mu}}{\k} z \frac{V^z}{e^{2s}}  =C,
\ee 
which in turn gives
\be\label{Eq: Vz Vs}
z\frac{V^z}{e^{2s}} = \frac{\k C }{ e^{2\mu}}+  \left(z^2 - \k e^{2\l - 2\mu} \right)\frac{V^s}{e^{2s}}. 
\ee
Using \eqref{Eq: angular momentum upper}, we may rewrite  \eqref{Eq: null geo 1} as 
\be\label{Eq: null geo 2}
- \frac{e^{2\mu}}{\k} \left(\frac{V^z}{e^{2s}}\right)
^2 + 2 \frac{e^{2\mu} }{\k} z\left( \frac{V^z}{e^{2s}} \right) \left( \frac{V^s}{e^{2s}}\right) + \left( e^{2\l } - \frac{e^{2\mu}}{\k}  z^2 \right) \left( \frac{V^s}{e^{2s}} \right)^2  + \frac{L^2}{ \ch^2} =0 . 
\ee
Recall that we work in a patch where $z\neq 0$. Plugging~\eqref{Eq: Vz Vs} into \eqref{Eq: null geo 2} to replace $V^z$, we get 
\[
\begin{split}
&- \frac{e^{2\mu}}{\k} \left( \frac{\k C }{z e^{2\mu}}-+ \frac{1}{z} \left(z^2 - \k e^{2\l - 2\mu} \right) \frac{V^s}{e^{2s}} \right)
^2\\
& + 2 \frac{e^{2\mu} }{\k} z\left( \frac{\k C }{z e^{2\mu}} + \frac{1}{z} \left(z^2 - \k e^{2\l - 2\mu} \right) \frac{V^s}{e^{2s}}  \right) \left( \frac{V^s}{e^{2s}}\right) - \frac{e^{2\mu}}{\k}  \left(z^2 - \k e^{2\l - 2\mu} \right) 
 \left( \frac{V^s}{e^{2s}} \right)^2  
+ \frac{L^2}{ \ch^2} =0 
\end{split}
\] which is easily simplified to  give~\eqref{Eq: null geo 3}.
\end{proof}

\begin{remark}
Relations~\eqref{Eq: angular momentum}--\eqref{E:CHAM} are the Hamiltonian constraints associated with the geodesic flow. Equation~\eqref{Eq: angular momentum} expresses
the conservation of angular momentum associated with the Killing vector field $\pa_\phi$, while~\eqref{E:CHAM} is the conservation law generated by the 
homothetic Killing vector field $-\pa_s = \tau\pa_\tau+R\pa_R$.
\end{remark}


\begin{lemma}\label{L:SIMPLENNGCHAR}
Assume that there exists a simple nonradial null-geodesic, i.e. a null-geodesic such that $L\neq0$
and $z(\ell)=\bar z\equiv \text{const}$ for all $\ell\in\mathbb R$. Then $\bar z$ is a critical point of the function
\begin{align}\label{E:MATHCALHDEF}
\mathcal H(z) : =  \left( e^{2\l(z) } - \frac{e^{2\mu(z)}}{\k}  z^2 \right) \ch(z)^{-2}.
\end{align}
\end{lemma}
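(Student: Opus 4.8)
The plan is to write out the geodesic equation for the $z$-component and show that the hypothesis $z(\ell)\equiv\bar z$ forces $\mathcal H'(\bar z)=0$. First I would record the $\kappa=z$ case of the geodesic equation \eqref{Eq: null geo 0}. Since $\gamma^z(\ell)=\bar z$ is constant, we have $V^z=\dot\gamma^z\equiv 0$, and consequently $V_z=g_{z\nu}V^\nu=g_{zs}V^s$; moreover $\frac{dV_z}{d\ell}=\frac{d}{d\ell}(g_{zs}V^s)$ where $g_{zs}=\frac{e^{2\mu(z)}}{\k}ze^{-2s}$ is evaluated along the geodesic. The left-hand side of the $z$-equation thus becomes $\frac{d}{d\ell}\bigl(\tfrac{e^{2\mu(\bar z)}}{\k}\bar z\, e^{-2s}V^s\bigr)$, while the right-hand side is $\frac12(\pa_z g_{\alpha\beta})V^\alpha V^\beta$ with $V^z=0$, i.e. only the $ss$, $\phi\phi$ (and possibly $zs$, but that carries a $V^z$ factor) terms survive.

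The key simplification is to feed in the already-derived conservation laws. From $V_s\equiv C$ in \eqref{E:CHAM} and $V^z\equiv 0$ we get from \eqref{E:SIMPLEHELP} that $\bigl(e^{2\l(\bar z)}-\tfrac{e^{2\mu(\bar z)}}{\k}\bar z^2\bigr)\tfrac{V^s}{e^{2s}}=C$, so $\tfrac{V^s}{e^{2s}}$ is itself a constant along the geodesic. Hence the left-hand side of the $z$-equation simplifies dramatically: $\frac{d}{d\ell}(e^{-2s}V^s)=\frac{d}{d\ell}(e^{2s}\cdot e^{-4s}V^s)$ — more cleanly, since $e^{-2s}V^s=\text{const}$, we get $\frac{d}{d\ell}\bigl(\tfrac{e^{2\mu(\bar z)}}{\k}\bar z\cdot(e^{-2s}V^s)\bigr)=0$. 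So the whole geodesic equation collapses to the statement that the right-hand side $\frac12(\pa_z g_{\alpha\beta})V^\alpha V^\beta$ vanishes at $z=\bar z$. Using $\gamma^\theta=\pi/2$, $V^\theta=0$, $V^z=0$, and the form of the metric \eqref{Eq: metric in z}, this reads
\be
\partial_z\!\left(e^{2\l(z)}-\tfrac{e^{2\mu(z)}}{\k}z^2\right)\Big|_{z=\bar z}(V^s)^2 + \partial_z\!\left(\chi(z)^2\right)\Big|_{z=\bar z}(V^\phi)^2 = 0,
\ee
where I have dropped the common $e^{-2s}$ factor.

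Finally I would substitute $V^\phi=\tfrac{L}{\chi^2}e^{2s}$ from \eqref{Eq: angular momentum upper} and $\tfrac{V^s}{e^{2s}}=\tfrac{C}{e^{2\l}-\k^{-1}e^{2\mu}z^2}$, and use the null constraint \eqref{Eq: null geo 2} with $V^z=0$, which reads $\bigl(e^{2\l(\bar z)}-\tfrac{e^{2\mu(\bar z)}}{\k}\bar z^2\bigr)(e^{-2s}V^s)^2 = -\tfrac{L^2}{\chi^2}$, i.e. $(e^{-2s}V^s)^2 = -\tfrac{L^2}{\chi^2(e^{2\l}-\k^{-1}e^{2\mu}z^2)}$ at $\bar z$. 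Plugging this into the displayed relation and factoring out $-L^2$ (nonzero since $L\neq 0$ for a nonradial geodesic) turns the vanishing condition into exactly $\partial_z\bigl[(e^{2\l(z)}-\k^{-1}e^{2\mu(z)}z^2)\chi(z)^{-2}\bigr]\big|_{z=\bar z}=0$, which is $\mathcal H'(\bar z)=0$ by the definition \eqref{E:MATHCALHDEF}. The only mild subtlety — and the step I would be most careful with — is the bookkeeping in combining the two displayed relations to recognize the quotient-rule structure of $\mathcal H'$; one should verify that the $\partial_z(\chi^2)$ contribution and the $\partial_z$ of the bracket contribution assemble with the correct relative weight, which is guaranteed precisely because the null constraint fixes the ratio $(V^\phi)^2/(V^s)^2 e^{-4s}$ to be $-1/(e^{2\l}-\k^{-1}e^{2\mu}z^2)\chi^2$ times the bracket — the same combination appearing in $\mathcal H$.
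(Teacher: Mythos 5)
Your proof is correct and follows essentially the same route as the paper: write the $\kappa=z$ geodesic equation, use $V^z\equiv 0$ together with the conservation law $V_s=C$ to deduce that $e^{-2s}V^s$ is constant (so the left-hand side vanishes), and then eliminate $(V^\phi)^2$ via the null constraint to reveal the quotient-rule structure of $\mathcal H'$. One point in your favor: you correctly keep the factor $g_{zs}=\frac{e^{2\mu}}{\k}ze^{-2s}$ in $V_z=g_{zs}V^s$, so the conserved quantity is $\frac{e^{2\mu(\bar z)}}{\k}\bar z\,e^{-2s}V^s$; the paper's display \eqref{E:SIMPLEHELP2} appears to drop the $z$ factor, which is immaterial to the argument but is a typo. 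One small slip in your closing remark: the null constraint gives $\frac{(V^\phi)^2}{(V^s)^2}=-\bigl(e^{2\l}-\tfrac{e^{2\mu}}{\k}z^2\bigr)\chi^{-2}=-\mathcal H(\bar z)$, not its reciprocal; this does not affect your main displayed computation, which is correct as written.
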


\begin{proof}
By our assumption, for any simple geodesic we have $V^z=0$.
By letting $\kappa=z$ in~\eqref{Eq: null geo 0}, we then obtain the formula
\begin{align}\label{E:SIMPLEHELP2}
\frac{d}{d\ell} \left(\frac{e^{2\mu}}{\k}e^{-2s}V^s\right) = \frac{e^{-2s}}{2} \pa_z\left( e^{2\l } - \frac{e^{2\mu}}{\k}  z^2 \right) (V^s)^2 +  \frac{e^{-2s}}{2} \pa_z(\ch^2)(V^\phi)^2.
\end{align}
Since $V^z=0$ in~\eqref{E:SIMPLEHELP}, we conclude that $V^se^{-2s}$ is $\ell$-independent and therefore the left-hand side of~\eqref{E:SIMPLEHELP2} vanishes. Letting $V^z=0$ 
in~\eqref{Eq: null geo 1} we have $(V^\phi)^2= -\left(e^{2\l}- \frac{e^{2\mu}}{\k}  z^2 \right) \frac{(V^s)^2}{\ch^2}$. We plug this back into~\eqref{E:SIMPLEHELP2} and the claim follows.
\end{proof}


\begin{lemma}[Monotonicity of $\mathcal H$]\label{L:MATHCALHMONOTONE}
The function $\mathcal H$ defined by~\eqref{E:MATHCALHDEF} is strictly monotone on $(-\infty,0)$. 
\end{lemma}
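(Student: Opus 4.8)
\textbf{Proof proposal for Lemma~\ref{L:MATHCALHMONOTONE}.}
The plan is to rewrite $\mathcal H$ in terms of the self-similar fluid unknowns, where its structure becomes transparent, and then differentiate. First I would recall from~\eqref{E:FDEFLITTLEY} and Remark following it that $F_\k(Y) = Y^2 e^{2\tilde\mu-2\tilde\l}$ and that, by~\eqref{E:SHORTERPROOF} and~\eqref{E:MUTILDE}, $F_\k(Y) - \frac{(1+\k)^2}{(1-\k)^2}$ has the same sign (indeed is a positive multiple modulo $e^{-2\mu}$) as $y^{-2}\big(e^{2\mu(y)-2\l(y)} - y^2\big)$. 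Translating back to the $z$-variable (recall $z = -1/y$, so $z<0$ corresponds to $y>0$), the quantity $e^{2\l(z)} - \frac{e^{2\mu(z)}}{\k} z^2$ is (up to a strictly positive factor) exactly $e^{2\l}y^2 - \frac1\k e^{2\mu}$ expressed in the $z$-chart. Therefore
\[
\mathcal H(z) = \left(e^{2\l} - \frac{e^{2\mu}}{\k} z^2\right)\ch^{-2} = \frac{e^{2\mu}}{\k \ch^2}\left(\k e^{2\l - 2\mu} - z^2\right),
\]
and the claim reduces to showing that $y \mapsto \ch(y)^{-2}\big(e^{2\l(y)}y^2 - \frac1\k e^{2\mu(y)}\big)$, equivalently $y\mapsto \tr(y)^{-2}y^2\big(e^{2\l}y^2 - \frac1\k e^{2\mu}\big)$, is strictly monotone on $(0,\infty)$ — but note the statement to be cited from the proof of Lemma~\ref{L:YN} is precisely that $y\mapsto \tr(y)^2\big(e^{2\l}y^2 - \frac1\k e^{2\mu}\big)$ is strictly monotone, so I would set up the correct rescaled combination that matches $\mathcal H$ directly.

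The key computational step is to express everything through $\d, \w, \tr$ (or $\R, W$ in Schwarzschild variables) using Lemma~\ref{L:NONLOCALISLOCAL}(d) for $e^{2\l}$ and~\eqref{E:MUFORMULASS2} for $e^{2\mu}$. From~\eqref{E:LAMBDAFLUID} and~\eqref{E:MUFORMULASS2} one gets
\[
\k e^{2\l-2\mu} - z^2 = \frac{1}{y^2}\left(\k e^{2\l-2\mu}y^2 - 1\right),
\]
and $y^2 e^{2\l-2\mu}$ can be written explicitly in terms of $\w, \d, \tr$; in fact from the proof of Proposition~\ref{P:UNIF}(a) we already have the clean identity $y^{-2}e^{2\mu - 2\l} - y^{-2}\cdot y^2 \cdot(\text{stuff})$, i.e. $B$-type expression, so $\k e^{2\l-2\mu} - z^2$ is essentially $-B[x;\R,W]$ divided by positive factors. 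Then
\[
\mathcal H(z) \propto \frac{B[x;\R,W]}{\tr^2 (\w+\k)^2}\cdot(\text{positive factor}),
\]
and I would compute $\frac{d}{dz}$ using the ODEs~\eqref{E:RODE}--\eqref{E:WODE} (or~\eqref{E:DSSEQNBOLD}--\eqref{E:WSSEQNBOLD}) together with~\eqref{E:TILDERPRIME}. The cleanest route is probably to differentiate $\tr^2\big(\k e^{2\l-2\mu}y^2 - 1\big)$ directly, since the factor $\tr^2$ is chosen so that the worst singular terms cancel — this is exactly why Lemma~\ref{L:YN} isolates that combination. I expect $\frac{d}{dy}\big[\tr^2(\k e^{2\l-2\mu}y^2-1)\big]$ to reduce, after using the field equations and the constraint~\eqref{E:LAMBDACONSTRAINTSS}, to something manifestly signed — most plausibly a positive multiple of $\tr^2\w\d$ or of $\tr'\tr$, hence of one sign for all $y>0$ because $\d, \w, \tr, \tr' > 0$ on $(0,\infty)$ (the positivity of $\w$ and $\d$ along the RLP solution being guaranteed by the results of Sections~\ref{S:FRIEDMANN}--\ref{S:FARFIELD} and~\ref{S:MAE}, and $\tr' > 0$ from~\eqref{E:TILDERPRIME} together with $\w + \k > 0$).

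The main obstacle will be the bookkeeping in the differentiation: $\mathcal H$ is a ratio whose numerator and denominator both vanish or blow up at various ends of the interval, and one must track the cancellations carefully rather than differentiating the quotient naively. The safe strategy is to not prove monotonicity of $\mathcal H$ as written but of the equivalent combination $y\mapsto \tr(y)^2\big(e^{2\l(y)}y^2 - \tfrac1\k e^{2\mu(y)}\big)$ (which the excerpt already asserts to be strictly monotone in the proof of Lemma~\ref{L:YN}), then observe that $\mathcal H(z) = \ch^{-2}\big(e^{2\l}-\tfrac{e^{2\mu}}{\k}z^2\big) = \tr^{-2}y^2\cdot\tfrac1{y^2}\big(e^{2\l}y^2 - \tfrac1\k e^{2\mu}\big)\cdot y^2 \cdot y^{-2} = \tr^{-4}y^2\cdot\big[\tr^2(e^{2\l}y^2 - \tfrac1\k e^{2\mu})\big]$, and since $y\mapsto \tr(y)$ and $y\mapsto y$ are both smooth and positive and the change of variables $z = -1/y$ is a smooth increasing bijection from $(0,\infty)$ onto $(-\infty,0)$, monotonicity transfers once one checks that the extra factor $\tr^{-4}y^2$ does not destroy it — and here one uses that $\tr \asymp y$ at both $y\to 0^+$ and $y\to\infty$ (Remarks~\ref{R:TILDRREG} and~\ref{R:TAYLORR}), together with, if needed, a direct sign computation of the derivative of the full product. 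If this transfer is not automatic, the fallback is the direct differentiation of $\mathcal H$ sketched above, using the field equations to collapse the derivative to a single signed monomial in $(\d, \w, \tr)$.
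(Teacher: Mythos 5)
Your ``safe strategy'' is circular. The clause in the proof of Lemma~\ref{L:YN} that you lean on --- ``It is shown in Lemma~\ref{L:MATHCALHMONOTONE} that the map $y\mapsto \tr(y)^2(e^{2\l}y^2-\frac1\k e^{2\mu})$ is strictly monotone'' --- is a forward reference to the very lemma you are asked to prove, so citing it establishes nothing. (Incidentally, that citation is itself slightly misstated: the quantity that Lemma~\ref{L:MATHCALHMONOTONE}'s proof actually differentiates is $\tr^{-2}(e^{2\l}y^2-\frac1\k e^{2\mu})$, not $\tr^2(\cdot)$; for the uniqueness-of-zeroes purpose in Lemma~\ref{L:YN} the sign of the exponent is irrelevant since $\tr>0$, but it means the ``equivalent combination'' you propose to transfer from differs from $\mathcal H$ by a factor $\tr^4$, not merely by a reparametrisation.)

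Independently of the circularity, the transfer step is unsound. Strict monotonicity is not preserved under multiplication by a positive smooth function: from $g'>0$ you cannot conclude $(hg)'>0$ without controlling $h'g$ pointwise, and knowing $\tr\asymp y$ only at the two endpoints $y\to 0^+$ and $y\to\infty$ gives you no interior control on the sign of the extra factor's derivative. So ``monotonicity transfers once one checks the extra factor does not destroy it'' is precisely the part that requires the full computation, with no shortcut. There is also a small algebra slip in the rewrite: since $\ch^{-2}=y^2/\tr^2$ and $z^2=1/y^2$, one gets $\mathcal H=\tr^{-2}\bigl(e^{2\l}y^2-\tfrac1\k e^{2\mu}\bigr)$ with no residual $y^2$, whereas your final displayed identity leaves an extra $y^2$.

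Your fallback --- direct differentiation in $y$ using the field equations --- is the right approach and is what the paper does, so that instinct is sound. But the expectation that $\mathcal H'$ collapses to a single signed monomial is not borne out. Substituting $\tr'/\tr$ from~\eqref{E:TILDERPRIME}, $\l'$ from~\eqref{E:LAMBDASSNEW}, $\mu'$ from~\eqref{E:MUFORMULASS}, eliminating $\Sigma'$ via~\eqref{E:DSSEQN} and Lemma~\ref{L:NONLOCALISLOCAL}(c), and then inserting~\eqref{E:SONICKEY1} for $e^{2\mu-2\l}$, one arrives at
\[
\mathcal H'(y)=\frac{2x^{-2}y}{1+\k}\,e^{2\l}\left(\frac{\d^{-\e}}{x^2}\left[\frac1\k\frac{1}{\w+\k}-\d^{1+\e}x^2\Bigl(2+\frac{4\w}{\w+\k}\Bigr)\right]+\frac{(\w-1)^2}{\w+\k}+1-\w\right).
\]
This is not a monomial: the bracket pits a $\k^{-1}$ term against a bounded negative term, and concluding positivity requires two further inputs that are missing from your sketch --- the uniform-in-$\k$ bound $\d^{1+\e}x^2\le C$ (a consequence of the monotone increase of $x\mapsto \R^{1+\e}x^2 W$, $W<1$, and the $\RY_2$-bound of Lemma~\ref{L:WSTAYSBELOWONE}), together with the $\k$-independent pinching $\tfrac16\le\w\le1$ --- and even then only for $\k$ sufficiently small. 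The gap is therefore not merely bookkeeping: the essential ideas of the sign argument (the $\k^{-1}$ dominance and the a priori bound on $\d^{1+\e}x^2$) are absent from the proposal.
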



\begin{proof}
Recalling the notation $x=\tr(y)$, $\ch(y)=\frac{x}{y}$, and $z=-\frac1y$, we see that $\mathcal H$, written as a function of $y$ can be written in the form
\begin{align}
\mathcal H(y) = x^{-2}\left(e^{2\l}y^2 - \frac1\k e^{2\mu}\right).
\end{align}
Therefore 
\begin{align}
&\mathcal H'(y) = -2 x^{-2}\frac{x'}{x} \left(e^{2\l}y^2 - \frac1\k e^{2\mu}\right) + x^{-2} \left(2e^{2\l}y^2\left(\l'+\frac1y\right)-\frac2\k e^{2\mu}\mu'\right) \notag\\
& = -2 x^{-2}\frac{\w +\k}{(1+\k)y} \left(e^{2\l}y^2 - \frac1\k e^{2\mu}\right) + 2x^{-2} \left(e^{2\l}y^2\left(-\frac1{1+\k}\left(\frac{ \Sigma'}{ \Sigma}+\frac{2}{y}\right) - 2\left(\frac{x'}{x}-\frac1y\right)+\frac1y\right) +\frac1\k e^{2\mu} \frac{\k}{1+\k} \frac{ \Sigma'}{ \Sigma}\right) \notag\\
& =  -2 x^{-2}\frac{\w +\k}{(1+\k)y} \left(e^{2\l}y^2 - \frac1\k e^{2\mu}\right) + \frac{2x^{-2}}{1+\k} \left(e^{2\l}y^2\left(-\frac{ \Sigma'}{ \Sigma}+\frac{1+3\k}{y} - 2\frac{\w +\k}{y} \right)+e^{2\mu} \frac{ \Sigma'}{ \Sigma} \right) \notag\\
& =  -2 x^{-2}\frac{\w +\k}{(1+\k)y} \left(e^{2\l}y^2 - \frac1\k e^{2\mu}\right) + \frac{2x^{-2}}{1+\k} \left(\frac{ \Sigma'}{ \Sigma}\left(e^{2\mu}-e^{2\l}y^2\right)+e^{2\l}y\left(1+\k  - 2\w  \right)\right),  \label{E:SIMPLEHELP3}
\end{align}
where we have used the relation $\frac{x'}{x}=\frac{\w +\k}{(1+\k)y}$ (see~\eqref{E:TILDERPRIME}), formula~\eqref{E:LAMBDASSNEW} for $\l'$, and~\eqref{E:MUFORMULASS} to substitute for $\mu'$.
We now use~\eqref{E:DSSEQN} and part (c) of Lemma~\ref{L:NONLOCALISLOCAL} to obtain  the formula
\begin{align}
\frac{ \Sigma'}{(1+\k) \Sigma}\left(e^{2\mu}-e^{2\l}y^2\right) = -2ye^{2\l}K 
= -\frac{2(\d-\w )}{(1+\k)}e^{2\l}y.
\end{align}
Using this in~\eqref{E:SIMPLEHELP3} we obtain
\begin{align}
\mathcal H'(y) &=-2 x^{-2}\frac{\w +\k}{(1+\k)y} \left(e^{2\l}y^2 - \frac1\k e^{2\mu}\right) + \frac{2x^{-2}}{1+\k} e^{2\l}y\left(1+\k  - 2\d \right) \notag\\
& =\frac{2 x^{-2}}{(1+\k)y} e^{2\l} \left(\frac1\k (\w +\k)e^{2\mu-2\l} + y^2 \left(1-\w -2\d\right)\right). \label{E:SIMPLEHELP4}
\end{align}
We now recall the formula
\begin{align}
e^{2\mu-2\l} = \frac{y^2}{x^2(\w +\k)^2}\left(\d^{-\e}+\k x^2(\w -1)^2 - 4\k \d \w  x^2\right),
\end{align}
see~\eqref{E:SONICKEY1}.
We plug it back into~\eqref{E:SIMPLEHELP4} to conclude that 
\begin{align}
\mathcal H'(y)& =\frac{2 x^{-2}y}{(1+\k)} e^{2\l} \left(\frac1\k \frac{\d^{-\e}}{x^2(\w +\k)} + \frac{(\w -1)^2}{\w +\k}-4\frac{\d\w }{\w +\k}+1-\w -2\d\right) \notag\\
& = \frac{2 x^{-2}y}{(1+\k)} e^{2\l}  \left(\frac{\d^{-\e}}{x^2}\left[\frac1\k \frac{1}{\w +\k} - \d^{1+\e}x^2\left(2+\frac{4\w }{\w +\k}\right) \right]  + \frac{(\w -1)^2}{\w +\k} + 1-\w \right) . \label{E:SIMPLEHELP5}
\end{align}
Recall that $\frac16\le \w \le 1$ for all $y\in(0,\infty)$. Moreover, by~\eqref{E:PIWX2} and the inequality $\w (y)=W(x)<1$ for all $y,x\ge0$ we know that  that the function $x\mapsto \R^{1+\e}x^2 W(x)$ is increasing.
It follows from~\eqref{E:PIWX2LB} that there exists an $\k$-independent constant $C$ such that $\R^{1+\e}x^2\le C$ for all $\k\in(0,\k_0]$, with $\k_0$ sufficiently small.
Since $ \frac{1}{\w +\k}\ge\frac1{1+\k}$, $2+\frac{4\w }{\w +\k} \le 2+\frac4{\frac13+\k}$, and $ \frac{(\w -1)^2}{\w +\k} + 1-\w >0$, we conclude from~\eqref{E:SIMPLEHELP5} that for $\k>0$ sufficiently small that $\mathcal H'$
is strictly positive for $y\in(0,\infty)$.
\end{proof}


\begin{lemma}[$L=0$: radial null-geodesics]
Let $V^\kappa$ as above. If $L=0$ the flow takes the form
\be\label{geodesicODE0}
\frac{ds}{dz}=\frac{V^s}{V^z} =\frac{1}{ z \pm \sqrt{\k} e^{\lambda -\mu}},
\ee
while $V^s$, $V^z$ are then recovered using \eqref{Eq: Vz Vs}. The behaviour of the solutions is described by Lemma~\ref{L:INCOMING}.
\end{lemma}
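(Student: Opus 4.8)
The plan is to use the fact that, for $L=0$, the null constraint \eqref{Eq: null geo 1} collapses to a quadratic equation for the slope $ds/dz$ that factors explicitly. First I would note that $L=0$ forces $V^\phi=0$ through \eqref{Eq: angular momentum upper}, while $\gamma^\theta\equiv\pi/2$ by \eqref{E:PIHALF}, so the geodesic is genuinely radial and \eqref{Eq: null geo 1} becomes $-\tfrac{e^{2\mu}}{\k}(V^z)^2+2\tfrac{e^{2\mu}}{\k}zV^zV^s+\bigl(e^{2\l}-\tfrac{e^{2\mu}}{\k}z^2\bigr)(V^s)^2=0$. Excluding the simple radial null-geodesics, for which $V^z\equiv0$ and which are treated separately (these are exactly the curves along which $z\pm\sqrt\k\,e^{\l-\mu}$ vanishes identically, cf. Lemma~\ref{L:SIMPLENNGCHAR} and the analysis of $G_\pm$ in Section~\ref{S:RLP}), we have $V^z\neq0$, so $\tfrac{ds}{dz}=\tfrac{V^s}{V^z}=:m$ since $\tfrac{ds}{d\ell}=V^s$, $\tfrac{dz}{d\ell}=V^z$. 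Dividing the null relation by $(V^z)^2$ and multiplying by $\k e^{-2\mu}$ turns it into $(\k e^{2\l-2\mu}-z^2)\,m^2+2zm-1=0$.

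Next I would solve this quadratic. Its discriminant equals $4\k e^{2\l-2\mu}$, so $m=\dfrac{-z\pm\sqrt\k\,e^{\l-\mu}}{\k e^{2\l-2\mu}-z^2}$; writing the denominator as $(\sqrt\k\,e^{\l-\mu}-z)(\sqrt\k\,e^{\l-\mu}+z)$ and cancelling the common factor in each of the two branches yields precisely $m=\dfrac{1}{z\pm\sqrt\k\,e^{\l-\mu}}$, which is \eqref{geodesicODE0}. Once $m=V^s/V^z$ is known, the tangent components themselves are recovered (up to affine reparametrization) by substituting the identity \eqref{Eq: Vz Vs} between $V^z/e^{2s}$ and $V^s/e^{2s}$ into the conservation law \eqref{E:CHAM} (equivalently \eqref{E:SIMPLEHELP}) and solving the resulting linear relation for $V^s/e^{2s}$, then for $V^z/e^{2s}$; this is possible precisely when $e^{2\l}-\tfrac{e^{2\mu}}{\k}z^2\neq0$, i.e. away from the simple geodesics.

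Finally, to obtain the behaviour asserted in Lemma~\ref{L:INCOMING}, I would change variables from $(z,s)$ back to the comoving chart via $z=\tfrac{\sqrt\k\tau}{R}$, $s=-\log R$, so that $dR=-e^{-s}\,ds$ and $d\tau=\tfrac{e^{-s}}{\sqrt\k}(dz-z\,ds)$. A short computation then gives $\tfrac{dR}{d\tau}=\dfrac{-\sqrt\k}{(dz/ds)-z}=\dfrac{-\sqrt\k}{1/m-z}=\mp e^{\mu-\l}$, so \eqref{geodesicODE0} is identical to the radial null-geodesic equation \eqref{E:GEODESICL} of Lemma~\ref{L:RNGS}(a) (and, in the patch near $\{\tau=0\}$, to \eqref{E:GEQN3} after the analogous substitution). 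Since the global behaviour of the solutions of that ODE --- future-global existence, the intersections with $\mathcal B_1$, $\mathcal B_2$ and $\{r=0\}$, and the impossibility of converging to $\mathcal O$ to the past --- is exactly the content of Lemma~\ref{L:INCOMING}, the proof concludes.

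I do not expect a genuine obstacle, since the computation is elementary. The only points requiring care are the bookkeeping of the $\pm$ signs when matching the ``outgoing/ingoing'' labels of \eqref{geodesicODE0} with those in \eqref{E:GEODESICL}, and making explicit that the degenerate locus $V^z=0$ --- equivalently the vanishing of $z\pm\sqrt\k\,e^{\l-\mu}$ --- is exactly the family of simple radial null-geodesics, so that this lemma genuinely describes the complementary, non-simple family governed by Lemma~\ref{L:INCOMING}.
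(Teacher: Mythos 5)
Your argument is correct and follows the same route the paper indicates (solving the null constraint, i.e.\ equation~\eqref{Eq: null geo 2}, as a quadratic in $V^s/V^z$ and factoring the denominator $\k e^{2\l-2\mu}-z^2$ to obtain~\eqref{geodesicODE0}); you simply fill in the algebra the paper calls a ``simple consequence,'' along with the correct identification of the $V^z=0$ locus with the simple radial null-geodesics. Your closing cross-check against~\eqref{E:GEODESICL} is an equivalent alternative to the paper's reference to~\eqref{E:RYEQN}, amounting only to a different choice of coordinates in which to recognize the same ODE.
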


\begin{proof}
Equation~\eqref{geodesicODE0} is a simple consequence of~\eqref{Eq: null geo 2}. 
Equation \eqref{geodesicODE0} is equivalent to the radial null geodesic equation \eqref{E:RYEQN} expressed in $(Y,\log R)$-variables.
\end{proof}
%
%
%
%

\begin{lemma}[$L\neq0$: nonradial null-geodesics]
Let $V^\kappa$ as above and $L\neq0$. Then any such geodesic that emanates from the union of the exterior and the interior region exists globally in its affine time and does not converge to the scaling origin.
\end{lemma}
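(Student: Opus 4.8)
The plan is to mirror the radial-geodesic analysis of Lemma~\ref{L:INCOMING}, replacing the scalar ODE for $R(\tilde\tau)$ by the system for $(z,s)$ supplemented by the two Hamiltonian constraints~\eqref{Eq: angular momentum}--\eqref{E:CHAM}. The starting point is the quadratic equation~\eqref{Eq: null geo 3} for $V^s/e^{2s}$, together with~\eqref{Eq: Vz Vs} expressing $V^z/e^{2s}$ in terms of $V^s/e^{2s}$. Dividing, one obtains an autonomous first-order ODE $\frac{dz}{ds} = F(z)$ whose right-hand side is built from the metric coefficients $e^{2\mu(z)}, e^{2\lambda(z)}, \chi(z)$ and the conserved ratio $L^2/C^2$ (the flow depends on the geodesic only through this single impact-parameter-type constant, since $\theta\equiv\pi/2$). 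First I would write this ODE explicitly and check that its right-hand side is smooth and bounded in the region of interest $\{z_1 < z < z_{\mathcal N}\}$ — this is the nonradial analogue of Remark~\ref{R:WELLDEFINEDGEODESICS}, and boundedness of the denominators there is exactly what guarantees $F$ is well-defined. The key algebraic input is the factorisation of $z^2 - \k e^{2\lambda-2\mu}$, which by Definition~\ref{D:SPDEF} and the discussion of $F_\k$ in~\eqref{E:FDEFLITTLEY}--\eqref{E:FGGFACTORISATION} vanishes precisely at the simple null-geodesic slopes; away from those slopes it has a definite sign, so $F$ is finite.

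Next I would reduce to the two cases already handled for radial geodesics. In the exterior region, change variables from the affine parameter to $z$ (equivalently to $Y$, via $z=-1/y$ and~\eqref{E:YLITTLEY}), obtaining $\frac{d}{dz}(\log R) = G(z)$ with $G$ smooth and of one sign on $(z_1,z_{\mathcal N})$ by the monotonicity of $\mathcal H$ (Lemma~\ref{L:MATHCALHMONOTONE}) and the positivity of $G_+$ established in Lemma~\ref{L:YN}. Integrating as in~\eqref{E:LOGR} and~\eqref{E:ROUTGOING} shows $R$ stays finite and positive as the geodesic traverses $\mathcal B_1$ and $\mathcal B_2$, and that outgoing nonradial geodesics asymptote to $\mathcal B_1$ with $R\to\infty$, hence $\tilde\tau\to\infty$; thus they exist globally. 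For a geodesic initiated in the interior region, I would switch back to the original comoving coordinates $(\tau,R)$ exactly as in part (a) of Lemma~\ref{L:INCOMING}: using $T=-\log(-\tau)$, the $y$-component satisfies an autonomous ODE with no fixed points on $(0,y_{\mathcal N})$, and the finiteness of the time to reach $\{r=0\}$ follows from the same integrability estimate near $y=0$ (the integrand asymptotes to a constant multiple of $y^{-2/(3(1+\k))}$ by~\eqref{E:TILDERREG} and $e^{\mu(0)}>0$), the only modification being the presence of the bounded angular-momentum term, which does not affect integrability because the relevant denominator stays bounded below.

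The claim that no such geodesic converges to the scaling origin $\mathcal O$ to the past would follow from the monotonicity of the $z$- (equivalently $Y$-) flow together with the observation that $\mathcal O$ corresponds to $Y=0$ (i.e. $\tilde\tau=0$) at finite $R$: since $G_+$ is smooth and bounded near $z=0$, a backward-integrated outgoing geodesic crosses $\{\tilde\tau=0\}$ at a strictly positive value of $R$, exactly as in the last paragraph of the proof of Lemma~\ref{L:INCOMING}. I expect the main obstacle to be purely bookkeeping: verifying that for \emph{every} admissible value of the conserved ratio $L^2/C^2$ (not just $L=0$) the denominator $z^2 - \k e^{2\lambda-2\mu}$ controlling $F$ does not vanish inside the exterior region, so that the geodesic cannot be ``trapped'' at a turning point before exiting. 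This requires combining the sign information on $G_\pm$ from Lemma~\ref{L:YN} with the fact (Lemma~\ref{L:SIMPLENNGCHAR}, Lemma~\ref{L:MATHCALHMONOTONE}) that $\mathcal H$ is strictly monotone, hence has no critical points in $(-\infty,0)$, so there are \emph{no} simple nonradial null-geodesics in that region and no turning points obstruct the flow. Once that is in place, the global existence and the non-convergence to $\mathcal O$ are immediate from the radial arguments applied verbatim.
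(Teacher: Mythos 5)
The treatment of the interior region is incorrect, and this is where the gap lies. You claim that for a geodesic initiated in the interior region the radial argument of Lemma~\ref{L:INCOMING}(a) carries over verbatim, with the angular-momentum term "not affecting integrability," so the geodesic reaches $\{r=0\}$ in finite affine time. This is the opposite of what happens. For $L\neq0$, the quadratic~\eqref{Eq: null geo 3} only has real solutions when its discriminant is nonnegative, which imposes the constraint
\[
a(z):=\frac{e^{2\mu}(z^2-\k e^{2\l-2\mu})}{\ch^2}\ \le\ \frac{\k C^2}{L^2}.
\]
Since $a(z)\to+\infty$ as $z\to-\infty$ (using~\eqref{E:LAMBDABADATZERO} and $e^{\mu(0)}>0$) and $a(z_{\mathcal N})=0$, there is a $\bar z<z_{\mathcal N}$ with $a(\bar z)=\k C^2/L^2$: this is a centrifugal barrier, and a nonradial geodesic entering the interior region turns around there instead of reaching $\{r=0\}$. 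Lemmas~\ref{L:SIMPLENNGCHAR} and~\ref{L:MATHCALHMONOTONE} are needed not to show that "there are no turning points" (there is one, at $\bar z$) but rather to rule out that $\bar z$ is a stationary, i.e.\ simple, NNG; this guarantees the geodesic passes through the turning point rather than getting stuck, and then asymptotes to $\mathcal N$ in the past and $\mathcal B_1$ in the future. Without identifying and analysing this barrier, the non-convergence to $\mathcal O$ from the interior side is not established.

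Two further problems. First, the case $C=0$ (vanishing charge associated with the homothety) must be treated separately: with $L\neq0$, equation~\eqref{Eq: null geo 3} then forces $z^2-\k e^{2\l-2\mu}<0$, so the geodesic is entirely confined to the exterior region $z_{\mathcal N}<z<z_1$ and the interior analysis never enters. Your proposal does not distinguish this case, and your "impact parameter" $L^2/C^2$ is undefined there. Second, you cannot reuse the radial formulas~\eqref{E:LOGR}--\eqref{E:ROUTGOING} verbatim: for $L\neq0$ and $C\neq0$, solving~\eqref{Eq: null geo 3} for $V^s/e^{2s}$ and dividing by~\eqref{Eq: Vz Vs} yields an ODE for $ds/dz$ whose right-hand side involves the additional square-root factor $Q(z)=\sqrt{1-a(z)L^2/(\k C^2)}$, not just $G_\pm$. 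The sign analysis, and the verification that the improper integrals converge as the geodesic crosses $\{\tau=0\}$ or approaches the boundary curves, must be redone with $Q$ present; the paper does this by splitting off the $Q-1$ correction explicitly. The exterior-region conclusion you reach is ultimately correct, but the argument as written replaces the nonradial ODE by the radial one, which is a genuine logical gap.
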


\begin{proof}
{\em Step 1. We first consider the case where $C=0$.}
Then~\eqref{Eq: Vz Vs} gives the relation 
\be\label{geodesicODE1}
\frac{ds}{dz} =\frac{V^s}{V^z} = - \frac{z}{\k e^{2\l -2\mu} - z^2}.
\ee
In order for $V^s$ to satisfy \eqref{Eq: null geo 3} we must have $\k e^{2\l -2\mu} - z^2 > 0$ since $\frac{L^2}{\ch^2}>0$. In particular, the null geodesics are confined to the exterior region $z_{\mathcal N}< z< z_1$ where $z_{\mathcal N} := - |Y_{\mathcal N}|^{1+\e}<0$ and $z_1:= |Y_1|^{1+\e}>0$. Let $s_0\in \mathbb R$ and $z_0 \in ( z_{\mathcal N}, z_1)$ be given initial point in the exterior region. Then by integrating \eqref{geodesicODE1}, we obtain 
\be
s=s_0 - \int_{z_0}^{z(s)} \frac{z}{\k e^{2\l -2\mu} - z^2} dz . 
\ee
We recall that $\k e^{2\l -2\mu} - z^2|_{z= z_{\mathcal N}, z_1} =0$ and $\k e^{2\l -2\mu} - z^2 > 0$ for $z_{\mathcal N}< z< z_1$. Since $s= -\log R$, we deduce that $R\to \infty$ ($s \to -\infty$)  as $z(s)\to z_{\mathcal N}$ or $z(s)\to z_1$. Therefore a null geodesic in this case asymptotes to the backward light cone $z=z_{\mathcal N}$ in the past and to the Cauchy horizon $z=z_1$ in the future. 
In particular, there are no null geodesics emanating from or going into the origin.  

\noindent
{\em Step 2. We now consider the general case $C\neq0$.}
Assume without loss of generality $C>0$. In order for the solution of \eqref{Eq: null geo 3} to exist the discriminant must be nonnegative.  
This amounts to requiring  
\be\label{Eq: effective potential}
a(z):= \frac{e^{2\mu} (z^2 - \k e^{2\l - 2\mu})}{\ch^2 }\le  \frac{\k C^2}{L^2}. 
\ee
Note that $\lim_{z\to -\infty} a(z) = +\infty$, which follows from $z= -\frac{1}{y}$,~\eqref{E:LAMBDABADATZERO}, and $e^{\mu}|_{y=0}>0$, while  $\lim_{z\to z^{\text{ms}}} a(z)= -\infty$ by Proposition \ref{P:SINGBEHAVIOUR}. Moreover, we also know $a(z_{\mathcal N})=0$, $a(z_1)=0$ and $a(z)<0$ for $z_{\mathcal N}< z< z_1$ from the analysis of radial null geodesics. Therefore, for given $\frac{\k C^2}{L^2}>0$, there exists $\bar z<z_{\mathcal N}$ such that $a(z) > \frac{\k C^2}{L^2}$ for all $z\in (-\infty, \bar z)$ and hence, the non-radial null geodesics cannot enter the part of the interior region $z<\bar z$. 

Under the assumption~\eqref{Eq: effective potential}, the solution to \eqref{Eq: null geo 3} is given by 
\be
\frac{V^s}{e^{2s}} = C\sqrt\k \frac{- \sqrt \k e^{\l -\mu} \pm |z| \sqrt{ 1 - \frac{e^{2\mu}(z^2- \k e^{2\l-2\mu})}{\ch^2} \frac{L^2}{\k C^2} } }{e^{\l+\mu}\left( z^2 - \k e^{2\l -2\mu}  \right)}.
\ee
From \eqref{Eq: Vz Vs} we also have (recall $z\neq0$)
\be
z\frac{V^z}{e^{2s}} = \pm \frac{C\sqrt\k |z|}{e^{\l+\mu}} \sqrt{ 1 - \frac{e^{2\mu}(z^2- \k e^{2\l-2\mu})}{\ch^2} \frac{L^2}{\k C^2} }. 
\ee
Let 
\be\label{E:QNNGDEF}
Q(z):=  \sqrt{ 1 - \frac{e^{2\mu}(z^2- \k e^{2\l-2\mu})}{\ch^2} \frac{L^2}{\k C^2} }.
\ee 
We then conclude that
\be\label{E:DSDZEQN}
\frac{ds}{dz} = \frac{V^s}{V^z}=  \frac{z\left[ - \sqrt \k e^{\l -\mu} \pm |z| Q(z) \right]}{\pm |z| Q(z)( z^2 - \k e^{2\l -2\mu})}. 
\ee
%

\noindent
{\em Step 3. No simple NNG-s}. By definition, we have $\frac{dz}{ds}=0$ for simple NNG-s. Therefore by~\eqref{E:DSDZEQN} they must satisfy either $z^2-\k e^{2\l - 2\mu}=0$ or $Q(z)=0$. 
The first case corresponds to simple radial null geodesics, while zeros of $Q(z)=0$ possibly describe the simple non-radial geodesics. We know that $z^2 - \k e^{2\l-e\mu} <0 $ for all $z_{\mathcal N} < z<z_1$ and thus, $Q >0$ for all  $z_{\mathcal N} < z<z_1$. Therefore we deduce that there is no simple non-radial null geodesics in the exterior (i.e. in the causal past of $\mathcal B_1$).  In the interior region however $\bar z$ cannot correspond to a geodesic, since 
otherwise by Lemma~\ref{L:SIMPLENNGCHAR} we would have $\mathcal H'(\bar z)=0$, but this is impossible by Lemma~\ref{L:MATHCALHMONOTONE}.

\noindent
{\em Step 4. NNG-s cannot emanate from or go towards the scaling origin $\mathcal O$.} 

{\em Step 4.1. First let $z_1>z_0>0$ and $s_0\in \mathbb R$ be given in the exterior region above $\tau=0$ ($z>0$).} 
Then the null geodesics satisfy  
\be\label{geodesicODE2}
\frac{ds}{dz} = \frac{ - \sqrt \k e^{\l -\mu} \pm z Q(z)}{\pm Q(z)( z^2 - \k e^{2\l -2\mu})}. 
\ee

We start with $(-)$ part. In this case, the right-hand side of \eqref{geodesicODE2} is negative and hence the null geodesics are outgoing. We claim that outgoing null geodesics $(-)$ starting from $z_0>0$ and $s_0\in \mathbb R$ meet $z=0$ (namely $\tau=0$) for some positive $R>0$ in the past. Observe that there exist $-\infty <M_1 < M_0<0$ such that  $M_1<Q(z) ( z^2 - \k e^{2\l -2\mu}) < M_0$ for all $z\in [0,z_0]$. Integrating \eqref{geodesicODE2}, we have
 \be
 s(0) = s(z_0) + \int_{z_0}^0 \frac{  \sqrt \k e^{\l -\mu} + z Q(z)}{ Q(z)( z^2 - \k e^{2\l -2\mu})}  dz . 
 \ee
The integral in the right-hand side is finite and using $s=-\log R$, the claim follows. On the other hand, in the future direction, since $z^2-\k e^{2\l -2\mu} \to 0$ as $z\to z_1^-$, $s \to -\infty$ as $z\to z_1^-$ and thus it asymptotes to the Cauchy horizon $z=z_1$. 

We move onto $(+)$ part, the ingoing case. As above, the null geodesics meet $\tau=0$ for some positive $R>0$ in the past as the corresponding integral stays finite. We claim that the null geodesics meets $\mathcal B_1$ at positive $R>0$ in the future. Integrating \eqref{geodesicODE2}, we have 
 \be
 s = s(z_0) + \int_{z_0}^{z(s)} \frac{ - \sqrt \k e^{\l -\mu} + z Q(z)}{ Q(z)( z^2 - \k e^{2\l -2\mu})}  dz 
 \ee
 for $z=z(s)< z_1$. To prove the claim, it suffices to show the integral is finite when $z(s)=z_1$. To this end, we rewrite the integrand as 
 \[
 \begin{split}
 \frac{ - \sqrt \k e^{\l -\mu} + z Q(z)}{ Q(z)( z^2 - \k e^{2\l -2\mu})} &= \frac{ - \sqrt \k e^{\l -\mu} + z + z (Q(z)-1)}{ Q(z)( z^2 - \k e^{2\l -2\mu})} \\
  &= \frac{1}{Q(z) (z+ \sqrt\k e^{\l-\mu })} + \frac{  z (Q(z)^2-1)}{ Q(z)( z^2 - \k e^{2\l -2\mu}) (Q(z) +1)} \\
  &=  \frac{1}{Q(z) (z+ \sqrt\k e^{\l-\mu })}  - \frac{z \frac{e^{2\mu} }{\ch^2} \frac{L^2}{\k C^2}}{Q(z) (Q(z)+1)}
 \end{split}
 \]
for $z_0<z<z_1$, where we have used~\eqref{E:QNNGDEF} in the last line. It is now clear that both terms are finite and thus the integral is indeed bounded, thereby proving the claim. 

{\em Step 4.2. Next we let $z_{\mathcal N}<z_0<0$ and $s_0\in \mathbb R$ be given in the exterior region below $\tau=0$ $(z<0)$.} The null geodesics satisfy  
\be\label{geodesicODE3}
\frac{ds}{dz} = \frac{ - \sqrt \k e^{\l -\mu} \mp z Q(z)}{\mp Q(z)( z^2 - \k e^{2\l -2\mu})}. 
\ee
We start with $(+)$ part. In this case, we claim that the null geodesics meet $\tau=0$ for positive $R>0$ in the future and asymptotes to the past light cone $z_{\mathcal N}$ in the past. The geodesic in the future satisfies 
\be
s(0) = s(z_0) +  \int_{z_0}^0 \frac{ - \sqrt \k e^{\l -\mu} + z Q(z)}{Q(z)( z^2 - \k e^{2\l -2\mu})} dz . 
\ee
Observe that the integrand is positive and uniformly bounded. Therefore, the claim for the future follows. Now in the past, the right-hand side of \eqref{geodesicODE3} is positive and the denominator goes to 0 as $z\to z_{\mathcal N}$. Therefore $s\to -\infty$ and $R\to \infty$ as $z\to z_{\mathcal N}$, and hence the null geodesics asymptote to $z=z_{\mathcal N}$.   

We move onto $(-)$ part. Since the denominator is uniformly bounded for $z\in [z_0,0]$, the integral is finite and hence we deduce that the null geodesics meet $\tau=0$ for some positive $R>0$ in the future. On the other hand, the null geodesics in the past satisfy 
\be
s = s(z_0) +  \int_{z_0}^{z(s)} \frac{ \sqrt \k e^{\l -\mu} + z Q(z)}{Q(z)( z^2 - \k e^{2\l -2\mu})} dz . 
\ee
We claim that they meet $z=z_{\mathcal N}$ for finite $R>0$. To this end, we rewrite the integrand as 
 \[
 \begin{split}
 \frac{ \sqrt \k e^{\l -\mu} + z Q(z)}{ Q(z)( z^2 - \k e^{2\l -2\mu})} &= \frac{ \sqrt \k e^{\l -\mu} + z + z (Q(z)-1)}{ Q(z)( z^2 - \k e^{2\l -2\mu})} =  \frac{1}{Q(z) (z- \sqrt\k e^{\l-\mu })}  - \frac{z \frac{e^{2\mu} }{\ch^2} \frac{L^2}{\k C^2}}{Q(z) (Q(z)+1)}
 \end{split}
 \]
 for $z_{\mathcal N}<z<z_0<0$, where we have used~\eqref{E:QNNGDEF} in the last line. It is now clear that both terms are finite and thus the integral is bounded, thereby proving the claim. 
 
{\em Step 5. NNG-s in the interior region.} 
Using the same strategy as above, one can show that nonradial geodesics ($L\neq0$) starting in the interior region will asymptote to the past to the boundary of the backward light cone $\mathcal N$, and in the future 
they will asymptote to the Cauchy horizon $\mathcal B_1$ to the future. In between, they will go through a turning point, which corresponds to the point $(\bar s, \bar z)$, where $\bar z<z_{\mathcal N}$ is the zero of $Q$
discussed above. We omit the details.
\end{proof}
%


\section{Einstein-Euler system in the double-null gauge}\label{A:DOUBLENULL}


\subsection{Proof of Lemma~\ref{L:DOUBLENULL}}

\begin{proof}[Proof of Lemma~\ref{L:DOUBLENULL}.]
Note that
\begin{align}\label{E:EMT2}
T_{\u\v} = \frac14 \Om^{4} T^{\u\v}, \ \ T_{\u\u} = \frac14 \Om^{4} T^{\v\v}, \ \ T_{\v\v} = \frac14 \Om^{4} T^{\u\u}.
\end{align}
We use~\eqref{E:TPP} to obtain $\gamma^{AB}T_{AB} =  2 \k \rho r^2=  \e \Om^2 r^2 T^{\u\v}$. 
Equations~\eqref{E:RWAVE1}--\eqref{E:CONSTRAINTU1} now follow directly from Section 5 of~\cite{DaRe2005}.
Formulas~\eqref{E:TPP}-\eqref{E:TAB} follow directly from~\eqref{E:EMTENSOR} and~\eqref{E:METRICNULL}.
The fluid evolution equations~\eqref{E:CONTNULL13}--\eqref{E:MOMENTUMNULL13} are a consequence of the Bianchi identities $\nabla_\mu T^{\mu\nu}=0$, $\nu = \u,\v,2,3$. 
The Christoffel symbols associated with the purely angular degrees of freedom $\Gamma^A_{BC}$, $A,B,C\in\{2,3\}$ may not vanish, but are not needed in the computations to follow, so we do not compute them.
All the remaining non-vanishing Christoffel symbols
are given by (Appendix A in~\cite{DaRe2005})
\begin{align}
& \Gamma^\u_{AB} = - g^{\u\v} r \pa_\v r \gamma_{AB}, \ \ 
\Gamma^\v_{AB} = - g^{\u\v} r \pa_\u r \gamma_{AB}, \label{E:CHR1}\\
& \Gamma^A_{B\v} = \Gamma^A_{\v B}= \frac{\pa_\v r}{r} \delta^A_B, \ \ 
\Gamma^A_{B\u} =  \Gamma^A_{\u B} = \frac{\pa_\u r}{r} \delta^A_B, \\
& \Gamma^\u_{\u\u} = \pau\log(\Om^2), \ \ 
\Gamma^\v_{\v\v} = \pav\log(\Om^2).\label{E:CHR3}
\end{align}

By~\eqref{E:TAB} we note that 
\begin{align}\label{E:TRACEGAMMA}
\gamma_{AB}T^{AB} = \k \rho r^{-2} \gamma_{AB}\gamma^{AB} = 2\k \rho r^{-2}. 
\end{align}
We also observe that by~\eqref{E:TRACEGAMMA} and~\eqref{E:TPP}
\be\label{E:TRACE2}
\Om^{-2}\gamma_{AB}T^{AB} = 2\k \rho\Om^{-2} r^{-2} = \frac{2\k}{1-\k} r^{-2} T^{\u\v} = \e r^{-2} T^{\u\v} ,
\ee
where we recall the notation
$
\e = \frac{2\k}{1-\k}.
$
To express the Bianchi identities in coordinates, we use the formula
\begin{align}\label{E:CDER}
\nabla_\delta T^{\alpha\beta} = \pa_\delta T^{\alpha\beta} + \Gamma^\alpha_{\gamma\delta}T^{\gamma\beta} 
+ \Gamma^{\beta}_{\gamma\delta}T^{\alpha\gamma}
\end{align}
and the formulas~\eqref{E:CHR1}--\eqref{E:CHR3}.
We start the with the $p$-equation and obtain from~\eqref{E:BIANCHI} 
\begin{align}\label{E:CNULL}
0 = \nabla_\mu T^{\mu \u} = \nabla_\u T^{\u\u} + \nabla_\v T^{\v\u} + \nabla_AT^{Ap}.
\end{align}
By~\eqref{E:CDER} and~\eqref{E:CHR1}--\eqref{E:CHR3} we have
\begin{align}
\nabla_\u T^{\u\u} & = \pau T^{\u\u} + 2\Gamma^\u_{\u\u}T^{\u\u} = \left(\pa_\u + 4\frac{\pau\Om}{\Om}\right)T^{\u\u}, \\
 \nabla_\v T^{\v\u} & = \pa_\v T^{\u\v} + \Gamma^{\v}_{\v\v}T^{\u\v} =  \left(\pa_\v + 2\frac{\pav\Om}{\Om}\right)T^{\u\v} ,\\
 \nabla_A T^{A\u} & = \pa_A T^{A\u} + \Gamma^A_{\gamma A}T^{\gamma \u} + \Gamma^\u_{\gamma A}T^{A\gamma} \notag \\
 & = \Gamma^A_{\u A}T^{\u\u} + \Gamma^A_{\v A}T^{\v \u} +\Gamma^\u_{AB}T^{AB} \notag \\
 & = 2\frac{\pau r}{r} T^{\u\u}+2\frac{\pav r}{r} T^{\u\v} -g^{\u\v} r \pav r \gamma_{AB}T^{AB} \notag\\
 &  = 2\frac{\pau r}{r} T^{\u\u}+2\frac{\pav r}{r} T^{\u\v} + 2\Om^{-2}r \pav r \gamma_{AB}T^{AB} \notag\\
 & = 2\frac{\pau r}{r} T^{\u\u}+(2+2\e)\frac{\pav r}{r} T^{\u\v} , 
\end{align}
where we have used~\eqref{E:METRICNULL} in the next-to-last line and~\eqref{E:TRACE2} in the last.
Similarly, we compute the $\v$-equation of the Bianchi identities~\eqref{E:BIANCHI}.
\begin{align}\label{E:MNULL}
0 = \nabla_\mu T^{\mu \v} = \nabla_\u T^{\u\v} + \nabla_\v T^{\v\v} + \nabla_A T^{A\v}.
\end{align}
By~\eqref{E:CDER} and~\eqref{E:CHR1}--\eqref{E:CHR3} we have just like above
\begin{align}
\nabla_\u T^{\u\v} & =  \left(\pa_\u + 2\frac{\pau\Om}{\Om}\right)T^{\u\v}, \notag\\
 \nabla_\v T^{\v\v}  & = \left(\pa_\v + 4\frac{\pav\Om}{\Om}\right)T^{\v\v}, \\
  \nabla_A T^{A\v} & =  \Gamma^A_{\u A}T^{\u\v} + \Gamma^A_{\v A}T^{\v \v} +\Gamma^\v_{AB}T^{AB} \notag\\
  & =  2\frac{\pau r}{r} T^{\u\v} + 2\frac{\pav r}{r} T^{\v\v}+ 2\e \frac{\pau r}{r}T^{\u\v} \notag \\
  & = \left(2+2\e\right)\frac{\pau r}{r}  T^{\u\v}+ 2\frac{\pav r}{r} T^{\v\v}. 
  \end{align}

Therefore, keeping in mind equations~\eqref{E:CNULL} and~\eqref{E:MNULL} can be rewritten in the form
\begin{align}
\left(\pa_\u + \pau \log\left(\Om^4r^2\right)\right)T^{\u\u} + \left(\pa_\v + \pav \log\left(\Om^2r^{2+2\e}\right)\right)T^{\u\v}   & = 0,   \label{E:CONTNULL}\\
\left(\pau + \pau \log\left(\Om^2r^{2+2\e}\right)\right)T^{\u\v} + \left(\pa_\v + \pav \log\left(\Om^4r^2\right)\right)T^{\v\v} & = 0,
\label{E:MOMENTUMNULL}
\end{align}
which is equivalent to~\eqref{E:CONTNULL13}--\eqref{E:MOMENTUMNULL13}.
From~\eqref{E:TPP}--\eqref{E:TAB} and~\eqref{E:NORMALISATIONNULL} we have the additional relationship
\begin{align}
T^{\u\u} T^{\v\v} & = (1+\k)^2 \rho^2 (u^\u)^2 (u^\v)^2 = (1+\k)^2 \rho^2\Om^{-4} \notag \\
& = \left(\frac{1+\k}{1-\k}\right)^2 (T^{\u\v})^2 
= (1+\e)^2(T^{\u\v})^2 .
\end{align}
\end{proof}

\subsection{Double-null description of the RLP-spacetime}\label{A:DN}


%
%

\begin{lemma}\label{L:RLPDN}
The region to the past of the curve $\mathcal B_1$ in the RLP spacetime $(\MRLP,\grlp)$ is well-defined in the double-null coordinates, with the normalisation conditions~\eqref{E:NORMIN}--\eqref{E:NORMOUT}.
\end{lemma}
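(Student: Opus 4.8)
The statement is essentially the assertion that the portion of the RLP spacetime lying to the causal past of $\mathcal B_1$ — which, by Definition~\ref{D:EXTERIOR} and Figure~\ref{F:EXTERIOR}, is the union of the exterior region, the interior region, and the regular centre $\{r=0\}$ — admits a chart in which $\grlp$ has the form~\eqref{E:DNG} and in which the two normalisation conditions~\eqref{E:NORMIN}--\eqref{E:NORMOUT} hold. I would construct this chart directly from the two radial null congruences whose global structure has already been pinned down. Denote by $\mathcal R$ the region in question. Fix the reference point $(\ttau_0,R_0)$ in the exterior region, let $\mathcal I$ be the future-oriented ingoing RNG through it and $\mathcal O_{\mathrm{out}}$ the outgoing RNG through it. By Lemma~\ref{L:INCOMING}(b), $\mathcal I$ runs from an asymptote with $\mathcal N$ in the past, across $\mathcal B_1$ (at $(R_1,\ttau_1)$, with $R_1>0$) and then across $\mathcal B_2$; by Lemma~\ref{L:INCOMING}(c), $\mathcal O_{\mathrm{out}}$ runs from $\mathcal N$ in the past to null infinity in the future, asymptoting to $\mathcal B_1$. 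Along $\mathcal I$ I set $\u:=-2(r-r_1)$ as in~\eqref{E:NORMIN}, along $\mathcal O_{\mathrm{out}}$ I set $\v:=2(r-r_\ast)$ as in~\eqref{E:NORMOUT}, and then I extend $\u$ to be constant on each outgoing RNG and $\v$ to be constant on each ingoing RNG.

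\textbf{Why this is well-defined.} Two ingredients are needed. First, through every point of $\mathcal R$ there passes exactly one outgoing and one ingoing RNG: in the exterior region this is ODE existence–uniqueness for the geodesic equations~\eqref{E:GEQN3} of Lemma~\ref{L:RNGS}, whose right-hand sides are smooth and the relevant denominators nonvanishing by Remark~\ref{R:WELLDEFINEDGEODESICS}; in the interior region one repeats the argument in the comoving chart $(\tau,R)$ using~\eqref{E:GEODESICL}, as in the proof of Lemma~\ref{L:INCOMING}(a); near $\{r=0\}$ one passes to the regular $(\tau,r)$ chart of Remark~\ref{R:TILDRREG}. Since radial null geodesics in spherical symmetry are integral curves of the (hypersurface-orthogonal) null fields, the two congruences are smooth foliations with no caustics, and an outgoing and an ingoing null direction always span the $2$-plane orthogonal to the orbit spheres, so the two foliations are transversal and $(\u,\v)$ is a local chart wherever it is defined. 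Second, $\u$ and $\v$ are globally single-valued on $\mathcal R$: this requires that $r$ be strictly monotone along the seed curves $\mathcal I$ and $\mathcal O_{\mathrm{out}}$ — so that~\eqref{E:NORMIN}--\eqref{E:NORMOUT} really define $\u,\v$ as functions of affine parameter — and that each outgoing (resp. ingoing) RNG meet $\mathcal I$ (resp. $\mathcal O_{\mathrm{out}}$) exactly once. The monotonicity I would extract from the fact that $\mathcal R$ contains no (anti-)trapped surfaces: by the Hawking-mass identity~\eqref{E:HAWKINGMASSDEF} together with the constraint~\eqref{E:LAMBDACONSTRAINT1}, which gives $(\partial_R r)^2 e^{-2\lambda}=1+\mathcal V^2-\tfrac{8\pi}{3}Gr^2>0$ and hence $\tfrac{2m}{r}<1$ off the centre, the quantities $\pau r$ and $\pav r$ keep opposite definite signs throughout $\mathcal R$; equivalently, one reads the sign of $dr/d\tau$ along the two congruences off~\eqref{E:GEODESICL} and Lemma~\ref{L:RNGS}(b). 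This forces $r$ to be strictly decreasing to the future along any ingoing RNG and strictly increasing to the future along any outgoing RNG, so in particular along $\mathcal I$ and $\mathcal O_{\mathrm{out}}$; combined with the structure of the congruences from Lemma~\ref{L:INCOMING}(b)--(c) and the fact (Lemma~\ref{L:YN}) that $\mathcal N$ is the unique simple ingoing RNG, it follows that each leaf of one congruence crosses the seed curve of the other exactly once, so $\u,\v$ are well-defined on $\mathcal R$, with $\{\u=0\}=\mathcal B_1$ and $\{\v=0\}=\mathcal N$ by construction.

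\textbf{Conclusion and the hard part.} With $(\u,\v)$ in hand, $\grlp$ automatically takes the double-null form $-\Omega^2\,d\u\,d\v+r^2\gamma$ of~\eqref{E:DNG} with $\Omega^2=-2\grlp(\pau,\pav)>0$, and $\Omega,r$ (and the fluid variables) inherit the regularity of the RLP metric on $\DRLPtilde$ (cf.~\eqref{E:MATHCALDTILDE}); in particular they solve the system of Lemma~\ref{L:DOUBLENULL}, being the same geometric solution re-expressed. The one place requiring separate attention is the centre $\{r=0\}$, where the comoving self-similar chart degenerates (Remark~\ref{R:TILDRREG}): there I would work in the $(\tau,r)$ chart, note that the RLP metric is smooth, observe that ingoing and outgoing RNGs reflect smoothly off the axis, and conclude that $(\u,\v)$ extends smoothly up to the axis with $r\to0$. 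The main obstacle I expect is precisely the global bookkeeping of the two null foliations near the three boundary loci of $\mathcal R$ — the smooth surfaces $\mathcal N$ and $\mathcal B_1$ and the axis $\{r=0\}$ — i.e.\ checking uniformly that every leaf of each congruence crosses the chosen seed curve exactly once and that the resulting $(\u,\v)$-image of $\mathcal R$ is the expected domain; all the analytic input for this (global existence and asymptotics of the RNGs, positivity of the relevant denominators, uniqueness of $\mathcal N$) is already available in Lemmas~\ref{L:RNGS}, \ref{L:YN} and~\ref{L:INCOMING}, so this last step is a matter of assembling those facts rather than of new estimates.
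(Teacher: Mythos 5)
Your proposal takes a genuinely different route from the paper's, and while the geometric scaffolding you set up (two null foliations, transversality, monotonicity of $r$ along the seed curves, single-valuedness of $\u,\v$) is sound and would establish that a double-null chart exists topologically on the region in question, it misses what is in fact the substantive content of the paper's proof. The paper does not build the chart as a foliation argument at all. Instead, it works quantitatively: starting from the relations $\Om^2\,\pa_\tau\u\,\pa_\tau\v=e^{2\mu}$, $\Om^2\,\pa_R\u\,\pa_R\v=-e^{2\l}$, it expresses $u^\u,u^\v$ in terms of $\mathcal V$, $\pau r$, $\pav r$ and the mass aspect $\frac{2m}{r}$; it then computes the limits of $\mathcal V$, $\frac{2m}{r}$, and $\rho$ as $\v\to\infty$ along $\C$ (the outgoing null curve asymptoting to $\mathcal B_1$), and finally solves the null constraint $\pav\log\Om=(1+\k)\pi(\tfrac12\v+r_\ast)\rho (u^\v)^{-2}$ to obtain the precise asymptotic rates $\Om\asymp_{\v\to\infty}\v^{C\k}$, $\pau r\asymp_{\v\to\infty}\v^{2C\k}$, $\pav r\asymp_{\v\to\infty}1$. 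This asymptotic control is the point of the lemma: it is what guarantees that the characteristic data read off from the RLP solution along $\uC\cup\C$ have finite norm in the weighted space $\vertiii{[f^\pm,\Om,r]|_{\uC\cup\C}}$ used in Theorem~\ref{thm:LWP}, and it shows concretely that $\Om$ does not degenerate or blow up as $\v\to\infty$ even though the spacetime is not asymptotically flat.

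Your argument, as written, cannot supply this information. Smoothness of $\Om$ on the interior (which your ``inherits the regularity'' step does give) says nothing about its behaviour at null infinity; and the positivity $\tfrac{2m}{r}<1$, which you invoke for monotonicity, must itself be shown to persist in the far field, which is exactly the computation $\lim_{\v\to\infty}(\tfrac{2m}{r}-1)=-1+C\k$ that the paper performs (and which relies on the a priori asymptotics $W\to1$, $\DS x^2\to\RY_2$ from Section~\ref{S:FARFIELD}). So to close your proof you would still need to reproduce the constraint-integration and the density/mass-aspect asymptotics along $\C$; without them the lemma would be proved only in a weaker topological sense that does not feed correctly into the subsequent well-posedness argument.
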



\begin{proof}

Recalling the expressions~\eqref{E:METRIC} and~\eqref{E:DNG}, it is easy to see that 
\begin{align}
\Om^2 \pa_\tau \u\pa_\tau \v  = e^{2\mu}, \ \ 
\Om^2 \pa_R \u\pa_R \v  = -e^{2\lambda}, \ \ 
\frac{\pa_\tau \u}{\pa_R \u} + \frac{\pa_\tau \v}{\pa_R \v}  = 0. \label{E:J3}
\end{align}
Using~\eqref{E:J3} we conclude that
$
\frac{\pa_\tau \u}{\pa_R \u} = -\frac{\pa_\tau \v}{\pa_R \v} = e^{\mu-\lambda}.
$
Recall that the 4-velocity $u^\nu$ in the comoving frame takes the form $e^{-\mu}\pa_\tau$
and therefore
\begin{align}
 \mathcal V = e^{-\mu}\pa_\tau r   = u^\u\pau r + u^\v\pav r = \frac1{\Om^2 u^\v}\pau r + u^\v\pav r. 
\end{align}
It follows that $u^\v$ solves the quadratic equation $\pav r (u^\v)^2 - \mathcal V u^\v + \frac1{\Om^2}\pau r = 0$. Therefore
\begin{align}
u^\u &= \frac{\mathcal V -\sqrt{\mathcal V^2-\frac{4\pau r \pav r }{\Om^2}}}{2\pau r}= \frac{\mathcal V -\sqrt{\mathcal V ^2+1-\frac{2m}{r}}}{2\pau r}, \label{E:UTAUNULL1}\\
u^\v & =  \frac{\mathcal V +\sqrt{\mathcal V^2-\frac{4\pau r \pav r}{\Om^2}}}{2\pav r} = \frac{\mathcal V +\sqrt{\mathcal V^2+1-\frac{2m}{r}}}{2\pav r},\label{E:UTAUNULL2}
\end{align}
where the choice of signs in the solutions of the quadratic equations is consistent with the normalisation that $e^{-\mu}\pa_\tau=u^\u\pau + u^\v\pav$ is future pointing.

We now let $r\to\infty$ along the outgoing geodesic through $(\tau_0,R_0)$. By Lemma~\ref{L:INCOMING} we know that this curve asymptotes to 
$\mathcal B_1$ in the $(\ttau,R)$ plane, or equivalently, $y$ converges to $y_1$ in the $(\tau,R)$-plane.
Recalling~\eqref{E:VELOCITYSSCHANGE},~\eqref{E:MUFORMULASS}--\eqref{E:MUFORMULASS2}, and~\eqref{E:LITTLEWDEF}, we see that $\mathcal V(\tau,R) = \sqrt\k V(y)=\sqrt\k\tr(y) \frac{\w(y)-1}{1+\k} \Sigma(y)^{\frac{\k}{1+\k}} \to \bar C_1\sqrt\k$ as $y\to y_1$ 
for some constant $\bar C_1>0$. Recall here that the density $\Sigma(y)=\d(y)^{\frac{1-\k}{1+\k}}=\RY(Y)^{\frac{1-\k}{1+\k}}$ for $y<0$.
Observe that along the outgoing geodesic
\be\label{E:ASPECTFUNCTIONLIMIT}
 \lim_{\v\to\infty} \left(\frac{2m}{r}-1\right) = -1+C\k
\ee
for some $\k$-independent constant $C>0$ by an argument analogous to~\eqref{E:NOTAF}.
Using the normalisation condition~\eqref{E:NORMOUT}, we see that there exists a constant $\bar C_2>0$ such that 
\be\label{E:UVLIMIT}
\lim_{\v\to\infty} (u^\v)^2=\bar C_2>0.
\ee

Using the normalisation condition~\eqref{E:NORMOUT}, equation~\eqref{E:CONSTRAINTV1} along the outgoing null-geodesic through $(\ttau_0,R_0)$ takes the form
\begin{align}
\pav(\Om^{-2})=-2\pi (1+\k)(\frac12\v +r_\ast) \Om^{-2}\rho (u^\v)^{-2},
\end{align}
where we have used~\eqref{E:NORMALISATIONNULL} and~\eqref{E:NORMALISATIONNULL}. Equivalently, after multiplying by $\Omega^2$ and integrating, we obtain the formula
\begin{align}\label{E:CONSTRAINTVRLP}
\pav\log\Om = (1+\k)\pi (\frac12 \v + r_\ast) \rho (u^\v)^{-2}.
\end{align} 
Note that by~\eqref{E:SS1} $\rho(\tau,R)=\frac{\k}{R^2}y^2 \Sigma(y)$.
Since we know by Lemma~\ref{L:INCOMING}
that as $\v\to\infty$, $y\to y_1$, and thereby $\frac{r}{R}\to\ch(y_1)$  it follows by $q=2(r-r_\ast)$ that there exists a constant $C_1$ independent of $\k$ such that 
\[
\frac{\rho(\tau,R)}{\frac{1}{\v^2}} \to_{\v\to\infty} C_1\k. 
\]
We then use~\eqref{E:UVLIMIT} to conclude that that the leading order asymptotic behaviour
of the right-hand side of~\eqref{E:CONSTRAINTVRLP} is $\frac{C\k}{\v}$ for some $\k$-independent constant $C>0$. Therefore
$\Om(\u_0,\cdot)$ is well-defined and $\Om\sim_{\v\to\infty}\v^{C\k}$.
Moreover, from~\eqref{E:HAWKINGMASSDEF} and~\eqref{E:ASPECTFUNCTIONLIMIT}, we have
\begin{align}
\lim_{\v\to\infty} \frac{4\pau r\pav r}{\Om^2} = \lim_{\v\to\infty} \frac{2m}{r}-1 = -1+C\k.
\end{align}
It follows therefore, that along the outgoing geodesic $\pau r\approx_{\v\to\infty} \Om^2\approx_{\v\to\infty} \v^{2C\k}$.
We can similarly use the normalisation condition~\eqref{E:NORMIN} to determine $\Om$ along the ingoing geodesic through $(\ttau_0,R_0)$ in the past of $\mathcal B_1$.
Using~\eqref{E:RWAVE1}--\eqref{E:OMEGAWAVE1}, it is straightforward to show that the the solution exists everywhere in the past of $\mathcal B_1$ with the asymptotic
behaviour $\Om\asymp_{\v\to\infty} \v^{C\k}$, $\pau r \asymp_{\v\to\infty} \v^{2C\k}$, $\pav r\asymp_{\v\to\infty} 1$.
\end{proof}

\end{document}